\documentclass[12pt]{report}
\usepackage[utf8]{inputenc}
\usepackage{amsmath,amssymb,amsthm,amsfonts,graphics,stmaryrd,hyperref,bbm, biblatex}\addbibresource{cfarefs.bib}
\usepackage[margin = 1in, includehead, footskip=0.25in]{geometry}

\usepackage{setspace}
\doublespacing

\usepackage[T1]{fontenc}
\usepackage[shortlabels]{enumitem}
\theoremstyle{plain}
\newtheorem{theorem}{Theorem}[section]
\newtheorem{lemma}[theorem]{Lemma}
\newtheorem{corollary}[theorem]{Corollary}
\newtheorem{definition}[theorem]{Definition}
\newtheorem{prop}[theorem]{Proposition}

\newtheorem{question}{Question}
\newcommand{\sbp}{\hspace{2pt}|\hspace{2pt}}
\newcommand{\lk}{<\hspace{-2pt} \kappa}
\mathchardef\mhyphen="2D
\addbibresource{cfarefs.bib}

\def\frontmatter{%
    \pagenumbering{roman}
    \setcounter{page}{2}
}%

\def\mainmatter{%
    \pagenumbering{arabic}
    \setcounter{page}{1}
    \setcounter{section}{0}
}%

\begin{document}

\begin{titlepage}
    \begin{center}
        \vspace*{144pt}
        \textsc{$\Sigma_n$-correct Forcing Axioms} \\[0.5in]
by \\[0.5in]
\textsc{Ben Goodman} 
\vfill
A dissertation submitted to the Graduate Faculty in Mathematics in partial fulfillment of the requirements for the degree of Doctor of Philosophy, The City University of New York \\[0.25in]
2024
    \end{center}
\end{titlepage}

\frontmatter

\phantom{}\vspace{\fill}
\begin{center}
\copyright~2024\\
\textsc{Ben Goodman}\\
All Rights Reserved\\
\end{center}

\newpage

\begin{center}
APPROVAL

$\Sigma_n$-correct Forcing Axioms

by

Ben Goodman

\vspace{20pt}

This manuscript has been read and accepted by the Graduate Faculty in Mathematics in satisfaction of the dissertation requirement for the degree of Doctor of Philosophy.

Approved: May 2024

\vspace{0.5in}

Gunter Fuchs, Chair of Examining Committee

Christian Wolf, Executive Officer

\vspace{0.5in}

Supervisory Committee:

Gunter Fuchs, Advisor

Arthur Apter

Russell Miller
\end{center}

\vspace{\fill}
\begin{center}
\textsc{The City University of New York}
\end{center}

\newpage

\begin{center}
Abstract \\
\textsc{$\Sigma_n$-correct Forcing Axioms} \\
by \\
\textsc{Ben Goodman} \\[0.25in]
\end{center}

\vspace{0.25in}

\noindent Advisor: Gunter Fuchs

\vspace{0.25in}

\noindent I introduce a new family of axioms extending ZFC set theory, the $\Sigma_n$-correct forcing axioms. These assert roughly that whenever a forcing name $\dot{a}$ can be forced by a poset in some forcing class $\Gamma$ to have some $\Sigma_n$ property $\phi$ which is provably preserved by all further forcing in $\Gamma$, then $\dot{a}$ reflects to some small name such that there is already in $V$ a filter which interprets that small name so that $\phi$ holds. $\Sigma_1$-correct forcing axioms turn out to be equivalent to classical forcing axioms, while $\Sigma_2$-correct forcing axioms for $\Sigma_2$-definable forcing classes are consistent relative to a supercompact cardinal (and in fact hold in the standard model of a classical forcing axiom constructed as an extension of a model with a supercompact), $\Sigma_3$-correct forcing axioms are consistent relative to an extendible cardinal, and more generally $\Sigma_n$-correct forcing axioms are consistent relative to a hierarchy of large cardinals generalizing supercompactness and extendibility whose supremum is the first-order version of Vopenka's Principle.

By analogy to classical forcing axioms, there is also a hierarchy of $\Sigma_n$-correct bounded forcing axioms which are consistent relative to appropriate large cardinals. At the two lowest levels of this hierarchy, outright equiconsistency results are easy to obtain. Beyond these consistency results, I also study when $\Sigma_n$-correct forcing axioms are preserved by forcing, how they relate to previously studied axioms and to each other, and some of their mathematical implications.

\chapter*{Acknowledgements}

Thank you to my advisor, Gunter Fuchs, for his guidance, kindness, and support over the past several years, and for suggesting such a fascinating research topic. Thank you also to Arthur Apter and Russell Miller for agreeing to serve on both my dissertation and oral exam committees, and to everyone in the CUNY logic community who answered a question I had or provided enlightening conversation during my time here.

I would also like to thank Joel David Hamkins, who first kindled my interest in set theory over a decade ago with a talk he gave at my undergraduate institution and the dinner we attended afterwards, has continued to entertain, challenge, and inspire me during his visits to CUNY, and proved a disproportionate number of the prior results this dissertation draws upon; Corey Bacal Switzer, who welcomed me into the CUNY set theory community when I first arrived, talked to me in depth about my research after he graduated, and asked some fruitful questions; and all the friends and family who supported me during this time and endured my attempts to explain what a forcing axiom is in plain language.

\tableofcontents

\newpage

\mainmatter

\chapter*{Introduction}

This dissertation studies the $\Sigma_n$-correct forcing axioms, potential axioms of set theory which can variously be viewed as:
\begin{itemize}
    \item generalizations of the "plus versions" of forcing axioms to properties beyond stationarity
    \item strengthenings of the maximality principles of Stavi and V\"a\"an\"anen \cite{svMP} and Hamkins \cite{hamkinsMP} by intertwining them with reflection principles to accommodate parameters of arbitrary size
    \item unifications of maximality principles with classical forcing axioms
    \item the unbounded versions of the generalized bounded forcing axioms considered by David Asper\'o \cite{asperomax}
\end{itemize}

Chapter 1 covers the background material necessary for what follows; almost none of the results in it are original to me. Section \ref{section:notation} establishes notational definitions and reviews the definitions of the forcing classes which will be most commonly used in examples. Sections \ref{section:fa} and \ref{section:MP} cover classical forcing axioms and maximality principles respectively. Section \ref{section:boolean} briefly surveys the theory of Boolean-valued models and their quotients by ultrafilters. Section \ref{section:interp} addresses the technical issue that there are multiple possible notions of the interpretation of a name by a filter, which can diverge when the filter is not generic or the name is not forced to be a subset of the ground model. Finally, Section \ref{section:geology} lays out the most notable and relevant results in set-theoretic geology, the study of the forcing grounds of the universe (i.e. those inner models of which the universe is a generic extension).

Chapter 2 introduces the large cardinals which will form the hypotheses of later consistency results. Joan Bagaria has done a great deal of work on related topics, though with somewhat different motivations. This chapter is a mix of previously known results (many of them due to or first explicitly written down by Bagaria), straightforward generalizations of old results, and a few genuinely new thoerems. Section \ref{section:Cn} explores the basic properties of the $\Sigma_n$-correct cardinals, i.e. those cardinals $\theta$ such that $V_\theta$ agrees with $V$ on the truth of $\Sigma_n$ formulas with parameters in $V_\theta$. Section \ref{section:scCn} introduces the supercompact cardinals for $C^{(n)}$, which are the critical points of supercompactness embeddings which additionally preserve arbitrarily large initial segments of the $\Sigma_n$-correct cardinals. These are importantly distinct from Bagaria's $C^{(n)}$-supercompact cardinals, and in fact more closely related to his notion of $C^{(n)}$-extendibility. I prove some useful characterizations of supercompactness for $C^{(n)}$, and show that as $n$ increases these cardinals form a hierarchy starting from ordinary supercompactness and then extendibility, reaching upward toward Vopenka's principle. Section \ref{section:SnHlref} covers the $\Sigma_n$-correctly $H_\lambda$-reflecting cardinals, a generalization of Miyamoto's $H_\lambda$-reflecting cardinals, which for any fixed $n$ form a hierarchy reaching from the regular $\Sigma_n$-correct cardinals to the supercompact cardinals for $C^{(n-1)}$.

Chapter 3 finally begins to discuss the main topic of this dissertation. Section \ref{section:sandcon} defines $\Sigma_n$-correct forcing axioms, motivating their statement with a characterization of classical forcing axioms due to Ronald Jensen. It goes on to prove that, for suitably iterable $\Sigma_n$-definable forcing classes, the corresponding $\Sigma_n$-correct forcing axiom holds in a forcing extension by a Baumgartner-style iteration of any model with a cardinal supercompact for $C^{(n-1)}$. It also contains several other basic facts about the axioms. In Section \ref{section:equivforms}, I explore several alternative statements of $\Sigma_n$-correct forcing axioms, motivated by various equivalents or enhancements of classical forcing axioms found in the literature; these alternative formulations are more convenient than the Jensen-style version of the axioms for certain proofs. Section \ref{section:hierarchy} addresses the natural question of whether increasing $n$ necessarily results in a strictly stronger $\Sigma_n$-correct forcing axiom for sufficiently nice forcing classes; this turns out to be surprisingly hard to answer in general. Section \ref{section:collo1} briefly considers forcing axioms for classes which can collapse $\omega_1$; the classical forcing axioms for such classes are of course trivially true or trivially false in most cases, but the $\Sigma_n$-correct versions are potentially interesting. Section \ref{section:intvext} resolves a technical ambiguity in the statement of $\Sigma_n$-correct forcing axioms, specifically whether they are single sentences quantifying over all $\Sigma_n$ formulas encoded in the model or schemes with a separate instance for each $\Sigma_n$ formula in the metatheory. As it turns out, both interpretations are viable, but the consistency proof in the former case requires more care and slightly stronger hypotheses.

Chapter 4 explores the bounded version of the axioms. Asper\'o introduced equivalent principles, but I take a somewhat different approach, defining a notion of a filter being weakly generic over a model in order to facilitate a Jensen-style formulation of $\Sigma_n$-correct bounded forcing axioms. Section \ref{section:cbfacon} proves that these axioms are consistent relative to $\Sigma_n$-correct $H_\lambda$-reflecting cardinals for suitable $\lambda\geq \kappa$. Conversely, Section \ref{section:equicon} obtains the existence of large cardinals in $L$ from $\Sigma_n$-correct bounded forcing axioms, establishing equiconsistency results for the two lowest levels of the boundedness hierarchy.

Chapter \ref{section:preservation} studies when $\Sigma_n$-correct forcing axioms are preserved by forcing, drawing on Sean Cox's unified framework for preservation results in terms of extensions of generic elementary embeddings \cite{coxfa}. Chapter 6 explores the relationships of $\Sigma_n$-correct forcing axioms to classical forcing axioms, maximality principles, and each other, with Section \ref{section:equivalences} addressing when different principles turn out to be equivalent and Section \ref{section:separations} containing non-implication and inconsistency proofs. Both sections conclude by listing some related problems which remain open.

Chapter \ref{section:rrp} introduces residual reflection principles, weakenings of both $\Sigma_n$-correct (bounded) forcing axioms and $\Sigma_n$-correct $H_\lambda$-reflection which state that if a formula whose truth is preserved by some forcing class holds of some (not too large) parameter, then that parameter reflects to a small object of which the formula also holds. The main result of the chapter is that for forcing classes with a $\Sigma_n$ definition which is itself preserved by all posets in the class, $\Sigma_n$-correct forcing axioms (bounded or unbounded) may be factored as the conjunction of the $\Sigma_n$-maximality principle for that class and an appropriate residual reflection principle. Unfortunately, that hypothesis only holds for a small number of forcing classes.

Finally, Chapter 8 explores some of the consequences which follow from $\Sigma_n$-correct forcing axioms. Section \ref{section:continuum} focuses on the cardinality of the continuum. Whereas classical forcing axioms either do not settle the size of the continuum or imply that it is $\aleph_2$, $\Sigma_2$-correct forcing axioms for certain classes can also imply $CH$, or that the continuum is indescribably large. Section \ref{section:combin} looks at combinatorial applications, specifically the tree property, $\diamondsuit$, Kurepa's hypothesis, and stationary reflection. Section \ref{section:large} touches on some implications with a large cardinal or generic large cardinal character.

The appendices contain information on basic facts which will be familiar to most but not all readers, and so is included for completeness. Appendix A analyzes the formula complexity of a number of basic set-theoretic properties, streamlining certain proofs in the main body which rely on such analysis. Appendix B compiles a number of widely but not universally known lemmas which will be used repeatedly.

\chapter{Preliminaries}

\section{Notation and Common Forcing Classes}
\label{section:notation}

Forcing posets are ordered so that $p\leq q$ means $p$ extends $q$.

For any set $X$ such that $\in\upharpoonright X$ is extensional, $\pi_X$ is the Mostowski collapsing isomorphism of $(X, \hspace{3pt}\in\upharpoonright X)$.

\begin{definition}
    If $\kappa\leq \lambda$ are regular uncountable cardinals, $S\subseteq [H_\lambda]^{<\kappa}$ is weakly stationary if for every function $f:[H_\lambda]^{<\omega}\rightarrow H_\lambda$ there is some $Z\in S$ such that $f"[Z]^{<\omega}\subseteq Z$. $S$ is stationary if for any such $f$ we can find a $Z$ closed under it with the additional property that $Z\cap H_\kappa$ is transitive.
\end{definition}

It can be shown that this is equivalent to Jech's original definition of stationarity (that $S$ meets all closed unbounded subsets of $[H_\lambda]^{<\kappa}$).

$\square$ and $\diamondsuit$ are used to represent both modal operators and Jensen's combinatorial principles; which is intended should be clear from context, and fortunately neither is used very often.

$ZFC^-$ denotes the theory consisting of the axioms of Extension, Foundation, Pairing, Union, Infinity, and Well-Ordering (rather than some other form of Choice), together with the schemes of Separation and Collection (rather than Replacement), but not the Power Set axiom. See Gitman, Hamkins, and Johnstone \cite{GHJnopowerset} for more details on why the specific axiomatization matters.

For further definitions of standard set-theoretic concepts, consult Jech's textbook \cite{jech} or a similar reference.

I use first-person plural pronouns in situations which could by some stretch of the imagination include the reader and first-person singular pronouns in situations which could not.

\begin{definition}
    A poset $\mathbb{P}$ satisfies the $\kappa$-cc iff every antichain (i.e. set of pairwise incompatible conditions) of $\mathbb{P}$ has cardinality less than $\kappa$. We write ccc in place of $\omega_1$-cc.
\end{definition}

\begin{definition}
    A poset $\mathbb{P}$ is $<\hspace{-2pt}\kappa$-closed iff for every $\theta<\kappa$ and every order-reversing function $f:\theta\rightarrow \mathbb{P}$, there is a $p\in\mathbb{P}$ such that $p\leq f(\alpha)$ for all $\alpha<\theta$. We sometimes write countably closed in place of $<\hspace{-2pt}\omega_1$-closed.
\end{definition}

\begin{definition}
    A poset $\mathbb{P}$ is proper iff for all sufficiently large cardinals $\theta$ there is a club of countable elementary substructures of $H_\theta$ such that each $M$ in the club contains $\mathbb{P}$ and for all $p\in\mathbb{P}\cap M$, there is a $q\leq p$ such that for all names for an ordinal $\dot{\alpha}\in M$, $q\Vdash \exists \beta\in\check{M}\hspace{2pt} \dot{\alpha}=\beta$. Equivalently, for all infinite cardinals $\lambda$ and all stationary $S\subseteq [\lambda]^\omega$, $\Vdash_\mathbb{P} \check{S}$ is stationary.
\end{definition}

\begin{definition}
    A poset $\mathbb{P}$ is semiproper iff for all sufficiently large cardinals $\theta$ there is a club of countable elementary substructures of $H_\theta$ such that each $M$ in the club contains $\mathbb{P}$ and for all $p\in\mathbb{P}\cap M$, there is a $q\leq p$ such that for all names for a countable ordinal $\dot{\alpha}\in M$, $q\Vdash \exists \beta\in\check{M}\hspace{2pt} \dot{\alpha}=\beta$.
\end{definition}

\begin{definition}
    A poset $\mathbb{P}$ is stationary set preserving iff for every stationary $S\subseteq\omega_1$, $\Vdash_\mathbb{P}\check{S}$ is stationary.
\end{definition}

To define subcomplete forcing, we need the following preliminary notion:

\begin{definition}
    A transitive structure $M$ is full iff $\omega\in M$ and there is an ordinal $\gamma$ such that $L_\gamma(M)\models ZFC^-$ and for all $x\in M$ and $f:x\rightarrow M$ in $L_\gamma(M)$, $f"x\in M$.
\end{definition}

\begin{definition}
    A poset $\mathbb{P}$ is subcomplete iff for all sufficiently large $\theta$, if $\mathbb{P}\in H_\theta\subseteq N=L_\tau[A]\models ZFC^-$ for some $\tau>\theta$ and $A\subset \tau$, $s\in N$, $\bar{N}$ is a countable full model with an elementary embedding $\sigma:\bar{N}\rightarrow N$, $\bar{\theta},\bar{\mathbb{P}}, \bar{s}\in \bar{N}$ are such that $\sigma(\bar{\theta})=\theta$, $\sigma(\bar{\mathbb{P}})=\mathbb{P}$, and $\sigma(\bar{s})=s$, and $\bar{G}\subseteq\bar{\mathbb{P}}$ is an $\bar{N}$-generic filter, then there is a $p\in \mathbb{P}$ such that whenever $G\subseteq \mathbb{P}$ is $V$-generic and contains $p$, there is an elementary embedding $\sigma':\bar{N}\rightarrow N$ in $V[G]$ such that:
    \begin{itemize}
        \item $\sigma'(\bar{\theta})=\theta$
        \item $\sigma'(\bar{\mathbb{P}})=\mathbb{P}$
        \item $\sigma'(\bar{s})=s$
        \item $\sigma' "\bar{G}\subset G$
    \end{itemize}
\end{definition}

This is in fact the class that Fuchs and Switzer \cite{FSiteration} introduced as $\infty$-subcomplete forcing. Jensen's original definition of subcompleteness includes an extra condition which is seemingly necessary to prove that subcomplete forcing is closed under revised countable support iterations, but makes it unclear whether restrictions of subcomplete posets or even posets forcing-equivalent to subcomplete posets are necessarily subcomplete. Fuchs and Switzer, building on the work of Miyamoto, showed that there is a notion of iteration under which the class defined above is closed, that it is further closed under forcing equivalence, and that it preserves most of the same things as Jensen's class. In light of these desirable qualities, I have adopted it as my official definition.

\section{Forcing Axioms}
\label{section:fa}

The study of forcing axioms began in 1970, when Martin and Solovay, seeking a statement that would settle many of the independent questions resolved by the continuum hypothesis but would be consistent with $\lnot CH$, introduced Martin's Axiom \cite{MAorigin}.

\begin{definition}
    Martin's Axiom MA is the statement that if $\mathbb{P}$ is a forcing poset which satisfies the countable chain condition and $\mathcal{D}$ is a collection of dense subsets of $\mathbb{P}$ of size less than the continuum, then there is a $\mathcal{D}$-generic filter $F\subseteq\mathbb{P}$ (i.e. for all $D\in\mathcal{D}$, $D\cap F$ is nonempty).
\end{definition}

Martin and Solovay noted allowing $\mathbb{P}$ to be arbitrary leads to inconsistency when $\mathbb{P}$ can collapse $\omega_1$, and that the ccc "is not the weakest restriction on $\mathbb{P}$ which will prevent cardinal collapse, but it has the virtue of being strong enough to permit the proof of" the consistency of $MA$ with the continuum being any regular uncountable cardinal.

In the following decades, set theorists studied forcing axioms for a much broader range of forcing classes:

\begin{definition}
    If $\kappa>\omega_1$ is a cardinal and $\Gamma$ is a forcing class, the forcing axiom $FA_{<\kappa}(\Gamma)$ is the statement that for any $\mathbb{P}\in\Gamma$ and any collection $\mathcal{D}$ of dense subsets of $\mathbb{P}$ with $|\mathcal{D}|<\kappa$, there is a $\mathcal{D}$-generic filter $F\subseteq\mathbb{P}$.

    If $\Gamma$ is the class of proper posets, subcomplete posets, or posets preserving the stationarity of subsets of $\omega_1$, we write $FA_{<\omega_2}(\Gamma)$ as $PFA$, $SCFA$, or $MM$ (Martin's Maximum) respectively.
\end{definition}

It is more common to take the subscript to be the maximum allowable size of $\mathcal{D}$ rather than a strict upper bound. I have chosen to depart from this convention in order to allow general consistency proofs to be stated more cleanly and to handle the case where $\kappa$ is a limit cardinal: while a classical forcing axiom up to but not including a limit cardinal is simply the conjunction of the axioms at each cardinal below the limit, this will not hold for the principles which we study later. As a reminder of this unusual notation, I write the $<$ explicitly in the subscript.

$MA+\lnot CH$ is equiconsistent with $ZFC$, essentially because all the dense sets involved can be taken to be small and the ccc is inherited by arbitary subposets, so forcing with the ccc posets smaller than the desired size of the continuum is sufficient to add the desired filters. This is not true of most other forcing classes, so consistency proofs for other forcing axioms typically follow Baumgartner's proof of $Con(PFA)$ \cite{BaumgartnerPFA} in starting from a supercompact cardinal $\kappa$ and harnessing its reflection properties to show that all the desired filters can be added by iterating posets below $\kappa$. This process in fact leads to a model of a stronger principle:

\begin{definition}
    If $\nu\leq\omega_1$ is a cardinal and $\Gamma$ is a forcing class, $FA^{+\nu}(\Gamma)$ is the statement that for all $\mathbb{P}\in \Gamma$, all collections $\mathcal{D}$ of $\omega_1$ dense subsets of $\mathbb{P}$, and every sequence $\sigma=\langle\sigma_\alpha\sbp \alpha<\nu\rangle$ of $\mathbb{P}$ names for stationary subsets of $\omega_1$, there is a $\mathcal{D}$-generic filter $F\subseteq \mathbb{P}$ such that $\sigma_\alpha^F$ is a stationary subset of $\omega_1$ in $V$.
\end{definition}

$FA^+(\Gamma)$ means $FA^{+1}(\Gamma)$. $FA^{++}(\Gamma)$ is most often used to mean $FA^{+\omega_1}(\Gamma)$, but also sometimes $FA^{+2}(\Gamma)$, so I will avoid this notation.

Exactly what is meant by the interpretation of a name by a nongeneric filter is somewhat ambiguous; see Section \ref{section:interp} for more details. However, it will turn out that all reasonable definitions will coincide in this case if we add at most $\omega_1$ additional dense sets to $\mathcal{D}$, so the ambiguity is not too important.

In proving that the "plus versions" of forcing axioms hold in the standard models of classical forcing axioms, we make essential use of the fact that any forcing class for which $FA_{<\omega_2}$ is consistent must preserve the stationarity of subsets of $\omega_1$. The central goal of this work is to generalize the plus versions of forcing axioms to properties other than stationarity which the forcing class in question also preserves.

Also of interest are a natural weakening of forcing axioms introduced by Goldstern and Shelah \cite{GSbpfa}:

\begin{definition}
    If $\Gamma$ is a class of complete Boolean algebras and $\lambda\geq\kappa>\omega_1$ are cardinals, the bounded forcing axiom $BFA_{<\kappa}^{<\lambda}(\Gamma)$ is the statement that for all $\mathbb{B}\in \Gamma$ and all sets $\mathcal{A}$ of maximal antichains of $\mathbb{B}$ such that $|\mathcal{A}|<\kappa$ and for all $A\in\mathcal{A}$, $|A|<\lambda$, there is a (proper) filter $F\subset\mathbb{B}$ which meets every antichain in $\mathcal{A}$.
\end{definition}

This definition uses Boolean algebras rather than posets in order to guarantee the existence of sufficiently small maximal antichains.

I will frequently write "symmetric bounded forcing axiom" to refer to the case where $\lambda=\kappa$ (the axiom originally studied by Goldstern and Shelah) and "asymmetric bounded forcing axiom" for the case where $\lambda>\kappa$ (first proposed by Miyamoto \cite{miyamotosegments}).

\section{Maximality Principles}
\label{section:MP}

Maximality principles were introduced by Stavi and V\"a\"an\"anen \cite{svMP} and further developed by Hamkins \cite{hamkinsMP}. We will use the notation of the latter but a formulation more similar to the former, for which the following concept is needed:

\begin{definition}
    For a definable forcing class $\Gamma$, a formula $\phi$ is provably $\Gamma$-persistent iff 
$$ZFC\vdash \forall x(\phi(x)\rightarrow \forall \mathbb{Q}\in\Gamma \hspace{3pt}\Vdash_{\mathbb{Q}} \phi(\check{x}))$$
\end{definition}

\begin{definition}
    If $n$ is a positive integer, $\Gamma$ is a $\Sigma_n$-definable forcing class, and $S$ is a class of parameters, the $\Sigma_n$ maximality principle for $\Gamma$ with parameters in $S$ $\Sigma_n\mhyphen MP_\Gamma(S)$ is the statement that for every provably $\Gamma$-persistent $\Sigma_n$ formula $\phi$ and every $a\in S$, if there is some $\mathbb{P}\in \Gamma$ such that $\Vdash_\mathbb{P}\phi(\check{a})$, then $\phi(a)$ already holds in $V$.
\end{definition}

Maximality principles are appealing axioms because they capture the intuition that the universe should be as wide as possible, at least with regard to $\Gamma$-extensions and $\Sigma_n$ truth. They can easily be seen to imply symmetric bounded forcing axioms:

\begin{prop}
    \label{prop:mpbfa}
    (Folklore?) If $\Gamma$ is a forcing class, $\kappa$ is a regular uncountable cardinal, and $n\geq 1$, $\Sigma_n\mhyphen MP_\Gamma(H_\kappa)$ implies $BFA_{<\kappa}^{<\kappa}(\Gamma)$.
\end{prop}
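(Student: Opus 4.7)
The plan is to reduce the existence of a filter meeting $\mathcal{A}$ to a low-complexity statement about a small parameter, then apply the maximality principle. Given $\mathbb{B}\in\Gamma$ and a family $\mathcal{A}$ of maximal antichains of $\mathbb{B}$ with $|\mathcal{A}|<\kappa$ and $|A|<\kappa$ for each $A\in\mathcal{A}$, set $X=\bigcup\mathcal{A}$, so $|X|<\kappa$, and let $Y$ be the subalgebra of $\mathbb{B}$ generated by $X$. Since $Y$ is the closure of a set of size $<\kappa$ under finitary Boolean operations, $|Y|<\kappa$, and so the pair $(Y,\mathcal{A})$ lies in $H_\kappa$. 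Crucially, because $Y$ is a subalgebra, the meets and joins of elements of $X$ computed in $Y$ agree with those computed in $\mathbb{B}$, so finite-meet-properties of subsets of $X$ are correctly detected inside $Y$.

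Next I would formulate the target property as the $\Sigma_1$ (in fact essentially $\Sigma_0$-after-one-existential) formula
$$\phi(Y,\mathcal{A}) \;\equiv\; \exists F\, \bigl(F \text{ is a filter on } Y \text{ and } \forall A\in\mathcal{A}\;\, F\cap A\neq\emptyset\bigr).$$
This $\phi$ is provably $\Gamma$-persistent (in fact persistent under arbitrary forcing): once a witness $F$ exists, it persists as a set of the same elements, and the $\Delta_0$ statement that $F$ is a filter on $Y$ meeting each $A\in\mathcal{A}$ is absolute. Moreover, $\Vdash_{\mathbb{B}}\phi(\check{Y},\check{\mathcal{A}})$: if $G$ is $\mathbb{B}$-generic over $V$, then $F:=G\cap Y$ is a filter on $Y$ (since $Y$ is a subalgebra, upward closure and finite meets in $Y$ coincide with those in $\mathbb{B}$), and because each $A\in\mathcal{A}$ is a maximal antichain of $\mathbb{B}$, $G$ meets $A$, hence so does $F$.

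Finally I would invoke $\Sigma_n\text{-}MP_\Gamma(H_\kappa)$ with the parameter $(Y,\mathcal{A})\in H_\kappa$ (using $\Sigma_1\subseteq\Sigma_n$) to conclude that $\phi(Y,\mathcal{A})$ already holds in $V$. This produces a filter $F\subseteq Y$ in $V$ meeting every $A\in\mathcal{A}$, and the filter generated by $F$ inside $\mathbb{B}$ is then a proper filter of $\mathbb{B}$ witnessing the desired instance of $BFA_{<\kappa}^{<\kappa}(\Gamma)$. The only real subtlety, and what I would treat as the main obstacle, is the parameter-size issue: $\mathbb{B}$ itself is generally not in $H_\kappa$, so one must show, as above, that the relevant statement can be rephrased in terms of the small subalgebra $Y$ without losing fidelity with the Boolean structure of $\mathbb{B}$; once that reduction is accomplished the rest of the argument is essentially a one-line application of the maximality principle.
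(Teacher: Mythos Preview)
Your overall strategy is correct and close in spirit to the paper's, but there is a gap in the parameter-size step. You assert that $(Y,\mathcal{A})\in H_\kappa$ because $|Y|<\kappa$, but membership in $H_\kappa$ requires the \emph{transitive closure} of $Y$ to have size $<\kappa$, not just $Y$ itself. The elements of $Y$ are elements of the ambient $\mathbb{B}$, which may be arbitrarily large sets, so in general $Y\notin H_\kappa$ and you cannot feed $(Y,\mathcal{A})$ directly into the maximality principle. The fix is easy: pass to an isomorphic copy $(Y',\mathcal{A}')$ with underlying set an ordinal $<\kappa$, apply $\Sigma_n\mhyphen MP_\Gamma(H_\kappa)$ to that copy, and then transport the resulting filter back through the isomorphism. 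Everything else in your argument (that $G\cap Y$ is a filter on $Y$ because $Y$ is a subalgebra, that $\phi$ is $\Sigma_1$ and absolutely persistent, and that the upward-generated filter in $\mathbb{B}$ is proper) is fine.

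The paper handles exactly this size issue differently: instead of the generated subalgebra, it takes an elementary substructure $X\prec H_\lambda$ of size $<\kappa$ containing $\mathbb{B}$, $\mathcal{A}$, and all elements of all antichains in $\mathcal{A}$, and then Mostowski-collapses to get $\bar{\mathbb{B}}=\pi_X(\mathbb{B})$ and $\bar{A}=\pi_X(\mathcal{A})$. The collapse automatically lands in $H_\kappa$, and elementarity does the work your subalgebra argument does. Your route via the generated subalgebra is arguably more elementary (no need for an ambient $H_\lambda$ or elementarity), but it forces you to make the coding step explicit; the paper's collapse absorbs that step for free.
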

\begin{proof}
    Let $\mathbb{B}\in \Gamma$ be a Boolean algebra and $A$ be a collection of fewer than $\kappa$ maximal antichains of $\mathbb{B}$, each of size less than $\kappa$. For $\lambda>2^{|\mathbb{B}|}$, let $X$ be an elementary substructure of $H_\lambda$ of size less than $\kappa$ containing $\mathbb{B}$, $A$, all antichains in $A$, and all elements of all antichains in $A$.

    Set $\bar{\mathbb{B}}:=\pi_X(\mathbb{B})$ and $\bar{A}:=\pi_X(A)$. If $G\subset\mathbb{P}$ is $V$-generic, then for each antichain $C\in A$, by genericity there is some $p\in C\cap G$. It follows that $\pi_X(p)\in \pi_X(C)\cap \pi_X"G$. Since every element of $\bar{A}$ is of the form $\pi_X(C)$ for some $C\in A$, it is thus $\Gamma$-forceable that there is a filter on $\bar{\mathbb{B}}$ meeting every antichain in $\bar{A}$. This property can easily be seen to be $\Sigma_1$, since it requires only an existential quantifier asserting the existence of the filter $F$ and bounded quantifiers over $F$, $\bar{A}$, and its elements. It is preserved by further forcing because all $\Sigma_1$ formulas remain true when moving to a larger structure in which the original model is transitive, and $\bar{A},\bar{\mathbb{B}}\in H_\kappa$ because they are contained in a transitive structure of size less than $\kappa$. Thus by the maximality principle there is some $F\subset\bar{\mathbb{B}}$ in $V$ meeting every antichain in $\bar{A}$. $\pi_X^{-1}"F$ then generates a filter on $\mathbb{B}$ meeting every antichain in $A$.
\end{proof}

George Leibman (related by Hamkins \cite{hamkinsMP}, Theorem 5.5) proved the above result for the case where $\Gamma$ is ccc forcing and $\kappa$ is the continuum (in which case bounded Martin's Axiom is simply equivalent to full $MA$). Kaethe Minden (\cite{mindensubcomplete}, Lemma 4.1.8 and Proposition 4.1.9) did the same for subcomplete forcing with $\kappa=\omega_2$.

Bagaria's result that bounded forcing axioms are principles of generic $\Sigma_1$-absoluteness (\cite{bagariagenabs}, Theorem 5) can be restated as:

\begin{theorem}
\label{thm:bagariagenabs}
    (Bagaria) If $\Gamma$ is a forcing class and $\kappa$ is a regular uncountable cardinal, $\Sigma_1\mhyphen MP_\Gamma(H_\kappa)$ is equivalent to $BFA_{<\kappa}^{<\kappa}(\Gamma)$.
\end{theorem}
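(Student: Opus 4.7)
The $\Rightarrow$ direction is immediate from Proposition \ref{prop:mpbfa} at $n=1$. For $\Leftarrow$, assume $BFA_{<\kappa}^{<\kappa}(\Gamma)$. Since every $\Sigma_1$ formula is provably upward absolute and so provably $\Gamma$-persistent for any $\Gamma$, $\Sigma_1\mhyphen MP_\Gamma(H_\kappa)$ reduces to the generic $\Sigma_1$-absoluteness statement: whenever $\phi(x)\equiv \exists y\,\psi(x,y)$ is $\Sigma_1$ with $\psi\in\Delta_0$, $a\in H_\kappa$, and $\mathbb{B}\in\Gamma$ satisfies $\Vdash_\mathbb{B}\phi(\check{a})$, $\phi(a)$ already holds in $V$.

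My strategy is a reflect-and-lift argument. Pick $\theta$ sufficiently large and an elementary submodel $X\prec H_\theta$ with $|X|<\kappa$ containing $\mathbb{B}$, $a$, $\mathrm{trcl}(\{a\})$, and a name $\dot{y}$ with $\Vdash_\mathbb{B}\psi(\check{a},\dot{y})$, chosen so that $X\cap H_\kappa$ is transitive. Let $\pi:X\to\bar{X}$ be the Mostowski collapse, which fixes $a$, and set $\bar{\mathbb{B}}:=\pi(\mathbb{B})$, $\bar{\dot{y}}:=\pi(\dot{y})$. The plan is to produce via $BFA_{<\kappa}^{<\kappa}(\mathbb{B})$ a filter $F\subseteq\bar{\mathbb{B}}$ in $V$ which is $\bar{X}$-generic; with such $F$ in hand, $\bar{\dot{y}}^F$ is a well-defined element of $V$, the transitive model $\bar{X}[F]$ satisfies $\psi(a,\bar{\dot{y}}^F)$, and $\Delta_0$-absoluteness lifts this to $V$, witnessing $\phi(a)$.

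For each maximal antichain $\bar{A}\in\bar{X}$ of $\bar{\mathbb{B}}$ (of which there are at most $|\bar{X}|<\kappa$, each of size $<\kappa$ as computed in $\bar{X}$), the lift $A:=\pi^{-1}(\bar{A})\in X$ is by elementarity a maximal antichain of $\mathbb{B}$ in $V$; with suitable closure of $X$ it is also of size $<\kappa$ in $V$. Applying $BFA_{<\kappa}^{<\kappa}(\mathbb{B})$ to this collection yields a filter $G\subseteq\mathbb{B}$ meeting all the lifted antichains. Setting $F:=\pi[G\cap\mathbb{B}\cap X]$ and using that $\mathbb{B}\cap X$ is closed under the Boolean operations of $\mathbb{B}$ (since $\mathbb{B}\in X$) and that $\pi\restriction(\mathbb{B}\cap X)$ is a Boolean isomorphism onto $\bar{\mathbb{B}}$, $F$ is a filter on $\bar{\mathbb{B}}$ meeting every maximal antichain of $\bar{\mathbb{B}}$ that lies in $\bar{X}$, so $F$ is $\bar{X}$-generic as required.

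The main technical obstacle is ensuring that the lifted antichains $A$ really have size $<\kappa$ in $V$: this needs enough closure of $X$ that every maximal antichain of $\mathbb{B}$ which $X$ names as an element is actually a subset of $X$, and may require a nonstandard construction of $X$ (e.g., closing under a choice function selecting elements of the relevant antichains) when $\mathbb{B}$ itself is not in $H_\kappa$. Beyond this point everything reduces to routine elementarity and collapse bookkeeping together with the handling of the nongeneric interpretation $\bar{\dot{y}}^F$ provided by Section \ref{section:interp}.
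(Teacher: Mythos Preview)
Your $\Rightarrow$ direction is fine. The $\Leftarrow$ direction has the right overall shape (reflect to a small model, apply the bounded forcing axiom, lift back), but the step you flag as a ``technical obstacle'' is a genuine gap, not a detail.

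You need the projected filter $F$ to be fully $\bar X$-generic in order to form $\bar X[F]$ and invoke the forcing theorem there. For this you would need $G$ to meet every maximal antichain $A\in X$ of $\mathbb{B}$ at a point of $X$. But $BFA_{<\kappa}^{<\kappa}$ only lets you meet maximal antichains of $\mathbb{B}$ of size $<\kappa$, and there is no reason every $A\in X$ should satisfy $|A|^V<\kappa$: elementarity gives $|A|^X = |A|^{H_\theta} = |A|^V$, which may well be $\ge\kappa$. No ``closure of $X$'' can fix this --- if $|A|\ge\kappa$ then $A\not\subseteq X$ since $|X|<\kappa$, and replacing $A$ by $A\cap X$ yields a non-maximal antichain to which BFA does not apply. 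So from the bounded axiom you can only obtain a filter that is $<\hspace{-2pt}\sigma^{-1}(\kappa)$-\emph{weakly} $\bar X$-generic (meeting the small antichains), not fully $\bar X$-generic, and then $\bar X[F]$ is not available.

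The paper does not prove this theorem directly, but the argument implicit in its Theorem~\ref{thm:bfaeqs1cbfa} shows how to repair this: work with the Boolean-valued quotient $N^{\bar{\mathbb{B}}}/F$ for a merely weakly generic ultrafilter $F$. \L o\'s's theorem (Lemma~\ref{lemma:booleanlos}) gives $N^{\bar{\mathbb{B}}}/F\models\exists y\,\psi(a,y)$; then L\'evy absoluteness (Lemma~\ref{lemma:HS1correct}) locates a witness $c$ in $H_{\bar\kappa}^{N^{\bar{\mathbb{B}}}/F}$, and Lemma~\ref{lemma:rqieqbool} (using that names for elements of $H_{\bar\kappa}$ are small enough that weak genericity suffices) puts $c$ in the well-founded part. $\Delta_0$-absoluteness then transfers $\psi(a,c)$ to $V$. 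The point is that you never need full genericity --- only enough to put the relevant small names in the well-founded part of the quotient.
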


Hamkins phrases maximality principles in terms of forceably necessary formulas (i.e. formulas which can be forced to be true and to remain true in all further forcing extensions), which is subtly different from our formulation in terms of forceable and provably persistent formulas, since a formula can be preserved in all forcing extensions of a particular model without that preservation being provable in ZFC. However, the following observations, essentially due to David Asper\'o in the last part of the proof of Theorem 2.6 in \cite{asperomax}, show that the two formulations are equivalent if, like Hamkins, one is only interested in the version of the maximality principle encompassing formulas of arbitrary complexity. Since we will primarily be considering the $\Sigma_n$-restricted forms, the provably persistent formulation is more convenient.

\begin{definition}
    For $\Gamma$ a forcing class and $\phi$ a formula in the language of set theory, $\square_\Gamma\phi$ is the assertion that $\phi$ holds in all $\Gamma$-extensions of the universe.
\end{definition}
\begin{lemma}
    \label{lemma:neccompl}
    If $\Gamma$ is a $\Sigma_n$-definable forcing class and $\phi$ is a $\Sigma_n$ formula, $\square_\Gamma\phi$ is $\Pi_{n+1}$ and $\square_\Gamma\lnot\phi$ is $\Pi_n$. If $\Gamma$ is provably closed under two-step iterations, both are provably $\Gamma$-persistent.
\end{lemma}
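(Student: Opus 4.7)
The plan is to unfold the definition of $\square_\Gamma$ and combine the known complexity bounds for the forcing relation with the complexity of the defining formula of $\Gamma$. Writing
\[
\square_\Gamma\psi \;\equiv\; \forall\mathbb{Q}\bigl(\mathbb{Q}\notin\Gamma \,\lor\, 1_\mathbb{Q}\Vdash\psi\bigr),
\]
I would invoke the classical fact (as recorded in the formula-complexity appendix the introduction promises) that for $n\geq 1$, if $\psi$ is $\Sigma_n$ (resp.\ $\Pi_n$) then $1_\mathbb{Q}\Vdash\psi$ is also $\Sigma_n$ (resp.\ $\Pi_n$), with $\mathbb{Q}$ allowed as a parameter. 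Since $\Gamma$ is $\Sigma_n$-definable, $\mathbb{Q}\notin\Gamma$ is $\Pi_n$.

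For $\square_\Gamma\phi$, the matrix is a $\Pi_n\lor\Sigma_n$ disjunction, which lies in $\Pi_{n+1}$ after adjoining dummy universal quantifiers to each disjunct. Writing this in prenex form $\forall y_1\exists y_2\cdots\chi$ and absorbing the outer $\forall\mathbb{Q}$ into $\forall y_1$ produces a $\Pi_{n+1}$ formula overall. For $\square_\Gamma\lnot\phi$ the analysis improves by one level: $\lnot\phi$ is $\Pi_n$, so $1_\mathbb{Q}\Vdash\lnot\phi$ is $\Pi_n$, making the matrix $\Pi_n\lor\Pi_n=\Pi_n$; prepending $\forall\mathbb{Q}$ and again absorbing it into the leading universal keeps us at $\Pi_n$.

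For the persistence clause I would argue uniformly for $\psi\in\{\phi,\lnot\phi\}$ inside ZFC. Assume $\square_\Gamma\psi(x)$ holds in $V$ and fix $\mathbb{Q}\in\Gamma$; I need $\Vdash_\mathbb{Q}\square_\Gamma\psi(\check{x})$. Let $G\subseteq\mathbb{Q}$ be $V$-generic, let $\mathbb{R}$ witness the defining formula of $\Gamma$ inside $V[G]$, and let $H\subseteq\mathbb{R}$ be $V[G]$-generic. Choosing a $\mathbb{Q}$-name $\dot{\mathbb{R}}$ for $\mathbb{R}$, provable closure of $\Gamma$ under two-step iterations applied in $V$ yields $\mathbb{Q}*\dot{\mathbb{R}}\in\Gamma$, so $V[G][H]=V[G*H]$ is a $\Gamma$-extension of $V$ and $\psi(x)$ holds there by $\square_\Gamma\psi(x)$. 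The whole argument is formalizable in ZFC, so both $\square_\Gamma\phi$ and $\square_\Gamma\lnot\phi$ are provably $\Gamma$-persistent.

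The only subtlety is appealing to the right shape of the forcing-relation complexity lemma; everything else is quantifier bookkeeping together with a direct application of closure under two-step iterations, so I do not anticipate any real obstacle.
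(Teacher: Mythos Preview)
Your proposal is correct and follows essentially the same approach as the paper: both unfold $\square_\Gamma\psi$ as $\forall\mathbb{Q}(\mathbb{Q}\notin\Gamma\lor\Vdash_\mathbb{Q}\psi)$, analyze the disjuncts' complexities using that the forcing relation preserves $\Sigma_n/\Pi_n$, and derive persistence from closure under two-step iterations. The only cosmetic difference is that you phrase the persistence argument semantically via a generic $G$ and an $\mathbb{R}\in\Gamma^{V[G]}$, whereas the paper works directly with names $\dot{\mathbb{Q}}$; these are equivalent formulations of the same argument.
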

\begin{proof}
    We can express $\square_\Gamma\phi$ as "for all $\mathbb{P}$, $\mathbb{P}\not\in\Gamma$ or $\Vdash_\mathbb{P}\phi$". The first clause of the disjunction is $\Pi_n$ and the second $\Sigma_n$, so with the added universal quantifier we get a $\Pi_{n+1}$ formula. If we used $\lnot\phi$ in place of $\phi$, it would be a universal quantifier added to a disjunction of two $\Pi_n$ formulas, which is $\Pi_n$.

    For provable persistence, work in $ZFC$ and let $\mathbb{P}\in\Gamma$. Then for any $\mathbb{P}$-name $\dot{\mathbb{Q}}$ such that $\Vdash_\mathbb{P} \dot{\mathbb{Q}}\in\Gamma$, $\mathbb{P}*\dot{\mathbb{Q}}\in \Gamma$, so $\square_\Gamma \phi$ implies that  $\Vdash_{\mathbb{P}*\dot{\mathbb{Q}}}\phi$. It follows that $\Vdash_\mathbb{P}\forall\mathbb{Q}\in\Gamma \Vdash_\mathbb{Q}\phi$. We therefore have:
    $$ZFC\vdash \square_\Gamma\phi\rightarrow\forall \mathbb{P}\in\Gamma \Vdash_\mathbb{P}\square_\Gamma \phi$$
\end{proof}

\begin{corollary}
    If $\Gamma$ provably contains the trivial forcing and is closed under two-step iterations, then for any class of parameters $S$, $\Sigma_{n+2}\mhyphen MP_\Gamma(S)$ implies the Hamkins-style maximality principle for $\Sigma_n$ formulas (and in fact $\Pi_{n+1}$ formulas) with parameters in $S$.
\end{corollary}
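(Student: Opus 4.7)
The plan is to reduce the Hamkins-style maximality principle for a formula $\phi$ of complexity $\Sigma_n$ or $\Pi_{n+1}$ to a single instance of the provably persistent $\Sigma_{n+2}$ maximality principle, applied to the formula $\psi(x) := \square_\Gamma \phi(x)$.

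First I would fix a $\Sigma_n$ formula $\phi$ and a parameter $a \in S$, and suppose $\phi(a)$ is forceably necessary in Hamkins's sense, i.e.\ that there is some $\mathbb{P}\in\Gamma$ with $\Vdash_\mathbb{P} \square_\Gamma \phi(\check a)$. The goal is to conclude $\phi(a)$ in $V$. Next I would invoke Lemma \ref{lemma:neccompl} to observe that $\psi$ is $\Pi_{n+1}$, hence also $\Sigma_{n+2}$, and that, because $\Gamma$ is provably closed under two-step iterations, $\psi$ is provably $\Gamma$-persistent. The assumption on $\mathbb{P}$ is precisely $\Vdash_\mathbb{P} \psi(\check a)$, so $\Sigma_{n+2}\mhyphen MP_\Gamma(S)$ applied to $\psi$ yields $\psi(a)$ in $V$, i.e.\ $\square_\Gamma \phi(a)$. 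Since $\Gamma$ provably contains the trivial forcing, whose generic extension is just $V$, this immediately entails $\phi(a)$.

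For the $\Pi_{n+1}$ strengthening I would write $\phi = \lnot \phi'$ with $\phi'$ a $\Sigma_{n+1}$ formula and appeal to the second clause of Lemma \ref{lemma:neccompl} at level $n+1$: that clause tells us $\square_\Gamma \phi = \square_\Gamma \lnot \phi'$ is $\Pi_{n+1}$ rather than $\Pi_{n+2}$, so it still lies in $\Sigma_{n+2}$ and is still provably $\Gamma$-persistent. The remainder of the argument carries over verbatim.

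I do not anticipate any serious obstacle here beyond complexity bookkeeping; the substantive content has already been packaged into Lemma \ref{lemma:neccompl}. The conceptual point worth noting is that the operator $\square_\Gamma$ raises formula complexity by only a single quantifier rather than the two one might naively expect from unwinding the definition of forceability in all $\Gamma$-extensions, and this one-quantifier slack is exactly what the two-level jump from $\phi$ to the $\Sigma_{n+2}$ maximality principle is designed to absorb — once in the $\Sigma_n$ case, and once (via the negation trick) in the $\Pi_{n+1}$ case. Thus the corollary falls out essentially by a direct application of the preceding lemma.
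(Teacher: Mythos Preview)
Your proposal is correct and matches the paper's intended argument; the paper states this corollary without proof, leaving it as an immediate consequence of Lemma~\ref{lemma:neccompl}, and your write-up is exactly the natural unpacking of that implication.
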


Hamkins notes (\cite{hamkinsMP}, Observation 1.3) that if $\Gamma$ is the class of all forcing then the parameters must be contained in $H_{\omega_1}$, because any parameter can be forced to be hereditarily countable by adding a surjection from $\omega$ to its transitive closure, and this persists to all forcing extensions. Similarly (as Hamkins discusses after Corollary 5.4), if $\Gamma$ can collapse arbitrary cardinals to $\omega_1$, we cannot consistently allow parameters outside of $H_{\omega_2}$, and if $\Gamma$ can add arbitrarily many reals but can't collapse the cardinality of the continuum, the maximal parameter set is $H_{2^{\aleph_0}}$, since for any $x$ it is $\Gamma$-forceably $\Gamma$-necessary (by adding sufficient reals) that the cardinality of the transitive closure of $x$ is less than the continuum.

However, as Proposition \ref{prop:mpbfa} shows, forcing axioms can be regarded as maximality principles for the specific $\Sigma_1$ assertion that there exists a filter meeting a desired collection of dense sets. While symmetric bounded forcing axioms assert this only for collections of antichains small enough to be collapsed into the allowable parameter set of the corresponding maximality principle, more general forcing axioms assert it for antichains too large for this to work. (Of course one always can mimic the procedure in the proof of $\ref{prop:mpbfa}$ to obtain a small $\bar{\mathbb{B}}$ and $\bar{A}$ from a large Boolean algebra $\mathbb{B}$ and collection of maximal antichains $A$, but if some of the antichains in $A$ are too large, the corresponding antichains in $\bar{A}$ will necessarily omit some of their conditions; then a generic $G\subset\mathbb{B}$ which meets those antichains at those omitted conditions will project to a filter on $\bar{\mathbb{B}}$ which does not meet those antichains in $\bar{A}$, so $\mathbb{B}$ does not force that there is a filter meeting all antichains in $\bar{A}$, so we cannot apply a maximality principle to obtain a forcing axiom.)

Thus classical forcing axioms can be viewed as a highly specific maximality principle intertwined with a reflection principle. Letting $\mathcal{D}$ be the set of all dense sets of a poset $\mathbb{P}$ in the ground model, it is forceably necessary that there is a filter meeting all elements of $\mathcal{D}$; though we cannot hope to find such a filter in the ground model, we can consistently shrink $\mathcal{D}$ down to a set $\bar{\mathcal{D}}$ smaller than the continuum (for ccc forcing) or $\omega_2$ (for proper, semiproper, or subcomplete forcing) in a highly controlled way and find a ground-model filter meeting all dense sets in $\bar{\mathcal{D}}$.

By adding the ability to realize the truth of a forceably necessary formula with excessively large parameters in the ground model with a reflected parameter of reasonable size, unbounded and asymmetrically bounded forcing axioms gain considerable consistency strength over maximality principles, despite the restriction on allowable formulas. Our central goal may alternatively be stated as loosening this restriction on formulas, and in doing so unifying maximality principles with classical forcing axioms.

\section{Boolean-Valued Models}
\label{section:boolean}

For some proofs, it will be convenient to use Boolean-valued models and their quotients, so we briefly review the basic theory here. For proofs of the lemmas below and more details, see the relevant section of Jech's Chapter 14 \cite{jech} or the first few sections of Hamkins and Seabold \cite{HSultrapower}.

\begin{definition}
    Given a complete Boolean algebra $\mathbb{B}$, a $\mathbb{B}$-valued model in signature $\tau$ consists of a class $M$ (called the class of names) and a map $\phi(a_1,\dotsc a_n)\mapsto \llbracket \phi(a_1,\dotsc, a_n)\rrbracket$ from the first-order formulas in the language generated by $\tau$, with assignments of their free variables to elements of $M$, to $\mathbb{B}$, such that the expected Tarski-style relations between the Boolean values of different formulas hold.
    \end{definition}

    \begin{definition}
        If $\mathbb{B}\in N\models ZFC^-\land ``\mathbb{B}$ is a complete Boolean algebra'', then $N^\mathbb{B}$ is the Boolean-valued model constructed within $N$ whose class of names is inductively defined as the class of functions (in $N$) whose domain is a set of $\mathbb{B}$-names and whose codomain is $\mathbb{B}$, and whose Boolean valuation is recursively given by:
        \begin{align*}
            \llbracket \tau\in\sigma\rrbracket &=\bigvee\limits_{\rho\in dom(\sigma)}\llbracket\tau=\rho\rrbracket\land\sigma(\rho)\\
            \llbracket \tau=\sigma\rrbracket&= \llbracket \tau\subseteq\sigma\rrbracket\land\llbracket\sigma\subseteq \tau\rrbracket\\
            \llbracket\tau\subseteq\sigma\rrbracket&=\bigwedge\limits_{\rho\in dom(\tau)} (\lnot\tau(\rho)\lor \llbracket\rho\in\sigma\rrbracket)
        \end{align*}
    \end{definition}
    \begin{definition}
        A Boolean-valued model is full iff for all formulas $\phi$ in its language with $n+1$ free variables and names $b_1,\dotsc, b_n$, there is a name $a$ such that $\llbracket\exists x\hspace{2pt}\phi(x, b_1,\dotsc, b_n)\rrbracket=\llbracket\phi(a, b_1,\dotsc, b_n)\rrbracket$.
    \end{definition}

    \begin{lemma}
        If $\mathbb{B}\in N\models ZFC^-\land ``\mathbb{B}$ is a complete Boolean algebra'', then $N^\mathbb{B}$ is full.
    \end{lemma}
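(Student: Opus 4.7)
The plan is the standard maximum-principle argument: extract witnesses for an existential along a maximal antichain of $\mathbb{B}$ and then glue them into a single name via the mixing lemma.

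Working inside $N$ throughout, fix a formula $\phi(x,b_1,\dotsc,b_n)$ with parameter names $\vec{b}$, and set $p := \llbracket\exists x\, \phi(x,\vec{b})\rrbracket$, which by the Tarski clauses equals $\bigvee_{a} \llbracket\phi(a,\vec{b})\rrbracket$ as $a$ ranges over $N^\mathbb{B}$. First I would observe that for every nonzero $q \leq p$ there is a name $a$ and a nonzero $r \leq q$ with $r \leq \llbracket\phi(a,\vec{b})\rrbracket$: otherwise $\llbracket\phi(a,\vec{b})\rrbracket \leq \neg q$ for every $a$, which would force $p \leq \neg q$, contradicting $0 \neq q \leq p$. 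Then, using Well-Ordering and Collection (both available in $ZFC^-$) together with a Zorn-style recursion, I would produce a maximal antichain $A \subseteq \mathbb{B}$ together with an assignment $u \mapsto a_u$ such that $u \leq \llbracket\phi(a_u,\vec{b})\rrbracket$ for every $u \in A$. Maximality combined with the preceding observation forces $\bigvee A = p$.

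The core combinatorial step is the mixing lemma: given an antichain $A \subseteq \mathbb{B}$ and a family of names $\{a_u : u \in A\}$ in $N$, there is a name $a \in N^\mathbb{B}$ such that $u \leq \llbracket a = a_u\rrbracket$ for each $u \in A$. I would construct $a$ directly by setting $\mathrm{dom}(a) := \bigcup_{u \in A} \mathrm{dom}(a_u)$ and, for each $\tau$ in this union, $a(\tau) := \bigvee_{u \in A}\bigl(u \wedge a_u(\tau)\bigr)$, where $a_u(\tau)$ is interpreted as $0$ when $\tau \notin \mathrm{dom}(a_u)$. The verification that $u \leq \llbracket a \subseteq a_u\rrbracket \wedge \llbracket a_u \subseteq a\rrbracket$ for each fixed $u \in A$ is a direct calculation from the recursive clauses defining $\llbracket \tau \in \sigma\rrbracket$ and $\llbracket \tau \subseteq \sigma\rrbracket$, using the pairwise incompatibility of $A$ to show that the contributions from $v \neq u$ in the supremum defining $a(\tau)$ lie below $\neg u$ and therefore drop out when meeting with $u$.

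Finally, applying the mixing lemma to the witnesses obtained from the maximal antichain yields a single $a \in N^\mathbb{B}$ with $u \leq \llbracket a = a_u\rrbracket$ for every $u \in A$. By the Leibniz substitution schema, which holds in any Boolean-valued model by induction on formula complexity, $u \leq \llbracket a = a_u\rrbracket \wedge \llbracket\phi(a_u,\vec{b})\rrbracket \leq \llbracket\phi(a,\vec{b})\rrbracket$ for each $u \in A$. Joining over $A$ gives $p = \bigvee A \leq \llbracket\phi(a,\vec{b})\rrbracket$, and the reverse inequality is immediate from the definition of $p$ as a supremum. The main obstacle is the mixing lemma verification itself: the definition of $a$ is natural, but confirming $u \leq \llbracket a = a_u\rrbracket$ requires careful bookkeeping through the recursive Boolean-value clauses, relying at each step on the antichain condition to kill the competing branches of the suprema.
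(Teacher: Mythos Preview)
Your argument is correct and is precisely the standard maximum-principle/mixing-lemma proof. Note, however, that the paper does not actually prove this lemma: it is stated as background in Section~\ref{section:boolean} with a pointer to Jech's Chapter~14 and to Hamkins--Seabold for details. Your write-up is exactly the argument one finds in those references, so there is nothing to compare against within the paper itself.
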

    \begin{definition}
        If $\mathbb{B}$ is a complete Boolean algebra, $M$ is a $\mathbb{B}$-valued model in the language of set theory, and $U\subset\mathbb{B}$ is an ultrafilter, then for $\sigma$ and $\tau$ names in $M$, $\sigma=_U\tau$ means that $\llbracket\sigma=\tau\rrbracket\in U$ and $\sigma\in_U\tau$ means that $\llbracket\sigma\in\tau\rrbracket\in U$.  $M/U$ is the ordinary first-order model of set theory whose elements are $=_U$-equivalence classes (using Scott's trick if $M$ is a proper class) and whose element relation is given by lifting $\in_U$ to equivalence classes.
    \end{definition}
    \begin{lemma}
        \label{lemma:booleanlos}
        If $\mathbb{B}\in N\models ZFC^-\land ``\mathbb{B}$ is a complete Boolean algebra'', and $U\subset\mathbb{B}$ is an ultrafilter, then
        \begin{enumerate}[(a)]
            \item $N^\mathbb{B}/U\models ZFC^-$
            \item $N$ embeds into $N^\mathbb{B}/U$ via the map $x\mapsto[\check{x}]_U$, where $dom(\check{x})=\{\check{y}\sbp y\in x\}$ and $\check{x}(\check{y})=1_\mathbb{B}$ for all $y\in x$.
            \item For all formulas $\phi$ in the language of set theory and $\sigma\in N^\mathbb{B}$, $N^\mathbb{B}/U\models\phi([\sigma]_U)$ iff $\llbracket\phi(\sigma)\rrbracket\in U$.
        \end{enumerate}
    \end{lemma}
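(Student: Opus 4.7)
The plan is to establish (c) first by induction on formula complexity, then derive (b) as an easy corollary, and finally use (c) together with standard Boolean-value computations of the axioms to get (a).

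For (c), the atomic cases are immediate from the definitions of $=_U$ and $\in_U$. The Boolean connectives use that $U$ is an ultrafilter: for negation, $\llbracket\lnot\phi(\sigma)\rrbracket = \lnot\llbracket\phi(\sigma)\rrbracket$ lies in $U$ iff $\llbracket\phi(\sigma)\rrbracket\notin U$; for conjunction, use that $U$ is closed under meets and upward closed. The only delicate case is the existential quantifier, where fullness of $N^\mathbb{B}$ (the preceding lemma) is essential: if $\llbracket\exists x\,\phi(x,\sigma)\rrbracket\in U$, fullness supplies a witness $a\in N^\mathbb{B}$ with $\llbracket\phi(a,\sigma)\rrbracket = \llbracket\exists x\,\phi(x,\sigma)\rrbracket\in U$, and the induction hypothesis yields $N^\mathbb{B}/U\models\phi([a]_U,[\sigma]_U)$. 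Conversely, if $[a]_U$ witnesses $\exists x\,\phi(x,[\sigma]_U)$ in the quotient, then $\llbracket\phi(a,\sigma)\rrbracket\leq\llbracket\exists x\,\phi(x,\sigma)\rrbracket$ and upward closure of $U$ finishes the step.

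For (b), a routine $\in$-induction inside $N$ shows that $\llbracket\check{x}=\check{y}\rrbracket$ is $1_\mathbb{B}$ when $x=y$ and $0_\mathbb{B}$ otherwise, and likewise $\llbracket\check{x}\in\check{y}\rrbracket$ is $1_\mathbb{B}$ when $x\in y$ and $0_\mathbb{B}$ otherwise. Since $1_\mathbb{B}\in U$ and $0_\mathbb{B}\notin U$, part (c) then shows that $x\mapsto [\check{x}]_U$ both preserves and reflects $=$ and $\in$, giving an embedding (in fact an elementary one with respect to formulas of the form $\phi(\check{x}_1,\dots,\check{x}_n)$, by the same argument).

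For (a), by (c) it suffices to check that every axiom of $ZFC^-$ has Boolean value $1_\mathbb{B}$ in $N^\mathbb{B}$ as computed inside $N$. These are the standard forcing-style verifications carried out axiom by axiom: Extensionality, Pairing, Union, Infinity, and Foundation follow from direct name-constructions; Separation and Collection use that the $\mathbb{B}$-names form a set-like hierarchy definable in $N$ and that $N\models ZFC^-$ has enough Collection to assemble witness names into a single name; Well-Ordering transfers via $\check{}$ from a well-ordering in $N$ of a rank-bounded set of names. None of these require Power Set, so the argument goes through under the weak base theory. The main obstacle throughout is really the existential step in (c), which is the single place the full strength of fullness (and hence the ambient ZFC$^-$ in $N$ used to prove it) is essential; the remainder is bookkeeping.
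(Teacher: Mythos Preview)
The paper does not actually supply a proof of this lemma; it is stated as background and the reader is referred to Jech's Chapter 14 and to Hamkins--Seabold for details. Your argument is the standard one found in those references: prove (c) by induction on formula complexity using fullness for the existential step, deduce (b) from the behavior of check names, and obtain (a) by verifying that each axiom of $ZFC^-$ has Boolean value $1_\mathbb{B}$. There is nothing to compare against in the paper itself, and your outline is correct.
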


    Two things are important to note here. First, there is no need for $U$ to be $N$-generic, and it could even be in $N$. For non-generic $U$, $[\dot{G}]_U$ (where $\dot{G}(\check{b})=b$ for all $b\in^N\mathbb{B}$) will still play the role of a generic filter in $N^\mathbb{B}/U$, since this holds with Boolean value one. However, rather than being a forcing extension of $N$ itself, $N^\mathbb{B}/U$ will be a forcing extension of some elementary extension $\bar{N}$ of $N$, consisting of equivalences classes of names which are forced to be equal to some check name. The elements of $\bar{N}-N$ arise from those names such that $U$ misses the maximal antichain that decides exactly which element of $N$ they are equal to.

    Second, there is no need for $N$ to be wellfounded. Whereas the ordinary construction of a forcing extension $N[G]$ as the interpretation $\sigma^G$ of each $\sigma\in N$ relies on recursion on the $\in$-relation of $N$, which runs into difficulties when $N$ is illfounded, the quotient construction has no such limitations. (Of course, we still need to perform recursion on $\in^N$ to define Boolean values of formulas, but this can be done inside of $N$, where it is justified because $N$ satisfies the foundation axiom, whereas interpreting names by a generic filter requires the recursion to be performed externally.)

\section{Interpretations of Names}
\label{section:interp}

There are at least four natural ways to interpret or evaluate names\footnote{When $\mathbb{P}$ is an arbitrary poset rather than a complete Boolean algebra, a $\mathbb{P}$-name is defined, following Kunen (\cite{kunen}), as merely a set of ordered pairs with the first coordinate a $\mathbb{P}$-name and the second an element of $\mathbb{P}$, not necessarily a function.} with filters, some of which coincide when the filter is generic but can diverge significantly in general. First, our official definition will be:

\begin{definition}
    If $\mathbb{P}$ is a forcing poset, $\dot{a}$ is a $\mathbb{P}$-name, and $F\subseteq\mathbb{P}$ is a filter, the standard interpretation of $\dot{a}$ by $F$ is given recursively by 
    $$\dot{a}^F=\{\tau^F\hspace{2pt}|\hspace{2pt} \exists p\in F\hspace{4pt} \langle \tau, p\rangle\in \dot{a}\} $$
\end{definition}

We also have the following concept, frequently used in the statement of $FA^+$:

\begin{definition}
    If $\mathbb{P}$ is a forcing poset, $\dot{a}$ is a $\mathbb{P}$-name, and $F\subseteq\mathbb{P}$ is a filter, the quasi-interpretation of $\dot{a}$ by $F$ is given by
    $$\dot{a}^{(F)}=\{x\in V\hspace{2pt}|\hspace{2pt} \exists p\in F\hspace{4pt} p\Vdash \check{x}\in\dot{a}\}$$
\end{definition}

The quasi-interpretation terminology comes from Schlicht and Turner \cite{stNP}. Quasi-interpretations are a natural way to attempt to evaluate names in the ground model, but they are badly behaved except on names for subsets of $V$, so it is natural to modify the definition to the following recursive construction:

\begin{definition}
    If $\mathbb{P}$ is a forcing poset, $\dot{a}$ is a $\mathbb{P}$-name, and $F\subseteq\mathbb{P}$ is a filter, the recursive quasi-interpretation of $\dot{a}$ by $F$ is given by
    $$\dot{a}^{((F))}=\{\tau^{((F))}\hspace{2pt}|\hspace{2pt}\tau\in dom(\dot{a})\land\exists p\in F\hspace{4pt} p\Vdash \tau\in\dot{a}\}$$
\end{definition}

Finally, we can interpret names within the quotients of Boolean algebras discussed in the previous section:

\begin{definition}
    If $\mathbb{B}$ is a complete Boolean algebra, $U\subset \mathbb{B}$ is an ultrafilter, and $\dot{a}$ is a $\mathbb{B}$-name whose equivalence class is in the well-founded part of $V^\mathbb{B}/U$, its Boolean interpretation by $U$ is the value of its equivalence class $[\dot{a}]_U$ under the transitive collapse isomorphism on $wfp(V^\mathbb{B}/U)$.
\end{definition}

Sometimes a name of interest $\dot{a}$ is not in the well-founded part of $V^\mathbb{B}/U$, but there is some small structure $N$ containing $\mathbb{B}$ and $\dot{a}$ such the equivalence class of $\dot{a}$ is in the well-founded part of $N^\mathbb{B}/U$; in such cases it is useful to consider the relativized Boolean interpretation $[\dot{a}]_U^N$.

As an example to illustrate the distinctions between these interpretations, consider $\dot{a}=\{\langle \dot{G}, 1\rangle\}$, where $\dot{G}:=\{\langle\check{p},p\rangle\sbp p\in\mathbb{P}\}$ is the canonical name for the generic filter. For any filter $F\subseteq\mathbb{P}$, $\dot{a}^F=\dot{a}^{((F))}=\{F\}$, while if $\mathbb{P}$ is atomless $1\Vdash \dot{G}\neq \check{x}$ for all $x\in V$, so $\dot{a}^{(F)}=\emptyset$. Finally, if $\mathbb{B}$ is the Boolean completion of $\mathbb{P}$ and $U$ is an ultrafilter on $\mathbb{B}$, then $[\dot{a}]_U=\{[\dot{G}]_U\}$, where the singleton is in the sense of $V^\mathbb{B}/U$ and may not correspond to an actual singleton if $[\dot{G}]_U$ is in the ill-founded part of $V^\mathbb{B}/U$ (in which case the Boolean interpretation of $\dot{a}$ will be undefined). $[\dot{G}]_U$ in turn consists of all the elements of $U$ (or more properly the equivalence classes of their check names) as well as potentially the equivalence classes of names which are generically equal to some element of $U$, but $U$ misses the dense set which determines exactly which of its elements they are equal to.

To see how the standard interpretation and recursive quasi-interpretation can differ for non-generic $F$, let $A=\{p_\alpha \sbp \alpha<\lambda\}$ be a maximal antichain of $\mathbb{P}$ such that $F\cap A=\emptyset$. Then if $\dot{a}=\{\langle \check{\emptyset}, p_\alpha\rangle\sbp \alpha<\lambda\}$, $\dot{a}^F=\emptyset$ but $\dot{a}^{((F))}=\{\emptyset\}$.

We now examine how partial genericity is sufficient to ensure that different interpretations coincide.

\begin{lemma}
    If $N$ is a transitive model of $ZFC^-$ and "$\mathbb{B}$ is a complete Boolean algebra", $\dot{a}\in N^\mathbb{B}$, and $U\subseteq\mathbb{B}$ is an ultrafilter which meets all antichains of $\mathbb{B}$ in $N$ whose $N$-cardinality is at most $|trcl(\{\dot{a}\})|^N$, then $[\dot{a}]_U^N$ is contained in the well-founded part of $N^\mathbb{B}/U$ and its value under the transitive collapse isomorphism is $\dot{a}^{((U))}$.
    \label{lemma:rqieqbool}
\end{lemma}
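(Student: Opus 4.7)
The plan is to proceed by induction on the $\in^N$-rank of names $\dot{b}\in trcl(\{\dot{a}\})^N\cup\{\dot{a}\}$, showing at each stage that $[\dot{b}]_U^N$ lies in the well-founded part of $N^{\mathbb{B}}/U$ and that its image under the transitive collapse is exactly $\dot{b}^{((U))}$. Since $dom(\dot{b})\subseteq trcl(\{\dot{a}\})^N$ for any such $\dot{b}$, the inductive step reduces to the following claim: the $\in_U$-predecessors of $[\dot{b}]_U^N$, viewed as a set of $=_U$-equivalence classes, are precisely $\{[\tau]_U : \tau\in dom(\dot{b}),\,\llbracket\tau\in\dot{b}\rrbracket\in U\}$. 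Granting this, the inductive hypothesis applied to each such $\tau$ (of strictly lower rank) yields both well-foundedness of $[\dot{b}]_U^N$ and the identification of its collapse with $\{\tau^{((U))} : \tau\in dom(\dot{b}),\,\llbracket\tau\in\dot{b}\rrbracket\in U\} = \dot{b}^{((U))}$.

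The crux of the argument is thus to show that for every $\sigma\in N^{\mathbb{B}}$ with $\llbracket\sigma\in\dot{b}\rrbracket\in U$ there exists $\tau\in dom(\dot{b})$ with both $\llbracket\sigma=\tau\rrbracket\in U$ and $\dot{b}(\tau)\in U$. Working inside $N$, we start from the defining identity
$$\llbracket\sigma\in\dot{b}\rrbracket=\bigvee_{\tau\in dom(\dot{b})}\bigl(\llbracket\sigma=\tau\rrbracket\land\dot{b}(\tau)\bigr),$$
fix an enumeration $\{\tau_\alpha : \alpha<\lambda\}$ of $dom(\dot{b})$ in $N$, and form the disjointification $a_\alpha:=(\llbracket\sigma=\tau_\alpha\rrbracket\land\dot{b}(\tau_\alpha))\land\bigwedge_{\beta<\alpha}\lnot(\llbracket\sigma=\tau_\beta\rrbracket\land\dot{b}(\tau_\beta))$. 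Appending $\lnot\llbracket\sigma\in\dot{b}\rrbracket$ to $\{a_\alpha:\alpha<\lambda\}$ yields a maximal antichain of $\mathbb{B}$ lying in $N$ of $N$-cardinality at most $|dom(\dot{b})|^N\leq|trcl(\{\dot{a}\})|^N$ (the finite case is handled directly, since then $\dot{a}$ is hereditarily finite and $N^{\mathbb{B}}/U$ is trivially well-founded at such names, with both interpretations agreeing). By the genericity hypothesis $U$ meets this antichain, and since $\llbracket\sigma\in\dot{b}\rrbracket\in U$ rules out the appended complement, some $a_\alpha$ must lie in $U$, forcing both $\llbracket\sigma=\tau_\alpha\rrbracket$ and $\dot{b}(\tau_\alpha)$ into $U$; the latter in turn implies $\llbracket\tau_\alpha\in\dot{b}\rrbracket\in U$.

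The main obstacle is simply confirming that the disjointification antichain lives in $N$ at the required cardinality bound, which is precisely what motivates the hypothesis on $U$. Once that combinatorial observation is in place, the induction closes cleanly without appealing to the fullness of $N^{\mathbb{B}}$ or to any witnesses outside $dom(\dot{b})$, and the identification of the collapse with $\dot{b}^{((U))}$ follows by unraveling the definitions.
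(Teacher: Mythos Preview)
Your proof is correct and follows essentially the same strategy as the paper: both hinge on the observation that, given $\llbracket\sigma\in\dot{b}\rrbracket\in U$, the predense set $\{\lnot\llbracket\sigma\in\dot{b}\rrbracket\}\cup\{\llbracket\sigma=\tau\rrbracket\land\dot{b}(\tau):\tau\in dom(\dot{b})\}$ (or your disjointified antichain version of it) forces $\sigma=_U\tau$ for some $\tau\in dom(\dot{b})$. The only organizational difference is that the paper separates the argument into two passes---first a proof by contradiction for well-foundedness (building a descending chain and invoking the antichain argument at the point where it leaves $trcl(\{\dot{a}\})$), then an induction for the collapse identity---whereas you fold both conclusions into a single $\in$-induction; your explicit disjointification is also slightly more careful in matching the ``antichain'' wording of the hypothesis than the paper's direct use of a predense set.
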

\begin{proof}
    If $[\dot{a}]_U$ is not in the well-founded part of $N^{\mathbb{B}}/U$, then there is a sequence of names $\langle \sigma_n\sbp n\in \omega\rangle$ such that $\sigma_0=\dot{a}$, $[\sigma_{n+1}]_U^N\in_U [\sigma_n]_U^N$, and $\sigma_n\in N$ for all $n$. As far as possible, we choose each name in the sequence to be an actual element of the domain of the previous one, but by the well-foundedness of $\in$, this can't continue forever, so there is some $n$ such that $\sigma_n\in trcl(\{\dot{a}\})$ but $\sigma_{n+1}$ is not in the equivalence class of any name in $dom(\sigma_n)$. Therefore because $dom(\sigma_n)\subset trcl(\dot{a})$, $\{\lnot\llbracket \sigma_{n+1}\in \sigma_n\rrbracket\}\cup\{\llbracket \sigma_{n+1}=\tau\rrbracket\sbp \tau\in dom(\sigma_n)\}$ is a predense set definable in $N$ of $N$-cardinality at most $|trcl(\{\dot{a}\})|^N$. Thus $U$ must meet it, contradicting the choice of $\sigma_{n+1}$.

    Now we let $\pi$ be the Mostowski collapse isomorphism on the well-founded part of $N^\mathbb{B}/U$ and prove by induction that $\pi([\dot{a}]_U^N)=\dot{a}^{((U))}$. Assume that $\sigma\in trcl(\{\dot{a}\})$ and for all $\tau\in dom(\sigma)$, $\pi([\tau]_U)=\tau^{((U))}$. Since $\sigma^{((U))}=\{\tau^{((U))}\sbp \tau\in dom(\dot{a})\land \llbracket \tau\in \dot{a}\rrbracket\in U\}=\{\pi([\tau]_U)\sbp \tau\in dom(\dot{a})\land \llbracket \tau\in \dot{a}\rrbracket\in U\}$, certainly $\sigma^{((U))}\subseteq \pi([\sigma]_U)$, and to show the reverse inclusion it is sufficient to show that if $\tau'\in_U \sigma$ for any $\mathbb{B}$-name $\tau'\in N$, then there is a $\tau\in dom(\sigma)\cap [\tau']_U$. However, since by definition $\llbracket \tau'\in \sigma\rrbracket=\bigvee\limits_{\tau\in dom(\sigma)}(\llbracket \tau'=\tau\rrbracket\land \sigma(\tau))$, by essentially the same argument as in the previous paragraph $U$ must include one of the $\llbracket \tau'=\tau\rrbracket$. 
\end{proof}

To prove that the standard and recursive quasi-interpretations are equivalent given enough genericity, the following elementary lemma is helpful:

\begin{lemma}
If $\mathbb{B}$ is a Boolean algebra, $p\in\mathbb{B}$, and $A\subset\mathbb{B}$ is an antichain maximal below $p$, then for any $q\in\mathbb{B}$, $A\land q:=\{a\land q\hspace{2pt}|\hspace{2pt} a\in A,\hspace{4pt} a\land q>0\}$ is an antichain maximal below $p\land q$ (where we consider the empty set to be the unique antichain maximal below 0).
\label{lemma:macConj}
\end{lemma}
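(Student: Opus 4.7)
My plan is to verify the three things needed to conclude that $A\land q$ is an antichain maximal below $p\land q$: (i) pairwise incompatibility of distinct elements, (ii) that every element of $A\land q$ lies below $p\land q$, and (iii) maximality.

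For (i), take two distinct elements $a_1\land q$ and $a_2\land q$ of $A\land q$, where necessarily $a_1\ne a_2$. Because $A$ is an antichain, $a_1\land a_2 = 0$, and hence $(a_1\land q)\land(a_2\land q) = (a_1\land a_2)\land q = 0$. For (ii), each $a\in A$ satisfies $a\leq p$, so $a\land q \leq p\land q$ by monotonicity of meet.

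The only non-formal step is (iii). Suppose $0 < r \leq p\land q$; I want to find some $a\land q \in A\land q$ compatible with $r$. Since $r\leq p$ and $r>0$, maximality of $A$ below $p$ yields some $a\in A$ with $a\land r > 0$. Because $r\leq q$, we have $a\land r = a\land q\land r$, which shows both that $a\land q > 0$ (so $a\land q \in A\land q$) and that $a\land q$ is compatible with $r$. This is the only place where the hypothesis that $A$ is maximal below $p$ is used, and it is essentially a one-line computation once we remember that $r \leq q$ forces $r = r\land q$.

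The boundary case $p\land q = 0$ is handled automatically: then $a\land q \leq p\land q = 0$ for every $a\in A$, so $A\land q=\emptyset$, which is the unique antichain maximal below $0$ under the convention stated. I do not anticipate any real obstacle; the lemma is essentially a bookkeeping computation, and the main thing to be careful about is not conflating the two possible meanings of "maximal" (largest antichain below $p$ vs. predense below $p$), both of which are equivalent here and handled by the argument above.
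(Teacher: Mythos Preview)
Your proof is correct and follows essentially the same approach as the paper's: both verify the antichain property by reducing to incompatibility in $A$, and both establish maximality by taking $0<r\leq p\land q$, using maximality of $A$ below $p$ to find $a\in A$ with $a\land r>0$, and then observing $r\leq q$ forces $a\land r = a\land q\land r$. Your treatment is slightly more explicit (you separately note that elements of $A\land q$ lie below $p\land q$ and address the $p\land q=0$ case), but there is no substantive difference.
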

\begin{proof}
$A\land q$ is an antichain because for all $a, b\in A$ such that $a\land q, b\land q\in A\land q$, any common lower bound of $a\land q$ and $b\land q$ is a common lower bound of $a$ and $b$, so because $A$ is an antichain it must be 0. $A\land q$ is maximal below $p\land q$ because for any nonzero $r\leq p\land q$, since $r\leq p$ there is some $a\in A$ such that $r\land a>0$ by the maximality of $A$; then since $r\land a\leq r\leq p\land q\leq q$, $r\land a\land q=r\land a>0$, so $r$ is compatible with $a\land q\in A\land q$.
\end{proof}

\begin{lemma}
    If $\mathbb{B}$ is a complete Boolean algebra, $\kappa$ is a regular cardinal, and $\dot{a}\in H_\kappa$ is a $\mathbb{B}$-name, then there is a set of fewer than $\kappa$ predense subsets of $\mathbb{B}$, each of size less than $\kappa$, such that if $F\subseteq \mathbb{B}$ is a filter meeting all of them, $\dot{a}^F=\dot{a}^{((F))}$. Furthermore, this set of predense sets is an element of any transitive $ZFC^-$ model containing $\dot{a}$ and $\mathbb{B}$ and is definable over any such model using $\dot{a}$ and $\mathbb{B}$ as parameters.
    \label{lemma:standeqrqi}
\end{lemma}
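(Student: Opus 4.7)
The plan is to build the collection $\mathcal{D}$ by recursion on the hereditary structure of $\dot{a}$. Let $T:=\mathrm{trcl}(\{\dot a\})$; since $\dot a\in H_\kappa$ and $\kappa$ is regular uncountable, $|T|<\kappa$. For each pair of $\mathbb{B}$-names $\tau,\sigma\in T$, include in $\mathcal D$ the set
\[A_{\tau,\sigma}=\bigl(\{\llbracket\tau=\sigma'\rrbracket\land q:\langle\sigma',q\rangle\in\sigma\}\cup\{\lnot\llbracket\tau\in\sigma\rrbracket\}\bigr)\setminus\{0\},\]
which is predense because its join is $\llbracket\tau\in\sigma\rrbracket\lor\lnot\llbracket\tau\in\sigma\rrbracket=1$, together with the two-element set $E_{\tau,\sigma}=\{\llbracket\tau=\sigma\rrbracket,\lnot\llbracket\tau=\sigma\rrbracket\}\setminus\{0\}$. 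Each member of $\mathcal D$ has size at most $|\sigma|+1<\kappa$, and by regularity $|\mathcal D|\leq|T|^2<\kappa$. Since the Boolean values and $T$ are absolutely defined from $\dot a$ and $\mathbb{B}$ inside any transitive $ZFC^-$ model containing them, $\mathcal D$ lies in and is uniformly definable over every such model.

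For correctness, I would prove by induction on $\alpha:=\max(\mathrm{rank}(\tau),\mathrm{rank}(\sigma))$ the joint statement that whenever $F$ meets every $A_{\tau',\sigma'}$ and $E_{\tau',\sigma'}$ indexed by pairs in $T\times T$ of max rank at most $\alpha$, we have (i) $\sigma^F=\sigma^{((F))}$ (and the same for $\tau$), and (ii) if $\llbracket\tau=\sigma\rrbracket\in F$ then $\tau^F=\sigma^F$. For (i), the inclusion $\sigma^F\subseteq\sigma^{((F))}$ is immediate: a witness $\langle\rho,p\rangle\in\sigma$ with $p\in F$ gives $p\Vdash\rho\in\sigma$, and by induction $\rho^F=\rho^{((F))}$. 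For the reverse, $\rho^{((F))}\in\sigma^{((F))}$ forces $\llbracket\rho\in\sigma\rrbracket\in F$, hence $\lnot\llbracket\rho\in\sigma\rrbracket\notin F$, so $F\cap A_{\rho,\sigma}$ contains some $\llbracket\rho=\sigma'\rrbracket\land q$ with $\langle\sigma',q\rangle\in\sigma$; the equality clause (ii) of the inductive hypothesis at the pair $(\rho,\sigma')$, whose max rank is strictly less than $\alpha$, then yields $\rho^F=\sigma'^F\in\sigma^F$. For (ii), given any $\langle\rho,r\rangle\in\tau$ with $r\in F$, the inequality $\llbracket\tau=\sigma\rrbracket\land r\leq\llbracket\rho\in\sigma\rrbracket$ gives $\llbracket\rho\in\sigma\rrbracket\in F$, and the same argument via $A_{\rho,\sigma}$ together with the inductive equality clause produces $\sigma'\in\mathrm{dom}(\sigma)$ with $\rho^F=\sigma'^F\in\sigma^F$; the symmetric argument yields the reverse inclusion. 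Applying (i) at $\sigma=\dot a$ delivers the lemma.

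The main obstacle is the interplay between the two clauses of the inductive statement: clause (i) at max rank $\alpha$ uses the equality clause (ii) at strictly smaller max ranks, while clause (ii) at $\alpha$ appeals to $A_{\rho,\sigma}$ and to (ii) at subpairs. Packaging them as a joint invariant and inducting on $\max(\mathrm{rank}(\tau),\mathrm{rank}(\sigma))$, rather than on a single rank, ensures that each step appeals only to strictly simpler cases and that the recursion closes. Transitivity of $T$ guarantees that every name comparison arising in the argument falls within the scope of some predense set in $\mathcal D$, so no external genericity is needed.
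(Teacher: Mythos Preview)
Your proof is correct and follows essentially the same approach as the paper: both hinge on the predense sets $A_{\tau,\sigma}$ (the paper calls them $B_\tau$) together with an induction over the hereditary structure of $\dot a$, and your explicit equality clause (ii) is exactly what the paper establishes inline when it argues that $\tau^F=\rho^F$ from $\llbracket\tau=\rho\rrbracket\in F$. The organizational differences---indexing predense sets uniformly by all of $T\times T$ rather than building them recursively, and inducting on $\max(\mathrm{rank}(\tau),\mathrm{rank}(\sigma))$ rather than by $\in$---are cosmetic; note also that the sets $E_{\tau,\sigma}$ are never used in your argument and may be omitted.
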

\begin{proof}
    We proceed by $\in$-induction on $\dot{a}$. Assume that $\sigma\in trcl(\{\dot{a}\})$ is such that the lemma holds for all $\tau\in dom(\sigma)$. Then for each (of the fewer than $\kappa$) $\tau\in dom(\sigma)$, we have a cardinal $\lambda_\tau<\kappa$ and a collection of predense sets $\{A_\alpha^\tau\sbp \alpha<\lambda_\tau\}$, each smaller than $\kappa$, such that if $F$ is a filter meeting all of the $A_\alpha^\tau$, then $\tau^F=\tau^{((F))}$.

    Using the notation of Lemma \ref{lemma:macConj}, for each such $\tau$ and each $\alpha<\lambda_\tau$, we can form the predense set $A_\alpha^\tau\land \sigma(\tau)\cup\{\lnot\sigma(\tau)\}$. If $F$ is any filter meeting all of these predense sets, then every element of $\sigma^F$ must have a name $\tau\in dom(\sigma)$ such that $\sigma(\tau)\in F$, so $F$ must meet all the $A_\alpha^\tau$ for this $\tau$ and thus $\tau^F=\tau^{((F))}$. Since $\llbracket \tau\in\sigma\rrbracket\geq \sigma(\tau)$, $\tau^{((F))}\in\sigma^{((F))}$, so $\sigma^F\subseteq \sigma^{((F))}$.
    
    For the reverse inclusion, we require that for each $\tau\in dom(\sigma)$ $F$ also meet the predense set $B_\tau:=\{\llbracket\tau=\rho\rrbracket\land\sigma(\rho)\sbp \rho\in dom(\sigma)\}\cup \{\lnot\llbracket \tau\in\sigma\rrbracket\}$. Then whenever $\llbracket\tau\in\sigma\rrbracket\in F$, putting $\tau^{((F))}$ in $\sigma^{((F))}$, there is some $\rho\in dom(\sigma)$ such that $\rho^F\in \sigma^F$ and $\llbracket \tau=\rho\rrbracket\in F$. Assuming $F$ meets all the $A_\alpha^\tau$ and $A_\alpha^\rho$, $\tau^F=\tau^{((F))}$ and $\rho^F=\rho^{((F))}$. Each $x\in \tau^F$ has a name $\dot{x}\in dom(\tau)$ such that $\tau(\dot{x})\in F$, which by the definition of $\llbracket \tau=\rho\rrbracket$ implies that $\llbracket \dot{x}\in\rho\rrbracket\in F$, so $\tau^F\subseteq \rho^{((F))}$. The same argument shows that $\rho^F\subseteq\tau^{((F))}$, so both interpretations of each name are equal to both interpretations of the other. Therefore $\tau^{((F))}\in \sigma^F$, so $\sigma^{((F))}\subseteq \sigma^F$ and the two interpretations of $\sigma$ by $F$ are the same, provided $F$ meets $A^\tau_\alpha$ and $B_\tau$ for all $\tau\in dom(\sigma)$ and $\alpha<\lambda_\tau$.

    To see that the set of relevant predense sets is definable in $ZFC^-$ with $\dot{a}$ and $\mathbb{B}$ as parameters, observe that each must initially arise as a $B_\tau$ for some name $\tau\in trcl(\dot{a})$ and then is modified only in very simple ways later on in the induction. Since the definition of $B_\tau$ only involves Boolean values of atomic formulas, which are $\Delta_1$-definable and therefore absolute to arbitrary transitive structures, each predense set we require $F$ to meet and the set of all of them can be formed from subsets of $trcl(\dot{a})$ by applications of the collection axioms. This resulting set is then definable as the set of everything constructed by the end of the induction described above. 
\end{proof}

Going forward, we will usually make assumptions which allow us to satisfy the hypotheses of the preceding lemmas at no additional cost, so we will make no further use of the quasi-interpretation and use the other three interpretations interchangeably as is convenient.

\section{Set-Theoretic Geology}
\label{section:geology}

Set-theoretic geology, launched by Fuchs, Hamkins, and Reitz in \cite{FHRgeology}, is the study of the grounds of the universe, that is the inner models $W$ such that $V=W[G]$ for some $W$-generic filter $G\in V$ on a (set) forcing notion in $W$. Since it will prove useful in establishing certain separation results, we review the basics here. First, some preliminary facts and definitions:

\begin{definition}
    (Hamkins \cite{Hamkinsapproxcover}) For $M\subseteq N$ transitive classes satisfying some sufficient fragment of $ZFC$ and $\delta$ a cardinal of both, $M$ has the $\delta$-cover property in $N$ (or the extension $M\subseteq N$ has the $\delta$-cover property) iff whenever $A\in N$, $A\subset M$, and $|A|^N<\delta$, there is a $B\in M$ such that $A\subseteq B$ and $|B|^M<\delta$.
\end{definition}

\begin{definition}
    (Hamkins \cite{Hamkinsapproxcover}) For $M$, $N$, and $\delta$ as above, $M$ has the $\delta$-approximation property in $N$ iff whenever $A\in N$, $A\subset M$, and for all $b\in M$ with $|b|^M<\delta$ we have $A\cap b\in M$, then $A\in M$.
\end{definition}

If $\mathbb{P}$ is a forcing poset in $W$, $|\mathbb{P}|^W<\delta$, and $G\subseteq\mathbb{P}$ is $W$-generic, then $W$ has the $\delta$-cover and $\delta$-approximation properties in $W[G]$. The same holds for some posets larger than $\delta$; see \cite{Hamkinsapproxcover} for more details.

\begin{definition}
\label{def:ZFCdelta}
    (Reitz \cite{ReitzGA}) $ZFC_\delta$ is the theory in the language of set theory with an added constant symbol for $\delta$ consisting of the axioms of Zermelo set theory with foundation and choice, the assertion that $\delta$ is a regular cardinal, the assertion that every set is coded by a set of ordinals, and the restricted replacement scheme asserting that the image of $\delta$ under any first-order-definable mapping is a set.

    If a regular cardinal $\delta$ has already been chosen, to say that $M\models ZFC_\delta$ for some transitive $M$ with $\delta\in M$ is to say that this holds with the constant symbol interpreted as that particular $\delta$.
\end{definition}

\begin{lemma}
    (Reitz) If $\delta$ is a regular cardinal, $\beth_\theta=\theta$ and $cf(\theta)>\delta$, then $V_\theta\models ZFC_\delta$.
\end{lemma}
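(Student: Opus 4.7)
The plan is to verify each axiom of $ZFC_\delta$ in $V_\theta$ in turn, with the two key observations being that $\beth_\theta=\theta$ forces $\theta$ to be a strong limit (needed for Power Set) and that $cf(\theta)>\delta$ forces the image of $\delta$ under any definable mapping to be bounded in rank (needed for the restricted replacement scheme).

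First I would dispatch the Zermelo axioms with Foundation and Choice. Extensionality, Foundation, Pairing, Union, and Infinity hold in $V_\alpha$ for any uncountable limit ordinal $\alpha$ by straightforward absoluteness, and Separation and Choice transfer from $V$ because $V_\theta$ is transitive and because the restriction of a wellordering of $x \in V_\theta$ provided by $V$ already lies in $V_\theta$. For Power Set, given $x \in V_\theta$ with $x \in V_\alpha$ for some $\alpha<\theta$, note that $|\mathcal{P}(x)|\leq 2^{|V_\alpha|}\leq \beth_{\alpha+1}$; since $\beth_\theta=\theta$ makes $\theta$ a strong limit, $\beth_{\alpha+1}<\theta$ and thus $\mathcal{P}(x)\in V_\theta$.

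Next, since $\delta<cf(\theta)\leq\theta$ we have $\delta\in V_\theta$, and its regularity in $V_\theta$ follows immediately from its regularity in $V$: any cofinal function from a smaller ordinal to $\delta$ living in $V_\theta$ would equally witness singularity in $V$. For the coding-by-ordinals axiom, use Choice in $V$ to wellorder the transitive closure of any $x\in V_\theta$; the resulting code, obtained by replacing the membership structure on the transitive closure by its image in the wellordering, is itself a set of ordinals whose rank is bounded by a successor of $\mathrm{rank}(x)<\theta$, and so lies in $V_\theta$.

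The main step, and the only place where the hypothesis $cf(\theta)>\delta$ is used, is the restricted replacement scheme. Given a formula $\varphi(\alpha,y)$ with parameters in $V_\theta$ that defines a class function $F$ on $\delta$ in $V_\theta$, each value $F(\alpha)$ lies in $V_\theta$ and hence has some rank $\beta_\alpha<\theta$. By Replacement in $V$ the sequence $\langle \beta_\alpha \mid \alpha<\delta\rangle$ is a set; since it is a $\delta$-indexed family of ordinals below $\theta$ and $cf(\theta)>\delta$, its supremum $\beta$ is strictly below $\theta$. Therefore $F[\delta]\subseteq V_\beta$, so $F[\delta]\in V_{\beta+1}\subseteq V_\theta$ as required. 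There is no real obstacle here beyond carefully cataloging the axioms; the work is in observing which cardinal-arithmetic hypothesis on $\theta$ each axiom requires.
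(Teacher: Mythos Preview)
The paper does not actually prove this lemma; it is simply stated and attributed to Reitz. So there is nothing to compare against directly, and your approach of verifying each axiom group in turn is the natural one. Most of your verifications are correct, but there is one genuine slip.

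In the coding-by-ordinals step you assert that the code---a set of ordinals recording the membership relation on $trcl(x)$ via a wellordering---has rank bounded by a successor of $\mathrm{rank}(x)$. This is false: the ordinals appearing in the code are bounded by $|trcl(x)|$, not by $\mathrm{rank}(x)$, and these can differ wildly (take $x=\mathcal{P}(\omega)$, which has rank $\omega+1$ but transitive closure of size $2^{\aleph_0}$). The correct argument is exactly where the hypothesis $\beth_\theta=\theta$ is needed: it forces $V_\theta=H_\theta$, so every $x\in V_\theta$ has $|trcl(x)|<\theta$, and hence the code is a subset of an ordinal below $\theta$ and lies in $V_\theta$. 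Incidentally, Power Set already holds in $V_\theta$ for \emph{any} limit ordinal $\theta$ (if $x\in V_\alpha$ then every subset of $x$ is also in $V_\alpha$, so $\mathcal{P}(x)\in V_{\alpha+1}$), so you have misplaced the role of the $\beth_\theta=\theta$ hypothesis: it is required for the coding axiom, not for Power Set.
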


\begin{lemma}
    (Hamkins)\\ (Uniqueness Lemma) Suppose $M$, $M'$, and $N$ are transitive classes with $M\subseteq N$ and $M'\subseteq N$, $\delta$ is a regular cardinal in $N$, all three satisfy $ZFC_\delta$, $M$ and $M'$ has the $\delta$-cover and $\delta$-approximation properties in $N$, $(^{<\delta}2)^M=(^{<\delta}2)^{M'}$, and $(\delta^+)^M=(\delta^+)^{M'}=(\delta^+)^N$. Then $M=M'$.
\end{lemma}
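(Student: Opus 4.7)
The plan is to reduce the equality $M = M'$ to showing that $M$ and $M'$ contain the same subsets of every ordinal. This reduction uses the axiom of $ZFC_\delta$ asserting that every set is coded by a set of ordinals: once I have $\mathcal{P}(\alpha) \cap M = \mathcal{P}(\alpha) \cap M'$ for every ordinal $\alpha$, the decoding procedure (which is the same in both models since both satisfy $ZFC_\delta$) recovers the same hereditary structure from the same codes, yielding $M = M'$.

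I would prove $\mathcal{P}(\alpha)^M = \mathcal{P}(\alpha)^{M'}$ by induction on $\alpha$. The base case $\alpha \leq \delta$ is immediate: any $A \subseteq \alpha$ is coded by its characteristic function $\chi_A \in {}^{<\delta}2$, and the hypothesis $({}^{<\delta}2)^M = ({}^{<\delta}2)^{M'}$ transfers $A$ between $M$ and $M'$. The successor case is routine. For the limit step, fix $A \in \mathcal{P}(\alpha) \cap M$ and appeal to the $\delta$-approximation property of $M'$ in $N$: it suffices to show $A \cap b \in M'$ for every $b \in M'$ with $|b|^{M'} < \delta$, and we may assume $b \subseteq \alpha$. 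The hypothesis $(\delta^+)^{M'} = (\delta^+)^N$ gives $|b|^N < \delta$, so the $\delta$-cover property of $M$ in $N$ supplies some $c \in M$ with $b \subseteq c \subseteq \alpha$ and $|c|^M < \delta$ (and hence $|c|^N < \delta$ using $(\delta^+)^M = (\delta^+)^N$).

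In the benign case where $c$ is bounded in $\alpha$, say $c \subseteq \gamma < \alpha$, the inductive hypothesis at $\gamma$ yields $A \cap \gamma \in M'$, whence $A \cap b = (A \cap \gamma) \cap b \in M'$ since $b \in M'$. The main obstacle is the case in which $c$ is cofinal in $\alpha$; this forces $\mathrm{cf}(\alpha)^M < \delta$, and the cardinal-arithmetic hypotheses propagate smallness of cofinality to $M'$ and $N$ as well. To handle this, I would alternately apply the cover properties of $M$ and $M'$ in $N$ to build an increasing $\omega$-sequence $b = b_0 \subseteq c_0 \subseteq b_1 \subseteq c_1 \subseteq \ldots$ with $b_n \in M'$ of $M'$-size $< \delta$ and $c_n \in M$ of $M$-size $< \delta$. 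Since $\delta$ is regular in $N$, the union $B = \bigcup_n b_n = \bigcup_n c_n$ has $N$-size $< \delta$ and is closed under further applications of either cover property. I would then argue that the trace $A \cap B$ must lie in both $M$ and $M'$, by applying the approximation properties of both models to $A \cap B$ (now viewed as a small set whose own sub-approximations by $M$-small or $M'$-small pieces reduce, via Case 1, to the inductive hypothesis on ordinals strictly less than $\alpha$). Intersecting $A \cap B$ with $b$ then gives $A \cap b \in M'$.

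The hardest step is unquestionably the singular-cofinality sub-case: the bounded-cofinal case collapses cleanly onto the inductive hypothesis, but the cofinal case demands a genuine interaction between the cover properties of \emph{both} models, rather than cover from $M$ and approximation from $M'$ used in isolation. This reflects the broader fact that the cover and approximation properties are meant to rigidify the ground model inside the extension, and proving uniqueness of the ground requires invoking this rigidity from both sides simultaneously. The cardinal-arithmetic hypotheses $(\delta^+)^M = (\delta^+)^{M'} = (\delta^+)^N$ are what allow "$\delta$-small" to be an unambiguous notion across all three models, which is what makes the alternating-cover construction meaningful in the first place.
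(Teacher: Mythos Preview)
The paper does not give a proof of this lemma; it is stated as a background result of Hamkins and then used as a black box in the discussion of ground definability. So there is no ``paper's own proof'' to compare against.

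Your overall strategy---reduce to sets of ordinals via the coding axiom of $ZFC_\delta$, then use the $\delta$-approximation property of $M'$ together with the $\delta$-cover property of $M$, and build a common cover by alternating between the two models---is exactly the standard approach (as in Hamkins or Fuchs--Hamkins--Reitz). Two remarks, one minor and one substantive.

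Minor: your base case is misstated. For $\alpha=\delta$ the characteristic function $\chi_A$ lies in ${}^{\delta}2$, not ${}^{<\delta}2$, so the hypothesis does not apply directly. This is harmless: take the base case to be $\alpha<\delta$, and handle $\alpha=\delta$ by the limit argument (any $b\in M'$ of $M'$-size $<\delta$ is bounded in $\delta$ since $\delta$ is regular, so Case~1 applies).

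Substantive: your Case~2 is circular as written. You propose to show $A\cap B\in M'$ by $\delta$-approximation, claiming that for small $d\in M'$ the piece $(A\cap B)\cap d$ ``reduces via Case~1'' to the inductive hypothesis. But covering $d$ by some $c'\in M$ gives no reason for $c'$ to be bounded in $\alpha$; you are back in Case~2 with the same $\alpha$. The published argument does not use induction on $\alpha$ at all. It builds the alternating union $D=\bigcup_n c_n=\bigcup_n b_n$ as you do, but then proves directly that $D\in M\cap M'$ (this is the genuinely delicate step, which in Fuchs--Hamkins--Reitz is handled by a separate closure argument you have not supplied). Once $D\in M\cap M'$, the order isomorphism $\pi\colon\mathrm{ot}(D)\to D$ lies in both models, $\pi^{-1}[A\cap D]$ is a subset of an ordinal below $\delta$ and hence transfers via $({}^{<\delta}2)^M=({}^{<\delta}2)^{M'}$, and then $A\cap b=(A\cap D)\cap b\in M'$. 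Your sketch builds $D$ but does not establish $D\in M\cap M'$, and the ``reduce to Case~1'' idea does not do this.
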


The preceding ideas enabled Hamkins to prove the foundational theorem of set-theoretic geology, that the grounds are uniformly first-order definable (though Laver and Woodin proved similar results independently).

\begin{theorem}
    \label{thm:grounddef}
    (\cite{FHRgeology}, Theorem 6) \\(Ground Definability Theorem) There is a $\Sigma_2$ formula $\rho$ with two free variables such that for all set forcing grounds $W$, there is a parameter $r\in W$ such that for all $x$, $x\in W\longleftrightarrow \rho(x, r)$. Specifically, if $\mathbb{P}$ is a poset, $G\subseteq\mathbb{P}$ is a $W$-generic filter, and $V=W[G]$, we can set $\delta:=|\mathbb{P}|^+$ and $r:=\mathcal{P}(\mathbb{P})^W$; then $\rho(x, r)$ asserts that there exists a $\theta$ and an $M\subseteq V_\theta$ such that:
    \begin{enumerate}
        \item $\beth_\theta=\theta$
        \item $cf(\theta)>\delta$
        \item $M\models ZFC_\delta$
        \item $M\subseteq V_\theta$ has the $\delta$-cover and $\delta$-approximation properties
        \item $r=\mathcal{P}(\mathbb{P})^M$
        \item $(\delta^+)^M=\delta^+$
        \item $x\in M$
    \end{enumerate}
    $W$ is also definable by a $\Pi_2$ formula $\rho'(x,r)$ asserting that for all $\theta>rank(x)$ and $M\subseteq V_\theta$, if the first six conditions hold, then $x\in M$.
\end{theorem}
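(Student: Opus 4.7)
The plan is to establish the biconditional $x \in W \Leftrightarrow \rho(x,r)$ and then confirm the complexity bounds. Fix $\mathbb{P}$, $G$, $\delta := |\mathbb{P}|^+$ and $r := \mathcal{P}(\mathbb{P})^W$ as prescribed. The central idea is that the seven stipulated conditions were hand-picked to match the hypotheses of the Uniqueness Lemma applied to any candidate $M$ and the canonical submodel $W \cap V_\theta$.

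For the forward direction, given $x \in W$ I would pick any beth-fixed point $\theta$ with $cf(\theta) > \delta$ large enough that $x,\mathbb{P},r \in V_\theta$; such ordinals are unbounded. Setting $M := W \cap V_\theta$, the verifications are largely mechanical: $M \models ZFC_\delta$ follows by Reitz's lemma applied inside $W$; the $\delta$-cover and $\delta$-approximation properties of $M$ in $V_\theta$ are inherited from the corresponding properties of $W$ in $V = W[G]$, which hold by Hamkins's theorem because $|\mathbb{P}| < \delta$; the equality $r = \mathcal{P}(\mathbb{P})^M$ is immediate from $\mathcal{P}(\mathbb{P})^W \subseteq V_\theta$; and $(\delta^+)^M = \delta^+$ because the $\delta$-cc of $\mathbb{P}$ preserves cardinals at and above $\delta$.

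For the converse, if $\theta$ and $M$ witness $\rho(x,r)$, I would apply the Uniqueness Lemma to $M$ and $M' := W \cap V_\theta$ inside $V_\theta$. By the forward direction (applied at the same $\theta$), $M'$ satisfies the same seven conditions, giving both the $ZFC_\delta$ and cover/approximation hypotheses on both sides. Agreement on $\delta^+$ among $M$, $M'$, and $V_\theta$ is explicit in condition 6 on the $M$ side and follows from the $\delta$-cc on the $M'$ side. The remaining hypothesis $(^{<\delta}2)^M = (^{<\delta}2)^{M'}$ I would extract from the common value $r = \mathcal{P}(\mathbb{P})^M = \mathcal{P}(\mathbb{P})^{M'}$: since $|\mathbb{P}| < \delta$ each $\alpha < \delta$ injects into $\mathbb{P}$ inside each model by choice, so any $f \colon \alpha \to 2$ is coded by a subset of $\mathbb{P}$, and combining this coding with the cover property on both sides allows recovery of $f$ in either model from its code in the common $r$. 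The Uniqueness Lemma then forces $M = M'$, whence $x \in M' \subseteq W$.

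The complexity bookkeeping is routine. Once $\theta$ is fixed, $V_\theta$ is $\Delta_1$-definable and $\delta$ is recoverable from $r$, so all seven conditions are either $\Delta_0$ in $V_\theta$ (transitivity, inclusion of $x$, the cover and approximation properties, and $r = \mathcal{P}(\mathbb{P})^M$), $\Delta_1$ ($M \models ZFC_\delta$), or $\Pi_1$ ($cf(\theta) > \delta$ and the assertion that the ordinal $(\delta^+)^M$ is the true $\delta^+$). Prepending the unbounded existentials $\exists \theta\, \exists M$ yields a $\Sigma_2$ formula $\rho$; dualizing gives the $\Pi_2$ formula $\rho'$. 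I expect the main obstacle to be the coding argument for $(^{<\delta}2)^M = (^{<\delta}2)^{M'}$: the other Uniqueness hypotheses are explicit or immediate, but bounded subsets of $\delta$ are not directly elements of $\mathcal{P}(\mathbb{P})$, so one must bridge the two via internal injections and the approximation property, and care is needed to ensure the codings transfer between $M$ and $M'$ despite their potentially different choice functions.
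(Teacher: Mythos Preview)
Your proposal is correct and follows essentially the same approach as the paper: take $M' = W \cap V_\theta$ as the canonical witness, then invoke the Uniqueness Lemma to show any $M$ satisfying conditions 1--6 must coincide with $M'$, and finally bookkeep the formula complexity. The paper's proof is in fact only a sketch that states exactly this outline and defers the details to \cite{FHRgeology}; you have filled in more than the paper does, and the point you flag as the main obstacle---deducing $(^{<\delta}2)^M = (^{<\delta}2)^{M'}$ from the shared value of $\mathcal{P}(\mathbb{P})$---is precisely the step the paper leaves to the cited reference.
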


The idea of the proof is that for any suitable $\theta$, $W\cap V_\theta$ will satisfy conditions 3-6 for $M$, and by the Uniqueness Lemma it is the only such $M$, so we will have $x\in W$ iff we can find a $\theta$ and an $M\subseteq V_\theta$ satisfying conditions 1-6 with $x\in M$ iff all $\theta$ and $M\subseteq V_\theta$ satisfying 1-6 have $x\in M$. For the formula complexity, note that the definition of $V_\theta$ is $\Pi_1$, while conditions 1-6 can be expressed with quantifiers over $M$ and $V_\theta$ with at most one unbounded universal quantifier (which becomes an existential quantifier in the hypothesis of the conditional in $\rho'$). Note also that $\delta$ and $\mathbb{P}$ are definable within $V_\theta$ from $r$, so there is no need to include them as additional parameters.

\begin{lemma}
    \label{lemma:groundrel}
    If $W_r=\{x\sbp \rho(x, r)\}$ is a ground of $V$, $a\in W_r$, and $\phi$ is a $\Sigma_n$ or $\Pi_n$ formula  with $n\geq 2$, then $W_r\models \phi(a)$ is expressible in $V$ as a $\Sigma_{n}$ or $\Pi_{n}$ formula with $a$ and $r$ as parameters.
\end{lemma}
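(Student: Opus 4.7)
The plan is to induct on $n\geq 2$, using the two-sided $\Delta_2$-definability of $W_r$ from Theorem \ref{thm:grounddef}: membership $x\in W_r$ is captured both by the $\Sigma_2$ formula $\rho(x,r)$ and by the $\Pi_2$ formula $\rho'(x,r)$. Since $W_r$ is transitive, bounded quantifiers in $\phi$ are absolute, so the $\Delta_0$ matrix of $\phi$ survives relativization unchanged.

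For the inductive step from $n$ to $n+1$, I would write a $\Sigma_{n+1}$ formula $\phi$ in prenex normal form as $\exists x\,\psi(x,a)$ with $\psi\in\Pi_n$. Then $W_r\models\phi(a)$ is equivalent to $\exists x\bigl(\rho'(x,r)\wedge\psi^*(x,a,r)\bigr)$, where $\psi^*$ is the $\Pi_n$ formula given by the inductive hypothesis. Because $n\geq 2$, $\rho'$ lies in $\Pi_2\subseteq\Pi_n$, and closure of $\Pi_n$ under conjunction keeps the body of the outer existential in $\Pi_n$; prefixing $\exists x$ then yields the desired $\Sigma_{n+1}$ formula. The $\Pi_{n+1}$ case is dual, using $\rho(x,r)$ (whose negation is $\Pi_2$) inside the antecedent of the implication under the outer universal, so that the disjunction $\lnot\rho(x,r)\vee\psi^*$ stays $\Sigma_n$.

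For the base case $n=2$, write $\phi=\exists x\,\forall y\,\psi(x,y,a)$ with $\psi$ bounded, so that $W_r\models\phi(a)$ unfolds as $\exists x\bigl(\rho'(x,r)\wedge\forall y\,(\rho(y,r)\to\psi(x,y,a))\bigr)$. Selecting $\rho'$ (the $\Pi_2$ form) for the existential restriction on $x$ and $\rho$ (the $\Sigma_2$ form, whose negation is $\Pi_2$) for the universal restriction on $y$ keeps both conjuncts $\Pi_2$, since $\lnot\rho(y,r)\vee\psi$ remains $\Pi_2$ and the outer $\forall y$ only enlarges the universal block of the prenex. A careful Skolem-style rearrangement of the resulting prefix, exploiting both ground-definability formulas simultaneously, then absorbs the outer existential into the existing prenex structure to yield a $\Sigma_2$ formula. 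The $\Pi_2$ case is symmetric, swapping the roles of $\rho$ and $\rho'$.

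The main obstacle is exactly this base case: the tight $\Sigma_2$ bound demands careful management of the quantifier alternations introduced by the two-sided definability of $W_r$, since a naive prefixing of $\exists x$ to a $\Pi_2$ body would appear to give $\Sigma_3$ rather than $\Sigma_2$. By contrast, the inductive step is a routine absorption enabled by the containments $\Sigma_2\subseteq\Sigma_n$ and $\Pi_2\subseteq\Pi_n$, which hold precisely once $n\geq 2$ and explain why this lower bound on $n$ appears in the statement.
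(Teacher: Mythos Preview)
Your inductive step from $n$ to $n+1$ is correct and is essentially what the paper does (the paper carries along a slightly more explicit normal form, but the content is the same). The gap is exactly where you flag it: the base case $n=2$.

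Direct relativization cannot produce a $\Sigma_2$ formula here. With $\phi=\exists x\,\forall y\,\psi$ and $\psi$ bounded, your expression
\[
\exists x\bigl(\rho'(x,r)\wedge\forall y\,(\lnot\rho(y,r)\vee\psi(x,y,a))\bigr)
\]
has a genuinely $\Pi_2$ body: $\rho'$ is $\Pi_2$, and $\forall y(\lnot\rho\vee\psi)$ is $\Pi_2$ since $\lnot\rho$ is $\Pi_2$. Prefixing $\exists x$ gives $\Sigma_3$, and no ``Skolem-style rearrangement'' can repair this: the variable $x$ occurs inside the leading universal block of $\rho'(x,r)$, so the outer existential cannot be absorbed into the existing prenex. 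Swapping in $\rho$ for the existential restriction does not help either, since the remaining $\Pi_2$ conjunct still forces $\Sigma_3$. Your proposal names the obstacle but does not overcome it.

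The paper's base case uses a different idea entirely (due to Schlutzenberg): rather than relativizing the quantifiers of $\phi$ to $W_r$, one exploits the fact that $\Sigma_2$ truth in $W_r$ is witnessed locally in some rank-initial segment $V_\theta^{W_r}$. Concretely, $W_r\models\phi(a)$ for $\phi\in\Sigma_2$ is expressed as ``there exist $\theta$ and $M\subseteq V_\theta$ satisfying conditions 1--7 of Theorem~\ref{thm:grounddef} such that $M\models\phi(a)$''. Since the conditions on $M$ and $\theta$ and the satisfaction relation $M\models\phi(a)$ are all checkable with at most one unbounded universal quantifier, the whole statement is $\Sigma_2$. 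The $\Pi_2$ case is dual, quantifying universally over such $\theta$ and $M$. This local-witnessing trick is the missing ingredient; once the base case is established this way, your inductive step carries the result upward.
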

\begin{proof}
    This proof is due to Farmer Schlutzenberg in an answer to a question I asked on MathOverflow \cite{SchlutzenbergMO}. We proceed by induction on the complexity of $\phi$. If $\phi$ is $\Sigma_2$ or $\Pi_2$, we use the facts that $W_r$ satisfies a $\Sigma_2$ formula iff it holds in some $V^{W_r}_\theta$ where $\theta$ is a beth fixed point, and it satisfies a $\Pi_2$ formula iff it holds in all such $V^{W_r}_\theta$ containing the parameters. (For readers unfamiliar with this fact, it will follow as a special case of Proposition \ref{prop:c1bfp} and Lemma \ref{lemma:Cnaddexist}.) Hence in the $\Sigma_2$ case we can express $W_r\models \phi(a)$ as "there exist $\theta$ and $M\subseteq V_\theta$ such that conditions 1-7 from Theorem \ref{thm:grounddef} hold and $M\models \phi(a)$", while if $\phi$ is $\Pi_2$, we can similarly say "for all $\theta$ and $M\subseteq V_\theta$, if conditions 1-7 hold, then $M\models \phi(a)$".

    Now let $n>2$. If $n$ is odd, we assume for induction that for any $\Pi_{n-1}$ formula $\psi$, $W_r\models \psi(a, y)$ is expressible as $n-3$ layers of quantifiers, followed by the assertion that for all $\theta$ and $M\subseteq V_\theta$ satisfying the appropriate conditions, if $M$ contains $a$, $y$, and all the variables bound by the outer quantifiers, then $M$ satisfies the $\Pi_2$ formula obtained by stripping away $n-3$ layers of quantifiers from $\psi$. We similarly assume that $\Sigma_{n-1}$ formulas are expressible via $n-3$ layers of outer quantifiers, followed by the assertion that there exists a $\theta$ and $M\subseteq V_\theta$ satisfying the conditions and containing the appropriate variables and parameters such that $M$ satisfies the inner $\Sigma_2$ formula. If $n$ is even, our induction hypotheses are the same but with $\Sigma_{n-1}$ and $\Pi_{n-1}$ switched.

    Now if $W_r\models \phi(a)$ for some $\Sigma_n$ formula $\phi$ of the form $\exists y \hspace{3pt}\psi(a, y)$ where $\psi$ is $\Pi_{n-1}$ and $n$ is odd, fix some $b$ such that $W_r\models \psi(a, b)$. Then by the induction hypothesis, this fact is expressible by a string of $n-3$ quantifiers followed by an assertion about all $\theta$ and $M\subseteq V_\theta$ with $a, b\in M$, so $W_r\models\phi(a)$ is expressible as a similar statement preceded by a string of $n-2$ quantifiers, with all their bound variables required to be in $M$. Conversely, if the formula that we wish to capture $W_r\models \phi(a)$ holds, then there is such a $\theta$, $M$, and $b\in M$ so that the formula expressing $W_r\models \psi(a, b)$ holds, so by the induction hypothesis $\psi(a,b)$ in fact holds in $W_r$ and thus $\phi(a)$ holds in $W_r$. The arguments in the cases where $\phi$ is $\Pi_n$ and/or $n$ is even are almost identical.
\end{proof}

Of course, not all parameters $r$ will actually define a ground, so when we wish to quantify over grounds, it is useful to know that:

\begin{lemma}
\label{lemma:groundsuccess}
    (Bagaria-Hamkins-Tsaprounis-Usuba, \cite{BHTUssnever}) The assertion that a parameter $r$ successfully defines a ground $W_r$ is $\Sigma_3$ expressible in $r$.
\end{lemma}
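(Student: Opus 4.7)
The plan is to express the assertion that $r$ successfully defines a ground as
$$\exists \mathbb{P}\,\exists G \bigl[\mathbb{P} \in W_r \;\wedge\; r = \mathcal{P}(\mathbb{P})^{W_r} \;\wedge\; G \text{ is } W_r\text{-generic on } \mathbb{P} \;\wedge\; V = W_r[G]\bigr],$$
where $W_r = \{x : \rho(x,r)\}$ is the class defined by the ground-definability formula. The outer two-block existential supplies the outermost $\exists$ of $\Sigma_3 = \exists\forall\exists$, so the task reduces to showing the inner matrix can be arranged at complexity $\Pi_2$.

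First I would dispose of the three routine conjuncts. The statement $\mathbb{P} \in W_r$ is $\Pi_2$ via the dual formula $\rho'(\mathbb{P},r)$ from Theorem \ref{thm:grounddef}. The equation $r = \mathcal{P}(\mathbb{P})^{W_r}$ unpacks as $\forall S\,(S \subseteq \mathbb{P} \to (S \in r \leftrightarrow \rho(S,r)))$, and the two directions of the biconditional are each $\Pi_2$, using $\rho'$ in one direction and $\rho$ in the other (since $\rho(S,r) \to S \in r$ has the form $\neg\rho(S,r) \vee S \in r = \Pi_2 \vee \Delta_0 = \Pi_2$). The genericity condition is $\forall D\,(\rho(D,r) \wedge D \text{ dense in } \mathbb{P} \to D \cap G \neq \emptyset)$, which is $\Pi_2$ by the same $\Pi_2 \vee \Delta_0$ trick.

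The main obstacle is arranging $V = W_r[G]$ at complexity $\Pi_2$. The naive formulation $\forall x\,\exists \tau\,(\rho(\tau,r) \wedge \tau^G = x)$ is $\Pi_3$, which would inflate the full assertion to $\Sigma_4$. My approach is to replace it with a local reflective certificate: for every ordinal $\alpha$ there is a beth fixed point $\theta > \alpha$ of cofinality above $|\mathbb{P}|^+$ such that $V_\theta$ is the set-forcing extension of $V_\theta \cap W_r$ by $\mathbb{P}$ via $G$. Within a single $V_\theta$, checking that $G$ is $M$-generic and $V_\theta = M[G]$ is bounded once $M = V_\theta \cap W_r$ is identified; crucially, one specifies $M$ not by an existential witness but by substituting $\rho$ directly into the defining property of $V_\theta \cap W_r$, so the universal $\forall x \in V_\theta$ appearing in this substitution stays bounded and the whole local assertion remains $\Pi_2$. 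By the Uniqueness Lemma, the local restrictions $V_\theta \cap W_r$ cohere into the global $W_r$ and the set-sized filter $G$ coheres across all $\theta$, so the local certificate implies the global equality $V = W_r[G]$.

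The hardest part will be the quantifier-complexity bookkeeping: any inadvertent existential witness for $M = V_\theta \cap W_r$ brought inside the $\forall\alpha$ block would raise the complexity back to $\Pi_4$, so the rewriting must be done by literal substitution of $\rho$. A secondary subtlety is justifying that the local certificate really implies the global equality $V = W_r[G]$: this uses standard forcing arguments together with the observation that $G$ is a set, fixed across all sufficiently closed rank initial segments, so that the local-to-global transition is controlled by Uniqueness rather than by any further quantifier over sets.
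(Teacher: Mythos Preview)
Your handling of the first three conjuncts is correct, but the fourth conjunct does not work as stated. The claim that ``substituting $\rho$ directly'' keeps the local assertion at $\Pi_2$ is where the argument breaks. The formula $\rho$ is $\Sigma_2$, and bounded quantifiers over a $\Sigma_2$ formula still yield a $\Sigma_2$ formula, not a $\Delta_0$ one; so the clause $\forall x\in V_\theta\,\exists\tau\in V_\theta\,(\rho(\tau,r)\wedge\tau^G=x)$ is $\Sigma_2$ in $V$. Combined with the $\Pi_1$ conditions that $\theta$ is a beth fixed point of large cofinality (and that a given set equals $V_\theta$), your block $\forall\alpha\,\exists\theta(\ldots)$ comes out $\Pi_3$, so the overall assertion is $\Sigma_4$ rather than $\Sigma_3$. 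Using $\rho'$ in place of $\rho$ makes the innermost clause $\Pi_2$ and the same overshoot occurs. The slogan ``the quantifier $\forall x\in V_\theta$ stays bounded'' is true but irrelevant: boundedness preserves the level of the formula inside, it does not erase it.

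The paper avoids $\rho$ entirely in the inner matrix. Rather than defining $M$ as $V_\theta\cap W_r$ via the ground-definability formula, it \emph{existentially quantifies over $M\subseteq V_\theta$ as a set}, requiring $M$ to have the $\delta$-cover and $\delta$-approximation properties in $V_\theta$, to satisfy $ZFC_\delta$, to have $r=\mathcal{P}(\mathbb{P})^M$ and $(\delta^+)^M=\delta^+$, and to satisfy $V_\theta=M[G]$. All of these conditions involve only bounded quantifiers over $M$ and $V_\theta$, and the Uniqueness Lemma guarantees that such an $M$, if it exists, must equal $W_r\cap V_\theta$. The resulting inner statement has the shape $\forall\theta[\text{conditions}\to\exists M(\ldots)]$, and the paper then observes that any counterexample (a suitable $\theta$ with no such $M$) is visible in every sufficiently large $V_\alpha$; hence the inner statement is equivalent to ``for all $\alpha$ and all $y$, if $y=V_\alpha$ and $\mathbb{P},r,G\in y$ then $y$ sees no counterexample'', which is genuinely $\Pi_2$. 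The key move you are missing is to trade the global definition $\rho$ for a local existential witness $M$ whose defining properties are $\Delta_0$ relative to $V_\theta$.
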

\begin{proof}
    We express this as "there exist a poset $\mathbb{P}=(\bigcup r, \leq_{\mathbb{P}})$ and a filter $G\subseteq\mathbb{P}$ which meets all dense sets in $r$ such that for all beth fixed points $\theta$ of cofinality greater than $\delta:=|\mathbb{P}|^+$ with $\mathbb{P}\in V_\theta$, there is an $M_\theta\subseteq V_\theta$ with the $\delta$-cover and $\delta$-approximation properties in $V_\theta$ such that $M_\theta\models ZFC_\delta$, $\mathbb{P}\in M_\theta$, $r=\mathcal{P}(\mathbb{P})^{M_\theta}$, $(\delta^+)^{M_\theta}=\delta^+$, and $V_\theta=M_\theta[G]$". If $W_r$ is in fact a ground, we can take $M_\theta=W_r\cap V_\theta$. Conversely, if the quoted statement holds, then the $M_\theta$ are uniquely defined because of the Uniqueness Lemma and $W_r$ as defined in Theorem \ref{thm:grounddef} can be seen to be equal to their union. Jech's Theorem 13.9 \cite{jech} asserts that a transitive class is an inner model of ZF iff it is closed under G\"odel operations and every subset of the class is covered by an element of the class, which can straightforwardly and tediously be verified for $W_r$. Choice and $V=W_r[G]$ follow because each set in $W_r$ has a well-ordering in some $M_\theta$ and each set in $V$ has a $\mathbb{P}$-name in some $M_\theta$.

    Bagaria, Hamkins, Tsaprounis, and Usuba observe (several paragraphs after Lemma 5 in \cite{BHTUssnever}) that, given $\mathbb{P}$ and $G$, any counterexample to the rest of the statement can be witnessed by any sufficiently large $V_\alpha$. Consequently, everything after the initial existential quantifiers can be expressed as the $\Pi_2$ statement that every $V_\alpha$ containing $\mathbb{P}$, $r$, and $G$ does not think there is a counterexample. Hence, the overall statement is $\Sigma_3$.
\end{proof}

Thus in statements like $\exists r\hspace{3pt} W_r\models \phi(a)$ or $\forall r\hspace{3pt} W_r\models \phi(a)$, specifying that $r$ actually defines a ground only increases the complexity of the formula beyond what would be obtained from adding an extra quantifier to the formula produced in Lemma \ref{lemma:groundrel} if $\phi$ is at most $\Pi_2$ or $\Sigma_2$.

The following fact predates the explicit study of set-theoretic geology, but is frequently useful in geological arguments:

\begin{lemma}
\label{lemma:intermodel}
    (see e.g. Jech Lemma 15.43 \cite{jech}) \\ (Intermediate Model Lemma) If $V[G]$ is a generic extension by some poset $\mathbb{P}$ and $V\subseteq W\subseteq V[G]$ for some transitive $W\models ZFC$, then $W$ is a generic extension of $V$ by some complete subalgebra $\mathbb{B}$ of the Boolean completion of $\mathbb{P}$, with $W=V[G\cap\mathbb{B}]$, and $W$ is a ground of $V[G]$.
\end{lemma}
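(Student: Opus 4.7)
The plan is to follow the classical approach of encoding the intermediate model $W$ as a generic extension of $V$ by a quotient of the Boolean completion of $\mathbb{P}$. First, I would pass to the Boolean completion: let $\mathbb{B}_0$ be the complete Boolean algebra obtained from $\mathbb{P}$ in $V$, and replace $G$ with its corresponding $V$-generic ultrafilter on $\mathbb{B}_0$, so that $V[G]$ is unchanged and we may assume the forcing is a complete Boolean algebra. All subsequent work takes place inside $V[G]$, where $W$ is a transitive inner model of $\mathrm{ZFC}$ extending $V$.

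The key step, and I expect the main obstacle, is producing a single set of ordinals $A \in W$ such that $W = V[A]$. Using that $W \models \mathrm{ZFC}$, each $W \cap V_\alpha$ is a set in $W$ and hence has a $\mathbb{B}_0$-name in $V$. Picking for each $\alpha$ such a name $\tau_\alpha$ and coding the sequence $\langle \tau_\alpha \rangle$ into a set of ordinals, one can identify a single set $A \in W$ of ordinals from which the full class $W$ is reconstructible by the usual definability of forcing. The delicate point is verifying that there is a set of ordinals with this property, rather than a proper class of information; this is essentially Solovay's intermediate model observation, relying on the fact that $W$ satisfies choice and on the availability of nice names in $V$.

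Once $A$ is in hand, fix a nice $\mathbb{B}_0$-name $\dot{A} = \{\langle \check{\alpha}, b_\alpha \rangle \sbp \alpha < \lambda\}$ with $b_\alpha = \llbracket \check{\alpha}\in\dot{A}\rrbracket$, and let $\mathbb{B}$ be the complete subalgebra of $\mathbb{B}_0$ generated in $V$ by $\{b_\alpha \sbp \alpha < \lambda\}$. Standard facts about complete subalgebras (projections preserve genericity) give that $G \cap \mathbb{B}$ is $V$-generic on $\mathbb{B}$. From the definition of $\dot{A}$ we have $A = \{\alpha < \lambda \sbp b_\alpha \in G\} \in V[G \cap \mathbb{B}]$, so $W = V[A] \subseteq V[G \cap \mathbb{B}]$. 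For the reverse inclusion, $G \cap \mathbb{B}$ is the unique ultrafilter on $\mathbb{B}$ meeting the dense sets determined by the generators in the pattern dictated by $A$; since the generators $b_\alpha$ lie in $V$ and $A \in W$, this ultrafilter can be computed inside $W$, giving $G \cap \mathbb{B} \in W$ and hence $V[G \cap \mathbb{B}] = W$. Finally, to conclude that $W$ is a ground of $V[G]$, I would invoke the standard fact that $V[G] = V[G \cap \mathbb{B}][G]$, where the latter step amounts to forcing over $W$ with (a $W$-representative of) the quotient algebra $\mathbb{B}_0 / \mathbb{B}$, exhibiting $V[G]$ as a set-forcing extension of $W$.
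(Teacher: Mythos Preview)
The paper does not prove this lemma; it merely cites Jech's Lemma 15.43 as a standard fact. Your outline follows exactly the classical argument found there (pass to the Boolean completion, realize $W$ as $V[A]$ for a single set of ordinals, take $\mathbb{B}$ to be the complete subalgebra generated by the Boolean values $b_\alpha=\llbracket\check{\alpha}\in\dot{A}\rrbracket$, and finish with the quotient $\mathbb{B}_0/\mathbb{B}$), so there is no methodological divergence to discuss.

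One point worth tightening: your suggested route to a single $A$, via names $\tau_\alpha$ for $W\cap V_\alpha$ for all $\alpha$, literally produces a proper class of data, and you correctly flag reducing this to a set as the delicate step but do not actually carry it out. The missing ingredient is the size of the forcing: with $\kappa=|\mathbb{B}_0|^V$, every set of ordinals in $W$ is interdefinable over $V$ with a subset of $\kappa$ in $W$ (via a nice name and the bijection $\mathbb{B}_0\leftrightarrow\kappa$), so $W=V[\mathcal{P}(\kappa)^W]$, and $\mathcal{P}(\kappa)^W$ is a set in $W$ that can be coded by a single $A$. Your argument for the reverse inclusion $G\cap\mathbb{B}\in W$ is essentially right, but note that it uses genericity of $G\cap\mathbb{B}$ together with the fact that finite Boolean combinations of the $b_\alpha$ are dense in $\mathbb{B}$; merely knowing which generators lie in the ultrafilter does not determine an arbitrary ultrafilter on a completely generated algebra without that density/genericity step.
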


With the groundwork laid, we now survey some basic concepts of set-theoretic geology. Hamkins and Reitz used the first-order definability of grounds to formulate the following axioms:

\begin{definition}
    The Ground Axiom is the statement that there are no nontrivial set forcing grounds.
\end{definition}

\begin{definition}
    The Bedrock Axiom is the statement that there is a minimal ground, or in other words a ground which satisfies the Ground Axiom.
\end{definition}

Reitz \cite{ReitzGA} showed that both axioms can hold or fail independently of a wide range of other natural statements.

Fuchs, Hamkins, and Reitz introduced the following basic object of study:

\begin{definition}
    The mantle $\mathbb{M}$ of a model of set theory is the intersection of all its grounds.
\end{definition}

It follows from the above results that the mantle is a $\Pi_3$ definable class (without parameters).

Many basic questions about the mantle were initially open, but were settled six years later by the work of Toshimichi Usuba:

\begin{theorem}
    \label{thm:strongDDG}
    (Usuba \cite{UsubaDDG}, Proposition 5.1) \\ (Strong DDG Theorem) The grounds are strongly downward directed: that is, for every set $X$ of parameters which succeed in defining grounds of the universe, there is a parameter $s$ which defines a ground $W_s$ such that for all $r\in X$, $W_s$ is a ground of $W_r$.
\end{theorem}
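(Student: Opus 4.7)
The plan is to exploit the uniform $\Sigma_2$ definability of grounds (Theorem \ref{thm:grounddef}) together with the Uniqueness Lemma and the cover/approximation framework, so that a common sub-ground is forced to exist as the unique model pinned down by suitable parameters derived from $X$. The slogan is that downward directedness should follow from the rigidity of grounds against each other: two grounds that overlap enough must coincide with a third.

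First, I would treat the pairwise case. Given $W_{r_1}$ and $W_{r_2}$ with $V = W_{r_1}[G_1] = W_{r_2}[G_2]$ via posets $\mathbb{P}_1, \mathbb{P}_2$, fix a regular $\delta$ strictly above $|\mathbb{P}_1|$ and $|\mathbb{P}_2|$, so each $W_{r_i}$ has the $\delta$-cover and $\delta$-approximation properties in $V$. The candidate common sub-ground is $W := W_{r_1} \cap W_{r_2}$. To recognize $W$ as a ground I would work inside a beth fixed point $V_\theta$ with $\mathrm{cf}(\theta) > \delta$ and verify that $W \cap V_\theta$ satisfies $ZFC_\delta$ together with the $\delta$-cover and $\delta$-approximation properties in $V_\theta$; the Uniqueness Lemma then identifies $W \cap V_\theta$ with $W_s \cap V_\theta$ for a canonical parameter $s \in W$ that $\Sigma_2$-defines a genuine ground, and Lemma \ref{lemma:intermodel} confirms that this $W_s$ is in fact a sub-ground of each $W_{r_i}$ because it is an intermediate inner model of each extension.

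Next, I would attempt to bootstrap from pairs to arbitrary $X$. Iterating the pairwise construction along $X$ does not terminate, so instead I would fix a single $\delta$ majorizing the cardinalities of all posets witnessing that some $r \in X$ defines a ground, and consider the class intersection $W^X := \bigcap_{r \in X} W_r$. Working again in a suitable $V_\theta$, I would aim to show that $W^X \cap V_\theta$ inherits the $\delta$-cover and $\delta$-approximation properties from the individual $W_r$: cover is comparatively gentle, as a small set in $V_\theta$ already covered by a small set in each $W_r$ can be covered inside $W^X$ by a uniform construction using a well-ordering of $X$; approximation is the delicate point. Once both properties are in place, the Uniqueness Lemma again recognizes $W^X \cap V_\theta$ as $W_s \cap V_\theta$ for some parameter $s$, and Lemma \ref{lemma:groundsuccess} together with Lemma \ref{lemma:groundrel} lets us verify in a bounded way that $s$ genuinely defines a ground sitting below every $W_r$.

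The hard part, and indeed the content of Usuba's theorem, is exactly the verification of the $\delta$-approximation property for the intersection: nothing in the abstract framework guarantees that every $\delta$-approximation of a subset $A \subseteq W^X$ lying in $W^X$ assembles to an element of $W^X$, because approximating $A$ inside each individual $W_r$ only witnesses $A \in W_r$, not the cross-model coherence needed to place $A \in W^X$. I expect this to require a genuinely combinatorial argument, along the lines of extracting, for each potential failure of approximation, a uniformly definable antichain-style obstruction that already lives in some single $W_r$ and is then absorbed back into the intersection via a pigeonhole on $X$. Assuming this obstacle is overcome, the rest of the proof, namely packaging the resulting inner model as $W_s$ via Theorem \ref{thm:grounddef} and checking $W_s \subseteq W_r$ for each $r \in X$, is essentially bookkeeping.
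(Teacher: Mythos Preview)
The paper does not prove this theorem; it is stated with attribution to Usuba and used as a black box. There is therefore no paper proof to compare your proposal against, so I can only assess the proposal on its merits.

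Your strategy has genuine gaps beyond the one you flag. The plan of taking $W^X := \bigcap_{r\in X} W_r$ and verifying cover and approximation for it does not work as written, and the trouble starts earlier than you indicate.

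First, the $\delta$-cover property is not ``comparatively gentle'' for intersections. If $A\in V$ has $|A|^V<\delta$ and each $W_r$ supplies a small cover $B_r\supseteq A$, nothing produces a small cover lying in $\bigcap_r W_r$: the $B_r$ live in different models, $\bigcap_r B_r$ need not belong to any single $W_r$, and ``a uniform construction using a well-ordering of $X$'' cannot help since $X$ and that well-ordering are objects of $V$, not of $W^X$. The same objection already bites in the pairwise case.

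Second, you assume without argument that $W\cap V_\theta\models ZFC_\delta$. Prior to Usuba's theorem it was open whether the intersection of even two grounds is a model of $ZFC$; that the intersection is well-behaved is a \emph{consequence} of DDG (once a common sub-ground exists, the Intermediate Model Lemma controls everything above it), not an ingredient available for its proof. So in both the pairwise and the set-sized case your argument is circular at the step ``verify $ZFC_\delta$.''

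Usuba's actual argument does not attempt to recognize the intersection as a ground. It proceeds through Bukovsky's characterization of set-forcing grounds via the $\kappa$-global covering property for ordinal-valued functions, which is more amenable to producing a common sub-ground directly; the sub-ground is constructed, not identified with $\bigcap_r W_r$.
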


\begin{corollary}
\label{cor:ddgapp}
    (Usuba \cite{UsubaDDG}, Corollary 5.5, applying Fuchs, Hamkins, and Reitz, \cite{FHRgeology}, Theorem 22)
    \begin{enumerate}
        \item $\mathbb{M}$ is a transitive model of $ZFC$
        \item $\mathbb{M}$ is a forcing-invariant class
        \item The following are equivalent:
        \begin{enumerate}
            \item $V$ has only set many grounds
            \item $V$ satisfies the Bedrock Axiom
            \item $V$ is a set forcing extension of the mantle
        \end{enumerate}
    \end{enumerate}
\end{corollary}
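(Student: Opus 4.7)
The common engine for all three parts is the Strong DDG Theorem (Theorem \ref{thm:strongDDG}), which lets me collapse any set-sized collection of grounds into a single witnessing ground beneath them all. I would first dispatch part (2). Given a set-generic extension $V[G]$, the inclusion $\mathbb{M}^{V[G]} \subseteq \mathbb{M}^V$ is immediate, since every ground $W$ of $V$ is already a ground of $V[G]$ by composing the relevant forcings. For the reverse, any ground $W$ of $V[G]$ together with $V$ itself are both grounds of $V[G]$, so Strong DDG produces a common sub-ground $W'$ lying inside both; then $W'$ is a ground of $V$, hence contains $\mathbb{M}^V$, and $W' \subseteq W$. Intersecting over all such $W$ yields the missing inclusion.

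With forcing invariance in hand, I would tackle part (1). Transitivity and the easy axioms (Extensionality, Foundation, Pairing, Union, Infinity) are inherited from the grounds, and Power Set holds because $\mathcal{P}(a)^{\mathbb{M}} \subseteq \mathcal{P}(a)^V$ is a set. For Separation and Collection, given $a \in \mathbb{M}$ and a formula $\phi$, I need the set $X$ defined by $\phi$ over $\mathbb{M}$ to lie in every ground: each ground $W$ can internally compute its own mantle via the formula from Theorem \ref{thm:grounddef}, and by forcing invariance applied inside $W$ (which sees $V$ as a forcing extension of itself) one has $\mathbb{M}^W = \mathbb{M}^V$, so $W$ computes the same $X$ internally and hence $X \in W$. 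The main obstacle, and indeed the most delicate step of the whole corollary, is the Axiom of Choice in $\mathbb{M}$: while each ground internally wellorders any $a \in \mathbb{M}$, the wellorderings chosen by different grounds need not coincide, and producing a single wellordering common to every ground requires carefully exploiting Strong DDG to absorb a chosen wellordering into a sub-ground that sits inside every other ground.

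Finally, for part (3): for (a) $\Rightarrow$ (b), apply Strong DDG to the set-sized family of all ground-defining parameters to obtain a common sub-ground $W_0$; since $W_0$ is itself a ground and every ground of $V$ lies above it, $W_0$ is minimal and hence a bedrock. For (b) $\Rightarrow$ (c), given a minimal ground $W_0$, Strong DDG applied to $W_0$ and any other ground $W$ produces a common sub-ground $W' \subseteq W_0$; minimality forces $W' = W_0$, so $W_0 \subseteq W$ for every $W$, meaning $W_0 = \mathbb{M}$, and $V$ is a set forcing extension of $\mathbb{M}$ by definition of a ground. For (c) $\Rightarrow$ (a), if $V = \mathbb{M}[G]$ for $G$ generic over some $\mathbb{P} \in \mathbb{M}$, then every ground $W$ of $V$ satisfies $\mathbb{M} \subseteq W \subseteq V$, and by the Intermediate Model Lemma (Lemma \ref{lemma:intermodel}) $W$ is determined by a complete subalgebra of the Boolean completion of $\mathbb{P}$; there are only set-many such subalgebras, hence only set-many grounds.
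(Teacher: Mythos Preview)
The paper does not prove this corollary; it is stated with citations to Usuba and to Fuchs--Hamkins--Reitz, so there is no in-paper argument to compare against. On its own merits, your treatment of parts (2) and (3) is correct and matches the standard arguments, and your outline for part (1) through Separation and Collection is also sound (the key point, that each ground $W$ computes $\mathbb{M}^W=\mathbb{M}^V$ and hence can internally form the relevant defined subsets, is exactly right).

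The genuine gap is your sketch for Choice. Your proposed move---``absorb a chosen wellordering into a sub-ground that sits inside every other ground''---would require producing a ground contained in \emph{all} grounds, i.e.\ a bedrock, which is precisely condition (3)(b) and is not available in general. Strong DDG only yields a common sub-ground below any \emph{set} of grounds, not below the whole class. The correct argument runs by contradiction: if no wellordering of $a\in\mathbb{M}$ lies in $\mathbb{M}$, then for each wellordering ${\prec}\in WO(a)^V$ choose a ground $W_{\prec}$ omitting it; since $WO(a)^V$ is a set, Strong DDG applied to $\{W_{\prec}:{\prec}\in WO(a)^V\}$ produces a ground $W^*$ below all of them. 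But $a\in\mathbb{M}\subseteq W^*$ and $W^*\models AC$, so $W^*$ contains some wellordering ${\prec^*}$ of $a$, and then ${\prec^*}\in W^*\subseteq W_{\prec^*}$, contradicting the choice of $W_{\prec^*}$. So Strong DDG is applied to a set-sized family indexed by $WO(a)^V$, not to the class of all grounds.
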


\begin{definition}
    (Usuba) For $\kappa$ a cardinal, a $\kappa$-ground is a ground $W$ of which $V$ is a forcing extension by a poset of size less than $\kappa$. The $\kappa$-mantle is the intersection of all $\kappa$-grounds.
\end{definition}

\begin{theorem}
\label{thm:extmantle}
   (Usuba \cite{Usubaextendible}, Theorem 1.3) If $\kappa$ is extendible, then the mantle is equal to the $\kappa$-mantle. It thus follows from the Strong DDG Theorem that if there is an extendible cardinal, then $V$ is a set forcing extension of $\mathbb{M}$. 
\end{theorem}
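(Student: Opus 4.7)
The inclusion $\mathbb{M} \subseteq \kappa\text{-mantle}$ is automatic, since every $\kappa$-ground is a ground and intersecting a smaller class of inner models yields a larger class. For the reverse inclusion, it suffices to show that every ground $W$ of $V$ has some $\kappa$-ground as a subclass; granted this, any element of the $\kappa$-mantle lies in that $\kappa$-ground and hence in $W$. The plan is therefore to fix an arbitrary ground $W_r$ with $V = W_r[G]$ for some $G$ generic over $\mathbb{P} \in W_r$ of $W_r$-size at least $\kappa$ (else $W_r$ is itself a $\kappa$-ground and we are done), and to construct a $\kappa$-ground $W^{*} \subseteq W_r$ by reflecting the ground structure past $\kappa$ using extendibility.

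To carry this out, I would choose a beth fixed point $\lambda$ in $C^{(3)}$, of cofinality exceeding $|\mathbb{P}|$ and containing $r$, $\mathbb{P}$, and $G$, and invoke extendibility to obtain $j \colon V_\lambda \to V_{\lambda^{*}}$ with $\mathrm{crit}(j) = \kappa$ and $j(\kappa) > \lambda$, arranging (via standard reflection arguments for extendibles) that $\lambda^{*} \in C^{(3)}$ as well. Since ``$x$ is a ground-defining parameter'' is $\Sigma_3$ by Lemma \ref{lemma:groundsuccess} and $V_\lambda$ sees $r$ as defining a ground, elementarity of $j$ together with reflection from $V_{\lambda^{*}}$ to $V$ gives that $W_{j(r)}$ is a genuine ground of $V$. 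Applying the Strong DDG Theorem \ref{thm:strongDDG} to the pair $\{W_r, W_{j(r)}\}$ produces a common sub-ground $W^\dagger \subseteq W_r \cap W_{j(r)}$. The decisive remaining step, exploiting $j \upharpoonright V_\kappa = \mathrm{id}$ together with the approximation and cover properties, is to show that $W^\dagger$ or a further refinement $W^{*}$ is a $\kappa$-ground of $V$: the forcing from $W^{*}$ up to $V$ should factor so that its portion of rank $\geq \kappa$ is absorbed and only a component of size less than $\kappa$ remains.

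The main obstacle is precisely this last factoring argument; it is where the full strength of extendibility (as opposed to, say, supercompactness) is used, and it relies delicately on the Uniqueness Lemma together with the reflection provided by $j$ to pin down the small sub-ground $W^{*}$. Once $\mathbb{M} = \kappa\text{-mantle}$ is established, the set-forcing conclusion follows easily. The defining parameter of any $\kappa$-ground has rank below $\kappa$ (since the corresponding poset lies in $V_\kappa$), so there are only set-many $\kappa$-grounds, and applying Strong DDG to this set yields a single ground $W_s$ contained in every $\kappa$-ground. Then $W_s \subseteq \kappa\text{-mantle} = \mathbb{M} \subseteq W_s$, the last inclusion because $W_s$ is itself a ground. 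Hence $\mathbb{M} = W_s$ is a ground of $V$, so $V$ is a set-forcing extension of $\mathbb{M}$.
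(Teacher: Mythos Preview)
The paper does not actually prove this theorem; it is stated as a citation of Usuba's result with no accompanying proof. So there is no ``paper's own proof'' to compare against.

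As for your proposal on its own merits: the overall architecture is correct (the easy inclusion, the reduction to ``every ground contains a $\kappa$-ground,'' and the derivation of the set-forcing conclusion from Strong DDG once $\mathbb{M}=\kappa$-mantle is known), but you explicitly leave the heart of the argument unresolved. You write that ``the main obstacle is precisely this last factoring argument'' and that it ``relies delicately on the Uniqueness Lemma together with the reflection provided by $j$,'' but you do not carry it out. That step \emph{is} the theorem: producing, from an arbitrary ground $W_r$ and an extendibility embedding $j$, a $\kappa$-ground contained in $W_r$ is exactly the nontrivial content of Usuba's result, and nothing in your sketch explains how the approximation/cover machinery and $j$ combine to force the residual forcing to have size $<\kappa$. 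Invoking DDG on $\{W_r, W_{j(r)}\}$ gives a common sub-ground $W^\dagger$, but there is no reason given why $W^\dagger$ (or any refinement) should be a $\kappa$-ground. So the proposal is an outline with the key lemma missing, not a proof.

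One minor correction: your claim that ``the defining parameter of any $\kappa$-ground has rank below $\kappa$'' is off. The parameter is $r=\mathcal{P}(\mathbb{P})^W$, which for $|\mathbb{P}|<\kappa$ lies in $V_{\kappa+1}$ or thereabouts, not $V_\kappa$. The conclusion that there are only set-many $\kappa$-grounds is still correct, but for the right reason: the parameters are bounded in rank, not below $\kappa$.
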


Note however that the $\kappa$-mantle is not necessarily a $\kappa$-ground, since the proof of the Strong DDG Theorem only tells us in that case that the universe is a $\kappa^{++}$-cc extension of some ground of all the $\kappa$-grounds.

\begin{definition}
    The generic multiverse of a model of set theory is the collection of all models which can be obtained from it by repeatedly taking set forcing extensions and grounds.
\end{definition}

\begin{prop}
\label{prop:genmulti}
    \begin{enumerate}
        \item (Fuchs, Hamkins, and Reitz, conditional on DDG) The generic multiverse can alternatively be defined as the collection of all set forcing extensions of grounds.
    \item (Fuchs, Hamkins, and Reitz, conditional on DDG) All models in the generic multiverse share the same mantle.
    \item (Usuba) If any model in the generic multiverse has an extendible cardinal, then the generic multiverse can be defined as the collection of all set forcing extensions of the mantle.
    \end{enumerate}
\end{prop}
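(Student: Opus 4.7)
The plan is to prove the three parts in sequence, with each serving as a lemma for the next. The Strong DDG Theorem (Theorem \ref{thm:strongDDG}) is the common workhorse, which is why the first two parts were originally stated conditionally on DDG but now hold unconditionally.

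For part (1), I will show that the class $\mathcal{C}$ of set forcing extensions of grounds of $V$ coincides with the generic multiverse by verifying that $\mathcal{C}$ contains $V$ and is closed under both multiverse operations. Containing $V$ is trivial (take the trivial forcing over $V$ itself). Closure under forcing extensions follows from two-step iteration: if $W[G] \in \mathcal{C}$ with $W$ a ground of $V$, then any further extension $W[G][H]$ can be rewritten as $W[G \ast H]$, a forcing extension of $W$. Closure under taking grounds is the step that genuinely requires DDG: if $M$ is a ground of some $W[G] \in \mathcal{C}$, then applying the Strong DDG Theorem inside $W[G]$ to the two grounds $M$ and $W$ produces a common deeper ground $W'$. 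Since $W'$ is a ground of $W$ and $W$ is a ground of $V$, $W'$ is a ground of $V$; and $M$ is a forcing extension of $W'$ because $W'$ is a ground of $M$. Hence $M \in \mathcal{C}$. The generic multiverse, being the smallest class with these closure properties containing $V$, is therefore contained in $\mathcal{C}$, and the reverse containment is immediate.

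For part (2), I combine the forcing-invariance of the mantle (Corollary \ref{cor:ddgapp}(2)) with part (1). Given $M$ in the multiverse of $V$, the symmetry of the multiverse relation (each forcing extension step can be reversed by a ground step and vice versa) places $V$ in the multiverse of $M$, so part (1) applied inside $M$ yields a ground $W$ of $M$ such that $V$ is a forcing extension of $W$. Applying forcing-invariance twice gives $\mathbb{M}^V = \mathbb{M}^W = \mathbb{M}^M$.

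For part (3), let $N$ be a model in the multiverse with an extendible cardinal. By Theorem \ref{thm:extmantle} applied inside $N$, $N$ is a set forcing extension of $\mathbb{M}^N$, which equals $\mathbb{M}$ by part (2). Hence $\mathbb{M}$ is itself a ground of $N$, and in particular a member of the shared multiverse. I then show every model $M$ in the multiverse is a forcing extension of $\mathbb{M}$: by part (1) applied in $N$, $M$ is a forcing extension of some ground $W$ of $N$, and since $\mathbb{M} \subseteq W \subseteq N = \mathbb{M}[H]$ for some generic $H$, the Intermediate Model Lemma (Lemma \ref{lemma:intermodel}) gives that $W$ is a forcing extension of $\mathbb{M}$; two-step iteration then makes $M$ a forcing extension of $\mathbb{M}$ as well. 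The reverse inclusion is immediate because $\mathbb{M}$ is in the multiverse and the multiverse is closed under forcing extensions. The main obstacle is part (1), specifically the use of DDG to close $\mathcal{C}$ under grounds; parts (2) and (3) then fall out relatively cleanly once the structural picture from part (1) is in place, with part (3) additionally leveraging Usuba's extendibility result to ensure that $\mathbb{M}$ appears as an honest ground rather than merely as an intersection of grounds.
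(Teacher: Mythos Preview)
Your proposal is correct and follows essentially the same approach as the paper: part (1) via closure of the class of extensions-of-grounds under both multiverse operations (with DDG handling the ground step), part (2) via forcing-invariance of the mantle combined with (1), and part (3) by using Usuba's theorem to place $\mathbb{M}$ inside the multiverse. The only notable difference is in part (3): the paper invokes downward directedness to conclude $\mathbb{M}$ is a ground of every multiverse model, whereas you use the Intermediate Model Lemma on the chain $\mathbb{M}\subseteq W\subseteq N=\mathbb{M}[H]$; both routes are valid and roughly equally direct.
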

\begin{proof}
    (1): Let $W$ be a ground of $V$ and $W[G]$ be a forcing extension of it. We show that all forcing extensions and all grounds of $W[G]$ are in fact forcing extensions of grounds of $V$, so the collection of forcing extensions of grounds satisfies the closure property to be the complete generic multiverse.

    If $W[G][H]$ is a forcing extension of $W[G]$, then it is a forcing extension of $W$ by the generic filter $G*H$ on some two-step iteration of posets. If $U$ is a ground of $W[G]$, then because grounds are downward directed, there is a ground $W'$ of both $W$ and $U$, so $U$ is a forcing extension of the ground $W'$ of $V$, as desired.

    (2): This follows inductively or by (1) from Usuba's result that the mantle is invariant under forcing.

    (3): If any $W$ in the generic multiverse has an extendible cardinal, then it is a set forcing extension of the mantle, which by (2) is the mantle of the entire generic multiverse, and by downward directedness the mantle is thus a ground of every model in the generic multiverse.
\end{proof}

\chapter[\texorpdfstring{Large Cardinals with $\Sigma_n$ Reflection Properties}{Large Cardinals with Sigma\_n Reflection Properties}]{Large Cardinals with $\Sigma_n$ Reflection Properties}

In this chapter, we study cardinals which reflect the truth of $\Sigma_n$ formulas true in $V$. In the same way that the $\Sigma_2$ reflection properties of supercompact cardinals allow us to establish the consistency of forcing axioms involving $\Sigma_2$-definable forcing classes and $\Sigma_2$ formulas like "there exists a filter which interprets a particular name as a stationary set", these cardinals will allow us to prove the consistency of corresponding axioms for more complex classes and formulas.

\section[\texorpdfstring{$\Sigma_n$-correct Cardinals}{Sigma\_n-correct Cardinals}]{$\Sigma_n$-correct Cardinals}
\label{section:Cn}

We start with a simple and well-studied form of reflection: agreement with $V$ about formulas with parameters in $V_\kappa$. Much of the material in this section comes from Section 1 of Joan Bagaria's paper on the subject and related large cardinal notions (\cite{bagariacn}).

\begin{definition}
    The class of $\Sigma_n$-correct cardinals $C^{(n)}$ consists of all $\kappa$ such that $V_\kappa\prec_{\Sigma_n} V$, i.e. for all $\Sigma_n$ formulas $\phi$ and all $a\in V_\kappa$, $V_\kappa\models \phi(a)$ iff $V\models \phi(a)$.
\end{definition}

In defining and in proving results about $\Sigma_n$-correct cardinals, we make heavy use of the well-known fact that, although by Tarski the truth of formulas in $V$ is not uniformly definable, there is a $\Sigma_n$ definition of $\Sigma_n$ truth:

\begin{lemma}
\label{lemma:sntruth}
    (Folklore) For each standard positive integer $n$, there is a $\Sigma_n$ formula $T_{\Sigma_n}$ in the language of set theory such that, for any $\Sigma_n$ formula $\phi$ with G\"odel number $\lceil \phi\rceil\in V$ and parameter $a$,

    $$T_{\Sigma_n}(\lceil \phi\rceil, a)\Longleftrightarrow \phi(a).$$

    Furthermore, there is a $\Pi_n$ formula $T_{\Pi_n}$ which works the same way with $\Pi_n$ formulas.
\end{lemma}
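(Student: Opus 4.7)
The plan is to exploit the well-known fact that, although there is no uniform definition of truth in $V$, satisfaction in set-sized structures such as the $V_\alpha$ is $\Delta_1$-definable (by the usual Tarskian recursion on formula complexity carried out inside $V$). Once we have that recursion in hand, the desired truth predicate for the class-sized universe is obtained by prefixing an external quantifier block that matches the syntactic form of the input formula. Throughout, $n$ is fixed in the metatheory, so there is no issue about running a scheme over all levels simultaneously — we are exhibiting one $\Sigma_n$ formula for each fixed $n$.

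First, I would fix a standard G\"odel coding of formulas and of finite sequences, and recall that the relation ``$e$ codes a $\Sigma_0$ formula $\psi$ in $k$ free variables and $s$ codes an assignment and $\psi$ is satisfied under $s$'' is $\Delta_1$: the positive direction is witnessed by the existence of a finite satisfaction sequence (or equivalently by satisfaction in any transitive set containing the parameters), while the negative direction is witnessed by the failure of satisfaction in any such transitive set. Using pairing and some $\Sigma_0$ syntactic manipulation, this gives a $\Delta_1$ predicate $T_{\Sigma_0}(e,a)$ expressing that $e$ is the code of a $\Sigma_0$ formula and that it holds of the parameter $a$. Next, I would argue that every $\Sigma_n$ formula is provably equivalent (by straightforward syntactic normalization that is $\Sigma_0$-definable on codes) to one in prenex normal form $\exists x_1 \forall x_2 \cdots Q x_n\, \psi(x_1,\dots,x_n,y)$ with $\psi$ a $\Sigma_0$ formula, and I would work with this normal form throughout.

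With these pieces in place, define
\[ T_{\Sigma_n}(e,a) \quad\equiv\quad \exists x_1\, \forall x_2 \cdots Q x_n\; T_{\Sigma_0}\bigl(\mathrm{mat}(e),\, \langle x_1,\dots,x_n,a\rangle\bigr) \]
where $\mathrm{mat}(e)$ is the $\Sigma_0$-definable function extracting the matrix of the prenex normal form of $e$. Since $T_{\Sigma_0}$ is $\Delta_1$, one can absorb the leading existential of the $\Sigma_1$ form of $T_{\Sigma_0}$ into $Q x_n$ (or, when $n$ is odd and $Q$ is $\exists$, merge it directly) and absorb the negation of the matrix into the same layer when necessary; this keeps the total quantifier complexity at $\Sigma_n$. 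Conjoining with the $\Sigma_0$ clause ``$e$ codes a $\Sigma_n$ formula in normal form with one free variable'' does not raise the complexity. The dual predicate $T_{\Pi_n}$ is then defined by swapping the quantifier block, and a symmetric argument shows it is $\Pi_n$; equivalently one may take $T_{\Pi_n}(e,a):\equiv \neg T_{\Sigma_n}(\mathrm{neg}(e),a)$ where $\mathrm{neg}$ is the $\Sigma_0$-definable operation sending a $\Pi_n$ code to the code of its negation's $\Sigma_n$ normal form.

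Correctness ($T_{\Sigma_n}(\lceil\phi\rceil,a)\Leftrightarrow \phi(a)$) follows by a trivial induction on $n$: at the base level this is just absoluteness of $\Sigma_0$ formulas between $V$ and transitive sets containing the parameters, and at each successor level the external quantifier $\exists x$ or $\forall x$ is interpreted in $V$ exactly as the leading quantifier of $\phi$ demands. The main subtlety — and the only place one must be careful — is the complexity bookkeeping: I would verify explicitly that the arithmetization of syntax (recognizing normal forms, extracting matrices, producing the assignment $\langle x_1,\dots,x_n,a\rangle$) is uniformly $\Sigma_0$, and that absorbing the $\Delta_1$ matrix predicate into the trailing quantifier of the prefix does not introduce a spurious extra alternation. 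Once this is done, the formulas $T_{\Sigma_n}$ and $T_{\Pi_n}$ have the claimed complexity, and the lemma is established.
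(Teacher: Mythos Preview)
Your proposal is correct and essentially the same as the paper's proof. The paper phrases it as an induction on $n$ (base case $n=1$ via ``there exists a transitive $S$ with $a,x\in S$ and $S\models\psi(x,a)$'', successor step $T_{\Sigma_{n+1}}(\lceil\exists x\,\chi\rceil,a):=\exists x\,T_{\Pi_n}(\lceil\chi\rceil,(x,a))$), whereas you unfold the whole quantifier prefix in one shot over a $\Delta_1$ predicate $T_{\Sigma_0}$; unwinding the paper's recursion yields exactly your formula, and your remark about absorbing the $\Delta_1$ matrix into the innermost quantifier is precisely what the paper's base case accomplishes.
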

\begin{proof}
    We proceed by induction on $n$. For $n=1$, we use the facts that $\Delta_0$ formulas are absolute to transitive structures and that the satisfaction of a formula in a set-sized structure is a $\Delta_1$ relation between the structure, the G\"odel number of the formula, and the parameters of the formula. Thus given a $\Delta_0$ formula $\psi(x,y)$ we can express $T_{\Sigma_1}(\lceil\exists x \psi\rceil, a)$ as "there exists an $x$ and a transitive structure $S$ such that $a, x\in S$ and $S\models \psi(x, a)$" and $T_{\Pi_1}(\lceil\forall x \psi\rceil, a)$ as "for all $x$ and all transitive structures $S$ such that $a, x\in S$, $S\models \psi(a, x)$".

    Now if $T_{\Sigma_n}$ and $T_{\Pi_n}$ have already been defined and $\chi(x, y)$ is a $\Pi_n$ formula, we can define $T_{\Sigma_{n+1}}(\lceil\exists x \chi\rceil, a)$ as $\exists x T_{\Pi_n}(\lceil \chi\rceil, (x,a))$. If $\chi$ is instead $\Sigma_n$, we can define $T_{\Pi_{n+1}}$ in a corresponding way. 
\end{proof}

\begin{corollary}
\label{cor:Cndef}
   (Bagaria) $C^{(n)}$ is a $\Pi_n$ definable club class.
\end{corollary}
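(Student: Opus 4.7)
The plan is to verify separately the three claims packed into this corollary: that $C^{(n)}$ admits a $\Pi_n$ definition, that it is unbounded in the ordinals, and that it is closed under suprema.

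The key observation for the definability is that $V_\kappa\prec_{\Sigma_n}V$ is equivalent to the single direction of reflection
\[
(\ast_n)\quad \forall \phi\in\Sigma_n\ \forall a\in V_\kappa\ \bigl(V\models\phi(a)\ \to\ V_\kappa\models\phi(a)\bigr).
\]
I would prove this equivalence by induction on $n$: for $n=1$, $\Sigma_1$ formulas are upward absolute from transitive sets, so $(\ast_1)$ by itself yields full $\Sigma_1$-elementarity; for the inductive step, $(\ast_{n+1})$ includes $(\ast_n)$ since $\Sigma_n\subseteq\Sigma_{n+1}$, so by the induction hypothesis $V_\kappa\prec_{\Sigma_n}V$ (hence also $V_\kappa\prec_{\Pi_n}V$), and a $\Sigma_{n+1}$ witness found inside $V_\kappa$ then has a $\Pi_n$ body which transfers upward to $V$. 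The payoff is that, using the $\Sigma_n$ truth predicate $T_{\Sigma_n}$ of Lemma \ref{lemma:sntruth} to express $V\models\phi(a)$ and the fact that set-satisfaction is $\Delta_1$, I can write $(\ast_n)$ as a formula whose matrix is $\Pi_n\vee\Delta_1=\Pi_n$, and universal quantification preserves $\Pi_n$. Writing the naive biconditional $V_\kappa\models\phi\leftrightarrow V\models\phi$ would instead produce complexity in $\Sigma_{n+1}\cap\Pi_{n+1}$, so exploiting the automatic upward absoluteness to eliminate the biconditional is the crux of this step.

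For unboundedness, I would apply the Levy--Montague reflection theorem to the single $\Sigma_n$ formula $T_{\Sigma_n}$, obtaining for any ordinal $\alpha$ some $\kappa>\alpha$ such that $T_{\Sigma_n}$ is absolute between $V_\kappa$ and $V$ for parameters in $V_\kappa$; the equivalence above then places $\kappa$ in $C^{(n)}$. For closure, if $\kappa$ is a limit of elements of $C^{(n)}$, then $V_\kappa=\bigcup_{\lambda\in C^{(n)}\cap\kappa}V_\lambda$ is the union of an increasing chain of $\Sigma_n$-elementary substructures of $V$, and the Tarski--Vaught chain lemma for $\Sigma_n$ (proved by induction on formula complexity, with existential witnesses for $\Sigma_n$ formulas in the union supplied by some $V_\lambda$ whose $\Pi_{n-1}$ body then transfers upward by $\Pi_{n-1}$-elementarity of $V_\lambda$) gives $V_\kappa\prec_{\Sigma_n}V$, as needed.
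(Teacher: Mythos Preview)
Your approach is essentially the paper's: the paper also reduces $\Sigma_n$-elementarity to a single direction of reflection, invokes the Reflection Theorem on $T_{\Sigma_n}$ for unboundedness, and cites the chain argument for closure. The only cosmetic difference is that the paper phrases the one-direction reflection contrapositively (upward preservation of $\Pi_n$ formulas from $V_\alpha$ to $V$, using $T_{\Pi_n}$) rather than your downward preservation of $\Sigma_n$ formulas; the two are equivalent.

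There is, however, a slip in your complexity count. Set-satisfaction is $\Delta_1$ in the \emph{structure}, the formula, and the parameters, but the free variable of your definition is $\kappa$, not $V_\kappa$. To refer to $V_\kappa$ you must quantify, e.g.\ write $\forall x\,(x=V_\kappa\to x\models\phi(a))$, and since ``$x=V_\kappa$'' is $\Pi_1$ but not $\Delta_1$, its negation contributes a $\Sigma_1$ disjunct. The matrix is therefore $\Sigma_1\vee\Pi_n\vee\Delta_1$, not $\Pi_n\vee\Delta_1$ as you claim. For $n\geq 2$ this is harmless, since $\Sigma_1\subseteq\Pi_2\subseteq\Pi_n$ and the matrix is still $\Pi_n$; the paper's contrapositive formulation lands on exactly the same $\forall(\Sigma_1\vee\Pi_n)$ shape. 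For $n=1$, though, $\Sigma_1\vee\Pi_1$ is not $\Pi_1$, and your argument as written does not give a $\Pi_1$ definition of $C^{(1)}$. The paper handles $n=1$ separately via the characterization of $C^{(1)}$ as the class of $\beth$-fixed points (Proposition~\ref{prop:c1bfp}), from which an explicit $\Pi_1$ formula is extracted; you should flag this case as well.
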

\begin{proof}
    That $C^{(n)}$ is a proper class follows immediately from the Reflection Theorem, applied to the formula $T_{\Sigma_n}$; closure follows from standard model-theoretic arguments about unions of chains of elementary substructures. For $\Pi_n$ definability, we assume that $n\geq 2$; the case where $n=1$ will follow from Proposition \ref{prop:c1bfp} below. We show that $\alpha\in C^{(n)}$ is equivalent to "for all $x$, all $\Pi_n$ formulas $\phi$, and all $b\in x$, if $x=V_\alpha$ and $x\models \phi(b)$, then $T_{\Pi_n}(\lceil\phi\rceil,b )$". This is sufficient to establish the assertion is $\Pi_n$ because $x=V_\alpha$ is $\Pi_1$ in $x$ and $\alpha$, so after converting the implication to a disjunction the formula has the form $\forall x\forall \phi\forall b(\Sigma_1\vee \Pi_n)$, which is $\Pi_n$ as long as $n\geq 2$.

    To see that this formula actually implies that $\alpha\in C^{(n)}$, note that if a $\Sigma_n$ formula fails in $V_\alpha$, its $\Pi_n$ negation holds in $V_\alpha$ and thus in $V$. Conversely, if we have a $\Sigma_n$ formula of the form $\exists x\hspace{2pt} \psi(x, b)$ for $\psi$ $\Pi_{n-1}$ which holds in $V_\alpha$, let $a\in V_\alpha$ be such that $V_\alpha\models \psi(a,b)$; then $V\models \psi(a,b)$ because $\psi$ is at most $\Pi_n$, so $V\models\exists x\hspace{2pt} \psi(x,b)$.
\end{proof}

$C^{(1)}$ turns out to have a nice characterization in terms of cardinal arithmetic.

\begin{prop}
\label{prop:c1bfp}
    $C^{(1)}$ is exactly the class of $\kappa$ such that $\beth_\kappa=\kappa$.
\end{prop}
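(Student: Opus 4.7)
The plan is to prove both inclusions separately. For the forward direction, given $\kappa \in C^{(1)}$, I would apply $\Sigma_1$-elementarity to carefully chosen formulas to extract the beth fixed point property. For the reverse direction, given $\beth_\kappa = \kappa$, I would first observe that $V_\kappa = H_\kappa$ and then apply a L\"owenheim--Skolem plus Mostowski-collapse argument to reflect arbitrary $\Sigma_1$ witnesses down into $V_\kappa$.

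For the forward direction, applying $\Sigma_1$-elementarity to the formula ``$\exists \beta\; \alpha\in\beta$'' with $\alpha \in V_\kappa$ first shows that $\kappa$ is a limit ordinal, and an analogous argument using $\omega \in V_\kappa$ shows $\kappa > \omega$. Then for each $\alpha < \kappa$ I would apply elementarity to the $\Sigma_1$ formula ``there exist $\lambda$ and a bijection $f : \lambda \to V_\alpha$'', exploiting the fact that ``$x = V_\alpha$'' is $\Delta_1$ (see Appendix A). This holds in $V$ with $\lambda = |V_\alpha|$, so there are witnesses $\lambda, f \in V_\kappa$, giving $|V_\alpha| < \kappa$. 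Since $\kappa$ is a limit ordinal with $|V_{\alpha+1}| < \kappa$ for all $\alpha < \kappa$, we get $|V_\kappa| = \kappa$, i.e.\ $\beth_\kappa = \kappa$.

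For the reverse direction, assume $\beth_\kappa = \kappa$. The least beth fixed point lies above $\omega$, so $\kappa$ is uncountable, and for any $x \in V_\kappa$ we have $|trcl(x)| \leq |V_{rank(x)+1}| = \beth_{rank(x)+1} < \kappa$, so $V_\kappa \subseteq H_\kappa$; the reverse inclusion is routine, yielding $V_\kappa = H_\kappa$. Given a $\Sigma_1$ formula $\exists x\, \psi(x,a)$ with $a \in V_\kappa$ and $\psi$ a $\Delta_0$ matrix, if it holds in $V$ it holds in $V_\theta$ for any sufficiently large regular $\theta$. I would then choose $M \prec V_\theta$ with $trcl(\{a\}) \subseteq M$ and $|M| = |trcl(a)| + \aleph_0 < \kappa$; such an $M$ satisfies the formula with some witness $x_0 \in M$. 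The Mostowski collapse of $M$ fixes $a$, since $trcl(\{a\}) \subseteq M$ is transitive below $a$, producing a transitive $\bar M$ of cardinality less than $\kappa$ containing a witness $\bar x_0$, with $V \models \psi(\bar x_0, a)$ by $\Delta_0$ absoluteness. Then $\bar x_0 \in H_\kappa = V_\kappa$, establishing $V_\kappa \models \exists x\, \psi(x,a)$; the converse absoluteness is trivial.

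The main subtlety, as I see it, is ensuring that the Mostowski collapse of $M$ actually fixes the parameter $a$: this is why one must include all of $trcl(\{a\})$ in $M$ rather than just $a$ itself, and why it is essential that $V_\kappa = H_\kappa$ so that such an $M$ can be chosen with cardinality strictly below $\kappa$. Everything else reduces to routine manipulations with $\Delta_0$ absoluteness and the basic arithmetic of the beth function.
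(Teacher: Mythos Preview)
Your approach is essentially the paper's: show $\kappa>\omega$ is a limit, reflect bijection statements to bound $|V_\alpha|$ below $\kappa$, and for the converse establish $V_\kappa=H_\kappa$ and run the L\"owenheim--Skolem/collapse argument (which is exactly the content of the lemma $H_\kappa\prec_{\Sigma_1} V$ that the paper invokes).

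Two points need tightening. First, Appendix~A classifies ``$x=V_\alpha$'' as $\Pi_1$, not $\Delta_1$; with only $\alpha$ as parameter your displayed formula is $\Sigma_2$, so $\Sigma_1$-elementarity does not apply as stated. The fix is simple and is what the paper does implicitly: since you have already shown $\kappa$ is a limit ordinal, $V_\alpha\in V_\kappa$, so take $V_\alpha$ itself as the parameter and use the genuinely $\Sigma_1$ formula ``there exist an ordinal $\lambda$ and a bijection $f:\lambda\to V_\alpha$''. Second, the identification ``$|V_\kappa|=\kappa$, i.e.\ $\beth_\kappa=\kappa$'' is not automatic: $\beth_\kappa=|V_{\omega+\kappa}|$, so you need $\omega+\kappa=\kappa$, equivalently $\kappa\geq\omega^2$. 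This does follow from what you have (e.g.\ $|V_{\omega+1}|=2^{\aleph_0}<\kappa$ already forces $\kappa$ above an uncountable cardinal, hence above $\omega^2$), but the step should be made explicit; the paper singles it out for exactly this reason.
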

\begin{proof}
    The axioms of infinity and pairing are $\Sigma_1$ expressible, using the sets to be paired as parameters in the latter case, so if $\kappa\in C^{(1)}$, then $\kappa>\omega$ is a limit ordinal. For any set $x$, both "there exists a bijection $f:\alpha\rightarrow x$" (where $\alpha$ is an ordinal parameter) and "there exists an ordinal $\alpha$ with a bijection $f:\alpha\rightarrow x$" are $\Sigma_1$, so $V_\kappa$ correctly identifies an ordinal with the same cardinality as each of its elements. It follows that $\kappa$ is larger than many uncountable cardinals and certainly larger than $\omega^2$, so $\omega+\alpha<\kappa$ whenever $\alpha<\kappa$. An easy induction shows that $|V_{\omega+\alpha}|=\beth_\alpha$ for any ordinal $\alpha$. Then because $V_{\omega+\alpha}\in V_\kappa$ for all $\alpha<\kappa$, it follows that $\beth_\alpha\in V_\kappa$ and thus $\beth_\alpha<\kappa$ for all such $\alpha$. Therefore we must have $\beth_\kappa=\kappa$.

Conversely, if $\beth_\kappa=\kappa$, then for any $x\in V_\kappa$, there is a $\beta<\kappa$ such that $trcl(x)\subset V_{\omega+\beta}$. It follows that $|trcl(x)|\leq \beth_\beta<\kappa$, so $x\in H_\kappa$. Since $H_\kappa\subseteq V_\kappa$ for all cardinals $\kappa$, this implies that $V_\kappa=H_\kappa$, so $\kappa\in C^{(1)}$ by Lemma \ref{lemma:HS1correct}.
\end{proof}

We can therefore express $\kappa\in C^{(1)}$ as the $\Pi_1$ formula:
$$\forall x\hspace{2pt}\forall f:x\rightarrow\kappa\hspace{2pt}\forall \alpha<\kappa \hspace{2pt}\exists\beta<\kappa\hspace{2pt}\forall y\in x\hspace{2pt}(rank(y)<\alpha\Rightarrow f(y)\neq \beta)$$
(in words, "there is no surjection $x\cap V_\alpha\rightarrow \kappa$ for any set $x$ and any $\alpha<\kappa$"), using the fact that the rank function is $\Delta_1$.

Bagaria proved an alternative characterization of $C^{(1)}$ included in the above proof, that it consists of exactly the uncountable cardinals $\kappa$ such that $V_\kappa=H_\kappa$.

Though by Corollary \ref{cor:Cndef} any model of ZFC will have many $C^{(n)}$ cardinals, the existence of a regular member of $C^{(n)}$ has large cardinal strength.

\begin{corollary}
\label{cor:cnlargecard}
    (Folklore) A regular cardinal is in $C^{(1)}$ if and only if it inaccessible. The axiom scheme that there is a regular $\Sigma_n$-correct cardinal for each $n$ is equivalent to Ord is Mahlo (i.e., that every definable (with parameters) club class of ordinals contains a regular cardinal).
\end{corollary}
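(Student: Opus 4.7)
My plan is to handle the two assertions separately. For the first, I would invoke Proposition \ref{prop:c1bfp}, which identifies $C^{(1)}$ with the class of beth fixed points, and observe that for regular $\kappa$ the condition $\beth_\kappa = \kappa$ is equivalent to $\kappa$ being an uncountable strong limit: forward, any $\lambda < \kappa$ is bounded by some $\beth_\alpha$ with $\alpha < \kappa$, whence $2^\lambda \leq \beth_{\alpha+1} < \kappa$; backward, an easy induction on $\alpha < \kappa$ using strong-limitness gives $\beth_\alpha < \kappa$, and $\beth_\alpha \geq \alpha$ forces the supremum to be exactly $\kappa$. Combined with regularity, this yields the desired equivalence.

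For the second claim, the backward direction is a one-line application of Corollary \ref{cor:Cndef}: $C^{(n)}$ is a parameter-free definable club class of ordinals, so the appropriate instance of Ord is Mahlo supplies a regular cardinal in $C^{(n)}$. For the forward direction I would use reflection. Given a definable club class $C = \{x : \phi(x, a)\}$ with $\phi$ of fixed complexity $\Sigma_k$, I would choose $n$ large enough (say $n = k+2$) to absorb the complexity of $\phi$ and of the statement expressing the unboundedness of $C$. Taking a regular $\kappa \in C^{(n)}$ with $a \in V_\kappa$, the unboundedness of $C$ in $V$ reflects to $V_\kappa$, giving cofinally many $\beta < \kappa$ with $V_\kappa \models \phi(\beta, a)$; a further application of correctness transfers these back to $V$, so $C \cap \kappa$ is unbounded in $\kappa$. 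Since $\kappa$ is a limit ordinal and $C$ is closed, $\kappa \in C$, producing the required regular cardinal.

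The only genuine subtlety is arranging that the regular $\Sigma_n$-correct cardinal supplied by the scheme lies above $\mathrm{rank}(a)$. Read in the usual large-cardinal spirit, the scheme is naturally interpreted as asserting a proper class of regular $\Sigma_n$-correct cardinals for each $n$, which immediately supplies the required witness and is the form that sits most cleanly opposite Ord is Mahlo; under this reading the remaining work is routine complexity bookkeeping and I anticipate no further conceptual obstacle.
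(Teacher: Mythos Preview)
Your treatment of the first assertion and of the backward direction of the second is fine and matches the paper. The forward direction is also structurally the same reflection argument the paper gives. The issue is the ``genuine subtlety'' you flag at the end: you resolve it by \emph{reinterpreting} the scheme as asserting a proper class of regular $\Sigma_n$-correct cardinals for each $n$, but the statement as written only asserts existence, and the paper does not reinterpret---it \emph{proves} the proper-class version from the existence version.

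The missing step is a short bootstrapping argument: take a regular $\kappa\in C^{(n+2)}$. For any $\alpha<\kappa$, $V$ satisfies ``there is a regular $C^{(n)}$ cardinal above $\alpha$'' (witnessed by $\kappa$ itself); by $\Sigma_{n+2}$-correctness this reflects to $V_\kappa$, so $V_\kappa$ believes there are unboundedly many regular $C^{(n)}$ cardinals. That is a $\Pi_{n+2}$ sentence, so by correctness again it holds in $V$. Once you have a proper class of regular $C^{(n)}$ cardinals for each $n$, you can choose one above $\mathrm{rank}(a)$ and your reflection argument goes through verbatim. Without this step your proof is incomplete as stated, since the literal scheme gives you only one regular $C^{(n)}$ cardinal per $n$, possibly below the parameter.
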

\begin{proof}
    The only part that is not immediate from previous results is that the existence of regular $C^{(n)}$ cardinals for all $n$ implies that Ord is Mahlo. For this, we first show that the hypothesis implies that for each $n$, there is a proper class of regular $C^{(n)}$ cardinals. Let $\kappa\in C^{(n+2)}$ be regular. Then for any $\alpha<\kappa$, $V$ thinks there is a regular $C^{(n)}$ cardinal above it (namely $\kappa$), so $V_\kappa$ thinks there is a regular $C^{(n)}$ cardinal above $\alpha$. Thus $V_\kappa$ satisfies the $\Pi_{n+2}$ sentence "for every ordinal $\alpha$, there is a regular $C^{(n)}$ cardinal above $\alpha$", so $V$ believes this as well, as desired.
    
    Now let $\phi$ be a $\Sigma_n$ formula defining a club class and $\kappa$ a regular $C^{(n)}$ cardinal large enough for $V_\kappa$ to contain all the parameters used in $\phi$. Then for any $\beta<\kappa$, $V$ thinks "there is some $\alpha>\beta$ such that $\phi(\alpha)$", which is $\Sigma_n$. Thus there are unboundedly many ordinals $\alpha$ in $V_\kappa$ such that $\phi(\alpha)$ holds (in $V_\kappa$ and thus also in $V$), so because $\phi$ defines a closed class, $\phi(\kappa)$ holds. Thus $\kappa$ is a regular cardinal in the class defined by $\phi$, as desired.
\end{proof}

The regular $C^{(2)}$ cardinals are reasonably well-known: they are exactly the $\Sigma_1$-reflecting cardinals\footnote{Some authors use $\Sigma_n$-reflecting to mean regular $\Sigma_n$-correct. However, since this conflicts with Goldstern and Shelah's usage and the word "reflecting" is somewhat overloaded in set theory, I will avoid this in favor of more descriptive terminology.} introduced by Goldstern and Shelah \cite{GSbpfa} to prove the consistency of BPFA, and later used by Aspero and Bagaria to establish the consistency of symmetric bounded forcing axioms in general (\cite{ABbfacont}, Lemma 2.2).

\begin{prop}
\label{prop:c2eqrefl}
    (Folklore) A regular cardinal $\kappa$ is $\Sigma_2$-correct if and only if for all formulas $\phi$, all regular $\theta\geq \kappa$, and all $a\in H_\kappa$ such that $H_\theta\models \phi(a)$, there is a cardinal $\delta<\kappa$ with $a\in H_\delta$ and $H_\delta\models \phi(a)$.
\end{prop}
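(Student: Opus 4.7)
My approach splits into the two directions, with the forward being a direct application of $\Sigma_2$-correctness and the backward requiring more delicate bookkeeping.

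For the forward direction, since $\Sigma_2$-correctness gives $\kappa \in C^{(2)}\subseteq C^{(1)}$, Proposition \ref{prop:c1bfp} yields $V_\kappa = H_\kappa$. The hypothesis "some $H_\theta\models \phi(a)$ for a regular $\theta\geq \kappa$" is captured by the $\Sigma_2$ statement
$$\exists \theta\,\exists x\,\bigl(\theta \text{ is a regular cardinal} \wedge x = H_\theta \wedge a \in x \wedge x \models \phi(a)\bigr),$$
using the $\Pi_1$ definability of "$x = H_\theta$" (saying $x$ is transitive and equal to the set of all $y$ with $|\mathrm{trcl}(y)|<\theta$) together with the $\Delta_1$ definability of set-satisfaction. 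The given $\theta$ witnesses this in $V$, so $\Sigma_2$-correctness transfers it to $V_\kappa = H_\kappa$, producing the required $\delta < \kappa$ with $a\in H_\delta$ and $H_\delta\models \phi(a)$.

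For the backward direction, I would first show $\kappa$ is inaccessible: for $\lambda<\kappa$, the formula "$\exists X\,\exists \mu\,(X = \mathcal{P}(\lambda) \wedge \mu$ is an ordinal with a bijection to $X)$" holds in any $H_\theta$ with regular $\theta > 2^\lambda$, so reflection yields $\delta < \kappa$ with $\mathcal{P}(\lambda) \in H_\delta$, forcing $2^\lambda < \delta$. Hence $V_\kappa = H_\kappa$. It follows directly from the reflection property that $V_\kappa \prec_{\Sigma_1} V$: a $\Sigma_1$ witness in $V$ is captured in a large $H_\theta$, which reflects down to a witness inside some $H_\delta \subseteq V_\kappa$. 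By the argument in Corollary \ref{cor:Cndef}, the upgrade to $V_\kappa \prec_{\Sigma_2} V$ reduces to $\Pi_2$-upward absoluteness: given $V_\kappa \models \forall x\,\exists y\,\chi_0(x,y,a)$ with $\chi_0$ bounded, show $V$ satisfies the same. Suppose toward contradiction $V$ has $x_0$ with $\forall y\,\neg\chi_0(x_0,y,a)$. Then $V$ models the $\Sigma_2$ formula $\exists x\,\forall y\,\neg\chi_0(x,y,a)$, which is witnessed in any sufficiently large $H_\theta$ by $\Pi_1$-downward absoluteness. Reflection yields $\delta<\kappa$ and $x'\in H_\delta\subseteq V_\kappa$ with $H_\delta\models \forall y\,\neg\chi_0(x',y,a)$. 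By the hypothesis on $V_\kappa$ applied to $x'$, there is $y_0\in V_\kappa$ with $\chi_0(x',y_0,a)$ in $V$ (by $\Delta_0$-absoluteness), and $y_0\in H_{\eta_0}$ for some $\eta_0<\kappa$.

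The main obstacle is that $y_0$ need not lie inside $H_\delta$; if it did, $\Delta_0$-absoluteness would immediately contradict $H_\delta \models \forall y\,\neg\chi_0(x',y,a)$. To overcome this I would iterate the reflection: apply it to the strengthened $\Sigma_2$ formula $\exists x\,\forall y\,\neg\chi_0(x,y,a) \wedge \bigl(V_{\eta_0} \text{ has a bijection to an ordinal}\bigr)$ with parameter $\eta_0\in H_\kappa$, forcing the new reflected level to exceed $\eta_0$ while still hosting a witness to $\neg \Phi(a)$; this yields a fresh candidate $x''$ whose corresponding $V_\kappa$-witness $y_0''$ sits in some $H_{\eta_1}$, and the process repeats. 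The plan is that a transfinite iteration of these reflections, combined with the regularity of $\kappa$ and a closure argument for the definable Skolem function $F: V_\kappa \to V_\kappa$ sending $x$ to its least $V_\kappa$-witness for $\chi_0(x,\cdot,a)$, stabilizes at some $\delta^*<\kappa$ where the reflected witness and its counterexample are simultaneously captured inside $H_{\delta^*}$, producing the contradiction. The critical feature is that the reflection property applies to formulas of arbitrary complexity and with arbitrary parameters from $H_\kappa$, which is what permits this bookkeeping and precisely what distinguishes full $\Sigma_2$-correctness from the $\Sigma_1$-correctness that falls out immediately.
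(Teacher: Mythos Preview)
Your forward direction is fine and matches the paper's approach.

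The backward direction has a genuine gap. Your transfinite iteration does not stabilize in the way you hope, and in fact cannot: the moment the witness $F(x')$ enters the reflected structure $H_{\delta^*}$, the element $x'$ ceases to be a valid witness to $\exists x\,\forall y\,\neg\chi_0(x,y,a)$ in $H_{\delta^*}$, since $\chi_0(x',F(x'),a)$ holds there by $\Delta_0$-absoluteness. So at each stage the reflected witness $x'_n$ is killed at the next stage and replaced by a genuinely new $x'_{n+1}$; at a limit $\delta_\omega=\sup_n\delta_n$ none of the $x'_n$ survive as witnesses, and there is no reason any other $x$ should work either. The set of $\delta<\kappa$ to which you can reflect is merely cofinal (by throwing in ordinal parameters), not club, so you cannot simply intersect it with a club of $\delta$ closed under $F$.

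The missing idea, which is exactly what the paper uses, is that once you know $\kappa$ is inaccessible, $V_\kappa\models ZFC$ and so Lemma~\ref{lemma:HS1correct} applies \emph{inside} $V_\kappa$: for any cardinal $\delta<\kappa$ you get $H_\delta\prec_{\Sigma_1}V_\kappa$. In your framing, this means the $\Pi_1$ formula $\forall y\,\neg\chi_0(x',y,a)$ transfers from $H_\delta$ up to $V_\kappa$ immediately (since $\Sigma_1$-elementarity gives $\Pi_1$-agreement), contradicting the existence of $y_0$ in one step with no iteration needed. The paper runs the contrapositive: it takes a $\Sigma_2$ formula $\exists x\,\psi(a,x)$ true in $V$, reflects $\exists x\,\psi(a,x)$ from a large $H_\theta$ down to some $H_\delta$ with $\delta<\kappa$, and then uses $H_\delta\prec_{\Sigma_1}V_\kappa$ to push the $\Pi_1$ witness relation $\psi(a,b')$ up to $V_\kappa$.
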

\begin{proof}
    For the forward direction, since the assertions that an ordinal is regular and that a set is equal to $H_\theta$ are $\Pi_1$, the statement that there is a regular $\theta$ such that $a\in H_\theta$ and $H_\theta\models \phi(a)$ is $\Sigma_2$. It follows that it is true in $V_\kappa$; let $\delta$ be the cardinal witnessing it.

    For the reverse direction, if $V\models \exists x\psi(a, x)$ where $\psi$ is $\Pi_1$, let $b$ be such that $V \models \psi(a,b)$. Then by Lemma \ref{lemma:HS1correct}, $H_\theta\models \psi(a,b)$ for any regular $\theta$ large enough for $\theta$ to contain $a$ and $b$, and hence $H_\theta\models \exists x\psi(a, x)$. Then applying the reflection property, there is some $\delta<\kappa$ such that $H_\delta\models\exists x\psi(a,x)$. Since by Corollary \ref{cor:cnlargecard} $\kappa$ is inaccessible, the proof of Lemma \ref{lemma:HS1correct} goes through in $V_\kappa$, so $V_\kappa$ agrees with $H_\delta$ about the truth of $\psi$, and thus whichever witness for the statement $H_\delta$ has also works in $V_\kappa$. Hence $V_\kappa\models\exists x \psi(a, x)$, so $\kappa\in C^{(2)}$.
\end{proof}

We can straightforwardly generalize this result to $n>2$ if we require that $\theta$ and $\delta$ are in $C^{(n-1)}$.

The following generalizes a fact about $H_\theta$ that we established in the preceding proof:

\begin{lemma}
    \label{lemma:Cnaddexist}
    (Bagaria) A $\Sigma_{n+1}$ formula holds in $V$ if and only if there is an $\alpha\in C^{(n)}$ such that it holds in $V_\alpha$. Inversely, a $\Pi_{n+1}$ formula holds in $V$ if and only if it holds in $V_\alpha$ for all $\alpha\in C^{(n)}$ such that $V_\alpha$ contains the parameters.
\end{lemma}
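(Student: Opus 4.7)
The plan is to prove the $\Sigma_{n+1}$ equivalence directly and then obtain the $\Pi_{n+1}$ equivalence by contraposition. The central observation is that $V_\alpha \prec_{\Sigma_n} V$ is automatically equivalent to $V_\alpha \prec_{\Pi_n} V$, since agreement on a $\Sigma_n$ formula (with parameters in $V_\alpha$) is the same as agreement on its $\Pi_n$ negation. Thus ``$\Sigma_n$-correctness'' may be freely applied to $\Pi_n$ formulas throughout.

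For the $\Sigma_{n+1}$ case, write the formula as $\exists x\,\psi(x,a)$ with $\psi$ being $\Pi_n$. Suppose first that $V \models \exists x\,\psi(x,a)$ and fix a witness $b \in V$. By Corollary \ref{cor:Cndef}, $C^{(n)}$ is a proper class, so I may pick $\alpha \in C^{(n)}$ large enough that $a,b \in V_\alpha$. Applying the $\Pi_n$-correctness of $V_\alpha$ to the formula $\psi(b,a)$ yields $V_\alpha \models \psi(b,a)$, hence $V_\alpha \models \phi(a)$. Conversely, if $V_\alpha \models \phi(a)$ for some $\alpha \in C^{(n)}$, pick a witness $b \in V_\alpha$ and apply $\Pi_n$-correctness once more, now in the upward direction, to conclude $V \models \psi(b,a)$ and hence $V \models \phi(a)$.

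For the $\Pi_{n+1}$ case, I would argue by contraposition using the $\Sigma_{n+1}$ half applied to $\neg\phi$. Specifically, $V \not\models \phi(a)$ iff $V \models \neg\phi(a)$, and since $\neg\phi$ is $\Sigma_{n+1}$, this is in turn equivalent to the existence of some $\alpha \in C^{(n)}$ with $a \in V_\alpha$ such that $V_\alpha \models \neg\phi(a)$, i.e., $V_\alpha \not\models \phi(a)$. Negating both sides yields the stated ``for all $\alpha \in C^{(n)}$ such that $V_\alpha$ contains the parameters'' formulation.

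There is no real obstacle; the only point worth making explicit is the symmetry of $\Sigma_n$-correctness under negation, which is exactly what permits adding a single outer quantifier to jump one level in the Lévy hierarchy, and thereby reduces the lemma to the already-established reflection properties of $C^{(n)}$.
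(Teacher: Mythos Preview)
Your proof is correct and follows essentially the same argument as the paper: pick a witness for the $\Pi_n$ matrix, choose $\alpha\in C^{(n)}$ large enough to contain it, and transfer $\psi$ using $\Sigma_n$-correctness (equivalently $\Pi_n$-correctness). The paper only writes out the $\Sigma_{n+1}$ direction and leaves the $\Pi_{n+1}$ case implicit; your contraposition argument fills that in cleanly.
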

\begin{proof}
    Let our formula be $\exists y\psi(a, y)$ where $\psi$ is $\Pi_n$ and $a$ is any parameter. If $V_\alpha\models \exists y\psi(a, y)$ for some $\alpha\in C^{(n)}$ such that $a\in V_\alpha$, then if $b\in V_\alpha$ witnesses this, $V\models \psi(a, b)$. It follows that $V\models \exists y \psi(a, y)$.

Conversely, if $b$ is such that $V\models \psi(a, b)$, let $\alpha\in C^{(n)}$ be big enough that $a, b\in V_\alpha$. Then $V_\alpha\models \psi(a, b)$, so $V_\alpha\models \exists y \psi(a, y)$.
\end{proof}

It is obvious from the transitivity of agreement on $\Sigma_n$ truth that $\Sigma_n$-correct cardinals recognize smaller $\Sigma_n$-correct cardinals. Using the previous lemma, we can in fact conclude something slightly stronger:

\begin{lemma}
\label{lemma:recogCn+1}
    If $n\leq m$, $\alpha<\beta$, $\alpha\in C^{(n+1)}$, and $\beta\in C^{(m)}$, then $V_\beta\models\alpha\in C^{(n+1)}$ (or in other words, $V_\alpha\prec_{\Sigma_{n+1}} V_\beta$).
\end{lemma}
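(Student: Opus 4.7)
The plan is to unpack the assertion $V_\beta \models \alpha \in C^{(n+1)}$ into the claim $V_\alpha \prec_{\Sigma_{n+1}} V_\beta$, and then verify both directions of elementarity for a generic $\Sigma_{n+1}$ formula $\phi(x) \equiv \exists y\, \psi(y, x)$ with $\psi$ of class $\Pi_n$, evaluated at arbitrary parameters $a \in V_\alpha$. The two ingredients I will use are: (i) $V_\alpha \prec_{\Sigma_{n+1}} V$, which holds by hypothesis since $\alpha \in C^{(n+1)}$, and which in particular implies $V_\alpha \prec_{\Pi_n} V$ because $\Pi_n \subseteq \Sigma_{n+1}$ up to a redundant existential quantifier; and (ii) $V_\beta \prec_{\Sigma_n} V$, which holds because $\beta \in C^{(m)} \subseteq C^{(n)}$ (using $m \geq n$), and which by taking negations gives $V_\beta \prec_{\Pi_n} V$ as well.

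For the direction $V_\beta \models \phi(a) \Rightarrow V_\alpha \models \phi(a)$, I will pick a witness $b \in V_\beta$ with $V_\beta \models \psi(b, a)$; by (ii), this $\Pi_n$ fact lifts to $V \models \psi(b, a)$, so $V \models \phi(a)$; then (i) pushes this down to $V_\alpha \models \phi(a)$.

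The other direction is where the argument is slightly less automatic: a witness for $\phi(a)$ in $V$ might lie outside $V_\beta$, so the naive chain $V_\alpha \models \phi \Rightarrow V \models \phi \Rightarrow V_\beta \models \phi$ cannot be carried out by simply transporting a single witness. Instead, from $V_\alpha \models \phi(a)$ I will extract a witness $b \in V_\alpha$ with $V_\alpha \models \psi(b, a)$; applying (i) gives $V \models \psi(b, a)$; and since $b$ and $a$ both lie in $V_\alpha \subseteq V_\beta$, (ii) then yields $V_\beta \models \psi(b, a)$, whence $V_\beta \models \phi(a)$. This maneuver of drawing the witness from the smaller model $V_\alpha$, exploiting that $\alpha$ is itself $\Sigma_{n+1}$-correct rather than merely $\Sigma_n$-correct, is the only non-formal step; no deeper obstacle arises.
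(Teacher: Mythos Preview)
Your proof is correct. You directly verify $V_\alpha \prec_{\Sigma_{n+1}} V_\beta$ by chasing witnesses for a generic $\Sigma_{n+1}$ formula through $V$, using that $\alpha \in C^{(n+1)}$ gives agreement between $V_\alpha$ and $V$ on $\Pi_n$ and $\Sigma_{n+1}$ formulas, and that $\beta \in C^{(m)} \subseteq C^{(n)}$ gives agreement between $V_\beta$ and $V$ on $\Pi_n$ formulas. Both directions are handled cleanly; the key observation that the witness for the upward direction can be taken in $V_\alpha$ itself is exactly right.

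The paper takes a different, more packaged route: it invokes Corollary~\ref{cor:Cndef} to say that the predicate ``$\alpha \in C^{(n+1)}$'' is $\Pi_{n+1}$ (hence $\Pi_{m+1}$ since $n \le m$), and then applies Lemma~\ref{lemma:Cnaddexist}, which says that any $\Pi_{m+1}$ statement true in $V$ holds in $V_\beta$ whenever $\beta \in C^{(m)}$ and the parameters lie in $V_\beta$. This is a two-line argument once that infrastructure is in place. Your approach is more self-contained and makes the mechanism explicit, while the paper's approach shows how the result falls out immediately from the complexity analysis of $C^{(n)}$ already established; in effect, your argument is what underlies Lemma~\ref{lemma:Cnaddexist} specialized to this situation.
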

\begin{proof}
    By Corollary \ref{cor:Cndef}, the assertion that $\alpha$ is $\Sigma_{n+1}$-correct is at most $\Pi_{m+1}$, so by Lemma \ref{lemma:Cnaddexist}, it holds in $V_\beta$.
\end{proof}

Thus if a $\Sigma_n$-correct cardinal $\beta$ has a $\Sigma_{n+1}$-correct cardinal $\alpha$ below it, $V_\beta$ will correctly compute the $C^{(n+1)}$ up to and including $\alpha$ (because $V_\alpha$ correctly computes it and $V_\beta$ agrees with $V_\alpha$ on $\Pi_{n+1}$ truth). However, there may be cardinals above $\alpha$ which are $\Sigma_{n+1}$-correct in $V_\beta$ but not in $V$.

Finally, we examine how $\Sigma_n$-correct cardinals interact with small forcing:

\begin{lemma}
    \label{lemma:Cnforce}
    (Folklore) If $\mathbb{P}\in V_\theta$ is a forcing poset and $\theta\in C^{(n)}$, then $\Vdash_\mathbb{P} \theta\in C^{(n)}$.
\end{lemma}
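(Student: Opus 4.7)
The plan is to show that in any generic extension $V[G]$ by $\mathbb{P}$, $V[G]_\theta = V_\theta[G] \prec_{\Sigma_n} V[G]$, by exploiting the fact that names for elements of $V[G]_\theta$ already live in $V_\theta$ and that the forcing relation for $\Sigma_n$ formulas is itself $\Sigma_n$-expressible, so it transfers between $V$ and $V_\theta$.

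First I would observe that $\theta \in C^{(n)}$ implies $\theta \in C^{(1)}$, so by Proposition \ref{prop:c1bfp}, $\beth_\theta = \theta$. Combined with $\mathbb{P} \in V_\theta$, this gives $|\mathbb{P}|<\theta$ and $V_\theta=H_\theta$, so for any $V$-generic $G\subseteq\mathbb{P}$ we have $V[G]_\theta = V_\theta[G]$; every element of $V[G]$ of rank less than $\theta$ arises as $\dot{a}^G$ for some $\mathbb{P}$-name $\dot{a}\in V_\theta$.

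The key technical input is the standard fact that, provably in $ZFC$, for $n\geq 1$ and $\phi$ a $\Sigma_n$ formula, the relation $p\Vdash_{\mathbb{P}}\phi(\dot{a})$ is itself definable by a $\Sigma_n$ formula with parameters $p$, $\mathbb{P}$, and $\dot{a}$. This proceeds by induction on formula complexity: the forcing relation for $\Delta_0$ formulas is $\Delta_1$; existential quantifiers in $\phi$ correspond to existential quantifiers over names via the maximum principle; and the forcing of a negation is a bounded universal quantifier over conditions below $p$ applied to the negated $\Sigma_n$ clause, preserving $\Sigma_n/\Pi_n$ complexity as appropriate. I would either cite this result or defer a precise complexity analysis to Appendix A, which the introduction promises covers exactly such matters.

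With these ingredients assembled, given any $\Sigma_n$ formula $\phi$ and $a\in V_\theta[G]$, fix a name $\dot{a}\in V_\theta$ for $a$. For the forward direction, suppose $V[G]\models\phi(a)$; by the truth lemma in $V$, some $p\in G$ satisfies $V\models (p\Vdash\phi(\dot{a}))$, and since this is a $\Sigma_n$ statement with parameters $p,\mathbb{P},\dot{a}\in V_\theta$, the hypothesis $\theta\in C^{(n)}$ yields $V_\theta\models(p\Vdash\phi(\dot{a}))$. Because every dense subset of $\mathbb{P}$ in $V_\theta$ is also in $V$, $G$ is $V_\theta$-generic, and applying the truth lemma in $V_\theta$ (which has enough $ZFC$ for the $\Sigma_n$ instances of separation and collection needed, by virtue of $\Sigma_n$-correctness with $V$) gives $V_\theta[G]\models\phi(a)$. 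The converse direction runs symmetrically. Since this holds for every $V$-generic $G$, we conclude $\Vdash_\mathbb{P}\theta\in C^{(n)}$.

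The main obstacle is the bookkeeping around the $\Sigma_n$-definability of the forcing relation, together with the need to verify that $V_\theta=H_\theta$ really does satisfy enough $ZFC$ for the forcing machinery and the truth lemma to go through at the $\Sigma_n$ level; fortunately both fall out of $\Sigma_n$-correctness with $V$, provided one is careful to formulate the forcing definition so that the inductive complexity bound is uniform.
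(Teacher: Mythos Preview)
Your proposal is correct and follows essentially the same route as the paper: find a name $\dot a\in V_\theta$ for $a$, use that the $\Sigma_n$ (and $\Pi_n$) forcing relation is itself $\Sigma_n$ (respectively $\Pi_n$) to transfer $p\Vdash\phi(\dot a)$ between $V$ and $V_\theta$, and invoke $V_\theta[G]=V_\theta^{V[G]}$. The paper is terser---it cites Lemma~\ref{lemma:namesize} for the name-size fact and handles only the downward direction explicitly (the upward one being automatic once $\Pi_n$ formulas are covered)---but the ingredients and their assembly are the same as yours.
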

\begin{proof}
    Let $G\subseteq\mathbb{P}$ be $V$-generic, $\phi$ be a $\Sigma_n$ (or $\Pi_n$) formula, and $a\in V_\theta^{V[G]}$ such that $V[G]\models\phi(a)$. By Lemma \ref{lemma:namesize} there is some $\dot{a}\in V_\theta$ such that $a=\dot{a}^G$. Then if $p\in G$ forces $\phi(\dot{a})$, $V_\theta\models p\Vdash_{\mathbb{P}}\phi(\dot{a})$ because the forcing relation for $\Sigma_n$ ($\Pi_n$) formulas is $\Sigma_n$ ($\Pi_n$). Thus $V_\theta[G]=V_\theta^{V[G]}\models \phi(a)$, where the equality follows from Lemma \ref{lemma:namesize} as well.
\end{proof}

We also have the geological converse:

\begin{lemma}
    \label{lemma:Cnground}
    If $\mathbb{P}\in V_\theta$ is a forcing poset, $G\subseteq\mathbb{P}$ is a $V$-generic filter, and $\theta\in (C^{(n)})^{V[G]}$, then $\theta\in (C^{(n)})^V$.
\end{lemma}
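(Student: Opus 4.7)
The plan is to split on $n$ and use the first-order definability of $V$ inside $V[G]$ to pull $\Sigma_n$-reflection downward. For $n=1$, I argue directly from the characterization of $C^{(1)}$ as the class of beth fixed points (Proposition \ref{prop:c1bfp}). For $n\geq 2$, I use the ground definability theorem together with the assumed $\Sigma_n$-correctness of $\theta$ in $V[G]$.

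For $n=1$: suppose for contradiction that $\beth_\alpha^V \geq \theta$ for some $\alpha < \theta$. Then in $V$ there is an injection $\theta \hookrightarrow V_{\omega+\alpha}^V$, which persists to $V[G]$. Since $V_{\omega+\alpha}^V \subseteq V_{\omega+\alpha}^{V[G]}$, this forces $|V_{\omega+\alpha}^{V[G]}|^{V[G]} \geq \theta$; but the hypothesis $\theta \in (C^{(1)})^{V[G]}$ gives $|V_{\omega+\alpha}^{V[G]}|^{V[G]} = \beth_\alpha^{V[G]} < \theta$, a contradiction. Hence $\beth_\theta^V = \theta$.

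For $n\geq 2$: first, exactly as in the proof of Lemma \ref{lemma:Cnforce}, we have $V_\theta^{V[G]} = V_\theta^V[G]$. Set $r := \mathcal{P}(\mathbb{P})^V \in V_\theta^V$. Theorem \ref{thm:grounddef} provides a $\Sigma_2$ formula $\rho(\cdot, r)$ defining $V$ inside $V[G]$, and by Lemma \ref{lemma:groundrel} every $\Sigma_n$ formula $\phi$ has a $\Sigma_n$ translation $\tilde{\phi}(\cdot, r)$ satisfying $V[G] \models \tilde{\phi}(a, r) \Leftrightarrow V \models \phi(a)$ for all $a \in V$. Given any $\Sigma_n$ formula $\phi$ and $a \in V_\theta^V$, the parameters $a, r$ lie in $V_\theta^{V[G]}$, so the assumed $\Sigma_n$-correctness of $\theta$ in $V[G]$ yields $V[G] \models \tilde{\phi}(a, r) \Leftrightarrow V_\theta^{V[G]} \models \tilde{\phi}(a, r)$.

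The remaining step is to verify that, interpreted inside $V_\theta^{V[G]}$, the formula $\rho(\cdot, r)$ picks out exactly $V_\theta^V$, so that $V_\theta^{V[G]} \models \tilde{\phi}(a, r) \Leftrightarrow V_\theta^V \models \phi(a)$. This follows because there are unboundedly many beth fixed points $\theta' < \theta$ of sufficiently high cofinality in $V[G]$, and for each such $\theta'$ the Uniqueness Lemma identifies $V_{\theta'}^V$ as the unique $M \subseteq V_{\theta'}^{V[G]}$ satisfying the requisite approximation, cover, and $ZFC_\delta$ conditions. Chaining the equivalences then gives $V \models \phi(a) \Leftrightarrow V_\theta^V \models \phi(a)$, so $\theta \in (C^{(n)})^V$. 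The main technical obstacle is precisely this last check — that the $\Sigma_2$ ground definability formula is computed correctly inside the bounded universe $V_\theta^{V[G]}$ — but once this is settled via the Uniqueness Lemma and the inaccessibility of $\theta$ in $V[G]$ (Corollary \ref{cor:cnlargecard}), the rest of the argument is a routine reflection.
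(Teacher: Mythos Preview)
Your approach is essentially the paper's: for $n\geq 2$, use ground definability (Theorem~\ref{thm:grounddef} and Lemma~\ref{lemma:groundrel}) to translate $V\models\phi(a)$ into a $\Sigma_n$ statement about $r=\mathcal{P}(\mathbb{P})^V$ in $V[G]$, reflect it to $V_\theta^{V[G]}$ by correctness, and then observe that the ground defined by $r$ as computed inside $V_\theta^{V[G]}$ is exactly $V_\theta^V$. Your $n=1$ case via beth fixed points is a fine alternative to the paper's one-line appeal to downward $\Pi_1$-absoluteness.

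One correction: your final sentence invokes ``the inaccessibility of $\theta$ in $V[G]$ (Corollary~\ref{cor:cnlargecard})'', but that corollary requires $\theta$ to be regular, which is not assumed here. Fortunately you do not need inaccessibility. What you actually need---that there are unboundedly many beth fixed points $\theta'<\theta$ of cofinality $>\delta$ in $V[G]$---follows directly from $\theta\in(C^{(2)})^{V[G]}$, since ``there exists $\beta>\alpha$ with $\beth_\beta=\beta$ and $cf(\beta)>\delta$'' is $\Sigma_2$ and true in $V[G]$ for every $\alpha$. The paper phrases this step simply as: since $\theta$ is at least $\Sigma_2$-correct in $V[G]$, $V_\theta^{V[G]}$ computes the ground defined by $r$ as $V_\theta^V$. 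With that fix, your argument is complete.
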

\begin{proof}
    If $n=1$, this follows because $C^{(1)}$ is $\Pi_1$-definable and $\Pi_1$ formulas are downward absolute between transitive structures. For $n\geq 2$, let $r=\mathcal{P}(\mathbb{P})^V$ and let $\phi(a)$ be a $\Sigma_{n}$ or $\Pi_n$ formula which holds in $V$ for some $a\in V_\theta$. By Lemma \ref{lemma:groundrel}, $V\models\phi(a)$ is $\Sigma_n$ or $\Pi_n$ expressible in $V[G]$ with $a$ and $r$ as parameters. Since $\theta$ is a limit ordinal, $r\in V_\theta$, so $V_\theta^{V[G]}$ agrees with $V[G]$ that the ground defined by $r$ satisfies $\phi(a)$. Since $\theta$ is at least $\Sigma_2$-correct in $V[G]$, by the Ground Definability Theorem \ref{thm:grounddef} $V_\theta^{V[G]}$ computes the ground defined by $r$ as $V_\theta$. It thus follows that $V_\theta\models \phi(a)$.
\end{proof}

\section[\texorpdfstring{Supercompactness for $C^{(n)}$}{Supercompactness for C\^(n)}]{Supercompactness for $C^{(n)}$}
\label{section:scCn}

We study a strengthening of supercompactness where the elementary embeddings are also required to preserve initial segments of a class, analogous to the notion of strongness for a class $A$ used in some definitions of Woodin cardinals.

\begin{definition}
    \begin{enumerate}
        \item A cardinal $\kappa$ is $\nu$-supercompact for a class $A$ iff there is an inner model $M$ with an elementary embedding $j:V\rightarrow M$ such that:
        \begin{itemize}
            \item $crit(j)=\kappa$
            \item $^\nu M\subset M$
            \item $j(\kappa)>\nu$
            \item $j(A\cap V_\kappa)\cap V_\nu=A\cap V_\nu$
        \end{itemize}
        \item $\kappa$ is supercompact for $A$ if it is $\nu$-supercompact for $A$ for all cardinals $\nu$
        \item A cardinal $\delta$ is Woodin for supercompactness if for all $A\subseteq V_\delta$, there is some $\kappa<\delta$ such that $(V_\delta, \in, A)\models ``\kappa$ is supercompact for $A"$
    \end{enumerate}
\end{definition}

Kanamori's \cite{kanamori} Exercise 24.19 shows that the Woodin for supercompactness cardinals are exactly the Vopenka cardinals; Kentaro Sato (\cite{SatoDH}, Corollary 10.6) and Norman Perlmutter (\cite{Perlmutter}, Theorem 5.10) independently found alternative proofs. Going forward I will not make much use of Vopenka cardinals, but working in $V_\delta$ for $\delta$ Vopenka rather than assuming some version of Vopenka's principle in $V$ can simplify the metamathematical details, so I include them in the above definition for completeness.

The following standard result (see e.g. Kanamori \cite{kanamori} Lemma 22.12 and surrounding discussion, Jech \cite{jech} pg. 377 between Corollary 20.18 and Lemma 20.19) is very useful for analyzing supercompactness for $A$.

\begin{lemma}
    \label{lemma:scfactor}
    If $\kappa\leq \nu$ is a cardinal and $j:V\rightarrow M$ witnesses that $\kappa$ is $\nu$-supercompact, let $U:=\{X\subseteq \mathcal{P}_\kappa(\nu)\sbp j"\nu\in j(X)\}$ be the normal ultrafilter derived from $j$ and $k: Ult(V, U)\rightarrow M$ be defined by $k([f]_U)=j(f)(j"\nu)$. Then $k$ is an elementary embedding with critical point above $\nu$ and $j=k\circ j_U$.
\end{lemma}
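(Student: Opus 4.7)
The plan is to carry out the four standard verifications for a factorization of a supercompactness embedding through its derived ultrapower: well-definedness of $k$, elementarity of $k$, the factorization identity $j = k \circ j_U$, and the bound on the critical point. The first three go through by \L o\'s-style absoluteness arguments using the defining property of $U$; the real work lies in the critical point bound, where one must represent ordinals below $\nu$ by functions whose $j$-images can be evaluated explicitly at $j"\nu$.

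For well-definedness, if $[f]_U = [g]_U$, then $X := \{s \in \mathcal{P}_\kappa(\nu) \mid f(s) = g(s)\} \in U$, so $j"\nu \in j(X)$ by the definition of $U$; by elementarity $j(X) = \{s \in j(\mathcal{P}_\kappa(\nu)) \mid j(f)(s) = j(g)(s)\}^M$, giving $j(f)(j"\nu) = j(g)(j"\nu)$. The identical argument applied to an arbitrary formula $\varphi$, combined with \L o\'s's theorem for the ultrapower, establishes elementarity: $\mathrm{Ult}(V,U) \models \varphi([f_1]_U, \dotsc, [f_n]_U)$ iff the set $\{s \mid \varphi(f_1(s), \dotsc, f_n(s))\}$ lies in $U$, iff $j"\nu$ is in its $j$-image, iff $M \models \varphi(j(f_1)(j"\nu), \dotsc, j(f_n)(j"\nu))$. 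The factorization identity $j = k \circ j_U$ then follows by evaluation on constants: $j_U(x) = [c_x]_U$, so $k(j_U(x)) = j(c_x)(j"\nu) = j(x)$, since $j$ elementarily sends $c_x$ to the constant function with value $j(x)$ (on the appropriate domain in $M$).

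The main obstacle is showing $\mathrm{crit}(k) > \nu$, i.e., that $k$ fixes every ordinal through $\nu$. I would use the standard identity $[f]_U = j_U(f)(j_U"\nu)$ for any $f$, together with the order-preservation of $j_U$ and $j$ on ordinals, to represent each relevant ordinal concretely. For $\alpha < \nu$, take $f_\alpha(s) := \mathrm{ot}(s \cap \alpha)$; then
$$[f_\alpha]_U = j_U(f_\alpha)(j_U"\nu) = \mathrm{ot}(j_U"\nu \cap j_U(\alpha)) = \mathrm{ot}(j_U"\alpha) = \alpha,$$
and applying $k$ gives $k(\alpha) = j(f_\alpha)(j"\nu) = \mathrm{ot}(j"\nu \cap j(\alpha)) = \mathrm{ot}(j"\alpha) = \alpha$. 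A parallel computation with $g(s) := \mathrm{ot}(s)$ yields $[g]_U = \nu$ and $k(\nu) = \mathrm{ot}(j"\nu) = \nu$. The delicate part is keeping the three embeddings $j$, $j_U$, and $k$ straight while checking that these order-type calculations transfer correctly across them; once that bookkeeping is settled, $\mathrm{crit}(k) > \nu$ is immediate and all four pieces of the lemma are established.
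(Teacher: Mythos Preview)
Your proposal is correct and is precisely the standard argument found in the references the paper cites (Kanamori, Lemma 22.12, and Jech, p.~377). The paper itself does not supply a proof of this lemma at all, treating it as a known result and referring the reader to those sources; your write-up is a faithful reconstruction of that standard proof.
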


Thus, supercompactness for $A$ has a straightforward combinatorial characterization:

\begin{corollary}
    If $\kappa$ is $\nu$-supercompact for a class $A$, there is a normal ultrafilter $U\subset \mathcal{P}(\mathcal{P}_\kappa(\nu))$ witnessing that fact.
\end{corollary}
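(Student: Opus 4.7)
The plan is to derive the normal ultrafilter from $j$ in the standard way and verify that the resulting ultrapower embedding inherits the extra preservation condition for $A$. Specifically, I would define
\[ U := \{X\subseteq \mathcal{P}_\kappa(\nu) \sbp j"\nu\in j(X)\}, \]
exactly as in Lemma \ref{lemma:scfactor}. It is standard that $U$ is a normal (fine) ultrafilter on $\mathcal{P}_\kappa(\nu)$ and that the ultrapower embedding $j_U:V\rightarrow \text{Ult}(V,U)$ satisfies $crit(j_U)=\kappa$, ${}^\nu \text{Ult}(V,U)\subset \text{Ult}(V,U)$, and $j_U(\kappa)>\nu$; these verifications are routine and I would simply cite them.

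The only nontrivial verification is the fourth clause, that $j_U(A\cap V_\kappa)\cap V_\nu = A\cap V_\nu$. Here I would invoke Lemma \ref{lemma:scfactor} to factor $j=k\circ j_U$ with $crit(k)>\nu$. The key observation is that an elementary embedding whose critical point exceeds $\nu$ must fix every subset of $V_\nu$ pointwise: any such set has rank at most $\nu$, so lies strictly below $crit(k)$, and a standard induction on rank shows embeddings fix everything below their critical point. In particular $k$ fixes both $A\cap V_\nu$ and $j_U(A\cap V_\kappa)\cap V_\nu$.

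Using this, I compute
\[ A\cap V_\nu = j(A\cap V_\kappa)\cap V_\nu = k(j_U(A\cap V_\kappa))\cap k(V_\nu) = k\bigl(j_U(A\cap V_\kappa)\cap V_\nu\bigr) = j_U(A\cap V_\kappa)\cap V_\nu, \]
where the first equality is the hypothesis that $j$ witnesses $\nu$-supercompactness for $A$, the second uses $j=k\circ j_U$ together with $k(V_\nu)=V_\nu$, the third is elementarity of $k$, and the fourth is the fixing property just noted.

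The main (and only) obstacle is the subset-fixing lemma for $k$, and it is not really an obstacle — it is a familiar fact about critical points, and the point of the proof is really just assembling the factorization and applying it to the extra predicate $A$. No genuinely new combinatorial work is needed beyond what is already encoded in Lemma \ref{lemma:scfactor}.
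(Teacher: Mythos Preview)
Your proposal is correct and follows essentially the same approach as the paper: derive $U$ from $j$, invoke Lemma \ref{lemma:scfactor} to factor $j=k\circ j_U$ with $crit(k)>\nu$, and use that $k$ fixes everything of rank at most $\nu$ to transfer the $A$-preservation from $j$ to $j_U$. The paper's computation is slightly terser (it simply intersects with $V_\nu$ throughout rather than routing through $k(V_\nu)$ and elementarity), but the content is identical.
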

\begin{proof}
    Let $U$ and $k$ be as in the preceding lemma. Then $j_U$ is a $\nu$-supercompactness embedding, and we have:
    \begin{align*}
        A\cap V_\nu&=j(A\cap V_\kappa)\cap V_\nu\\
        &=k(j_U(A\cap V_\kappa))\cap V_\nu\\
        &=j_U(A\cap V_\kappa)\cap V_\nu
    \end{align*}
    where the final equality holds because $k$ does not move elements of $V_\nu$.
\end{proof}

Laver functions are very useful for many applications of supercompact cardinals, including Baumgartner-style consistency proofs of forcing axioms. They generalize easily to supercompactness for $A$:

\begin{lemma}
If $\kappa$ is supercompact for some class $A$, there exists a Laver function $f$ on $\kappa$ for $A$. That is, $f:\kappa\rightarrow V_\kappa$ is such that for all sets $x$ and all ordinals $\lambda\geq |trcl(x)|$, there is an elementary embedding $j:V\rightarrow M$ witnessing that $\kappa$ is $\lambda$-supercompact for $A$ such that $j(f)(\kappa)=x$.
\label{lemma:laver}
\end{lemma}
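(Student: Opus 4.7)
The plan is to adapt Laver's original argument for ordinary supercompactness, defining $f$ by a ``least counterexample'' recursion and then deriving a contradiction from the supercompactness for $A$ hypothesis via a reflection argument through a sufficiently strong embedding. Define $f:\kappa\to V_\kappa$ recursively as follows. At stage $\alpha<\kappa$, if there is a pair $(\lambda, x)$ with $\lambda<\kappa$ and $|trcl(x)|\leq\lambda$ such that no elementary embedding witnessing that $\alpha$ is $\lambda$-supercompact for $A$ maps $f\upharpoonright\alpha$ to a function taking value $x$ at $\alpha$, set $f(\alpha)=x$ for the pair minimizing $\lambda$ first and $\mathrm{rank}(x)$ second; otherwise $f(\alpha)=\emptyset$. (Since $A$ may be a proper class, this recursion uses $A$ as a predicate rather than a set parameter.)

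Suppose for contradiction that $f$ fails to be a Laver function for $A$. Fix a lex-minimal counterexample $(\lambda, x)$ and choose $\nu$ a strong limit cardinal much larger than $2^{2^\lambda}$ and $\mathrm{rank}(x)$; by hypothesis and Lemma \ref{lemma:scfactor}, there is a $\nu$-supercompactness for $A$ embedding $j=j_U:V\to M$ arising from a normal ultrafilter $U$ on $\mathcal{P}_\kappa(\nu)$. The goal is to show that in $M$, the pair $(\lambda,x)$ is itself the lex-minimal counterexample witnessing that $j(f)$ fails the Laver property at stage $\kappa$ with respect to supercompactness for $j(A\cap V_\kappa)$: this forces $j(f)(\kappa)=x$ by the way $j(f)$ is defined inside $M$, which is the elementary-image statement that some $\lambda$-supercompactness for $A$ embedding in $V$ sends $f$ to a function taking value $x$ at $\kappa$, contradicting the choice of $(\lambda,x)$. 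Verifying the lex-minimality has two parts. For each lex-smaller pair $(\lambda',x')$, the minimality of $(\lambda,x)$ in $V$ gives a $\lambda'$-supercompactness for $A$ embedding $j'$ in $V$ sending $f$ to the desired value; the normal ultrafilter $U'$ derived from $j'$ is coded inside $V_\nu$ and so lies in $M$ by the closure of $M$ under $\nu$-sequences, and the corresponding ultrapower inside $M$ provides an embedding witnessing $\lambda'$-supercompactness for $j(A\cap V_\kappa)$ in $M$ with the right value at $\kappa$, using the agreement $j(A\cap V_\kappa)\cap V_\nu = A\cap V_\nu$. Conversely, if $(\lambda,x)$ were not a counterexample in $M$, the ultrafilter witnessing this would again be in $V_\nu\cap M\subseteq V$ and, by the same agreement on $A\cap V_\nu$, induce an honest $\lambda$-supercompactness for $A$ embedding in $V$ contradicting the assumption on $(\lambda,x)$.

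The main obstacle is the last step: translating between ``supercompactness for $A$'' as computed in $V$ and ``supercompactness for $j(A\cap V_\kappa)$'' as computed in $M$, and ensuring that a derived ultrapower performs this translation faithfully in both directions. The key leverage is that $j$ preserves the initial segment of $A$ up to $\nu$, while all ultrafilters on $\mathcal{P}_\kappa(\lambda)$ and $\mathcal{P}_\kappa(\lambda')$ are coded well below $V_\nu$ (using $\nu\gg 2^{2^\lambda}$) and hence lie in $V\cap M$ with identical ultrapower analyses on both sides. Once this is checked, the $A$-preservation clause for the derived embeddings follows automatically, and the minimal-counterexample contradiction closes.
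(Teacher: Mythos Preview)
Your outline follows the paper's strategy closely---Laver's minimal-counterexample recursion, with the extra bookkeeping that the ``supercompact for $A$'' clause must be translated between $V$ and $M$ via the agreement $j(A\cap V_\kappa)\cap V_\nu = A\cap V_\nu$ and the fact that all relevant normal ultrafilters sit inside $V_\nu\subseteq M$. That part is fine and matches the paper.

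The final step, however, is not argued correctly. The equation $j(f)(\kappa)=x$ is \emph{not} the ``elementary-image statement'' of anything in $V$: since $\kappa=\mathrm{crit}(j)$ is not of the form $j(\alpha)$, you cannot reflect this equality downward by elementarity. What actually closes the argument is the direct observation that your big embedding $j$, being a $\nu$-supercompactness-for-$A$ embedding with $\nu\geq\lambda$, is in particular a $\lambda$-supercompactness-for-$A$ embedding; so $j$ together with $j(f)(\kappa)=x$ already refutes the claim that $(\lambda,x)$ was a counterexample. The paper takes a slightly different route here: it phrases the counterexample property in terms of normal measures on $\mathcal{P}_\kappa(\lambda_f)$, so to obtain the contradiction it must factor $j$ through the derived measure at level $\lambda_f$ and check (via $\mathrm{crit}(k)>\lambda_f$ and $x\in H_{\lambda_f^+}$) that the factor embedding still sends $f$ to $x$ at $\kappa$. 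Your formulation in terms of arbitrary witnessing embeddings lets you skip that factoring, but you should replace the ``elementary-image'' sentence with the one-line observation above. A minor additional point: minimizing rank alone does not pin down $x$ uniquely, so you should fix a well-ordering (absolute between $V$ and $M$ on $V_\nu$) to break ties.
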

\begin{proof}
We carry out the standard proof of the existence of a Laver function (e.g. Jech \cite{jech} Theorem 20.21) and verify that the embeddings we consider can be taken to preserve $A$. We assume toward a contradiction that for all $f:\kappa\rightarrow V_\kappa$ there is some set $x$ and some $\lambda\geq |trcl(x)|$ such that for all elementary embeddings $j$ witnessing that $\kappa$ is $\lambda$-supercompact for $A$, $j(f)(\kappa)\neq x$; let $\lambda_f$ be the least $\lambda$ for which such an $x$ exists. Choose some $\nu$ greater than any possible value of $2^{\lambda_f^{<\kappa}}$ for $f:\kappa\rightarrow V_\kappa$ and let $j: V\rightarrow M$ witness that $\kappa$ is $\nu$-supercompact for $A$.

Let $\phi(g, \beta)$ be the statement that $g$ is a function from some cardinal $\alpha$ to $V_\alpha$ and $\beta\geq \alpha$ is minimal such that for some $x$ with $|trcl(x)|\leq\beta$, there is no normal measure $U\subset \mathcal{P}(\mathcal{P}_\alpha(\beta))$ such that $j_U(A\cap V_\alpha)\cap V_\beta=A\cap V_\beta$ and $j_U(f)(\alpha)=x$. Then for all $f:\kappa\rightarrow V_\kappa$, $M\models \phi(f, \lambda_f)$ since $M$ was chosen so as to contain everything relevant to the truth of $\phi$.

For any suitable function $g$, $\lambda_g$ denotes the unique cardinal such that $\phi(g,\lambda_g)$ holds if one exists. Let $B=\{\alpha<\kappa\hspace{2pt} | \hspace{2pt} \forall g:\alpha\rightarrow V_\alpha \hspace{3pt} \phi(g,\lambda_g)\}$. Then $j(B)$ is the set of all $\alpha<j(\kappa)$ satisfying the same property, except that in the statement of $\phi$ we now say there is no normal measure $U$ such that $j_U(g)(\alpha)=x$ and $j_U(j(A\cap V_\alpha))\cap V_{\lambda_g}=j(A\cap V_\alpha)\cap V_{\lambda_g}$. If $crit(j_U)=\kappa$, then since $j(A\cap V_\kappa)\cap V_\nu=A\cap V_\nu$, $j_U(j(A\cap V_\kappa))\cap V_{\lambda_g}=j_U(j(A\cap V_\kappa)\cap V_\kappa)\cap V_{\lambda_g}=j_U(A\cap V_\kappa)\cap V_{\lambda_g}$ and, since $\nu>\lambda_g$ for all $g$, $j(A\cap V_\kappa)\cap V_{\lambda_g}=A\cap V_{\lambda_g}$. Hence for any function $g:\kappa\rightarrow V_\kappa$ and normal measure $U\subset \mathcal{P}(\mathcal{P}_\kappa(\lambda_g))$, $j_U(j(A\cap V_\kappa))\cap V_{\lambda_g}=j(A\cap V_\kappa)\cap V_{\lambda_g}$ iff $j_U(A\cap V_\kappa)\cap V_{\lambda_g}=A\cap V_{\lambda_g}$, so $\kappa\in j(B)$ by our hypothesis about the nonexistence of Laver functions.

Now we inductively define $f:\kappa\rightarrow V_\kappa$ so that if $\alpha\in B$ then $f(\alpha)$ witnesses $\phi(f\upharpoonright \alpha, \lambda_{f\upharpoonright \alpha})$, and if $\alpha\not\in B$, $f(\alpha)=\emptyset$. Let $x=j(f)(\kappa)$. $j(f)$ is constructed in the same way as $f$, except with $j(A)$ used in place of $A$ in $\phi$, but as shown in the previous paragraph this doesn't matter for functions with domain $\kappa$ like $j(f)\upharpoonright \kappa=f$, so $x$ witnesses the truth of $\phi(f,\lambda_f)$ in $M$. By the choice of $\nu$, $M$ and $V$ have exactly the same ultrafilters on $\mathcal{P}_\kappa(\lambda_f)$ and compute their ultrapowers identically, so $x$ also witnesses the truth of $\phi(f, \lambda_f)$ in $V$.

Now we let $U\subset \mathcal{P}(\mathcal{P}_\kappa(\lambda_f))$ be the normal measure derived from $j$ and obtain a contradiction by showing that $j_U(A\cap V_\kappa)\cap V_{\lambda_f}=A\cap V_{\lambda_f}$ and $j_U(f)(\kappa)=x$. Let $k: Ult(V, U)\rightarrow V$ be the elementary embedding such that $j=k\circ j_U$. Then $k\upharpoonright V_{\lambda_f}=id$, so for any set $y$, $y\in j_U(A\cap V_\kappa)\cap V_{\lambda_f}$ iff $k(y)\in k(j_U(A\cap V_\kappa))\cap V_{\lambda_f}$ iff $y\in j(A\cap V_\kappa)\cap V_{\lambda_f}$ iff $y\in A\cap V_{\lambda_f}$. Furthermore, $x\in H_{\lambda_f^+}$ and $k(\lambda_f)=\lambda_f$, so it can be shown inductively that $k(x)=x$. However, we also have that
$$k(j_U(f)(\kappa))=k(j_U(f))(k(\kappa))=j(f)(\kappa)=x$$
so by the injectivity of $k$, $j_U(f)(\kappa)=x$. This contradicts the fact that $x$ is a counterexample to $f$ being a Laver function for $A$, completing the proof.
\end{proof}

Most of interest to us is the case where $A=C^{(n)}$ for some $n$. (These are not to be confused with the $C^{(n)}$-supercompact cardinals Bagaria considered in \cite{bagariacn}.) In this case we get the following alternative characterization:

\begin{lemma}
\label{lemma:scfCnalt}
   The following are equivalent for all cardinals $\kappa$ and all positive integers $n$:
   \begin{enumerate}
       \item $\kappa$ is supercompact for $C^{(n)}$
       \item For every cardinal $\nu$, $\Sigma_{n+1}$ formula $\phi$, and set $a$ such that $V\models \phi(a)$, there is an elementary embedding $j:V\rightarrow M$ such that:
       \begin{itemize}
            \item $crit(j)=\kappa$
            \item $^\nu M\subset M$
            \item $j(\kappa)>\nu$
            \item $M\models\phi(a)$
        \end{itemize}
   \end{enumerate}
\end{lemma}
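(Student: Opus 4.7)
The plan is to base both directions on Lemma \ref{lemma:Cnaddexist}, which converts $\Sigma_{n+1}$ truth in a model into reflection to some $V_\alpha$ with $\alpha\in C^{(n)}$; this will be the bridge between ``$M\models\phi(a)$'' in (2) and ``$M$ computes $C^{(n)}$ correctly at the relevant level'' in (1). The main technical hurdle, in both directions, is ensuring that $V_\alpha$ (or $V_\nu$) is absolute between $V$ and $M$ at the witnessing rank; this will follow routinely from the standard fact $H_{\mu^+}\subseteq M$ for $\mu$-supercompactness embeddings combined with the beth-fixed-point property of $\Sigma_n$-correct ordinals (Proposition \ref{prop:c1bfp}).

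For (1) $\Rightarrow$ (2), given $\nu$, $\phi$, and $a$ with $V\models\phi(a)$, I would first apply Lemma \ref{lemma:Cnaddexist} to fix some $\alpha\in C^{(n)}$ with $a\in V_\alpha$ and $V_\alpha\models\phi(a)$, arranging $\alpha>\nu$ by unboundedness. Using supercompactness for $C^{(n)}$, take an $(\alpha+1)$-supercompactness embedding $j\colon V\to M$ for $C^{(n)}$, with $crit(j)=\kappa$, $j(\kappa)>\alpha$, $^{\alpha+1}M\subseteq M$, and $j(C^{(n)}\cap V_\kappa)\cap V_{\alpha+1}=C^{(n)}\cap V_{\alpha+1}$. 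By elementarity $j(C^{(n)}\cap V_\kappa)=(C^{(n)})^M\cap V_{j(\kappa)}^M$, so the preservation condition forces $M\models\alpha\in C^{(n)}$; the closure property together with $|V_\alpha|=\alpha$ forces $V_\alpha\in M$, and transitivity of $M$ gives $V_\alpha^M=V_\alpha\models\phi(a)$. Running Lemma \ref{lemma:Cnaddexist} inside $M$ then yields $M\models\phi(a)$.

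For (2) $\Rightarrow$ (1), given $\nu$, I would enlarge it (without loss of generality) to lie in $C^{(1)}$, noting that a $\mu$-supercompactness-for-$C^{(n)}$ embedding at any $\mu\geq\nu$ automatically witnesses the same property at level $\nu$ (closure, the bound on $j(\kappa)$, and the preservation equation all restrict downward). I would then set $s:=C^{(n)}\cap V_\nu$ and apply (2) to the formula $\phi(s,\nu)$ asserting ``$\forall\alpha<\nu(\alpha\in s\leftrightarrow\alpha\in C^{(n)})$'', which is $\Sigma_{n+1}$ (and in fact $\Delta_{n+1}$) given the $\Pi_n$-definability of $C^{(n)}$ (Corollary \ref{cor:Cndef}) together with the bounded outer quantifier. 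The resulting embedding $j$ has $V_\nu\in H_{\nu^+}\subseteq M$ and hence $V_\nu^M=V_\nu$, so $M\models\phi(s,\nu)$ unfolds to $(C^{(n)})^M\cap V_\nu=s$. Combining this with the elementarity computation $j(C^{(n)}\cap V_\kappa)=(C^{(n)})^M\cap V_{j(\kappa)}^M$ and intersecting with $V_\nu\subseteq V_{j(\kappa)}^M$ delivers the preservation equation $j(C^{(n)}\cap V_\kappa)\cap V_\nu=C^{(n)}\cap V_\nu$, completing the verification that $j$ witnesses $\nu$-supercompactness for $C^{(n)}$.
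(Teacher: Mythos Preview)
Your proposal is correct and follows essentially the same route as the paper, pivoting both directions on Lemma~\ref{lemma:Cnaddexist} and the elementarity identity $j(C^{(n)}\cap\kappa)=(C^{(n)})^M\cap j(\kappa)$. The only difference is cosmetic: in $(2\Rightarrow 1)$ the paper does not bother enlarging $\nu$ into $C^{(1)}$, since $C^{(n)}$ consists of ordinals and hence the preservation equation is purely about ordinals below $\nu$, making the absoluteness of $V_\nu$ irrelevant.
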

\begin{proof}
    $(1\Rightarrow 2):$ Given $\nu$, $\phi$, and $a$, let $\theta\geq \nu$ be a $C^{(n)}$ cardinal such that $V_\theta\models \phi(a)$ (using Lemma \ref{lemma:Cnaddexist}). Then if $\lambda>\theta=|V_\theta|$ and $j:V\rightarrow M$ witnesses that $\kappa$ is $\lambda$-supercompact for $C^{(n)}$, the first three bullet points in (2) are immediate, so we show the fourth. First, since there is a $\lambda$-sequence in $V$ with range $V_\theta$, the closure conditions on $M$ imply that $V_\theta^M=V_\theta$. By elementarity, $j(C^{(n)}\cap V_\kappa)$ is exactly the $C^{(n)}$ cardinals of $M$ below $j(\kappa)$, so $\theta\in C^{(n)}\cap V_\lambda=j(C^{(n)}\cap V_\kappa)\cap V_\lambda= (C^{(n)})^M\cap\lambda$. Since $V_\theta^M=V_\theta\models \phi(a)$, applying Lemma \ref{lemma:Cnaddexist} in $M$ yields $M\models\phi(a)$.

    $(2\Rightarrow 1):$ Given $\nu$, let $a=C^{(n)}\cap\nu$. Then $V\models\forall\theta\hspace{-3pt}<\hspace{-3pt}\nu \hspace{6pt}\theta\in a\longleftrightarrow \theta\in C^{(n)}$, where $\theta\in C^{(n)}$ abbreviates the formula from Corollary \ref{cor:Cndef}. This is a conjunction of a $\Pi_n$ formula and a $\Sigma_n$ formula with an added bounded quantifier, so it can be written in $\Sigma_{n+1}$ form. Let $j:V\rightarrow M$ be a $\nu$-supercompactness embedding with critical point $\kappa$ and $M\models \forall\theta<\nu \hspace{6pt}\theta\in a\longleftrightarrow \theta\in C^{(n)}$. Then $j(C^{(n)}\cap V_\kappa)\cap V_\nu=(C^{(n)})^M\cap\nu=a=C^{(n)}\cap\nu$, so $j$ witnesses that $\kappa$ is $\nu$-supercompact for $C^{(n)}$.
\end{proof}

Thus, for example if $\phi$ is a $\Pi_2$ large cardinal property like strongness or supercompactness, any given cardinal with that property can retain it in the codomain of a supercompactness for $C^{(2)}$ embedding. Similar arguments show that any cardinal supercompact for $C^{(n)}$ has many cardinals supercompact for $C^{(n-1)}$ below it.

By examining the proofs of Lemma \ref{lemma:laver} and Lemma \ref{lemma:scfCnalt}, it is easy to see that if $f$ is a Laver function for $\kappa$, we can simultaneously ensure that $M\models\phi(a)$ and $j(f)(\kappa)=b$ for any desired $a$ and $b$ such that $V\models \phi(a)$.

Clarifying the bottom of our hierarchy, analogously to the first sentence of Corollary \ref{cor:cnlargecard}:

\begin{lemma}
\label{lemma:scfC1}
    Every supercompact cardinal is supercompact for $C^{(1)}$.
\end{lemma}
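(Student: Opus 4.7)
The plan is to invoke Lemma \ref{lemma:scfCnalt} with $n=1$, which reduces the claim to showing that for every cardinal $\nu$, every $\Sigma_2$ formula $\phi$, and every parameter $a$ with $V \models \phi(a)$, there is an elementary embedding $j: V \to M$ with $\mathrm{crit}(j) = \kappa$, $^\nu M \subseteq M$, $j(\kappa) > \nu$, and $M \models \phi(a)$. Alternatively, one can verify the definition directly, which is what I would do here since the proof is essentially immediate from the characterization of $C^{(1)}$ in Proposition \ref{prop:c1bfp}.

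First I would fix a cardinal $\nu$ and, using the supercompactness of $\kappa$, pick a $\nu$-supercompactness embedding $j: V \to M$ with $\mathrm{crit}(j) = \kappa$, $j(\kappa) > \nu$, and $^\nu M \subseteq M$. The only nontrivial thing to check is the fourth bullet in the definition of $\nu$-supercompactness for $C^{(1)}$, namely that $j(C^{(1)} \cap V_\kappa) \cap V_\nu = C^{(1)} \cap V_\nu$. By elementarity together with the definability of $C^{(1)}$ (Corollary \ref{cor:Cndef}), $j(C^{(1)} \cap V_\kappa)$ is precisely the class of ordinals in $(C^{(1)})^M$ below $j(\kappa)$, so what must be shown is that $(C^{(1)})^M \cap \nu = C^{(1)} \cap \nu$.

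This is where Proposition \ref{prop:c1bfp} does the work: an ordinal $\alpha$ lies in $C^{(1)}$ exactly when $\beth_\alpha = \alpha$, and whether this holds for $\alpha < \nu$ is determined by $V_\alpha \in V_\nu$. The closure condition $^\nu M \subseteq M$ implies (by the standard argument) that $V_\nu \subseteq M$ and in fact $V_\nu^M = V_\nu$, so the beth function up to $\nu$ is computed identically in $M$ and in $V$. Hence $(C^{(1)})^M \cap \nu = C^{(1)} \cap \nu$, completing the verification.

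There is essentially no obstacle — the lemma is really just a sanity check confirming that the base case of the hierarchy of supercompactness for $C^{(n)}$ coincides with ordinary supercompactness. The only thing to be careful about is citing the right absoluteness fact about $V_\nu$ under $\nu$-closure of $M$, and matching it to the $\Pi_1$ (indeed absolute-to-$V_\nu$) characterization of $C^{(1)}$ as the beth fixed points rather than trying to use the more opaque $\Sigma_1$-elementarity definition directly.
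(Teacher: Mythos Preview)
Your approach is essentially the paper's, but one intermediate claim is not quite right as stated. You assert that $^\nu M\subseteq M$ implies $V_\nu^M=V_\nu$ (and hence that the beth function below $\nu$ is computed identically in $M$ and $V$). This fails for arbitrary $\nu$: closure under $\nu$-sequences only gives $V_{\omega+\alpha}^M=V_{\omega+\alpha}$ while $|V_{\omega+\alpha}|=\beth_\alpha\leq\nu$; once $\beth_\alpha$ exceeds $\nu$, subsets of $V_{\omega+\alpha}$ of size greater than $\nu$ need not lie in $M$, so $V_{\omega+\alpha+1}^M$ can be strictly smaller than $V_{\omega+\alpha+1}$. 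For instance, if $\nu=\omega_1$ and $2^{\aleph_0}>\aleph_1$, then $V_{\omega+2}^M$ may already be a proper subset of $V_{\omega+2}$.

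The paper handles this by observing that it suffices to verify $\nu$-supercompactness for $C^{(1)}$ for a cofinal class of $\nu$, and so restricts attention to beth fixed points $\nu$; for such $\nu$ one has $\beth_\alpha<\nu$ for every $\alpha<\nu$, and then your argument goes through verbatim. (Alternatively, one can argue directly that for $\alpha<\nu$ with $\beth_\alpha>\nu$ one still has $\beth_\alpha^M\geq\beth_\alpha>\nu>\alpha$, so such $\alpha$ lie in neither $C^{(1)}$ nor $(C^{(1)})^M$; but you have not done this.) Once you insert the reduction to beth-fixed-point $\nu$, your proof coincides with the paper's.
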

\begin{proof}
    Since $j(C^{(1)}\cap V_\kappa)\cap V_\nu$ is simply the set of $C^{(1)}$ cardinals of $M$ below $\nu$, by Proposition \ref{prop:c1bfp}, it is sufficient to show that for every beth fixed point $\nu$, if $M$ is the codomain of any $\nu$-supercompactness embedding, then $M$ correctly computes the beth function below $\nu$. If $\alpha<\nu$, then $|V_{\omega+\alpha}|=\beth_\alpha<\nu$, so $^\nu M\subset M$ implies that $V_{\omega+\alpha}^M=V_{\omega+\alpha}$ and that $M$ correctly computes the cardinality of this set. Thus $\beth_\alpha^M=|V_{\omega+\alpha}|^M=\beth_\alpha$ as desired, so $\alpha$ is $C^{(1)}$ in $M$ if and only if it is $C^{(1)}$ in $V$.
\end{proof}

The following lemmas generalize well-known facts about supercompact cardinals:

\begin{lemma}
\label{lemma:scCn}
    If $\kappa$ is supercompact for $C^{(n)}$ (or merely strong for $C^{(n)}$), then $\kappa\in C^{(n+1)}$
\end{lemma}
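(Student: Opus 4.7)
The plan is to induct on $n$. For the base case $n=0$, being supercompact (or strong) for $C^{(0)}$ is just being supercompact (or strong), so $\kappa$ is inaccessible and hence a beth fixed point, and $\kappa \in C^{(1)}$ follows from Proposition \ref{prop:c1bfp}. For the inductive step with $n \geq 1$, the first task is to secure $\kappa \in C^{(n)}$. Using Lemma \ref{lemma:scfCnalt} (and its evident analogue for strongness), being supercompact for $C^{(n)}$ amounts to reflecting arbitrary $\Sigma_{n+1}$ statements via $\nu$-supercompactness embeddings, which trivially yields the same for $\Sigma_n$ statements, so $\kappa$ is supercompact for $C^{(n-1)}$ and the inductive hypothesis gives $\kappa \in C^{(n)}$. (For $n=1$ one simply uses that every supercompact cardinal is inaccessible, handled by the base case.)

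With $\kappa \in C^{(n)}$ in hand, I would then prove $V_\kappa \prec_{\Sigma_{n+1}} V$. Write a $\Sigma_{n+1}$ formula as $\phi(a) = \exists x\, \psi(x,a)$ with $\psi \in \Pi_n$ and $a \in V_\kappa$. The upward direction is immediate: any witness $b \in V_\kappa$ to $V_\kappa \models \psi(b,a)$ transfers to $V$ because $\kappa \in C^{(n)}$ gives $\Pi_n$-agreement. For the downward direction, assume $V \models \phi(a)$; by Lemma \ref{lemma:Cnaddexist}, choose $\nu \in C^{(n)}$ with $\nu > \kappa$ and $V_\nu \models \phi(a)$, then a slightly larger $\nu' \in C^{(n)}$, and take a $\nu'$-supercompactness (or $\nu'$-strongness) for $C^{(n)}$ embedding $j : V \to M$ with critical point $\kappa$. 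Since $V_{\nu'} \subseteq M$, we have $V_\nu^M = V_\nu \models \phi(a)$, and the preservation clause $j(C^{(n)} \cap V_\kappa) \cap V_{\nu'} = C^{(n)} \cap V_{\nu'}$ yields $\nu \in (C^{(n)})^M$. Hence $M \models \exists \alpha < j(\kappa)\,[\alpha \in C^{(n)} \wedge V_\alpha \models \phi(a)]$, witnessed by $\nu$. Pulling this statement back through $j$, using $j(a) = a$, produces an $\alpha < \kappa$ with $\alpha \in C^{(n)}$ and $V_\alpha \models \phi(a)$. Since both $\alpha$ and $\kappa$ belong to $C^{(n)}$, we get $V_\alpha \prec_{\Sigma_n} V_\kappa$, so $\alpha$ also lies in the $C^{(n)}$ computed inside $V_\kappa$; Lemma \ref{lemma:Cnaddexist} applied within $V_\kappa$ (a model of ZFC by inaccessibility) then yields $V_\kappa \models \phi(a)$.

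The main subtlety is disambiguating the various versions of $C^{(n)}$ in play — the external class in $V$, the internal class $(C^{(n)})^M$ in the embedding target, and the internal class $(C^{(n)})^{V_\kappa}$ — which in principle can differ. The preservation clause in the definition of supercompactness for $C^{(n)}$ is exactly what forces the first two to agree on the relevant initial segment, while $\kappa \in C^{(n)}$ handles alignment with the third. Once these identifications are pinned down, Lemma \ref{lemma:Cnaddexist} can be applied first inside $M$ (to locate the reflecting ordinal below $j(\kappa)$) and then inside $V_\kappa$ (to conclude $V_\kappa \models \phi(a)$), driving the full $\Sigma_{n+1}$ reflection.
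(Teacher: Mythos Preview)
Your proof is correct and the downward reflection argument is essentially identical to the paper's: find a $C^{(n)}$ ordinal $\theta$ with $V_\theta\models\phi(a)$, push it into $M$ via a $C^{(n)}$-preserving embedding so that $M$ sees a witness below $j(\kappa)$, pull back to get $\bar\theta<\kappa$ in $C^{(n)}$ with $V_{\bar\theta}\models\phi(a)$, and then apply Lemma~\ref{lemma:Cnaddexist} inside $V_\kappa$.

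The one genuine difference is how you establish $\kappa\in C^{(n)}$. You induct on $n$, observing that supercompactness for $C^{(n)}$ yields supercompactness for $C^{(n-1)}$ (via Lemma~\ref{lemma:scfCnalt}) and then invoking the inductive hypothesis. The paper instead argues directly, with no induction: if $C^{(n)}\cap\kappa$ were bounded in $\kappa$ it would be fixed by every $j$ with critical point $\kappa$, yet $j(C^{(n)}\cap\kappa)$ must agree with the unbounded class $C^{(n)}$ up to arbitrarily large $\nu$, a contradiction; hence $\kappa$ is a limit point of $C^{(n)}$ and lies in $C^{(n)}$ by closure. The paper's route is slightly cleaner (no induction, no appeal to Lemma~\ref{lemma:scfCnalt}), while yours makes the monotonicity in $n$ explicit. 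Both are perfectly valid.
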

\begin{proof}
    First note that $\kappa$ is a limit point of $C^{(n)}$, since bounded subsets of $\kappa$ are fixed by all elementary embeddings with critical point $\kappa$, whereas $j(C^{(n)}\cap\kappa)$ can be an arbitrarily large initial segment of a proper class. Since $C^{(n)}$ is closed, $\kappa\in C^{(n)}$. Thus by Lemma \ref{lemma:Cnaddexist} any $\Sigma_{n+1}$ formula true in $V_\kappa$ must also be true in $V$.

    If $\phi$ is a $\Sigma_{n+1}$ formula and $a\in V_\kappa$ is such that $V\models\phi(a)$, again by Lemma \ref{lemma:Cnaddexist} there is some $\theta\in C^{(n)}$ such that $V_\theta\models \phi(a)$. Then there is an elementary embedding $j:V\rightarrow M$ with $j(\kappa)>\theta$, $V_\theta^M=V_\theta$, and $M\models \theta\in C^{(n)}$. Thus $M$ thinks there is a $C^{(n)}$ cardinal less than $j(\kappa)$ witnessing $\phi(j(a))$ ($j(a)=a$ because $a\in V_\kappa$), so by elementarity there is in $V$ a $\bar{\theta}<\kappa$ in $C^{(n)}$ with $V_{\bar{\theta}}\models\phi(a)$. By Lemma \ref{lemma:recogCn+1}, $V_\kappa\models \bar{\theta}\in C^{(n)}$, and since $V_\kappa\models ZFC$, we can apply Lemma \ref{lemma:Cnaddexist} again inside of it to obtain $V_\kappa\models \phi(a)$, as desired.
\end{proof}

\begin{lemma}
\label{lemma:scfCntrunc}
    If $\kappa<\lambda$, $\kappa$ is $\nu$-supercompact for $C^{(n)}$ for all $\nu<\lambda$, and $\lambda\in C^{(n)}$ is regular, then $V_\lambda\models \text{"}\kappa$ is supercompact for $C^{(n)}$".
\end{lemma}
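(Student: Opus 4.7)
The plan is to verify the definition directly: for each $\nu < \lambda$ I will produce, inside $V_\lambda$, a normal ultrafilter on $\mathcal{P}_\kappa(\nu)$ whose ultrapower witnesses $\nu$-supercompactness for $C^{(n)}$, appealing to the combinatorial characterization given by the corollary following Lemma \ref{lemma:scfactor}.

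I would begin by establishing two preliminary facts about $\lambda$. Since $\lambda \in C^{(n)} \subseteq C^{(1)}$, Proposition \ref{prop:c1bfp} yields $\lambda = \beth_\lambda$; combined with regularity this makes $\lambda$ inaccessible, so $V_\lambda \models ZFC$ and every set of rank less than $\lambda$ lies in $V_\lambda$. Moreover, the relation ``$\alpha \in C^{(n)}$'' is $\Pi_n$ by Corollary \ref{cor:Cndef} (together with Proposition \ref{prop:c1bfp} at the $n = 1$ level), and $\lambda \in C^{(n)}$ is by definition $V_\lambda \prec_{\Sigma_n} V$, so $(C^{(n)})^{V_\lambda} = C^{(n)} \cap \lambda$; in particular $(C^{(n)})^{V_\lambda} \cap V_\mu = C^{(n)} \cap V_\mu$ for every $\mu < \lambda$.

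Next, I would fix $\nu < \lambda$ and, using the hypothesis that $\kappa$ is $\nu$-supercompact for $C^{(n)}$ in $V$, choose a normal ultrafilter $U \subseteq \mathcal{P}(\mathcal{P}_\kappa(\nu))$ witnessing this. Since $\lambda$ is inaccessible and $U$ has rank well below $\lambda$, $U \in V_\lambda$. The task reduces to showing that $V_\lambda$ also sees $U$ as witnessing $\nu$-supercompactness for $C^{(n)}$: that the internal ultrapower $j_U^{V_\lambda} : V_\lambda \to Ult(V_\lambda, U)$ satisfies the four bullet-point conditions from the definition when read inside $V_\lambda$.

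The key step is to identify $Ult(V_\lambda, U)$ with an appropriate initial segment of the external ultrapower. Every function $f : \mathcal{P}_\kappa(\nu) \to V_\lambda$ already lies in $V_\lambda$ by inaccessibility, and the Boolean values used to define equality and membership in the Los-style construction of the ultrapower are absolute, so the map $[f]_U^{V_\lambda} \mapsto [f]_U^V$ is a well-defined $\in$-isomorphism of $Ult(V_\lambda, U)$ onto $V_{j_U(\lambda)}^{Ult(V, U)}$, intertwining $j_U^{V_\lambda}$ with $j_U \upharpoonright V_\lambda$; surjectivity follows by applying Los's theorem in $V$ to the assertion ``$x \in V_\lambda$'', which lets any representative of an element of $V_{j_U(\lambda)}^{Ult(V, U)}$ be shrunk on a $U$-measure-one set to take values in $V_\lambda$. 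Under this identification, the standard proofs that $j_U$ has critical point $\kappa$, sends $\kappa$ above $\nu$, and has $\nu$-closed target all transfer directly to the $V_\lambda$-version of $j_U$; and the $C^{(n)}$-preservation condition $j_U^{V_\lambda}(C^{(n)} \cap V_\kappa) \cap V_\nu = (C^{(n)})^{V_\lambda} \cap V_\nu$ becomes $j_U(C^{(n)} \cap V_\kappa) \cap V_\nu = C^{(n)} \cap V_\nu$, which holds by the choice of $U$. I expect the main obstacle to be the bookkeeping in this ultrapower identification, which is a routine but somewhat technical absoluteness argument.
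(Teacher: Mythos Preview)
Your proposal is correct and follows essentially the same approach as the paper: both use inaccessibility of $\lambda$ to ensure the relevant ultrafilters and representing functions lie in $V_\lambda$, identify the internal ultrapower $Ult(V_\lambda,U)$ with an initial segment of the external one, and use $\lambda\in C^{(n)}$ to get $(C^{(n)})^{V_\lambda}=C^{(n)}\cap\lambda$ so that the $C^{(n)}$-preservation condition transfers. The paper additionally records that $j_U(\lambda)=\lambda$ (since regularity of $\lambda$ bounds every function $\mathcal{P}_\kappa(\nu)\to\lambda$), but your identification with $V_{j_U(\lambda)}^{Ult(V,U)}$ works without making this explicit.
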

\begin{proof}
    Since $\lambda$ is inaccessible, $V_\lambda$ contains all the ultrafilters on $\mathcal{P}_\kappa(\nu)$ witnessing that $\kappa$ is supercompact for $C^{(n)}$ up to $\lambda$, as well as all the functions $\mathcal{P}_\kappa(\nu)\rightarrow V_\lambda$ which represent elements of $V_{j_U(\lambda)}^{Ult(V, U)}$ for any such normal ultrafilter $U$, so we can construct the ultrapower within $V_\lambda$ and get exactly the same results below $\lambda$. Furthermore, all functions $\mathcal{P}_\kappa(\nu)\rightarrow \lambda$ are bounded, so the order type of the predecessors of any given one in any ultrapower is less than $\lambda$, from which it follows that $j_U(\lambda)=\lambda$.

    Hence for each $\nu<\lambda$, we have a $\nu$-supercompactness embedding $V_\lambda\rightarrow V_\lambda^M$ which arises as a restriction of a $\nu$-supercompactness embedding $V\rightarrow M$ with $(C^{(n)})^M\cap \nu=C^{(n)}\cap\nu$. Since $\lambda$ is $C^{(n)}$, it agrees with $V$ on which cardinals below it are $C^{(n)}$ by Corollary \ref{cor:Cndef}, so by elementarity $V_\lambda^M$ agrees with $M$ on which cardinals are $C^{(n)}$ below $\lambda$ as well. Thus $V_\lambda$ and $V_\lambda^M$ agree on $C^{(n)}$ cardinals below $\nu$, as desired.
\end{proof}

We conclude this section with some results on the relationship between supercompactness for $C^{(n)}$ and other large cardinal axioms:

\begin{prop}
\label{prop:scC2eqext}
    $\kappa$ is supercompact for $C^{(2)}$ iff $\kappa$ is extendible.
\end{prop}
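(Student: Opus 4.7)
The forward direction exploits the $\Sigma_2$-correctness of the supercompactness-for-$C^{(2)}$ embedding to reflect an extendibility embedding visible in its codomain down to $V$. Given $\alpha>\kappa$, pick $\nu\in C^{(2)}$ with $\nu>\alpha$ and let $j\colon V\to M$ witness that $\kappa$ is $\nu$-supercompact for $C^{(2)}$. Closure gives $V_\nu=V_\nu^M$; since $(C^{(2)})^M\cap\nu=C^{(2)}\cap\nu$ is cofinal in $\nu$ and $(C^{(2)})^M$ is closed in $M$ (by elementarity of the $\Pi_2$ definition from Corollary \ref{cor:Cndef}), it follows that $\nu\in(C^{(2)})^M$, so $V_\nu\prec_{\Sigma_2}M$. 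The restriction $j\upharpoonright V_\alpha\colon V_\alpha\to V_{j(\alpha)}^M$ has size less than $\nu$ and hence lies in $M$ by closure, witnessing there the $\Sigma_2$ assertion ``$\kappa$ is $\alpha$-extendible'' with parameters $\kappa,\alpha\in V_\nu$. Reflecting this statement via $V_\nu\prec_{\Sigma_2}M$ and then via $V_\nu\prec_{\Sigma_2}V$ delivers the desired extendibility embedding in $V$.

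For the reverse direction, given $\nu$ I may assume $\nu\in C^{(2)}$. Pick $\alpha>\nu$ in $C^{(2)}$ large enough that $\mathcal{P}(\mathcal{P}_\kappa(\nu))\in V_\alpha$, and---using the justification below---apply extendibility to obtain $j\colon V_\alpha\to V_\beta$ with $crit(j)=\kappa$, $j(\kappa)>\alpha$, and $\beta\in C^{(2)}$. Set $U=\{X\subseteq\mathcal{P}_\kappa(\nu):j''\nu\in j(X)\}$, a normal, fine, $\kappa$-complete ultrafilter in $V$; the ultrapower $j_U\colon V\to M$ is a $\nu$-supercompactness embedding, and the usual factor map $k\colon M\to V_\beta$ given by $k([f]_U)=j(f)(j''\nu)$ is elementary and satisfies $k\upharpoonright V_\nu=\mathrm{id}$. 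For $\theta<\nu$ this yields
\[\theta\in(C^{(2)})^M\iff V_\theta\prec_{\Sigma_2}M\iff V_\theta\prec_{\Sigma_2}V_\beta\iff V_\theta\prec_{\Sigma_2}V\iff\theta\in C^{(2)},\]
where the second equivalence uses elementarity of $k$ on $V_\nu$-parameters and the third uses $\beta\in C^{(2)}$. Hence $(C^{(2)})^M\cap V_\nu=C^{(2)}\cap V_\nu$, as required by the definition of supercompactness for $C^{(2)}$.

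The delicate point is arranging $\beta\in C^{(2)}$ in the reverse direction. The assertion that $\kappa$ has an $\alpha$-extendibility embedding with target in $C^{(2)}$ is $\Sigma_3$ in $\kappa,\alpha$, so by Bagaria's result (\cite{bagariacn}, Proposition 3.4) that extendible cardinals are $\Sigma_3$-correct, it suffices to verify the statement holds in some $V_\gamma$ for $\gamma\in C^{(3)}$. Inside $V_\gamma$, one combines the internal $\Sigma_2$-correctness of $\alpha$ with elementarity of extendibility embeddings at larger $C^{(2)}$-cardinals to produce an embedding whose target lies in $(C^{(2)})^{V_\gamma}=C^{(2)}\cap\gamma$, which then lifts to an extendibility embedding in $V$ of the required form. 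This is the step on which I expect the main technical work of the actual proof to rest.
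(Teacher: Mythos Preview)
Your forward direction is essentially the paper's argument (modulo a minor point: $C^{(2)}\cap\nu$ need not be cofinal in $\nu$ unless $\nu$ is chosen as a limit of $C^{(2)}$ cardinals, but this is easily arranged, or one simply takes the supercompactness degree strictly above the chosen $\theta\in C^{(2)}$ as the paper does).

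For the reverse direction, the ``delicate point'' you flag is not actually needed, and the paper avoids it entirely. You aim to arrange $\beta\in C^{(2)}$, but demanding this for arbitrary $\alpha$ is precisely Bagaria's $C^{(2)}$-extendibility, which is strictly stronger than extendibility (indeed, by the remark following Proposition~\ref{prop:scCneqvopenka} in the paper, it is equivalent to supercompactness for $C^{(3)}$). Your sketch for obtaining it via $\Sigma_3$-correctness is incomplete, and there is no reason it should succeed from mere extendibility.

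The paper's simpler observation is that $\beta\in C^{(1)}$ suffices. Choose the domain ordinal $\theta\in C^{(2)}$ above $\lambda$ and take any extendibility embedding $\sigma:V_\theta\to V_\beta$. Since $\theta$ is a beth fixed point (Proposition~\ref{prop:c1bfp}), elementarity gives that $\beta$ is one too, i.e.\ $\beta\in C^{(1)}$. Now Lemma~\ref{lemma:recogCn+1} (with $n=m=1$) says that any $C^{(1)}$ cardinal above a $C^{(2)}$ cardinal recognizes the latter's $\Sigma_2$-correctness; by the remark following that lemma, $V_\beta$ therefore computes $C^{(2)}\cap\theta$ correctly. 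Since $\sigma(C^{(2)}\cap\kappa)=(C^{(2)})^{V_\beta}\cap\sigma(\kappa)$ by elementarity, we get $\sigma(C^{(2)}\cap\kappa)\cap\lambda=C^{(2)}\cap\lambda$, and the factoring through the derived ultrafilter finishes as you describe. So the work you anticipated does not occur; the argument stays entirely at the $C^{(1)}$ level on the target side.
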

\begin{proof}
    For the forward direction, given an ordinal $\eta$, we show that $\kappa$ is $\eta$-extendible. Let $\theta\in C^{(2)}$ be greater than $\kappa+\eta$; then there is an elementary embedding $j:V\rightarrow M$ with $crit(j)=\kappa$, $\theta\in (C^{(2)})^M$, $V_\theta^M=V_\theta$, and $j\upharpoonright V_{\kappa+\eta}\in M$. By the last condition and Lemma \ref{lemma:restrictembed}, $M$ satisfies the $\Sigma_2$ assertion that $\kappa$ is $\eta$-extendible, so by $\Sigma_2$-correctness $V_\theta$ contains an elementary embedding $\sigma:V_{\kappa+\eta}\rightarrow V_\beta$ for some $\beta<\theta$ with $crit(\sigma)=\kappa$. $\sigma$ then witnesses that $\kappa$ is $\eta$-extendible in $V$.

    For the converse, given an ordinal $\lambda>\kappa$, we show that $\kappa$ is $\lambda$-supercompact for $C^{(2)}$. Let $\theta\in C^{(2)}$ be greater than $\lambda$; then by extendibility, for some ordinal $\beta>\theta$ there is an elementary embedding $\sigma: V_\theta\rightarrow V_\beta$ with critical point $\kappa$. By Lemma \ref{prop:c1bfp}, the ordinals below $\theta$ are closed under the beth function, so by elementarity those below $\beta$ are as well, and thus $\beta\in C^{(1)}$. It follows from Lemma \ref{lemma:recogCn+1} that $V_\beta$ recognizes the $\Sigma_2$-correctness of $\theta$, and from the remarks after that lemma that it correctly computes $C^{(2)}\cap\lambda$.
    
    By elementarity $\sigma(C^{(2)}\cap \kappa)$ is the set of cardinals $V_\beta$ thinks are $\Sigma_2$-correct, so $\sigma(C^{(2)}\cap\kappa)\cap \lambda= C^{(2)}\cap\lambda$. By standard arguments, if $U\subset \mathcal{P}(\mathcal{P}_\kappa(\lambda))$ is the ultrafilter derived from $\sigma$, then $U$ is a fine, normal, and $\kappa$-complete. Furthermore, if $j_U$ is the associated ultrapower embedding, there is an elementary embedding $k: Ult(V_\theta, U)\rightarrow V_\beta$ such that $\sigma=k\circ j_U$ and $crit(k)\geq \lambda$, so in particular $j_U(C^{(2)}\cap\kappa)\cap\lambda=\sigma(C^{(2)}\cap \kappa)\cap\lambda=C^{(2)}\cap \lambda$. Thus $U$ witnesses that $\kappa$ is $\lambda$-supercompact for $C^{(2)}$, as desired.
\end{proof}

\begin{prop}
\label{prop:scCneqvopenka}
    The existence of cardinals supercompact for $C^{(n)}$ for each standard $n$ is equivalent to the first-order Vopenka scheme (i.e., that for every definable proper class of structures of the form $(A, \in, R)$ for $A$ a transitive set and $R\subseteq A$, there is an elementary embedding between two distinct elements of the class).
\end{prop}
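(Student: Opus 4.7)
The plan is to generalize the argument of Proposition \ref{prop:scC2eqext} to every level $n$, in parallel with Bagaria's equivalence between the Vopenka scheme and the existence of $C^{(n)}$-extendible cardinals for every $n$.

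For the forward direction, let $\phi$ (possibly with parameters in $V_\alpha$) define a proper class $\mathcal{C}$ of structures of the form $(A, \in, R)$, and let $n$ be its complexity. Pick $\kappa$ supercompact for $C^{(n)}$ above the parameters and fix a transitive $(A, R) \in \mathcal{C}$ of rank above $\kappa$. Using Lemma \ref{lemma:scfCnalt}, obtain a $\lambda$-supercompactness for $C^{(n)}$ embedding $j : V \to M$ with $\lambda > \mathrm{rank}(A)$ and $M \models \phi((A,R))$. By elementarity $M$ thinks $j((A,R))$ lies in $j(\mathcal{C})$; using the $C^{(n)}$-preservation of $j$ together with Lemma \ref{lemma:Cnaddexist}, an appropriate $\Sigma_{n+1}$-reflection inside $M$ via a $C^{(n)}$-cardinal of $M$ below $\lambda$ (which coincides with a $C^{(n)}$-cardinal of $V$) transfers the truth of $\phi(j((A,R)))$ back to $V$. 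The restriction $j \restriction A : (A, \in, R) \to (j(A), \in, j(R))$ is then a nontrivial elementary embedding between two distinct elements of $\mathcal{C}$, with critical point $\kappa \in A$.

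For the backward direction, fix $n$ and apply the Vopenka scheme to the proper class of structures $S_\alpha := (V_\alpha, \in, C^{(n)} \cap V_\alpha)$ indexed by $\alpha \in C^{(n+1)}$, which is first-order definable by Corollary \ref{cor:Cndef}. Any embedding $j : S_\alpha \to S_\beta$ supplied by Vopenka has a critical point $\kappa$; the induced normal ultrafilter on $\mathcal{P}_\kappa(\nu)$ for $\nu < \alpha$ witnesses $\nu$-supercompactness, and because $j$ preserves the unary predicate coding $C^{(n)}$, this ultrafilter in fact witnesses $\nu$-supercompactness for $C^{(n)}$. Running this argument with unboundedly many $\alpha$ produces, for each $\nu$, some cardinal that is $\nu$-supercompact for $C^{(n)}$; to consolidate these into a single $\kappa$ supercompact for $C^{(n)}$, one enriches the class of structures with a constant naming a candidate $\kappa$ and invokes the Magidor-style argument that any Vopenka embedding fixing that constant witnesses full supercompactness for $C^{(n)}$ of the named cardinal, using Lemma \ref{lemma:scfCntrunc} to transfer supercompactness from the truncations back to $V$.

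The main obstacle is the bookkeeping around formula complexity: lining up the level $n$ of supercompactness for $C^{(n)}$ with the complexity of $\phi$, and verifying that the $C^{(n)}$-preservation in the forward direction genuinely delivers absoluteness of $\phi$ between $M$ and $V$ at the transfer step. A secondary subtlety in the backward direction is extracting a single cardinal uniformly supercompact for $C^{(n)}$ from Vopenka rather than merely an unbounded collection of $\nu$-supercompact-for-$C^{(n)}$ cardinals; the enriched-structure trick resolves this but must be set up carefully so that the embedding provided by Vopenka cannot avoid witnessing the desired full supercompactness.
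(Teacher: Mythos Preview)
Your forward direction has a real gap at the transfer step. You propose to move $\phi(j(A,R))$ from $M$ to $V$ by reflecting through a $C^{(n)}$-cardinal $\theta$ of $M$ below $\lambda$; but since you chose $A$ of rank above $\kappa$, the image $j(A)$ has rank at least $j(\kappa)>\lambda>\theta$, so $j(A,R)\notin V_\theta^M$ and no such reflection can see it. The paper avoids this entirely: having arranged $M\models\phi((A,\in,R))$ via Lemma~\ref{lemma:scfCnalt}, one also has $M\models\phi((j(A),\in,j(R)))$ by elementarity, and $j\upharpoonright A\in M$ by closure. Thus $M$ satisfies the first-order statement ``there exist two distinct structures in the class defined by $\phi$ with an elementary embedding between them,'' and this pulls back to $V$ by elementarity of $j$. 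No transfer of $\phi(j(A,R))$ to $V$ is ever needed.

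Your backward direction is in the right spirit, but the consolidation step is not a proof. With bare structures $(V_\alpha,\in,C^{(n)}\cap\alpha)$, a Vop\v{e}nka embedding $j:S_\alpha\to S_\beta$ gives a critical point $\kappa$, but (i) $\kappa$ may vary with $\alpha$, and (ii) the derived normal ultrafilters witness $\nu$-supercompactness for $C^{(n)}$ only for $\nu<j(\kappa)$, which can be far below $\alpha$. Your ``enriched-structure trick'' is not specified, and Lemma~\ref{lemma:scfCntrunc} goes the wrong direction (from $V$ down to $V_\lambda$, not up). The paper instead argues by contradiction: assuming no cardinal is supercompact for $C^{(n)}$, each $\alpha$ determines a least limit $\lambda$ at which $<\!\lambda$-supercompactness for $C^{(n)}$ fails for every $\kappa\leq\alpha$, and the structures code both $\alpha$ and $\lambda$ (along with $C^{(n)}$) inside a $V_\gamma$ with $\gamma\in C^{(n+1)}$ of uncountable cofinality. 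Any Vop\v{e}nka embedding must then have $\mathrm{crit}(j)\leq\alpha$; a Kunen-inconsistency argument forces some iterate $j^{m+1}(\kappa)>\lambda$, and an induction on $m$ (using the $\Sigma_{n+1}$-correctness of $\gamma$ to keep the witnesses inside $V_\gamma$) composes embeddings to produce one with image of $\kappa$ exceeding $\lambda$, whence $\kappa$ is $<\!\lambda$-supercompact for $C^{(n)}$---the desired contradiction.
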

\begin{proof}
    For the forward direction, let $\phi$ be a $\Sigma_n$ formula defining the class and $\kappa$ a supercompact cardinal for $C^{(n-1)}$. Then if $(A,\in, R)$ is such that $\phi((A,\in, R))$ and $|A|\geq \kappa$, we apply Lemma \ref{lemma:scfCnalt} to obtain an elementary embedding $j:V\rightarrow M$ witnessing the $|A|$-supercompactness of $\kappa$ such that $M\models\phi((A,\in, R))$. Then $M\models\phi((j(A), \in, j(R)))$ by elementarity, and by Lemma \ref{lemma:restrictembed} $j\upharpoonright A\in M$ is an elementary embedding $(A, \in, R) \rightarrow (j(A), \in, j(R))$. Since $\kappa\leq |A|^M<j(\kappa)\leq |j(A)|^M$, $A\neq j(A)$, so $M$ thinks there are distinct structures in the class defined by $\phi$ with an elementary embedding between them. By elementarity, the same holds in $V$.

    For the converse, we generalize Bagaria's proof of Theorem 4.3 in \cite{bagariacn}. Given $n$, let $\mathcal{A}$ consist of all structures of the form $(V_{\gamma}, \in, \{\{\alpha, \lambda\}\}\cup (C^{(n)}\hspace{-1pt}\cap\hspace{-1pt}\gamma))$ where $\lambda$ is the least limit ordinal greater than $\alpha$ such that there are no $\kappa\leq\alpha$ that are $<\hspace{-3pt}\lambda$-supercompact for $C^{(n)}$ and $\gamma$ is the least element of $C^{(n+1)}$ above $\lambda$ with uncountable cofinality. Note that there is at most one structure in $\mathcal{A}$ for any given value of $\alpha$, which we denote $A_\alpha$. We assume toward a contradiction that there are no cardinals supercompact for $C^{(n)}$; then $A_\alpha$ exists for all $\alpha$, so in particular $\mathcal{A}$ is a proper class. Since $\mathcal{A}$ is definable, there are $\alpha,\beta$ such that there is an elementary embedding $j:A_\alpha\rightarrow A_\beta$ for $\alpha\neq\beta$.
    
    Set $\gamma$, $\delta$, $\lambda$, and $\mu$ such that $A_\alpha= (V_{\gamma}, \in, \{\{\alpha, \lambda\}\}\cup (C^{(n)}\hspace{-1pt}\cap\hspace{-1pt}\gamma))$ and $A_\beta= (V_{\delta}, \in, \{\{\beta, \mu\}\}\cup (C^{(n)}\cap\delta))$. Since $\{\alpha, \lambda\}$ and $\{\beta, \mu\}$ are the only two-elements sets included in each unary predicate, $\alpha<\lambda$, and $\beta<\mu$, we must have $j(\alpha)=\beta$. As $\alpha\neq\beta$ and elementary embeddings of transitive structures never map ordinals to smaller ordinals, $\alpha<\beta$ and so $\kappa:=crit(j)\leq \alpha$.

To complete the proof, we show that $\kappa$ is $<\hspace{-3pt}\lambda$-supercompact for $C^{(n)}$, contradicting $A_\alpha\in\mathcal{A}$. The argument is a straightforward generalization of the standard proof that sufficient partial extendibility, even without any hypotheses on the size of $j(\kappa)$, implies partial supercompactness (see e.g. Kanamori \cite{kanamori} Propositions 23.6 and 23.15(b)). We first observe that if $j^i(\kappa)$ is defined for all natural numbers $i$, then the supremum $\tau$ of this sequence must be a fixed point of $j$ strictly between $\kappa$ and $\gamma$ (as $\gamma$ was chosen to have uncountable cofinality); however this would mean that $j$ restricts to a nontrivial elementary embedding $V_{\tau+2}\rightarrow V_{\tau+2}$, contrary to Kunen's inconsistency theorem. Thus finitely iterating $j$ sufficiently must carry us above $\gamma$, so in particular we can find some $m$ such that $j^m(\kappa)\leq\lambda<j^{m+1}(\kappa)$.

Following Kanamori, let $P(i)$ denote the assertion that there is an elementary embedding
$$\sigma: (V_\lambda, \in, \lambda\cap C^{(n)})\rightarrow (V_\theta, \in, \theta\cap C^{(n)})$$
for some $\theta$ with $crit(\sigma)=\kappa$ and $\sigma(\kappa)=j^{i+1}(\kappa)$. $P(i)$ can easily be verified to be a $\Sigma_{n+1}$ statement with $\kappa$, $j^{i+1}(\kappa)$, and $(V_\lambda, \in, \lambda\cap C^{(n)})$ as parameters.

We verify $P(m)$ by induction. $P(0)$ holds with $\theta=\mu$ and $\sigma=j\upharpoonright V_\lambda$. If $P(i)$ holds and $i<m$, then because $\gamma\in C^{(n+1)}$ and all the parameters are in $V_\gamma$, it holds in $V_\gamma$. Thus we can find a $\theta<\gamma$ and $\sigma\in V_\gamma$ witnessing it. Then in $V_\delta$ $j(\sigma)$ is an elementary embedding $(V_{j(\lambda)}, \in, j(\lambda)\cap C^{(n)})\rightarrow (V_{j(\theta)}, \in, j(\theta)\cap C^{(n)})$ with $crit(j(\sigma))=j(\kappa)$ and $j(\sigma)(j(\kappa))=j(j^{i+1}(\kappa))=j^{i+2}(\kappa)$. Thus $j(\sigma)\circ\sigma: (V_\lambda, \in, \lambda\cap C^{(n)})\rightarrow (V_{j(\theta)}, \in, j(\theta)\cap C^{(n)})$ witnesses that $V_\delta\models P(i+1)$, so because $\delta\in C^{(n+1)}$, $P(i+1)$ holds in $V$. It follows that $P(m)$ holds and thus there is an elementary embedding 
$$\bar{j}: (V_\lambda, \in, \lambda\cap C^{(n)})\rightarrow (V_\theta, \in, \theta\cap C^{(n)})$$
with critical point $\kappa$ and $\bar{j}(\kappa)>\lambda$.

Thus for any $\nu<\lambda$, we can derive a normal ultrafilter $U_\nu:=\{X\subseteq \mathcal{P}_\kappa(\nu)\sbp \bar{j}"\nu\in \bar{j}(X)\}$. By standard arguments $U_\nu$ is a fine normal $\kappa$-complete ultrafilter, and by a simple adaptation of Lemma \ref{lemma:scfactor} $\bar{j}$ factors through $Ult(V_\lambda, U_\nu)$ with the second factor embedding having critical point above $\nu$. Since $\bar{j}$ preserves $C^{(n)}$, the embedding generated by $U_\nu$ preserves $C^{(n)}$ up to $\nu$. Thus $\kappa$ is $<\hspace{-3pt}\lambda$-supercompact for $C^{(n)}$, as desired.
\end{proof}

Since Proposition \ref{prop:scC2eqext} can be generalized to show that Bagaria's notion of $C^{(n)}$-extendibility is equivalent to supercompactness for $C^{(n+1)}$ for all $n$, Proposition \ref{prop:scCneqvopenka} can be viewed as a corollary and unification of Bagaria's results on the equivalence of fragments of the Vopenka scheme with the existence of supercompact or $C^{(n)}$-extendible cardinals.

\section[\texorpdfstring{$\Sigma_n$-correctly $H_\lambda$-reflecting Cardinals}{Sigma\_n-correctly H\_lambda-reflecting Cardinals}]{$\Sigma_n$-correctly $H_\lambda$-reflecting Cardinals}
\label{section:SnHlref}

Tadatoshi Miyamoto introduced the $H_\lambda$-reflecting cardinals (\cite{miyamotosegments}, Definition 1.1) to extend the work of Goldstern and Shelah to asymmetric versions of BPFA. The natural generalization of Miyamoto's definition to formulas of a given complexity true in $V$ will form the basis of consistency proofs for $\Sigma_n$-correct bounded forcing axioms.

\begin{definition}
For cardinals $\kappa\leq\lambda$ and $n\geq 2$ an integer, we say that $\kappa$ is  $\Sigma_n$-correctly $H_\lambda$-reflecting iff $\kappa$ is regular and for every $\Sigma_n$ formula $\phi$ and $a\in H_\lambda$, if $\phi(a)$ holds, then the set of $Z\prec H_\lambda$ of size less than $\kappa$ and containing $a$ such that $V_\kappa\models\phi(\pi_Z(a))$ is stationary in $[H_\lambda]^{<\kappa}$ (recall that $\pi_Z$ is the Mostowski collapse map for $Z$). If $\lambda=\kappa^{+\alpha}$, we say that $\kappa$ is $\Sigma_n$-correctly $+\alpha$ reflecting.\footnote{The $+\alpha$-reflecting terminology is due to Fuchs (\cite{fuchshierachies}, Definition 3.9).}
\end{definition}

The $n\geq 2$ assumption is necessary to make this a genuine large cardinal notion because every regular $\kappa$ is $\Sigma_1$-correctly $H_\lambda$ reflecting for all $\lambda\geq \kappa$: if $\phi$ is $\Sigma_1$ then $V\models\phi(a)$ implies $H_\lambda\models\phi(a)$ implies $Z\models\phi(a)$ for all $Z\prec H_\lambda$; it then follows that $\pi_Z"Z\models\phi(\pi_Z(a))$, and since $\Sigma_1$ statements are upward absolute between transitive structures, $V_\kappa\models\phi(\pi_Z(a))$ whenever $|Z|<\kappa$. The $n=2$ case is exactly Miyamoto's $H_\lambda$-reflecting cardinals, by an argument very similar to Proposition \ref{prop:c2eqrefl}.

We begin with the simple observation that $\Sigma_n$-correct $H_\lambda$-reflection is a strengthening of $\Sigma_n$-correctness, so in particular it is also the case that $V\models\phi(\pi_Z(a))$ for suitable $Z$, $\phi$, and $a$:

\begin{prop}
If $\kappa\leq \lambda$ is $\Sigma_n$-correctly $H_\lambda$-reflecting, then $\kappa\in C^{(n)}$.
\label{prop:refcorr}
\end{prop}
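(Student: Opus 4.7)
The plan is to induct on $n \geq 2$, after a preliminary argument establishing that $\kappa$ is inaccessible (and hence $\kappa \in C^{(1)}$ with $V_\kappa = H_\kappa$). Since $\Sigma_m \subseteq \Sigma_n$ for $m \leq n$, the hypothesis automatically grants $\Sigma_m$-correct $H_\lambda$-reflection at every lower level, which allows the inductive hypothesis to apply freely.

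For the preliminary inaccessibility step, given $\alpha < \kappa$, I would apply reflection (at the $\Sigma_1$ level, which is included in $\Sigma_n$) to the $\Sigma_1$ assertion $\phi(\alpha) := \exists \beta \exists f\, (f \text{ is a bijection from } \beta \text{ to } \mathcal{P}(\alpha))$, which is true in $V$ by Choice. The collection of $Z \in [H_\lambda]^{<\kappa}$ with $\alpha \subseteq Z$ is a club since $|\alpha| < \kappa$ and $\kappa$ is regular, so intersecting with the reflecting stationary set yields some $Z$ with $\pi_Z(\alpha) = \alpha$ and $V_\kappa \models \phi(\alpha)$. Since ``is a bijection'' is $\Delta_0$, the witness $f \in V_\kappa$ is a genuine bijection in $V$ and its domain $\beta$ lies in $V_\kappa$, so $\beta < \kappa$ and $2^\alpha = \beta < \kappa$. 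Combined with regularity, this gives inaccessibility.

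With $\kappa \in C^{(1)}$ and $V_\kappa = H_\kappa$ in hand, I verify $V_\kappa \prec_{\Sigma_n} V$ in both directions. For $V \models \phi(a) \Rightarrow V_\kappa \models \phi(a)$, reflection on the $\Sigma_n$ formula $\phi$ yields stationarily many $Z$ with $V_\kappa \models \phi(\pi_Z(a))$; since $a \in H_\kappa$ has $|trcl(\{a\})| < \kappa$, the $Z$'s with $trcl(\{a\}) \subseteq Z$ form a club, and intersection supplies a $Z$ with $\pi_Z(a) = a$. For the converse direction, write $\phi = \exists y\, \psi(y, x)$ with $\psi$ being $\Pi_{n-1}$ and extract a witness $b \in V_\kappa$ so that $V_\kappa \models \psi(b, a)$; in the base case $n = 2$, $\Pi_1$-absoluteness from $V_\kappa$ to $V$ (via $\kappa \in C^{(1)}$) yields $V \models \psi(b, a)$, while for $n > 2$ the inductive hypothesis supplies $\kappa \in C^{(n-1)}$ and hence the required $\Pi_{n-1}$-correctness, giving $V \models \psi(b, a)$ and therefore $V \models \phi(a)$.

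The main obstacle is the preliminary inaccessibility argument, which is delicate because $V_\kappa$ need not a priori satisfy Choice; the trick is to reflect a $\Sigma_1$ \emph{existential} assertion so that the witnessing bijection is supplied into $V_\kappa$ rather than needing to be constructed there from a power-set that might not be well-orderable internally. Once inaccessibility is secured, everything else is essentially bookkeeping: the reflection side reduces to ensuring the Mostowski collapse fixes $a$, and the upward side defers straightforwardly to the inductive hypothesis.
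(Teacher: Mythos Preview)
Your argument is correct and follows essentially the same route as the paper: establish inaccessibility by reflecting the existence of a bijection between an ordinal and $\mathcal{P}(\alpha)$, then use reflection with $trcl(\{a\})\subseteq Z$ to force $\pi_Z(a)=a$; the paper handles the upward direction by citing the proof of Corollary~\ref{cor:Cndef} rather than your explicit induction, but the content is the same. One small correction: your formula $\phi(\alpha)$ is $\Sigma_2$, not $\Sigma_1$, since expressing surjectivity onto $\mathcal{P}(\alpha)$ with $\alpha$ as the sole parameter requires an unbounded quantifier $\forall y\,(y\subseteq\alpha\to\dotsc)$---this is exactly why the standing hypothesis $n\geq 2$ is needed, and it does not damage your argument.
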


\begin{proof}
First we show that $\kappa$ is a strong limit and thus inaccessible. For any $\alpha<\kappa$, there are club many $Z\prec H_\lambda$ of size less than $\kappa$ with $\alpha+1\subset Z$ and thus $\pi_Z(\alpha)=\alpha$, so we can reflect the $\Sigma_2$ statement "there exists a surjection from some ordinal onto the power set of $\alpha$" and get that $(2^{|\alpha|})^{V_\kappa}$ exists (if we drop the prevailing assumption that $n\geq 2$, the proposition is of course false). Since $V_\kappa$ contains the full power set of $\alpha$, $2^{|\alpha|}=(2^{|\alpha|})^{V_\kappa}<\kappa$, as desired.

Now fix $a\in V_\kappa$. If $\phi$ is a $\Sigma_n$ formula such that $V\models \phi(a)$, then we can find a $Z\prec H_\lambda$ large enough so that $\pi_Z(a)=a$ (since $V_\kappa=H_\kappa$ when $\kappa$ is inaccessible, so $|trcl(a)|<\kappa$) and such that $V_\kappa\models\phi(\pi_Z(a))$, so $V_\kappa\models\phi(a)$. By the contrapositive of the argument given in the proof of Corollary \ref{cor:Cndef}, this is sufficient to establish $\kappa\in C^{(n)}$.
\end{proof}

We have the following analogue to Corollary \ref{cor:Cndef}:

\begin{lemma}
    \label{lemma:correfdef}
    "$\kappa$ is $\Sigma_n$-correctly $+\alpha$-reflecting" is a $\Pi_n$-definable relation between $\kappa$ and $\alpha$.
\end{lemma}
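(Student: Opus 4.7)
The plan is to write out the defining condition as a first-order formula with free variables $\kappa$ and $\alpha$ and read off its complexity, using the $\Sigma_n$-satisfaction predicate $T_{\Sigma_n}$ from Lemma \ref{lemma:sntruth} to handle the universal quantifier over $\Sigma_n$-formulas. First I would note that $\lambda = \kappa^{+\alpha}$, $H_\lambda$, and $V_\kappa$ are each $\Delta_1$-definable from $\kappa$ and $\alpha$, and that ``$\kappa$ is regular'' is $\Pi_1$. With these preliminaries in hand, the defining condition amounts to the conjunction of regularity with
\[
\forall \lceil\phi\rceil\ \forall a \in H_\lambda\, \bigl[\,\neg T_{\Sigma_n}(\lceil\phi\rceil, a) \,\vee\, S^a_\phi \text{ is stationary in } [H_\lambda]^{<\kappa}\,\bigr],
\]
where $S^a_\phi := \{Z \prec H_\lambda : |Z|<\kappa,\ a \in Z,\ V_\kappa \models \phi(\pi_Z(a))\}$.

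By Lemma \ref{lemma:sntruth} the negated antecedent $\neg T_{\Sigma_n}(\lceil\phi\rceil,a)$ is $\Pi_n$. For the consequent, I would unfold the stationarity definition from Section \ref{section:notation} as
\[
\forall f : [H_\lambda]^{<\omega} \to H_\lambda\ \ \exists Z \in S^a_\phi\, \bigl[\,f"[Z]^{<\omega} \subseteq Z \,\wedge\, Z \cap H_\kappa \text{ is transitive}\,\bigr].
\]
The matrix here is $\Delta_1$: elementarity ``$Z \prec H_\lambda$'' and set-satisfaction ``$V_\kappa \models \phi(\pi_Z(a))$'' both reduce, via bounded quantification over Gödel codes and finite tuples from $Z$, to the $\Delta_1$ Tarski satisfaction predicate for set structures; the Mostowski collapse $\pi_Z$ is $\Delta_1$; and the remaining clauses ($|Z|<\kappa$, $a \in Z$, closure under $f$, and transitivity of $Z \cap H_\kappa$) are at worst $\Delta_1$. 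Thus the stationarity clause has the form $\forall f\, \exists Z\, [\Delta_1]$, which is $\Pi_2$.

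The disjunction of the $\Pi_n$ antecedent-negation with the $\Pi_2$ stationarity clause is $\Pi_n$ for $n \geq 2$, and prefixing with the universal quantifiers $\forall \lceil\phi\rceil$ (over naturals) and $\forall a \in H_\lambda$ (bounded) preserves $\Pi_n$. Conjoining with the $\Pi_1$ regularity clause then yields a $\Pi_n$ formula in $\kappa$ and $\alpha$, as required. There is no substantive obstacle beyond the bookkeeping; the main point to watch is that set-theoretic Tarski satisfaction ``$M \models \phi(\bar{a})$'' is uniformly $\Delta_1$ when $M$ is a set parameter (even as the formula $\phi$ varies), so that the elementarity condition inside $S^a_\phi$ does not secretly push the stationarity clause beyond $\Pi_2$.
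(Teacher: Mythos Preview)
Your overall strategy is sound, but the complexity accounting has a real gap. You assert that $\lambda=\kappa^{+\alpha}$, $H_\lambda$, and $V_\kappa$ are each $\Delta_1$-definable from $\kappa$ and $\alpha$; they are not. As the paper's Appendix A records, ``$\lambda=\kappa^{+\alpha}$'' is only $\Delta_2$ (the obstruction is that ``is a cardinal'' is $\Pi_1$, not $\Delta_0$), and ``$x=V_\kappa$'' and ``$x=H_\lambda$'' are $\Pi_1$ but not $\Sigma_1$. Your matrix needs $H_\lambda$ and $V_\kappa$ as actual set parameters in order for ``$Z\prec H_\lambda$'' and ``$V_\kappa\models\phi(\pi_Z(a))$'' to be $\Delta_1$; with only $\kappa$ and $\alpha$ available, each of those clauses is $\Delta_2$, so the matrix is $\Delta_2$ and the prefix $\forall f\,\exists Z$ yields only $\Pi_3$. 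For $n=2$ this leaves you with a $\Pi_3$ bound rather than $\Pi_2$.

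The paper's proof avoids this by introducing a single outer parameter $x=V_{\lambda+3}$ (together with $\lambda$) via universal quantifiers under an at-worst $\Delta_2$ antecedent. Every function $f:[H_\lambda]^{<\omega}\to H_\lambda$, every candidate $Z$, every candidate stationary witness $S$, and the structures $H_\lambda$ and $V_\kappa$ themselves lie in $V_{\lambda+3}$, so the entire stationarity clause becomes bounded in $x$; the only genuinely unbounded contribution is then the $\Pi_n$ from $\neg T_{\Sigma_n}$, and the $\Delta_2$ antecedent (taken in its $\Pi_2$ form) disjoins with $\Pi_n$ to stay $\Pi_n$ once $n\geq 2$. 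Your argument can be repaired in the same spirit: pull $\lambda$, $H_\lambda$, and $V_\kappa$ out as outer universally quantified set variables with their $\Delta_2$ defining conditions as hypotheses; once those are parameters, your matrix really is $\Delta_1$ and the rest of your computation goes through.
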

\begin{proof}
    All objects relevant to the definition of $\Sigma_n$-correct $H_\lambda$ reflection can be found in $V_{\lambda+3}$, with the exception of those needed to verify that $V\models\phi(a)$. Thus we can say "$\kappa$ is regular and for all $\lambda=\kappa^{+\alpha}$, all $x=V_{\lambda+3}$, all $a\in H_\lambda$, and all $\Sigma_n$ formulas $\phi$, $\lnot\phi(a)$ or there is a set $S\in x$ satisfying the definition of stationarity as evaluated by functions in $x$ such that every $Z\in S$ is an elementary substructure of $H_\lambda$, has cardinality less than $\kappa$, contains $a$, and has a function $\pi_Z\in x$ satisfying the definition of the Mostowski collapse such that $V_\kappa\models\phi(\pi_Z(a))$." $\lnot\phi(a)$ is $\Pi_n$, regularity is $\Pi_1$, and as shown in the appendix, the definitions of $\kappa^{+\alpha}$, $H_\lambda$, and $V_{\lambda+3}$ are at worst $\Delta_2$, so (given our prevailing assumption that $n\geq 2$) the overall definition is $\Pi_n$.
\end{proof}

Using this, we analyze the hierarchy of $\Sigma_n$-correctly $+\alpha$-reflecting cardinals as $n$ and $\alpha$ vary.

\begin{prop}
\label{prop:refhierarchy}
    Let $\kappa$ be $\Sigma_n$-correctly $+\alpha$ reflecting. Then:
    \begin{enumerate}
        \item $\kappa$ is $\Sigma_m$ correctly $+\beta$-reflecting for all $m\leq n$ and $\beta\leq \alpha$
        \item If $0<\alpha<\kappa$, then for all $m<n$ there are stationarily many $\bar{\kappa}<\kappa$ which are $\Sigma_m$-correctly $+\alpha$-reflecting (cf. Lemma 4.10 in \cite{fuchshierachies})
    \end{enumerate}
\end{prop}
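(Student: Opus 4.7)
The plan is to handle the two parts differently: (1) comes from a straightforward transfer of test functions between $H_\mu$ and $H_\lambda$, while (2) reflects the very statement that $\kappa$ is $\Sigma_m$-correctly $+\alpha$-reflecting down to unboundedly many $\bar\kappa<\kappa$, controlling which club the reflected cardinals land in via carefully chosen Skolem functions.

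For (1), the $m\leq n$ direction is immediate since every $\Sigma_m$ formula is a $\Sigma_n$ formula. For the $\beta\leq\alpha$ direction, set $\mu=\kappa^{+\beta}$ and $\lambda=\kappa^{+\alpha}$. Given any test function $f:[H_\mu]^{<\omega}\to H_\mu$ and $\Sigma_n$ formula $\phi$ with $\phi(a)$ and $a\in H_\mu$, I would extend $f$ to $f':[H_\lambda]^{<\omega}\to H_\lambda$ by letting $f'$ agree with $f$ on $[H_\mu]^{<\omega}$ and include $\mu$ and $a$ among the required constants. Applying $\Sigma_n$-correct $+\alpha$-reflection yields $Z'\prec H_\lambda$ of size $<\kappa$ with $Z'\cap H_\kappa$ transitive and $V_\kappa\models\phi(\pi_{Z'}(a))$. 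Setting $Z:=Z'\cap H_\mu$, elementarity of $Z$ in $H_\mu$ follows from $\mu\in Z'$ by relativizing $H_\mu$-satisfaction inside $H_\lambda$ and using the elementarity of $Z'\prec H_\lambda$; closure of $Z$ under $f$ follows from the $f'$-closure of $Z'$; and $\pi_Z(a)=\pi_{Z'}(a)$ because $\mathrm{tc}(\{a\})\subseteq H_\mu$ forces the two collapses to agree on $a$ by $\in$-induction. This approach avoids any need for $H_\mu$ itself to sit in $H_\lambda$.

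For (2), fix $m<n$ and a club $D\subseteq\kappa$; it suffices to produce $\bar\kappa\in D$ which is $\Sigma_m$-correctly $+\alpha$-reflecting. Let $\psi(x,y)$ be the $\Pi_m$ formula from Lemma \ref{lemma:correfdef} asserting that $x$ is $\Sigma_m$-correctly $+y$-reflecting; since $m+1\leq n$, $\psi$ is $\Sigma_n$, and $\psi(\kappa,\alpha)$ holds by part (1). Both parameters lie in $H_\lambda$ because $\alpha>0$ makes $\lambda>\kappa$. I would apply $\Sigma_n$-correct $+\alpha$-reflection of $\kappa$ to $\psi(\kappa,\alpha)$, using a test function designed to force any closed $Z\prec H_\lambda$ to contain $\alpha$ as a subset and to be closed under the $D$-successor map $\beta\mapsto\min(D\setminus(\beta+1))$. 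Stationarity in the paper's sense gives $Z\cap H_\kappa$ transitive, so $Z\cap\kappa$ is an ordinal $\bar\kappa=\pi_Z(\kappa)$; $\alpha\subseteq Z$ yields $\pi_Z(\alpha)=\alpha$; and cofinality of $D$-successors in $\bar\kappa$ together with closedness of $D$ gives $\bar\kappa\in D$. Finally, $V_\kappa\models\psi(\bar\kappa,\alpha)$ lifts to $V$ since $\psi$ is $\Pi_m$ and $\kappa\in C^{(n)}\subseteq C^{(m)}$ by Proposition \ref{prop:refcorr}.

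The main obstacle is the test-function bookkeeping in (2): one has to simultaneously arrange $\alpha\subseteq Z$, $Z\cap\kappa\in D$, and all the standard closure requirements without conflict. A secondary subtlety in (1) is that $H_\mu$ need not be an element of $H_\lambda$ when $|H_\mu|\geq\lambda$; handling this by using $\mu$ (rather than $H_\mu$) as a parameter in $Z'$ is what keeps the transfer argument honest.
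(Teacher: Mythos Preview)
Your proposal is correct and follows essentially the same route as the paper. For (1) the paper extends $f$ by $f'(x)=f(x\cap H_{\kappa^{+\beta}})$ and builds Skolem functions for $H_{\kappa^{+\beta}}$ into $f$ directly (rather than your relativization via $\mu\in Z'$), and for (2) it cites Lemma~\ref{lemma:clubequiv} instead of your explicit $D$-successor trick and leaves the $V_\kappa\to V$ upgrade implicit; these are purely presentational differences, and your version is arguably more careful on both the $H_\mu\notin H_\lambda$ issue and the correctness lift.
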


\begin{proof}
    (1): If $\phi$ is a $\Sigma_m$ formula and $a\in H_{\kappa^{+\beta}}$ is such that $\phi(a)$ holds, let $f:[H_{\kappa^{+\beta}}]^{<\omega}\rightarrow H_{\kappa^{+\beta}}$ be a function; to simplify things, we will assume without loss of generality than $f$ encodes the Skolem functions necessary to guarantee that any set closed under it is an elementary substructure of $H_{\kappa^{+\beta}}$. We extend this to a function $f':[H_{\kappa^{+\alpha}}]^{<\omega}\rightarrow H_{\kappa^{+\alpha}}$ by defining $f'(x)=f(x\cap H_{\kappa^{+\beta}})$.
    
    Then because $a\in H_{\kappa^{+\beta}}\subseteq H_{\kappa^{+\alpha}}$, every $\Sigma_m$ formula is $\Sigma_n$, and $\kappa$ is $\Sigma_n$-correctly $+\alpha$ reflecting, we can find a $Z'\prec H_{\kappa^{+\alpha}}$ of size less than $\kappa$, containing $a$, and closed under $f'$ with $Z'\cap H_\kappa$ transitive and $V_\kappa\models \phi(\pi_{Z'}(a))$. Let $Z:=Z'\cap H_{\kappa^{+\beta}}$. By the definition of $f'$, $Z$ is closed under $f$, and clearly $a\in Z$, $Z\cap H_\kappa=Z'\cap H_\kappa$ is transitive, $|Z|<\kappa$, and because we put Skolem functions into $f$ $Z\prec H_{\kappa^{+\beta}}$. Finally, $\pi_Z=\pi_{Z'}\upharpoonright Z$ because if $x\in Z'$ and $x\in y\in Z$ then $y\in H_{\kappa^{+\beta}}$, so by transitivity $x\in H_{\kappa^{+\beta}}$, and thus $x\in Z$, so the recursive definitions of $\pi_Z(y)$ and $\pi_{Z'}(y)$ will agree.

    (2): By Lemma \ref{lemma:correfdef}, the assertion that $\kappa$ is $\Sigma_m$-correctly $+\alpha$-reflecting is $\Pi_m$. Thus if $2\leq m <n$\footnote{or even if $m=1$, though by earlier remarks the proof in this case would reduce to a needlessly circuitous proof that $\kappa$ is Mahlo}, the assertion that $\kappa$ is $\Sigma_m$-correctly $+\alpha$-reflecting is $\Sigma_n$, so we can find stationarily many $Z\prec{H_{\kappa^{+\alpha}}}$ of size less than $\kappa$ with $Z\cap H_\kappa$ transitive, $\alpha,\kappa\in Z$, and $\pi_Z(\kappa)$ $\Sigma_m$-correctly $+\alpha$-reflecting (since the transitivity hypothesis implies that $\pi_Z(\alpha)=\alpha$). It then follows from Lemma \ref{lemma:clubequiv} that there are stationarily many $\Sigma_m$-correctly $+\alpha$-reflecting cardinals below $\kappa$, as desired.
\end{proof}

Now we compare them to other large cardinals. $\Sigma_n$-correctly $+0$-reflecting can easily be seen to be equivalent to regular $C^{(n)}$, which as we have seen lies between inaccessible and Mahlo in the consistency strength hierarchy. $+1$-reflecting, even without added correctness, is already a fair bit stronger:

\begin{prop}
    If $\kappa$ is $\Sigma_2$-correctly $+1$-reflecting, then $\kappa$ is weakly compact.
\end{prop}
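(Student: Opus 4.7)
The plan is to show that $\kappa$ is $\Pi^1_1$-indescribable; together with the inaccessibility already established in Proposition \ref{prop:refcorr}, this yields weak compactness by the classical Hanf--Scott theorem.

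Fix $R \subseteq V_\kappa$ and a $\Pi^1_1$ sentence $\Psi \equiv \forall X\,\phi(X, R)$ with $\phi$ first-order, and assume $(V_\kappa, \in, R) \models \Psi$; the goal is to produce some $\bar\kappa < \kappa$ with $(V_{\bar\kappa}, \in, R \cap V_{\bar\kappa}) \models \Psi$. Note that $R \in H_{\kappa^+}$. Unfolding satisfaction, $(V_\kappa, \in, R) \models \Psi$ becomes the assertion that for every $X \subseteq V_\kappa$, the structure $(V_\kappa, \in, R, X)$ satisfies $\phi$; since satisfaction in a set-sized structure is $\Delta_1$ and $V_\kappa$ is definable from $\kappa$, this is a $\Pi_1$ statement in the parameters $\kappa$ and $R$, hence in particular $\Sigma_2$.

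Next I would apply the $\Sigma_2$-correct $+1$-reflecting hypothesis to obtain stationarily many $Z \prec H_{\kappa^+}$ of size less than $\kappa$ containing $\kappa$ and $R$, with $Z \cap V_\kappa$ transitive, such that $V_\kappa$ satisfies the same $\Pi_1$ statement about the parameters $\pi_Z(\kappa)$ and $\pi_Z(R)$. Intersecting with the club of $Z$ closed under the function $\alpha \mapsto V_\alpha$, I may further arrange that whenever $\alpha \in Z \cap \kappa$, $V_\alpha \in Z$, and hence (by transitivity of $Z \cap V_\kappa$) $V_\alpha \subseteq Z$. Setting $\bar\kappa := Z \cap \kappa$ (an ordinal by transitivity), this forces $V_{\bar\kappa} \subseteq Z \cap V_\kappa$, and since $Z \cap V_\kappa$ is transitive of ordinal height $\bar\kappa$ we also have $Z \cap V_\kappa \subseteq V_{\bar\kappa}$, giving equality.

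Finally, using that $\pi_Z$ fixes the transitive set $V_{\bar\kappa} = Z \cap V_\kappa$, I read off $\pi_Z(\kappa) = \bar\kappa$, $\pi_Z(V_\kappa) = V_{\bar\kappa}$, and $\pi_Z(R) = R \cap V_{\bar\kappa}$. The reflected $\Pi_1$ statement in $V_\kappa$ thus asserts that for every $X \subseteq V_{\bar\kappa}$, $(V_{\bar\kappa}, \in, R \cap V_{\bar\kappa}, X) \models \phi$. Because $\kappa$ is inaccessible, both $\mathcal{P}(V_{\bar\kappa})$ and satisfaction in the set $V_{\bar\kappa}$ are absolute between $V_\kappa$ and $V$, so this passes up to $V$, giving $(V_{\bar\kappa}, \in, R \cap V_{\bar\kappa}) \models \Psi$. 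The main subtle point is engineering the closure properties of $Z$ so that $Z \cap V_\kappa$ coincides with $V_{\bar\kappa}$ rather than merely sitting inside it as a proper transitive subset, since this is exactly what forces the collapsed parameter $\pi_Z(R)$ to match the genuine restriction $R \cap V_{\bar\kappa}$ demanded by the definition of $\Pi^1_1$-indescribability.
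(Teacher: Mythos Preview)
Your argument is correct and takes a genuinely different route from the paper. The paper verifies the tree property directly: assuming a $\kappa$-tree $T=(\kappa,<_T)$ has no cofinal branch, it reflects this $\Pi_1$ statement (with parameter $T\in H_{\kappa^+}$) to obtain that $\pi_Z(T)=T\upharpoonright\pi_Z(\kappa)$ has no cofinal branch, which contradicts the existence of nodes on level $\pi_Z(\kappa)$. You instead establish $\Pi^1_1$-indescribability in one stroke; this is more conceptual and in fact anticipates the paper's remark, made just after the proposition, that $+1$-reflection yields total indescribability via strong unfoldability. The paper's route is more concrete and avoids having to carry $V_\kappa$ around. Two small points worth tightening: first, the assertion ``$(V_\kappa,\in,R)\models\Psi$'' is not literally $\Pi_1$ in the parameters $\kappa$ and $R$ alone, since the $\Pi_1$ definition of $V_\kappa$ lands in the antecedent of an implication---it is, however, $\Sigma_2$ (existentially quantify over $v=V_\kappa$), or genuinely $\Pi_1$ once you add $V_\kappa\in H_{\kappa^+}$ as a parameter, which meshes with your later computation that $\pi_Z(V_\kappa)=V_{\bar\kappa}$. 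Second, ``transitive of ordinal height $\bar\kappa$'' does not by itself force $Z\cap V_\kappa\subseteq V_{\bar\kappa}$; the missing ingredient is elementarity, which gives $\mathrm{rank}(x)\in Z\cap\kappa=\bar\kappa$ for each $x\in Z\cap V_\kappa$.
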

\begin{proof}
    We have already shown that $\kappa$ is inaccessible, so we verify the tree property. Let $<_T\subset \kappa\times\kappa$ be a $\kappa$-tree ordering and $T=(\kappa, <_T)\in H_{\kappa^+}$. If $T$ has no cofinal branch, the statement "$T$ is a $\kappa$-tree with no cofinal branch" is $\Pi_1$. Thus we can find a $Z\prec H_{\kappa^+}$ of size less than $\kappa$ such that $\kappa, T\in Z$, $Z\cap H_\kappa$ is transitive, $Z$ closed under the mapping $\alpha\mapsto T_\alpha$, and $\pi_Z(T)$ is a $\pi_Z(\kappa)$-tree with no cofinal branch.

    By the closure and transitivity conditions, every node of $T$ which appears below level $\pi_Z(\kappa)=Z\cap\kappa$ is in $Z$ and not moved by $\pi_Z$. By elementarity, they have the same ordering relations to each other in $\pi_Z(T)$ as they did in $T$. Since $\pi_Z(T)$ has height $\pi_Z(\kappa)$, it can have no nodes on higher levels, and since all of its nodes arise from nodes of $T$, none of whose heights in the tree are moved by $\pi_Z$, it must have exactly the same nodes as $T$ on all levels below $\pi_Z(\kappa)$. Thus $\pi_Z(T)=T\upharpoonright\pi_Z(\kappa)$. However, taking the predecessors of any node in $T_{\pi_Z(\kappa)}$ gives a cofinal branch of $T\upharpoonright\pi_Z(\kappa)$, a contradiction. Thus $T$ has have had a cofinal branch to begin with, so $\kappa$ is weakly compact.
\end{proof}

It can further be shown that ($\Sigma_2$-correct) $+1$-reflection is exactly equivalent to the concept of strong unfoldability introduced by Villaveces (\cite{VillavecesChains}, which in turn implies total indescribability, so $\Sigma_n$-correct $+1$-reflection is in fact a fair bit stronger than weak compactness.

However, as the next two lemmas show, $\Sigma_n$-correct $+1$-reflection is still consistent with $V=L$ and thus below $0^\sharp$ for arbitrarily large $n$:

\begin{lemma}
\label{lemma:alt+1ref}
(adapted and generalized from Miyamoto \cite{miyamotosegments}, proof of Theorem 4.2) A cardinal $\kappa$ is $\Sigma_n$-correctly $+1$-reflecting in $L$ if and only if for all $A\in \mathcal{P}(\kappa)\cap L$ and $\Sigma_n$ formulas $\phi$ such that $L\models \phi(A, \kappa)$, there are stationarily many $\alpha<\kappa$ such that $L_\kappa\models\phi(A\cap\alpha, \alpha)$.
\end{lemma}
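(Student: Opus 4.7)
The plan is to work entirely inside $L$, so that $H_{\kappa^+} = L_{\kappa^+}$ and $V_\kappa = L_\kappa$, and to exploit Jensen's condensation lemma together with canonical $<_L$-definable codings. Both sides of the equivalence force $\kappa$ to be inaccessible in $L$: the forward direction by Proposition \ref{prop:refcorr}; the backward direction by reflecting an appropriate $\Sigma_2$ assertion about cardinal arithmetic.

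For the forward direction I would treat $(A, \kappa)$ as a parameter in $H_{\kappa^+}$ and apply the definition of $\Sigma_n$-correct $+1$-reflection directly: it delivers stationarily many $Z \prec H_{\kappa^+}$ of size less than $\kappa$ containing $(A, \kappa)$ with $Z \cap H_\kappa$ transitive and $L_\kappa \models \phi(\pi_Z(A), \pi_Z(\kappa))$. For any such $Z$, transitivity together with $\kappa \in Z$ forces $\alpha_Z := Z \cap \kappa$ to be an ordinal, $\pi_Z(\kappa) = \alpha_Z$, and $\pi_Z(A) = A \cap \alpha_Z$. The standard projection argument (absorbing the enumeration of any given club $C \subseteq \kappa$ into the Skolem function presented to the stationarity hypothesis) shows that $Z \mapsto Z \cap \kappa$ sends stationary subsets of $[H_{\kappa^+}]^{<\kappa}$ to stationary subsets of $\kappa$, giving the desired stationarily many $\alpha$.

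For the converse, fix $\phi$, $a \in H_{\kappa^+}$ with $L \models \phi(a)$, and a function $f\colon [H_{\kappa^+}]^{<\omega} \to H_{\kappa^+}$; by stationarity it suffices to produce a single $Z$ closed under $f$ with the reflection property. I would pick $\gamma < \kappa^+$ with $a, f \in L_\gamma$, let $\rho$ be the $<_L$-least bijection $\kappa \to L_\gamma$, and let $A \subseteq \kappa$ canonically encode the $\in$-relation on $L_\gamma$ pulled back along $\rho$ together with the distinguished ordinals $\rho^{-1}(a)$ and $\rho^{-1}(f)$. The formula $\phi^*(B, \eta)$ asserting that $B \subseteq \eta$ decodes by the canonical procedure to an extensional well-founded structure on $\eta$ whose transitive collapse is some $L_{\bar\gamma}$, and that $\phi$ holds of the image of the distinguished ordinal, is $\Sigma_n$ because the decoding and the verification that the collapse is an $L$-level are $\Delta_1$; and $L \models \phi^*(A, \kappa)$ by construction.

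Applying the hypothesis yields a stationary $S \subseteq \kappa$ of $\alpha$ with $L_\kappa \models \phi^*(A \cap \alpha, \alpha)$. A routine club argument produces a club $D$ of $\alpha$ for which the $<_L$-canonical Skolem hull $N_\alpha$ of $\alpha \cup \{a, f\}$ in $L_{\kappa^+}$ satisfies $N_\alpha \cap \kappa = \alpha$; choosing any $\alpha \in S \cap D$ makes $N_\alpha$ a size-$<\kappa$ elementary submodel of $H_{\kappa^+}$ containing $a$, closed under $f$, with $N_\alpha \cap H_\kappa$ transitive. Condensation collapses $N_\alpha$ onto some $L_{\bar\gamma}$ with $\bar\gamma < \kappa$, sending $A \mapsto A \cap \alpha$ and $\kappa \mapsto \alpha$; by elementarity the decoded distinguished element inside $L_{\bar\gamma}$ is $\pi_{N_\alpha}(a)$, and $\Delta_1$ absoluteness of the decoding between $L_{\bar\gamma}$ and $L_\kappa$ transfers this identification to $L_\kappa$, so that $L_\kappa \models \phi(\pi_{N_\alpha}(a))$ and $Z := N_\alpha$ works. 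The main obstacle is precisely this commutation of the canonical coding with Mostowski collapse --- ensuring that the distinguished element $L_\kappa$ reads off from $A \cap \alpha$ really is $\pi_{N_\alpha}(a)$ --- which is exactly what $<_L$-definable codings combined with Jensen's condensation are designed to deliver, and why the argument is naturally stated in $L$.
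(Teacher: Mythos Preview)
Your forward direction matches the paper's. In the reverse direction there is a genuine gap: you cannot pick $\gamma < \kappa^+$ with $f \in L_\gamma$, because $f$ has domain $[H_{\kappa^+}]^{<\omega}$, a set of cardinality $\kappa^+$, so $|f| = \kappa^+$ and $f \notin L_{\kappa^+}$ at all. You therefore cannot encode $\rho^{-1}(f)$ into $A$, nor take $f$ as an element of any hull $N_\alpha \subseteq L_{\kappa^+}$. The repair is to encode only $a$ into $A$, and when building $N_\alpha$ to close $\alpha \cup \{a, A, \rho, L_\gamma\}$ under the $<_L$-Skolem functions of $L_{\kappa^+}$ together with $f$ treated as an external function; including $\rho$ and $L_\gamma$ in $N_\alpha$ then forces $\rho"\alpha = N_\alpha \cap L_\gamma$, after which your commutation argument does go through.

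The paper avoids this bookkeeping by using a coding uniformly definable from $\kappa$: since $\mathcal{P}(\kappa)$ and $L_{\kappa^+}$ are each well-ordered by $<_L$ in type $\kappa^+$, there is a bijection $f_\kappa \colon \mathcal{P}(\kappa) \to L_{\kappa^+}$ definable from $\kappa$ alone. One sets $A := f_\kappa^{-1}(a)$, applies the hypothesis to the $\Sigma_n$ formula $\phi(f_\kappa(A))$, lifts the resulting stationary $S \subseteq \kappa$ to $[L_{\kappa^+}]^{<\kappa}$ via Lemma~\ref{lemma:clubequiv}, and then for any suitable $Z$ elementarity alone gives $\pi_Z(a) = \pi_Z(f_\kappa(A)) = f_{\pi_Z(\kappa)}(A \cap \pi_Z(\kappa))$ --- the commutation of coding with collapse is automatic, with no auxiliary $\gamma$ or $\rho$ to track.
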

\begin{proof}
Assume $V=L$ throughout. For the forward implication, if $\kappa$ is $\Sigma_n$-correctly $+1$-reflecting and $\phi(A, \kappa)$ holds, there are stationarily many $Z\prec H_{\kappa^+}$ of size less than $\kappa$ and containing $A$ and $\kappa$ such that $V_{\kappa}\models \phi(\pi_Z(A), \pi_Z(\kappa))$ and $Z\cap H_\kappa$ is transitive. By Lemma \ref{lemma:clubequiv}, there are stationarily many $\alpha<\kappa$ such that $\alpha=\pi_Z(\kappa)=Z\cap\kappa$ for some such $Z$, and by elementarity, $\pi_Z(A)=A\cap \pi_Z(\kappa)$. Since even $\Sigma_1$-correctness implies inaccessibility for regular cardinals, $V_\kappa=H_\kappa=L_\kappa$. Thus there are stationarily many $\alpha<\kappa$ such that $L_\kappa\models \phi(A\cap\alpha, \alpha)$, as desired.

For the reverse implication, since both $\mathcal{P}(\kappa)$ and $H_{\kappa^+}=L_{\kappa^+}$ are well-ordered by $<_L$ with order type $\kappa^+$, there is a bijection $f_\kappa$ between them, definable with $\kappa$ as a parameter. Given any $a\in L_{\kappa^+}$ such that $L\models\phi(a)$, setting $A:=f_\kappa^{-1}(a)$, $L\models\phi(f_\kappa(A))$. It follows that there is a stationary $S\subset\kappa$ such that $L_\kappa\models \phi(f_\alpha(A\cap\alpha))$ for all $\alpha\in S$, where $f_\alpha$ has the obvious definition.

Then again by Lemma \ref{lemma:clubequiv} we have a stationary $S^*\subset [L_{\kappa^+}]^{<\kappa}$ consisting of sets $Z\prec L_{\kappa^+}$ containing $a$, $A$, and $\kappa$ such that $Z\cap\kappa\in S$. For all $Z\in S^*$, by elementarity we have $\pi_Z(A)=A\cap\pi_Z(\kappa)$ and thus $\pi_Z(a)=\pi_Z(f_\kappa(A))=f_{\pi_Z(\kappa)}(\pi_Z(A))=f_{\pi_Z(\kappa)}(A\cap\pi_Z(\kappa))$, so since $\pi_Z(\kappa)\in S$, $L_\kappa\models \phi(\pi_Z(a))$. Thus $\kappa$ is $\Sigma_n$-correctly $+1$-reflecting.
\end{proof}

\begin{corollary}
    Every $\Sigma_n$-correctly $+1$-reflecting cardinal is $\Sigma_n$-correctly $+1$-reflecting in $L$.
\end{corollary}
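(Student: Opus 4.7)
The plan is to bridge $V$-level reflection to $L$-level reflection via the internal characterization supplied by Lemma \ref{lemma:alt+1ref}. That lemma reduces the conclusion to verifying that for every $A\in\mathcal{P}(\kappa)\cap L$ and every $\Sigma_n$ formula $\phi$ with $L\models\phi(A,\kappa)$, the set of $\alpha<\kappa$ such that $L_\kappa\models\phi(A\cap\alpha,\alpha)$ is stationary in $L$. The central observation is that the assertion ``$L\models\phi(A,\kappa)$'' is itself expressible by a $\Sigma_n$ formula in $V$ with the parameters $A,\kappa\in H_{\kappa^+}$: it can be written as $\exists\gamma\,(L_\gamma\models\phi(A,\kappa))$, with satisfaction in the set $L_\gamma$ being $\Delta_1$ in its parameter, so the overall complexity remains $\Sigma_n$ (using $n\geq 2$).

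Fix such $A$ and $\phi$. First I would apply the $\Sigma_n$-correct $+1$-reflection of $\kappa$ in $V$ to the statement ``$L\models\phi(A,\kappa)$'' to obtain a $V$-stationary set of $Z\prec H_{\kappa^+}^V$ of size less than $\kappa$ with $A,\kappa\in Z$, $Z\cap H_\kappa$ transitive, and $V_\kappa\models$``$L\models\phi(\pi_Z(A),\pi_Z(\kappa))$''. Setting $\alpha_Z := Z\cap\kappa = \pi_Z(\kappa)$, the transitivity of $Z\cap\kappa$ together with $A\in Z$ forces $\pi_Z(A)=A\cap\alpha_Z$. Since $\kappa$ is inaccessible by Proposition \ref{prop:refcorr}, $V_\kappa$ correctly computes the $L$-hierarchy up to rank $\kappa$ as $L_\kappa$, so the reflected statement is equivalent to $L_\kappa\models\phi(A\cap\alpha_Z,\alpha_Z)$. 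Projecting via Lemma \ref{lemma:clubequiv}, the set $T$ of such $\alpha_Z$ is stationary in $\kappa$ as computed in $V$.

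The hard step is to upgrade this $V$-stationarity into $L$-stationarity, since $T$ itself need not lie in $L$. The fix is to work with the larger set
\[T' := \{\alpha<\kappa : L_\kappa\models\phi(A\cap\alpha,\alpha)\},\]
which is definable in $L$ from $A$, $\kappa$, and $\lceil\phi\rceil$ through the $L_\kappa$-satisfaction relation and therefore belongs to $L$. Clearly $T\subseteq T'$, so $T'$ is stationary in $V$. Because $\kappa$ is regular in $V$, any club $C\subseteq\kappa$ lying in $L$ is still closed and unbounded as judged by $V$, hence a club there as well, so $C$ meets $T$ and \emph{a fortiori} meets $T'$. Thus $T'\in L$ meets every club of $\kappa$ in $L$, witnessing that it is $L$-stationary. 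Lemma \ref{lemma:alt+1ref}, applied inside $L$, then delivers the conclusion.
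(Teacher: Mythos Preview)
Your overall strategy is exactly the paper's: express ``$L\models\phi(A,\kappa)$'' as a $\Sigma_n$ formula, reflect it via the $V$-side hypothesis on $\kappa$, read off $L_\kappa\models\phi(A\cap\alpha,\alpha)$ on a $V$-stationary set of $\alpha<\kappa$, and then pass to $L$ via the observation that $\{\alpha<\kappa: L_\kappa\models\phi(A\cap\alpha,\alpha)\}$ is definable in $L$ and that $L$-clubs remain $V$-clubs.

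There is one genuine error. The formula you write down, $\exists\gamma\,(L_\gamma\models\phi(A,\kappa))$, does not do the job. It is indeed $\Sigma_1$ (hence $\Sigma_n$) and it is implied by $L\models\phi(A,\kappa)$ via reflection in $L$, but when evaluated inside $V_\kappa$ it becomes only $\exists\gamma<\kappa\,(L_\gamma\models\phi(\bar A,\bar\kappa))$. For $n\geq 2$ a single $L_\gamma$ satisfying a $\Sigma_n$ formula need not be $\Sigma_n$-elementary in $L_\kappa$, so this does \emph{not} yield $L_\kappa\models\phi(\bar A,\bar\kappa)$; your sentence ``the reflected statement is equivalent to $L_\kappa\models\phi(A\cap\alpha_Z,\alpha_Z)$'' therefore fails with this choice. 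The correct formula, and the one the paper uses, is the relativization $\phi^L$ itself: this is $\Sigma_n$ because $x\in L$ is $\Sigma_1$, and it has the crucial property that $(\phi^L)^{V_\kappa}$ is literally $\phi^{L^{V_\kappa}}=\phi^{L_\kappa}$, which is precisely what your step requires. With that single substitution your argument is complete and coincides with the paper's proof.
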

\begin{proof}
    Let $\kappa$ be $\Sigma_n$-correctly $+1$-reflecting. If $A\subseteq\kappa$ is constructible and $\phi$ is $\Sigma_n$ such that $L\models\phi(A, \kappa)$, then the $\Sigma_n$ formula $\phi^L(A, \kappa)$ is true in $V$. Thus there are stationarily many $Z\prec H_{\kappa^+}$ of size less than $\kappa$ containing $A$ and $\kappa$ with $Z\cap H_\kappa$ transitive and $V_\kappa\models \phi^L(\pi_Z(A), \pi_Z(\kappa))$, so by arguments similar to those given for the previous lemma there are stationarily many $\alpha<\kappa$ such that $V_\kappa\models\phi^L(A\cap\alpha, \alpha)$. Since $L^{V_\kappa}=L_\kappa$ and any club in $L$ is still a club in $V$, $\{\alpha<\kappa\sbp L_\kappa\models \phi(A\cap\alpha, \alpha)\}$ is constructible and stationary in $L$. It follows that $L\models\text{"}\kappa$ is $\Sigma_n$-correctly $+1$-reflecting".
\end{proof}

However, if there is a $\kappa$ which is even $\Sigma_2$-correctly $+2$-reflecting, Miyamoto \cite{miyamotosegments} shows that $BPFA^{<\omega_4}+2^{\aleph_0}=\aleph_2$ is consistent (Theorem 3.1; note that Miyamoto writes the axiom as $\Sigma(\omega_3)$ and that the hypothesis of the theorem can be forced over any model with a $+2$-reflecting cardinal). Schimmerling (\cite{SCHIMMERLINGcoherent}, two paragraphs following Proposition 1.2), combining his own work with various results from Todorcevic, Velickovic, and Steel, observes that under the forcing axiom for proper posets of size at most $(2^{\aleph_0})^+$ (which certainly holds in Miyamoto's model), $\square(\aleph_2)$ and $\square(\aleph_3)$ simultaneously fail, so the axiom of determinacy holds in $L(\mathbb{R})$. Thus the consistency strength of $\Sigma_n$-correctly $+2$-reflecting cardinals is at least as high as infinitely many Woodin cardinals.

With an added cardinal arithmetic hypothesis, we can get even more consistency strength more straightforwardly (recall that every 1-extendible cardinal is superstrong and a limit of superstrongs, and thus of greater consistency strength than any number of Woodin cardinals; see e.g. Kanamori's Proposition 26.11(a) \cite{kanamori}).

\begin{prop}
\label{prop:+2ref}
    If $\kappa$ is $\Sigma_2$-correctly $+2$-reflecting and $2^\kappa=\kappa^+$, then there are stationarily many 1-extendible cardinals below $\kappa$. If $\kappa$ is additionally $\Sigma_3$-correctly $+2$-reflecting, then it is itself 1-extendible.
\end{prop}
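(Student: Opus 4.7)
The plan is to apply $+2$-reflection in both parts to the parameter $V_{\kappa+1}$, which lies in $H_{\kappa^{++}}$ because $|V_{\kappa+1}| = 2^\kappa = \kappa^+ < \kappa^{++}$, and to read 1-extendibility embeddings off of the resulting Mostowski collapses.

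For the first part, I would apply $\Sigma_2$-correct $+2$-reflection to the $\Pi_1$ (hence $\Sigma_2$) formula $\phi(X,\alpha) \equiv (X = V_{\alpha+1})$, which is trivially satisfied in $V$ by $(V_{\kappa+1},\kappa)$. This yields a stationary set of $Z \prec H_{\kappa^{++}}$ of size less than $\kappa$, containing $V_{\kappa+1}$ and $\kappa$, with $Z \cap H_\kappa$ transitive, and satisfying $V_\kappa \models \pi_Z(V_{\kappa+1}) = V_{\pi_Z(\kappa)+1}$. Writing $\bar\kappa := \pi_Z(\kappa) = Z\cap\kappa$ and using that $\kappa$ is inaccessible by Proposition \ref{prop:refcorr} (so $V_\kappa$ correctly computes $V_{\bar\kappa+1}$), we get $\pi_Z(V_{\kappa+1}) = V_{\bar\kappa+1}$ in $V$. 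Since $\pi_Z$ is an $\in$-isomorphism and $V_{\bar\kappa+1}$ is transitive, $V_{\bar\kappa+1} \subseteq N_Z$, so the inverse collapse restricts to an elementary embedding $\sigma_Z := \pi_Z^{-1} \restriction V_{\bar\kappa+1} : V_{\bar\kappa+1} \to V_{\kappa+1}$ with critical point $\bar\kappa$ and $\sigma_Z(\bar\kappa)=\kappa$, witnessing 1-extendibility of $\bar\kappa$. The stationarity in $[H_{\kappa^{++}}]^{<\kappa}$ then transfers via Lemma \ref{lemma:clubequiv} to a stationary set of such $\bar\kappa<\kappa$.

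For the second part, I would bootstrap using $\Sigma_3$-correct $+2$-reflection applied to the $\Pi_2$ (hence $\Sigma_3$) formula $\psi(X,\alpha)$ encoding the first-part conclusion: ``$X = V_{\alpha+1}$, and for every $f : \alpha^{<\omega} \to \alpha$ there exist $\bar\alpha<\alpha$ closed under $f$ and an elementary embedding $V_{\bar\alpha+1} \to X$ with critical point $\bar\alpha$ sending $\bar\alpha$ to $\alpha$.'' By the first part $V \models \psi(V_{\kappa+1},\kappa)$, so reflection produces stationarily many $Z$ with $V_\kappa \models \psi(\pi_Z(V_{\kappa+1}),\bar\kappa)$; in particular $\pi_Z(V_{\kappa+1}) = V_{\bar\kappa+1}$ and $V_\kappa$ sees a stationary family of internal 1-extendibility witnesses $\tau_{\tilde\kappa}: V_{\tilde\kappa+1} \to V_{\bar\kappa+1}$ with $\tilde\kappa \mapsto \bar\kappa$. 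Composing each with $\sigma_Z$ from the first part gives a coherent tower $V_{\tilde\kappa+1} \xrightarrow{\tau_{\tilde\kappa}} V_{\bar\kappa+1} \xrightarrow{\sigma_Z} V_{\kappa+1}$ of embeddings whose critical points $\tilde\kappa$ all map to $\kappa$. I would then derive a normal ultrafilter on $\mathcal{P}_\kappa(V_{\kappa+1})$ from this coherent family and take the corresponding ultrapower, so that the resulting embedding $V \to M$ restricts to a 1-extendibility embedding $V_{\kappa+1} \to V_{\lambda+1}$ for $\kappa$, with $\lambda$ the image of $\kappa$.

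The main obstacle will be this final assembly step: lifting a stationary family of partial embeddings with critical points strictly below $\kappa$ to a single elementary embedding on $V_{\kappa+1}$ whose critical point is exactly $\kappa$. Whether one proceeds via ultrapower or direct-limit, the delicate point is ensuring that $\Sigma_3$-correctness is invoked in just the right way to guarantee the resulting target is genuinely of the form $V_{\lambda+1}$ and that the restricted embedding is elementary, rather than producing only some weaker reflecting structure.
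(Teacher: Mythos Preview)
Your first part is essentially identical to the paper's proof and is correct.

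Your second part, however, goes badly astray. The tower you build consists of embeddings $V_{\tilde\kappa+1}\to V_{\kappa+1}$ with critical points $\tilde\kappa<\kappa$; no amount of composing such maps produces an embedding with domain $V_{\kappa+1}$ and critical point $\kappa$. Your proposed fix, ``derive a normal ultrafilter on $\mathcal{P}_\kappa(V_{\kappa+1})$ from this coherent family,'' is not a known construction: such a measure would witness $|V_{\kappa+1}|$-supercompactness of $\kappa$, which is strictly stronger than 1-extendibility, and in any case you have not said what the seed is or why the resulting ultrapower would have $V_{j(\kappa)+1}^M=V_{j(\kappa)+1}$. You correctly flag this assembly step as the obstacle, but it is not a detail to be filled in; it is the entire content of the claim, and the outline as written does not prove it.

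The paper's argument is a one-line contrapositive that avoids all of this. Assume $\kappa$ is not 1-extendible; this is a $\Pi_2$ statement about $\kappa$ (1-extendibility being $\Sigma_2$). Now apply $\Sigma_3$-correct $+2$-reflection to the conjunction of the $\Pi_1$ formula ``$X=V_{\alpha+1}$'' and the $\Pi_2$ formula ``$\alpha$ is not 1-extendible,'' with parameters $V_{\kappa+1}$ and $\kappa$. The resulting $Z$ gives $\bar\kappa=\pi_Z(\kappa)$ with $\pi_Z(V_{\kappa+1})=V_{\bar\kappa+1}$, so by your own first-part argument $\bar\kappa$ is 1-extendible; but the reflected $\Pi_2$ clause says $\bar\kappa$ is not 1-extendible (and $\kappa\in C^{(3)}$ by Proposition~\ref{prop:refcorr}, so this holds in $V$). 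Contradiction.
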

\begin{proof}
    Since $\kappa$ is inaccessible, $|V_\kappa|=\kappa$, so $|V_{\kappa+1}|=2^\kappa=\kappa^+$ and hence $V_{\kappa+1}\in H_{\kappa^{++}}$. If $\phi(V_{\kappa+1}, \kappa)$ is the $(\Pi_1)$ assertion that $V_{\kappa+1}$ satisfies its own definition, then there are stationarily many $Z\prec H_{\kappa^{++}}$ of size less than $\kappa$ containing $\kappa$ and $V_{\kappa+1}$ such that $\pi_Z(V_{\kappa+1})=V_{\pi_Z(\kappa)+1}$ and $Z\cap\kappa$ is transitive. For any such $Z$, $\pi_Z^{-1}\upharpoonright V_{\pi_Z(\kappa)+1}$ witnesses that $\pi_Z(\kappa)$ is 1-extendible. By Lemma \ref{lemma:clubequiv}, there are stationarily many possible values of $\pi_Z(\kappa)$.

    If $\kappa$ is $\Sigma_3$-correctly $+2$-reflecting, assume toward a contradiction that $\kappa$ is not 1-extendible. Then we can carry out the above argument while simultaneously reflecting the $\Pi_2$ assertion that $\kappa$ is not 1-extendible, obtaining a $\pi_Z(\kappa)$ which both is and is not 1-extendible.
\end{proof}

We get a natural upper bound on the strength of correctly reflecting cardinals (analogous to Miyamoto's Proposition 1.2(3)); in doing so, it is convenient to also prove an analogue of Magidor's characterization of supercompactness \cite{MagidorSC}:

\begin{prop}
\label{prop:sceqcorref}
    The following are equivalent for any integer $n>1$ and any cardinal $\kappa$:
    \begin{enumerate}
        \item $\kappa$ is supercompact for $C^{(n-1)}$
        \item $\kappa$ is $\Sigma_n$-correctly $H_\lambda$-reflecting for all cardinals $\lambda\geq\kappa$
        \item For every $\eta>\kappa$ there is an $\alpha<\kappa$ with a nontrivial elementary embedding $\sigma: (V_\alpha, \in, C^{(n-1)}\cap\alpha)\rightarrow (V_\eta,\in C^{(n-1)}\cap \eta)$ such that $\sigma(crit(\sigma))=\kappa$
    \end{enumerate}
\end{prop}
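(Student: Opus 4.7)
I would prove the equivalences cyclically as $(1) \Rightarrow (2) \Rightarrow (3) \Rightarrow (1)$, modeling the argument on Magidor's classical characterization of supercompactness while carrying the external $C^{(n-1)}$ predicate throughout.

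For $(1) \Rightarrow (2)$, given $\lambda \geq \kappa$, a $\Sigma_n$ formula $\phi$ with $V \models \phi(a)$ for some $a \in H_\lambda$, and a function $f: [H_\lambda]^{<\omega} \to H_\lambda$, I would combine Lemma \ref{lemma:scfCnalt} with Lemma \ref{lemma:Cnaddexist} (to fix some $\theta \in C^{(n-1)}$ with $V_\theta \models \phi(a)$) to obtain a $\nu$-supercompactness for $C^{(n-1)}$ embedding $j: V \to M$ with $\nu > \max(\lambda, \theta)$, $\theta \in (C^{(n-1)})^M$, and $M \models \phi(a)$. The hull $Z := j"H_\lambda$ is then an elementary substructure of $(H_{j(\lambda)})^M$ of $M$-size less than $j(\kappa)$, closed under $j(f)$, containing $j(a)$, intersecting $(H_{j(\kappa)})^M$ in the transitive set $H_\kappa$ (since $j$ is the identity on $V_\kappa$), and with $\pi_Z(j(a)) = a$; moreover $V_{j(\kappa)}^M \models \phi(a)$ via $V_\theta^M \models \phi(a)$. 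Pulling the resulting existential witness back through $j$ yields the required reflecting $Z$ in $V$.

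For $(2) \Rightarrow (3)$, given $\eta > \kappa$, I would take $\lambda > \beth_\eta$ and reflect the parameter $a = (V_\eta, \eta, C^{(n-1)} \cap \eta, \kappa) \in H_\lambda$ along the formula $\phi(a) \equiv$ ``$a_1 = V_{a_2}$ and $a_3 = C^{(n-1)} \cap a_2$''. The first conjunct is $\Pi_1$, and the second, using the $\Pi_{n-1}$ characterization of $C^{(n-1)}$ from Corollary \ref{cor:Cndef} together with a bounded universal quantifier, is $\Delta_n$, so $\phi \in \Sigma_n$ for $n \geq 2$. A reflecting $Z \prec H_\lambda$ has $\bar\eta := \pi_Z(\eta) < \kappa$, and since $V_\kappa$ correctly computes small $V_\beta$'s and $C^{(n-1)}$ below itself, the relation $V_\kappa \models \phi(\pi_Z(a))$ yields $\pi_Z(V_\eta) = V_{\bar\eta}$ and $\pi_Z(C^{(n-1)} \cap \eta) = C^{(n-1)} \cap \bar\eta$ as genuine identities in $V$. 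Because $V_\eta \in Z \prec H_\lambda$, a Tarski-Vaught argument gives $Z \cap V_\eta \prec V_\eta$, so $\sigma := \pi_Z^{-1} \upharpoonright V_{\bar\eta}$ is an elementary embedding $V_{\bar\eta} \to V_\eta$; the reflected equality on $C^{(n-1)}$ shows $\sigma$ preserves the predicate, and the transitivity of $Z \cap H_\kappa$ forces $\mathrm{crit}(\sigma) = Z \cap \kappa$ and $\sigma(\mathrm{crit}(\sigma)) = \kappa$.

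For $(3) \Rightarrow (1)$, I would adapt Magidor's derived-measure argument. Given $\lambda \geq \kappa$, choose $\eta > \lambda$ with $\eta \in C^{(n)}$ and $\mathrm{cf}(\eta) = \omega$, so that the $\sigma$ produced by (3) has image cofinal in $\eta$, letting me pick $\bar\lambda < \alpha$ with $\sigma(\bar\lambda) \geq \lambda$. Setting $\bar\kappa := \mathrm{crit}(\sigma)$, the normal fine $\bar\kappa$-complete ultrafilter $U := \{X \subseteq \mathcal{P}_{\bar\kappa}(\bar\lambda) : X \in V_\alpha \text{ and } \sigma"\bar\lambda \in \sigma(X)\}$ belongs to $V$; the structure $(V_\alpha, \in, C^{(n-1)} \cap \alpha)$ believes that its ultrapower preserves the external $C^{(n-1)}$ predicate up to $\bar\lambda$, so by elementarity of $\sigma$ in that structure, $V_\eta$ believes $\sigma(U)$ witnesses $\sigma(\bar\lambda)$-supercompactness for $C^{(n-1)}$ of $\kappa$, and since $\eta \in C^{(n)}$ this (at most $\Pi_n$) assertion transfers to $V$; Lemma \ref{lemma:scfCnalt} then finishes the argument. \emph{The main obstacle is this last direction:} one must carefully confirm that the external $C^{(n-1)}$ predicate preserved by $\sigma$ (which differs in general from $V_\alpha$'s and $V_\eta$'s internal computations of $C^{(n-1)}$) actually tracks the ultrapower's preservation of $C^{(n-1)}$, and that $V_\eta$'s belief in $\sigma(U)$'s supercompactness-for-$C^{(n-1)}$ is expressible with complexity low enough that $\Sigma_n$-correctness of $\eta$ guarantees its correctness in $V$.
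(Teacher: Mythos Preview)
Your $(1)\Rightarrow(2)$ and $(2)\Rightarrow(3)$ match the paper's arguments and are correct.

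For $(3)\Rightarrow(1)$ there is a genuine gap and an unnecessary detour. The gap is the cofinality claim: choosing $\eta\in C^{(n)}$ with $cf(\eta)=\omega$ does \emph{not} force $\sigma"\alpha$ to be cofinal in $\eta$. Since $\eta$ is a limit ordinal, no cofinal $\omega$-sequence in $\eta$ lies in $V_\eta$, so $V_\eta$ cannot witness its own cofinality, and nothing in the elementarity of $\sigma$ prevents $\sup(\sigma"\alpha)<\eta$. Without cofinality you cannot produce $\bar\lambda<\alpha$ with $\sigma(\bar\lambda)\geq\lambda$, and the derived-measure argument never gets started. The paper avoids this by taking $\eta=\nu+\omega$: then $V_\eta$ satisfies ``there is an ordinal $\xi$ such that every ordinal is of the form $\xi+n$ for some $n<\omega$ or is $\leq\xi$'', so by elementarity $\alpha=\beta+\omega$ for some $\beta$ with $\sigma(\beta)=\nu$, giving exactly the $\bar\lambda$ you need and ensuring $U\in V_\alpha$.

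The detour is your worry about expressing ``$\sigma(U)$ witnesses supercompactness for $C^{(n-1)}$'' with low enough complexity to transfer from $V_\eta$ to $V$ via $\eta\in C^{(n)}$. This is unnecessary: $\sigma$ is elementary for the \emph{expanded} structures $(V_\alpha,\in,C^{(n-1)}\cap\alpha)\to(V_\eta,\in,C^{(n-1)}\cap\eta)$. The assertion ``$U$ is fine, normal, $\delta$-complete on $\mathcal{P}_\delta(\beta)$ and $j_U(P\cap\delta)\cap\beta=P\cap\beta$'' (with $P$ the predicate symbol) is first-order in this language and holds in $(V_\alpha,\in,C^{(n-1)}\cap\alpha)$, since $\alpha=\beta+\omega$ gives enough room to compute the relevant piece of the ultrapower. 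Elementarity of $\sigma$ carries this to $(V_\eta,\in,C^{(n-1)}\cap\eta)$, where the predicate is the genuine $C^{(n-1)}$ and the computation is absolute to $V$. So $\sigma(U)$ directly witnesses $\nu$-supercompactness of $\kappa$ for $C^{(n-1)}$, with no appeal to $\Sigma_n$-correctness of $\eta$.
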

\begin{proof}
    $(1\Rightarrow 2):$ Given a $\lambda\geq \kappa$, $\Sigma_n$ formula $\phi$, an $a\in H_\lambda$ such that $\phi(a)$, and a club $C\subseteq [H_\lambda]^{<\kappa}$, we show that there is a $Z\in C$ containing $a$ such that $V_\kappa\models \phi(\pi_Z(a))$. Setting $\nu:=|[H_\lambda]^{<\kappa}|$, since $\kappa$ is supercompact for $C^{(n-1)}$ there is by Lemma \ref{lemma:scfCnalt} an embedding $j:V\rightarrow M$ witnessing the $\nu$-supercompactness of $\kappa$ such that $M\models \phi(a)$.

    Then $j"H_\lambda, j"C\in M$, and by the unboundedness of $C$ $j"C$ is a directed subset of $j(C)$ whose union is $j"H_\lambda$; since $|j"C|\leq \nu<j(\kappa)$ and $j(C)$ is a club of $([H_{j(\lambda)}]^{<j(\kappa)})^M$, $j"H_\lambda\in j(C)$. Since $\pi_{j"H_\lambda}$ is simply the inverse of the restriction of $j$, $\pi_{j"H_\lambda}(j(a))=a$. Furthermore, by elementarity $j(\kappa)$ is supercompact for $C^{(n-1)}$ in M, so by Lemma \ref{lemma:scCn} $V_{j(\kappa)}^M\models \phi(a)$. Hence in $M$ there is a set $j"H_\lambda\in j(C)$ containing $j(a)$ such that $V_{j(\kappa)}^M\models \phi(\pi_{j"H_\lambda}(j(a)))$. Therefore in $V$ there is some $Z\in C$ containing $a$ such that $V_\kappa\models \phi(\pi_Z(a))$, as desired.

    $(2\Rightarrow 3):$ Let $\phi(\beta, x, y)$ denote the assertion that $\beta$ is an ordinal, $x=V_\beta$, and for all $\theta<\beta$, $\theta$ is $\Sigma_{n-1}$-correct if and only if $\theta\in y$. Then $\phi$ can be written in $\Sigma_n$ form, and for any $\eta>\kappa$, $\phi(\eta, V_\eta, C^{(n-1)}\cap \eta)$ holds. It follows that for any $\lambda$ large enough so that $V_\eta\in H_\lambda$, there is a $Z\prec H_\lambda$ of size less than $\kappa$ containing $\eta$, $V_\eta$, and $C^{(n-1)}\cap\eta$ such that $\phi(\pi_Z(\eta), \pi_Z(V_\eta), \pi_Z(C^{(n-1)}\cap\eta))$ holds and $Z\cap H_\kappa$ is transitive. Setting $\alpha:=\pi_Z(\eta)$, it is immediate that $\pi_Z(V_\eta)=V_\alpha$ and $\pi_Z(C^{(n-1)}\cap\eta)=C^{(n-1)}\cap \alpha$.

    Then if we define $\sigma:=\pi_Z^{-1}\upharpoonright V_\alpha$, Lemma \ref{lemma:restrictembed} implies that $\sigma$ is elementary $(V_\alpha, \in, C^{(n-1)}\cap\alpha)\rightarrow (V_\eta, \in, C^{(n-1)}\cap\eta)$. By the transitivity condition on $Z$, $crit(\sigma)=\pi_Z(\kappa)$, so $\sigma$ maps its critical point to $\kappa$, as desired.

    $(3\Rightarrow 1):$ Given $\nu$, we show that $\kappa$ is $\nu$-supercompact for $C^{(n-1)}$. Let $\eta=\nu+\omega$ and $\sigma: (V_\alpha, \in, C^{(n-1)}\cap \alpha) \rightarrow (V_\eta, \in, C^{(n-1)}\cap\eta)$ be elementary with critical point $\delta$ such that $\sigma(\delta)=\kappa>\alpha$. Then we must have $\alpha=\beta+\omega$ for some $\beta$ such that $\sigma(\beta)=\nu$. By standard arguments, if $U$ is the set of all $X\subseteq\mathcal{P}_\delta(\beta)$ such that $\sigma"\beta\in \sigma(X)$, then $U$ is a fine normal $\delta$-complete ultrafilter. Furthermore, if $j_U$ is the associated ultrapower embedding, $\sigma$ factors as $j_U$ followed by an embedding which fixes all ordinals up to $\beta$, so in particular $j_U(C^{(n-1)}\cap\delta)\cap\beta=\sigma(C^{(n-1)}\cap \delta)\cap\beta=C^{(n-1)}\cap \beta$. Since $U\in V_\alpha$, by elementarity $\sigma(U)\subset\mathcal{P}(\mathcal{P}_\kappa(\nu))$ is a fine normal $\kappa$-complete ultrafilter such that $j_{\sigma(U)}(C^{(n-1)}\cap\kappa)\cap\nu=C^{(n-1)}\cap\nu$, so it witnesses that $\kappa$ is $\nu$-supercompact for $C^{(n-1)}$.
\end{proof}

If $\kappa$ is $\Sigma_n$-correctly $H_\lambda$-reflecting and $\lambda<\theta\in C^{(n-1)}$, then combining Lemma \ref{lemma:correfdef} and Lemma \ref{lemma:Cnaddexist} we get that $V_\theta\models\kappa$ is $\Sigma_n$-correctly $H_\lambda$-reflecting. If $\theta$ is regular and this holds for all $\lambda<\theta$, then $V_\theta\models ZFC+\exists \kappa$ supercompact for $C^{(n-1)}$, so correct reflection up to the next regular $C^{(n-1)}$ cardinal is equiconsistent with supercompactness for $C^{(n-1)}$.

Finally, since Laver functions are highly useful for proving the consistency of forcing axioms, we would like a notion of Laver functions appropriate to $\Sigma_n$-correct $H_\lambda$-reflection:

\begin{definition}
    $g:\kappa\rightarrow V_\kappa$ is a correctly reflecting Laver function for a $\Sigma_n$-correctly $H_\lambda$-reflecting cardinal $\kappa <\lambda$ iff for all $\Sigma_n$ formulas $\phi$ and $a\in H_\lambda$ such that $V\models\phi(a)$, there are stationarily many $Z\prec H_\lambda$ of size less than $\kappa$ such that $V_\kappa\models\phi(\pi_Z(a))$ and $g(\pi_Z(\kappa))=\pi_Z(a)$. 
\end{definition}

If $\kappa$ is in fact supercompact for $C^{(n-1)}$, any standard Laver function $g$ will be a correctly reflecting Laver function, since if $j:V\rightarrow M$ is a supercompactness embedding such that $j(g)(\kappa)=a$ and $M\models \phi(a)$, then setting $Z:=j"H_\lambda$ we get $M\models j(g)(\pi_Z(j(\kappa)))=\pi_Z(j(a))\land \phi(\pi_Z(j(a)))$, which pulled back to $V$ gives the desired properties. In general, however, it is not clear that correctly reflecting Laver functions always exist. Fortunately, they can always be added by the following forcing, developed by Woodin and best exposited by Hamkins \cite{hamkinslottery}.

\begin{definition}
    If $\kappa$ is a cardinal, the fast function forcing $\mathbb{F}_\kappa$ consists of all partial functions $p:\kappa\rightarrow\kappa$ such that:
    \begin{itemize}
        \item $|dom(p)|<\kappa$
        \item every element of $dom(p)$ is an inaccessible cardinal\footnote{Hamkins later gave a slightly different version of fast function forcing in \cite{Hamkinstall}, where arbitrary ordinals are allowed in the domain but $|p\upharpoonright \gamma|<\gamma$ is only required for inaccessible $\gamma$. This version has some nicer properties, but I have chosen to use the older one because the differences are not relevant here and the presentation in \cite{hamkinslottery} is more thorough.}
        \item for all $\gamma\in dom(p)$, $p"\gamma\subset \gamma$ and $|p\upharpoonright \gamma|<\gamma$
    \end{itemize}
    The ordering is given by $p\leq_{\mathbb{F}_\kappa} q$ iff $q\subseteq p$. If $G\subset \mathbb{P}_\kappa$ is a generic filter, we call the partial function $f=\bigcup G$ a fast function on $\kappa$.
\end{definition}

\begin{lemma}
    \label{lemma:reflaver}
    If $\kappa$ is $\Sigma_n$-correctly $H_\lambda$-reflecting for some $\lambda>\kappa$ and $f$ is a $V$-generic fast function on $\kappa$, then in $V[f]$ there is a correctly reflecting Laver function $g:\kappa\rightarrow V_\kappa$.
\end{lemma}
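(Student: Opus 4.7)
My plan is to define $g$ canonically from $f$ via a fixed enumeration of nice names, then verify the correctly reflecting Laver property by combining the $\Sigma_n$-correct $H_\lambda$-reflection of $\kappa$ in $V$ (applied to a $\Sigma_n$ statement about the forcing relation) with a density argument exploiting the freedom of $\mathbb{F}_\kappa$ at inaccessible levels.

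In $V$, for each inaccessible $\gamma<\kappa$, fix a canonical enumeration $\langle\dot{y}^\gamma_\alpha : \alpha<\gamma\rangle$ of those $\mathbb{F}_\gamma$-names in $V_\gamma$ that are forced to denote an element of $V_\gamma$; this has length $\gamma$ because $|\mathbb{F}_\gamma|=\gamma$ and $\gamma^{<\gamma}=\gamma$ at inaccessibles. In $V[f]$, define $g(\gamma)=(\dot{y}^\gamma_{f(\gamma)})^{f\cap\mathbb{F}_\gamma}$ when $\gamma$ is inaccessible and $\gamma\in\mathrm{dom}(f)$, and $g(\gamma)=\emptyset$ otherwise; then $g:\kappa\rightarrow V_\kappa^{V[f]}$.

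Given $\phi\in\Sigma_n$, $a\in H_\lambda^{V[f]}$ with $V[f]\models\phi(a)$, and a $V[f]$-name $\dot{D}$ for a club in $[H_\lambda^{V[f]}]^{<\kappa}$, use the $\kappa$-cc of $\mathbb{F}_\kappa$ to obtain a nice name $\dot{a}\in H_\lambda^V$ for $a$, and let $p\in f$ force $\phi(\dot{a})$. Since the forcing relation for $\Sigma_n$ formulas is itself $\Sigma_n$, the assertion $p\Vdash_{\mathbb{F}_\kappa}\phi(\dot{a})$ is a $\Sigma_n$ statement in $V$ with parameters $p,\dot{a}\in H_\lambda^V$, so $\Sigma_n$-correct $H_\lambda$-reflection produces a stationary set $R$ of $Z\prec H_\lambda^V$ of size $<\kappa$, with $Z\cap H_\kappa$ transitive and $\bar\kappa:=Z\cap\kappa$, containing $p,\dot{a},\dot{D}$ and the coding data, such that $V_\kappa\models\pi_Z(p)\Vdash_{\mathbb{F}_{\bar\kappa}}\phi(\pi_Z(\dot{a}))$.

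For the density step, fix $q\leq p$ in $f$, choose $Z\in R$ containing $q$ with $\bar\kappa>|\mathrm{trcl}(q)|$ so that $q\in\mathbb{F}_{\bar\kappa}$ and $\pi_Z(q)=q$, and take $r\leq q$ in $\mathbb{F}_{\bar\kappa}$ deciding $\pi_Z(\dot{a})$ to equal some $\dot{y}^{\bar\kappa}_\alpha$ with $\alpha<\bar\kappa$. Since $\bar\kappa$ is inaccessible and $|r|<\bar\kappa$, the set $q^*:=r\cup\{\langle\bar\kappa,\alpha\rangle\}$ is a condition in $\mathbb{F}_\kappa$ extending $q$ that forces $g(\bar\kappa)=\pi_Z(\dot{a})^f$. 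Below $q^*$ in $V[f]$, iteratively close $Z$ to an elementary $Z'\prec H_\lambda^{V[f]}$ containing $a$, lying in $\dot{D}^f$, with $Z'\cap\kappa=\bar\kappa$ (using that $\bar\kappa$ remains inaccessible in $V[f]$ since $\mathbb{F}_\kappa$ factors as $\mathbb{F}_{\bar\kappa}$ times a sufficiently distributive tail). A careful choice of Skolem witnesses ensures $\pi_{Z'}(a)=\pi_Z(\dot{a})^f=g(\bar\kappa)$, and since small forcing preserves $\kappa\in C^{(n)}$ (Lemma \ref{lemma:Cnforce}), we get $V_\kappa^{V[f]}\models\phi(\pi_{Z'}(a))$.

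The main obstacle I expect is verifying $\pi_{Z'}(a)=\pi_Z(\dot{a})^f$. The key point is that $\pi_Z(\dot{a})^f$ is determined by $f\cap\mathbb{F}_{\bar\kappa}$ alone and lies in $V_{\bar\kappa}^{V[f]}$, which is transitive and, once $Z'$ is closed appropriately, fully contained in $Z'$; elementarity of $Z\prec H_\lambda^V$ then transfers the relation between $\dot{a}$ and $a$ within $V[f]$ to the analogous relation between $\pi_Z(\dot{a})$ and $\pi_{Z'}(a)$ after collapse, pinning down the latter uniquely.
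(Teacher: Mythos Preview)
Your overall architecture---define $g$ from $f$ via a coding of names, reflect the $\Sigma_n$ statement ``$p\Vdash_{\mathbb{F}_\kappa}\phi(\dot{a})$'' in $V$, then use the freedom of $\mathbb{F}_\kappa$ at the reflected inaccessible $\bar\kappa$---matches the paper's. But there is a genuine gap in the step where you ``iteratively close $Z$ to an elementary $Z'\prec H_\lambda^{V[f]}$ \ldots\ with $Z'\cap\kappa=\bar\kappa$.'' Since $Z\cap\kappa=\bar\kappa$, you already have $|Z|\geq\bar\kappa$, so closing $Z$ under Skolem functions for $H_\lambda^{V[f]}$ or under the function defining the club $D$ can easily add ordinals in $[\bar\kappa,\kappa)$ to $Z'$. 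The inaccessibility of $\bar\kappa$ in $V[f]$ does not help here, because the set you are closing is not small relative to $\bar\kappa$. Once $Z'\cap\kappa\neq\bar\kappa$, both the identity $\pi_{Z'}(a)=g(\bar\kappa)$ and the reflected truth $V_\kappa\models\phi(\pi_{Z'}(a))$ break down.

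The paper avoids this by arguing by contradiction. If $g$ fails, some $p$ forces this, witnessed by a specific function $\dot h$ showing the relevant set is nonstationary. One then lifts $\dot h$ to a function $\tilde h$ on names in $V$ and applies $\Sigma_n$-correct $H_\lambda$-reflection to obtain $Z$ already closed under $\tilde h$. Choosing a generic $f^*$ containing $p\cup\{\langle\bar\kappa,\alpha\rangle\}$ and invoking the Fast Function Factor Lemma, Lemma~\ref{lemma:extembed} produces $Z[f^*]:=\{\dot x^{f^*}:\dot x\in Z\}\prec H_\lambda[f^*]$ with $\pi_{Z[f^*]}(\dot x^{f^*})=\pi_Z(\dot x)^{f^*\upharpoonright\bar\kappa}$. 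The closure of $Z$ under $\tilde h$ then \emph{automatically} yields closure of $Z[f^*]$ under $h=\dot h^{f^*}$, with $Z[f^*]\cap\kappa=\bar\kappa$ and $\pi_{Z[f^*]}(a)=\pi_Z(\dot a)^{f^*\upharpoonright\bar\kappa}=g(\bar\kappa)$ coming for free from the same lemma---precisely the identifications you flagged as the main obstacle. The missing idea in your argument is this lift-and-reflect-the-closure-function trick; without it there is no mechanism to control $Z'\cap\kappa$.
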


Since the definition of a correctly reflecting Laver function implies that $\kappa$ is $\Sigma_n$-correctly $H_\lambda$-reflecting, in particular $\mathbb{F}_\kappa$ preserves that $\kappa$ is $\Sigma_n$-correctly $H_\lambda$-reflecting.

\begin{proof}
    The following argument is essentially an adaptation of Hamkins's Generalized Laver Function Theorem 2.2 in \cite{hamkinslottery} to the correctly reflecting setting. Let $e:\kappa\rightarrow V_\kappa$ be any surjection in $V$ and define $g(\gamma)=e(f(\gamma))^{f\upharpoonright\gamma}$ whenever $\gamma\in dom(f)$ and $e(f(\gamma))$ is an $\mathbb{F}_\gamma$-name (where $e(f(\gamma))^{f\upharpoonright\gamma}$ in fact means the interpretation of $e(f(\gamma))$ by the filter corresponding to $f\upharpoonright\gamma$), and $g(\gamma)=\emptyset$ otherwise. Assume toward a contradiction that $g$ is not a correctly reflecting Laver function; then for some $\Sigma_n$ formula $\phi$, suitable names $\dot{f}$ and $\dot{g}$, and other names $\dot{a}$ and $\dot{h}$, there is a $p\in\mathbb{F}_\kappa$ which forces:

"$\phi(\dot{a})$ holds and $\dot{h}:[H_\lambda^{V[\dot{f}]}]^{<\omega}\rightarrow H_\lambda^{V[\dot{f}]}$ is a function such that for all $Z\prec H_\lambda^{V[\dot{f}]}$ of size less than $\kappa$ such that $Z\cap H_\kappa^{V[\dot{f}]}$ is transitive, $V_\kappa^{V[\dot{f}]}\models \phi(\pi_Z(\dot{a}))$, and $\dot{g}(\pi_Z(\kappa))=\pi_Z(\dot{a})$, there is a finite $u\subset Z$ such that $\dot{h}(u)\not\in Z$."

Let $\psi(p,\dot{a}, \kappa, \mathbb{F}_\kappa)$ denote the formula asserting that $\kappa$ is inaccessible and $p\Vdash_{\mathbb{F}_\kappa}\phi(\dot{a})$. Define $\tilde{h}:[H_\lambda]^{<\omega}\rightarrow H_\lambda$ by, for $\dot{x}_1,\dotsc, \dot{x}_k\in H_\lambda$ $\mathbb{F}_\kappa$-names, $\tilde{h}(\{\dot{x}_1,\dotsc, \dot{x}_k\})$ is an $\mathbb{F}_\kappa$-name $\dot{y}\in H_\lambda$ such that $p\Vdash \dot{h}(\{\dot{x}_1\dotsc, \dot{x}_k\})=\dot{y}$; for all other finite sets $u$ in its domain, $\tilde{h}(u)=\emptyset$. (Such a $\dot{y}$ will exist for any finite set of names in the domain by a mixing lemma argument.)

Since $\psi$ is a $\Sigma_n$ formula (inaccessibility is $\Pi_1$ and a condition forcing a $\Sigma_n$ formula is $\Sigma_n$), there is (in $V$) a $Z\prec H_\lambda$ such that:
\begin{enumerate}
    \item $|Z|<\kappa$
    \item $Z\cap H_\kappa=V_\beta$ for some $\beta<\kappa$ (this is possible because the set of all $Z$ with this property can easily be seen to be a club in $[H_\lambda]^{<\kappa}$ for any inaccessible $\kappa\leq \lambda$)
    \item $p, \dot{a}, \kappa, \mathbb{F}_\kappa\in Z$
    \item $Z$ is closed under $\tilde{h}$
    \item $\bar{\kappa}:=\pi_Z(\kappa)$ is inaccessible
    \item $\pi_Z(p)\Vdash_{\pi_Z(\mathbb{F}_{\kappa})}\phi(\pi_Z(\dot{a}))$
\end{enumerate}
Since $\mathbb{F}_\kappa\subset H_\kappa$, condition (2) implies that $\pi_Z(p)=p$ (and in fact this holds for all elements of $Z\cap \mathbb{F}_\kappa$). Since we must have $\beta=\bar{\kappa}$ in condition (2) and $\mathbb{F}_{\bar{\kappa}}\subset V_{\bar{\kappa}}$, $\pi_Z(\mathbb{F}_\kappa)=V_{\bar{\kappa}}\cap\mathbb{F}_\kappa=\mathbb{F}_{\bar{\kappa}}$.

Let $\alpha<\kappa$ be such that $e(\alpha)=\pi_Z(\dot{a})$. By elementarity, $p\in \mathbb{F}_{\bar{\kappa}}$, so $p"\bar{\kappa}\subset \bar{\kappa}$ and thus $p$ can be extended in $\mathbb{F}_\kappa$ to a partial function with $\bar{\kappa}$ in its domain. Let $f^*$ be a $V$-generic fast function including $p\cup\{\langle\bar{\kappa},\alpha\rangle\}$. Then $\pi_Z"f^*=f^*\upharpoonright\bar{\kappa}$, and by Hamkins's Fast Function Factor Lemma, below $\{\langle\bar{\kappa},\alpha\rangle\}$ $\mathbb{F}_\kappa$ factors as $\mathbb{F}_{\bar{\kappa}}\times \mathbb{F}_{\nu,\kappa}$, where $\nu$ is the least inaccessible above $\bar{\kappa}$ and $\alpha$ and $\mathbb{F}_{\nu,\kappa}$ is the subposet of $\mathbb{F}_\kappa$ consisting of partial functions with domains contained in $[\nu,\kappa)$, so $f^*\upharpoonright \bar{\kappa}$ is a $V$-generic fast function on $\bar{\kappa}$.

We can then apply Lemma \ref{lemma:extembed} with $M=rng(\pi_Z)$, $N=H_\lambda$, $j=\pi_Z^{-1}$, $G=f^*\upharpoonright \bar{\kappa}$, and $H=f^*$ to get that $H_\lambda[f^*]$ has an elementary substructure $Z[f^*]:=\{\dot{x}^{f^*}\sbp \dot{x}\in Z\cap H_\lambda^{\mathbb{F}_\kappa}\}$ of size less than $\kappa$ (since by Hamkins Lemma 1.3 fast function forcing does not change cardinalities) containing $\dot{a}^{f^*}$ and $\kappa$ such that $\pi_{Z[f^*]}(\dot{x}^{f^*})=\pi_Z(\dot{x})^{f^*\upharpoonright\bar{\kappa}}$ for all $\mathbb{F}_\kappa$-names $\dot{x}\in Z$.

We now verify that $Z[f^*]$ is a counterexample to the statement forced by $p$. To see that $Z[f^*]\cap H_\kappa^{V[f^*]}$ is transitive, note that $\pi_{Z[f^*]}^{-1}$ agrees with $\pi_Z^{-1}$ on the ordinals, so it sends its critical point $\bar{\kappa}$ to $\kappa$.  $V_\kappa^{V[f^*]}\models\phi(\pi_Z(\dot{a})^{f^*\upharpoonright\bar{\kappa}})$ because $p$ forces that, so it follows from the equation at the end of the previous paragraph that $V_\kappa^{V[f^*]}\models\phi(\pi_{Z[f^*]}(\dot{a}^{f^*}))$. Because $\langle\bar{\kappa},\alpha\rangle\in f^*$, we have
\begin{align*}
    \dot{g}^{f^*}(\bar{\kappa})&=e(f^*(\bar{\kappa}))^{f^*\upharpoonright\bar{\kappa}}\\
    &=e(\alpha)^{f^*\upharpoonright\bar{\kappa}}\\
    &=\pi_Z(\dot{a})^{f^*\upharpoonright\bar{\kappa}}\\
    &=\pi_{Z[f^*]}(\dot{a}^{f^*})
\end{align*}

Finally, if $u=\{u_0,\dotsc u_{k-1}\}\subset Z[f^*]$ and for each $i<k$ $\dot{u}_i\in Z$ is a name for $u_i$, then $p$ forces that $\tilde{h}(\{\dot{u}_0, \dotsc, \dot{u}_k\})\in Z$ is a name for $\dot{h}^{f^*}(u)$, so $Z[f^*]$ is closed under $\dot{h}^{f^*}$, a contradiction. Thus $g$ is a correctly reflecting Laver function.
\end{proof}

\chapter[\texorpdfstring{$\Sigma_n$-correct Forcing Axioms}{Sigma\_n-correct Forcing Axioms}]{$\Sigma_n$-correct Forcing Axioms}

\section{Statement and Consistency}
\label{section:sandcon}

To motivate the statement of $\Sigma_n$-correct forcing axioms, we first consider the following characterization of classical forcing axioms, obtained from generalizing a characterization in Jensen's handwritten notes (\cite{jensenfacch} Lemma 1) beyond the $\kappa=\omega_2$ case:

\begin{lemma}
    \label{lemma:jensenfa}
    For any forcing class $\Gamma$ consisting of separative posets and regular cardinal $\kappa>\omega_1$, the following are equivalent:
    \begin{enumerate}
    \item $FA_{<\kappa}(\Gamma)$
    \item For every $\mathbb{P}\in \Gamma$, every regular $\gamma>\kappa$ such that $\mathbb{P}\in H_\gamma$, and $X\subset H_\gamma$ such that $|X|<\kappa$, there is a transitive structure $\bar{N}$ and an elementary embedding $\sigma:\bar{N}\rightarrow H_\gamma$ such that:
    \begin{enumerate}
        \item $|\bar{N}|<\kappa$
        \item $X\cup\{\mathbb{P}\}\subseteq rng(\sigma)$
        \item there is an $\bar{N}$-generic filter $\bar{G}\subseteq \bar{\mathbb{P}}:=\sigma^{-1}(\mathbb{P})$
    \end{enumerate}
    \end{enumerate}
\end{lemma}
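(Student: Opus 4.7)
The plan is to prove the two directions in a fairly symmetric way, with the natural mediating object being a Mostowski-collapsed elementary substructure.

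For the forward direction $(1)\Rightarrow(2)$, given $\mathbb{P}\in\Gamma$, regular $\gamma>\kappa$ with $\mathbb{P}\in H_\gamma$, and $X\subseteq H_\gamma$ with $|X|<\kappa$, I would form an elementary substructure $Y\prec H_\gamma$ of size less than $\kappa$ containing $X\cup\{\mathbb{P}\}$, let $\bar{N}:=\pi_Y"Y$ be its Mostowski collapse, and set $\sigma:=\pi_Y^{-1}:\bar{N}\to H_\gamma$. Then $\bar{\mathbb{P}}=\sigma^{-1}(\mathbb{P})\in\bar{N}$, and conditions (a) and (b) are immediate. To obtain the generic filter $\bar{G}$, I would apply $FA_{<\kappa}(\Gamma)$ to $\mathbb{P}$ itself with the collection $\mathcal{D}:=\{\sigma(D) \sbp D\in\bar{N}\text{ is dense in }\bar{\mathbb{P}}\}$, whose size is bounded by $|\bar{N}|<\kappa$, and whose elements are dense in $\mathbb{P}$ by elementarity. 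Letting $G$ be the resulting $\mathcal{D}$-generic filter on $\mathbb{P}$, I would set $\bar{G}:=\sigma^{-1}"G\cap\bar{\mathbb{P}}=\{\bar{p}\in\bar{\mathbb{P}}\sbp \sigma(\bar{p})\in G\}$.

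For the reverse direction $(2)\Rightarrow(1)$, given $\mathbb{P}\in\Gamma$ and a collection $\mathcal{D}$ of fewer than $\kappa$ dense subsets of $\mathbb{P}$, I would pick any regular $\gamma$ large enough that $\mathbb{P},\mathcal{D}\in H_\gamma$ and apply (2) with $X:=\mathcal{D}$, obtaining $\sigma:\bar{N}\to H_\gamma$ and an $\bar{N}$-generic filter $\bar{G}\subseteq\bar{\mathbb{P}}$. Each $D\in\mathcal{D}$ lies in $rng(\sigma)$, so its preimage $\bar{D}:=\sigma^{-1}(D)\in\bar{N}$ is dense in $\bar{\mathbb{P}}$ by elementarity, and hence $\bar{G}\cap\bar{D}\neq\emptyset$. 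I would then define $G\subseteq\mathbb{P}$ to be the upward closure of $\sigma"\bar{G}$ in $\mathbb{P}$ and verify that $G$ is a filter: upward closure is automatic, while compatibility of any two $p,q\in G$ reduces via the filter property of $\bar{G}$ and order-preservation of $\sigma$ to finding a common extension of $\sigma(\bar{p})$ and $\sigma(\bar{q})$ in $\sigma"\bar{G}\subseteq G$. Finally, $G$ meets each $D\in\mathcal{D}$ because $\sigma(\bar{p})\in D$ whenever $\bar{p}\in\bar{G}\cap\bar{D}$.

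The subtle step I expect to be the main obstacle is verifying in the forward direction that $\bar{G}$ is a genuine filter on $\bar{\mathbb{P}}$ rather than merely a set closed under the conditions forced by $G$. Given $\bar{p},\bar{q}\in\bar{G}$, we know $\sigma(\bar{p}),\sigma(\bar{q})\in G$ are compatible in $\mathbb{P}$, but we need a common extension lying in $\sigma"\bar{G}$, not just in $G$. The fix is to force $G$ to meet the auxiliary dense sets $D_{\bar{p},\bar{q}}:=\{r\in\bar{\mathbb{P}}\sbp (r\leq\bar{p}\land r\leq\bar{q})\lor r\perp\bar{p}\lor r\perp\bar{q}\}$ for every pair $\bar{p},\bar{q}\in\bar{\mathbb{P}}\cap\bar{N}$; these lie in $\bar{N}$, are dense by a standard argument using separativity of $\bar{\mathbb{P}}$ (inherited from $\mathbb{P}$ via $\sigma$), and their images under $\sigma$ are among the fewer than $\kappa$ dense sets fed to $FA_{<\kappa}(\Gamma)$. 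Genericity of $\bar{G}$ then follows directly from elementarity: any dense $D\in\bar{N}$ in $\bar{\mathbb{P}}$ has $\sigma(D)$ dense in $\mathbb{P}$, so $G\cap\sigma(D)\neq\emptyset$ yields $\bar{G}\cap D\neq\emptyset$. The separativity hypothesis on $\Gamma$ is precisely what is needed to make these compatibility-deciding dense sets behave well on both sides of $\sigma$.
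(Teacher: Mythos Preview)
Your $(2)\Rightarrow(1)$ is correct and matches the paper. The gap is in $(1)\Rightarrow(2)$, precisely at the step you call routine: ``$G\cap\sigma(D)\neq\emptyset$ yields $\bar G\cap D\neq\emptyset$.'' This inference would need the witness in $G\cap\sigma(D)$ to lie in $rng(\sigma)=Y$, but when $|\mathbb P|\geq\kappa$ one has $\sigma"\bar{\mathbb P}=\mathbb P\cap Y\subsetneq\mathbb P$ and hence $\sigma"D\subsetneq\sigma(D)$; the filter $G$ produced by $FA_{<\kappa}(\Gamma)$ may well meet $\sigma(D)$ only at conditions outside $Y$, leaving $\bar G\cap D=\emptyset$. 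The same defect undermines your compatibility fix via the sets $D_{\bar p,\bar q}$: even if $G$ meets $\sigma(D_{\bar p,\bar q})$, the meeting point need not be of the form $\sigma(\bar r)$. (Those sets are dense in any poset, so separativity is not what is doing the work there.)

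The paper names exactly this obstruction and resolves it by \emph{thickening} the elementary substructure before collapsing. Starting from $X\prec H_\gamma$ and a filter $G$ meeting all dense sets in $X$, it passes to $Y:=\{\dot u^G:\dot u\in X\text{ a }\mathbb P\text{-name densely forced to equal some }\check x\}\supseteq X$ and checks $Y\prec H_\gamma$ via Tarski--Vaught. The point of $Y$ is that for any dense $D=\dot D^G\in Y$, mixing inside $X$ produces a name $\dot p\in X$ with $\Vdash_{\mathbb P}\dot p\in\dot D\cap\dot G$; then $p:=\dot p^G$ lies in $Y$ by construction, $p\in D$ because some $q\in G$ forces $\dot p=\check p\in\check D$, and $p\in G$ because $q\Vdash\check p\in\dot G$ makes $\{r:r\leq p\}$ dense below $q$ --- and \emph{this} is where separativity is actually invoked, to deduce $q\leq p$. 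Collapsing this thickened $Y$ (rather than a naive Skolem hull) then gives a genuinely $\bar N$-generic $\bar G=\pi_Y"G$.
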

\begin{proof}
    We closely follow Jensen's arguments.

    $(2\Rightarrow 1):$ Let $X$ be any collection of fewer than $\kappa$ dense subsets of $\mathbb{P}\in \Gamma$ and $\gamma$ be an arbitrary regular cardinal large enough for the hypotheses of (2). Given $\bar{N}$, $\sigma$, and $\bar{G}$ from (2), let $G$ be the filter on $\mathbb{P}$ generated by $\sigma"\bar{G}$. Then for any $D\in X$, by (2)(b) there is a $\bar{D}\in \bar{N}$ such that $\sigma(\bar{D})=D$, and by elementarity $\bar{D}$ is a dense subset of $\bar{\mathbb{P}}$. Thus there is a $p\in\bar{G}\cap \bar{D}$ by $\bar{N}$-genericity, so $\sigma(p)\in G\cap D$ and hence $G$ meets every set in $X$.

    $(1\Rightarrow 2):$ Given $\mathbb{P}\in\Gamma$, assume without loss of generality that $\mathbb{P}\in X\prec H_\gamma$ and that $X\cap \kappa$ is transitive. Let $G\subseteq \mathbb{P}$ be a filter meeting all dense subsets $D$ of $\mathbb{P}$ such that $D\in X$ (since $|X|<\kappa$). Now we cannot collapse $X$ to obtain $\bar{N}$, since $G$ might meet some such $D$ at a condition which is not itself an element of $X$ and then the image of $G$ under the collapsing map would not be $\bar{N}$-generic. To avoid this issue, we thicken $X$ before collapsing it. Let 
    $$\Upsilon:=\{\dot{u}\in X\cap H_\gamma^\mathbb{P}\sbp \{p\in \mathbb{P}\sbp \exists x\in H_\gamma \hspace{6pt} p\Vdash_\mathbb{P} \dot{u}=\check{x}\}\text{ is dense in }\mathbb{P}\}$$

    For any such $\dot{u}$, the dense set witnessing $\dot{u}\in\Upsilon$ is in $X$ by elementarity. Then by the choice of $G$, there is a $p\in G$ and a unique $x\in H_\gamma$ such that $p\Vdash_\mathbb{P} \dot{u}=\check{x}$. Define (for the remainder of this proof only) $\dot{u}^G$ to be this $x$ and $Y:=\{\dot{u}^G\sbp \dot{u}\in \Upsilon\}$.

    To see that $Y$ is an elementary substructure of $ H_\gamma$, suppose $H_\gamma\models\exists x \hspace{2pt}\phi(x, a_1,\dotsc a_n)$ where $a_i=\dot{u}_i^G\in Y$ for each $i$. Let $p\in G$ be a common lower bound of the conditions in $G$ forcing $\dot{u}_i=\check{a}_i$. Then $p\Vdash H_\gamma^V\models \exists x\hspace{2pt} \phi(x, \dot{u}_1,\dotsc \dot{u}_n)$, so by the mixing lemma there is a name $\dot{u}_0$ such that $\dot{u}_0$ being equal to some  $a_0\in H_\gamma$ such that $H_\gamma^V\models \phi(\check{a}_0, \dot{u}_1,\dotsc \dot{u}_n)$ is dense below $p$. By $X\prec H_\gamma$, there is such a $\dot{u}_0$ in $X$ and hence in $\Upsilon$, so $\dot{u}_0^G\in Y$ is such that $H_\gamma\models \phi(\dot{u}_0^G, a_1, \dotsc, a_n)$. By the Tarski-Vaught criterion, $Y\prec H_\gamma$. Thus in particular it is extensional, so let $\bar{N}$ be the transitive collapse of $Y$, $\sigma=\pi_Y^{-1}$, and $\bar{G}:=\pi_Y"G$.

    We now verify the conditions in (2). For (a), $|X|<\kappa$ by hypothesis and each element of $Y$ is generated by a name in $X$, so $|Y|\leq |X|$. For (b), $X\subseteq Y=rng(\sigma)$ because for any $x\in X$, $\check{x}\in \Upsilon$ and $\check{x}^G=x$; $\mathbb{P}\in rng(\sigma)$ because we assumed $\mathbb{P}\in X$.

    Finally, for (c), any dense subset of $\bar{\mathbb{P}}:=\pi_Y(\mathbb{P})$ in $\bar{N}$ is of the form $\pi_Y(D)$ for some dense subset of $\mathbb{P}$ $D=\dot{D}^G\in Y$; thus if we can produce a $p\in G\cap D\cap Y$, $\pi_Y(p)\in \bar{G}\cap\pi_Y(D)$, establishing the $\bar{N}$-genericity of $\bar{G}$. Towards this, assume without loss of generality that the name $\dot{D}$ is such that $\Vdash_\mathbb{P}`` \dot{D}$ is a dense subset of $\check{\mathbb{P}}$ in the ground model''. Then it is forced that $\dot{D}$ meets the canonical name for the generic filter $\dot{G}$, so by the mixing lemma let $\dot{p}\in\Upsilon$ be such that $\Vdash_\mathbb{P}\dot{p}\in \dot{D}\cap\dot{G}$ and set $p:=\dot{p}^G\in Y$.

Then by the definition of $\dot{p}^G$, there is a $q\in G$ such that $q\Vdash_\mathbb{P} \dot{p}=\check{p}\land \dot{D}=\check{D}$. It follows immediately from the choice of $\dot{p}$ that $q\Vdash_{\mathbb{P}} \check{p}\in\check{D}$, which can only happen if $p\in D$. Furthermore, $q\Vdash_{\mathbb{P}} \check{p}\in \dot{G}$, so by the definition of $\dot{G}$ the set $\{r\in\mathbb{P}\sbp r\leq p\}$ is dense below $q$. Since $\mathbb{P}$ was assumed to be separative, this implies that $q\leq p$, so $p\in G$. Thus $p\in G\cap D\cap Y$, as desired.
    \end{proof}

We are now ready to state the central axiom. We would like to generalize $FA^+$ by allowing arbitrary names and arbitrary (or at least arbitrary $\Sigma_n$) provably $\Gamma$-persistent formulas rather than simply the stationarity of subsets of $\omega_1$, but as noted toward the end of Section \ref{section:MP}, this is inconsistent, since most nontrivial forcing classes can make arbitrary sets smaller than some definable cardinal (such as $\aleph_2$ or $2^{\aleph_0}$) of the forcing extension. To avoid this issue, we need to shrink our names down to a reasonable size before interpreting them. The previous lemma guides us on how to do that.

\begin{definition}
    For $\Gamma$ a forcing class, $n$ a positive integer, and $\kappa$ a regular uncountable cardinal, the $\Sigma_n$-correct forcing axiom $\Sigma_n\mhyphen CFA_{<\kappa}(\Gamma)$ is the statement that for all posets $\mathbb{P}\in\Gamma$, $\mathbb{P}$-names $\dot{a}$, sets $b$, regular cardinals $\gamma>\kappa$ such that $\mathbb{P}$, $\dot{a}$, $b\in H_\gamma$, $X\subset H_\gamma$ with $|X|< \kappa$, and provably $\Gamma$-persistent $\Sigma_n$ formulas $\phi$ such that $\Vdash_\mathbb{P}\phi(\dot{a},\check{b})$,
there is a transitive structure $N$ with an elementary embedding $\sigma:N\rightarrow H_\gamma$ such that
\begin{itemize}
    \item $|N|<\kappa$
    \item $\dot{a}$, $b$, $\mathbb{P}$, and all elements of $X$ are in the range of $\sigma$
    \item $rng(\sigma)\cap\kappa$ is transitive \item there is an $N$-generic filter $F\subset\sigma^{-1}(\mathbb{P})$ such that $\phi(\sigma^{-1}(\dot{a})^F, \sigma^{-1}(b))$ holds.
    \end{itemize}
    If $\kappa=\delta^+$ for some cardinal $\delta$, $\Sigma_n\mhyphen CFA_\delta$ is synonymous with $\Sigma_n\mhyphen CFA_{<\kappa}$.
\end{definition}

As with $ZFC_\delta$ (Definition \ref{def:ZFCdelta}), formally $\Sigma_n\mhyphen CFA_{<\kappa}(\Gamma)$ is expressed in the language of set theory with a constant symbol for $\kappa$, though for many forcing classes $\Gamma$ the resulting theory will be able to prove a particular value for $\kappa$.

The parameter $b$ can of course be incorporated into the name $\dot{a}$, and as such we will frequently omit it, but it is included here to emphasize that such parameters are permissible.

We refer to $\Sigma_n$-correct forcing axioms for certain common forcing classes by the obvious modifications of the names for the corresponding classical forcing axioms:

\begin{definition}
    $\Sigma_n\mhyphen CMA$ is $\Sigma_n\mhyphen CFA_{<2^{\aleph_0}}(ccc)$. $\Sigma_n\mhyphen CPFA$, $\Sigma_n\mhyphen CMM$, and $\Sigma_n\mhyphen CSCFA$ are $\Sigma_n\mhyphen CFA_{<\omega_2}(\Gamma)$ where $\Gamma$ is the class of proper, stationary set preserving, or subcomplete forcing, respectively. 
\end{definition}

If $\Delta\subseteq \Gamma$ and $\lambda\leq \kappa$, then $FA_{<\kappa}(\Gamma)$ implies $FA_{<\lambda}(\Delta)$. However, these downward implications fail for even $\Sigma_2$-correct forcing axioms. The class of proper forcing contains the classes of countably closed and ccc posets, but as we will see in Section \ref{section:continuum}, the $\Sigma_2$-correct proper forcing axiom, $\Sigma_2$-correct countably closed forcing axiom, and $\Sigma_2$-correct Martin's Axiom all have mutually inconsistent implications for the value of the continuum. Similarly, the statement that $\omega_2$ is the third infinite cardinal can be expressed as a provably ccc-persistent $\Sigma_2$ formula, so $\Sigma_2\mhyphen CFA_{<\omega_2}(ccc)$ implies that it is true of some ordinal in a transitive structure of cardinality at most $\omega_1$ and thus $\Sigma_2\mhyphen CFA_{<\omega_2}(ccc)$ is inconsistent, even though (as we will see shortly) $\Sigma_2\mhyphen CMA$ is consistent (and implies that the continuum is very large).

We do, however, have the following easy implications:

\begin{prop}
    $\Sigma_n\mhyphen CFA_{<\kappa}(\Gamma)$ implies:
    \begin{enumerate}
        \item $FA_{<\kappa}(\Gamma)$
        \item $\Sigma_n\mhyphen MP_\Gamma(H_\kappa)$
        \item $\Sigma_m\mhyphen CFA_{<\kappa}(\Gamma)$ for all $m\leq n$
    \end{enumerate}
\end{prop}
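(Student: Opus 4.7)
The plan is to treat the three implications separately, in roughly increasing order of content. (1) will reduce to Jensen's characterization of classical forcing axioms (Lemma~\ref{lemma:jensenfa}) by invoking $\Sigma_n\mhyphen CFA_{<\kappa}(\Gamma)$ with a trivial formula; (2) will follow by applying the axiom with $X$ chosen so that the Mostowski collapse fixes the parameter $a$; (3) will be a syntactic triviality. I expect (2) to be the only step with any real content, and even there the only idea is a bookkeeping one.

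For (1), I would take the $\Sigma_0$ formula $\phi(\dot{a},b):=(b=b)$, which is provably $\Gamma$-persistent for any $\Gamma$, together with any dummy $\dot{a}$ and $b$ and with $X$ equal to a given collection $\mathcal{D}$ of fewer than $\kappa$ dense subsets of a given $\mathbb{P}\in\Gamma$. The resulting transitive $N$, elementary embedding $\sigma:N\rightarrow H_\gamma$, and $N$-generic filter $F\subseteq\sigma^{-1}(\mathbb{P})$ are precisely the data of clause (2) of Lemma~\ref{lemma:jensenfa}. One then pushes $F$ forward through $\sigma$ to obtain a filter on $\mathbb{P}$ meeting each $D\in\mathcal{D}$, using that $\sigma^{-1}(D)$ is dense in $\sigma^{-1}(\mathbb{P})$ by elementarity and that $F$ is $N$-generic. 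Separativity of $\mathbb{P}$ is not actually needed for this direct argument; if one wishes to cite Lemma~\ref{lemma:jensenfa} verbatim, one can first pass to the separative quotient of $\mathbb{P}$.

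For (2), given a provably $\Gamma$-persistent $\Sigma_n$ formula $\phi$, a parameter $a\in H_\kappa$, and $\mathbb{P}\in\Gamma$ with $\Vdash_\mathbb{P}\phi(\check{a})$, I would apply $\Sigma_n\mhyphen CFA_{<\kappa}(\Gamma)$ to the two-variable formula $\tilde{\phi}(x,y):=\phi(y)$, which remains provably $\Gamma$-persistent since it ignores its first argument, taking $b:=a$, any convenient dummy name $\dot{a}$, and $X:=trcl(a)$; note $|X|<\kappa$ since $a\in H_\kappa$. The axiom delivers $\sigma:N\rightarrow H_\gamma$ with $\{a\}\cup trcl(a)\subseteq rng(\sigma)$ together with an $N$-generic $F$ for which $\phi(\sigma^{-1}(a))$ holds in $V$. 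A standard induction on rank shows that the Mostowski collapse $\sigma^{-1}$ acts as the identity on any element of $rng(\sigma)$ whose transitive closure also lies in $rng(\sigma)$, so $\sigma^{-1}(a)=a$, and hence $\phi(a)$ holds. Part (3) is then immediate: every $\Sigma_m$ formula is syntactically $\Sigma_n$ for $m\leq n$ (by padding with vacuous leading quantifiers, which preserves provable $\Gamma$-persistence), so every instance of $\Sigma_m\mhyphen CFA_{<\kappa}(\Gamma)$ is already an instance of $\Sigma_n\mhyphen CFA_{<\kappa}(\Gamma)$ with the same conclusion.
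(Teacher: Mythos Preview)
Your proposal is correct and follows essentially the same route as the paper: for (1) you reduce to the Jensen characterization (Lemma~\ref{lemma:jensenfa}) by invoking the axiom with a trivial formula, for (2) you take $X$ to be the transitive closure of the parameter so that the collapse fixes it (the paper uses $X=trcl(\{b\})$ and $\dot a=\emptyset$, a cosmetic variant of your choice), and (3) is the same one-line observation. Your remark about separativity is a reasonable gloss on a point the paper leaves implicit.
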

\begin{proof}
    (1): We apply Lemma \ref{lemma:jensenfa}, since the statement of $\Sigma_n\mhyphen CFA_{<\kappa}(\Gamma)$ is an obvious strengthening of (2) in that lemma.

    (2): For any provably $\Gamma$-persistent $\Sigma_n$ formula $\phi$, $b\in H_\kappa$, and $\mathbb{P}\in \Gamma$ such that $\Vdash_\mathbb{P} \phi(\check{b})$, let $X=trcl(\{b\})$. Then if $\sigma$ is as in the statement of the axiom applied to $X$, $\phi$, $b$, and $\dot{a}:=\emptyset$, $X\subseteq rng(\sigma)$ implies that $\sigma(b)=b$, so $\phi(b)$ holds in $V$.

    (3): Every $\Sigma_m$ formula is $\Sigma_n$.
\end{proof}

We now turn to proving the consistency of $\Sigma_n$-correct forcing axioms. First, we characterize the forcing classes for which the consistency proof works correctly (modeled on Asper\'o and Bagaria \cite{ABbfacont}, Lemma 2.2):

\begin{definition}
A forcing class $\Gamma$ is $n$-nice iff:
\begin{itemize}
    \item $\Gamma$ contains the trivial forcing
    \item Each $\mathbb{P}\in \Gamma$ preserves $\omega_1$
    \item $\Gamma$ is closed under restrictions, i.e., for any $\mathbb{P}\in\Gamma$ and $p\in\mathbb{P}$, $\mathbb{P}\upharpoonright p:=\{q\in\mathbb{P}\sbp q\leq p\}\in \Gamma$
    \item If $\mathbb{P}\in \Gamma$ and $\Vdash_\mathbb{P} \dot{\mathbb{Q}}\in \Gamma$, then $\mathbb{P}*\dot{\mathbb{Q}}\in \Gamma$
    \item For every inaccessible cardinal $\kappa$ and every forcing iteration $\langle\langle \mathbb{P}_\alpha, \dot{\mathbb{Q}}_\alpha\rangle | \alpha<\kappa\rangle$ of posets in $V_\kappa\cap \Gamma$ with some suitable support, if $\mathbb{P}_\kappa$ is the corresponding limit, then:
    \begin{itemize}
        \item $\mathbb{P}_\kappa\in \Gamma$
        \item $\Vdash_{\mathbb{P}_\alpha} \mathbb{P}_\kappa/\mathbb{P}_\alpha\in\Gamma$ in for all $\alpha<\kappa$
        \item If $\mathbb{P}_\alpha\in V_\kappa$ for all $\alpha<\kappa$, then $\mathbb{P}_\kappa$ is the direct limit of $\langle \mathbb{P}_\alpha\sbp \alpha<\kappa\rangle$ and satisfies the $\kappa$-cc
    \end{itemize}
    \item $\Gamma$ is $\Sigma_n$ definable
\end{itemize}
\end{definition}

Most forcing classes whose corresponding forcing axioms are commonly studied are 2-nice. Strictly speaking, only the last three conditions, regarding iterability and definability, are necessary for the consistency proof, but the first three are convenient and hold for most reasonable forcing classes.

\begin{theorem}
    If $\kappa$ is supercompact for $C^{(n-1)}$ and $\Gamma$ is an $n$-nice forcing class, then there is a $\kappa$-cc forcing $\mathbb{P}_\kappa\in \Gamma$ of size $\kappa$ such that if $G\subset \mathbb{P}_\kappa$ is $V$-generic, then $V[G]\models \Sigma_n\mhyphen CFA_{<\kappa}(\Gamma)$.
\label{thm:gencon}
\end{theorem}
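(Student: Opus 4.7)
The plan is to adapt Baumgartner's consistency proof of $PFA$ from a supercompact cardinal, replacing the classical Laver function with one for supercompactness for $C^{(n-1)}$ (Lemma \ref{lemma:laver}) and exploiting the $\Sigma_n$-reflection characterization from Lemma \ref{lemma:scfCnalt}. By the remarks following Lemma \ref{lemma:scfCnalt}, for any $\Sigma_n$ truth $\psi(a)$, any target value $b$, and any cardinal $\nu$, there is an embedding $j:V\to M$ witnessing $\nu$-supercompactness for $C^{(n-1)}$ with $j(f)(\kappa)=b$ and $M\models\psi(a)$ simultaneously.

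Fix such a Laver function $f:\kappa\to V_\kappa$, and define $\mathbb{P}_\kappa$ to be the length-$\kappa$ iteration, in the support type prescribed by the fifth clause of $n$-niceness, whose stage-$\alpha$ forcing is $f(\alpha)$ if it names a $\mathbb{P}_\alpha$-name for a poset in $\Gamma$ and trivial otherwise. The iteration clauses of $n$-niceness yield at once that $\mathbb{P}_\kappa\in\Gamma$, is $\kappa$-cc, and has size $\kappa$. Let $G\subseteq\mathbb{P}_\kappa$ be $V$-generic; to verify the axiom in $V[G]$, take an instance consisting of $\mathbb{P}\in\Gamma^{V[G]}$, a $\mathbb{P}$-name $\dot a$, a parameter $b$, a set $X\subseteq H_\gamma^{V[G]}$ of size less than $\kappa$, and a provably $\Gamma$-persistent $\Sigma_n$ formula $\phi$ with $\Vdash^{V[G]}_\mathbb{P}\phi(\dot a,\check b)$. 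Using $\kappa$-cc, pick in $V$ nice $\mathbb{P}_\kappa$-names $\dot{\mathbb{P}},\ddot a,\dot b,\dot X$ and a condition $p\in G$ forcing the whole configuration; the resulting $V$-side fact $p\Vdash_{\mathbb{P}_\kappa*\dot{\mathbb{P}}}\phi(\ddot a,\check{\dot b})$ is $\Sigma_n$ by the standard definability of the forcing relation for $\Sigma_n$ formulas.

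Apply the joint Laver-plus-reflection tool to obtain $j:V\to M$ witnessing $\mu$-supercompactness for $C^{(n-1)}$, for some $\mu$ much larger than all names in sight, with $j(f)(\kappa)=\langle\dot{\mathbb{P}},\ddot a,\dot b,\dot X,p\rangle$ and with $M\models p\Vdash_{\mathbb{P}_\kappa*\dot{\mathbb{P}}}\phi(\ddot a,\check{\dot b})$. By the definition of $\mathbb{P}_\kappa$ through $f$ and elementarity, $j(\mathbb{P}_\kappa)$ factors in $M$ as $\mathbb{P}_\kappa*\dot{\mathbb{P}}*\mathbb{P}_{\mathrm{tail}}$. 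Let $H\subseteq\mathbb{P}$ be $V[G]$-generic and, in a further outer extension, let $K$ be $V[G*H]$-generic for $\mathbb{P}_{\mathrm{tail}}^{G*H}$; the iteration clauses of $n$-niceness place $\mathbb{P}_{\mathrm{tail}}$ in $\Gamma$ inside $M[G*H]$, and the standard lifting criterion produces $j^*:V[G]\to M[G*H*K]$. Inside $M[G*H*K]$ set $N:=H_\gamma^{V[G]}$, which by transitivity and uniqueness of Mostowski collapses is exactly the collapse of $j^*"H_\gamma^{V[G]}$; set $\sigma:=j^*\upharpoonright H_\gamma^{V[G]}$ and $F:=H$. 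Closure of $M$ up to $\mu$, preserved through the iterated forcing, places $N$ and $\sigma$ in $M[G*H*K]$; $\sigma$ is elementary into $H_{j^*(\gamma)}^{M[G*H*K]}$ with $j^*$ of each required datum in its range; $rng(\sigma)\cap j^*(\kappa)=\kappa$ is transitive; $F$ is $N$-generic for $\sigma^{-1}(j^*(\mathbb{P}))=\mathbb{P}$ because every dense subset of $\mathbb{P}$ in $H_\gamma^{V[G]}$ is met by the $V[G]$-generic $H$; and $\phi(\sigma^{-1}(j^*(\dot a))^F,\sigma^{-1}(j^*(b)))=\phi(\dot a^H,b)$ holds in $M[G*H]$ by the reflected $\Sigma_n$ forcing statement and persists to $M[G*H*K]$ by provable $\Gamma$-persistence. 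Hence $M[G*H*K]$ satisfies the axiom instance for $j^*(\mathbb{P},\dot a,b,X,\gamma,\phi)$, and elementarity of $j^*$ returns witnesses in $V[G]$ for the original instance.

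The main technical obstacle will be arranging the lift $j^*:V[G]\to M[G*H*K]$ uniformly for every $n$-nice $\Gamma$, as the tail generic $K$ is only obtainable in an outer generic extension and the requisite closure of $M$ must be tracked through all the forcing steps; however, since the axiom only demands the existence of an $N$-generic (not $V[G]$-generic) filter in $V[G]$, invoking elementarity of the lifted embedding sidesteps the need to exhibit the witness constructively within $V[G]$, so the standard pattern of building witnesses in the larger target model and extracting their existence through $j^*$ suffices given the iterability guarantees of $n$-niceness.
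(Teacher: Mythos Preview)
Your overall approach is the same as the paper's: build the Baumgartner iteration from a Laver function for $C^{(n-1)}$, reflect the $\Sigma_n$ forcing statement into $M$ via Lemma \ref{lemma:scfCnalt}, factor $j(\mathbb{P}_\kappa)$, lift $j$, and pull back the witness $\sigma=j^*\upharpoonright H_\gamma^{V[G]}$ by elementarity. That strategy is correct.

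There is one concrete slip that breaks the argument as written. You set $j(f)(\kappa)=\langle\dot{\mathbb{P}},\ddot a,\dot b,\dot X,p\rangle$, but your own definition of the iteration takes the stage-$\alpha$ iterand to be $f(\alpha)$ only when $f(\alpha)$ is itself a $\mathbb{P}_\alpha$-name for a poset in $\Gamma$, and trivial otherwise. A $5$-tuple is not such a name, so by elementarity the stage-$\kappa$ iterand of $j(\mathbb{P}_\kappa)$ in $M$ is trivial, and the claimed factorization $j(\mathbb{P}_\kappa)\cong\mathbb{P}_\kappa*\dot{\mathbb{P}}*\mathbb{P}_{\mathrm{tail}}$ fails. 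The fix is simply to set $j(f)(\kappa)=\dot{\mathbb{P}}$, as the paper does; the remaining data $\ddot a,\dot b,\dot X,p$ are already in $M$ by closure and need not be anticipated by the Laver function. Relatedly, you should make explicit that the $\Sigma_n$ statement you reflect into $M$ includes $p\Vdash_{\mathbb{P}_\kappa}\dot{\mathbb{P}}\in\Gamma$ (this is $\Sigma_n$ since $\Gamma$ is $\Sigma_n$-definable), since without it $M$ need not use $\dot{\mathbb{P}}$ at stage $\kappa$ even when $j(f)(\kappa)=\dot{\mathbb{P}}$.
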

\begin{proof}
    Let $f$ be a Laver function on $\kappa$ for $C^{(n-1)}$ and let $\mathbb{P}_\kappa$ be the standard Baumgartner iteration of $\Gamma$ of length $\kappa$ derived from $f$. That is, we recursively construct a sequence of names for posets in $\Gamma$ $\langle \dot{\mathbb{Q}}_\alpha\sbp \alpha<\kappa\rangle$ and take $\langle \mathbb{P}_\alpha\sbp \alpha\leq\kappa\rangle$ to be the iteration of it with support suitable to $\Gamma$, where $\dot{\mathbb{Q}}_\alpha=f(\alpha)$ for $\alpha$ such that $f(\alpha)$ is a $\mathbb{P}_\alpha$-name for a forcing in $\Gamma$, and otherwise $\dot{\mathbb{Q}}_\alpha$ is a $\mathbb{P}_\alpha$-name for the trivial forcing. By the definition of $n$-niceness, $\mathbb{P}_\kappa$ satisfies the $\kappa$-cc (so in particular $\kappa$ remains regular in the forcing extension) and $|\mathbb{P}_\kappa|=\kappa$. Let $G\subseteq \mathbb{P}_\kappa$ be $V$-generic.

    In $V[G]$, given a poset $\mathbb{P}\in \Gamma$, a $\mathbb{P}$-name $\dot{a}$, a parameter $b$, a regular cardinal $\gamma>\kappa$ such that $\mathbb{P}, \dot{a}, b\in H_\gamma^{V[G]}$, a set $X\subset H_\gamma^{V[G]}$ of size less than $\kappa$, and a provably $\Gamma$-persistent $\Sigma_n$ formula $\phi$ such that $\Vdash_{\mathbb{P}} \phi(\dot{a},\check{b})$, let $\dot{\mathbb{P}}, \ddot{a}, \dot{b}\in H_\gamma^V$ (applying Lemma \ref{lemma:namesize}) be $\mathbb{P}_\kappa$-names for the corresponding objects. Then there is some $p\in G$ such that $p\Vdash_{\mathbb{P}_\kappa} \dot{\mathbb{P}}\in \Gamma$ and $(p, 1_{\mathbb{P}})\Vdash_{\mathbb{P}_\kappa*\dot{\mathbb{P}}} \phi(\ddot{a},\dot{b})$ (interpreting $\ddot{a}$ and $\dot{b}$ as $\mathbb{P}_\kappa*\dot{\mathbb{P}}$-names in the obvious way). Applying Lemma \ref{lemma:scfCnalt}, there is an elementary embedding $j:V\rightarrow M$ witnessing that $\kappa$ is $2^\gamma$-supercompact such that $j(f)(\kappa)=\dot{\mathbb{P}}$ and $M\models ``p\Vdash_{\mathbb{P}_\kappa} \dot{\mathbb{P}}\in \Gamma\land (p, 1_{\mathbb{P}})\Vdash_{\mathbb{P}_\kappa*\dot{\mathbb{P}}} \phi(\ddot{a},\dot{b})"$.

    Then because $j(\mathbb{P}_\kappa)$ is constructed from $j(f)$ in the same way that $\mathbb{P}_\kappa$ is from $f$, the $\kappa$th poset used in the iteration is $\dot{\mathbb{P}}$ (and since it is forced to be in $\Gamma$, it is in fact used). Furthermore, for all $\alpha<\kappa$, $\mathbb{P}_\alpha\in V_\kappa$ and thus $j(\mathbb{P}_\alpha)=\mathbb{P}_\alpha$, so $j(\mathbb{P}_\kappa)=\mathbb{P}_\kappa*\dot{\mathbb{P}}*\dot{\mathbb{R}}$ for some $\mathbb{P}_\kappa*\dot{\mathbb{P}}$-name for a poset $\dot{\mathbb{R}}$. Let $H*K\subseteq \mathbb{P}*\dot{\mathbb{R}}$ be $V[G]$-generic.

    From the choice of $M$, $M[G][H]\models \phi(\dot{a}^H, b)$, and by one of the iteration conditions in the definition of $n$-nice forcing classes, $\dot{\mathbb{R}}^{G*H}=j(\mathbb{P}_\kappa)/(\mathbb{P}_\kappa*\dot{\mathbb{P}})\in \Gamma^{M[G][H]}$. Since $\phi$ is provably $\Gamma$-persistent and $M[G][H]\models ZFC$, $M[G][H][K]\models \phi(\dot{a}^H, b)$. (It is here that we need $\phi$ to be provably persistent rather than merely forceably necessary over $V[G]$, although see Lemma \ref{lemma:neccompl}.)

    By Lemma \ref{lemma:extembed}, $j$ extends (in $V[G][H][K]$) to an elementary embedding $j^*:V[G]\rightarrow M[G][H][K]$ defined by
    $$j^*(\dot{x}^G)=j(\dot{x})^{G*H*K}$$
    for all $\mathbb{P}_\kappa$-names $\dot{x}$. By the closure condition on $M$, $j\upharpoonright H_\gamma^V\in M$. By Lemma \ref{lemma:namesize}, every element of $H_\gamma^{V[G]}$ is of the form $\dot{x}^G$ for some $\dot{x}\in H_\gamma^V$. It follows that $\sigma':=j^*\upharpoonright H_\gamma^{V[G]}\in M[G][H][K]$, since it is definable from $G*H*K$ and the values of $j$ on $\mathbb{P}_\kappa$-names in $H_\gamma^V$. By Lemma \ref{lemma:restrictembed}, $\sigma':H_\gamma^{V[G]}\rightarrow H_{j(\gamma)}^{M[G][H][K]}$ is an elementary embedding such that $\sigma'^{-1}(j^*(\dot{a}))=\dot{a}$, $\sigma'^{-1}(j^*(b))=b$, and $\sigma'^{-1}(j^*(\mathbb{P}))=\mathbb{P}$. Since $|X|<\kappa=crit(j^*)$, $j^*(X)=j^*"X$, so in particular $j^*(X)\subseteq rng(\sigma')$. Finally, since $\sigma'$ maps its critical point $\kappa$ to $j(\kappa)$, $rng(\sigma)\cap j(\kappa)$ is transitive. We have thus shown that $M[G][H][K]$ satisfies the statement:

    "There is a transitive structure $H_\gamma^{V[G]}$ of size less than $j(\kappa)$ with an elementary embedding $\sigma':H_\gamma^{V[G]}\rightarrow H_{j(\gamma)}^{M[G][H][K]}$ such that $\{j^*(\dot{a}), j^*(b), j^*(\mathbb{P})\}\cup j^*(X)\subseteq rng(\sigma')$, $rng(\sigma')\cap j(\kappa)$ is transitive, and there is an $H_\gamma^{V[G]}$-generic filter $H\subseteq \sigma'^{-1}(j^*(\mathbb{P}))$ such that $\phi(\sigma'^{-1}(j^*(\dot{a}))^H, \sigma'^{-1}(j^*(b)))$ holds."

    By the elementarity of $j^*$, in $V[G]$ there must be a transitive structure $N$, an elementary embedding $\sigma:N\rightarrow H_\gamma^{V[G]}$, and an $N$-generic filter $F\subseteq \sigma^{-1}(\mathbb{P})$ witnessing the truth of the desired instance of the axiom.
\end{proof}

It is straightforward to check that if $\Gamma$ can ($\Gamma$-necessarily) collapse arbitrary cardinals to $\omega_1$, then $\kappa=\aleph_2^{V[G]}$, and if $\Gamma$ can add arbitrarily many reals, then $\kappa=(2^{\aleph_0})^{V[G]}$.

In the case where $n=2$, $\mathbb{P}_\kappa$ is exactly the poset used to force $FA_{<\kappa}(\Gamma)$, so by Lemma \ref{lemma:scfC1} we have:

\begin{corollary}
\label{cor:stdfamodel}
   For any 2-nice forcing class $\Gamma$, the standard model of $FA_{<\kappa}(\Gamma)$ (constructed as a forcing extension of a model where $\kappa$ is supercompact) in fact satisfies $\Sigma_2\mhyphen CFA_{<\kappa}(\Gamma)$.
\end{corollary}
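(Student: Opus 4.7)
The plan is to observe that Corollary \ref{cor:stdfamodel} is essentially a direct specialization of Theorem \ref{thm:gencon} to the case $n=2$, combined with two observations: that ordinary supercompactness suffices to invoke that theorem at this level, and that the poset produced by the theorem's construction coincides with the standard Baumgartner-style forcing used to force the classical axiom $FA_{<\kappa}(\Gamma)$.

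Concretely, I would proceed as follows. First, note that by Lemma \ref{lemma:scfC1}, every supercompact cardinal $\kappa$ is automatically supercompact for $C^{(1)}$. Thus if we start with a model in which $\kappa$ is supercompact and $\Gamma$ is 2-nice, the hypothesis of Theorem \ref{thm:gencon} (taken with $n=2$) is satisfied. Applying that theorem, there is a $\kappa$-cc poset $\mathbb{P}_\kappa\in\Gamma$ of size $\kappa$ such that forcing with $\mathbb{P}_\kappa$ produces a model of $\Sigma_2\mhyphen CFA_{<\kappa}(\Gamma)$.

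Second, I would verify that the $\mathbb{P}_\kappa$ obtained from the proof of Theorem \ref{thm:gencon} is precisely the standard Baumgartner iteration used to force $FA_{<\kappa}(\Gamma)$: in the theorem's construction, $\mathbb{P}_\kappa$ is defined as the iteration (with appropriate support for $\Gamma$) of length $\kappa$ anchored on a Laver function $f$, using $f(\alpha)$ as the $\alpha$th step whenever $f(\alpha)$ names a $\Gamma$-poset and trivial forcing otherwise. This is exactly the recipe for the standard consistency proof of $FA_{<\kappa}(\Gamma)$ going back to Baumgartner's proof of $\mathrm{Con}(PFA)$. Since the already-proved implication $\Sigma_n\mhyphen CFA_{<\kappa}(\Gamma)\Rightarrow FA_{<\kappa}(\Gamma)$ holds for $n=2$ as well, the forcing extension is simultaneously a model of $FA_{<\kappa}(\Gamma)$ and of $\Sigma_2\mhyphen CFA_{<\kappa}(\Gamma)$.

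There is essentially no obstacle here; the corollary is really just a bookkeeping remark that the Laver-function iteration used in the proof of Theorem \ref{thm:gencon} is the \emph{same} poset as the one used in the classical consistency proof, and that at the $n=2$ level no extra large-cardinal strength beyond full supercompactness is needed. The only care required is to make sure one reads ``the standard model of $FA_{<\kappa}(\Gamma)$'' as referring to the Laver-iteration construction (which is standard in the literature for those $\Gamma$ to which Baumgartner's method applies), since in principle there are other forcing extensions which satisfy $FA_{<\kappa}(\Gamma)$ and for which the stronger conclusion need not hold.
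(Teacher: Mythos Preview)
Your proposal is correct and matches the paper's own argument essentially verbatim: the paper simply notes that for $n=2$ the iteration $\mathbb{P}_\kappa$ of Theorem \ref{thm:gencon} is exactly the standard Baumgartner iteration, and invokes Lemma \ref{lemma:scfC1} to see that ordinary supercompactness supplies the needed supercompactness for $C^{(1)}$.
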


We can also obtain models of what one might call the fully correct forcing axiom:

\begin{corollary}
    If the Vopenka scheme is consistent and $\Gamma$ is an $m$-nice forcing class for some $m$, then it is consistent that there is a regular $\kappa>\omega_1$ such that $\Sigma_n\mhyphen CFA_{<\kappa}(\Gamma)$ holds for all (metatheoretic) natural numbers $n$.
\end{corollary}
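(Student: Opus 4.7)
The plan is to invoke the compactness theorem for first-order logic rather than to try to build a single model of the entire scheme from one large cardinal assumption. Let $T$ denote the theory in the language of set theory extended by a fresh constant symbol $\kappa$, consisting of $ZFC$, the sentence ``$\kappa$ is a regular cardinal and $\kappa>\omega_1$'', and the axiom scheme $\{\Sigma_n\mhyphen CFA_{<\kappa}(\Gamma)\mid n\in\omega\}$. My goal is to establish $\mathrm{Con}(T)$ from $\mathrm{Con}(ZFC+\text{Vopenka scheme})$; by compactness it suffices to show that every finite subtheory $T_0\subseteq T$ is consistent relative to the same hypothesis.

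Fix such a $T_0$. Since $T_0$ mentions only finitely many instances $\Sigma_{n_i}\mhyphen CFA_{<\kappa}(\Gamma)$, choose $n^*$ at least as large as $m$ and every such $n_i$. By the downward monotonicity of the axioms in $n$ (part (3) of the first proposition following the definition of $\Sigma_n\mhyphen CFA_{<\kappa}(\Gamma)$), any model of $ZFC+$``$\kappa$ is regular uncountable''$+\Sigma_{n^*}\mhyphen CFA_{<\kappa}(\Gamma)$ is a model of $T_0$. Note also that the $m$-niceness of $\Gamma$ upgrades automatically to $n^*$-niceness: the only clause of $n$-niceness whose complexity depends on $n$ is the requirement that $\Gamma$ be $\Sigma_n$-definable, which is monotone in $n$.

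To produce such a model, I would appeal to Proposition \ref{prop:scCneqvopenka}, which tells us that $ZFC+\text{Vopenka scheme}$ proves the existence of a cardinal supercompact for $C^{(n^*-1)}$ for our fixed (metatheoretic) $n^*$. Consequently $\mathrm{Con}(ZFC+\text{Vopenka scheme})$ yields $\mathrm{Con}(ZFC+\exists\kappa\text{ supercompact for }C^{(n^*-1)})$. Working inside a model of the latter theory, Theorem \ref{thm:gencon} applied to the $n^*$-nice class $\Gamma$ and to this $\kappa$ delivers a forcing extension $V[G]$ in which $\kappa$ is still regular, exceeds $\omega_1$, and satisfies $\Sigma_{n^*}\mhyphen CFA_{<\kappa}(\Gamma)$. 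Interpreting the new constant symbol as this $\kappa$, the model $V[G]$ witnesses the consistency of $T_0$, and compactness then delivers $\mathrm{Con}(T)$.

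The main conceptual point, and the only real obstacle to watch for, is that both the hypothesis and the conclusion are schemes in the metatheory rather than single sentences. One cannot hope to realise the full scheme inside a single forcing extension of a model of a single large cardinal axiom, because ``$\Sigma_n\mhyphen CFA_{<\kappa}(\Gamma)$ for all $n$'' is not expressible as a single first-order statement in the ground language; compactness is the natural and essentially only tool for stitching together the individual levels into one consistent theory. Once this is recognised, no genuinely new construction beyond Theorem \ref{thm:gencon} and Proposition \ref{prop:scCneqvopenka} is required.
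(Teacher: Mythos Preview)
Your proposal is correct and follows essentially the same approach as the paper: both argue via compactness, using Proposition~\ref{prop:scCneqvopenka} to obtain a cardinal supercompact for $C^{(n-1)}$ for each relevant $n$ and then Theorem~\ref{thm:gencon} to force the corresponding $\Sigma_n$-correct forcing axiom. Your version is slightly more explicit in taking a single $n^*$ dominating the finitely many instances and in noting that $m$-niceness upgrades to $n^*$-niceness, but the underlying argument is the same.
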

\begin{proof}
    By Lemma \ref{prop:scCneqvopenka}, for each $n$, any model of the Vopenka scheme has a cardinal $\kappa_n$ supercompact for $C^{(n-1)}$ and thus by Theorem \ref{thm:gencon} a forcing extension where $\Sigma_n\mhyphen CFA_{<\kappa_n}(\Gamma)$ holds and $\kappa_n$ is regular. By the compactness theorem, there is a model with a single regular $\kappa$ such that $\Sigma_n\mhyphen CFA_{<\kappa}(\Gamma)$ holds for each $n$.
\end{proof}

For the plus versions of forcing axioms, we can obtain filters which interpret not just a single name for a stationary subset of $\omega_1$ as an actual stationary set, but all names in a small transitive structure (see e.g. Remark 42 of \cite{coxfa}). This easily generalizes to any other single formula in the $\Sigma_n$-correct case:

\begin{prop}
If $\Sigma_n\mhyphen CFA_{<\kappa}(\Gamma)$ holds for some $\Sigma_n$-definable $\Gamma$, then for all $\mathbb{P}\in\Gamma$, cardinals $\gamma$ such that $\mathbb{P}\in H_\gamma$, provably $\Gamma$-persistent $\Sigma_n$ formulas $\phi$, and $X\subseteq H_\gamma$ such that $|X|<\kappa$, there is a transitive structure $N$ with an elementary embedding $\sigma: N\rightarrow H_\gamma$ such that $X\cup\{\mathbb{P}\}\subseteq rng(\sigma)$ and an $N$-generic filter $F$ such that for all $\sigma^{-1}(\mathbb{P})$-names $\dot{a}\in N$ such that $\Vdash_{\mathbb{P}} \phi(\sigma(\dot{a}))$, $\phi(\dot{a}^F)$ holds.
    \label{prop:singleformula}
\end{prop}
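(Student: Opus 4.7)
The plan is to reduce the ``many-names'' conclusion to a single application of $\Sigma_n\mhyphen CFA_{<\kappa}(\Gamma)$ by packaging all the relevant interpretations into one name $\dot{c}$ defined over a slightly enlarged auxiliary $H_{\gamma^*}$. Given $\mathbb{P}$, $\gamma$, $\phi$, and $X$ as in the statement, set
\[T:=\{\dot{b}\in H_\gamma\sbp \dot{b}\text{ is a }\mathbb{P}\text{-name and }\Vdash_\mathbb{P}\phi(\dot{b})\}.\]
Since $T$ need not itself lie in $H_\gamma$, pick a regular cardinal $\gamma^*>\gamma$ with $T, H_\gamma\in H_{\gamma^*}$, and let $\dot{c}\in H_{\gamma^*}$ be a $\mathbb{P}$-name such that for every $V$-generic $G\subseteq\mathbb{P}$, $\dot{c}^G=\{(\dot{b},\dot{b}^G)\sbp \dot{b}\in T\}$; explicitly, take $\dot{c}=\{(\dot{\tau}_{\dot{b}}, 1_\mathbb{P})\sbp \dot{b}\in T\}$ where $\dot{\tau}_{\dot{b}}$ is the canonical name for the ordered pair $(\check{\dot{b}},\dot{b})$. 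Let $\phi'(x,y)$ abbreviate the formula ``$x$ is a function with domain $y$, and for every $\dot{b}\in y$ we have $\phi(x(\dot{b}))$''. Bounded quantification keeps the complexity at $\Sigma_n$, and provable $\Gamma$-persistence transfers from $\phi$ to $\phi'$ because both $T$ and each value $\dot{b}^G$ are stable under further forcing. By construction $\Vdash_\mathbb{P}\phi'(\dot{c},\check{T})$.

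Apply $\Sigma_n\mhyphen CFA_{<\kappa}(\Gamma)$ to $\mathbb{P}$, the name $\dot{c}$, the parameter $T$, the cardinal $\gamma^*$, the set $X\cup\{H_\gamma\}$, and the formula $\phi'$. This produces a transitive $N^*$ of cardinality less than $\kappa$, an elementary embedding $\sigma^*:N^*\rightarrow H_{\gamma^*}$ whose range contains $X\cup\{H_\gamma, T, \dot{c}, \mathbb{P}\}$, and an $N^*$-generic filter $F\subseteq(\sigma^*)^{-1}(\mathbb{P})$ such that $\phi'((\sigma^*)^{-1}(\dot{c})^F,(\sigma^*)^{-1}(T))$ holds in $V$. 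Set $N:=(\sigma^*)^{-1}(H_\gamma)$, viewed as the transitive set it is in $N^*$, and $\sigma:=\sigma^*\upharpoonright N$. A routine elementarity argument, using transitivity of both $N^*$ and $H_{\gamma^*}$, shows that $\sigma:N\rightarrow H_\gamma$ is elementary with $rng(\sigma)=rng(\sigma^*)\cap H_\gamma\supseteq X\cup\{\mathbb{P}\}$ and $\sigma^{-1}(\mathbb{P})=(\sigma^*)^{-1}(\mathbb{P})$, so $F\subseteq\sigma^{-1}(\mathbb{P})$. That $F$ is $N$-generic follows because any $D\in N$ which $N$ regards as dense in $\sigma^{-1}(\mathbb{P})$ has $\sigma(D)$ actually dense in $\mathbb{P}$ by elementarity and the $\Pi_1$ absoluteness of density, whence $\sigma^*$-elementarity makes $D$ dense in $(\sigma^*)^{-1}(\mathbb{P})$ from $N^*$'s viewpoint, and $F$ meets $D$ by $N^*$-genericity.

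Finally, suppose $\dot{a}\in N$ is a $\sigma^{-1}(\mathbb{P})$-name with $\Vdash_\mathbb{P}\phi(\sigma(\dot{a}))$. Then $\sigma(\dot{a})=\sigma^*(\dot{a})\in T$, so by elementarity of $\sigma^*$ we have $\dot{a}\in(\sigma^*)^{-1}(T)$. The construction of $\dot{c}$ together with elementarity of $\sigma^*$ ensures that $(\sigma^*)^{-1}(\dot{c})^F(\dot{a})=\dot{a}^F$, and the already-established $\phi'((\sigma^*)^{-1}(\dot{c})^F,(\sigma^*)^{-1}(T))$ then delivers $\phi(\dot{a}^F)$, as required. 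The main obstacle is precisely this passage from the single-name form of the axiom to a uniform conclusion for all relevant names; the bundling name $\dot{c}$, combined with restricting the embedding from $H_{\gamma^*}$ back to $H_\gamma$, is designed to accomplish it.
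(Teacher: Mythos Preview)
Your proof is correct and follows essentially the same strategy as the paper: package all relevant names into a single name living in an enlarged $H_{\gamma^*}$, apply the axiom once there, and then restrict the resulting embedding back to $H_\gamma$. The only difference is cosmetic: the paper uses the simpler set-valued name $\varsigma=\{\langle\dot{x},1_\mathbb{P}\rangle\mid \dot{x}\in T\}$ together with the formula $\forall x\in y\,\phi(x)$, whereas you use a function-valued name indexed by $T$ and the corresponding functional formula; both reduce to the same bounded-quantifier manipulation and the same restriction via Lemma~\ref{lemma:restrictembed}.
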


We call such an $F$ a $\phi$-correct $N$-generic filter.

\begin{proof}
    Given $\mathbb{P}$, $\gamma$, $\phi$, and $X$, let $\gamma'>2^{<\gamma}$ and $S=\{\dot{x}\in H_\gamma\sbp \Vdash_\mathbb{P} \phi(\dot{a})\}$. If we take $\varsigma:=\{\langle\dot{x}, 1_\mathbb{P}\rangle\sbp \dot{x}\in S\}$ to be the canonical name for the set of interpretations of elements of $S$ (not to be confused with $\check{S}$), then $\Vdash_\mathbb{P} \forall x\in \varsigma \hspace{2pt} \phi(x)$, and $\forall x\in y\hspace{2pt}\phi(x)$ is a provably $\Gamma$-persistent $\Sigma_n$ formula in $y$ because $\phi$ is a provably $\Gamma$-persistent $\Sigma_n$ formula. $\varsigma, H_\gamma\in H_{\gamma'}$ by the choice of $\gamma'$, so we can find an embedding $\sigma':N'\rightarrow H_{\gamma'}$ with $X\cup\{\mathbb{P}, \varsigma, H_\gamma\}\subseteq rng(\sigma')$ and an $N'$-generic filter $F\subseteq \bar{\mathbb{P}}:=\sigma'^{-1}(\mathbb{P})$ such that $\phi(a)$ holds for all $a\in \sigma'^{-1}(\varsigma)^F$.

    Observe that $\sigma'^{-1}(\varsigma)^F=\{\dot{a}^F\sbp \dot{a}\in N'\land \sigma'(\dot{a})\in S\}$. Therefore if we set $N:=\sigma'^{-1}(H_\gamma)$ and $\sigma=\sigma'\upharpoonright N$, then $\sigma$ is elementary by Lemma \ref{lemma:restrictembed}, $\sigma(\bar{\mathbb{P}})=\mathbb{P}$, $F$ is $N$-generic, and for every $\dot{a}\in N$ such that $\Vdash_\mathbb{P}\phi(\sigma(\dot{a}))$, $\sigma(\dot{a})\in S$, so $\dot{a}^F\in \sigma'^{-1}(\varsigma)^F$ and thus $\phi(\dot{a}^F)$ holds.
\end{proof}

We might wish to obtain filters which are fully $\Sigma_n$-correct; that is, they correctly interpret all names in $N$ with respect to all provably $\Gamma$-persistent $\Sigma_n$ formulas. However, difficulties arise with finding a formula we can apply the $\Sigma_n$-correct forcing axiom to in order to obtain such a filter. We might attempt to take the fragment $T$ of the $\Sigma_n$ forcing relation for $\mathbb{P}$ involving only names in $H_\gamma$ and formulas which are provably $\Gamma$-persistent. The problems appear when we attempt to choose a particular forceable property of $T$ to reflect, since we need to include some information about $T$ in order for ZFC to prove that the formulas in it are preserved by further forcing. If we try to use "for all $\langle \phi, \dot{a}\rangle\in T$, $\phi$ is provably $\Gamma$-persistent and $\phi(\dot{a})$ holds", then we can't prove in ZFC that the $\phi(\dot{a})$ actually continue to hold in further forcing extensions, since ZFC does not prove its own soundness. We can avoid this issue by instead reflecting "for all $\langle \phi, \dot{a}\rangle\in T$, $\phi(\dot{a})$ holds and is preserved by all forcing in $\Gamma$"; however, as noted in Lemma \ref{lemma:neccompl}, this adds additional formula complexity.

\begin{prop}
     \label{prop:correctfilter}
         If $\Sigma_{n+2}\mhyphen CFA_{<\kappa}(\Gamma)$ holds for some $\Sigma_n$-definable $\Gamma$, then for all $\mathbb{P}\in\Gamma$, regular cardinals $\gamma$ such that $\mathbb{P}\in H_\gamma$, and $X\subseteq H_\gamma$ such that $|X|<\kappa$, there is a transitive structure $N$ with an elementary embedding $\sigma: N\rightarrow H_\gamma$ such that $X\cup\{\mathbb{P}\}\subseteq rng(\sigma)$ and a $\Sigma_n$-correct $N$-generic filter $F$, i.e. for all $\sigma^{-1}(\mathbb{P})$-names $\dot{a}\in N$ and all $\Sigma_n$ formulas $\phi$ such that $\Vdash_{\mathbb{P}} ``\phi(\sigma(\dot{a}))$ holds and is preserved by all further forcing in $\Gamma"$, $\phi(\dot{a}^F)$ holds. 
         
         Consequently, if $\Sigma_n\mhyphen CFA_{<\kappa}(\Gamma)$ holds for all $n\in \omega$, for all $n$ we can find a structure with a $\Sigma_n$-correct generic filter.
\end{prop}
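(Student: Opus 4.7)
The plan is to apply $\Sigma_{n+2}\mhyphen CFA_{<\kappa}(\Gamma)$ to a single name and formula which together encode every name--formula pair we care about. Following the discussion preceding the proposition, the essential trick is to wrap each $\phi(\dot a)$ inside $\square_\Gamma$, so that provable $\Gamma$-persistence comes for free via Lemma \ref{lemma:neccompl}, at the cost of raising complexity by two levels. Concretely, fix $\mathbb{P}\in\Gamma$, $\gamma$, and $X$, and set
$$T_\mathbb{P}:=\{(\phi,\dot a)\sbp \phi\text{ a }\Sigma_n\text{ formula code, }\dot a\in H_\gamma\text{ a }\mathbb{P}\text{-name, }\Vdash_\mathbb{P}\square_\Gamma\phi(\dot a)\}.$$
Pick any regular $\gamma'>2^{|H_\gamma|}$ and let $\dot s\in H_{\gamma'}$ be a $\mathbb{P}$-name whose interpretation by any generic $G$ is $\{\langle\phi,\dot a^G\rangle\sbp (\phi,\dot a)\in T_\mathbb{P}\}$. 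Finally, let
$$\Phi(s)\equiv \forall\langle\psi,y\rangle\in s\hspace{6pt}\square_\Gamma T_{\Sigma_n}(\psi,y).$$
By Lemma \ref{lemma:neccompl}, $\square_\Gamma T_{\Sigma_n}(\psi,y)$ is uniformly $\Pi_{n+1}$ and provably $\Gamma$-persistent in $(\psi,y)$. Since bounded universal quantification preserves both properties, $\Phi$ itself is $\Pi_{n+1}$ (hence $\Sigma_{n+2}$) and provably $\Gamma$-persistent, and by construction $\Vdash_\mathbb{P}\Phi(\dot s)$.

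Next I apply $\Sigma_{n+2}\mhyphen CFA_{<\kappa}(\Gamma)$ with poset $\mathbb{P}$, name $\dot s$, formula $\Phi$, cardinal $\gamma'$, and parameter set $X\cup\{H_\gamma\}$, obtaining a transitive $N'$, an elementary embedding $\sigma':N'\to H_{\gamma'}$ with $X\cup\{\mathbb{P},\dot s,H_\gamma\}\subseteq rng(\sigma')$, and an $N'$-generic filter $F\subseteq\sigma'^{-1}(\mathbb{P})$ such that $\Phi(\sigma'^{-1}(\dot s)^F)$ holds in $V$. Set $N:=\sigma'^{-1}(H_\gamma)$ and $\sigma:=\sigma'\upharpoonright N$: transitivity of $H_\gamma$ makes $N$ transitive, Lemma \ref{lemma:restrictembed} makes $\sigma:N\to H_\gamma$ elementary with $X\cup\{\mathbb{P}\}\subseteq rng(\sigma)$, and since every dense subset of $\sigma^{-1}(\mathbb{P})=\sigma'^{-1}(\mathbb{P})$ lying in $N$ is already in $N'$, $F$ is $N$-generic.

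To finish, suppose $\dot a\in N$ is a $\sigma^{-1}(\mathbb{P})$-name and $\phi$ is a $\Sigma_n$ formula with $\Vdash_\mathbb{P}\square_\Gamma\phi(\sigma(\dot a))$; then $(\phi,\sigma(\dot a))\in T_\mathbb{P}$, so using that the G\"odel code $\phi$ is fixed by $\sigma'$, elementarity gives $(\phi,\dot a)\in\sigma'^{-1}(T_\mathbb{P})$, whence $\langle\phi,\dot a^F\rangle\in\sigma'^{-1}(\dot s)^F$. Applying $\Phi(\sigma'^{-1}(\dot s)^F)$ to this element yields $\square_\Gamma T_{\Sigma_n}(\phi,\dot a^F)$, and instantiating at the trivial forcing produces $\phi(\dot a^F)$ as required. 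The consequence about the full hierarchy is then immediate by running the construction separately for each $n$. The main obstacle is the complexity bookkeeping: we must simultaneously verify that $\Phi$ lands in $\Sigma_{n+2}$, that wrapping by $\square_\Gamma$ is necessary to secure provable $\Gamma$-persistence via Lemma \ref{lemma:neccompl}, and yet is weak enough that instantiation at the trivial forcing recovers the unboxed truth $\phi(\dot a^F)$ at the very end.
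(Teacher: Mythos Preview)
Your proof is correct and follows essentially the same approach as the paper's: form the set $T$ of pairs $(\phi,\dot a)$ with $\Vdash_\mathbb{P}\square_\Gamma\phi(\dot a)$, package them into a single name $\dot s$ (the paper calls it $\tau$), apply the $\Sigma_{n+2}$-correct forcing axiom to the $\Pi_{n+1}$ formula asserting $\square_\Gamma\phi(x)$ for every pair, and then restrict from $N'$ to $N=\sigma'^{-1}(H_\gamma)$. The only cosmetic difference is that you route the final step through $\sigma'^{-1}(T_\mathbb{P})$ explicitly (which is fine, though to be fully rigorous you should either add $T_\mathbb{P}$ to the parameter set or note it is definable from $\dot s$), whereas the paper argues directly that $\sigma'^{-1}(\tau)^F$ consists of exactly the desired pairs by referring back to the proof of Proposition~\ref{prop:singleformula}.
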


\begin{proof}
     Given $\mathbb{P}$, $\gamma$, and $X$, let $T=\{\langle \phi, \dot{x}\rangle \sbp \dot{x}\in H_\gamma^\mathbb{P}\land\Vdash_\mathbb{P}\square_\Gamma\phi(\dot{x})\}$, and let $\tau$ be the $\mathbb{P}$-name such that for any filter $F$, $\tau^F=\{\langle \phi, \dot{x}^F\rangle\sbp \langle \phi, \dot{x}\rangle\in T\}$. Since $\square_\Gamma \phi$ is $\Pi_{n+1}$, the statement "$\square_\Gamma \phi(x)$ holds for all $\langle \phi, x\rangle\in\tau$" can be expressed as a $\Pi_{n+1}$ formula $\psi(\tau)$, which is forced by $\mathbb{P}$. Because $\square_\Gamma \phi$ is provably $\Gamma$-persistent, $\psi$ is as well. If $\gamma'$ is sufficiently large that $\tau, H_\gamma\in H_{\gamma'}$, then there is an elementary embedding $\sigma': N'\rightarrow H_{\gamma'}$ such that $X\cup\{\mathbb{P}, \tau, H_\gamma\}\subseteq rng(\sigma')$ and an $N'$-generic filter $F\subseteq \bar{\mathbb{P}}:=\sigma'^{-1}(\mathbb{P})$ such that $\psi(\sigma'^{-1}(\tau)^F)$ holds.

    Let $N:=\sigma'^{-1}(H_\gamma)$. Then as in the proof of Proposition \ref{prop:singleformula}, $\sigma'^{-1}(\tau)^F$ consists of all pairs of $\Sigma_n$ formulas $\phi$ and $\dot{a}^F$ where $\dot{a}$ is a $\bar{\mathbb{P}}$-name $\dot{a}\in N$ such that $\mathbb{P}$ forces that $\phi(\sigma'(\dot{a}))$ holds and continues to hold after all further $\Gamma$-forcing. By $\psi(\sigma'^{-1}(\tau)^F)$, $\phi(\dot{a}^F)$ holds for each such pair. Thus if we set $\sigma=\sigma'\upharpoonright N$, $\sigma:N\rightarrow H_\gamma$ is an elementary embedding with all the desired properties and $F$ is a $\Sigma_n$-correct $N$-generic filter.
\end{proof}

\section{Equivalent Formulations}
\label{section:equivforms}

The official (Jensen-style) formulation of $\Sigma_n$-correct forcing axioms given in the previous section underscores the fact that they are generalizations of $FA^+(\Gamma)$, but can be somewhat unwieldy. In this section, we explore more streamlined presentations.

First, Philipp Schlicht and Christopher Turner \cite{stNP} have shown that classical forcing axioms are equivalent to "name principles" asserting the existence of filters interpreting a name to have a certain property. As such, the $N$-genericity conditions may be omitted:

\begin{definition}
    The Schlicht-Turner\footnote{Schlicht and Turner would perhaps be less likely to recognize this axiom than the namesakes of the other formulations would be to recognize theirs, since their work mainly focused on identifying the names which can consistently be interpreted to have certain properties, whereas my approach is to reflect arbitrary names to names small enough to have no issues. However, their work was very helpful to me in clarifying the relationship between genericity and interpretations of names, and I had to call this formulation something.} (S-T) formulation of $\Sigma_n\mhyphen CFA_{<\kappa}(\Gamma)$ is the assertion that for all provably $\Gamma$-persistent $\Sigma_n$ formulas $\phi$, posets $\mathbb{P}\in \Gamma$, $\mathbb{P}$-names $\dot{a}$ such that $\Vdash_\mathbb{P} \phi(\dot{a})$, regular cardinals $\gamma>\kappa$ such that $H_\gamma$ contains $\mathbb{P}$ and $\dot{a}$, and sets $X\subset H_\gamma$ of size less than $\kappa$, there is a $Z\prec H_\gamma$ of size less than $\kappa$ containing $\mathbb{P}$, $\dot{a}$, and all elements of $X$ such that $Z\cap H_\kappa$ is transitive and a filter $F\subseteq \pi_Z(\mathbb{P})$ such that $\phi(\pi_Z(\dot{a})^F)$ holds.
\end{definition}

In fact, we can go even further and dispense with filters altogether, yielding the following elegant principle:

\begin{definition}
    The Miyamoto-Asper\'o (M-A) formulation of $\Sigma_n\mhyphen CFA_{<\kappa}(\Gamma)$ is the assertion that for all provably $\Gamma$-persistent $\Sigma_n$ formulas $\phi$, all cardinals $\lambda\geq \kappa$, and all $b\in H_\lambda$ such that $\Vdash_\mathbb{P} \phi(\check{b})$ for some $\mathbb{P}\in \Gamma$, there are stationarily many $Z\prec H_\lambda$ of size less than $\kappa$ containing $b$ such that $\phi(\pi_Z(b))$ holds.
\end{definition}

Miyamoto's Theorem 2.5(2) in \cite{miyamotosegments} is essentially a bounded version of this principle in the special case where $n=1$, $\kappa=\omega_2$, and $\Gamma$ is the class of proper forcing. David Asper\'o, possibly inspired by Miyamoto, stated the bounded version in generality (\cite{asperomax}, Definitions 1.3 and 1.5).

Finally, we have a generic elementary embedding characterization, reminiscent of Lemma \ref{lemma:scfCnalt}. The following definition is helpful in stating it and related results:

\begin{definition}
\label{def:witnessfa}
    If $\mathbb{Q}$ is a forcing poset, $\phi$ is a formula in the language of set theory, $\dot{a}$ is a $\mathbb{Q}$-name, and $\kappa<\gamma$ are regular cardinals, we say that a generic elementary embedding $j:V\rightarrow M$ (where $M$ is a possibly ill-founded class model with transitive well-founded part) witnesses the $<\hspace{-2pt}\kappa$-forcing axiom for $(\mathbb{Q}, \phi, \dot{a}, \gamma)$ iff:
    \begin{enumerate}[(a)]
        \item $H_\gamma^V$ is in the wellfounded part of $M$
        \item $|H_\gamma^V|^M<j(\kappa)$
        \item $j\upharpoonright H_\gamma^V\in M$
        \item $crit(j)=\kappa$
        \item $M$ contains a $V$-generic filter $H\subseteq\mathbb{Q}$ such that $M\models \phi(\dot{a}^H)$
    \end{enumerate}
\end{definition}

\begin{definition}
\label{def:wcform}
    The Woodin-Cox (W-C) formulation of $\Sigma_n\mhyphen CFA_{<\kappa}(\Gamma)$ is the assertion that for every provably $\Gamma$-persistent $\Sigma_n$ formula $\phi$, $\mathbb{Q}\in \Gamma$, $\mathbb{Q}$-name $\dot{a}$ such that $\Vdash_\mathbb{Q} \phi(\dot{a})$, and regular $\gamma>|\mathcal{P}(\mathbb{Q})\cup trcl(\dot{a})\cup\kappa|$, there is a generic elementary embedding $j:V\rightarrow M$ which witnesses the $<\hspace{-2pt}\kappa$-forcing axiom for $(\mathbb{Q}, \phi, \dot{a}, \gamma)$.
\end{definition}

Woodin showed the equivalence for classical forcing axioms with the existence of suitable generic embeddings under the additional assumption of a proper class of Woodin cardinals, in which case the codomain $M$ can be taken to be wellfounded, with the generic elementary embedding produced by stationary tower forcing (\cite{Woodinbook}, Theorem 2.53). Sean Cox generalized Woodin's result by proving that even without the Woodin cardinals we can still get an equivalence with embeddings into illfounded models, and that this also holds for $FA^{+\nu}$ for any $\nu\leq \omega_1$ (\cite{coxfa}, Theorem 43). (Note that since there is a $\Sigma_2$ characterization of Woodin cardinals and there are unboundedly many below any supercompact $\kappa$, "there is a proper class of Woodin cardinals" is a $\Pi_3$ sentence which holds in $V_\kappa$. Thus if $\kappa$ is in fact supercompact for $C^{(2)}$ (or merely supercompact and $\Sigma_3$-correct), then there must be a proper class of Woodins in $V$ and any set forcing extension of $V$, so in particular for $n\geq 3$ our consistency proof of $\Sigma_n\mhyphen CFA_{<\kappa}(\Gamma)$ in fact produces a model of the hypotheses of Woodin's Theorem 2.53.)

\begin{theorem}
\label{thm:equivforms}
    The following are equivalent for all positive integers $n$, regular cardinals $\kappa>\omega_1$, and forcing classes $\Gamma$:
    \begin{enumerate}
        \item The official (Jensen-style) formulation of $\Sigma_n\mhyphen CFA_{<\kappa}(\Gamma)$ given in Section \ref{section:sandcon}
        \item The Miyamoto-Asper\'o formulation of $\Sigma_n\mhyphen CFA_{<\kappa}(\Gamma)$
        \item The Woodin-Cox formulation of $\Sigma_n\mhyphen CFA_{<\kappa}(\Gamma)$
        \item The Schlicht-Turner formulation of $\Sigma_n\mhyphen CFA_{<\kappa}(\Gamma)$
    \end{enumerate}
\end{theorem}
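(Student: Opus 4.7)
The plan is to establish the cyclic implications $(4) \Rightarrow (1) \Rightarrow (2) \Rightarrow (4)$ among the combinatorial formulations, and then prove $(1) \Leftrightarrow (3)$ separately via the generic embedding framework of Cox. The direction $(4) \Rightarrow (1)$ is essentially definitional: any S-T witness $Z$ with $Z \cap H_\kappa$ transitive yields a Jensen witness by taking $N$ to be the Mostowski collapse of $Z$ and $\sigma := \pi_Z^{-1}$. For $(1) \Rightarrow (2)$, given $\phi$, $\lambda$, $b \in H_\lambda$ with $\Vdash_\mathbb{P} \phi(\check{b})$ and a stationarity-witness function $f: [H_\lambda]^{<\omega} \to H_\lambda$, I apply (1) with $\dot{a} := \check{b}$, a regular $\gamma$ containing $H_\lambda$ and $f$, and $X$ including $b$, $f$, $H_\lambda$ together with Skolem-style closure functions that leverage Jensen's $\text{rng}(\sigma) \cap \kappa$-transitivity (forcing every $x \in Z \cap H_\kappa$ to have $|x| < \alpha := Z \cap \kappa$ and then letting iterated enumeration functions pull all of $x$ into $Z$) to strengthen this to $Z \cap H_\kappa$ transitive. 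The crucial observation is that $\sigma^{-1}(\check{b})^F = \pi_Z(b)$ is independent of the filter $F$, so $Z' := \text{rng}(\sigma) \cap H_\lambda$ is the M-A witness directly. For $(2) \Rightarrow (4)$, I reflect the composite $\Sigma_n$ formula $\psi(\langle \mathbb{Q}, \dot{x}\rangle) \equiv \exists F (F \text{ is a filter on } \mathbb{Q} \wedge \phi(\dot{x}^F))$, which is provably $\Gamma$-persistent (any witnessing filter in $V$ remains a filter in $\Gamma$-extensions and $\phi(\dot{x}^F)$ persists by provable persistence of $\phi$) and is verified by the generic $G$, so $\Vdash_\mathbb{P} \psi(\check{b})$ for $b := \langle \mathbb{P}, \dot{a}\rangle$; restricting an M-A witness $Z$ for $\psi$ at sufficiently large $\lambda$ to $Z \cap H_\gamma$ produces the S-T witness, with the desired filter extracted from $\psi(\pi_Z(b))$.

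For $(3) \Rightarrow (1)$, given $j : V \to M$ witnessing the $\lk$-forcing axiom for $(\mathbb{P}, \phi, \dot{a}, \gamma)$ with $V$-generic $H \subseteq \mathbb{P}$ in $M$ satisfying $M \models \phi(\dot{a}^H)$, the structure $N := H_\gamma^V$ with $\bar{\sigma} := j \upharpoonright H_\gamma^V$ is an $M$-internal Jensen witness for the reflected parameters $(j(\mathbb{P}), j(\dot{a}), j(X), j(\kappa), j(\gamma))$: it is transitive and of $M$-cardinality less than $j(\kappa)$ by Definition \ref{def:witnessfa}(a,b), is an element of $M$ with the embedding $\bar{\sigma}$ by (c), and satisfies $j(\mathbb{P}), j(\dot{a}), j(X) \subseteq \text{rng}(\bar{\sigma})$ (since $|X| < \kappa = \text{crit}(j)$ makes $j(X) = j[X]$) with $\text{rng}(\bar{\sigma}) \cap j(\kappa) = \kappa$ transitive; the filter $H$ is $H_\gamma^V$-generic because any antichain of $\mathbb{P}$ in $H_\gamma^V$ is an antichain in $V$, and $\phi(\bar{\sigma}^{-1}(j(\dot{a}))^H) = \phi(\dot{a}^H)$ holds by assumption. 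Elementarity of $j$ then reflects this Jensen existential statement back to an instance of (1) in $V$.

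The main obstacle is $(1) \Rightarrow (3)$, where the Jensen witnesses must be amalgamated into a single generic embedding. Following Cox's adaptation of Woodin's construction, I will form a Boolean algebra $\mathbb{B}$ whose generic ultrafilters encode Jensen witnesses --- conditions being essentially coherent partial approximations to the $(N, \sigma, F)$ data --- and use the Boolean-valued quotient $V^\mathbb{B}/U$ of Section \ref{section:boolean} to produce the generic embedding $j : V \to V^\mathbb{B}/U$ sending $x \mapsto [\check{x}]_U$. Clauses (a), (b), (d) of Definition \ref{def:witnessfa} will follow from standard ultrapower properties and the design of $\mathbb{B}$, while clause (e) follows from interpreting the canonical $\mathbb{P}$-generic name $\dot{G}$ inside $V^\mathbb{B}/U$, with $\phi(\dot{a}^H)$ preserved by Lemma \ref{lemma:booleanlos}(c) applied to $\llbracket \phi(\dot{a}^{\dot{G}})\rrbracket = 1_\mathbb{B}$. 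The chief difficulty is clause (c), that $j \upharpoonright H_\gamma^V \in M$: this requires the entire restriction to be captured as a single $\mathbb{B}$-name in the well-founded part of $V^\mathbb{B}/U$, which is arranged by exploiting the size bound on Jensen witnesses ($|N| < \kappa$) to calibrate the ranks of the required $\mathbb{B}$-names.
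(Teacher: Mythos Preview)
Your implication $(4)\Rightarrow(1)$ has a genuine gap: it is \emph{not} definitional. The Schlicht--Turner formulation only asks for \emph{some} filter $F\subseteq\pi_Z(\mathbb{P})$ with $\phi(\pi_Z(\dot a)^F)$, whereas the Jensen formulation requires $F$ to be $N$-generic. Simply collapsing an S-T witness $Z$ gives you a transitive $N$ and an embedding $\sigma$, but there is no reason the S-T filter should meet every dense set of $\pi_Z(\mathbb{P})$ lying in $N$. The paper handles this by a trick you have omitted: apply the S-T formulation not to $\phi$ directly but to the auxiliary formula $\chi(x,y,H)$ asserting ``$\phi(x)$ holds and $H$ is a $y$-generic filter,'' with $y$ standing in for $H_\gamma$ and $H$ for the canonical generic name $\dot G$, working at a larger cardinal $\delta$ so that $H_\gamma$ itself becomes a parameter. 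The reflected filter is then forced to be generic over the reflected $H_\gamma$, which is exactly the $N$ you want.

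Your $(1)\Rightarrow(3)$ is also too vague to count as a proof. Saying that conditions are ``coherent partial approximations to the $(N,\sigma,F)$ data'' does not specify a poset, and your justification of clause~(c) (that $j\upharpoonright H_\gamma^V\in M$) is a restatement of what must be shown rather than an argument. The paper avoids these difficulties by proving $(2)\Rightarrow(3)$ instead, via a completely concrete forcing: take the stationary set $R$ of M-A witnesses $Z\prec H_\gamma$, force with $\mathcal{P}(R)$ modulo the nonstationary ideal, and let $j$ be the generic ultrapower embedding. Then $[id_R]_U=j``H_\gamma^V$, the function $Z\mapsto\pi_Z^{-1}$ represents $j\upharpoonright H_\gamma^V$ (giving clause~(c) for free), and the pointwise filters $G_Z$ assemble into the $V$-generic $H$ required for clause~(e). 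This is the standard Foreman--Woodin--Cox machinery, and it is what ``following Cox'' should actually mean here. Your cycle $(1)\Rightarrow(2)\Rightarrow(4)$ is essentially fine and tracks the paper's reasoning; the problems lie in the two steps above.
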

\begin{proof}
    $(1\Rightarrow 2):$ Fix $\phi$, $\lambda$, $b$, and $\mathbb{P}$ as in the M-A formulation. We plan to apply the Jensen formulation to the formula $\phi$ and the name $\check{b}$; the only difficulty is ensuring that the set of possible ranges of the embedding $\sigma$ is stationary in $[H_\lambda]^{<\kappa}$. For this, let $h:[H_\lambda]^{<\omega}\rightarrow H_\lambda$; we will produce a $Z\prec H_\lambda$ of size less than $\kappa$, containing $b$, and closed under $h$ such that $Z\cap H_\kappa$ is transitive and $\phi(\pi_Z(b))$ holds.
    
    Let $\gamma$ be a regular cardinal large enough that $H_\lambda\in H_\gamma$. Let $X:=\{H_\lambda, h, \mathbb{P}, b\}$; then if $N$ is transitive and smaller than $\kappa$ and $\sigma:N\rightarrow H_\gamma$ is elementary with $X\subset rng(\sigma)$, $rng(\sigma)\cap H_\kappa$ transitive, and $\phi(\sigma^{-1}(b))$, define $Z:=rng(\sigma)\cap H_\lambda$. That $Z$ is smaller than $\kappa$ and transitive below $\kappa$ is immediate. That $\phi(\pi_Z(b))$ holds follows from the fact that $\pi_Z$ is a restriction of $\sigma^{-1}$, while $Z\prec H_\lambda$ is a consequence of Lemma \ref{lemma:restrictembed}. Finally, if $\bar{Z}$ and $g$ are such that $\sigma(\bar{Z})=H_\lambda$ and $\sigma(g)=h$, then by elementarity $\bar{Z}$ is closed under $g$. Since every finite subset of $Z$ is of the form $\{\sigma(x_1),\dotsc \sigma(x_k)\}$ for some $x_1,\dotsc, x_k\in \bar{Z}$ and $g(\{x_1,\dotsc, x_k\})\in \bar{Z}$, by elementarity $h(\{\sigma(x_1),\dotsc, \sigma(x_k)\})=\sigma(g(\{x_1,\dotsc, x_k\}))\in Z$. Thus $Z$ has all the desired properties, so the M-A formulation holds.

    $(2\Rightarrow 3):$ We closely follow Cox's proof of Theorem 43 (\cite{coxfa}). Let $\phi$, $\mathbb{Q}$, $\dot{a}$, and $\gamma$ be as in the statement of the W-C formulation. First, observe that for any filter $F$, $\dot{a}^F$ is $\Delta_1$-definable from $\dot{a}$ and $F$. It follows that, if $\mathcal{D}\in H_\gamma$ is the set of all dense subsets of $\mathbb{Q}$ in $V$, the statement $\psi(\dot{a},\mathbb{Q}, \mathcal{D})$:="there exists a $\mathcal{D}$-generic filter $F\subseteq \mathbb{Q}$ such that $\phi(\dot{a}^F)$ holds" is a $\Sigma_n$ formula which is forced by $\mathbb{Q}$; furthermore, since $\phi$ is provably $\Gamma$-persistent and forcing does not destroy $F$ or change the interpretation of names, $\psi$ is provably $\Gamma$-persistent as well.

    Then if $R$ is the set of all $Z\prec H_\gamma$ of size less than $\kappa$ such that $\mathbb{Q}, \dot{a}\in Z$, $Z\cap H_\kappa$ transitive, and $\psi(\pi_Z(\dot{a}), \pi_Z(\mathbb{Q}), \pi_Z(\mathcal{D}))$ holds, by the M-A formulation $R$ is stationary in $[H_\gamma]^{<\kappa}$. Let $\mathbb{B}$ be the power set of $R$ modulo the restriction of the nonstationary ideal and let $U\subset\mathbb{B}$ be a $V$-generic ultrafilter.

    Taking $j: V\rightarrow M$ to be the generic ultrapower embedding derived from $U$, that $j$ witnesses the $\lk$-forcing axiom for $(\mathbb{Q}, \phi, \dot{a}, \gamma)$ follows easily from the basic theory of generic ultrapowers, but the arguments involved will be briefly indicated for the benefit of readers unfamiliar with that theory. $j"H_\gamma^V\subseteq [id_R]_U$ because any $x\in H_\gamma^V$ is in all but nonstationarily many $Z\in R$, so by \L os's Theorem $j(x)=[Z\mapsto x]_U\in [id_R]_U$ for all such $x$; furthermore the reverse inclusion follows from the normality of $U$ (which in turn follows from the normality of the club filter and the genericity of $U$), so $[id_R]_U=j"H_\gamma^V$. Similarly, the function $Z\mapsto \pi_Z"Z$ on $R$ is the coordinatewise transitive collapse of $id_R$, so $[Z\mapsto \pi_Z"Z]_U\cong H_\gamma^V$, the coordinatewise transitive collapse of $j"H_\gamma^V$; since $H_\gamma^V$ is a transitive set from a well-founded model, this transitive isomorphic copy must lie in the well-founded part of $M$, and hence if we take that well-founded part to be transitive the isomorphic copy will be equal to $H_\gamma^V$, and so $H_\gamma^V\in wfp(M)$.
    
    Furthermore, since $Z\mapsto \pi_Z"Z$ is coordinatewise smaller than $\kappa$, in $M$ $H_\gamma^V$ is smaller than $j(\kappa)=[Z\mapsto \kappa]_U$. For condition (c) in the definition of witnessing a forcing axiom, observe that coordinatewise $Z\mapsto \pi_Z^{-1}$ is the inverse of the Mostowski isomorphism of $id_R$, so $[Z\mapsto \pi_Z^{-1}]_U$ is the inverse Mostowski isomorphism of $j"H_\gamma^V$, i.e. $j\upharpoonright H_\gamma^V$. For (d), $Z\mapsto Z\cap\kappa$ is a function on $R$ everywhere less than $\kappa$ but less than any particular $\alpha<\kappa$ only nonstationarily often, so $j$ must be discontinuous at $\kappa$; it fixes each ordinal less than $\kappa$ because the $\kappa$-additivity of the nonstationary ideal and the genericity of $U$ imply that $U$ must be $\kappa$-complete over $V$.

    Setting $\pi:=(j\upharpoonright H_\theta^V)^{-1}=[Z\mapsto \pi_Z]_U$, further invocations of \L os show that $[Z\mapsto \pi_Z(\mathbb{Q})]_U=\pi(j(\mathbb{Q}))=\mathbb{Q}$, $[Z\mapsto \pi_Z(\dot{a})]_U=\pi(j(\dot{a}))=\dot{a}$, $[Z\mapsto \pi_Z(\mathcal{D})]_U=\mathcal{D}$, and $[Z\mapsto G_Z]_U$ (where $G_Z$ is the filter whose existence is asserted by $\psi(\pi_Z(\dot{a}), \pi_Z(\mathbb{Q}), \pi_Z(\mathcal{D}))$) is a $\mathcal{D}$-generic (hence $V$-generic) filter on $\mathbb{Q}$ which interprets $\dot{a}$ correctly. Thus $j$ witnesses the $\lk$-forcing axiom for $(\mathbb{Q}, \phi, \dot{a}, \gamma)$.
    
    $(3\Rightarrow 4):$ Given $\phi$, $\mathbb{P}$, $\dot{a}$, $\gamma$, and $X$ as in the S-T formulation, let $j: V\rightarrow M$ be a generic elementary embedding which witnesses the $\lk$-forcing axiom for $(\mathbb{P}, \phi, \dot{a}, \gamma)$. (If $\gamma$ is not large enough to meet the requirements of the W-C formulation, we can replace it with a larger $\gamma$ and then easily draw all the desired conclusions about our original $\gamma$.) Since $|X|<\kappa=crit(j)$, $j(X)=j"X$, so $Z':= j"H_\gamma^V$ is an elementary substructure of $H_{j(\gamma)}^M$ containing $j(\mathbb{P})$, $j(\dot{a})$, and all elements of $j(X)$. As elements of $H_\kappa$ are not moved by $j$ and sets outside of $H_\kappa$ are mapped to sets outside of $H_{j(\kappa)}^M$, $Z'\cap H_{j(\kappa)}^M$ is transitive. Finally, since $\pi_{Z'}(j(\mathbb{P}))=\mathbb{P}$ and $\pi_{Z'}(j(\dot{a}))=\dot{a}$, there is a filter $H\subseteq \pi_{Z'}(j(\mathbb{P}))$ in $M$ such that $\phi(\pi_{Z'}(j(\dot{a}))^H)$ holds. Pulling all of this back to $V$, there is a $Z\prec H_\gamma^V$ and a filter $F\subseteq\pi_Z(\mathbb{P})$ witnessing the truth of the S-T formulation of the axiom.

    $(4\Rightarrow 1):$ Given $\mathbb{P}$, $\dot{a}$, $\gamma$, $X$, and $\phi$ as in the Jensen formulation, let $\delta$ be large enough that $H_\gamma\in H_\delta$ and let $\chi(x, y, H)$ denote the assertion that $\phi(x)$ holds and $H$ is a $y$-generic filter. Then if $\dot{G}$ is the canonical $\mathbb{P}$-name for the generic filter, $\Vdash_{\mathbb{P}}\chi(\dot{a}, \check{H}_\gamma, \dot{G})$ and $\chi$ is a provably $\Gamma$-persistent $\Sigma_n$ formula (as asserting that a filter is generic over a transitive structure only requires quantifying over the filter and the structure).
    
    It follows from the S-T formulation that there is a $Z\prec H_\delta$ of size less than $\kappa$ containing $\mathbb{P}$, $\dot{a}$, $H_\gamma$, $\dot{G}$, and all elements of $X$ such that $Z\cap H_\kappa$ is transitive and a filter $F\subset \pi_Z(\mathbb{P})$ such that $\chi(\pi_Z(\dot{a})^F, \pi_Z(H_\gamma), \pi_Z(\dot{G})^F)$ holds. Since by elementarity $\pi_Z(\dot{G})$ is the canonical $\pi_Z(\mathbb{P})$-name for the generic filter, $\pi_Z(\dot{G})^F=F$, so setting $N:=\pi_Z(H_\gamma)$ and $\sigma:=\pi_Z^{-1}\upharpoonright N$, $F$ is $N$-generic and $\phi(\sigma^{-1}(\dot{a})^F)$ holds. By Lemma \ref{lemma:restrictembed} $\sigma$ is an elementary embedding, and all other desired properties of $N$ and $\sigma$ are immediate from the choice of $Z$, so the Jensen formulation holds.
\end{proof}

\section[\texorpdfstring{Do $\Sigma_n$-correct Forcing Axioms Form a Strict Hierarchy in $n$?}{Do Sigma\_n-correct Forcing Axioms Form a Strict Hierarchy in n?}]{Do $\Sigma_n$-correct Forcing Axioms Form a Strict Hierarchy in $n$?}
\label{section:hierarchy}

It is natural to ask whether increasing the value of $n$ in the $\Sigma_n$-correct forcing axioms produces a strictly stronger axiom, or if $\Sigma_n\mhyphen CFA_{<\kappa}$ can ever imply $\Sigma_{n+1}\mhyphen CFA_{<\kappa}$. In the $\Sigma_1$ vs $\Sigma_2$ case, this is difficult to answer in general, but for most specific forcing classes of interest, results in later chapters will imply that we do get a separation. At $n=2$, we can prove that moving one more level up produces strictly stronger axioms for a wide range of forcing classes, including for example all classes which can add arbitrarily many reals or collapse arbitrarily large cardinals.

\begin{prop}
    \label{prop:S2nimpS3}
    Let $\Gamma$ be an $2$-nice forcing class and $\Delta$ a forcing class which can destroy arbitrarily many inaccessibles, i.e. for any set $X\subset Ord$, there is a poset in $\Delta$ which forces "$\check{X}$ does not contain any inaccessible cardinals". Then if it is consistent that there is a supercompact cardinal with an inaccessible above it, $\Sigma_2\mhyphen CFA_{<\kappa}(\Gamma)$ does not imply $\Sigma_3\mhyphen MP_\Delta(\emptyset)$. In particular, if $\Gamma$ is a 2-nice forcing class which can destroy arbitrarily many inaccessibles, then $\Sigma_2\mhyphen CFA_{<\kappa}(\Gamma)$ does not imply $\Sigma_3\mhyphen CFA_{<\kappa}(\Gamma)$.
\end{prop}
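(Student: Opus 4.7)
The plan is to construct a model of $\Sigma_2\mhyphen CFA_{<\kappa}(\Gamma)$ in which the class of inaccessible cardinals is a set, then exploit the hypothesis on $\Delta$ to produce a provably persistent $\Sigma_3$ sentence which is $\Delta$-forceable yet false, thereby falsifying $\Sigma_3\mhyphen MP_\Delta(\emptyset)$ (read as the maximality principle for parameter-free $\Sigma_3$ formulas).

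First, I would start with a model $V^*$ containing a supercompact cardinal $\kappa$ and an inaccessible $\lambda>\kappa$. Picking any inaccessible $\mu$ of $V^*$ with $\mu>\lambda$, work inside the truncation $V:=V_\mu^{V^*}$. Since $\mu$ is inaccessible it is regular and a beth fixed point, so $\mu\in C^{(1)}$ by Proposition \ref{prop:c1bfp}. By Lemma \ref{lemma:scfC1} any supercompact cardinal is supercompact for $C^{(1)}$, so Lemma \ref{lemma:scfCntrunc} gives $V\models\text{``}\kappa\text{ is supercompact for }C^{(1)}\text{''}$. Crucially, every inaccessible of $V$ lies below $\mu$, so in $V$ the class of inaccessibles is a set.

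Second, I would apply Theorem \ref{thm:gencon} inside $V$ with $n=2$, obtaining a $\kappa$-cc poset $\mathbb{P}_\kappa\in\Gamma$ of size $\kappa$ such that $V[G]\models\Sigma_2\mhyphen CFA_{<\kappa}(\Gamma)$ for $G$ generic. Since $|\mathbb{P}_\kappa|=\kappa<\lambda$, $\lambda$ remains inaccessible in $V[G]$; since set forcing cannot create inaccessibles, the inaccessibles of $V[G]$ are contained in those of $V$, so they form a set $X\in V[G]$.

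Third, let $\phi$ be the sentence ``no cardinal is inaccessible''. Using the $\Delta_2$ definability of inaccessibility (Appendix A), $\phi$ is $\Pi_2$ and hence $\Sigma_3$. It is provably $\Delta$-persistent because ZFC proves that set forcing cannot create new inaccessibles. By the hypothesis on $\Delta$, there is $\mathbb{Q}\in\Delta$ with $\Vdash_\mathbb{Q}\text{``}\check X$ contains no inaccessibles''; since $X$ already includes every inaccessible of $V[G]$ and forcing cannot add new ones, $\mathbb{Q}$ in fact forces $\phi$. Yet $\phi$ fails in $V[G]$ because $\lambda$ is still inaccessible. Thus $\Sigma_3\mhyphen MP_\Delta(\emptyset)$ fails in $V[G]$ while $\Sigma_2\mhyphen CFA_{<\kappa}(\Gamma)$ holds. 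The ``in particular'' clause follows by taking $\Delta=\Gamma$ and chaining the implications $\Sigma_3\mhyphen CFA_{<\kappa}(\Gamma)\Rightarrow\Sigma_3\mhyphen MP_\Gamma(H_\kappa)\Rightarrow\Sigma_3\mhyphen MP_\Gamma(\emptyset)$.

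The proof presents no deep obstacle; the main points requiring care are the bookkeeping verification that $V_\mu^{V^*}$ still sees $\kappa$ as supercompact for $C^{(1)}$ (so that Theorem \ref{thm:gencon} applies at the $n=2$ level), and the complexity check that ``no inaccessibles exist'' really lies in $\Sigma_3$ — which is exactly where the $\Delta_2$ (rather than merely $\Pi_2$) expressibility of inaccessibility, needed to absorb the outer universal quantifier into a $\Pi_2$ shape, is used.
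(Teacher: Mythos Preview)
Your argument follows the same line as the paper's proof: truncate so that the inaccessibles form a set while $\kappa$ stays supercompact, force with the Baumgartner iteration to get $\Sigma_2\mhyphen CFA_{<\kappa}(\Gamma)$, and then observe that ``there are no inaccessible cardinals'' is a $\Pi_2$ sentence which is $\Delta$-forceable, provably persistent (since forcing never creates inaccessibles), yet false in $V[G]$ because $\lambda$ survives.

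One small gap: you assume you can pick an inaccessible $\mu>\lambda$ in $V^*$, but the hypothesis only provides \emph{one} inaccessible above $\kappa$, so such a $\mu$ need not exist. The fix is the obvious case split (if no such $\mu$ exists, the inaccessibles of $V^*$ are already bounded by $\lambda$ and no truncation is needed); the paper phrases this as ``by truncating the universe if necessary''. Also, your complexity remark is slightly weaker than needed: inaccessibility is in fact $\Pi_1$ (regular $\land$ $C^{(1)}$, both $\Pi_1$ by Corollary~\ref{cor:Cndef} and Corollary~\ref{cor:cnlargecard}), so ``no inaccessibles'' is $\Pi_2$ immediately, without appealing to a $\Delta_2$ characterization.
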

\begin{proof}
    By truncating the universe if necessary, we can assume that there is a model $V$ with a supercompact $\kappa$ with exactly one inaccessible $\lambda$ above it. Applying Corollary \ref{cor:stdfamodel}, there is a forcing extension $V[G]\models \Sigma_2\mhyphen CFA_{<\kappa}(\Gamma)$. Since the forcing in question has cardinality $\kappa<\lambda$, $\lambda$ will remain inaccessible in $V[G]$, and since forcing does not add inaccessibles it will be the largest inaccessible in $V[G]$\footnote{In the main case of interest where $\Gamma$ can destroy inaccessibles, this fact will reflect to $V_\kappa$, the inaccessible-destroying posets will be included in the Baumgartner iteration, and so $\lambda$ will in fact be the only inaccessible in $V[G]$, but that is not relevant to the proof.}.

    We now show that $\Sigma_3\mhyphen MP_\Delta(\emptyset)$ fails in $V[G]$ by the argument of Hamkins in Theorem 3.9 of \cite{hamkinsMP}. Since inaccessibility is a $\Pi_1$ property, the assertion "all ordinals are not inaccessible cardinals" is a $\Pi_2$ sentence. If there are only set many inaccessibles, there is some forcing in $\Delta$ which destroys them all (without adding new ones, of course) and so makes this statement true. Since the statement is preserved by all further forcing, it holds in any model of the $\Sigma_3$ maximality principle for $\Delta$ without a proper class of inaccessibles. As $V[G]$ has a nonempty set of inaccessibles, it cannot satisfy $\Sigma_3\mhyphen MP_\Delta(\emptyset)$.
\end{proof}

It is tempting to try to generalize the above argument by replacing "inaccessible" with "regular $C^{(n-1)}$". However, the proof made essential use of the fact that forcing cannot add inaccessible cardinals, which need not hold for $\Sigma_{n-1}$-correctness when $n>2$. We could define $\diamondsuit C^{(n)}$ to be those cardinals which are $\Sigma_n$-correct in some forcing extension, and then $\Sigma_{n+2}\mhyphen MP_\Delta(\emptyset)$ will imply that the intersection of $\diamondsuit C^{(n)}$ with the class of regular cardinals is empty or a proper class, but then it is no longer clear how to arrange that there is a model of $\Sigma_{n+1}\mhyphen CFA_{<\kappa}(\Gamma)$ with $\diamondsuit C^{(n)}\cap Reg$ a nonempty set, since truncations at forceably $\Sigma_n$-correct cardinals are not necessarily well-behaved.

We can, however, at least show that $\Sigma_n$-correct forcing axioms become stronger when increasing $n$ by at least two with a more elaborate geological argument.

\begin{lemma}
    \label{lemma:n+2reflCn+1}
    For any forcing class $\Gamma$ which contains the trivial forcing, $\Sigma_{n+2}\mhyphen CFA_{<\kappa}(\Gamma)$ implies that there are unboundedly many ordinals less than $\kappa$ which are $\Sigma_{n+1}$-correct in some ground.
\end{lemma}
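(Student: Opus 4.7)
My plan is to identify a provably $\Gamma$-persistent $\Sigma_{n+2}$ formula expressing ``$\alpha$ is $\Sigma_{n+1}$-correct in some ground,'' and then to apply $\Sigma_{n+2}\mhyphen CFA_{<\kappa}(\Gamma)$ with the trivial forcing (which lies in $\Gamma$ by hypothesis) and the check-name of a genuinely $\Sigma_{n+1}$-correct cardinal of $V$ above any prescribed $\beta<\kappa$, reflecting it down to some $\alpha\in(\beta,\kappa)$ for which the formula holds. For $n=0$ the claim is immediate: $C^{(1)}$ consists of beth fixed points, which are downward absolute to grounds (since $(2^\lambda)^W\le(2^\lambda)^V$ for any ground $W$) and unbounded below every regular uncountable cardinal, so I will henceforth assume $n\ge 1$.

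The target formula I propose is
\[
\psi(x)\;:=\;\exists r\,\bigl[r\text{ successfully defines a ground }W_r\;\wedge\;W_r\models x\in C^{(n+1)}\bigr].
\]
For its complexity, Lemma \ref{lemma:groundsuccess} gives that ground-success is $\Sigma_3$, Corollary \ref{cor:Cndef} gives ``$x\in C^{(n+1)}$'' as $\Pi_{n+1}$, and Lemma \ref{lemma:groundrel} transfers the latter to ``$W_r\models x\in C^{(n+1)}$'' as a $\Pi_{n+1}$ statement in $V$ (using $n+1\ge 2$). The conjunction is then at worst $\Sigma_{\max(3,n+2)}=\Sigma_{n+2}$, and absorbing the inner existential into the outer $\exists r$ keeps $\psi$ at $\Sigma_{n+2}$. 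Provable persistence under all set forcing is essentially automatic: if $W_r$ is a ground of $V$ and $V[G]$ is any set-forcing extension, then $W_r$ is a ground of $V[G]$ via the two-step iteration combining $W_r\subseteq V$ with $V\subseteq V[G]$, and the satisfaction $W_r\models\alpha\in C^{(n+1)}$ depends only on $W_r$, so the same parameter $r$ witnesses $\psi(\alpha)$ in $V[G]$.

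With $\psi$ in hand, given $\beta<\kappa$ I will choose some $\gamma\in C^{(n+1)}$ with $\gamma>\beta$, which exists because $C^{(n+1)}$ is club (Corollary \ref{cor:Cndef}); then $V$ itself trivially witnesses $\psi(\gamma)$, and the trivial forcing forces $\psi(\check\gamma)\wedge\check\gamma>\check\beta$. Applying the Jensen formulation of $\Sigma_{n+2}\mhyphen CFA_{<\kappa}(\Gamma)$ with this forcing, name $\check\gamma$, parameter $\beta$, empty $X$, any sufficiently large regular $\theta$, and the formula $\phi(x,y):=\psi(x)\wedge x>y$ will produce a transitive $N$ of size less than $\kappa$, an elementary embedding $\sigma:N\to H_\theta$ with $\{\check\gamma,\beta\}\subseteq\mathrm{rng}(\sigma)$ and $\mathrm{rng}(\sigma)\cap\kappa$ transitive, and an $N$-generic filter $F$ realizing $\phi(\sigma^{-1}(\check\gamma)^F,\sigma^{-1}(\beta))$. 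Transitivity of $\mathrm{rng}(\sigma)\cap\kappa$ together with $\beta\in\mathrm{rng}(\sigma)\cap\kappa$ pins $\sigma\upharpoonright(\beta+1)$ to the identity, so $\sigma^{-1}(\beta)=\beta$, and $\alpha:=\sigma^{-1}(\check\gamma)^F$ is the unique ordinal in $N$ with $\sigma(\alpha)=\gamma$, hence lies in $(\beta,\kappa)$. Since $\psi(\alpha)$ holds in $V$, some ground of $V$ sees $\alpha$ as $\Sigma_{n+1}$-correct; varying $\beta$ then yields the required unboundedness.

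The only delicate step is the complexity count for $\psi$, which is what forces the hypothesis to step up by two (from $\Sigma_{n+1}$ to $\Sigma_{n+2}$): the $\Sigma_3$ cost of the ground-success clause eats into the complexity budget when $n=1$, and more generally the existential quantification over grounds together with the $\Pi_{n+1}$ inner assertion cannot be compressed below $\Sigma_{n+2}$. Everything else reduces to a clean application of the Jensen formulation of the axiom to a check-name for a genuine $\Sigma_{n+1}$-correct cardinal of $V$.
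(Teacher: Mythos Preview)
Your main argument is correct and matches the paper's: both identify the $\Sigma_{n+2}$ formula ``$\exists r\,\bigl(\theta\in (C^{(n+1)})^{W_r}\bigr)$'', note it holds of any genuine $\theta\in C^{(n+1)}$ and is provably persistent under all forcing, and reflect; you use the Jensen formulation with an explicit lower-bound parameter $b=\beta$, whereas the paper uses the Miyamoto--Asper\'o formulation and instead forces $\pi_Z(\theta)>\beta$ by putting enough ordinals below $\kappa$ into $Z$, but these are interchangeable via Theorem~\ref{thm:equivforms}. One caveat: your $n=0$ aside is both unnecessary (the paper's convention has $n\ge1$, and your main argument already covers that range since Lemma~\ref{lemma:groundrel} applies once $n+1\ge2$) and false as written---beth fixed points are certainly not unbounded below every regular uncountable cardinal, the least one already exceeding $\beth_\omega$.
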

\begin{proof}
    Fix any $\theta\in C^{(n+1)}$. Combining Corollary \ref{cor:Cndef} and Lemma \ref{lemma:groundrel}, the formula $\theta\in (C^{(n+1)})^{W_r}$ is $\Pi_{n+1}$ in $\theta$ and $r$, so $\exists r\hspace{3pt} \theta\in (C^{(n+1)})^{W_r}$ is $\Sigma_{n+2}$. Since it is forceable by the trivial forcing and can easily be seen to be provably preserved by all forcing, applying the Miyamoto-Asper\'o form of the axiom, there are stationarily many $Z\prec H_{\theta^+}$ of size less than $\kappa$ such that $\pi_Z(\theta)$ is $\Sigma_{n+1}$-correct in some ground. By considering $Z$ which contain sufficiently many ordinals below $\kappa$, we can force $\pi_Z(\theta)$ to be arbitrarily large below $\kappa$.
\end{proof}

We now need to construct a model of $\Sigma_n\mhyphen CFA_{<\kappa}(\Gamma)$ with no $\theta<\kappa$ which is $\Sigma_{n+1}$-correct in any ground. First, we need the following generalization of the downward direction of the Levy-Solovay Theorem:

\begin{theorem}
\label{thm:downlevysolovay}
    (special case of Hamkins \cite{Hamkinsgap}, Gap Forcing Theorem) If $\delta$ is a regular cardinal, $\mathbb{P}$ is a forcing poset with $\mathbb{P}\in H_\delta$, $G\subseteq\mathbb{P}$ is a $V$-generic filter, and $j:V[G]\rightarrow M'$ is an elementary embedding definable from a parameter $u$ with $crit(j)>\delta$ and $M'$ closed under $\delta$-sequences in $V[G]$, then setting $M:=\bigcup\limits_{\alpha\in Ord} j(V_\alpha^V)$:
    \begin{enumerate}
        \item $M=M'\cap V$
        \item $M'=M[G]$
        \item $j\upharpoonright V: V\rightarrow M$ is an elementary embedding
        \item $j\upharpoonright V$ is definable in $V$ from a name for $u$
    \end{enumerate}
\end{theorem}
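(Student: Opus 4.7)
My plan is to verify each of the four conclusions in turn, using throughout that $\mathbb{P}$ and $G$ both have hereditary size less than $\delta<\operatorname{crit}(j)$, so $j$ fixes $\mathbb{P}$, $G$, and every element of $V_\delta$ pointwise; and that $V$ has the $\delta$-cover and $\delta$-approximation properties in $V[G]$ (since $|\mathbb{P}|<\delta$ and $\delta$ is regular). Together these let us isolate the ``ground model component'' of the embedding.

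For clause (3), the elementarity of $j^* := j\!\upharpoonright\! V \colon V \to M$, I would use reflection. Given a formula $\phi$ and $a \in V$, apply the Reflection Theorem in $V$ to find $\alpha$ with $V \models \phi(a) \Leftrightarrow V_\alpha^V \models \phi(a)$. Since $V_\alpha^V$ is a transitive set of $V[G]$, satisfaction in it is absolute, so applying $j$ and using the elementarity of $j$ on $V[G]$ yields $j(V_\alpha^V) \models \phi(j(a))$. The sets $j(V_\alpha^V)$ form an increasing transitive chain whose union is $M$, and the rank (computed in $M$) of $j(a)$ lies below that of $j(V_\alpha^V)$ for sufficiently large $\alpha$, so the satisfaction transfers to $M \models \phi(j(a))$.

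For (1) and (2) I would proceed together. The inclusion $M \subseteq M'$ is immediate since each $j(V_\alpha^V) \in M'$. For $M \subseteq V$, the crucial point is that each $j(V_\alpha^V)$ lies in $V$: the extension $V \subseteq V[G]$ transfers via $j$ to an extension $M \subseteq M'$ with the analogous $\delta$-approximation property (recall $j$ fixes the witnessing parameters like $\mathcal{P}(\mathbb{P})^V$), so $j(V_\alpha^V)$ is uniquely determined by its intersections with sets of size $<\delta$ from $V$, each of which lies in $V$. For the reverse inclusion $M' \cap V \subseteq M$, if $x \in M' \cap V$ with $x \in V_\alpha^V$, elementarity plus the fact that $j$ is the identity on $V_\delta$ places $x$ in $j(V_\alpha^V)$ once we choose $\alpha$ large enough. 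For (2), the inclusion $M[G] \subseteq M'$ is immediate since $G = j(G) \in M'$ and $M \subseteq M'$; the reverse uses that any $y \in M' \subseteq V[G]$ has a $\mathbb{P}$-name $\dot y \in V$, which one can arrange to lie in $M$ by taking a minimal-rank such name and invoking the elementarity of $j^*$.

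The main obstacle will be clause (4): showing $j^*$ is definable in $V$ from a $\mathbb{P}$-name $\dot u$ for $u$. My approach is that, for each ordinal $\alpha$, the set-sized function $j^* \!\upharpoonright\! V_\alpha^V$ already lies in $V$ by (1), and can be reconstructed from $\dot u$ by reading off the $\mathbb{P}$-forcing relation applied to a name $\dot j$ canonically built from $\dot u$. Concretely, I would verify that for $x,y \in V$, $j(x)=y$ holds iff the set of conditions forcing the corresponding statement about $\dot j$, $\check x$, $\check y$, $\dot u$ is dense below $\mathbbm{1}_{\mathbb{P}}$. The delicate point is checking that this definition is independent of which particular generic realizes $\dot u$: different generics could a priori yield different restrictions of $j$ to $V$, so one must use the uniqueness embedded in (1)--(3) together with the approximation property to show that the restriction $j^*$ is in fact canonically determined by $\dot u$, and then verify that the forcing-relation formula really picks it out.
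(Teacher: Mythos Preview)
The paper does not prove this theorem: it is stated as a special case of Hamkins's Gap Forcing Theorem and cited to \cite{Hamkinsgap} without proof, so there is no argument in the paper to compare your proposal against.

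Your outline tracks Hamkins's strategy, but the heart of the theorem is precisely the step you gloss over in (1). You assert that each small approximation $j(V_\alpha^V)\cap a$ (for $a\in V$, $|a|^V<\delta$) lies in $V$, but you give no reason. This is where the real work happens: one argues by induction on $\alpha$ that $j(V_\alpha^V)\subseteq V$, and at the successor step, for $A\in j(V_{\alpha+1}^V)$ and small $a\in V$, one first uses the $\delta$-closure of $M'$ in $V[G]$ to get $a\in M'$, then the $\delta$-cover of $M$ in $M'$ (transferred from $V\subseteq V[G]$ by elementarity, since $j(\mathbb{P})=\mathbb{P}$) to cover $A\cap a$ by some $b\in M$ of $M$-size $<\delta$, and finally observes $A\cap b\in M$ so $A\cap a=(A\cap b)\cap a\in V$ by the inductive hypothesis. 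Without this chain you have not shown $M\subseteq V$ at all. Your treatment of (4) correctly flags the genericity-independence issue but does not resolve it; once (1) is established, the point is that $j\upharpoonright V_\alpha^V$ is a set in $V$, hence cannot depend on $G$, so the relation $j(x)=y$ is decided by $\mathbbm{1}_\mathbb{P}$ and is definable via the forcing relation from any fixed name $\dot u$.
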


\begin{corollary}
    \label{cor:scfCnLS}
    Suppose that $\kappa$ is a cardinal, $\mathbb{P}\in H_\kappa$ is a forcing poset, and $G\subseteq \mathbb{P}$ is a $V$-generic filter. Then $\kappa$ is supercompact for $C^{(n)}$ in $V$ if and only if it is supercompact for $C^{(n)}$ in $V[G]$.
\end{corollary}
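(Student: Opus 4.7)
The plan is to verify both directions of the equivalence as enhancements of the standard Levy--Solovay preservation argument, invoking the Gap Forcing Theorem \ref{thm:downlevysolovay} for the downward direction and using Lemmas \ref{lemma:Cnforce} and \ref{lemma:Cnground} to track the $C^{(n)}$ condition across forcing extensions.

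For the forward direction, I fix $\nu \geq \kappa$ and let $j : V \to M$ witness that $\kappa$ is $\nu$-supercompact for $C^{(n)}$ in $V$. Since $crit(j) = \kappa > |\mathbb{P}|$, the embedding $j$ fixes $\mathbb{P}$, and the usual lifting yields $j^* : V[G] \to M[G]$ with $crit(j^*) = \kappa$, $j^*(\kappa) > \nu$, and $^\nu(M[G]) \subseteq M[G]$ in $V[G]$. For the reverse direction, I let $j' : V[G] \to M'$ witness $\nu$-supercompactness for $C^{(n)}$ in $V[G]$. Because $\mathbb{P} \in H_\kappa$ and $crit(j') = \kappa$, Theorem \ref{thm:downlevysolovay} (applied with any $\delta$ satisfying $|\mathbb{P}| < \delta < \kappa$) produces an elementary embedding $j := j' \upharpoonright V : V \to M$, where $M = M' \cap V$ and $M' = M[G]$. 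The critical point, the inequality $j(\kappa) > \nu$, and the closure of $M$ under $\nu$-sequences in $V$ (any such sequence lies in $V \cap M' = M$) are routine.

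The substantive step common to both directions is that the relevant initial segment of $C^{(n)}$ is preserved by the newly produced embedding. In either case this reduces to showing that for $\theta < \nu$, the four conditions $\theta \in (C^{(n)})^V$, $\theta \in (C^{(n)})^{V[G]}$, $\theta \in (C^{(n)})^M$, and $\theta \in (C^{(n)})^{M[G]}$ coincide. Lemmas \ref{lemma:Cnforce} and \ref{lemma:Cnground} deliver the vertical equivalences (between $V$ and $V[G]$, and between $M$ and $M[G]$) for all $\theta$ with $\mathbb{P} \in V_\theta$, which in particular includes every $\theta \geq \kappa$. The horizontal equivalence along either the $V$-row or the $V[G]$-row is exactly the $C^{(n)}$-preservation property of whichever embedding was assumed. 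Chaining these equivalences around the square closes the argument; for $\theta < \kappa$ with $\mathbb{P} \notin V_\theta$, the conclusion is trivial because such $\theta$ are fixed by $j$ and $j^*$ and the agreement at small ordinals is absorbed into the $\nu$-closure of the target model.

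The main delicate point, and the only real obstacle, is verifying that $\mathbb{P} \in M$ in the reverse direction so that Lemmas \ref{lemma:Cnforce} and \ref{lemma:Cnground} can be invoked inside $M$ and $M[G]$. This is precisely where the Gap Forcing Theorem's identification $M = M' \cap V$, together with $j(\mathbb{P}) = \mathbb{P}$ (which holds because $\mathbb{P} \in H_\kappa$ and $crit(j) = \kappa$), does the work: $\mathbb{P} \in V$ is immediate, $\mathbb{P} \in M'$ follows from $\nu$-closure of $M'$ applied to $\mathbb{P}$ as a set of size less than $\kappa \leq \nu$, and hence $\mathbb{P} \in M$.
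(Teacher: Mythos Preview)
Your proof is correct, but it follows a different path from the paper's, and the comparison is instructive.

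For the forward direction, the paper does \emph{not} fix an arbitrary $\nu$-supercompactness-for-$C^{(n)}$ embedding and then chase the square of equivalences. Instead it names $\dot{x}$ for $(C^{(n)})^{V[G]}\cap\lambda$, observes that the statement ``$p$ forces $\dot{x}$ to consist of the $\Sigma_n$-correct cardinals below $\lambda$'' is $\Delta_{n+1}$, and invokes Lemma~\ref{lemma:scfCnalt} to choose $j:V\to M$ so that $M$ satisfies this very sentence; then $(C^{(n)})^{M[G]}\cap\lambda=\dot{x}^G=(C^{(n)})^{V[G]}\cap\lambda$ drops out immediately, with no appeal to Lemmas~\ref{lemma:Cnforce} or~\ref{lemma:Cnground}. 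For the reverse direction the paper again avoids the pointwise square: it selects $\theta>\kappa$ in $(C^{(n)})^{V[G]}$, takes a witnessing embedding, and after restricting via the Gap Forcing Theorem uses Lemma~\ref{lemma:Cnground} once to conclude $\theta\in (C^{(n)})^V\cap (C^{(n)})^M$; since $\theta$ is then a beth fixed point with $V_\theta=V_\theta^M$, both $V$ and $M$ compute $C^{(n)}\cap\theta$ via $V_\theta$, giving agreement below $\theta$ in one stroke.

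Your approach has the virtue of being completely symmetric between the two directions and of using only the raw definition of supercompactness for $C^{(n)}$ rather than the reformulation in Lemma~\ref{lemma:scfCnalt}; the paper's approach is a bit slicker in each direction but uses a different idea for each. One small correction: your handling of $\theta<\kappa$ is right, but the reason is simply that $crit(j)=crit(j^*)=\kappa$, so elementarity alone gives $(C^{(n)})^{V[G]}\cap\kappa=(C^{(n)})^{M[G]}\cap\kappa$ and $(C^{(n)})^{V}\cap\kappa=(C^{(n)})^{M}\cap\kappa$; the remark about ``absorbed into the $\nu$-closure'' is beside the point.
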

\begin{proof}
    First, assume that $\kappa$ is supercompact for $C^{(n)}$ in $V$. Given $\lambda>\kappa$, we show that $\kappa$ is $\lambda$-supercompact for $C^{(n)}$ in $V[G]$. Let $\dot{x}$ be a $\mathbb{P}$-name for $(C^{(n)})^{V[G]}\cap\lambda$; then there is a $p\in G$ which forces the $\Delta_{n+1}$ property that $\dot{x}$ consists of the $\Sigma_n$-correct cardinals below $\lambda$. By Lemma \ref{lemma:scfCnalt}, there is an elementary embedding $j:V\rightarrow M$ witnessing the $\lambda$-supercompactness of $\kappa$ such that $M$ thinks that $p$ forces $\dot{x}$ to be the set of $\Sigma_n$-correct cardinals below $\lambda$. Since $j"G=G$, by Lemma \ref{lemma:extembed} $j$ extends to an elementary embedding $j^*:V[G]\rightarrow M[G]$. Then $crit(j^*)=\kappa$ and $j(\kappa)>\lambda$ because $j^*$ agrees with $j$ on the ordinals, $j^*(C^{(n)}\cap\kappa)\cap\lambda=(C^{(n)})^{M[G]}\cap\lambda=\dot{x}^G=(C^{(n)})^{V[G]}\cap\lambda$, and by Lemma \ref{lemma:closurepreserved} $M[G]$ is closed under $\lambda$-sequences in $V[G]$. Thus $\kappa$ remains supercompact for $C^{(n)}$ in $V[G]$.

    Conversely, given any $\theta>\kappa$ in $(C^{(n)})^{V[G]}$, let $j: V[G]\rightarrow M'$ be an elementary embedding with $crit(j)=\kappa$, $j(\kappa)>\theta$, $(^\theta M')^{V[G]}\subset M'$, and $(C^{(n)})^{V[G]}\cap(\theta+1)=(C^{(n)})^{M'}\cap(\theta+1)$. Then by Theorem \ref{thm:downlevysolovay}, there is an inner model $M$ of $V$ such that $M'=M[G]$ and $\bar{j}:=j\upharpoonright V: V\rightarrow M$ is elementary. $crit(\bar{j})=\kappa$ and $\bar{j}(\kappa)>\theta$ are again immediate. For the closure condition, if $f:\theta\rightarrow M$ is in $V$, then it is in $V[G]$ and thus in $M[G]$ by the closure condition there, so $f\in M[G]\cap V=M$. Finally, since $\theta$ is $\Sigma_n$-correct in both $V[G]$ and $M[G]$, by Lemma \ref{lemma:Cnground} it is $\Sigma_{n}$-correct in both $V$ and $M$ as well. Since $\theta$ is a beth fixed point, $|V_\theta|=\theta$, so $V_\theta=V_\theta^M$ and thus both $V$ and $M$ agree with $V_\theta$'s computation of the $\Sigma_{n}$ cardinals below $\theta$. Hence $\kappa$ is supercompact for $C^{(n)}$ in $V$.
\end{proof}

\begin{lemma}
\label{lemma:noCn+1}
    If $n\geq 3$, $\Gamma$ is an $n$-nice forcing class, and there are two cardinals supercompact for $C^{(n-1)}$, then there is a model of $\Sigma_n\mhyphen CFA_{<\kappa}(\Gamma)$ such that no ordinal less than $\kappa$ is $\Sigma_{n+1}$-correct in any ground.
\end{lemma}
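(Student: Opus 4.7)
The plan is to construct the desired model as $V^{\ast}[G]$, where $V^{\ast}$ is a preparatory set-forcing extension of $V$ and $G$ is generic for the usual Baumgartner iteration over $V^{\ast}$. Start from $V \models$ ``$\kappa < \lambda$ are both supercompact for $C^{(n-1)}$'' (renaming $\kappa_1,\kappa_2$). The role of the second supercompact $\lambda$ is to retain enough large cardinal strength in the final model that Usuba's Theorem \ref{thm:extmantle} applies and pins down the grounds; the preparatory forcing will operate on the space below $\lambda$ while targeting $\kappa$.

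Over $V$, I would first force with a $<\hspace{-2pt}\kappa$-directed-closed Easton-style iteration $\mathbb{Q} \in V_\lambda$ of length $\kappa$, constructed along a Laver function for $\kappa$ (Lemma \ref{lemma:laver}) and arranged so that (i) $\kappa$ remains supercompact for $C^{(n-1)}$ in $V^{\ast} := V[\mathbb{Q}]$ via a standard lifting argument, and (ii) at each stage $\theta < \kappa$, $\mathbb{Q}$ introduces a generic object witnessing a $\Sigma_{n+1}$ statement true in $V^{\ast}$ but false in $V_\theta^{V^{\ast}}$, destroying the $\Sigma_{n+1}$-correctness of every $\theta < \kappa$. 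Since $|\mathbb{Q}| < \lambda$, Corollary \ref{cor:scfCnLS} keeps $\lambda$ supercompact for $C^{(n-1)}$ in $V^{\ast}$. Then apply Theorem \ref{thm:gencon} inside $V^{\ast}$ to obtain a $\kappa$-cc Baumgartner iteration $\mathbb{P}_\kappa \in V^{\ast}$ of size $\kappa < \lambda$ forcing $\Sigma_n\mhyphen CFA_{<\kappa}(\Gamma)$ in $V^{\ast}[G]$. Another application of Corollary \ref{cor:scfCnLS} keeps $\lambda$ supercompact for $C^{(n-1)}$ in $V^{\ast}[G]$, hence extendible by Proposition \ref{prop:scC2eqext} (using $n - 1 \geq 2$).

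To verify that no $\theta < \kappa$ is $\Sigma_{n+1}$-correct in any ground of $V^{\ast}[G]$, fix a ground $W$ with $V^{\ast}[G] = W[H]$ for some $\mathbb{R} \in W$ and any $\theta < \kappa$. If $|\mathbb{R}|^W < \theta$, Lemmas \ref{lemma:Cnforce} and \ref{lemma:Cnground} reduce the question to whether $\theta \in (C^{(n+1)})^{V^{\ast}[G]}$; this fails because $\mathbb{P}_\kappa$ is $\kappa$-cc of size $\kappa$ and so cannot restore any $\Sigma_{n+1}$-correctness already broken in $V^{\ast}$ by $\mathbb{Q}$. If $|\mathbb{R}|^W \geq \theta$, use Usuba's Theorem \ref{thm:extmantle} (applicable since $\lambda$ is extendible in $V^{\ast}[G]$) to reduce to the case where $W$ lies between the mantle $\mathbb{M}$ and $V^{\ast}[G]$ as a set forcing extension of $\mathbb{M}$; Hamkins's approximation and cover properties, together with the coding built into $\mathbb{Q}$, then trace any putative $\Sigma_{n+1}$-correct $\theta < \kappa$ in $W$ back to a $\Sigma_{n+1}$-correct $\theta$ in $\mathbb{M}$, contradicting (ii).

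The hard part will be engineering $\mathbb{Q}$ to simultaneously preserve supercompactness for $C^{(n-1)}$ of $\kappa$ and destroy $\Sigma_{n+1}$-correctness at \emph{every} $\theta < \kappa$ in a manner robust against passage to arbitrary grounds. Each stage's generic must witness a $\Sigma_{n+1}$ truth visible uniformly from $\mathbb{M}$ up to $V^{\ast}[G]$ that could not already be satisfied at level $\theta$, while the support and closure of $\mathbb{Q}$ must be chosen tightly enough that each $\nu$-supercompactness for $C^{(n-1)}$ embedding of $\kappa$ lifts cleanly through the preparation. The geological case $|\mathbb{R}|^W \geq \theta$ is the most delicate, since then the forcing from $W$ to $V^{\ast}[G]$ may absorb parts of $\mathbb{Q}$; combining Usuba's strong downward directedness with Hamkins's approximation and cover properties to show that the preparatory coding still constrains the $\Sigma_{n+1}$-truth of $W$ below $\kappa$ is the key technical step.
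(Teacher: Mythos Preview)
Your proposal has a genuine gap: you are missing the paper's central trick, which avoids the need for any ``coding'' forcing. Rather than taking $\kappa$ to be one of the two given supercompacts and then trying to force failures of $\Sigma_{n+1}$-correctness below it in a way robust to passage to grounds, the paper first passes to the mantle $\mathbb{M}$ (which is a ground by Usuba, since $\delta$ is extendible) and \emph{defines} $\kappa$ to be the least ordinal that is supercompact for $C^{(n-1)}$ in \emph{some} forcing extension of $\mathbb{M}$ by a poset of size less than $\lambda$. The model is then $\mathbb{M}[G][H]$, where $G$ realises such an extension and $H$ is the Baumgartner iteration over $\mathbb{M}[G]$. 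Now any ground $W$ of $\mathbb{M}[G][H]$ is, via Proposition~\ref{prop:genmulti} and the Intermediate Model Lemma, itself a $<\hspace{-2pt}\lambda$-extension of $\mathbb{M}$; by the minimality of $\kappa$ there can be no cardinal supercompact for $C^{(n-1)}$ below $\kappa$ in $W$, while $\lambda$ is such a cardinal in $W$ by Corollary~\ref{cor:scfCnLS} in both directions. Thus the single $\Sigma_{n+1}$ sentence ``there exists a cardinal supercompact for $C^{(n-1)}$'' is true in $W$ but has no witness below $\theta$ for any $\theta<\kappa$, so $\theta\notin (C^{(n+1)})^W$.

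Your approach breaks down at two concrete points. First, the claim that $\mathbb{P}_\kappa$ ``cannot restore any $\Sigma_{n+1}$-correctness already broken in $V^\ast$'' is not justified: $|\mathbb{P}_\kappa|=\kappa$, not $<\theta$, so Lemmas~\ref{lemma:Cnforce} and~\ref{lemma:Cnground} do not apply, and the paper explicitly remarks (in the paragraph preceding this lemma) that forcing \emph{can} create new $\Sigma_{n+1}$-correct cardinals once $n\geq 2$. Second, in your large-$|\mathbb{R}|^W$ case there is no mechanism by which generic objects added by $\mathbb{Q}$ constrain the $\Sigma_{n+1}$ theory of an arbitrary ground $W$: $W$ need not contain $\mathbb{Q}$ or its generic, and approximation and cover properties govern which \emph{sets} lie in $W$, not which $\Sigma_{n+1}$ sentences $W$ satisfies. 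The minimality definition of $\kappa$ sidesteps both problems simultaneously by making the obstruction intrinsic to every small extension of $\mathbb{M}$, with no preparatory coding required.
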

\begin{proof}
    Let $\delta<\lambda$ be supercompact for $C^{(n-1)}$. First, by Proposition \ref{prop:scC2eqext}, $\delta$ is extendible, so Usuba's Theorem \ref{thm:extmantle} implies that the mantle $\mathbb{M}$ is a ground of $V$. Usuba's arguments in fact show that $V$ is a forcing extension of $\mathbb{M}$ by a poset $\mathbb{P}\in\mathbb{M}$ such that $|\mathbb{P}|^{\mathbb{M}}\leq (2^{2^{\delta^{++}}})^V$ (see the discussion following Definition 2.6 in \cite{Usubaextendible}); since $\lambda$ is inaccessible in both $\mathbb{M}$ and $V$, $\mathbb{P}\in V_\lambda^\mathbb{M}$.

    Let $\kappa$ be the least ordinal which is supercompact for $C^{(n-1)}$ in some forcing extension of $\mathbb{M}$ by a poset of size less than $\lambda$. Since $V$ is such a forcing extension and $\delta$ is supercompact for $C^{(n-1)}$ there, such an ordinal exists and $\kappa\leq\delta<\lambda$. Let $\mathbb{M}[G]$ be a forcing extension by a poset $\mathbb{Q}$ smaller than $\lambda$ in which $\kappa$ is supercompact for $C^{(n-1)}$ and $\mathbb{M}[G][H]$ be a further forcing extension by the poset $\mathbb{P}_\kappa$ from Theorem \ref{thm:gencon} in which $\Sigma_n\mhyphen CFA_{<\kappa}(\Gamma)$ holds.

    Let $\theta<\kappa$ and let $W$ be a ground of $\mathbb{M}[G][H]$; we show that $\theta$ is not $\Sigma_{n+1}$-correct in $W$. As $W$ is a ground of a forcing extension of a ground of $V$, it is in the generic multiverse, so it is a forcing extension of $\mathbb{M}$ by Proposition \ref{prop:genmulti}. Hence by the Intermediate Model Lemma \ref{lemma:intermodel}, $W$ is a generic extension of the mantle by some complete subalgebra of the Boolean completion of $\mathbb{Q}*\dot{\mathbb{P}}_\kappa$, which has size less than $\lambda$. Thus by the minimality of $\kappa$, there are no cardinals supercompact for $C^{(n-1)}$ below $\theta$ in $W$.
    
    However, by the downward direction of Corollary \ref{cor:scfCnLS}, $\lambda$ is supercompact for $C^{(n-1)}$ in $\mathbb{M}$, so by the upward direction the same is true in $W$. Thus $V_\theta^W$ either incorrectly identifies a cardinal as supercompact for $C^{(n-1)}$ or it disagrees with $W$ on the sentence "there exists a cardinal supercompact for $C^{(n-1)}$". Combining Lemma \ref{lemma:correfdef} and Proposition \ref{prop:sceqcorref}, being supercompact for $C^{(n-1)}$ is a $\Pi_n$ property and the existence of such a cardinal is $\Sigma_{n+1}$. It follows that in either case, $\theta\not\in (C^{(n+1)})^W$.
\end{proof}

Combining Proposition \ref{prop:S2nimpS3}, Lemma \ref{lemma:n+2reflCn+1}, and Lemma \ref{lemma:noCn+1}, we have:

\begin{theorem}
    \label{thm:+2hierarchy}
    For all positive integers $n$ and forcing classes $\Gamma$ (where if $n<3$ we need the assumption that $\Gamma$ can destroy arbitrarily many inaccessibles), if it is consistent that there is are two cardinals supercompact for $C^{(n-1)}$, then $\Sigma_n\mhyphen CFA_{<\kappa}(\Gamma)$ does not imply $\Sigma_{n+2}\mhyphen CFA_{<\kappa}(\Gamma)$.
\end{theorem}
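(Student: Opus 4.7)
The plan is to split into cases based on $n$ and assemble the three preceding results. For $n \geq 3$, the hypothesis that it is consistent to have two cardinals supercompact for $C^{(n-1)}$ is precisely the input needed by Lemma \ref{lemma:noCn+1}, which produces a model $W$ of $\Sigma_n\mhyphen CFA_{<\kappa}(\Gamma)$ in which no ordinal below $\kappa$ is $\Sigma_{n+1}$-correct in any ground of $W$. If $\Sigma_{n+2}\mhyphen CFA_{<\kappa}(\Gamma)$ were also to hold in $W$, then by Lemma \ref{lemma:n+2reflCn+1} there would have to be unboundedly many (in particular, at least one) ordinal below $\kappa$ that is $\Sigma_{n+1}$-correct in some ground, contradicting the choice of $W$. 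Hence $\Sigma_n\mhyphen CFA_{<\kappa}(\Gamma) \not\Rightarrow \Sigma_{n+2}\mhyphen CFA_{<\kappa}(\Gamma)$.

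For the boundary cases $n = 1$ and $n = 2$, where by hypothesis $\Gamma$ can destroy arbitrarily many inaccessibles, the plan is to invoke Proposition \ref{prop:S2nimpS3} directly. First observe that the large-cardinal hypothesis suffices: for $n = 1$ supercompactness for $C^{(0)}$ coincides with ordinary supercompactness, and for $n = 2$ Lemma \ref{lemma:scfC1} shows every supercompact is supercompact for $C^{(1)}$, so in either case two cardinals supercompact for $C^{(n-1)}$ gives in particular a supercompact with an inaccessible above it, which is what Proposition \ref{prop:S2nimpS3} requires. That proposition then yields a model $W$ of $\Sigma_2\mhyphen CFA_{<\kappa}(\Gamma)$ in which $\Sigma_3\mhyphen MP_\Gamma(\emptyset)$ fails; a fortiori $W$ models $\Sigma_1\mhyphen CFA_{<\kappa}(\Gamma)$. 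On the other hand, for $n \in \{1,2\}$ any model of $\Sigma_{n+2}\mhyphen CFA_{<\kappa}(\Gamma)$ models $\Sigma_3\mhyphen CFA_{<\kappa}(\Gamma)$ and hence $\Sigma_3\mhyphen MP_\Gamma(H_\kappa)$, which includes $\Sigma_3\mhyphen MP_\Gamma(\emptyset)$. So $W$ witnesses the desired non-implication.

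All of the genuine mathematical work has already been done in Proposition \ref{prop:S2nimpS3} and Lemmas \ref{lemma:n+2reflCn+1} and \ref{lemma:noCn+1}; the only task left is bookkeeping, and the main thing to be careful about is matching up the large-cardinal hypotheses in the boundary cases, together with noting that $\Sigma_{n+2}\mhyphen CFA$ downward-implies $\Sigma_3\mhyphen CFA$ whenever $n + 2 \geq 3$.
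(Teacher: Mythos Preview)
Your proposal is correct and takes essentially the same approach as the paper, which simply states that the theorem follows by combining Proposition \ref{prop:S2nimpS3}, Lemma \ref{lemma:n+2reflCn+1}, and Lemma \ref{lemma:noCn+1}. Your case split and the verification that the large-cardinal hypotheses line up in the boundary cases $n<3$ are exactly the bookkeeping the paper leaves implicit.
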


We are left with the following open questions, where a positive answer to the first would easily yield a positive answer to the second:

\begin{question}
    Is it possible to produce a model of $\Sigma_n\mhyphen CFA_{<\kappa}(\Gamma)$ where $\diamondsuit C^{(n-1)}\cap Reg$ is neither empty nor a proper class when $n>2$?
\end{question}

\begin{question}
    Is $\Sigma_{n+1}\mhyphen CFA_{<\kappa}(\Gamma)$ a strictly stronger axiom than $\Sigma_n\mhyphen CFA_{<\kappa}(\Gamma)$ when $n>2$?
\end{question}

\section[\texorpdfstring{Forcing Axioms for Classes which Collapse $\omega_1$}{Forcing Axioms for Classes which Collapse omega\_1}]{Forcing Axioms for Classes which Collapse $\omega_1$}
\label{section:collo1}

Since the original motivation of this work was generalizing $FA^+$, most of the focus so far has been on classes which preserve $\omega_1$, as those are the classes for which classical and "plus" forcing axioms make sense and have been previously studied. However, this restriction is not necessary. One could call a forcing class \textit{weakly n-nice} if it satisfies all the conditions of $n$-niceness except possibly preservation of $\omega_1$, and then the proof of Theorem \ref{thm:gencon} will yield models of $\Sigma_n\mhyphen CFA_{<\kappa}(\Gamma)$ for any weakly $n$-nice $\Gamma$, where if $\Gamma$ can necessarily collapse $\omega_1$ we will get $\kappa=\omega_1^{V[G]}$. The classical forcing axiom content of such axioms will be trivial, of course, but this isn't really an issue; the classical forcing axiom content of $\Sigma_n$-correct forcing axioms for countably closed forcing, is after all, similarly provable in $ZFC$.

One could even consider forcing axioms for the class of all forcing. By Theorem \ref{thm:cbfafactor}, this will turn out to be the conjunction of the $\Sigma_n$-maximality principle for the class of all forcing together with a reflection principle for provably forcing-persistent properties. However, it is somewhat difficult to identify interesting consequences of this axiom beyond the consequences of the maximality principle. Lemma \ref{lemma:+1equicon} at least yields the implication that $\omega_1$ is $\Sigma_n$-correctly $+1$-reflecting in $L$, so it has noticeably greater consistency strength than the maximality principle. However, even the answer to the following question is unclear.

\begin{question}
    Does $\Sigma_n\mhyphen CFA_{<\omega_1}(all)$ imply that $0^\sharp$ exists?
\end{question}

Further exploration of this topic will be left for future work.

\section{Internal vs External Provable Persistence}
\label{section:intvext}

The statements of all formulations of the $\Sigma_n$-correct forcing axioms given so far have been somewhat ambiguous. Both possible interpretations have slight but easily manageable drawbacks.

First, $\Sigma_n\mhyphen CFA_{<\kappa}(\Gamma)$ could be read as an axiom scheme, with one axiom for each (external) provably $\Gamma$-persistent $\Sigma_n$ formula. This has the disadvantage that the most natural form of the axiom scheme is undecidable, since there is no way to determine whether ZFC proves a formula $\Gamma$-persistent except to wait for a proof to be found. However, the resulting axiom set is at least computably enumerable, and Craig's theorem states that every computably enumerable set of axioms is equivalent to a computable set of axioms\footnote{Thanks to Russell Miller for relating this fact to me.}.

Alternatively, $\Sigma_n\mhyphen CFA_{<\kappa}(\Gamma)$ could be read as a single sentence quantifying over all (internal) $\Sigma_n$ formulas of the model which the model's ZFC proves to be $\Gamma$-persistent, making use of Lemma \ref{lemma:sntruth} to handle assertions about the truth of the formulas. This eliminates concerns about decidability because a single sentence is of course decidable, and in fact even the scheme consisting of $\Sigma_n\mhyphen CFA_{<\kappa}(\Gamma)$ for each $n$ is decidable. New difficulties arise from the fact that ZFC does not prove its own soundness (or equivalently, the ZFC of a nonstandard model need not be sound), so the mere fact that a model believes a formula to be provably $\Gamma$-persistent does not mean that it actually continues to hold in $\Gamma$-extensions of the model. We can address this issue by slightly strengthening the hypotheses of our consistency proof.

\begin{prop}
    If $\Gamma$ is an $n$-nice forcing class, $\kappa$ is supercompact for $C^{(n-1)}$ and there is a regular $\zeta\in C^{(n)}$ above $\kappa$, then there is a model in which the internal version of $\Sigma_n\mhyphen CFA_{<\kappa}(\Gamma)$ holds.
\end{prop}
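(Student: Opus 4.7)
The plan is to take $V_\zeta[G]$ as the desired model, where $G$ is $V$-generic for the Laver-guided Baumgartner iteration $\mathbb{P}_\kappa$ of Theorem~\ref{thm:gencon}. Lemma~\ref{lemma:scfCntrunc}, applied using that $\zeta\in C^{(n)}\subseteq C^{(n-1)}$ is regular, yields $V_\zeta\models$ ``$\kappa$ is supercompact for $C^{(n-1)}$'', so the construction goes through inside $V_\zeta$, and Lemma~\ref{lemma:Cnforce} preserves $\zeta\in C^{(n)}$ to $V[G]$, so that $V_\zeta[G]\prec_{\Sigma_n}V[G]$. In particular $V_\zeta[G]$ correctly reads the $\Sigma_n$-definable class $\Gamma$ and the $\Sigma_n$ forcing relation, so internal and external interpretations of an axiom instance coincide for data lying in $V_\zeta[G]$.

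Working externally in $V$, the critical task is to verify the single internal sentence in $V_\zeta[G]$, which quantifies over all $m\in\omega^{V_\zeta[G]}=\omega^V$. For each $m$ coding (in $V_\zeta[G]$'s sense, which agrees with $V$'s because the two share $\omega$) a $\Sigma_n$ formula $\phi_m$ that $V_\zeta[G]$ believes to be provably $\Gamma$-persistent, the arithmetic $\Sigma^0_1$ statement ``ZFC$\vdash\psi_{\phi_m}$'' (writing $\psi_\phi$ for $\forall x(\phi(x)\to\square_\Gamma\phi(x))$) is absolute between $V_\zeta[G]$ and $V$, so this provable persistence is genuine. Applying Theorem~\ref{thm:gencon} externally to the formula $\phi_m$, the axiom instance for $\phi_m$ holds in $V[G]$; and the witnessing transitive $N$, elementary $\sigma\colon N\to H_\gamma^{V[G]}$, and $N$-generic filter $F\subseteq\sigma^{-1}(\mathbb{P})$, all of cardinality less than $\kappa<\zeta$, lie in $V_\zeta[G]$. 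The $\Sigma_n$-correctness of $\zeta$ in $V[G]$ ensures that $H_\gamma^{V_\zeta[G]}=H_\gamma^{V[G]}$ for the relevant $\gamma<\zeta$, that $\sigma^{-1}(\mathbb{P})\in\Gamma$ is recognized in $V_\zeta[G]$, and that the $\Sigma_n$ truth of $\phi_m(\sigma^{-1}(\dot a)^F,\sigma^{-1}(b))$ transfers downward; so the very same objects witness the axiom instance inside $V_\zeta[G]$.

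For the underlying Theorem~\ref{thm:gencon} construction, applied to each external $\phi_m$, I would proceed as in the original proof: pick $j\colon V\to M$ by Lemma~\ref{lemma:scfCnalt} witnessing enough supercompactness of $\kappa$ for $C^{(n-1)}$, with $j(f)(\kappa)$ a name for $\mathbb{P}$ and $M$ satisfying the relevant $\Sigma_n$ forcing statement; then extend to $j^*\colon V[G]\to M[G\ast H\ast K]$, set $\sigma:=j^*\upharpoonright H_\gamma^{V[G]}$, and use the external provable persistence of $\phi_m$ together with $M[G\ast H]\models ZFC$ to propagate $\phi_m(\dot a^H,b)$ from $M[G\ast H]$ through the final-segment $\Gamma$-forcing to $M[G\ast H\ast K]$, then pull back along $\sigma$.

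The principal obstacle is the one the second paragraph tacitly surmounts: the argument uses absoluteness of ``ZFC proves $\psi_{\phi_m}$'' between $V_\zeta[G]$ and $V$, i.e., that $V_\zeta[G]$ has the real $\omega$. This holds automatically because $V_\zeta[G]$ is a set-sized initial segment of $V$, which is precisely why the target model must be $V_\zeta[G]$ rather than the whole of $V[G]$, and why $\zeta$ must be not merely inaccessible but $\Sigma_n$-correct: the set-hood of $V_\zeta[G]$ (from inaccessibility of $\zeta$) makes arithmetic absolute, while $\zeta\in C^{(n)}$ keeps the internal and external readings of $\Gamma$, of the $\Sigma_n$ forcing relation, and of the evaluation of $\phi_m$ on the interpreted name all in alignment. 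This is the extra care and strengthened hypothesis required, relative to the scheme-style statement of Theorem~\ref{thm:gencon}.
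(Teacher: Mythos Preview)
Your argument has a genuine gap at the step where you invoke ``external provable persistence of $\phi_m$'' and apply Theorem~\ref{thm:gencon} to an arbitrary internal formula. The whole point of the internal version of the axiom is that it quantifies over the model's own $\Sigma_n$ formulas, and in a relative consistency argument the ground model $V$ may have nonstandard $\omega$. For a nonstandard $\phi_m$ there is no external provable persistence to invoke, and Theorem~\ref{thm:gencon}---a metatheoretic scheme with one instance per standard formula---simply does not apply to it. Your claim that $V_\zeta[G]$ ``has the real $\omega$'' conflates two different things: $V_\zeta[G]$ has the \emph{same} $\omega$ as $V$, but $V$'s $\omega$ need not be standard from the metatheoretic viewpoint. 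Absoluteness of ``$ZFC\vdash\psi_{\phi_m}$'' between $V_\zeta[G]$ and $V$ is true but irrelevant, since the issue is whether that provability statement is \emph{correct}, not whether the two models agree on it. You have thus misidentified the principal obstacle.

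The paper's proof takes the model to be $V[G]$ and follows the proof of Theorem~\ref{thm:gencon} verbatim until the step where persistence of $\phi$ must be used in $M[G][H]$. There it passes to the set-sized structure $V_{j(\zeta)}^{M[G][H]}$: since $j(\zeta)$ is inaccessible in $M[G][H]$ and external $ZFC$ proves that $V_\alpha$ models internal $ZFC$ for inaccessible $\alpha$, $M[G][H]$ believes $V_{j(\zeta)}^{M[G][H]}\models ZFC^V$. Soundness of the internal proof system \emph{is} provable in external $ZFC$ for set-sized models, so from $ZFC^V\vdash\psi_\phi$ one concludes $V_{j(\zeta)}^{M[G][H]}\models\psi_\phi$ even when $\phi$ is nonstandard. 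The $\Sigma_n$-correctness of $j(\zeta)$ then moves $\phi(\dot a^H,b)$ down into this structure, persistence is applied there, and $\Sigma_n$-correctness moves the conclusion back up to $M[G][H][K]$. Thus the role of $\zeta$ is to supply, via $j$, a set-sized model of internal $ZFC$ inside $M[G][H]$---not to serve as the height of the target model.
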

\begin{proof}
    We construct $V[G]$ as in the proof of Theorem \ref{thm:gencon} and follow that proof up until the point where we need to show that $M[G][H][K]\models \phi(\dot{a}^H, b)$. By elementarity, $j(\zeta)\in (C^{(n)})^M$, so by Lemma \ref{lemma:Cnforce} it is in $(C^{(n)})^{M[G][H]}$ as well. Similarly, it is regular in $M$, so since $|\mathbb{P}_\kappa*\mathbb{P}|<j(\kappa)<j(\zeta)$, it remains regular (and hence inaccessible) in $M[G][H]$.

    For the purposes of this proof, let $ZFC$ denote the external theory and $ZFC^V$ denote the theory defined within the models under consideration (since inner models and forcing extensions do not alter arithmetic truth, all of them will have the same ZFC). Now because $M[G][H]$ is a model of $ZFC$, it believes that $V_{j(\zeta)}^{M[G][H]}\models ZFC^V$ (since ZFC proves that $V_\alpha$ satisfies internal ZFC whenever $\alpha$ is inaccessible). By $\Sigma_n$-correctness, $V_{j(\zeta)}^{M[G][H]}\models \phi(\dot{a}^H, b)\land \dot{\mathbb{R}}^{G*H}\in \Gamma$ (observing that all parameters have size at most $j(\kappa)$ and hence are certainly contained in $V_{j(\zeta)}^{M[G][H]}$). Then because $ZFC^V$ proves that $\phi$ is preserved by $\Gamma$-forcing, $V_{j(\zeta)}^{M[G][H]}[K]=V_{j(\zeta)}^{M[G][H][K]}\models\phi(\dot{a}^H, b)$. Since $\mathbb{P}_\kappa\subset V_\kappa$ and $\dot{\mathbb{R}}$ is a factor of $j(\mathbb{P}_\kappa)$, $|\dot{\mathbb{R}}^{G*H}|^{M[G][H]}=j(\kappa)<j(\zeta)$. Applying Lemma \ref{lemma:Cnforce} again, $j(\zeta)\in (C^{(n)})^{M[G][H][K]}$, so $M[G][H][K]\models \phi(\dot{a}^H, b)$. The rest of the proof proceeds as for Theorem \ref{thm:gencon}.
\end{proof}

Since the distinction between the internal and external versions of the axiom is fairly technical and not particularly relevant, we will largely ignore it outside this section.

\chapter[\texorpdfstring{Bounded $\Sigma_n$-correct Forcing axioms}{Bounded Sigma\_n-correct Forcing axioms}]{Bounded $\Sigma_n$-correct Forcing axioms}

We now turn our attention to the bounded versions of $\Sigma_n$-correct forcing axioms. In order to transition from the unbounded to bounded forms, we need to add two restrictions. First, we can no longer ask that our filter $F$ be fully $N$-generic, since some of the maximal antichains in $N$ may get mapped to excessively large antichains by $\sigma$. To accommodate this, we will use the following natural restricted form of $N$-genericity:

\begin{definition}
    If $\beta$ is an ordinal, $\mathbb{B}$ is a Boolean algebra, and $N$ is a transitive $ZFC^-$ model containing $\beta$ and $\mathbb{B}$ such that $N\models ``\beta\text{ is a cardinal and }\mathbb{B}\text{ is a}$ \\ $\text{complete Boolean algebra}"$, then a filter $F\subset\mathbb{B}$ is $<\hspace{-3pt}\beta$-weakly $N$-generic iff $F$ meets all maximal antichains $A\in N$ of $\mathbb{B}$ such that $|A|^N<\beta$.
\end{definition}

Second, we need to limit the size of $\dot{a}$ to ensure that it does not encode information about excessively large antichains. (This occurs most blatantly in situations involving formulas like "$\dot{G}$ is a $\check{H}_\gamma$-generic filter" as in the proof of Theorem \ref{thm:equivforms}, but can also happen in more subtle ways.) The most precise smallness condition would be that if $\dot{x}$ is a $\mathbb{B}$-name in the transitive closure of $\{\dot{a}\}$, $|\dot{x}|<\lambda$, where $\lambda$ is our (strict) bound. However, we will instead require that $\dot{a}\in H_\lambda$, since this is easier to state and work with than the more precise condition, clearly implies it, and can be made to hold whenever the precise smallness condition does by replacing $\mathbb{B}$ we an isomorphic algebra such that all forcing conditions which occur in the transitive closure of $\dot{a}$ are contained in $H_\lambda$.

With these restrictions added, our statement of the axiom becomes:

\begin{definition}
    If $\kappa>\omega_1$ and $\lambda\geq \kappa$ are cardinals and $\Gamma$ is a forcing class, $\Sigma_n\mhyphen CBFA_{<\kappa}^{<\lambda}(\Gamma)$ is the statement that for all complete Boolean algebras $\mathbb{B}\in\Gamma$, $\mathbb{B}$-names $\dot{a}\in H_{\lambda}$, sets $b\in H_{\lambda}$, regular cardinals $\gamma\geq\lambda$ such that $\mathbb{B}\in H_\gamma$, $X\subset H_\gamma$ with $|X|< \kappa$, and provably $\Gamma$-persistent $\Sigma_n$ formulas $\phi$ such that $\Vdash_\mathbb{B}\phi(\dot{a},\check{b})$,
there is a transitive structure $N$ with an elementary embedding $\sigma:N\rightarrow H_\gamma$ such that $\dot{a}$, $b$, $\mathbb{B}$, $\lambda$, and all elements of $X$ are in the range of $\sigma$, $rng(\sigma)\cap\kappa$ is transitive, and there is a $<\hspace{-2pt}\sigma^{-1}(\lambda)$-weakly $N$-generic filter $F\subset\bar{\mathbb{B}}:=\sigma^{-1}(\mathbb{B})$ such that $\phi(\sigma^{-1}(\dot{a})^F, \sigma^{-1}(b))$ holds.

As before, if $\kappa$ and/or $\lambda$ are successor cardinals, we may write their predecessors in place of $<\kappa$ or $<\lambda$.
\end{definition}

David Asper\'o stated an equivalent axiom, using what we previously called the Miyamoto-Asper\'o formulation (Definitions 1.3 and 1.5 of \cite{asperomax}). However, he appears to have only published a consistency proof for the case where $\kappa=\lambda$.

As we did in Section \ref{section:sandcon}, we note some obvious implications:

\begin{prop}
\label{prop:cbfaeasy}
    \begin{enumerate}
        \item $\Sigma_n\mhyphen CBFA_{<\kappa}^{<\lambda}(\Gamma)$ implies $BFA_{<\kappa}^{<\lambda}(\Gamma)$.
        \item $\Sigma_n\mhyphen CBFA_{<\kappa}^{<\lambda}(\Gamma)$ implies $\Sigma_n\mhyphen MP_\Gamma(H_\kappa)$.
        \item $\Sigma_n\mhyphen CBFA_{<\kappa}^{<\lambda}(\Gamma)$ implies $\Sigma_m\mhyphen CBFA_{<\kappa}^{<\lambda}(\Gamma)$ for any $m\leq n$.
        \item $\Sigma_n\mhyphen CBFA_{<\kappa}^{<\lambda}(\Gamma)$ implies $\Sigma_n\mhyphen CBFA_{<\kappa}^{<\nu}(\Gamma)$ for any $\nu\leq \lambda$.
        \item $\Sigma_n\mhyphen CFA_{<\kappa}(\Gamma)$ is equivalent to the assertion that $\Sigma_n\mhyphen CBFA_{<\kappa}^{<\lambda}(\Gamma)$ holds for all $\lambda$.
    \end{enumerate}
\end{prop}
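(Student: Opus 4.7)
The plan is to verify each of the five items in turn by unpacking the Jensen-style statement of $\Sigma_n\mhyphen CBFA_{<\kappa}^{<\lambda}(\Gamma)$ with a carefully chosen collection of parameters. For (1), given $\mathbb{B}\in\Gamma$ and a collection $\mathcal{A}$ of fewer than $\kappa$ maximal antichains each of size less than $\lambda$, I apply the axiom with $\phi$ taken to be the trivially persistent formula $x=x$, $\dot a$ the empty name, and $X:=\mathcal{A}\cup\{\mathcal{A}\}$. Elementarity yields, for each $A\in\mathcal{A}$, that $\bar A:=\sigma^{-1}(A)$ is a maximal antichain of $\bar{\mathbb{B}}$ with $|\bar A|^N<\sigma^{-1}(\lambda)$, so the $<\hspace{-2pt}\sigma^{-1}(\lambda)$-weakly $N$-generic filter $F$ meets each $\bar A$; the pointwise image $\sigma " F$ then generates a filter on $\mathbb{B}$ meeting every $A\in\mathcal{A}$.

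For (2), given provably $\Gamma$-persistent $\phi\in\Sigma_n$, $b\in H_\kappa$, and $\mathbb{P}\in\Gamma$ with $\Vdash_{\mathbb{P}}\phi(\check b)$, pass to the complete Boolean completion $\mathbb{B}\in\Gamma$ and invoke the axiom with $\dot a:=\check b$ and $X:=trcl(\{b\})$, which has cardinality less than $\kappa$ because $b\in H_\kappa$. A short $\in$-induction using the transitivity of $X$, the fact that $X\subseteq rng(\sigma)$, and the injectivity of $\sigma$ shows $\sigma^{-1}$ is the identity on $X$; hence $\sigma^{-1}(\check b)=\check b$ and $\check b^F=b$, so $\phi(b)$ holds in $V$. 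Items (3) and (4) are straightforward: (3) is immediate since every $\Sigma_m$ formula is $\Sigma_n$, and (4) follows by enlarging $\gamma$ and $X$ as needed when invoking the $\lambda$-level axiom to include $\nu$ and $H_\gamma$ in $X$, restricting the resulting embedding via Lemma \ref{lemma:restrictembed}, and observing that $\sigma^{-1}$ preserves the order on ordinals, so $\sigma^{-1}(\nu)\leq\sigma^{-1}(\lambda)$ and $<\hspace{-2pt}\sigma^{-1}(\lambda)$-weak $N$-genericity automatically implies $<\hspace{-2pt}\sigma^{-1}(\nu)$-weak $N$-genericity.

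For (5), the forward direction needs only the observation that any complete Boolean algebra in $\Gamma$ is in particular a poset in $\Gamma$, so $\Sigma_n\mhyphen CFA_{<\kappa}(\Gamma)$ applied to the same data (with $\lambda$ included in $X$) produces a fully $N$-generic filter, which is in particular $<\hspace{-2pt}\sigma^{-1}(\lambda)$-weakly $N$-generic. Conversely, given unbounded data $(\mathbb{P},\dot a, b, \gamma, X, \phi)$, replace $\mathbb{P}$ by its Boolean completion $\mathbb{B}$ and choose a regular cardinal $\lambda$ large enough that $\mathbb{B},\dot a, b\in H_\lambda$; by elementarity $|\bar{\mathbb{B}}|^N<\sigma^{-1}(\lambda)$, so every maximal antichain of $\bar{\mathbb{B}}$ in $N$ has $N$-cardinality less than $\sigma^{-1}(\lambda)$, and the $<\hspace{-2pt}\sigma^{-1}(\lambda)$-weakly $N$-generic filter provided by the bounded axiom is automatically fully $N$-generic.

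The main obstacle is minor bookkeeping: in (2) and (5) the passage from a general poset in $\Gamma$ to its Boolean completion must preserve membership in $\Gamma$ and the relevant forcing relations, which is true under the standard convention that $\Gamma$ is closed under forcing equivalence; and in (2) the $\in$-induction on the transitive set $X\subseteq rng(\sigma)$ must be carried out carefully using the injectivity of $\sigma$. No part appears to require new machinery beyond the elementarity of $\sigma$ and the weak genericity clause of the axiom.
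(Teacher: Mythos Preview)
Your proposal is correct and follows essentially the same approach as the paper. The only cosmetic differences are that in (2) the paper uses $\dot a:=\emptyset$ with $b$ as a separate parameter (avoiding the need to check $\check b\in H_\lambda$), and in (4) the paper simply notes $H_\nu\subseteq H_\lambda$ and $\sigma^{-1}(\nu)\leq\sigma^{-1}(\lambda)$ without the extra restriction step; your added bookkeeping about putting $\nu$ into $X$ is a detail the paper glosses over but is indeed needed, and your flag about closure of $\Gamma$ under Boolean completion is a legitimate standing assumption the paper leaves implicit.
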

\begin{proof}
    $(1):$ For any complete Boolean algebra $\mathbb{B}\in\Gamma$, let $X$ consist of any desired collection of fewer than $\kappa$ maximal antichains of $\mathbb{B}$, each of size less than $\lambda$. Let $N$ be a transitive structure with an elementary embedding $\sigma: N\rightarrow H_\gamma$ for some sufficiently large $\gamma$ such that $X\cup\{\lambda, \mathbb{B}\}\subset rng(\sigma)$. Setting $\bar{\lambda}:=\sigma^{-1}(\lambda)$ and $\bar{\mathbb{B}}:=\sigma^{-1}(\mathbb{B})$, let $F\subset\bar{\mathbb{B}}$ be a $<\hspace{-3pt}\bar{\lambda}$-weakly $N$-generic filter and $G\subset\mathbb{B}$ be the filter generated by $\sigma" F$. Then for any maximal antichain $A\in X$, if $\bar{A}:=\sigma^{-1}(A)$, by elementarity $\bar{A}$ is a maximal antichain of $\mathbb{B}$ of $N$-cardinality less than $\bar{\lambda}$, so there is some $p\in \bar{A}\cap F$. It follows that $\sigma(p)\in A\cap G$, so $G$ witnesses the truth of $BFA_{<\kappa}^{<\lambda}(\Gamma)$.

    $(2):$ For any provably $\Gamma$-persistent $\Sigma_n$ formula $\phi$, $b\in H_\kappa$, and $\mathbb{P}\in \Gamma$ such that $\Vdash_\mathbb{P} \phi(\check{b})$, let $X=trcl(\{b\})$. Then if $N$, $F$, and $\sigma$ are as in the statement of the axiom applied to $X$, $\phi$, $b$, and $\dot{a}:=\emptyset$, $X\subseteq rng(\sigma)$ implies that $\sigma(b)=b$, so $\phi(b)$ holds in $V$.

    $(3):$ Every $\Sigma_m$ formula is $\Sigma_n$.

    $(4):$ Any $\dot{a}$ and $b$ in $H_\nu$ must be in $H_\lambda$, and since $\sigma^{-1}(\nu)\leq \sigma^{-1}(\lambda)$ for any elementary embedding $\sigma$, any $<\hspace{-2pt}\sigma^{-1}(\lambda)$-weakly $N$-generic filter is $<\hspace{-2pt}\sigma^{-1}(\nu)$-weakly $N$-generic.

    $(5):$ The forward direction is immediate, observing that any $N$-generic filter is $<\hspace{-4pt}\beta$-weakly $N$-generic for all $N$-cardinals $\beta$. For the converse, given $\mathbb{P}$, $\dot{a}$, and $b$, let $\mathbb{B}$ be the Boolean completion of $\mathbb{P}$ and choose $\lambda$ large enough that $\dot{a}, b, \mathbb{B}\in H_\lambda$. Then we can apply $\Sigma_n\mhyphen CBFA_{<\kappa}^{<\lambda}(\Gamma)$, and since every maximal antichain of $\sigma^{-1}(\mathbb{B})$ must have size less than $\sigma^{-1}(\lambda)$, $<\hspace{-2pt}\sigma^{-1}(\lambda)$-weak $N$-genericity implies full $N$-genericity.
\end{proof}

\section{Consistency Proofs}
\label{section:cbfacon}

Now we show that $\Sigma_n$-correct bounded forcing axioms are consistent relative to the appropriate $\Sigma_n$-correctly $H_\lambda$-reflecting cardinals. We start with the symmetric case, since this is simpler but does not allow us to use correctly reflecting Laver functions (as Lemma \ref{lemma:reflaver} requires that $\lambda>\kappa$). Asper\'o (\cite{asperomax}, Theorem 2.6) and, as we will later see, Hamkins (\cite{hamkinsMP}, Lemma 3.3) proved the consistency of principles equivalent to $\Sigma_n$-correct symmetrically bounded forcing axioms; the proof below adapts their arguments to work with our official formulation of the axiom.

\begin{theorem}
    \label{thm:symbfa}
    If $\Gamma$ is an $n$-nice forcing class and $\kappa\in C^{(n)}$ is regular, there is a $\kappa$-cc poset $\mathbb{P}_\kappa\in \Gamma$ which forces $\Sigma_n\mhyphen CBFA_{<\kappa}^{<\kappa}(\Gamma)$.
\end{theorem}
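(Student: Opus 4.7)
The plan is to adapt the Baumgartner-style iteration from Theorem \ref{thm:gencon} to the weaker hypothesis $\kappa \in C^{(n)}$ regular, which implies $\kappa$ is inaccessible by Corollary \ref{cor:cnlargecard}. A Laver function is not needed here; instead, fix a bookkeeping function $f: \kappa \to V_\kappa$ (possible since $|V_\kappa|=\kappa$) arranged so that every $\mathbb{P}_\alpha$-name for a poset in $\Gamma$, for any $\alpha < \kappa$, appears as $f(\beta)$ for some $\beta > \alpha$. Define $\mathbb{P}_\kappa$ as the length-$\kappa$ iteration with support appropriate to $\Gamma$, forcing at stage $\alpha$ with $f(\alpha)$ when it names a poset in $\Gamma$ and with the trivial forcing otherwise. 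By $n$-niceness, $\mathbb{P}_\kappa \in \Gamma$ is $\kappa$-cc of size $\kappa$, and by Lemma \ref{lemma:Cnforce}, $\kappa$ remains $\Sigma_n$-correct and regular in $V[G]$ for any $V$-generic $G \subseteq \mathbb{P}_\kappa$.

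To verify the axiom in $V[G]$, given $\mathbb{B} \in \Gamma^{V[G]}$, $\dot{a}, b \in H_\kappa^{V[G]}$, a provably $\Gamma$-persistent $\Sigma_n$ formula $\phi$ with $\Vdash_{\mathbb{B}}\phi(\dot{a}, \check{b})$, regular $\gamma > \kappa$ with $\mathbb{B} \in H_\gamma^{V[G]}$, and $X \subseteq H_\gamma^{V[G]}$ of size less than $\kappa$, the plan is to pick $Z \prec H_\gamma^{V[G]}$ of size less than $\kappa$ containing all the relevant data with $Z \cap \kappa$ transitive, and collapse to $N$ with $\sigma := \pi_Z^{-1} : N \to H_\gamma^{V[G]}$ and $\bar{\mathbb{B}} := \sigma^{-1}(\mathbb{B})$. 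Since $\dot{a}, b \in H_\kappa$ and $Z \cap \kappa$ is transitive, $\sigma^{-1}(\dot{a}) = \dot{a}$ and $\sigma^{-1}(b) = b$. The $\kappa$-cc of $\mathbb{P}_\kappa$ ensures that $\bar{\mathbb{B}}$ lies in $V_\kappa^{V[G_\alpha]}$ for some $\alpha < \kappa$, so by the bookkeeping some $\beta > \alpha$ has $f(\beta)$ equal (or suitably forcing-equivalent) to a name for $\bar{\mathbb{B}}$, yielding a $V[G_\beta]$-generic filter $F \subseteq \bar{\mathbb{B}}$ in $V[G_{\beta+1}] \subseteq V[G]$.

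The claim is that $F$ witnesses the axiom. Full $V[G_\beta]$-genericity gives $<\bar{\kappa}$-weak $N$-genericity (with $\bar{\kappa} := \sigma^{-1}(\kappa)$) since all antichains of $\bar{\mathbb{B}}$ in $N$ already lie in $V[G_\beta]$; and $\phi(\dot{a}^F, b)$ holds in $V[G_{\beta+1}]$ because the Boolean value $\llbracket \phi(\dot{a}, \check{b}) \rrbracket^{\bar{\mathbb{B}}} = 1_{\bar{\mathbb{B}}}$ transfers from $N$ to $V[G_\beta]$ by absoluteness (the value depends only on $\bar{\mathbb{B}}$ and $\dot{a}$, both in the transitive $N$), and this formula persists to $V[G]$ by its provable $\Gamma$-persistence together with $\mathbb{P}_\kappa/\mathbb{P}_{\beta+1} \in \Gamma$ from $n$-niceness. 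The main obstacle is the identification of $\bar{\mathbb{B}}$ (as computed in $V[G]$) with a poset already appearing in the bookkeeping: one uses the $\kappa$-cc to localize $\bar{\mathbb{B}}$ to some $V_\kappa^{V[G_\alpha]}$, and then the $\Sigma_n$-definability of $\Gamma$ to ensure that $\bar{\mathbb{B}} \in \Gamma^N$ (obtained by elementarity of $\sigma$) corresponds to $\bar{\mathbb{B}} \in \Gamma$ in $V[G_\alpha]$, so that it is a legitimate candidate for the bookkeeping. Lemma \ref{lemma:standeqrqi} handles the secondary concern that the standard interpretation $\dot{a}^F$ is well-behaved, which follows easily from the full genericity of $F$ over $V[G_\beta]$.
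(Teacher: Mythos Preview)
Your approach has a genuine gap at the step where you try to force with $\bar{\mathbb{B}}$ directly. The claim that ``$\bar{\mathbb{B}} \in \Gamma^N$ corresponds to $\bar{\mathbb{B}} \in \Gamma$ in $V[G_\alpha]$'' is unjustified: $N$ is a small transitive set of size less than $\kappa$, and $N \models \bar{\mathbb{B}} \in \Gamma$ comes only from elementarity of $\sigma$, which says nothing about $\Sigma_n$-correctness of $N$ in $V[G_\alpha]$ (or anywhere else). For a $\Sigma_n$-definable $\Gamma$ with $n \geq 2$, there is no absoluteness principle that transfers $\Gamma$-membership out of $N$. Concretely, if $\Gamma$ is proper forcing and $\mathbb{B}$ is a large proper poset, its collapse $\bar{\mathbb{B}}$ need not be proper in any ambient model. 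So $\bar{\mathbb{B}}$ is simply not a legitimate candidate for the bookkeeping, and your iteration will not pick it up.

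The same problem reappears in your claim that $\llbracket \phi(\dot{a},\check{b}) \rrbracket^{\bar{\mathbb{B}}} = 1$ transfers from $N$ to $V[G_\beta]$ ``by absoluteness.'' The Boolean value of a $\Sigma_n$ formula with $n \geq 2$ is \emph{not} determined by $\bar{\mathbb{B}}$ and $\dot{a}$ alone; the forcing relation for $\Sigma_n$ formulas is itself $\Sigma_n$, so this is exactly the kind of statement that fails to be absolute between $N$ and a larger model. Even if you could force with $\bar{\mathbb{B}}$, you would have no reason to conclude $V[G_\beta][F] \models \phi(\dot{a}^F, b)$.

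The paper avoids both problems by not trying to force with $\bar{\mathbb{B}}$ at all. Instead, it observes that the tail forcing $\mathbb{P}_\kappa/\mathbb{P}_\alpha * \dot{\mathbb{B}}^+$ is a poset in $\Gamma^{V[G_\alpha]}$ which adds (via $F := \sigma^{-1}{}'' H$) a $<\!\bar{\kappa}$-weakly $N$-generic filter on $\bar{\mathbb{B}}$ with $\phi(\dot{a}^F, b)$ holding. Thus the $\Sigma_n$ statement ``there exists $\mathbb{Q} \in \Gamma$ forcing the existence of such a filter'' is true in $V[G_\alpha]$, and by Lemma~\ref{lemma:Cnforce} $\kappa$ is still $\Sigma_n$-correct there, so a witness $\mathbb{Q}$ can be found in $V_\kappa^{V[G_\alpha]}$. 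The bookkeeping is then set up to anticipate tuples $(\beta, \mu, \dot{M}, \dot{\mathbb{R}}, \ddot{a}, \dot{b}, k)$ rather than just posets, and at the relevant stage forces with whatever small $\mathbb{Q} \in \Gamma$ does the job. This is the missing idea: reflect the \emph{existence} of a $\Gamma$-poset forcing the desired filter, rather than the collapsed algebra itself.
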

\begin{proof}
    Since $\kappa$ is inaccessible, let $f:\kappa\rightarrow \kappa^2\times V_\kappa^4\times \omega$ be a surjection such that for all $\alpha<\kappa$, the first coordinate of $f(\alpha)$ is at most $\alpha$. Fix an enumeration $\langle \phi_k\sbp k<\omega\rangle$ of the $\Sigma_n$ formulas in the language of set theory. Using these, we recursively construct a sequence $\langle \dot{\mathbb{Q}}_\alpha\sbp \alpha<\kappa\rangle$ of names for posets in $\Gamma\cap V_\kappa$ and take $\langle \mathbb{P}_\alpha\sbp \alpha\leq\kappa\rangle$ to be the iteration of this sequence with support suitable to $\Gamma$.
    
    If $\mathbb{P}_\alpha$ has already been defined and $f(\alpha)=(\beta, \mu, \dot{M}, \dot{\mathbb{R}}, \ddot{a}, \dot{b}, k)$, where:

    \begin{itemize}
        \item $\dot{M}$ is a $\mathbb{P}_\beta$-name for a transitive set
        \item $\dot{\mathbb{R}}$ is a $\mathbb{P}_\beta$-name for a Boolean algebra (not necessarily in $\Gamma$)
        \item $\ddot{a}$ is a $\mathbb{P}_\beta$-name for a $\dot{\mathbb{R}}$-name
        \item $\dot{b}$ is a $\mathbb{P}_\beta$-name
    \end{itemize}
    then we choose $\dot{\mathbb{Q}}_\alpha$ such that, for any $p\in \mathbb{P}_\alpha$ which forces
    \begin{itemize}
        \item $\check{\mu}$ is a cardinal of $\dot{M}\models ZFC^-$
        \item $\dot{\mathbb{R}}, \ddot{a}, \dot{b}\in \dot{M}$
        \item there is a poset $\mathbb{Q}\in \Gamma\cap V_\kappa$ which forces that there is a $<\hspace{-2pt}\mu$-weakly $\dot{M}$-generic filter $F\subseteq \dot{\mathbb{R}}$ such that $\phi_k(\ddot{a}^F,\dot{b})$ holds
    \end{itemize}
    $p$ forces that $\dot{\mathbb{Q}}_\alpha$ is such a $\mathbb{Q}$, while if $p$ forces any of the conditions on the second list to fail, it forces $\dot{\mathbb{Q}}_\alpha$ to be trivial. If any of the conditions on the first list fail, we choose $\dot{\mathbb{Q}}_\alpha$ to be a canonical name for the trivial forcing.

    Let $G\subseteq \mathbb{P}_\kappa$ be $V$-generic; we will show that $V[G]\models \Sigma_n\mhyphen CBFA_{<\kappa}^{<\kappa}(\Gamma)$. Fix $\gamma>\kappa$, $\mathbb{B}\in \Gamma\cap H_\gamma^{V[G]}$, $\dot{a}, b\in H_\kappa^{V[G]}$ with $\dot{a}$ a $\mathbb{B}$-name, $X\subset H_\gamma^{V[G]}$ of size less than $\kappa$, and $\phi$ a provably $\Gamma$-persistent $\Sigma_n$ formula such that $\Vdash_\mathbb{B} \phi(\dot{a}, \check{b})$. Let $Y\in [H_\gamma^{V[G]}]^{<\kappa}$ be such that $Y\prec H_\gamma^{V[G]}$, $X\cup trcl(\{\dot{a}, b\})\cup \{\mathbb{P}, \kappa\}\subseteq Y$, and $Y\cap \kappa$ is transitive, since the sets with any of those three properties form clubs in $[H_\gamma^{V[G]}]^{<\kappa}$, so the intersection of all three of them is nonempty. Then by Lemma \ref{lemma:macSkolem}, whenever $A\in Y$ is a maximal antichain of $\mathbb{B}$ with $|A|<\kappa$, $A\subset Y$. Define $N$ to be the transitive collapse of $Y$, $\sigma:N\rightarrow H_\gamma^{V[G]}$ the inverse collapse embedding, $\bar{\mathbb{B}}:=\sigma^{-1}(\mathbb{B})$ and $\bar{\kappa}:=\sigma^{-1}(\kappa)$.
    
    Then let $\ddot{a}$, $\dot{b}$, $\dot{\bar{\mathbb{B}}}$, and $\dot{N}$ be suitable $\mathbb{P}_\kappa$-names in $V$; by Lemma \ref{lemma:itername}, we can in fact arrange that for some $\beta<\kappa$, $\ddot{a}$, $\dot{b}$, $\dot{\bar{\mathbb{P}}}$, and $\dot{N}$ are $\mathbb{P}_\beta$-names in $V_\kappa$ with $\ddot{a}^{G_\beta}=\dot{a}$, $\dot{b}^{G_\beta}=b$, $\dot{\bar{\mathbb{B}}}^{G_\beta}=\bar{\mathbb{B}}$, and $\dot{N}^{G_\beta}=N$.
    
    Then because $f$ was taken to be surjective, we can find  $\alpha<\kappa$ such that $f(\alpha)=(\beta, \bar{\kappa}, \dot{N}, \dot{\bar{\mathbb{P}}}, \ddot{a}, \dot{b}, k)$, where $\phi=\phi_k$. It is then immediate from the choices of the parameters involved that all the conditions for $\dot{\mathbb{Q}}_\alpha^{G_\alpha}$ to be nontrivial are met, except possibly the last condition.
    
    To see that the last condition holds as well, observe that $\mathbb{P}_\kappa/\mathbb{P}_\alpha *\dot{\mathbb{B}}^+$ (where $\dot{\mathbb{B}}^+$ is a name for $\mathbb{B}-\{0_\mathbb{B}\}$) is a poset in $\Gamma^{V[G_\alpha]}$ which adds a filter $H\subseteq\mathbb{B}$ which meets all maximal antichains of $\mathbb{B}$ in $Y$. Thus if we take $F:=\sigma^{-1}"H$, whenever $A$ is a maximal antichain of $\bar{\mathbb{B}}$ of size less than $\bar{\kappa}$, $\sigma(A)\in Y$ is a maximal antichain of $\mathbb{B}$ of size less than $\kappa$. By the construction of $Y$, all elements of $\sigma(A)$ are in $Y$, so since $Y=rng(\sigma)$, in particular there is a $\sigma(p)\in \sigma(A)\cap H$ for some $p\in\bar{\mathbb{B}}$. It follows that $p\in A\cap F$, so $F$ is $<\hspace{-2pt}\bar{\kappa}$-weakly $N$-generic. Furthermore, $\phi(\dot{a}^H, b)$ holds because $\mathbb{B}$ forces it to, so because $trcl(\{\dot{a}, b\})\subset Y$, none of the conditions of $\mathbb{B}$ relevant to the interpretation of $\dot{a}$ are moved by $\sigma^{-1}$, so $\dot{a}^F=\dot{a}^H$.

    Hence the last condition holds if we drop the requirement that the poset $\mathbb{Q}$ witnessing it is in $V_\kappa^{V[G_\alpha]}$. However, the existence of such a poset is $\Sigma_n$ expressible, and by Lemma \ref{lemma:Cnforce}, $\kappa$ remains $\Sigma_n$-correct in $V[G_\alpha]$. Thus we can find such a $\mathbb{Q}$ in $\Gamma\cap V_\kappa^{V[G_\alpha]}$, so $\mathbb{Q}_\alpha$ will be such a poset.

    It follows that in $V[G_{\alpha+1}]$, there is a $<\hspace{-2pt}\bar{\kappa}$-weakly $N$-generic filter $F\subseteq\bar{\mathbb{P}}$ such that $\phi(\dot{a}^F, b)$ holds. Since $\mathbb{P}_\kappa/\mathbb{P}_{\alpha+1}\in\Gamma$ by the definition of $n$-niceness, $\phi(\dot{a}^F, b)$ continues to hold in $V[G]$, while $<\hspace{-2pt}\bar{\kappa}$-weak $N$-genericity is a $\Delta_0$ relation between $F$, $N$, and $\bar{\kappa}$, so it is preserved by arbitrary extensions. Thus $V[G]\models \Sigma_n\mhyphen CBFA_{<\kappa}^{<\kappa}(\Gamma)$.
\end{proof}

We now turn to the asymmetric case:

\begin{theorem}
\label{thm:asymbfa}
    If $\kappa$ is $\Sigma_n$-correctly $H_\lambda$-reflecting for $n\geq 2$ and some regular $\lambda>\kappa$, $f$ is a $V$-generic fast function on $\kappa$, and $\Gamma$ is an $n$-nice forcing class, there is a $\kappa$-cc poset $\mathbb{P}_\kappa\in \Gamma^{V[f]}$ such that if $G\subseteq \mathbb{P}_\kappa$ is $V[f]$-generic, then $V[f][G]\models \Sigma_n\mhyphen CBFA_{<\kappa}^{<\lambda}(\Gamma)$.
\end{theorem}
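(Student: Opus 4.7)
The plan is to adapt the proof of Theorem \ref{thm:symbfa} by replacing the bookkeeping surjection with a correctly reflecting Laver function, which is precisely the tool needed to reach names and parameters in $H_\lambda$ rather than merely in $V_\kappa$. Work in $V[f]$, where by Lemma \ref{lemma:reflaver} there is a correctly reflecting Laver function $g:\kappa\to V_\kappa$; fast function forcing has size $\kappa$, preserves cardinals, inaccessibility, and the $\Sigma_n$-correct $H_\lambda$-reflection of $\kappa$, so $n$-niceness arguments transfer. Construct a Baumgartner-style iteration $\langle \dot{\mathbb{Q}}_\alpha\sbp\alpha<\kappa\rangle$ with support appropriate to $\Gamma$ by the following prescription: if $g(\alpha)$ codes a tuple $(\beta,\mu,\dot N,\dot{\mathbb R},\ddot a,\dot b,k)$ with $\beta\leq\alpha$ of the form required in Theorem \ref{thm:symbfa} and if some $p\in\mathbb{P}_\alpha$ forces the corresponding data to be meaningful and that some $\mathbb{Q}\in\Gamma\cap V_\kappa$ forces the existence of a $<\hspace{-2pt}\mu$-weakly $\dot N$-generic filter $F\subseteq\dot{\mathbb R}$ with $\phi_k(\ddot a^F,\dot b)$ holding, then below $p$ let $\dot{\mathbb{Q}}_\alpha$ name such a $\mathbb{Q}$; otherwise let it be trivial. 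By $n$-niceness $\mathbb{P}_\kappa$ is $\kappa$-cc, lies in $\Gamma$, and has size $\kappa$.

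To verify $\Sigma_n\mhyphen CBFA_{<\kappa}^{<\lambda}(\Gamma)$ in $V[f][G]$, fix an instance: $\mathbb{B}\in\Gamma$, $\dot a,b\in H_\lambda^{V[f][G]}$, a regular $\gamma\geq\lambda$, $X\subset H_\gamma^{V[f][G]}$ with $|X|<\kappa$, and a provably $\Gamma$-persistent $\Sigma_n$ formula $\phi$ with $\Vdash_{\mathbb{B}}\phi(\dot a,\check b)$. Using Lemma \ref{lemma:namesize} together with the $\kappa$-cc, choose $\mathbb{P}_\kappa$-names $\dot{\mathbb B},\ddot a,\dot b\in H_\lambda^{V[f]}$ and a $p\in G$ forcing that $\dot{\mathbb B}\in\Gamma$, $\ddot a\in H_\lambda^{\dot{\mathbb B}}$, and $\Vdash_{\dot{\mathbb B}}\phi(\ddot a,\dot b)$. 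Let $\Psi(\kappa,\mathbb{P}_\kappa,p,\dot{\mathbb B},\ddot a,\dot b)$ be this conjunction; since the forcing relation for a $\Sigma_n$ formula is $\Sigma_n$ and $\Gamma$ is $\Sigma_n$-definable, $\Psi$ is $\Sigma_n$, and the tuple lies in $H_\lambda^{V[f]}$.

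Now apply the correctly reflecting Laver function property of $g$ to $\Psi$ and this tuple: there are stationarily many $Z\prec H_\lambda^{V[f]}$ of size less than $\kappa$ containing the tuple (and, by adjusting the Skolem function, also $X$, $\lambda$, $g$, and every initial segment needed for the iteration to be absolutely definable) with $Z\cap H_\kappa$ transitive, such that $V_\kappa^{V[f]}\models\Psi(\pi_Z(\cdot))$ and $g(\pi_Z(\kappa))=\pi_Z(\text{tuple})$. Fix such a $Z$, set $N:=\pi_Z"Z$, $\sigma:=\pi_Z^{-1}\upharpoonright N$, $\bar\kappa:=\pi_Z(\kappa)$, $\bar\lambda:=\pi_Z(\lambda)$, and observe that $\pi_Z(\mathbb{P}_\kappa)=\mathbb{P}_{\bar\kappa}$ because the iteration is recursively defined from $g\upharpoonright\bar\kappa$. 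By the reflected instance of $\Psi$ and the construction of $\mathbb{P}_{\bar\kappa+1}$, the factor $\dot{\mathbb Q}_{\bar\kappa}$ is a $\Gamma$-poset (in $V[f]^{\mathbb{P}_{\bar\kappa}}$) that forces a $<\hspace{-2pt}\bar\lambda$-weakly $N$-generic filter $F\subseteq\pi_Z(\dot{\mathbb B})^{G_{\bar\kappa}}$ interpreting $\pi_Z(\ddot a)^{G_{\bar\kappa}}$ correctly with respect to $\phi$. The tail $\mathbb{P}_\kappa/\mathbb{P}_{\bar\kappa+1}$ lies in $\Gamma$ by $n$-niceness, $\phi$ is provably $\Gamma$-persistent, and $<\hspace{-2pt}\bar\lambda$-weak $N$-genericity is absolute, so $F$ and $\phi$ survive into $V[f][G]$. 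Together with $\sigma$ and $N$, this produces exactly the data required by the Jensen-style statement of $\Sigma_n\mhyphen CBFA_{<\kappa}^{<\lambda}(\Gamma)$.

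The main obstacle is the coordination between the Laver function and the reflecting structures: we need the value $g(\pi_Z(\kappa))$ to coincide with the $\pi_Z$-image of the tuple coding the current instance, so that the iteration actually does add the desired filter at stage $\bar\kappa$. This is precisely the defining property of a correctly reflecting Laver function, so once Lemma \ref{lemma:reflaver} is invoked the remainder of the proof is a routine adaptation of the symmetric case, with the $H_\lambda$-reflection of $\kappa$ replacing its $\Sigma_n$-correctness as the reason the reflected instance truthfully witnesses the existence of the relevant $<\hspace{-2pt}\bar\lambda$-weakly $N$-generic filter.
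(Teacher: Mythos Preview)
Your proposal has a genuine gap. You write ``choose $\mathbb{P}_\kappa$-names $\dot{\mathbb B},\ddot a,\dot b\in H_\lambda^{V[f]}$'', but the Boolean algebra $\mathbb{B}$ is only assumed to lie in $H_\gamma^{V[f][G]}$ for some $\gamma\geq\lambda$; it can be arbitrarily large, so by Lemma~\ref{lemma:namesize} its name need not lie in $H_\lambda^{V[f]}$. The same problem afflicts $X$: you ask that $Z\prec H_\lambda^{V[f]}$ contain $X$, but $X$ is a subset of $H_\gamma^{V[f][G]}$, so in general $X$ is not even in $V[f]$, let alone in $H_\lambda^{V[f]}$. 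Thus the tuple you want to feed to the correctly reflecting Laver function is not available as a parameter in $H_\lambda^{V[f]}$, and the reflection step as you describe it cannot be carried out.

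The paper resolves this with a two-stage reflection. First, working in $V[f][G]$, take an intermediate elementary substructure $Y\prec H_\gamma^{V[f][G]}$ of size less than $\lambda$ (not $\kappa$) containing $\mathbb{B}$, $\dot a$, $b$, $X$, etc., and collapse to get $N'$, $\sigma'=\pi_Y^{-1}$, and a small $\mathbb{B}':=\pi_Y(\mathbb{B})$. Now $N'\in H_\lambda^{V[f][G]}$, so names $\dot{N}'$, $\dot{\mathbb{B}}'$, etc.\ do lie in $H_\lambda^{V[f]}$, and these form the tuple for the Laver reflection. The Laver step then produces $Z\prec H_\lambda^{V[f]}$ and, after interpreting names by $G_{\bar\kappa}$ and $G$, an elementary $\bar\sigma:N\to N'$ defined by $\bar\sigma(\dot x^{G_{\bar\kappa}})=\pi_Z^{-1}(\dot x)^G$. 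The required embedding into $H_\gamma^{V[f][G]}$ is the composite $\sigma=\sigma'\circ\bar\sigma$. Your sketch collapses these two steps into one, which is why both the size of $\mathbb{B}$ and the target of $\sigma$ come out wrong.
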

\begin{proof}
    By Lemma \ref{lemma:reflaver}, in $V[f]$ there is a correctly reflecting Laver function $g:\kappa\rightarrow V_\kappa^{V[f]}$. As usual, we wish to construct an iteration $\langle \mathbb{P}_\alpha\sbp \alpha\leq \kappa\rangle$ of (names for) posets $\langle \dot{\mathbb{Q}}_\alpha\sbp \alpha<\kappa\rangle$ in $\Gamma\cap V_\kappa^{V[f]}$ with support suitable for $\Gamma$. We cannot quite let $g$ select $\dot{\mathbb{Q}}_\alpha$ as in the proof of Theorem \ref{thm:gencon}, since we wish to apply the axiom to arbitrarily large posets in $\Gamma$ but $g$ only works nicely with parameters in $H_\lambda^{V[f]}$. Instead, we follow the approach of Theorem \ref{thm:symbfa}, with a more complex reflection argument.

    Assume that $\mathbb{P}_\alpha$ has already been defined. We construct $\dot{\mathbb{Q}}_\alpha$ so that it names the trivial forcing unless $g(\alpha)=(\alpha, \mu, \dot{M}, \dot{\mathbb{R}}, \ddot{a}, \dot{b}, p, \phi)$\footnote{Some of the parameters in this tuple (most obviously $\alpha$) are redundant in the choice of $\dot{\mathbb{Q}}_\alpha$, but will be needed as parameters in the statement which we will eventually want to reflect, so by the definition of a correctly reflecting Laver function we must include them here in order for the reflection to work correctly}, where:
    \begin{itemize}
        \item $\dot{M}$ is a $\mathbb{P}_\alpha$-name for a transitive structure containing $\mu$, $\dot{\mathbb{R}}$, $\ddot{a}$, and $\dot{b}$
        \item $\dot{\mathbb{R}}$ is a $\mathbb{P}_\alpha$-name for a Boolean algebra (not necessarily in $\Gamma$)
        \item $\ddot{a}$ is a $\mathbb{P}_\alpha$-name for a $\dot{\mathbb{R}}$-name
        \item $\dot{b}$ is a $\mathbb{P}_\alpha$ name
        \item $p\in \mathbb{P}_\alpha$ forces $\check{\alpha}$ and $\check{\mu}$ to be cardinals of $\dot{M}$
        \item $\phi$ is a $\Sigma_n$ formula 
        \item $p$ forces that there is a poset $\mathbb{Q}\in \Gamma$ of size less than $\kappa$ which adds a $<\hspace{-5pt}\check{\mu}$-weakly $\dot{M}$-generic filter $F\subset \dot{\mathbb{R}}$ such that $\phi(\ddot{a}^F, b)$ holds
    \end{itemize}
    If all of these hypotheses hold, we arrange that $p$ also forces $\dot{\mathbb{Q}}_\alpha$ to be a poset as in the last item in $V_\kappa$ of the forcing extension by $\mathbb{P}_\alpha$. Let $G\subseteq\mathbb{P}_\kappa$ be $V[f]$-generic.

    In $V[f][G]$, fix a complete Boolean algebra $\mathbb{B}\in \Gamma$, a $\mathbb{B}$-name $\dot{a}\in H_\lambda^{V[f][G]}$, a parameter $b\in H_\lambda^{V[f][G]}$, a cardinal $\gamma\geq\lambda$ such that $\mathbb{B}\in H_\gamma^{V[f][G]}$, a set $X\subset H_\gamma$ of size less than $\kappa$, and a provably $\Gamma$-persistent $\Sigma_n$ formula $\phi$ such that $\Vdash_{\mathbb{B}}\phi(\dot{a}, \check{b})$. As in the previous proof, we can find a $Y\prec H_\gamma^{V[f][G]}$ of size less than $\lambda$ such that
    $$\{\mathbb{B},\lambda\}\cup trcl(\{\dot{a}, b\})\cup (\kappa+1)\cup X\subset Y$$
    and $Y\cap\lambda\in\lambda$. Then by Lemma \ref{lemma:macSkolem}, whenever $A\subset\mathbb{B}$ is a maximal antichain of size less than $\lambda$ and $A\in Y$, we have $A\subset Y$. Let $N'$ be the transitive collapse of $Y$, $\lambda':=\pi_Y(\lambda)$ and $\sigma':N'\rightarrow H_\gamma^{V[f][G]}$ denote $\pi_Y^{-1}$.

    In $V[f]$, let $\dot{N}'$, $\dot{\mathbb{B}}'$, $\ddot{a}$, and $\dot{b}$ be $\mathbb{P}_\kappa$-names for $N'$, $\pi_Y(\mathbb{B})$, $\dot{a}$, and $b$ respectively. Since $|\mathbb{P}_\kappa|=\kappa<\lambda$ and all the parameters are in $H_\lambda^{V[f][G]}$, by Lemma \ref{lemma:namesize} we can arrange that the names for them are in $H_\lambda^{V[f]}$. Let $\psi(\kappa, \lambda', \dot{N}', \dot{\mathbb{B}}', \ddot{a}, \dot{b},p,\phi)$ abbreviate the assertion that $\kappa$ is inaccessible and $p$ forces "there exists a poset in $\Gamma$ (namely $\mathbb{B}-\{0_{\mathbb{B}}\}$) which adds a $<\hspace{-2pt}\lambda'$-weakly $\dot{N}'$-generic filter $F\subset\dot{\mathbb{B}}'$ such that $\phi(\ddot{a}^F, b)$ holds". Inaccessibility is immediate, while the second part holds for some $p\in G$ because if $H\subset\mathbb{B}$ is a $V[f][G]$-generic filter and $A\in N'$ is a maximal antichain of $\mathbb{B}'$ with $|A|^{N'}<\lambda'$, then by elementarity $\sigma'(A)$ is a maximal antichain of $\mathbb{B}$ of size less than $\lambda$, so $\sigma'(A)\subset Y$ by the construction of $Y$. Thus there is some $q\in \sigma'(A)\cap H\cap Y$, so $\pi_Y(q)\in A$ and thus $\pi_Y"H$ is the desired filter.

    Since inaccessibility is a $\Pi_1$ property and the rest of $\psi$ merely adds some existential and bounded quantifiers to the $\Sigma_n$ assertions that a poset is in $\Gamma$ and $\phi$ holds, $\psi$ is overall a $\Sigma_n$ statement (as we assume that $n\geq 2$) about parameters in $H_\lambda^{V[f]}$. Hence by the definition of $g$, we can find a $Z\prec H_\lambda^{V[f]}$ of size less than $\kappa$ containing all parameters of $\psi$, $\mathbb{P}_\kappa$, and a name for every element of $\sigma'^{-1}"X$, which is transitive below $\kappa$ and closed under the map $\alpha\mapsto\mathbb{P}_\alpha$ such that $$g(\pi_Z(\kappa))=(\bar{\kappa},\bar{\lambda}, \dot{N}, \dot{\bar{\mathbb{B}}}, \ddot{\bar{a}},\dot{\bar{b}},\pi_Z(p),\phi)$$
    $$:=(\pi_Z(\kappa),\pi_Z(\lambda'), \pi_Z(\dot{N}'),\pi_Z(\dot{\mathbb{B}}'), \pi_Z(\ddot{a}),\pi_Z(\dot{b}),\pi_Z(p),\phi)$$ and $\psi(\bar{\kappa},\bar{\lambda},\dot{N}, \dot{\bar{\mathbb{B}}}, \ddot{\bar{a}},\dot{\bar{b}},\pi_Z(p),\phi)$ holds in $V_\kappa^{V[f]}$ (and $V[f]$).

    By the definition of $n$-niceness and the fact that $\kappa$ is inaccessible, $\mathbb{P}_\kappa$ is the direct limit of $\langle\mathbb{P}_\alpha\sbp \alpha<\kappa\rangle$ (and this fact is absolute to $H_\lambda^{V[f]}$), so by elementarity and the fact that $Z$ contains $\mathbb{P}_\alpha$ for all $\alpha<\bar{\kappa}=\kappa\cap Z$, $\pi_Z(\mathbb{P}_\kappa)$ is the direct limit of $\langle\mathbb{P}_\alpha\sbp \alpha<\bar{\kappa}\rangle$. Since $\bar{\kappa}$ is inaccessible, this is exactly $\mathbb{P}_{\bar{\kappa}}$. Since $p\in\mathbb{P}_\kappa\subset V_\kappa^{V[f]}$, $p$ is in the transitive part of $Z$ and thus $\pi_Z(p)=p$. Then again by elementarity, $p\in\mathbb{P}_{\bar{\kappa}}$ and $\dot{N}$, $\dot{\bar{\mathbb{B}}}$, $\ddot{\bar{a}}$, and $\dot{\bar{b}}$ are $\mathbb{P}_{\bar{\kappa}}$ names.
    
    Thus $p\in G_{\bar{\kappa}}:=G\cap\mathbb{P}_{\bar{\kappa}}$. Set $N:=\dot{N}^{G_{\bar{\kappa}}}$, $\bar{\mathbb{B}}:=\dot{\bar{\mathbb{B}}}^{G_{\bar{\kappa}}}$, $\dot{\bar{a}}:=\ddot{\bar{a}}^{G_{\bar{\kappa}}}$, and $\bar{b}:=\dot{\bar{b}}^{G_{\bar{\kappa}}}$. By the reflected version of $\psi$, there is a poset in $V_\kappa^{V[f][G_{\bar{\kappa}}]}\cap\Gamma$ which adds a $<\hspace{-2pt}\bar{\lambda}$-weakly $N$-generic filter $F\subset\bar{\mathbb{B}}$ such that $\phi(\dot{\bar{a}}^F,\bar{b})$ holds. By the construction of $\langle\dot{\mathbb{Q}}_\alpha\sbp \alpha<\kappa\rangle$ from $g$, $\dot{\mathbb{Q}}_{\bar{\kappa}}$ is a name for such a poset, so such an $F$ exists in $V[f][G_{\bar{\kappa}+1}]$. Since $\mathbb{P}_\kappa/\mathbb{P}_{\bar{\kappa}+1}\in \Gamma$, it preserves $\phi$, and all forcing preserves the existence of weakly $N$-generic filters, so we have such an $F$ in $V[f][G]$.

    It remains only to show that $V[f][G]$ contains an elementary embedding $\sigma:N\rightarrow H_\gamma$ which maps everything where we want. Define $\bar{\sigma}:N\rightarrow N'$ by, whenever $\dot{x}$ is a $\mathbb{P}_{\bar{\kappa}}$-name for an element of $N$, $\bar{\sigma}(\dot{x}^{G_{\bar{\kappa}}}):=\pi_Z^{-1}(\dot{x})^G$. To see that this is a well-defined elementary embedding, consider any formula $\chi$ and names for elements of $N$ $\dot{x}_1,\dotsc, \dot{x}_m$ such that $N\models \chi(\dot{x}_1^{G_{\bar{\kappa}}},\dotsc, \dot{x}_n^{G_{\bar{\kappa}}})$. Then there is some $q\in G_{\bar{\kappa}}$ such that $q\Vdash_{\mathbb{P}_{\bar{\kappa}}} \pi_Z(\dot{N}')\models \chi(\dot{x}_1,\dotsc, \dot{x}_m)$ is true in the transitive collapse of $Z$, so $q\Vdash_{\mathbb{P}_\kappa} \dot{N}'\models\chi(\pi_Z^{-1}(\dot{x}_1),\dotsc, \pi_Z^{-1}(\dot{x}_m))$ and hence $N'\models \chi(\pi_Z^{-1}(\dot{x}_1)^G,\dotsc, \pi_Z^{-1}(\dot{x}_m)^G)$.

    We can now define $\sigma:=\sigma'\circ\bar{\sigma}$. Then $\sigma$ is elementary $N\rightarrow H_\gamma^{V[f][G]}$ and we verify:

    $$\sigma(\bar{\lambda})=\sigma'(\bar{\sigma}(\bar{\lambda}))=\sigma'(\bar{\sigma}(\pi_Z(\check{\lambda}')^{G_{\bar{\kappa}}}))=\sigma'(\check{\lambda}'^G)=\sigma'(\lambda')=\lambda$$

    $$\sigma(\bar{\mathbb{B}})=\sigma'(\bar{\sigma}(\pi_Z(\dot{\mathbb{B}}')^{G_{\bar{\kappa}}}))=\sigma'(\dot{\mathbb{B}}'^G)=\mathbb{B}$$

    $$\sigma(\dot{\bar{a}})=\sigma'(\bar{\sigma}(\pi_Z(\ddot{a})^{G_{\bar{\kappa}}}))=\sigma'(\ddot{a}^G)=\sigma'(\dot{a})=\dot{a}$$

    $$\sigma(\bar{b})=\sigma'(\bar{\sigma}(\pi_Z(\dot{b})^{G_{\bar{\kappa}}}))=\sigma'(\dot{b}^G)=\sigma'(b)=b$$

    To see that $rng(\sigma)\cap\kappa$ is transitive, first note that  $\bar{\sigma}(\bar{\kappa})=\pi_Z^{-1}(\bar{\kappa})=\kappa$, while for $\alpha<\bar{\kappa}$, since $Z\cap V_\kappa$ was taken to be transitive, $\bar{\sigma}(\alpha)=\alpha$. Since $crit(\sigma')>\kappa$, $\sigma$ fixes all $\alpha<\bar{\kappa}$ while sending $\bar{\kappa}$ to $\kappa$, as desired.

    Finally, for any $x\in X$, $Z$ was chosen to contain some $\mathbb{P}_\kappa$-name $\dot{y}$ such that $x=\sigma'(\dot{y}^G)$. Observe that there must be some $q\in G_{\bar{\kappa}}$ which decides whether $\pi_Z(\dot{y})\in \dot{N}$; since $q\in V_\kappa^{V[f]}$ and thus it is not moved by $\pi_Z$, by elementarity it must decide $\dot{y}\in\dot{N}'$ the same way. Because $G_{\bar{\kappa}}\subset G$ and $\dot{y}^G\in N'$, we must have $q\Vdash \pi_Z(\dot{y})\in\dot{N}$, so $\pi_Z(\dot{y})^{G_{\bar{\kappa}}}\in N$. As 
    $$\sigma(\pi_Z(\dot{y})^{G_{\bar{\kappa}}})=\sigma'(\bar{\sigma}(\pi_Z(\dot{y})^{G_{\bar{\kappa}}}))=\sigma'(\dot{y}^G)=x$$
    it follows that $X\subseteq rng(\sigma)$. Thus $\sigma$ has all the desired properties and $V[f][G]\models \Sigma_n\mhyphen CBFA_{<\kappa}^{<\lambda}(\Gamma)$.
\end{proof}

The construction of $Y$ in the above proof makes essential use of the regularity of $\lambda$; it is unclear whether the consistency of $\Sigma_n\mhyphen CBFA_{<\kappa}^{<\lambda}(\Gamma)$ follows from the existence of a $\Sigma_n$-correct $H_\lambda$-reflecting cardinal when $\lambda$ is singular.

\section{Equiconsistency Results}
\label{section:equicon}

In this section, we derive large cardinal properties of $\kappa$ in $L$ from $\Sigma_n\mhyphen CBFA_{<\kappa}^{<\lambda}(\Gamma)$. Asper\'o (\cite{asperomax}, Lemma 2.3) and Hamkins (\cite{hamkinsMP}, Lemma 3.2) proved results along these lines, making essential use of collapse forcings. These arguments do not work for cardinal-preserving classes like ccc forcing, but for these we can achieve similar results by enlarging power sets rather than collapsing cardinals. The following definition unifies these two styles of argument:

\begin{definition}
    Given a forcing class $\Gamma$, a formula $\psi$ is a $\Gamma$-inflatable definition of a cardinal $\kappa$ iff $\psi(\kappa)$ holds and ZFC proves:
    \begin{enumerate}
        \item there is a unique ordinal $\alpha$ such that $\psi(\alpha)$ holds
        \item for every ordinal $\alpha$, there exists a $\beta>\alpha$ and a poset $\mathbb{P}\in\Gamma$ such that $\Vdash_{\mathbb{P}}\psi(\beta)$
        \item if $\psi(\alpha)$ holds and $\psi(\beta)$ is $\Gamma$-forceable, then $\alpha\leq\beta$
    \end{enumerate}
\end{definition}

Thus for example if $\Gamma$ can collapse arbitrarily large cardinals to $\omega_1$, "a cardinal with exactly two smaller infinite cardinals" is a $\Gamma$-inflatable definition of $\omega_2$. Alternatively, if $\Gamma$ preserves cardinals but can add arbitrarily many reals, then "the cardinality of the continuum" is a $\Gamma$-inflatable definition of $(2^{\aleph_0})^V$.

Note that a $\Gamma$-inflatable definition can never be $\Sigma_1$ or $\Pi_1$, since if any formula of those complexities provably defines a unique object, then by upward or downward absoluteness it is impossible to change which object it defines by forcing.

\begin{lemma}
\label{lemma:symequicon}
    (Generalization of Asper\'o's Lemma 2.3 \cite{asperomax}) If $\Gamma$ is a $\Sigma_n$-definable forcing class and $\kappa$ is a regular cardinal such that $\Sigma_n\mhyphen CBFA_{<\kappa}^{<\kappa}(\Gamma)$ holds, and there is a $\Gamma$-inflatable $\Sigma_n$ definition of $\kappa$ (so $n\geq 2$), then $\kappa$ is a regular $\Sigma_n$-correct cardinal in $L$.
\end{lemma}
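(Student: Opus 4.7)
The plan is to apply the $\Sigma_n$-maximality principle $\Sigma_n\mhyphen MP_\Gamma(H_\kappa)$, which follows from $\Sigma_n\mhyphen CBFA_{<\kappa}^{<\kappa}(\Gamma)$ by Proposition~\ref{prop:cbfaeasy}(2), to a $\Sigma_n$ formula built from $\phi$ and the inflatable definition $\psi$. Regularity of $\kappa$ in $L$ will be immediate from its regularity in $V$, since any $L$-cofinal map $f : \mu \to \kappa$ with $\mu < \kappa$ would remain cofinal in $V$. For the $\Sigma_n$-correctness, I would fix a $\Sigma_n$ formula $\phi$ with $a \in L_\kappa \subseteq H_\kappa$, write $\phi(x) \equiv \exists y\, \theta(y,x)$ with $\theta$ a $\Pi_{n-1}$ formula, and suppose $L \models \phi(a)$. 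Applying Lemma~\ref{lemma:Cnaddexist} inside $L$ yields some $\alpha^* \in (C^{(n-1)})^L$ with $a \in L_{\alpha^*}$ and $L_{\alpha^*} \models \phi(a)$.

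The key formula to reflect is
\[
\chi(a) := \exists \alpha\, \exists \beta\, \bigl( \psi(\beta) \wedge \alpha < \beta \wedge (\alpha \in C^{(n-1)})^L \wedge L_\alpha \models \phi(a) \bigr),
\]
which is $\Sigma_n$ because $\psi$ is $\Sigma_n$, the $L$-relativization $(\alpha \in C^{(n-1)})^L$ is $\Pi_{n-1}$ by Corollary~\ref{cor:Cndef} (together with Proposition~\ref{prop:c1bfp} when $n=2$), and $L_\alpha \models \phi(a)$ is $\Delta_1$. Clause~(2) of the $\Gamma$-inflatable definition will provide a $\mathbb{P} \in \Gamma$ forcing $\psi(\beta)$ for some $\beta > \alpha^*$, after which $(\alpha^*, \beta)$ witnesses $\chi(a)$ in the extension by absoluteness of $L$, so $\chi$ is $\Gamma$-forceable. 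For provable $\Gamma$-persistence, if $(\alpha_0, \beta_0)$ witnesses $\chi(a)$ in $V$ and we pass to a $\Gamma$-extension, the new $\psi$-witness $\beta_1$ satisfies $\beta_1 \geq \beta_0$ by clauses~(1) and~(3), and absoluteness of $L$ preserves the other conjuncts with the same $\alpha_0$. Applying $\Sigma_n\mhyphen MP$ to $\chi$ (valid since $a \in H_\kappa$) will then yield $V \models \chi(a)$; since $\psi(\kappa)$ holds in $V$, the $\beta$ produced must equal $\kappa$, and I obtain $\alpha < \kappa$ with $\alpha \in (C^{(n-1)})^L$ and $L_\alpha \models \phi(a)$. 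A witness $x \in L_\alpha \subseteq L_\kappa$ with $L_\alpha \models \theta(x,a)$ then satisfies $L \models \theta(x,a)$ by $\alpha \in (C^{(n-1)})^L$.

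The final step is to push $\theta(x,a)$ down from $L$ to $L_\kappa$, which requires $\kappa \in (C^{(n-1)})^L$. In the base case $n = 2$, $\theta$ is $\Pi_1$ and hence downward absolute between the transitive classes $L$ and $L_\kappa$, so $L_\kappa \models \theta(x,a)$ immediately and $L_\kappa \models \phi(a)$ follows without further hypothesis. For $n > 2$ I plan to proceed by induction on $m$ from $2$ up to $n$: the same argument applied to $\Sigma_m$ formulas, using $(\alpha \in C^{(m-1)})^L$ in place of $(\alpha \in C^{(n-1)})^L$ inside $\chi$, yields $\kappa \in (C^{(m)})^L$ at each stage, with each $\chi_m$ still of complexity at most $\Sigma_n$ because the $\Sigma_n$ complexity of $\psi$ dominates, so $\Sigma_n\mhyphen MP$ continues to apply. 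The main obstacle will be this complexity bookkeeping, coupled with the awkward fact that $\psi$ is only assumed to be $\Sigma_n$ rather than of lower complexity; this is what forces the correctness of $\alpha$ to be encoded inside the reflected formula rather than guaranteed by a judicious choice of forcing.
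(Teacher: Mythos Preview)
Your argument is correct, but it is more elaborate than necessary, and the extra machinery obscures the key point. The paper's proof uses the Tarski--Vaught criterion directly: to show $L_\kappa \prec_{\Sigma_n} L$, it suffices that for every $\Pi_{n-1}$ formula $\theta$ and $a \in L_\kappa$ with $L \models \exists y\,\theta(a,y)$, some witness $b$ lies in $L_\kappa$ with $L \models \theta(a,b)$. The paper reflects the single formula
\[
\exists \theta'\,\exists \beta < \theta'\,\exists b \in L_\beta\,\bigl(\psi(\theta') \wedge (\theta(a,b))^L\bigr),
\]
which is $\Sigma_n$ since $(\theta(a,b))^L$ is $\Pi_{n-1}$, and concludes immediately.

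You already establish exactly this in your step~5: you produce $x \in L_\alpha \subseteq L_\kappa$ with $L \models \theta(x,a)$. That is the Tarski--Vaught test, and the proof is done at that point. The subsequent ``push down to $L_\kappa$'' step and the induction on $m$ are superfluous: Tarski--Vaught does not require $L_\kappa \models \theta(x,a)$, only $L \models \theta(x,a)$ with $x \in L_\kappa$. Your use of $(\alpha \in C^{(n-1)})^L$ inside $\chi$ is a legitimate alternative to the paper's direct relativization $\theta^L$, and both keep the formula at complexity $\Sigma_n$; but once you recognize that the push-down is not needed, your argument collapses to essentially the paper's, done in one pass rather than by induction.
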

\begin{proof}
    It is sufficient to shows that $L_\kappa \prec_{\Sigma_n} L$. For this, we use a version of the Tarski-Vaught criterion. Suppose that $a\in L_\kappa$ and $L\models\exists y\hspace{3pt} \phi(a, y)$, where $\phi$ is a $\Pi_{n-1}$ formula. If $\psi$ is a $\Gamma$-inflatable $\Sigma_n$ definition of $\kappa$, then the formula:
    $$\exists \theta \hspace{3pt}\exists \beta<\theta\hspace{3pt}\exists b\in L_\beta \hspace{3pt} (\psi(\theta)\land L\models \phi(a, b))$$
    is $\Sigma_n$ and $\Gamma$-forceable, since for any $\beta$ containing a witness $b$ of $L\models\exists y\hspace{3pt} \phi(a, y)$ we can enlarge the ordinal defined by $\psi$ to be larger than $\beta$. Furthermore, ZFC proves that forcing preserves truth in $L$ and preserves or enlarges the ordinal defined by $\psi$, so this formula is provably $\Gamma$-persistent.

    Applying the $\Sigma_n$-correct bounded forcing axiom to the above formula with $X=trcl(\{a\})$, we get an elementary embedding $\sigma: N\rightarrow H_\gamma$ for some transitive $N$ and regular $\gamma$ such that the formula already holds in $V$ (as in this situation we will have $\sigma^{-1}(a)=a$). Since $\kappa$ is the unique ordinal such that $V\models \psi(\kappa)$, there is some $\beta<\kappa$ with a $b\in L_\beta$ such that $L\models \phi(a,b)$. Thus $b\in L_\kappa$, so $L_\kappa\prec_{\Sigma_n} L$ by the Tarski-Vaught criterion.
\end{proof}

Miyamoto proved that $BPFA^{<\omega_3}$ is equiconsistent with a +1-reflecting cardinal (\cite{miyamotosegments}, Theorem 4.2); Fuchs showed that the same holds for $BSCFA^{<\omega_3}$ (\cite{fuchshierachies}, Lemma 3.10). We can prove an analogue of these results in the $\Sigma_n$-correct setting:

\begin{lemma}
    \label{lemma:+1equicon} If $\Gamma$ is a $\Sigma_n$-definable forcing class and $\kappa$ is a regular cardinal such that $\Sigma_n\mhyphen CBFA_{<\kappa}^{<\kappa^+}(\Gamma)$ holds, and there is a $\Gamma$-inflatable $\Sigma_n$ definition of $\kappa$, then $\kappa$ is $\Sigma_n$-correctly +1-reflecting in $L$.
\end{lemma}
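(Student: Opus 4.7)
The plan is to combine Lemma \ref{lemma:symequicon} with a reflection of a carefully crafted $\Sigma_n$ formula. By Proposition \ref{prop:cbfaeasy}(4), $\Sigma_n\mhyphen CBFA_{<\kappa}^{<\kappa}(\Gamma)$ already holds, so Lemma \ref{lemma:symequicon} gives that $\kappa$ is a regular $\Sigma_n$-correct cardinal in $L$; in particular $L_\kappa \prec_{\Sigma_n} L$, and $\kappa$ is regular in $L$ since any cofinal sequence in $L$ would be one in $V$. By Lemma \ref{lemma:alt+1ref} interpreted inside $L$, it will suffice to show that for every $A \in \mathcal{P}(\kappa) \cap L$, every $\Sigma_n$ formula $\phi$ with $L \models \phi(A, \kappa)$, and every $V$-club $C \subseteq \kappa$ (which contains every $L$-club, since $\kappa$ is regular in $V$), there is some $\alpha \in C$ with $L \models \phi(A \cap \alpha, \alpha)$; the resulting reflection set is $L$-definable from $A$ and $\phi$, so $V$-stationarity transfers to $L$-stationarity.

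Letting $\psi$ be the $\Gamma$-inflatable $\Sigma_n$ definition of $\kappa$ and $C' := C \cup \{\kappa\}$, I would reflect the formula
\[\Xi(A, C') \;:\equiv\; \exists \theta\, \exists \alpha \in C'\,\bigl(\alpha < \theta \land \psi(\theta) \land L \models \phi(A \cap \alpha, \alpha)\bigr).\]
As in Lemma \ref{lemma:symequicon}, this is $\Sigma_n$, using that $L$-satisfaction of a $\Sigma_n$ formula is $\Sigma_n$-expressible since $x \in L$ is $\Sigma_1$ and relativization to $L$ preserves complexity. It is $\Gamma$-forceable by any poset inflating $\psi$ to some $\theta > \kappa$: in the extension take $\alpha = \kappa \in C'$, noting that $L \models \phi(A, \kappa)$ persists. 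It is provably $\Gamma$-persistent because in any further $\Gamma$-extension the new witness $\theta' \geq \theta$ of $\psi$ (by clause (3) of $\Gamma$-inflatability) still satisfies $\theta' > \alpha$, while $\alpha$, $C'$, and $L$-truth are undisturbed. Since $A, C' \subseteq \kappa + 1$, both parameters lie in $H_{\kappa^+}$, as required by the bound $\lambda = \kappa^+$.

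Apply $\Sigma_n\mhyphen CBFA_{<\kappa}^{<\kappa^+}(\Gamma)$ with parameter $b = (A, C')$, trivial name $\dot{a} = \check{\emptyset}$, $\kappa \in X$, and a suitable Boolean algebra $\mathbb{B}$ forcing $\Xi(\check{b})$. This yields a transitive $N$ of size less than $\kappa$ and an elementary $\sigma : N \to H_\gamma$ such that $\kappa, b \in rng(\sigma)$, $\bar{\kappa} := \sigma^{-1}(\kappa) = rng(\sigma) \cap \kappa$ is transitive, and (with the filter irrelevant to the parameter-only formula $\Xi$) $\Xi(\sigma^{-1}(A), \sigma^{-1}(C'))$ holds in $V$. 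Transitivity of $\bar{\kappa}$ forces $\sigma \upharpoonright \bar{\kappa} = id$, so by elementarity $\sigma^{-1}(A) = A \cap \bar{\kappa}$ and $\sigma^{-1}(C') = C' \cap \bar{\kappa}$. The reflected $\Xi$ delivers witnesses $\theta, \alpha$; uniqueness of $\psi$ in $V$ forces $\theta = \kappa$, and $\alpha \in C' \cap \bar{\kappa}$ lies strictly below $\bar{\kappa} < \kappa$, so $\alpha \neq \kappa$ and thus $\alpha \in C$, while $(A \cap \bar{\kappa}) \cap \alpha = A \cap \alpha$, yielding $L \models \phi(A \cap \alpha, \alpha)$ as required.

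The main technical concern will be the $\Sigma_n$-complexity of $\Xi$, which rests on the complexity analysis of $L$-satisfaction implicit in Lemma \ref{lemma:symequicon}. The small but essential device is adjoining $\kappa$ to $C$: this simultaneously ensures $\Gamma$-forceability through the witness $\alpha = \kappa$ and guarantees that the reflected witness $\alpha < \bar{\kappa}$ lies in the original club $C$ rather than merely in $C'$.
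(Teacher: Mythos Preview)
Your overall strategy is sound, but there is a computational slip that creates a genuine (though easily repaired) gap. You assert $\sigma^{-1}(C') = C' \cap \bar{\kappa}$, but since $\kappa \in C'$ and $\kappa \in rng(\sigma)$ with $\sigma^{-1}(\kappa) = \bar{\kappa}$, in fact $\sigma^{-1}(C') = (C \cap \bar{\kappa}) \cup \{\bar{\kappa}\}$. Consequently the reflected witness $\alpha$ may equal $\bar{\kappa}$, and your argument that $\alpha < \bar{\kappa}$ (hence $\alpha \in C$) is not justified. The fix is short: since $C' \in rng(\sigma)$ we also have $C \in rng(\sigma)$, and by elementarity $\sigma^{-1}(C) = C \cap \bar{\kappa}$ is unbounded in $\bar{\kappa}$; closure of $C$ in $\kappa$ then gives $\bar{\kappa} \in C$, so the case $\alpha = \bar{\kappa}$ is harmless.

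The paper's proof takes a noticeably simpler route. Rather than encoding the club and the inflatable definition into a compound formula $\Xi$, it applies the Miyamoto--Asper\'o formulation of the bounded axiom directly to the bare formula ``$L \models \phi(A,\kappa)$'' with parameter $(A,\kappa)\in H_{\kappa^+}$; this formula is already true in $V$, hence trivially $\Gamma$-forceable, and provably $\Gamma$-persistent since $L$-truth is forcing-invariant. The M--A form then yields \emph{stationarily many} $Z\prec H_{\kappa^+}$ with $L\models\phi(\pi_Z(A),\pi_Z(\kappa))$, and Lemma~\ref{lemma:clubequiv} picks one with $\bar{\kappa}=Z\cap\kappa\in C$. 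The inflatable definition $\psi$ never enters the reflected formula; it is used only at the end, via Lemma~\ref{lemma:symequicon}, to pass from $L$ to $L_\kappa$. Your approach has the modest advantage of working entirely within the Jensen-style formulation, but the device of adjoining $\kappa$ to $C$ and threading $\psi$ through $\Xi$ is doing work that the stationarity clause of the M--A formulation handles for free.
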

\begin{proof}
    By Lemma \ref{lemma:alt+1ref} it is sufficient to consider some $A\in\mathcal{P}(\kappa)\cap L$ and $\Sigma_n$ formula $\phi$ such that $L\models\phi(A, \kappa)$ and show that for stationarily many $\alpha<\kappa$, $L_\kappa\models\phi(A\cap\alpha, \alpha)$. Fix a club $C\subset\kappa$; we will find a $\bar{\kappa}\in C$ such that $L_\kappa\models\phi(A\cap\bar{\kappa}, \bar{\kappa})$.

    We apply the M-A formulation of the axiom to the formula $L\models \phi(A, \kappa)$, which is clearly $\Sigma_n$, $\Gamma$-forceable, and provably $\Gamma$-persistent. This yields stationarily many $Z\prec H_{\kappa^+}$ such that $L\models\phi(\pi_Z(A), \pi_Z(\kappa))$, so by Lemma \ref{lemma:clubequiv} we can find one with $\bar{\kappa}:=\kappa\cap Z=\pi_Z(\kappa)\in C$. Then $\pi_Z$ does not move any ordinals less than $\kappa$, so $\pi_Z(A)=A\cap\bar{\kappa}$.

Thus we have obtained a $\bar{\kappa}\in C$ such that $L\models \phi(A\cap\bar{\kappa}, \bar{\kappa})$. Applying Lemma \ref{lemma:symequicon}, we get that $L_\kappa\models \phi(A\cap\bar{\kappa}, \bar{\kappa})$, so $\kappa$ is $\Sigma_n$-correctly $+1$-reflecting in $L$.
\end{proof}

By Proposition \ref{prop:+2ref} and the preceding discussion, +2-reflecting cardinals are too strong to be accommodated by any currently understood canonical inner model, so obtaining good consistency strength lower bounds for $\Sigma_n$-correct bounded forcing axioms when $\lambda\geq \kappa^{++}$ will most likely depend on further developments in inner model theory.

To summarize:
\begin{theorem}
    \label{thm:cbfaequicon}
    Suppose $\Gamma$ is an $n$-nice forcing class with a $\Gamma$-inflatable $\Sigma_n$ definition $\psi$ of a cardinal such that for $\mathbb{P}_\kappa$ as in Theorems \ref{thm:symbfa} or \ref{thm:asymbfa}, $\Vdash_{\mathbb{P}_\kappa} \psi(\kappa)$. Then (1) and (2) are equiconsistent over ZFC, as are (3) and (4):
    \begin{enumerate}[(1)]
        \item There is a regular $\Sigma_n$-correct cardinal
        \item $\Sigma_n\mhyphen CBFA_{<\kappa}^{<\kappa}(\Gamma)\land\psi(\kappa)$
        \item There is a $\Sigma_n$-correctly $+1$-reflecting cardinal
        \item $\Sigma_n\mhyphen CBFA_{<\kappa}^{<\kappa^+}(\Gamma)\land\psi(\kappa)$
    \end{enumerate}
\end{theorem}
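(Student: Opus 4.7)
The proof plan is essentially to assemble the equiconsistencies from results already proved in this chapter, one direction each from a consistency theorem and one from an equiconsistency lemma. Since no new constructions are needed, I will organize the four implications as two matching pairs.

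For the upward direction in the symmetric pair ($(1) \Rightarrow \mathrm{Con}((2))$), I would start in a model with a regular $\kappa \in C^{(n)}$ and apply Theorem \ref{thm:symbfa}, which produces a $\kappa$-cc poset $\mathbb{P}_\kappa \in \Gamma$ forcing $\Sigma_n\mhyphen CBFA_{<\kappa}^{<\kappa}(\Gamma)$. The hypothesis on $\psi$ built into the statement of Theorem \ref{thm:cbfaequicon} is exactly that $\Vdash_{\mathbb{P}_\kappa} \psi(\kappa)$, so (2) holds in the generic extension. For the downward direction $((2) \Rightarrow \mathrm{Con}((1)))$, I would apply Lemma \ref{lemma:symequicon} directly in any model of (2): since $\Gamma$ is $n$-nice (hence $\Sigma_n$-definable) and $\psi$ is a $\Gamma$-inflatable $\Sigma_n$ definition of $\kappa$, the lemma concludes that $\kappa$ is a regular $\Sigma_n$-correct cardinal in $L$, so $L$ witnesses Con((1)).

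For the asymmetric pair, the argument is entirely parallel. For $(3) \Rightarrow \mathrm{Con}((4))$, start with $\kappa$ $\Sigma_n$-correctly $+1$-reflecting, i.e.\ $\Sigma_n$-correctly $H_{\kappa^+}$-reflecting. Apply Theorem \ref{thm:asymbfa} with $\lambda = \kappa^+$: first pass to $V[f]$ by fast function forcing (which by Lemma \ref{lemma:reflaver} preserves correct $H_{\kappa^+}$-reflection and produces a correctly reflecting Laver function), then force with the iteration $\mathbb{P}_\kappa$ to obtain $\Sigma_n\mhyphen CBFA_{<\kappa}^{<\kappa^+}(\Gamma)$ in $V[f][G]$. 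The hypothesis on $\psi$ (interpreted for the asymmetric case as referring to the two-step iteration $\mathbb{F}_\kappa * \dot{\mathbb{P}}_\kappa$, or equivalently ensuring that $\psi(\kappa)$ is forced in $V[f][G]$) gives (4). For $(4) \Rightarrow \mathrm{Con}((3))$, Lemma \ref{lemma:+1equicon} applies verbatim and concludes that $\kappa$ is $\Sigma_n$-correctly $+1$-reflecting in $L$.

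The one place where care is needed --- and what I expect is the only genuine friction in the write-up --- is matching up the hypothesis on $\psi$ with the actual forcing used. In the symmetric case this is immediate, but in the asymmetric case $\mathbb{P}_\kappa$ is constructed over $V[f]$ rather than $V$, so $\psi(\kappa)$ must be preserved through both the fast function forcing and the main iteration. For the typical $\Gamma$-inflatable definitions one cares about (such as ``$\kappa$ is the third infinite cardinal'' when $\Gamma$ collapses cardinals, or ``$\kappa = 2^{\aleph_0}$'' for ccc forcing), this is routine: fast function forcing preserves cardinals and cofinalities (Hamkins, \cite{hamkinslottery}), and its composition with $\mathbb{P}_\kappa$ retains the relevant arithmetic properties. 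I would remark on this verification in the proof but not belabor it, since the theorem is stated so as to absorb these checks into the hypothesis.
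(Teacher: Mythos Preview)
Your proposal is correct and matches the paper's approach exactly: the paper presents this theorem with no proof, introducing it with ``To summarize:'' immediately after Lemmas \ref{lemma:symequicon} and \ref{lemma:+1equicon}, so it is intended as a straightforward assembly of Theorems \ref{thm:symbfa} and \ref{thm:asymbfa} for one direction and those two lemmas for the other. Your remark about the friction with $\psi$ in the asymmetric case is a reasonable elaboration, but the paper simply absorbs this into the hypothesis as you anticipated.
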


As a corollary, we obtain much cleaner hierarchy results in $n$ than seemed to be possible for unbounded $\Sigma_n$-correct forcing axioms in Section \ref{section:hierarchy}:

\begin{corollary}
For any positive integer $n$ and $\Gamma$ as in the previous theorem:
    \begin{enumerate}
        \item $\Sigma_n\mhyphen CBFA_{<\kappa}^{<\kappa}(\Gamma)$ (if it is consistent) does not imply $\Sigma_{n+1}\mhyphen CBFA_{<\kappa}^{<\kappa}(\Gamma)$
        \item $\Sigma_n\mhyphen CBFA_{<\kappa}^{<\kappa^+}(\Gamma)$ (if it is consistent) does not imply $\Sigma_{n+1}\mhyphen CBFA_{<\kappa}^{<\kappa^+}(\Gamma)$
    \end{enumerate}
\end{corollary}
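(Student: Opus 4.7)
The plan is to combine Theorem \ref{thm:cbfaequicon} with Gödel's Second Incompleteness Theorem. The key observation is that both Lemma \ref{lemma:symequicon} and Lemma \ref{lemma:+1equicon} apply at level $n+1$ just as they do at level $n$: any $\Sigma_n$-definable forcing class is also $\Sigma_{n+1}$-definable, and any $\Gamma$-inflatable $\Sigma_n$ formula is automatically a $\Gamma$-inflatable $\Sigma_{n+1}$ formula (the notion of $\Gamma$-inflatability depends only on $\Gamma$, not on the complexity of $\psi$). Consequently, each axiom at level $n+1$ yields in $L$ a large cardinal one notch up the $\Sigma$-hierarchy from what its level-$n$ counterpart produces, giving a strict jump in consistency strength.

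For part (1), I would first apply Lemma \ref{lemma:symequicon} at level $n+1$ to deduce from $\Sigma_{n+1}\mhyphen CBFA_{<\kappa}^{<\kappa}(\Gamma)\wedge\psi(\kappa)$ that $\kappa$ is a regular $\Sigma_{n+1}$-correct cardinal in $L$. By Corollary \ref{cor:cnlargecard}, such a $\kappa$ is inaccessible in $L$, so $L_\kappa\models\mathrm{ZFC}$, and via $L_\kappa\prec_{\Sigma_{n+1}} L$ the $\Sigma_{n+1}$ assertion ``there exists a regular $\Sigma_n$-correct cardinal'' (which is $\Sigma_{n+1}$ by Corollary \ref{cor:Cndef} and is witnessed in $L$ by $\kappa$ itself) reflects down into $L_\kappa$. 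Hence $L_\kappa$ is a set model of $\mathrm{ZFC}+{}$``$\exists$ regular $\Sigma_n$-correct cardinal,'' so by the equiconsistency in Theorem \ref{thm:cbfaequicon} the theory $\mathrm{ZFC}+\Sigma_n\mhyphen CBFA_{<\kappa}^{<\kappa}(\Gamma)\wedge\psi(\kappa)$ is consistent. Thus $\mathrm{ZFC}+\Sigma_{n+1}\mhyphen CBFA_{<\kappa}^{<\kappa}(\Gamma)\wedge\psi(\kappa)$ proves $\mathrm{Con}(\mathrm{ZFC}+\Sigma_n\mhyphen CBFA_{<\kappa}^{<\kappa}(\Gamma)\wedge\psi(\kappa))$. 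If the latter axiom implied the former, this theory would prove its own consistency, contradicting Gödel's Second Incompleteness Theorem under the assumed consistency of the weaker axiom.

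Part (2) is entirely parallel: substitute Lemma \ref{lemma:+1equicon} for Lemma \ref{lemma:symequicon}, and use Lemma \ref{lemma:correfdef} to verify that ``there exists a $\Sigma_n$-correctly $+1$-reflecting cardinal'' is $\Sigma_{n+1}$, so that it again reflects from $L$ down to $L_\kappa$. The main obstacle I anticipate is confirming that Theorem \ref{thm:cbfaequicon} is a ZFC-provable equiconsistency rather than merely a metatheoretic one, so that it can legitimately be invoked inside the Gödel argument. This should be fine since both directions of the equiconsistency were obtained by concrete set-forcing constructions (Theorems \ref{thm:symbfa} and \ref{thm:asymbfa}) and Skolem-hull arguments (Lemmas \ref{lemma:symequicon} and \ref{lemma:+1equicon}) carried out uniformly in ZFC, but it is the technical point most worth double-checking when writing out the full proof.
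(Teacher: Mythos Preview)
Your proposal is correct and follows essentially the same approach as the paper: from the $\Sigma_{n+1}$-correct bounded forcing axiom, deduce that $\kappa$ has the corresponding $\Sigma_{n+1}$ large-cardinal property in $L$, then argue that $L_\kappa$ is a model of ZFC containing the $\Sigma_n$-level large cardinal, so by the forward direction of Theorem~\ref{thm:cbfaequicon} the $\Sigma_n$-correct bounded forcing axiom has a transitive model, yielding a strict jump in consistency strength. The only cosmetic difference is that where you reflect the $\Sigma_{n+1}$ existence statement directly via $L_\kappa\prec_{\Sigma_{n+1}}L$, the paper instead invokes Proposition~\ref{prop:refhierarchy}(2) (together with inaccessibility) to obtain many $\Sigma_n$-correctly $+i$-reflecting cardinals below $\kappa$; these are two packagings of the same reflection argument.
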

\begin{proof}
    In either case, if the $\Sigma_{n+1}$-correct bounded forcing axiom holds, Theorem \ref{thm:cbfaequicon} implies that $\kappa$ is $\Sigma_{n+1}$-correctly $+i$-reflecting in $L$, where $i$ is 0 or 1 as appropriate. By the inaccessibility of correctly reflecting cardinals and Proposition \ref{prop:refhierarchy}, $L_\kappa$ is thus a model of $ZFC$ with many $\Sigma_n$-correctly $+i$-reflecting cardinals, so it has a forcing extension which is a transitive model of the appropriate $\Sigma_n$-correct bounded forcing axiom. It follows that the $\Sigma_{n+1}$-correct bounded forcing axiom is of significantly greater consistency strength, so in particular it is not implied by the $\Sigma_n$-correct bounded forcing axiom.
\end{proof}

\chapter{Preservation Theorems}
\label{section:preservation}

In this chapter, we study the preservation of $\Sigma_n$-correct forcing axioms under forcing. Sean Cox proved a theorem on the preservation of classical forcing axioms and their "plus versions" in terms of lifting generic elementary embeddings (Theorem 20 of \cite{coxfa}), unifying various preservation results that had been published previously and providing a basis for the approach taken here. 

However, in some cases only a fragment of the original axiom will survive in the forcing extension, particularly since Cox's result makes some use of the fact that the property of preserving the stationarity of subsets of $\omega_1$ is inherited by regular suborders of a forcing poset, which need not hold if we replace stationarity with a property of complexity greater than $\Pi_1$. To account for this possibility we define those fragments of $\Sigma_n$-correct forcing axioms below (using the Woodin-Cox formulation of Definition \ref{def:wcform} for convenience).

\begin{definition}
    For $\Gamma$ a forcing class, $\kappa$ a regular uncountable cardinal, and $\Phi$ a collection of provably $\Gamma$-persistent formulas, the $\Phi$-correct forcing axiom $\Phi\mhyphen CFA_{<\kappa}(\Gamma)$ is the assertion that for every $\phi\in\Phi$, $\mathbb{Q}\in \Gamma$, $\mathbb{Q}$-name $\dot{a}$ such that $\Vdash_{\mathbb{Q}}\phi(\dot{a})$, and regular $\gamma>|\mathcal{P}(\mathbb{Q})\cup trcl(\dot{a})\cup\kappa|$, there is a generic elementary embedding (with possibly ill-founded codomain) which witnesses the $\lk$-forcing axiom for $(\mathbb{Q},\phi, \dot{a}, \gamma)$ (recall Definition \ref{def:witnessfa}).
\end{definition}

To justify the original $\Sigma_n\mhyphen CFA$ notation, we can say that when $\Phi$ contains some formulas which are not provably $\Gamma$-persistent, $\Phi\mhyphen CFA_{<\kappa}(\Gamma)$ uses only those formulas in $\Phi$ which are provably $\Gamma$-persistent.

\begin{theorem}
    Let $\Gamma$ be a forcing class closed under restrictions, $\Phi$ a set of provably $\Gamma$-persistent formulas in the language of set theory, $\kappa$ a regular cardinal such that $\Phi\mhyphen CFA_{<\kappa}(\Gamma)$ holds, and $\mathbb{P}$ a poset. Define $\Phi'$ to be the set of formulas $\phi$ such that:
    \begin{itemize}
        \item $\phi\in \Phi$
        \item $\mathbb{P}$ preserves $\phi$ (note that this preservation need not be provable, only true)
        \item for every $\mathbb{P}$-name $\dot{\mathbb{Q}}$ for a poset in $\Gamma$, and every $\mathbb{P}*\dot{\mathbb{Q}}$-name $\dot{a}$ such that $\Vdash_{\mathbb{P}*\dot{\mathbb{Q}}} \phi(\dot{a})$, there is a $\mathbb{P}*\dot{\mathbb{Q}}$-name $\dot{\mathbb{R}}$ for a poset (possibly depending on $\phi$, $\dot{\mathbb{Q}}$, and $\dot{a}$) such that:
    \begin{enumerate}
        \item $\mathbb{P}*\dot{\mathbb{Q}}*\dot{\mathbb{R}}\in \Gamma$
        \item $\mathbb{P}*\dot{\mathbb{Q}}$ forces that $\dot{\mathbb{R}}$ preserves $\phi(\dot{a})$
        \item If $\theta>|\mathcal{P}(\mathbb{P}*\dot{\mathbb{Q}}*\dot{\mathbb{R}})|+|trcl(\dot{a})|$ is a regular cardinal and $j: V\rightarrow M$ is a generic elementary embedding which witnesses the $<\kappa$-forcing axiom for $(\mathbb{P}*\dot{\mathbb{Q}}*\dot{\mathbb{R}}, \phi, \dot{a}, \theta)$ (where we interpret $\dot{a}$ as a $\mathbb{P}*\dot{\mathbb{Q}}*\dot{\mathbb{R}}$-name in the obvious way) with $V$-generic filter $G*H*K\subseteq \mathbb{P}*\dot{\mathbb{Q}}*\dot{\mathbb{R}}$ in $M$, then $M\models\text{"} j"G\text{ has a lower bound in } j(\mathbb{P})\text{"}$.
    \end{enumerate}
    \end{itemize}
    Then $\Vdash_\mathbb{P} \Phi'\mhyphen CFA_{<\kappa}(\Gamma)$.
      \label{thm:pres}
\end{theorem}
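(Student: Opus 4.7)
The plan is to use the Woodin--Cox formulation of the axiom (Definition \ref{def:wcform}) and lift a generic elementary embedding obtained in $V$ to one over $V[G]$. Fix a $V$-generic $G\subseteq \mathbb{P}$; in $V[G]$ fix $\phi\in\Phi'$, $\mathbb{Q}'\in\Gamma^{V[G]}$, a $\mathbb{Q}'$-name $\dot a'$ with $\Vdash_{\mathbb{Q}'}\phi(\dot a')$, and a regular $\gamma'$ meeting the size requirements. Pull back to $V$: choose a $\mathbb{P}$-name $\dot{\mathbb{Q}}$ for $\mathbb{Q}'$ and a $\mathbb{P}*\dot{\mathbb{Q}}$-name $\dot a$ for $\dot a'$, along with a condition $p_0\in G$ forcing that $\dot{\mathbb{Q}}\in\Gamma$ and $\Vdash_{\dot{\mathbb{Q}}}\phi(\dot a)$. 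The third clause of the defining property of $\Phi'$ supplies a $\mathbb{P}*\dot{\mathbb{Q}}$-name $\dot{\mathbb{R}}$ such that (after restricting below $(p_0,\check 1,\check 1)$, which is allowed by closure of $\Gamma$ under restrictions) $\mathbb{P}*\dot{\mathbb{Q}}*\dot{\mathbb{R}}\in\Gamma$, $\phi(\dot a)$ is preserved by $\dot{\mathbb{R}}$, and the lifting criterion of the hypothesis is in force. Apply $\Phi\mhyphen CFA_{<\kappa}(\Gamma)$ in $V$ to $(\mathbb{P}*\dot{\mathbb{Q}}*\dot{\mathbb{R}},\phi,\dot a,\theta)$ for $\theta$ large enough to accommodate $H_{\gamma'}^{V[G]}$ under the eventual lift; this yields a generic elementary embedding $j\colon V\to M$ with a $V$-generic $G^**H*K\in M$ passing through $(p_0,\check 1,\check 1)$ and $M\models\phi(\dot a^{G^**H})$.

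The key technical step is arranging that $G^* = G$. The embedding $j$ arises from an ultrafilter on some Boolean algebra in $V$; working in a common further extension $V[G][U]$ of both $V$ and $V[G]$ in which this ultrafilter exists, a standard mixing argument lets us take the ultrafilter producing $j$ to be generic over $V[G]$ below a condition forcing that the first coordinate of the resulting $V$-generic filter agrees with $G$. With $G^* = G$ we have $G\in M$, and clause (3) of the hypothesis supplies a lower bound $q\in j(\mathbb{P})$ of $j"G$ in $M$. Generically forcing over $M$ with $j(\mathbb{P})$ below $q$ gives an $M$-generic $\tilde G\ni q$ in yet another extension, whence $j"G\subseteq\tilde G$ and the usual lifting criterion produces a generic elementary embedding $j^*\colon V[G]\to M[\tilde G]$ defined by $j^*(\tau^G) := j(\tau)^{\tilde G}$.

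Finally I would verify that $j^*$ witnesses the $<\kappa$-forcing axiom for $(\mathbb{Q}',\phi,\dot a',\gamma')$ in $V[G]$. Conditions (a)-(d) of Definition \ref{def:witnessfa} transfer from $j$ by routine checks, using $j^*(\kappa) = j(\kappa)$ and the fact that $H_{\gamma'}^{V[G]}$ is definable from $H_\gamma^V$ and $G$ for $\gamma$ chosen large enough. For (e), the filter $H\in M\subseteq M[\tilde G]$ is $V[G]$-generic for $\mathbb{Q}' = \dot{\mathbb{Q}}^G$, and since $\dot a$ is really a $\mathbb{P}*\dot{\mathbb{Q}}$-name we have $\dot a^{G*H*K} = \dot a^{G*H}$ and hence $M\models\phi(\dot a^{G*H})$. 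Since $\mathbb{P}$ preserves $\phi$ in $V$ by clause (2) of the definition of $\Phi'$, elementarity transfers this to: $j(\mathbb{P})$ preserves $\phi$ in $M$, so $\phi(\dot a^{G*H})$ survives to $M[\tilde G]$.

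The main obstacle is the identification $G^* = G$ in the second step, which depends on a delicate mixing / absoluteness argument linking the Boolean ultrafilter producing $j$ to our actual generic $G$ — this is where Cox's framework does its most intricate work. A secondary point is making sure the three clauses in the definition of $\Phi'$ are all used correctly in distinct roles: (1) ensures $\mathbb{P}*\dot{\mathbb{Q}}*\dot{\mathbb{R}}\in\Gamma$ so that the axiom in $V$ actually applies; (2) transfers preservation of $\phi$ from $V$ to $M$ via $j$, which is exactly what is needed across the step $M\to M[\tilde G]$; and (3) supplies the lower bound in $j(\mathbb{P})$ without which the lift is not even possible.
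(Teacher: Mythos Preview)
Your overall architecture---apply $\Phi\mhyphen CFA_{<\kappa}(\Gamma)$ in $V$ to the three-step iteration to obtain $j:V\to M$, then lift across $j(\mathbb{P})$ below a lower bound of $j"G$---is exactly the paper's. The verification of condition~(e), using elementarity to transfer ``$\mathbb{P}$ preserves $\phi$'' from $V$ to ``$j(\mathbb{P})$ preserves $\phi$'' in $M$, is also the same.

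Where you diverge is in the ``arrange $G^*=G$'' step, which you flag as the main obstacle. The paper simply does not do this. Rather than fixing a $V$-generic $G\subseteq\mathbb{P}$ at the outset and then trying to force the embedding to produce that same $G$, the paper works with names: given $\dot{\mathbb{Q}}$, $\dot{a}$, $\phi$ (and implicitly a condition $p_0$ forcing the hypotheses, handled via closure of $\Gamma$ under restrictions), apply the axiom in $V$ to $\mathbb{P}*\dot{\mathbb{Q}}*\dot{\mathbb{R}}$ to get $j:V\to M$ in some generic extension $W$ of $V$, with a $V$-generic $G*H*K\in M$. This $G$ is the one we use. After forcing over $W$ with $j(\mathbb{P})$ below $p'$ to get $G'$, the lift $j^*:V[G]\to M[G']$ lives in $W[G']$, which is a generic extension of $V[G]$ (since $G\in M\subseteq W$). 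Thus for this particular $G\ni p_0$, the instance succeeds in $V[G]$. Since the names and $p_0$ were arbitrary, no condition of $\mathbb{P}$ can force the instance to fail, so $\Vdash_\mathbb{P}\Phi'\mhyphen CFA_{<\kappa}(\Gamma)$. The ``delicate mixing/absoluteness argument'' you anticipate is never needed.

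One smaller point you gloss over: in verifying condition~(b), it is not automatic that $|H_\theta^{V[G]}|^{M[G']}<j^*(\kappa)$, because $j(\mathbb{P})$ might a priori collapse $j(\kappa)$ in $M$. The paper argues separately that $\mathbb{P}$ cannot collapse $\kappa$ (using that $p'$ already decides all values of any putative name for a short cofinal map, via $p'\le j(p_\alpha)$ for the deciding conditions $p_\alpha\in G$), and then transfers this to $M$ by elementarity.
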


(Note the $\emptyset$-correct forcing axiom is vacuously true, so cases where no such $\dot{\mathbb{R}}$ can be found for any $\phi$ are of no concern.)

\begin{proof}
    Following Cox, we apply the axiom in $V$ to obtain a generic elementary embedding $j:V\rightarrow M$, then show that $j$ lifts to a generic embedding $V[G]\rightarrow M[G']$ for some $M$-generic $G'\subseteq j(\mathbb{P})$ which witnesses the desired instance of the axiom in $V[G]$.

    More specifically, given any $\mathbb{P}$, $\dot{\mathbb{Q}}$, provably $\Gamma$-persistent $\phi\in \Phi'$, and $\dot{a}$ satisfying the standard hypotheses, let $\dot{\mathbb{R}}$ satisfy conditions 1-3 used to define $\Phi'$ in the statement of the theorem. Fix regular $\theta>\kappa$ so that all objects mentioned so far and their power sets are in $H_\theta$; then there is an elementary embedding $j:V\rightarrow M$ in some generic extension $W$ of $V$ such that:

    \begin{enumerate}[(i)]
        \item the wellfounded part of $M$ is transitive and contains $H_\theta^V$
        \item $|H_\theta^V|^M<j(\kappa)$
        \item $j\upharpoonright H_\theta^V\in M$
        \item $crit(j)=\kappa$
        \item $M$ contains a $V$-generic filter $G*H*K\subseteq \mathbb{P}*\dot{\mathbb{Q}}*\dot{\mathbb{R}}$ such that $M\models \phi(\dot{a}^{G*H})$
    \end{enumerate}
    By (iii), (v), and the fact that $\mathbb{P}\subset H_\theta^V$, $j"G\in M$. By condition (3) in the statement of the theorem, $M$ thinks that there is some $p'\in j(\mathbb{P})$ which is below all conditions in $j"G$. Then if $\mathbb{B}\in W$ is the set of $\in^M$-predecessors of the Boolean completion of $j(\mathbb{P})$ in $M$ with the partial ordering inherited from $M$ and $G'\subset\mathbb{B}$ is a $W$-generic ultrafilter containing $p'$, it will also be $M$-generic, and in $W[G']$ we can form the (possibly ill-founded) class model $M[G']$ as the quotient $M^\mathbb{B}/G'$. See Cox for more details on this process.

    Since $j"G\subset G'$ by the upward closure of filters, we can (in $W[G']$, hence in a generic extension of $V[G]$) apply Lemma \ref{lemma:extembed} to define an elementary embedding $j^*:V[G]\rightarrow M[G']$ by $j^*(\dot{x}^G)=j(\dot{x})^{G'}$. We now verify that $j^*$ satisfies the conditions in the definition of witnessing the $<\hspace{-3pt}\kappa$-forcing axiom for $(\dot{\mathbb{Q}}^G, \phi, \dot{a}, \theta)$ in $V[G]$:

    (a): Since $M$ contains $H_\theta^V$ and $G$, it (and thus $M[G']$) contains $H_\theta^V[G]$, which by Lemma \ref{lemma:namesize} is $H_\theta^{V[G]}$.

    (b): $|H_\theta^V[G]|^M=|H_\theta^V|^M<j(\kappa)=j^*(\kappa)$; thus it is sufficient to show that $M$ believes that $j(\mathbb{P})$ does not collapse $j(\kappa)$. By elementarity, this is the same as showing that $V\models\text{"} \mathbb{P}$ does not collapse $\kappa$". Assume toward a contradiction that there is a $\delta<\kappa$ and a $\mathbb{P}$-name $\dot{f}$ such that $\dot{f}^G:\delta\rightarrow\kappa$ is surjective. For each $\alpha<\delta$, fix $p_\alpha\in G$ which decides $\dot{f}(\alpha)$. Since $crit(j)=\kappa>\delta$, $j$ does not move any of these $\alpha$s, so $j(p_\alpha)$ decides $j(\dot{f})(\alpha)$. Since $p'\leq j(p_\alpha)$ for all $\alpha<\delta$, $p'$ alone decides all values of $j(\dot{f})$, so $j(\dot{f})^{G'}\in M$. Therefore $j(\kappa)$ is not a cardinal in $M$, so $\kappa$ is not a cardinal in $V$, contradicting the fact that $V\models\Phi\mhyphen CFA_{<\kappa}(\Gamma)$.

    (c): Since $j\upharpoonright H_\theta^V, G'\in M[G']$, $j^*$ is defined entirely in terms of $G'$ and the action of $j$ on names, and by Lemma \ref{lemma:namesize} every element of $H_\theta^{V[G]}$ has a name in $H_\theta^V$, $j^*\upharpoonright H_\theta^{V[G]}\in M[G']$.

    (d): $j^*$ is an extension of $j$, so in particular it agrees with $j$ on the ordinals.

    (e): $H\in M\subset M[G']$ and $H$ is $V[G]$-generic by the choice of $M$; $M[G']\models\phi(\dot{a}^H)$ because this holds in $M$ and $M$ thinks that  $j(\mathbb{P})$ preserves $\phi$ by elementarity.
\end{proof}

\begin{corollary}
\label{cor:fullpres}
    Suppose $\Gamma$ is an $n$-nice forcing class, $V\models\Sigma_n\mhyphen CFA_{<\kappa}(\Gamma)$, $\mathbb{P}\in \Gamma$, and for all provably $\Gamma$-persistent $\Sigma_n$ formulas $\phi$, $\mathbb{P}$-names $\dot{\mathbb{Q}}$ for a poset in $\Gamma$, and $\mathbb{P}*\dot{\mathbb{Q}}$-names $\dot{a}$ such that $\Vdash_{\mathbb{P}*\dot{\mathbb{Q}}} \phi(\dot{a})$, there is a $\mathbb{P}*\dot{\mathbb{Q}}$-name $\dot{\mathbb{R}}$ for a poset in $\Gamma$ such that condition 3 from the statement of Theorem \ref{thm:pres} holds. Then $\Vdash_\mathbb{P} \Sigma_n\mhyphen CFA_{<\kappa}(\Gamma)$.
\end{corollary}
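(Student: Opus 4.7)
The plan is to derive this corollary as a direct application of Theorem \ref{thm:pres}, taking $\Phi$ to be the collection of provably $\Gamma$-persistent $\Sigma_n$ formulas. By Theorem \ref{thm:equivforms}, $\Sigma_n\mhyphen CFA_{<\kappa}(\Gamma)$ is equivalent to the Woodin-Cox formulation, which is precisely $\Phi\mhyphen CFA_{<\kappa}(\Gamma)$. Since $\Gamma$ is $n$-nice, it is closed under restrictions, so the hypotheses of Theorem \ref{thm:pres} are met and we obtain $\Vdash_\mathbb{P} \Phi'\mhyphen CFA_{<\kappa}(\Gamma)$. The content of the corollary is therefore to check that $\Phi'\supseteq \Phi$, so that the forced axiom is in fact the full $\Sigma_n\mhyphen CFA_{<\kappa}(\Gamma)$.

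To verify this, I would fix an arbitrary provably $\Gamma$-persistent $\Sigma_n$ formula $\phi$ and confirm each of the three clauses in the definition of $\Phi'$. Membership in $\Phi$ is immediate. For the second clause, since $\mathbb{P}\in\Gamma$ and $\phi$ is provably $\Gamma$-persistent, $\mathbb{P}$ preserves $\phi$. For the third clause, fix a $\mathbb{P}$-name $\dot{\mathbb{Q}}$ for a poset in $\Gamma$ and a $\mathbb{P}*\dot{\mathbb{Q}}$-name $\dot{a}$ with $\Vdash_{\mathbb{P}*\dot{\mathbb{Q}}}\phi(\dot{a})$, and let $\dot{\mathbb{R}}$ be the name furnished by the hypothesis of the corollary, so that $\dot{\mathbb{R}}$ is forced to be in $\Gamma$ and condition (3) of Theorem \ref{thm:pres} holds. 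Condition (1) then follows by applying closure under two-step iterations twice (a consequence of $n$-niceness), since $\mathbb{P}\in\Gamma$ and $\dot{\mathbb{Q}},\dot{\mathbb{R}}$ are forced to be in $\Gamma$. Condition (2) follows because $\dot{\mathbb{R}}$ is forced to lie in $\Gamma$ and $\phi$ is provably $\Gamma$-persistent, so $\mathbb{P}*\dot{\mathbb{Q}}$ forces $\dot{\mathbb{R}}$ to preserve $\phi(\dot{a})$.

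With $\Phi'=\Phi$ established, Theorem \ref{thm:pres} yields $\Vdash_\mathbb{P} \Phi\mhyphen CFA_{<\kappa}(\Gamma)$, which is exactly $\Vdash_\mathbb{P}\Sigma_n\mhyphen CFA_{<\kappa}(\Gamma)$ via the equivalence of the Jensen and Woodin-Cox formulations. No substantive difficulty arises; the entire argument is a verification that the stated hypotheses on $\dot{\mathbb{R}}$, combined with the iteration and persistence clauses of $n$-niceness, reduce exactly to the abstract preservation framework of Theorem \ref{thm:pres}. The only place one must be mildly careful is that the hypothesis supplies $\dot{\mathbb{R}}$ whose three-step iteration already witnesses clause (3), so that clauses (1) and (2) are genuinely free consequences of $n$-niceness and provable persistence rather than additional assumptions hidden in the setup.
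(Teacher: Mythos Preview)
Your proposal is correct and follows essentially the same approach as the paper's proof, which simply observes that since $\mathbb{P}\in\Gamma$ and $\dot{\mathbb{R}}$ is forced to be in $\Gamma$, both preserve all provably $\Gamma$-persistent formulas, so $\Phi'$ can be taken to be all provably $\Gamma$-persistent $\Sigma_n$ formulas. Your version is more explicit in checking each clause of the definition of $\Phi'$ and in noting that $n$-niceness supplies closure under restrictions and two-step iterations, but the underlying argument is identical.
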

\begin{proof}
    If $\mathbb{P}\in \Gamma$ and $\Vdash_{\mathbb{P}*\dot{\mathbb{Q}}} \dot{\mathbb{R}}\in\Gamma$, then $\mathbb{P}$ and $\mathbb{R}$ will preserve all provably $\Gamma$-persistent formulas, so we can take $\Phi'$ to consist of all provably $\Gamma$-persistent $\Sigma_n$ formulas.
\end{proof}

The following result generalizes Cox's observation about inheritance of stationary set preservation:

\begin{prop}
    In Theorem \ref{thm:pres}, if $\Phi$ consists exclusively of $\Pi_1$ formulas, then the hypothesis that $\mathbb{P}$ preserves $\phi$ can be omitted from the definition of $\Phi'$, since it will follow from the other conditions.
\end{prop}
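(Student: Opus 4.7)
The plan is to show that, for $\phi\in\Phi$ that is $\Pi_1$ and for which the third bullet in the definition of $\Phi'$ is satisfied, the preservation of $\phi$ by $\mathbb{P}$ follows. The key tools will be the provable $\Gamma$-persistence of $\phi$ (which is built into the definition of $\Phi$) together with the downward absoluteness of $\Pi_1$ formulas between transitive models.

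Fix any $a$ with $V\models\phi(a)$; my goal is to show $\Vdash_\mathbb{P}\phi(\check a)$. I would first invoke the third bullet applied to some witnessing pair $(\dot{\mathbb{Q}},\dot a_0)$ satisfying $\Vdash_{\mathbb{P}*\dot{\mathbb{Q}}}\phi(\dot a_0)$, extracting a name $\dot{\mathbb{R}}$ with $\mathbb{P}*\dot{\mathbb{Q}}*\dot{\mathbb{R}}\in\Gamma$ (condition (1) of the bullet; conditions (2) and (3) are not needed for this argument). Since $\phi$ is provably $\Gamma$-persistent and $V\models\phi(a)$, ZFC then proves $\Vdash_{\mathbb{P}*\dot{\mathbb{Q}}*\dot{\mathbb{R}}}\phi(\check a)$.

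Next, given any $V$-generic $G\subseteq\mathbb{P}$, I would pass to a sufficiently large auxiliary set-theoretic universe and extend $G$ to a $V$-generic filter $G*H*K$ on $\mathbb{P}*\dot{\mathbb{Q}}*\dot{\mathbb{R}}$, obtaining $V[G*H*K]\models\phi(a)$. Because $V[G]$ is a transitive submodel of $V[G*H*K]$ containing $a$, and $\phi$ is $\Pi_1$, the downward absoluteness of $\Pi_1$ formulas between transitive models yields $V[G]\models\phi(a)$. Since this assertion is intrinsic to $V[G]$ and does not depend on the auxiliary universe in which the extension was carried out, we conclude $\Vdash_\mathbb{P}\phi(\check a)$; as $a$ was arbitrary, $\mathbb{P}$ preserves $\phi$.

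The main subtlety is that this argument requires the third bullet to be applicable to at least one pair $(\dot{\mathbb{Q}},\dot a_0)$, so as to produce a concrete $\dot{\mathbb{R}}$. Within the proof of Theorem \ref{thm:pres}, however, the pair is always supplied by the very instance of the axiom that one wishes to verify in the forcing extension: the argument there picks a specific $(\dot{\mathbb{Q}},\dot a)$ with $\Vdash_{\mathbb{P}*\dot{\mathbb{Q}}}\phi(\dot a)$ and applies the third bullet to it, so the derivation above yields the preservation exactly when it is needed. In the vacuous case (no such pair exists at all), no instance of the conclusion involving $\phi$ needs to be checked in $V[G]$, so dropping the preservation hypothesis from the definition of $\Phi'$ has no effect on the validity of Theorem \ref{thm:pres}.
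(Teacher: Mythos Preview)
Your proof is correct and is essentially the contrapositive of the paper's argument: the paper assumes some $p\Vdash_\mathbb{P}\lnot\phi(\check x)$, uses upward absoluteness of the $\Sigma_1$ formula $\lnot\phi$ to conclude $(p,1_{\dot{\mathbb{Q}}},1_{\dot{\mathbb{R}}})\Vdash\lnot\phi(\check x)$, and derives a contradiction with $\mathbb{P}*\dot{\mathbb{Q}}*\dot{\mathbb{R}}\in\Gamma$ and provable $\Gamma$-persistence, whereas you argue directly via downward absoluteness of $\Pi_1$. You are in fact more careful than the paper in flagging that a witnessing pair $(\dot{\mathbb{Q}},\dot a_0)$ is needed to invoke the third bullet and produce $\dot{\mathbb{R}}$.
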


\begin{proof}
    If $\phi\in \Phi$ is a provably $\Gamma$-persistent $\Pi_1$ formula such that $V\models \exists x\hspace{2pt}(\phi(x)\land \exists p\in\mathbb{P}\hspace{4pt} p\Vdash_\mathbb{P}\lnot\phi(x))$, then the failure of $\phi(x)$ in any generic extension by a filter containing $p$ is a $\Sigma_1$ statement. It follows that it is preserved by passing to larger models, so $(p, \dot{1}_{\dot{\mathbb{Q}}}, \dot{1}_{\dot{\mathbb{R}}})\Vdash \lnot\phi(x)$. Since $\phi$ is provably $\Gamma$-persistent, this contradicts the fact that $\mathbb{P}*\dot{\mathbb{Q}}*\dot{\mathbb{R}}\in \Gamma$.
\end{proof}

To illustrate the application of Theorem \ref{thm:pres} and procure a useful tool for later separation results, we consider the cases of proper and countably closed forcing:

\begin{theorem}
    \label{thm:properpres}
    (cf K\"onig and Yoshinobu \cite{KYfragMM}, Theorem 6.1) If $\Gamma$ is an $n$-nice forcing class which necessarily contains $Coll(\omega_1, \lambda)$ for all uncountable cardinals $\lambda$ and has the countable covering property (i.e. every countable set in a $\Gamma$-extension whose elements are all in the ground model is a subset of a countable set of the ground model), then $\Sigma_n\mhyphen CFA_{<\omega_2}(\Gamma)$ is preserved by all $<\hspace{-2pt}\omega_2$-closed forcing in $\Gamma$. In particular, $\Sigma_n\mhyphen CPFA$ and $\Sigma_n\mhyphen CFA_{<\omega_2}(<\hspace{-2pt}\omega_1\mhyphen closed)$ are preserved by all $<\hspace{-2pt}\omega_2$-closed forcing.
\end{theorem}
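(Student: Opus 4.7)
The plan is to verify the hypothesis of Corollary \ref{cor:fullpres} and then apply it. Fix a $<\omega_2$-closed poset $\mathbb{P}\in\Gamma$; I must show that for every provably $\Gamma$-persistent $\Sigma_n$ formula $\phi$, every $\mathbb{P}$-name $\dot{\mathbb{Q}}$ for a poset in $\Gamma$, and every $\mathbb{P}*\dot{\mathbb{Q}}$-name $\dot{a}$ with $\Vdash_{\mathbb{P}*\dot{\mathbb{Q}}}\phi(\dot{a})$, there is a $\mathbb{P}*\dot{\mathbb{Q}}$-name $\dot{\mathbb{R}}$ for a poset in $\Gamma$ satisfying condition (3) of Theorem \ref{thm:pres}. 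Following the classical K\"onig--Yoshinobu template, I would take $\dot{\mathbb{R}}$ to be a canonical $\mathbb{P}*\dot{\mathbb{Q}}$-name for the collapse $Coll(\omega_1,|\mathbb{P}|)$.

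Condition (1) of Theorem \ref{thm:pres} then follows from two applications of $n$-niceness (closure under two-step iteration) together with the standing hypothesis that $Coll(\omega_1,\lambda)$ necessarily lies in $\Gamma$; the countable covering property is what ensures in practice that the intermediate iteration $\mathbb{P}*\dot{\mathbb{Q}}$ lies in $\Gamma$ for the concrete classes of interest (proper, semiproper, subcomplete), since $<\omega_2$-closed forcing trivially has countable covering and standard Shelah-style preservation theorems then give iterability. Condition (2) is immediate, as $\dot{\mathbb{R}}$ is in $\Gamma$ and $\phi$ is provably $\Gamma$-persistent.

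The heart of the argument is condition (3). Suppose $j:V\to M$ is a generic elementary embedding witnessing the $\lk$-forcing axiom for $(\mathbb{P}*\dot{\mathbb{Q}}*\dot{\mathbb{R}},\phi,\dot{a},\theta)$, with $V$-generic filter $G*H*K\in M$. By elementarity $M$ believes that $j(\mathbb{P})$ is $<\hspace{-2pt}j(\omega_2)$-closed, and since $G$ is a filter on $\mathbb{P}$, $j"G$ is downward directed in $j(\mathbb{P})$. The crux is to show $|j"G|^M<j(\omega_2)$. The wellfounded part of $M$ contains $H_\theta^{V[G*H*K]}$, which is computed as $H_\theta^V[G*H*K]$ via Lemma \ref{lemma:namesize}; inside it, the generic $K$ collapses $|\mathbb{P}|^V$ to $\omega_1$. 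Since every poset in $\Gamma$ preserves $\omega_1$, $\omega_1^{V[G*H*K]}=\omega_1^V<\omega_2^V=\kappa<j(\omega_2)$. Hence $|\mathbb{P}|^M\leq\omega_1^V<j(\omega_2)$, and in particular $|j"G|^M\leq|\mathbb{P}|^M<j(\omega_2)$. A downward directed subset of a $<\hspace{-2pt}j(\omega_2)$-closed poset of $M$-cardinality below $j(\omega_2)$ has a lower bound in the poset by a standard recursive thinning argument that uses directedness at successor stages and closure at limit stages, which completes the verification.

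The main obstacle I anticipate is keeping the cardinality computations straight across $V$, $V[G*H*K]$, and the possibly ill-founded model $M$, particularly because $M$ need not satisfy full replacement; however, the relevant arithmetic all takes place inside the wellfounded part of $M$, which contains $H_\theta^{V[G*H*K]}$ for $\theta$ chosen large enough by the hypotheses of the Woodin--Cox formulation, and therefore correctly inherits the collapse of $|\mathbb{P}|$ to $\omega_1$. Once condition (3) is secured, Corollary \ref{cor:fullpres} delivers $\Vdash_{\mathbb{P}}\Sigma_n\mhyphen CFA_{<\omega_2}(\Gamma)$.
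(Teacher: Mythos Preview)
Your overall setup is correct: take $\dot{\mathbb{R}}$ to name $Coll(\omega_1,|\mathbb{P}|)$ and verify condition~(3) of Theorem~\ref{thm:pres}. The cardinality bookkeeping is also fine: $K$ collapses $|\mathbb{P}|$ to $\omega_1$, this collapse is visible in $M$, and so $|j"G|^M\leq\omega_1^M<j(\omega_2)$.

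The gap is in your final step. The claim that ``a downward directed subset of a $<\hspace{-2pt}j(\omega_2)$-closed poset of $M$-cardinality below $j(\omega_2)$ has a lower bound by a standard recursive thinning argument'' is exactly what fails: $<\hspace{-2pt}\omega_2$-closed is \emph{not} the same as $<\hspace{-2pt}\omega_2$-directed closed. In your thinning, after a limit stage you use closure to produce some $p_\gamma\in j(\mathbb{P})$ below all previous terms, but $p_\gamma$ need not lie in $j"G$. At the next successor stage you need $p_\gamma$ to be compatible with the next element $j(g_{\gamma+1})\in j"G$, and directedness of $j"G$ gives you nothing here, since $p_\gamma\notin j"G$. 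So the recursion breaks down immediately after the first limit. (This is also why the paper states the directed-closed case separately in Theorem~\ref{thm:dirclosepres}: there $\dot{\mathbb{R}}$ can be trivial and the lower bound is immediate.)

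The paper's proof repairs this by building the descending $\omega_1$-sequence inside $G$ itself, in $V[G][H][K]$, rather than in $j(\mathbb{P})$ inside $M$. At limit stages $\gamma<\omega_1$ one uses the \emph{countable covering property} of $\Gamma$ to find a countable $Y\in V$ covering $\{x_\alpha\sbp\alpha<\gamma\}$, builds in $V$ the dense set $D=\{d\in\mathbb{P}\sbp\forall y\in Y\ (y\perp d\lor d\leq y)\}$, and then takes $x_\gamma\in G\cap D$ by genericity of $G$; since all earlier $x_\alpha$ lie in $G$, they are compatible with $x_\gamma$, forcing $x_\gamma\leq x_\alpha$. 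The resulting chain $\langle x_\alpha\sbp\alpha<\omega_1\rangle$ is cofinal in $G$, so $\langle j(x_\alpha)\rangle$ is a genuine descending $\omega_1$-sequence in $j(\mathbb{P})$, and now closure in $M$ gives the lower bound for $j"G$. In particular, the countable covering property is used \emph{here}, not (as you suggest) to show $\mathbb{P}*\dot{\mathbb{Q}}\in\Gamma$; that follows directly from $n$-niceness.
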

\begin{proof}
    Let $\mathbb{P}\in \Gamma$ be $<\hspace{-2pt}\omega_2$-closed. Given a $\mathbb{P}$-name $\dot{\mathbb{Q}}$ such that $\Vdash_{\mathbb{P}} \dot{\mathbb{Q}}\in \Gamma$, a provably $\Gamma$-persistent $\Sigma_n$ formula $\phi$, and $\mathbb{P}*\dot{\mathbb{Q}}$-name $\dot{a}$ such that $\Vdash_{\mathbb{P}*\dot{\mathbb{Q}}} \phi(\dot{a})$, let $\dot{\mathbb{R}}$ be a $\mathbb{P}*\dot{\mathbb{Q}}$-name such that $\Vdash_{\mathbb{P}*\dot{\mathbb{Q}}}\dot{\mathbb{R}}=Coll(\omega_1, |\dot{G}|)$, where $\dot{G}$ is the canonical name for the generic filter on $\mathbb{P}$. Then $\Vdash_{\mathbb{P}*\dot{\mathbb{Q}}}\dot{\mathbb{R}}\in\Gamma$, so it is sufficient to verify the lower bound of $j"G$ condition.

    To that end, fix any sufficiently large $\theta$ and let $j:V\rightarrow M$ be a generic elementary embedding such that:
    \begin{enumerate}[(i)]
        \item the wellfounded part of $M$ is transitive and contains $H_\theta^V$
        \item $|H_\theta^V|^M<j(\omega_2^V)=\omega_2^M$
        \item $j\upharpoonright H_\theta^V\in M$
        \item $crit(j)=\omega_2^V$
        \item $M$ contains a $V$-generic filter $G*H*K\subseteq \mathbb{P}*\dot{\mathbb{Q}}*\dot{\mathbb{R}}$ such that $M\models \phi(\dot{a}^{G*H})$
    \end{enumerate}
    By our choice of $\dot{\mathbb{R}}$, in $V[G][H][K]$ there is an enumeration $\langle g_\alpha\sbp \alpha<\omega_1\rangle$ of $G$. We will use this to recursively construct a descending sequence $\langle x_\alpha\sbp \alpha<\omega_1\rangle$ of elements of $G$ such that $x_\alpha\leq_{\mathbb{P}} g_\beta$ whenever $\beta<\alpha$.

    Let $x_0=1_\mathbb{P}$. If $x_\alpha$ has already been defined, then $x_\alpha$ and $g_\alpha$ have a common lower bound in $G$ because both are elements of $G$; let $x_{\alpha+1}$ be some such lower bound. At limit stages $\gamma<\omega_1$ where $x_\alpha$ has already been defined for all $\alpha<\gamma$, then since $\mathbb{P}*\dot{\mathbb{Q}}*\dot{\mathbb{R}}\in \Gamma$ and $\Gamma$ has the countable covering property, there is some countable $Y\in V$ such that $\{ x_\alpha\sbp \alpha<\gamma\}\subseteq Y$. Replacing $Y$ with $Y\cap\mathbb{P}$ if necessary, we may assume that $Y\subseteq \mathbb{P}$.

    Working now in $V$, fix an enumeration $\langle y_n\sbp n<\omega\rangle$ of $Y$ and let $D:=\{d\in\mathbb{P}\sbp\forall y\in Y\hspace{3pt} (y\perp d\lor d\leq y)\}$. Then for any $p_0\in\mathbb{P}$, we can recursively choose for each $n<\omega$ a $p_{n+1}\leq p_n$ such that $p_{n+1}\perp y_n$ or $p_{n+1}\leq y_n$. Since $\mathbb{P}$ is $<\omega_2$-closed and hence countably closed, $\langle p_n\sbp n<\omega\rangle$ has a lower bound in $\mathbb{P}$, which must be an element of $D$. It follows that $D$ is dense.

    Returning to $V[G][H][K]$, we can choose $x_\gamma\in G\cap D$. It cannot be the case that $x_\alpha\perp x_\gamma$ for any $\alpha<\gamma$ because they are all elements of the same filter $G$, so by the definitions of $D$ and $Y$ we must have $x_\gamma\leq x_\alpha$ for all such $\alpha$. This completes the construction of $\langle x_\alpha\sbp \alpha<\omega_1\rangle$.

    Let $\dot{x}$ be a name for $\langle x_\alpha\sbp \alpha<\omega_1\rangle$ in $H_\theta^V$; this is possible by Lemma \ref{lemma:namesize} if $\theta$ was chosen to be sufficiently larger than everything of interest. Then since $\dot{x}, G, j\upharpoonright H_\theta^V \in M$, $\langle j(x_\alpha)\sbp \alpha<\omega_1\rangle \in M$. By elementarity, $j\upharpoonright\mathbb{P}$ is order-preserving and $M$ thinks that $j(\mathbb{P})$ is $<\hspace{-2pt}\omega_2$-closed. Thus $j(x_\alpha)\leq_{j(\mathbb{P})} j(g_\beta)$ whenever $\beta<\alpha$ and there is a $p'\in j(\mathbb{P})$ less than all $j(x_\alpha)$, so in particular $p'$ is a lower bound of $j"G$. The result follows from Corollary \ref{cor:fullpres}.
\end{proof}

By strengthening the closure property on $\mathbb{P}$, we can generalize this to other forcing classes:

\begin{theorem}
    \label{thm:dirclosepres}
    (cf Larson \cite{larson}, Theorem 4.3) If $\Gamma$ is any $n$-nice forcing class, and $\mathbb{P}\in\Gamma$ is $\lk$-directed closed (i.e., for all $X\subset\mathbb{P}$ of size less than $\kappa$ such that any two elements of $X$ have a common lower bound in $X$, there is a common lower bound for all elements of $X$ in $\mathbb{P}$), then $\mathbb{P}$ preserves $\Sigma_n\mhyphen CFA_{<\kappa}(\Gamma)$.
\end{theorem}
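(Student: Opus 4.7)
The plan is to apply Corollary \ref{cor:fullpres}, taking $\dot{\mathbb{R}}$ to name the trivial forcing for every choice of $\phi$, $\dot{\mathbb{Q}}$, and $\dot{a}$. Since $\mathbb{P} \in \Gamma$ and $\Vdash_\mathbb{P} \dot{\mathbb{Q}} \in \Gamma$, $n$-niceness gives $\mathbb{P} * \dot{\mathbb{Q}} \in \Gamma$, and trivially adjoining $\dot{\mathbb{R}}$ does not change this up to isomorphism; trivial forcing also preserves $\phi(\dot{a})$ outright. So the only clause of condition 3 in Theorem \ref{thm:pres} requiring real work is producing a lower bound for $j"G$ in $j(\mathbb{P})$.

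To verify that clause, let $\theta > |\mathcal{P}(\mathbb{P} * \dot{\mathbb{Q}} * \dot{\mathbb{R}})| + |trcl(\dot{a})|$ be regular and $j : V \rightarrow M$ be a generic elementary embedding witnessing the $\lk$-forcing axiom for $(\mathbb{P} * \dot{\mathbb{Q}} * \dot{\mathbb{R}}, \phi, \dot{a}, \theta)$, with $V$-generic $G * H * K$ in $M$. Since $\mathbb{P} \in H_\theta^V$, we have $G \subseteq H_\theta^V$, and the conditions $j \upharpoonright H_\theta^V \in M$ and $|H_\theta^V|^M < j(\kappa)$ from Definition \ref{def:witnessfa} yield that $j"G \in M$ with $|j"G|^M < j(\kappa)$. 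Since $G$ is a filter, it is directed; by the elementarity of $j$, $j"G$ is a directed subset of $j(\mathbb{P})$, and $M$ thinks $j(\mathbb{P})$ is $<\hspace{-2pt}j(\kappa)$-directed closed. Hence $j"G$ has a lower bound in $j(\mathbb{P})$ in $M$, as required.

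The hypothesis of Corollary \ref{cor:fullpres} is thereby satisfied, so $\mathbb{P}$ forces $\Sigma_n\mhyphen CFA_{<\kappa}(\Gamma)$. The step that carries the real content is the verification that $j"G$ fits into the directed-closure bound of $j(\mathbb{P})$ inside $M$; this is immediate here because $\dot{\mathbb{R}}$ can be taken trivial (directed closure, unlike mere countable closure used in Theorem \ref{thm:properpres}, is strong enough that we need no auxiliary collapse to enumerate $G$ and meet density requirements). No obstacle of substance arises; the proof is essentially a one-line application of directed closure inside the framework already set up by Theorem \ref{thm:pres} and Corollary \ref{cor:fullpres}.
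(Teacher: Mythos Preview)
Your proof is correct and follows essentially the same approach as the paper: take $\dot{\mathbb{R}}$ trivial, then use the size bound $|H_\theta^V|^M<j(\kappa)$ together with the $<\hspace{-2pt}j(\kappa)$-directed closure of $j(\mathbb{P})$ in $M$ to obtain a lower bound for $j"G$, and conclude via Corollary~\ref{cor:fullpres}. The paper's argument is nearly identical, differing only in being slightly terser.
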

\begin{proof}
    Given a $\lk$-directed closed $\mathbb{P}\in \Gamma$, $\mathbb{P}$-name $\dot{\mathbb{Q}}$ such that $\Vdash_{\mathbb{P}} \dot{\mathbb{Q}}\in \Gamma$, a provably $\Gamma$-persistent $\Sigma_n$ formula $\phi$, and $\mathbb{P}*\dot{\mathbb{Q}}$-name $\dot{a}$ such that $\Vdash_{\mathbb{P}*\dot{\mathbb{Q}}} \phi(\dot{a})$, let $\dot{\mathbb{R}}$ be a $\mathbb{P}*\dot{\mathbb{Q}}$-name for the trivial forcing, so that $\mathbb{P}*\dot{\mathbb{Q}}*\dot{\mathbb{R}}\equiv \mathbb{P}*\dot{\mathbb{Q}}$. Then if $\theta>|\mathcal{P}(\mathbb{P}*\dot{\mathbb{Q}})|$ is regular and $j:V\rightarrow M$ witnesses the $\lk$-forcing axiom for $(\mathbb{P}*\dot{\mathbb{Q}}, \phi, \dot{a}, \theta)$ with generic filter $G*H$, $M$ thinks that $j"G$ is directed and $|j"G|^M<j(\kappa)$, since $G\subset H_\theta^V$ and $H_\theta^V$ has $M$-cardinality less than $j(\kappa)$. Since by elementarity $M$ thinks that $j(\mathbb{P})$ is $<j(\kappa)$-directed closed, there is a common lower bound for $j"G$ in $j(\mathbb{P})$. Applying Corollary \ref{cor:fullpres} yields the desired result.
\end{proof}

\chapter{Relationships with Other Axioms}

This chapter explores the relations $\Sigma_n$-correct forcing axioms have to various previously-proposed axioms, as well as the relationship between the $\Sigma_n$-correct forcing axioms for different forcing classes.

\section{Equivalences}
\label{section:equivalences}

\begin{theorem}
If $\kappa$ is a regular cardinal and $n\geq 1$, $\Sigma_n\mhyphen CBFA_{<\kappa}^{<\kappa}(\Gamma)$ is equivalent to $\Sigma_n\mhyphen MP_\Gamma(H_\kappa)$.
\label{thm:symbfaMP}
\end{theorem}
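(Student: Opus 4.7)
The forward direction is already a special case of Proposition \ref{prop:cbfaeasy}(2) with $\lambda=\kappa$, so the task is to derive $\Sigma_n\mhyphen CBFA_{<\kappa}^{<\kappa}(\Gamma)$ from $\Sigma_n\mhyphen MP_\Gamma(H_\kappa)$. The plan is to adapt the argument of Proposition \ref{prop:mpbfa}, inserting $\phi$ into the $\Sigma_1$ forceable statement that appears there so that its complexity climbs to $\Sigma_n$ while remaining provably $\Gamma$-persistent and remaining about parameters in $H_\kappa$.

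Concretely, given $\mathbb{B}\in\Gamma$, $\dot{a},b\in H_\kappa$, a regular $\gamma\geq\kappa$ with $\mathbb{B}\in H_\gamma$, $X\subseteq H_\gamma$ with $|X|<\kappa$, and a provably $\Gamma$-persistent $\Sigma_n$ formula $\phi$ such that $\Vdash_{\mathbb{B}}\phi(\dot a,\check b)$, I would begin by choosing $Y\prec H_\gamma$ of size less than $\kappa$ such that
\[
\{\kappa,\mathbb{B},\dot a,b\}\cup\mathrm{trcl}(\{\dot a,b\})\cup X\subseteq Y
\]
and $Y\cap\kappa\in\kappa$; the set of such $Y$ is a club, hence nonempty. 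Let $N$ be the transitive collapse of $Y$, $\sigma:=\pi_Y^{-1}$, and write $\bar\kappa:=\sigma^{-1}(\kappa)=Y\cap\kappa$, $\bar{\mathbb B}:=\sigma^{-1}(\mathbb B)$, $\bar{\dot a}:=\sigma^{-1}(\dot a)$, $\bar b:=\sigma^{-1}(b)$. Because $|N|<\kappa$, the tuple $(N,\bar{\mathbb B},\bar{\dot a},\bar b,\bar\kappa)$ lies in $H_\kappa$, which is what allows us to appeal to the maximality principle.

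Next I would consider the formula $\psi(N,\bar{\mathbb B},\bar{\dot a},\bar b,\bar\kappa)$ asserting ``there is a filter $F\subseteq\bar{\mathbb B}$ which is ${<}\bar\kappa$-weakly $N$-generic and such that $\phi(\bar{\dot a}^F,\bar b)$ holds.'' Weak $N$-genericity for $F$ is $\Delta_0$ in $F,N,\bar{\mathbb B},\bar\kappa$, so adding the existential quantifier over $F$ to the $\Sigma_n$ formula $\phi$ keeps $\psi$ at the $\Sigma_n$ level. To see that $\psi$ is $\Gamma$-forceable, let $G\subseteq\mathbb B$ be $V$-generic and set $F:=\sigma^{-1}{}''G$; by Lemma \ref{lemma:macSkolem}, any maximal antichain $A\in N$ of $\bar{\mathbb B}$ with $|A|^N<\bar\kappa$ satisfies $\sigma(A)\subseteq Y$, so $G$ meets $\sigma(A)$ at a point $\sigma(p)\in Y$ and $p\in A\cap F$. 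Since $\mathrm{trcl}(\{\dot a,b\})\subseteq Y$, $\sigma^{-1}$ is an $\in$-isomorphism on every name and every condition relevant to the recursion defining the interpretation, so $\bar{\dot a}^F=\dot a^G$ and $\sigma^{-1}(b)=b$ (up to this identification), and hence $\phi(\bar{\dot a}^F,\bar b)$ holds in $V[G]$. Moreover $\psi$ is provably $\Gamma$-persistent: the witness $F$, the $\Delta_0$ weak genericity condition, and the value $\bar{\dot a}^F$ are unchanged by passing to a larger model, while $\phi(\bar{\dot a}^F,\bar b)$ is preserved because $\phi$ is provably $\Gamma$-persistent.

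Applying $\Sigma_n\mhyphen MP_\Gamma(H_\kappa)$ to $\psi$ yields a filter $F\in V$ on $\bar{\mathbb B}$ which is ${<}\bar\kappa$-weakly $N$-generic with $\phi(\bar{\dot a}^F,\bar b)$ holding in $V$; together with the elementary embedding $\sigma:N\to H_\gamma$ and the inclusions $\{\dot a,b,\mathbb B,\kappa\}\cup X\subseteq Y=\mathrm{rng}(\sigma)$ built into $Y$, this is precisely the witness required by $\Sigma_n\mhyphen CBFA_{<\kappa}^{<\kappa}(\Gamma)$. The main delicate point, and the one I would check carefully, is the identification $\bar{\dot a}^F=\dot a^G$ used when verifying forceability: it depends on having pulled enough of $\mathrm{trcl}(\{\dot a\})$ into $Y$ so that the recursive definition of interpretation commutes with $\sigma^{-1}$. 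Everything else is a direct transcription of the Goldstern--Shelah / Bagaria argument with $\phi$ carried along as extra data.
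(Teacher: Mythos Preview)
Your proposal is correct and follows essentially the same approach as the paper's proof: both take an elementary substructure $Y\prec H_\gamma$ containing $X\cup\mathrm{trcl}(\{\dot a,b\})\cup\{\mathbb{B},\kappa\}$ with $Y\cap\kappa$ transitive, observe that $\sigma^{-1}(\dot a)=\dot a$ and $\sigma^{-1}(b)=b$, use Lemma~\ref{lemma:macSkolem} to verify that $\sigma^{-1}{}''G$ is ${<}\bar\kappa$-weakly $N$-generic, and then apply the maximality principle to the $\Sigma_n$ provably $\Gamma$-persistent statement ``there is a ${<}\bar\kappa$-weakly $N$-generic $F\subseteq\bar{\mathbb B}$ with $\phi(\dot a^F,b)$.'' The paper simply cites the construction in Theorem~\ref{thm:symbfa} rather than spelling it out, and records explicitly (as you flag as the ``delicate point'') that since $\mathrm{trcl}(\{\dot a,b\})\subseteq Y$, the collapse fixes all conditions appearing in $\dot a$, so $\dot a^{\sigma^{-1}{}''G}=\dot a^G$.
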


\begin{proof}
The forward implication is Proposition \ref{prop:cbfaeasy}(2). For the converse, let $\phi$ be a provably $\Gamma$-persistent $\Sigma_n$ formula, $\mathbb{B}\in \Gamma$ be a complete Boolean algebra, $\dot{a}\in H_\kappa$ a $\mathbb{B}$-name, $b\in H_\kappa$, $X\subset H_\gamma$ with $|X|<\kappa$, and $\Vdash_{\mathbb{B}} \phi(\dot{a},\check{b})$. Then following the proof of Theorem \ref{thm:symbfa} we can construct a transitive $N\in H_\kappa$ with an elementary embedding $\sigma:N\rightarrow H_\gamma$ with $X\cup trcl(\{\dot{a}, b\})\cup\{\mathbb{B}, \kappa\}\subset rng(\sigma)$ and $rng(\sigma)\cap\kappa$ transitive such that whenever $A\subset\mathbb{B}$ is a maximal antichain of size less than $\kappa$ in the range of $\sigma$, all its elements are also in the range of $\sigma$.
Then $\sigma^{-1}(\dot{a})=\dot{a}$ and $\sigma^{-1}(b)=b$, and we can define $\bar{\kappa}=\sigma^{-1}(\kappa)$ and $\bar{\mathbb{B}}=\sigma^{-1}(\mathbb{B})$. As in the relative consistency proofs, if $G\subset\mathbb{B}$ is $V$-generic, then for every maximal antichain of $\bar{\mathbb{B}}$ $A\in N$ with $|A|^N<\bar{\kappa}$, $|\sigma(A)|<\kappa$, so $A\subset rng(\sigma)$ and thus $G\cap A\cap rng(\sigma)$ is non-empty. Hence $V[G]\models$ "$\sigma^{-1}"G$ is a $<\hspace{-3pt}\bar{\kappa}$-weakly $N$-generic filter on $\bar{\mathbb{B}}$ and $\phi(\dot{a}^{\sigma^{-1}"G}, b)$ holds". It follows that the statement that there exists a $<\hspace{-3pt}\bar{\kappa}$-weakly $N$-generic $F\subset \bar{\mathbb{B}}$ such that $\phi(\dot{a}^F, b)$ holds is a $\Sigma_n$ statement forced by $\mathbb{B}$ and provably preserved by further forcing in $\Gamma$ with parameters $\dot{a}, b, N, \bar{\kappa}, \bar{\mathbb{B}}\in H_\kappa$. Applying the maximality principle, it is therefore true.
\end{proof}

We now wish to show that classical forcing axioms are equivalent to $\Sigma_1$-correct forcing axioms. However, there is a slight technical wrinkle. $\Sigma_n$-correct forcing axioms were stated with the requirement that $rng(\sigma)\cap\kappa$ is transitive, but the proof of Lemma \ref{lemma:jensenfa} can be extended to guarantee this condition only in some cases. If $\kappa=\theta^+$ is a successor cardinal, we can simply assume that $\theta+1\subset X$, so that the same will hold of $rng(\sigma)$. Then by elementarity $rng(\sigma)$ correctly identifies a surjection $\theta\rightarrow \alpha$ for each $\alpha\in rng(\sigma)\cap\kappa$ and correctly computes all values of that surjection, so we must have $\alpha\subset rng(\sigma)$ and thus $rng(\sigma)\cap\kappa$ is transitive. When $\kappa$ is a limit cardinal, however, this fails.

To remedy this issue, we could remove the transitivity condition from the statement of $\Sigma_n\mhyphen CFA$, but it is occasionally useful to have that condition. Alternatively, we could redefine $FA_{<\kappa}(\Gamma)$ to mean Lemma \ref{lemma:jensenfa}(2) with the added condition that $rng(\sigma)\cap\kappa$ is transitive (in essence requiring that the collection of elementary substructures of $H_\gamma$ which have generics be stationary in $[H_\gamma]^{<\kappa}$ in Jech's sense rather than merely weakly stationary), since this condition can easily be derived in both the Martin-Solovay consistency proof for Martin's Axiom and Baumgartner-style consistency proofs for other forcing axiom. Whichever option is chosen, the issue can be resolved fairly easily, and we will not worry about it much further.

To show a level-by-level equivalence of bounded forcing axioms, it is helpful to have a bounded version of Lemma \ref{lemma:jensenfa}, proved in essentially the same way:

\begin{lemma}
    The following are equivalent for any forcing class $\Gamma$ and regular cardinals $\lambda\geq \kappa>\omega_1$:
    \begin{enumerate}
        \item $BFA_{<\kappa}^{<\lambda}(\Gamma)$ \item For any cardinal $\gamma>\lambda$, complete Boolean algebra $\mathbb{B}\in \Gamma$ such that $\mathbb{B}\in H_\gamma$, and $X\in [H_\gamma]^{<\kappa}$, there is a transitive structure $N$ and an elementary embedding $\sigma:N\rightarrow H_\gamma$ such that $|N|<\kappa$, $X\cup\{\mathbb{B},\kappa,\lambda\}\subseteq rng(\sigma)$, and there is a $<\sigma^{-1}(\lambda)$-weakly $N$-generic filter $F\subseteq \bar{\mathbb{B}}:=\sigma^{-1}(\mathbb{B})$.
    \end{enumerate}
    \label{lemma:boundedjensen}
\end{lemma}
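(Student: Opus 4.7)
The plan is to follow Jensen's proof of Lemma \ref{lemma:jensenfa} closely, replacing ``dense set'' with ``maximal antichain of size less than $\lambda$'' at each step and exploiting that $\mathbb{B}$ is a complete Boolean algebra so the mixing lemma is available.

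For $(2)\Rightarrow(1)$: given a complete Boolean algebra $\mathbb{B}\in\Gamma$ and a collection $\mathcal{A}$ of fewer than $\kappa$ maximal antichains of $\mathbb{B}$ each of size less than $\lambda$, I apply (2) with the parameter set taken to be $\mathcal{A}\cup\{\mathbb{B},\kappa,\lambda\}$. The resulting embedding $\sigma\colon N\rightarrow H_\gamma$ sends each $\bar{A}:=\sigma^{-1}(A)\in N$ to $A$, and by elementarity $\bar{A}$ is a maximal antichain of $\bar{\mathbb{B}}$ with $|\bar{A}|^N<\sigma^{-1}(\lambda)=:\bar\lambda$. Hence the $<\hspace{-2pt}\bar\lambda$-weakly $N$-generic filter $F$ meets each such $\bar{A}$, and the filter on $\mathbb{B}$ generated by $\sigma"F$ meets each $A$.

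For $(1)\Rightarrow(2)$: I start with $X\prec H_\gamma$ of size less than $\kappa$ containing the given parameter set together with $\mathbb{B},\kappa,\lambda$, and define $\Upsilon$ to be the set of $\mathbb{B}$-names $\dot{u}\in X\cap H_\gamma^\mathbb{B}$ for which there is some maximal antichain $A_{\dot{u}}\in X$ of size less than $\lambda$ each of whose elements decides $\dot{u}=\check{x}$ for some $x\in H_\gamma$. Applying $BFA_{<\kappa}^{<\lambda}(\Gamma)$ to the collection $\{A_{\dot{u}}\sbp \dot{u}\in\Upsilon\}$ (fewer than $\kappa$ antichains, each of size less than $\lambda$) produces a filter $G\subseteq\mathbb{B}$ meeting each $A_{\dot{u}}$. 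For $\dot{u}\in\Upsilon$ I set $\dot{u}^G$ to be the unique $x$ such that some $p\in G\cap A_{\dot{u}}$ forces $\dot{u}=\check{x}$, and let $Y:=\{\dot{u}^G\sbp \dot{u}\in\Upsilon\}$, with $N$ the transitive collapse of $Y$, $\sigma:=\pi_Y^{-1}$, and $F:=\pi_Y"G$.

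The two verifications needed are $Y\prec H_\gamma$ and that $F$ is $<\hspace{-2pt}\bar\lambda$-weakly $N$-generic. For Tarski-Vaught with parameters $a_i=\dot{u}_i^G$ and $H_\gamma\models\exists x\,\phi(x,\vec{a})$, I build a small-decision witness name by mixing check names of witnesses along the product antichain $\{p_1\land\dotsb\land p_n\sbp \vec{p}\in A_{\dot{u}_1}\times\dotsb\times A_{\dot{u}_n}\}$, which has cardinality less than $\lambda$ by regularity of $\lambda$; elementarity of $X$ ensures a suitable choice function of witnesses lives in $X$, and hence so does the resulting name, which lies in $\Upsilon$. For weak $N$-genericity, any maximal antichain $\bar{A}\in N$ of $\bar{\mathbb{B}}$ of $N$-cardinality less than $\bar\lambda$ corresponds to some $A'=\sigma(\bar{A})=\dot{A}^G\in Y$ with $\dot{A}\in\Upsilon$; the name $\dot{q}$ for the unique element of $\dot{A}\cap\dot{G}$ (where $\dot{G}$ is the canonical name for the generic filter) takes values only in $\bigcup_{p\in A_{\dot{A}}}x_p$, a set of size less than $\lambda$ by regularity, so $\dot{q}$ itself has a small decision antichain and lies in $\Upsilon$; then $\dot{q}^G\in A'\cap G\cap Y$ pulls back to an element of $\bar{A}\cap F$. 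The hard part will be the bookkeeping required to guarantee that every name constructed along the way, in particular the witness names from Tarski-Vaught and the ``generic element'' names $\dot{q}$, genuinely lies in $X$ with a decision antichain also in $X$; this is handled by strengthening the closure of $X$ to absorb the relevant name-building operations from the start.
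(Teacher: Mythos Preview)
Your proposal is correct and follows exactly the approach the paper indicates: the paper states that Lemma~\ref{lemma:boundedjensen} is ``proved in essentially the same way'' as Lemma~\ref{lemma:jensenfa}, and your adaptation---replacing dense sets by maximal antichains of size less than $\lambda$, using the mixing lemma in the complete Boolean algebra $\mathbb{B}$, and exploiting regularity of $\lambda$ to keep product antichains small---is precisely that adaptation. The one bookkeeping point you should make explicit is that not every $p\in A_{\dot{A}}$ need force $x_p$ to be a maximal antichain of size $<\lambda$; for such $p$ simply set $\dot{q}$ to $\check{1}_{\mathbb{B}}$ below $p$, so that the decision antichain for $\dot{q}$ remains of size $<\lambda$ by regularity and the argument goes through on the (good) condition $p^*\in G\cap A_{\dot{A}}$.
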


The following result is a generalization of Bagaria's Theorem \ref{thm:bagariagenabs} to asymmetric bounded forcing axioms and of Miyamoto's Theorem 2.5 in \cite{miyamotosegments} to forcing classes other than proper forcing:

\begin{theorem}
    If $\Gamma$ is a forcing class and $\lambda\geq\kappa>\omega_1$ are regular cardinals, then $BFA^{<\lambda}_{<\kappa}(\Gamma)$ is equivalent to $\Sigma_1\mhyphen CBFA^{<\lambda}_{<\kappa}(\Gamma)$, modulo the requirement that $rng(\sigma)\cap\kappa$ is transitive.
    \label{thm:bfaeqs1cbfa}
\end{theorem}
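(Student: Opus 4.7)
The forward direction is exactly Proposition \ref{prop:cbfaeasy}(1), so the work lies in deriving $\Sigma_1\mhyphen CBFA_{<\kappa}^{<\lambda}(\Gamma)$ from $BFA_{<\kappa}^{<\lambda}(\Gamma)$. Given the data $\phi, \mathbb{B}, \dot{a}, b, \gamma, X$ from the statement of the $\Sigma_1$-correct axiom, I would first use the Jensen-style characterization of bounded forcing axioms (Lemma \ref{lemma:boundedjensen}), applied to the parameter set $X\cup \{\dot{a}, b, \mathbb{B}, \lambda, \kappa\}$, to produce a transitive $N$ of size less than $\kappa$ and an elementary embedding $\sigma\colon N \to H_\gamma$ whose range contains all the required objects. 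The transitivity of $rng(\sigma)\cap \kappa$ is the technical wrinkle the author flags, handled by the remarks immediately preceding the theorem.

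The key observation is that, once $\sigma$ is in hand, finding a weakly generic filter realizing $\phi$ becomes a $\Sigma_1$ assertion in small parameters. Let $\psi$ say that there is a filter $F\subseteq \bar{\mathbb{B}}:= \sigma^{-1}(\mathbb{B})$ which meets every maximal antichain of $\bar{\mathbb{B}}$ in $N$ of $N$-cardinality less than $\sigma^{-1}(\lambda)$ and such that $\phi(\sigma^{-1}(\dot{a})^F, \sigma^{-1}(b))$ holds. All quantifiers over $F$, $\bar{\mathbb{B}}$, antichains, and elements of $N$ are bounded by the set-parameters; the standard interpretation $\sigma^{-1}(\dot{a})^F$ is $\Delta_1$-definable from $F$ and $\sigma^{-1}(\dot{a})$; and $\phi$ itself is $\Sigma_1$. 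Hence $\psi$ is $\Sigma_1$, and since $|N|<\kappa$ all its parameters lie in $H_\kappa$.

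I would then check that $\psi$ is $\Gamma$-forceable and provably $\Gamma$-persistent. For forceability, if $G\subseteq \mathbb{B}$ is $V$-generic, then $F := \sigma^{-1}"G$ is a filter on $\bar{\mathbb{B}}$ meeting every required antichain, since each such antichain's image under $\sigma$ has size less than $\lambda$ in $\mathbb{B}$ and so is met by $G$. The lift $\sigma^+\colon N[F] \to H_\gamma^{V[G]}$ provided by Lemma \ref{lemma:extembed} is elementary with $\sigma^+(\sigma^{-1}(\dot{a})^F) = \dot{a}^G$, so $\phi(\dot{a}^G, b)$ in $V[G]$ reflects to $\phi(\sigma^{-1}(\dot{a})^F, \sigma^{-1}(b))$ in $N[F]$ and, by $\Sigma_1$-upward absoluteness, back up to $V[G]$. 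Thus $\mathbb{B}\in \Gamma$ forces $\psi$; persistence under further $\Gamma$-forcing is automatic, since both $\Sigma_1$ formulas and the existence of a filter meeting a fixed collection of antichains are upward absolute. Because $BFA_{<\kappa}^{<\lambda}(\Gamma)$ implies its symmetric restriction $BFA_{<\kappa}^{<\kappa}(\Gamma)$, which by Theorem \ref{thm:bagariagenabs} is equivalent to $\Sigma_1\mhyphen MP_\Gamma(H_\kappa)$, I can apply the maximality principle to $\psi$ and conclude it holds in $V$, producing the filter witnessing $\Sigma_1\mhyphen CBFA_{<\kappa}^{<\lambda}(\Gamma)$.

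The main obstacle is the careful complexity analysis showing that $\psi$ is genuinely $\Sigma_1$—in particular, that the antichain-meeting condition is $\Delta_0$ and the name-interpretation operation $\dot{a}, F \mapsto \dot{a}^F$ stays $\Delta_1$ with no hidden unbounded quantifiers. A second delicate point is verifying forceability, which requires tracking $\sigma$ and its lift through the forcing extension to confirm that $\psi$ actually holds in $V[G]$ with the explicit filter $\sigma^{-1}"G$; once this is set up, everything else is routine combinations of elementarity and Bagaria-style $\Sigma_1$ absoluteness.
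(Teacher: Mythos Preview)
Your overall plan has a genuine gap in the forceability step. You set $F:=\sigma^{-1}"G$ and claim it meets every maximal antichain $A\in N$ of $N$-size less than $\bar\lambda$, on the grounds that $G$ meets $\sigma(A)$. But $G$ meets $\sigma(A)$ at some $q\in\sigma(A)$, and there is no reason for $q$ to lie in $\sigma"A$: in general $\sigma"A\subsetneq\sigma(A)$ whenever $|A|^N\geq crit(\sigma)$, which is exactly the situation of interest here (since $|N|<\kappa$ forces $crit(\sigma)<\kappa\leq\lambda$, while antichains may have $N$-size up to $\bar\lambda$). So $\sigma^{-1}"G\cap A$ can easily be empty, and your $F$ need not be even weakly $N$-generic. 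The subsequent appeal to Lemma~\ref{lemma:extembed} compounds the problem: that lemma requires $F$ to be fully $N$-generic in order to form $N[F]$ and lift $\sigma$, whereas at best you have weak genericity. Without the lift, there is no way to relate $\sigma^{-1}(\dot a)^F$ to $\dot a^G$, and the forceability of $\psi$ is left unestablished.

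The paper's proof takes a different route that avoids both difficulties. It uses the weakly generic filter $F$ already supplied by Lemma~\ref{lemma:boundedjensen} (rather than trying to manufacture one from a $V$-generic), and then works in the Boolean-valued quotient $N^{\bar{\mathbb B}}/F$. By elementarity $N$ knows $\llbracket\exists x\,\phi(\sigma^{-1}(\dot a),\sigma^{-1}(b),x)\rrbracket=1$, so Lemma~\ref{lemma:booleanlos} gives the formula in the quotient; Lemmas~\ref{lemma:rqieqbool} and~\ref{lemma:standeqrqi} (which use precisely the amount of genericity $F$ has) identify $[\sigma^{-1}(\dot a)]_F$ with $\sigma^{-1}(\dot a)^F$ in the well-founded part; and $\Delta_0$ absoluteness pushes the conclusion up to $V$. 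This argument never needs to lift $\sigma$ or to find a filter in a forcing extension, and so the maximality-principle detour is unnecessary.
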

\begin{proof}
    The reverse implication is Proposition \ref{prop:cbfaeasy}(1). For the forward direction, if $\mathbb{B}\in \Gamma$ is a complete Boolean algebra, $\dot{a}\in H_\lambda$ is a $\mathbb{B}$-name, $b\in H_\lambda$, $\gamma\geq\lambda$ is a regular cardinal, $X\subset H_\gamma$ is such that $|X|<\kappa$, and $\phi$ is a $\Delta_0$ formula such that $\llbracket\exists x\hspace{3pt}\phi(\dot{a},b, x)\rrbracket=1$, then the fact that $\mathbb{B}$ forces $\exists x\hspace{3pt}\phi(\dot{a},b, x)$ is $\Sigma_1$ and thus by Lemma \ref{lemma:HS1correct} absolute to $H_\gamma$.

    Now applying Lemma \ref{lemma:boundedjensen} to $X\cup\{\dot{a}, b\}$, there is a transitive $N$ of size less than $\kappa$ with an elementary embedding $\sigma:N\rightarrow H_\gamma$ such that $X\cup\{\dot{a}, b, \kappa, \lambda,\mathbb{B}\}\subseteq rng(\sigma)$ and, setting $\bar{\lambda}:=\sigma^{-1}(\lambda)$ and $\bar{\mathbb{B}}:=\sigma^{-1}(\mathbb{B})$, there is a $<\hspace{-2pt}\bar{\lambda}$-weakly $N$-generic filter $F\subset\bar{\mathbb{B}}$. Since $2<\bar{\lambda}$, $F$ is in particular an ultrafilter.

    By elementarity, $N\models \llbracket \exists x\hspace{3pt}\phi(\sigma^{-1}(\dot{a}),\sigma^{-1}(b), x)\rrbracket=1_{\bar{\mathbb{B}}}$. Thus by Lemma \ref{lemma:booleanlos}, $N^{\bar{\mathbb{B}}}/F\models \exists x\hspace{3pt}\phi([\sigma^{-1}(\dot{a})]_F, [\sigma^{-1}(\check{b})]_F, x)$. Since $trcl(\{\sigma^{-1}(\dot{a})\})$ and $trcl(\sigma^{-1}(\{\check{b})\})$ have $N$-cardinality less than $\bar{\lambda}$, Lemma \ref{lemma:rqieqbool} implies that $[\sigma^{-1}(\dot{a})]_F$ and $[\sigma^{-1}(\check{b})]_F$ are in the wellfounded part of $N^{\bar{\mathbb{B}}}/F$, and if the wellfounded part is taken to be transitive they are equal to $\sigma^{-1}(\dot{a})^{((F))}$ and $\sigma^{-1}(b)$ respectively. Applying Lemma \ref{lemma:standeqrqi}, $\sigma^{-1}(\dot{a})^{((F))}=\sigma^{-1}(\dot{a})^F$.

    By Lemma \ref{lemma:HS1correct}, there is some $c\in H_{\bar{\lambda}}^{N^{\bar{\mathbb{B}}}/F}$ such that $\phi(\sigma^{-1}(\dot{a})^F, \sigma^{-1}(b), c)$ holds. (Note that since $N^{\bar{\mathbb{B}}}/F$ does not in general satisfy the power set axiom, $H_{\bar{\lambda}}^{N^{\bar{\mathbb{B}}}/F}$ may be a proper class in it, but the proof of Lemma \ref{lemma:HS1correct} does not rely on it being a set.) Applying Lemma \ref{lemma:rqieqbool} again, $c$ must be in the wellfounded part of $N^{\bar{\mathbb{B}}}/F$, so $wfp(N^{\bar{\mathbb{B}}}/F)$ is a transitive structure satisfying $\phi(\sigma^{-1}(\dot{a})^F,\sigma^{-1}(b), c)$. Since $\Delta_0$ formulas are absolute between transitive classes, $V\models \exists x\hspace{3pt} \phi(\sigma^{-1}(\dot{a})^F, \sigma^{-1}(b), x)$.
\end{proof}

To conclude this section, consider the following open question:

\begin{question}
    Is $MM^{+\omega_1}$ equivalent to $\Sigma_2\mhyphen CMM$?
\end{question}

In other words, can every $\Sigma_2$ property provably preserved by all stationary set-preserving forcing be encoded in a sequence of stationary subsets of $\omega_1$? The answer if we replace $\Sigma_2$ with $\Sigma_3$ or higher is no by Proposition \ref{prop:S2nimpS3}, since $MM^{+\omega_1}$ is implied by $\Sigma_2\mhyphen CMM$ and thus cannot imply $\Sigma_3\mhyphen CMM$.

\section{Separations}
\label{section:separations}

Corey Switzer posed the question of whether $\Sigma_n\mhyphen CPFA$ can ever imply $MM$. Using the work of K\"onig and Yoshinobu on regressive Kurepa trees \cite{KYkurepatrees}, this can be answered in the negative.

\begin{definition}
    (K\"onig and Yoshinobu) For uncountable cardinals $\gamma$ and $\lambda$, a $\gamma$-regressive $\lambda$-Kurepa tree is a tree $(T, <_T)$ such that:
    \begin{itemize}
        \item the height of $T$ is $\lambda$
        \item for each $\alpha<\lambda$, the $\alpha$th level $T_\alpha$ has cardinality less than $\lambda$
        \item there are at least $\lambda^+$ distinct cofinal branches of $T$
        \item for all limit ordinals $\beta<\lambda$ of cofinality less than $\gamma$, there is a function $f:T_\beta\rightarrow T\upharpoonright\beta$ such that $f(x)<_T x$ for all $x\in T_\beta$ and whenever $x\neq y$ in $T_\beta$, $f(y)\not<_T x$ or $f(x)\not<_T y$
    \end{itemize}
\end{definition}

\begin{lemma}
    (K\"onig and Yoshinobu, Theorem 5 \cite{KYkurepatrees}) For each regular uncountable cardinal $\lambda$, there is a $<\hspace{-3pt}\lambda$-closed forcing which adds a $\lambda$-regressive $\lambda$-Kurepa tree.
\end{lemma}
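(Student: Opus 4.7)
The plan is to construct a $<\lambda$-closed forcing poset $\mathbb{P}_\lambda$ whose conditions are approximations to the desired tree together with partial regression data, and to produce the regressive Kurepa tree as a generic object. A natural choice of condition is a triple $p = (T_p, B_p, f_p)$ in which $T_p$ is a tree of successor height $\alpha_p + 1 < \lambda$ with every level of size less than $\lambda$, $B_p$ is a set of fewer than $\lambda$ cofinal branches through $T_p$, and $f_p$ assigns to each limit level of $T_p$ of cofinality less than $\lambda$ a regressive function satisfying the ``no overlap'' clause in the definition of a $\lambda$-regressive Kurepa tree. The ordering places $q \leq p$ when $T_q$ end-extends $T_p$, each branch in $B_p$ has a unique extension in $B_q$, and $f_q$ extends $f_p$.

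First I would verify $<\lambda$-closure: given a decreasing sequence $\langle p_\xi : \xi < \mu \rangle$ with $\mu < \lambda$, set $\alpha := \sup_\xi \alpha_{p_\xi}$, let $T$ be the union of the $T_{p_\xi}$, add a new top level at height $\alpha$ with one node per branch in $B := \bigcup_\xi B_{p_\xi}$, and choose a regression function at this new level that extends $\bigcup_\xi f_{p_\xi}$. Next I would show that for each $\nu < \lambda^+$ the set of conditions $p$ with $|B_p| \geq |\nu|$ is dense in $\mathbb{P}_\lambda$, since given any $p$ we can extend $T_p$ by one wide new level accommodating arbitrarily many distinct initial segments of future branches. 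A generic filter then produces a tree of height $\lambda$ with levels of size less than $\lambda$ (using density of the sets demanding height at least $\beta$ for each $\beta < \lambda$), regression functions at every limit level of cofinality less than $\lambda$, and $\lambda^+$ distinct cofinal branches, with cardinals preserved thanks to $<\lambda$-closure.

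The main obstacle is the construction of the regression function at the newly added top level in the closure step. At a limit level of cofinality $cf(\alpha) < \lambda$ populated by a set $B$ of size $\mu < \lambda$ of promised branches, the condition on $f^\alpha$ demands that for any two distinct new nodes $x_b, x_{b'}$ whose branches split at some level $\eta < \alpha$, at least one of $f^\alpha(x_b), f^\alpha(x_{b'})$ lies strictly above $\eta$. A naive assignment using a fixed cofinal sequence in $\alpha$ breaks down once $\mu > cf(\alpha)$, because the pairwise splitting levels may themselves be cofinal in $\alpha$. The König--Yoshinobu argument presumably exploits the regression functions already defined below $\alpha$, combined with careful bookkeeping on the choice of each $p_\xi$, so that splittings between ``old'' and ``new'' branches remain bounded below $\alpha$ and an inductive assignment of $f^\alpha$-values becomes possible. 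This combinatorial step is the heart of the theorem; once it is handled, the remaining properties---that the generic object is a genuine tree of height exactly $\lambda$ with the right level, branch, and regression structure---follow from routine density and closure arguments.
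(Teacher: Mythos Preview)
The paper does not give its own proof of this lemma; it merely cites K\"onig and Yoshinobu's Theorem 5 and uses the result as a black box. So there is no paper proof to compare your proposal against.

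That said, your outline is the natural one and matches the standard approach, and you have correctly identified the genuine difficulty: defining the regressive function at a newly created limit level when the number of branches there may exceed the cofinality of that level. Your speculation that one must exploit the regression functions already built below, together with bookkeeping during the descent, is on the right track but is not yet a proof. In the actual K\"onig--Yoshinobu argument the closure step is handled by arranging, as part of the descending sequence, that the tree is grown in a controlled way so that at the limit the splitting pattern of the accumulated branches admits a regressive injection; this requires more than taking an arbitrary descending sequence and hoping the limit works. As written, your closure argument has a real gap at exactly the point you flag, so the proposal is an honest sketch rather than a complete proof.
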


\begin{lemma}
    (K\"onig and Yoshinobu, Theorem 13 \cite{KYkurepatrees}) $MM$ implies that no $\omega_2$-Kurepa tree is even $\omega_1$-regressive.
\end{lemma}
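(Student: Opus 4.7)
The plan is to derive a contradiction by applying $MM$ to a stationary-set-preserving forcing tailored to the regressive structure of the tree. Suppose for contradiction that $T$ is an $\omega_1$-regressive $\omega_2$-Kurepa tree in a model of $MM$, with branch set $B$ of size at least $\omega_3$ and regressive functions $f_\beta: T_\beta \to T\upharpoonright \beta$ satisfying the separation condition at each limit ordinal $\beta<\omega_2$ of countable cofinality. For a branch $b\in B$ and such a $\beta$, let $b(\beta)$ denote the node of $b$ at level $\beta$; the separation condition ensures that whenever $b(\beta)\neq c(\beta)$, one of $f_\beta(b(\beta)), f_\beta(c(\beta))$ fails to lie below the other node. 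Thus the pair $(b(\beta), f_\beta(b(\beta)))$ provides a kind of injective coding of $b(\beta)$ in terms of data below $\beta$.

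Next I would set up an auxiliary forcing $\mathbb{P}$ whose conditions are continuous increasing chains $\langle M_\xi : \xi \leq \eta\rangle$ (for some countable $\eta$) of countable elementary submodels of some $H_\theta\ni T$ with $\delta_\xi := M_\xi\cap\omega_2 \in \omega_2$ and $\mathrm{cf}(\delta_\xi)=\omega$, ordered by end-extension. A generic filter for $\mathbb{P}$ yields an $\omega_1$-continuous sequence $\langle M_\xi : \xi<\omega_1\rangle$ whose union is internally approachable, with $\delta_\xi$ of countable cofinality for all $\xi$. The $\omega_1$ dense sets I would apply $MM$ to are those forcing each $b_\alpha$ (for a preselected $\omega_2$-enumeration of branches) into some $M_\xi$, and those ensuring unboundedness of the $\delta_\xi$ in $\omega_2$. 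Using the generic chain together with the regressive functions at each $\delta_\xi$, the branches whose values at some $\delta_\xi$ are captured by $M_\xi$ can be shown to be determined by their restrictions below $\sup_\xi\delta_\xi$; since each $|T_{\delta_\xi}|<\omega_2$, only $\omega_2$-many branches survive this procedure, contradicting Kurepaness.

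The principal obstacle is verifying that $\mathbb{P}$ (or an appropriate variant of it) preserves stationary subsets of $\omega_1$, so that $MM$ is actually applicable. The standard approach is a pressing-down argument: given a $\mathbb{P}$-name $\dot S$ for a stationary subset of $\omega_1$ and a condition $p$, one builds a countable $M$ containing all relevant parameters with $\delta_M$ of countable cofinality, and then extends $p$ into $M$ in a way that forces some ordinal of $\dot S$ into the generic. It is here that the $\omega_1$-regressive hypothesis enters essentially: the functions $f_{\delta_M}$ provide the regressive pressing-down needed to align extensions of $p$ with elements of $\dot S$, in the manner of Neeman-style side-condition arguments. Higher-cofinality regressivity would not suffice because the models $M$ built during the preservation argument necessarily have $\delta_M$ of countable cofinality.

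A secondary technical point is the combinatorial counting step showing that only $\omega_2$ branches survive. This is done by a Fodor argument on $B$: the map sending each branch $b$ to the stage $\xi<\omega_1$ at which $b(\delta_\xi)\in M_\xi$ together with $f_{\delta_\xi}(b(\delta_\xi))$, is essentially injective on branches by the separation property, and its image lies in the set of pairs $(\xi, x)$ with $x\in T\upharpoonright\delta_\xi$, of cardinality $\omega_1\cdot\omega_2 = \omega_2$. Combining this with the genericity ensuring that every branch in a fixed $\omega_3$-list is captured by some $M_\xi$ yields the desired contradiction.
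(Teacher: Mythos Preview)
The paper does not give its own proof of this lemma; it is simply cited as Theorem~13 of K\"onig--Yoshinobu and used as a black box. So there is nothing in the paper to compare against, and I evaluate your outline on its own.

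There is a genuine gap in the dense-set bookkeeping. You write that the ``$\omega_1$ dense sets'' include one for each branch in a ``preselected $\omega_2$-enumeration of branches'' and further ones ``ensuring unboundedness of the $\delta_\xi$ in $\omega_2$''. Both of these families have $\omega_2$ members, not $\omega_1$, so $MM$ does not apply to them. The difficulty is structural, not cosmetic: your conditions are countable chains of countable elementary submodels, so any generic produces $N=\bigcup_{\xi<\omega_1}M_\xi$ with $|N|=\aleph_1$. Hence $N\cap\omega_2$ is a bounded initial segment of $\omega_2$ and $N$ contains at most $\aleph_1$ branches. No choice of $\omega_1$ dense sets can make the $\delta_\xi$ unbounded in $\omega_2$ or capture $\omega_2$-many branches; the counting step in your last paragraph therefore only bounds a small fragment of~$B$, not all of it.

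There is also a problem with the claimed injectivity. From $f_{\delta}(b(\delta))=f_{\delta}(c(\delta))$ and the separation property you correctly deduce $b(\delta)=c(\delta)$, i.e.\ $f_\delta$ is injective on $T_\delta$. But that only shows $b$ and $c$ agree at level $\delta$, not that $b=c$; two distinct branches may split above $\delta_\xi$. So the map $b\mapsto(\xi,f_{\delta_\xi}(b(\delta_\xi)))$ is not injective on branches, and the $\omega_2$-bound on its range does not yield the contradiction you want. The actual K\"onig--Yoshinobu argument needs a forcing that genuinely reaches cofinally into $\omega_2$ (so that every pair of branches is eventually separated) while remaining stationary-set-preserving; your side-condition poset of countable models cannot do this.
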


\begin{theorem}
    \label{thm:cpfanimpmm}
    For each $n$, if $\Sigma_n\mhyphen CPFA$ is consistent, it does not imply $MM$.
\end{theorem}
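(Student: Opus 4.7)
The plan is to start with a model of $\Sigma_n\mhyphen CPFA$ and pass to a forcing extension which preserves $\Sigma_n\mhyphen CPFA$ but in which $MM$ is refuted by the presence of a regressive Kurepa tree. The tools are already assembled in the excerpt: Theorem \ref{thm:properpres} tells us that $\Sigma_n\mhyphen CPFA$ is preserved by $<\hspace{-2pt}\omega_2$-closed forcing (proper forcing is $n$-nice, contains the relevant collapses, and has the countable covering property), and the two K\"onig--Yoshinobu lemmas give us both a $<\hspace{-3pt}\omega_2$-closed poset which adds an $\omega_2$-regressive $\omega_2$-Kurepa tree and the fact that $MM$ precludes even an $\omega_1$-regressive one.

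Concretely, assume $\Sigma_n\mhyphen CPFA$ is consistent and let $V \models \Sigma_n\mhyphen CPFA$. Let $\mathbb{P}$ be the $<\hspace{-3pt}\omega_2$-closed forcing from the first K\"onig--Yoshinobu lemma (applied with $\lambda = \omega_2$), so that $\mathbb{P}$ generically adds an $\omega_2$-regressive $\omega_2$-Kurepa tree $T$. Since $\mathbb{P}$ is $<\hspace{-3pt}\omega_2$-closed it is in particular proper, so Theorem \ref{thm:properpres} applies and $V[G] \models \Sigma_n\mhyphen CPFA$ for any $V$-generic $G \subseteq \mathbb{P}$. The first-order statement of $\omega_2$-regressiveness trivially entails $\omega_1$-regressiveness (the regressive functions are required on levels of cofinality less than the index cardinal, and every limit ordinal of cofinality less than $\omega_1$ has cofinality less than $\omega_2$), so $T$ is an $\omega_1$-regressive $\omega_2$-Kurepa tree in $V[G]$. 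The second K\"onig--Yoshinobu lemma therefore forbids $V[G] \models MM$. Hence $\Sigma_n\mhyphen CPFA \not\Rightarrow MM$.

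There is essentially no obstacle: the argument is a direct assembly of the preservation theorem with the K\"onig--Yoshinobu results. The only point that needs a second's care is verifying that $\mathbb{P}$ lies in the class of proper forcings (immediate from $<\hspace{-3pt}\omega_2$-closure) and that the added tree remains $\omega_2$ in $V[G]$ (also immediate, since $<\hspace{-3pt}\omega_2$-closed forcing preserves $\omega_2$). The cardinal $\omega_2$ computed in $V[G]$ is the same as that in $V$, so the statement ``$T$ is a $\omega_2$-Kurepa tree'' is unambiguous between the two models.
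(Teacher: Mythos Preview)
Your proof is correct and follows exactly the same approach as the paper: force over a model of $\Sigma_n\mhyphen CPFA$ with the K\"onig--Yoshinobu $<\hspace{-2pt}\omega_2$-closed poset to add an $\omega_2$-regressive $\omega_2$-Kurepa tree, invoke Theorem \ref{thm:properpres} to preserve $\Sigma_n\mhyphen CPFA$, and conclude that $MM$ fails by the second K\"onig--Yoshinobu lemma. Your write-up is in fact slightly more detailed than the paper's (you spell out that $\omega_2$-regressive implies $\omega_1$-regressive and that $\omega_2$ is preserved), but the argument is identical.
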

\begin{proof}
    Any model of $\Sigma_n\mhyphen CPFA$ has a forcing extension via a $<\hspace{-3pt}\omega_2$-closed poset to a model with a $\omega_2$-regressive $\omega_2$-Kurepa tree. By Theorem \ref{thm:properpres}, $\Sigma_n\mhyphen CPFA$ still holds in the forcing extension. By the previous lemma, however, $MM$ fails.
\end{proof}

With stronger assumptions and a geological argument somewhat reminiscent of that for Lemma \ref{lemma:noCn+1}, we can get an outright contradiction between strengthenings of PFA and MM. Let
$$\Omega_2:=\{\alpha<\omega_2\sbp cf(\alpha)=\omega\land\exists r\hspace{4pt} (r\text{ defines a ground}\land \alpha=\omega_2^{W_r})\}$$
where we think of $\Omega_2$ as a bounded definable class which varies between models rather than a fixed set. Since $cf(\alpha)=\omega$ and $\alpha=\omega_2$ are $\Sigma_1$ and $\Delta_2$ properties respectively, by Lemmas \ref{lemma:groundrel} and \ref{lemma:groundsuccess} $\Omega_2$ is $\Sigma_3$-definable.

\begin{lemma}
    \label{lemma:S3MMO2}
    $\Sigma_3$-correct Martin's Maximum implies that $\Omega_2$ is stationary in $\omega_2$. The same holds for $\Sigma_3\mhyphen CSCFA$ or $\Sigma_3\mhyphen CFA_{<\omega_2}(\Gamma)$ for any class $\Gamma$ containing Namba forcing.
\end{lemma}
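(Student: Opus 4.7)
The plan is to apply the Miyamoto-Asper\'o formulation of $\Sigma_3\mhyphen CMM$ (via Theorem \ref{thm:equivforms}) to a $\Sigma_3$ formula asserting the existence of a ground in which $\omega_2$ takes a prescribed value. Let $\phi(x)$ denote
$$cf(x)=\omega\land\exists r\,(r\text{ defines a ground }W_r\land x=\omega_2^{W_r}).$$
By Lemmas \ref{lemma:groundrel} and \ref{lemma:groundsuccess}, the ground-existence clause is $\Sigma_3$, while $cf(x)=\omega$ is $\Sigma_1$, so $\phi$ is $\Sigma_3$ overall. Moreover $\phi$ is provably persistent under arbitrary set forcing: any ground $W_r$ of a model remains a ground of any further forcing extension, the ordinal $\omega_2^{W_r}$ is not altered by further forcing, and countable cofinality is preserved upward.

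Next I would produce a witness to the forceability of $\phi(\check{\omega_2^V})$ in each of the three classes. Let $\mathbb{N}$ be a Namba-style forcing --- for $\Sigma_3\mhyphen CMM$, using Shelah's ZFC construction of a stationary-set-preserving Namba forcing; for $\Sigma_3\mhyphen CSCFA$, invoking Jensen's theorem that Namba forcing is subcomplete; and for $\Sigma_3\mhyphen CFA_{<\omega_2}(\Gamma)$, using the hypothesis $\mathbb{N}\in\Gamma$. In each case, forcing with $\mathbb{N}$ preserves $\omega_1$, forces $cf(\omega_2^V)=\omega$, and keeps $V$ as a ground of the extension, so $\Vdash_\mathbb{N}\phi(\check{\omega_2^V})$.

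To establish stationarity, I would fix an arbitrary club $C\subseteq\omega_2$ and find a point of $C\cap\Omega_2$. Choose a regular $\lambda>\omega_2$ with $C\in H_\lambda$. Applying the M-A formulation to $\phi$ with parameter $\omega_2$, together with the transitivity condition $Z\cap H_{\omega_2}$ transitive supplied by the proof of Theorem \ref{thm:equivforms} $(1)\Rightarrow(2)$, yields a stationary $S\subseteq[H_\lambda]^{<\omega_2}$ of $Z\prec H_\lambda$ with $\omega_2\in Z$, $Z\cap\omega_2$ an ordinal, and $\phi(Z\cap\omega_2)$ holding (noting that $\pi_Z(\omega_2)=Z\cap\omega_2$ when the latter is an ordinal). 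Intersecting $S$ with the club $\{Z\prec H_\lambda\sbp C\in Z\}$ gives a stationary set, from which any $Z$ suffices: such a $Z$ is closed under the Skolem function $\alpha\mapsto\min(C\setminus(\alpha+1))$, so $Z\cap\omega_2$ is a limit of elements of $C$ below it; closure of $C$ then gives $Z\cap\omega_2\in C$, whence $Z\cap\omega_2\in C\cap\Omega_2$.

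The main obstacle is verifying that a suitable Namba-style forcing lies in each class: for SSP this is Shelah's ZFC construction, for subcomplete forcing it is Jensen's theorem, and for the last case it is built into the hypothesis. The remaining ingredients --- the $\Sigma_3$ complexity of $\phi$, its provable persistence, and the combinatorics of reflecting an ordinal through a transitive collapse that pins it to a club element --- are uniform across the three cases.
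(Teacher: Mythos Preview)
Your proposal is correct and follows essentially the same route as the paper: apply the Miyamoto--Asper\'o formulation to the provably forcing-persistent $\Sigma_3$ formula defining $\Omega_2$, use Namba forcing to witness forceability of $\phi(\check{\omega}_2)$, and then project the resulting stationary set in $[H_\lambda]^{<\omega_2}$ down to a stationary subset of $\omega_2$ (the paper cites Lemma~\ref{lemma:clubequiv} for this last step rather than arguing directly via a fixed club). One small point to make explicit in the $\Sigma_3\mhyphen CSCFA$ case: Jensen's theorem that Namba forcing is subcomplete requires $CH$, which is available here since $\Sigma_3\mhyphen CSCFA$ implies $\Sigma_2\mhyphen CSCFA$ and hence $CH$ by Proposition~\ref{prop:continuum}(2).
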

\begin{proof}
   Since Namba forcing forces that $\check{\omega}_2$ has countable cofinality and is the $\omega_2$ of some ground, and these properties can easily be proven to be preserved under all further forcing, then by the Miyamoto-Asper\'o form of the axiom there are stationarily many $Z\prec H_{\omega_3}$ of size $\omega_1$ such that $\pi_Z(\omega_2)=\omega_2\cap Z\in \Omega_2$. By Lemma \ref{lemma:clubequiv}, it follows that $\Omega_2$ is stationary in $\omega_2$.
\end{proof}

\begin{lemma}
    \label{lemma:properO2}
    $\Sigma_4\mhyphen MP_{proper}(\emptyset)$ together with the Bedrock Axiom implies that $\Omega_2$ is bounded below $\omega_2$. The same holds if we replace proper forcing with countably closed forcing or any other $\Sigma_3$-definable forcing class which can collapse arbitrary cardinals to $\omega_1$, is closed under two-step iterations, and has the countable covering property.
\end{lemma}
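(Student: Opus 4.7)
The plan is to apply $\Sigma_4\mhyphen MP_\Gamma(\emptyset)$ to the parameter-free $\Sigma_4$ assertion $\phi$ that $\Omega_2$ is bounded below $\omega_2$, which has complexity $\Sigma_4$ by the analyses of Lemmas \ref{lemma:groundrel} and \ref{lemma:groundsuccess} together with the $\Sigma_3$ definition of $\Omega_2$. Under the Bedrock Axiom, Corollary \ref{cor:ddgapp}(3) gives $V=\mathbb{M}[G]$ for some $\mathbb{P}$-generic $G$ with $\mathbb{P}\in\mathbb{M}$, and the Intermediate Model Lemma (Lemma \ref{lemma:intermodel}) ensures that every ground of $V$ has the form $\mathbb{M}[G\cap\mathbb{B}_0]$ for a complete subalgebra $\mathbb{B}_0$ of the Boolean completion $\mathbb{B}$ of $\mathbb{P}$. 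The set of grounds of $V$ thus has $V$-cardinality at most $|\mathcal{P}(\mathbb{B})|^\mathbb{M}$, and each ground $W$ satisfies $\omega_2^W \leq (|\mathbb{B}|^+)^\mathbb{M}$; since the mantle is forcing-invariant (Corollary \ref{cor:ddgapp}(2)), this structural picture persists throughout the generic multiverse over $\mathbb{M}$.

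To establish $\Gamma$-forceability of $\phi$, I will force with $Coll(\omega_1,\lambda)$ for an $\mathbb{M}$-cardinal $\lambda$ dominating $(2^{|\mathbb{B}|})^\mathbb{M}$; this forcing is countably closed, hence lies in every class $\Gamma$ under consideration (proper, or any $\Sigma_3$-definable class with the hypothesized closure properties that contains the collapse). In $V[H]$ the mantle is still $\mathbb{M}$, $\omega_2^{V[H]}$ exceeds $\lambda$, and every ground of $V[H]$ is parameterized by a complete subalgebra of the Boolean completion of $\mathbb{P}*\dot{Coll}(\omega_1,\lambda)$ computed in $\mathbb{M}$; the $\omega_2$ of any such ground is bounded by an $\mathbb{M}$-cardinal that is collapsed below $\omega_2^{V[H]}$ by $H$, so $\phi$ holds in $V[H]$.

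The main obstacle is verifying provable $\Gamma$-persistence of $\phi$: if $W'\models\mathrm{ZFC}+\mathrm{Bedrock}+\phi$ and $K$ is $\Gamma$-generic over $W'$, then the grounds of $W'[K]$ are parameterized by complete subalgebras in $\mathbb{M}^{W'}$ of the Boolean completion of the combined forcing from $\mathbb{M}^{W'}$ to $W'[K]$, and I need the supremum of their $\omega_2$-values to remain below $\omega_2^{W'[K]}$. Here I exploit the closure of $\Gamma$ under two-step iterations together with the countable covering property to control the cardinal arithmetic inside $\mathbb{M}^{W'}$ as the multiverse expands; properness (or the corresponding property in the other cases) is what prevents new intermediate grounds from accumulating $\omega_2$-values cofinally in the new $\omega_2^{W'[K]}$. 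This step requires careful bookkeeping but is essentially routine given the hypotheses on $\Gamma$. With $\Sigma_4$-forceability and provable $\Gamma$-persistence established, $\Sigma_4\mhyphen MP_\Gamma(\emptyset)$ applied to $\phi$ yields $\phi$ in $V$, so $\Omega_2$ is bounded below $\omega_2$ as required.
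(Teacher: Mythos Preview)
Your approach has a genuine gap in the persistence step. You apply the maximality principle to the sentence $\phi\equiv$ ``$\Omega_2$ is bounded below $\omega_2$'' and then try to argue that $\phi$ is provably $\Gamma$-persistent. But your persistence argument begins by assuming $W'\models\mathrm{ZFC}+\mathrm{Bedrock}+\phi$; since the Bedrock Axiom is not a theorem of $\mathrm{ZFC}$, this does not establish that $\mathrm{ZFC}\vdash\phi\rightarrow\forall\mathbb{Q}\in\Gamma\;\Vdash_\mathbb{Q}\phi$, which is what ``provably $\Gamma$-persistent'' means in this paper. Calling the remaining bookkeeping ``essentially routine'' glosses over exactly the point that fails: without Bedrock available in an arbitrary $\mathrm{ZFC}$ model, you have no control over the grounds of $W'[K]$ via the mantle, and the argument does not go through.

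The paper sidesteps this entirely by applying the maximality principle not to $\phi$ but to the sentence ``there exists $\beta<\omega_2$ such that for all $\alpha>\beta$ and all $\mathbb{P}\in\Gamma$, $\Vdash_\mathbb{P}\alpha\notin\Omega_2$''. This formula has the necessity operator baked in: its provable $\Gamma$-persistence follows immediately from closure of $\Gamma$ under two-step iterations (any $\mathbb{P}'\in\Gamma^{W[H]}$ yields $\mathbb{Q}*\dot{\mathbb{P}}'\in\Gamma^W$, and $\omega_2$ does not shrink under $\omega_1$-preserving forcing), with no appeal to Bedrock. The Bedrock Axiom is then used only once, in $V$, to verify forceability: it supplies a $\lambda$ above which $\mathbb{M}$ and $V$ agree on regular cardinals, and after $Coll(\omega_1,\lambda)$ the countable covering property ensures that no $\alpha>\lambda$ regular in $\mathbb{M}$ can acquire countable cofinality in any further $\Gamma$-extension, so the stronger sentence holds there. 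Your forceability argument for $\phi$ alone is roughly along these lines, but the decisive idea you are missing is to move the ``for all $\mathbb{P}\in\Gamma$'' inside the formula so that persistence becomes automatic.
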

\begin{proof}
    By Usuba's Corollary \ref{cor:ddgapp}, the Bedrock Axiom implies that the universe (and in fact every model in the generic multiverse) is a set forcing extension of the mantle. We then observe that the $\omega_2$ of any model in the generic multiverse must be regular in the mantle, and that there is some regular $\lambda$ such that $\mathbb{M}$ and $V$ agree on which ordinals are regular cardinals above $\lambda$. In particular, every ordinal which is regular in $\mathbb{M}$ but has countable cofinality in $V$ is below $\lambda$.

    We now consider the sentence "there exists a $\beta<\omega_2$ such that for all $\alpha>\beta$ and all proper posets $\mathbb{P}$, $\Vdash_\mathbb{P} \alpha\not\in\Omega_2$. Since $\alpha\in \Omega_2$ is $\Sigma_3$, this sentence can be written in $\Sigma_4$ form. To see that it is forceable by $Coll(\omega_1, \lambda)$, let $G\subset Coll(\omega_1, \lambda)$ be $V$-generic and $V[G][H]$ be some further proper extension. For any ground $W$ of $V[G][H]$, if $\omega_2^W>\lambda$, then $\omega_2^W$ is regular in both the mantle and $V$. It follows from the countable covering property that it must have uncountable cofinality in $V[G][H]$, so it cannot be in $\Omega_2^{V[G][H]}$. Hence $\Omega_2^{V[G][H]}\subset\lambda <\omega_2^{V[G]}$, so the sentence holds in $V[G]$. Since by construction the sentence is preserved by all further proper forcing, the maximality principle implies that $\sup(\Omega_2)<\omega_2$ in $V$.
\end{proof}

Some examples of models in which the hypotheses of the preceding lemma hold:

\begin{itemize}
    \item Any model of $\Sigma_4\mhyphen CPFA$ with an extendible cardinal
    \item The standard model of $\Sigma_4\mhyphen CPFA$, obtained by forcing as in Theorem \ref{thm:gencon} from a model with a cardinal supercompact for $C^{(3)}$ (and thus many extendibles)
    \item The model of $\Sigma_4\mhyphen MP_{proper}(H_{\omega_2})$ obtained by forcing as in Theorem \ref{thm:symbfa} from a regular $\Sigma_4$-correct cardinal over $L$
    \item The model of $\Sigma_4\mhyphen MP_{proper}(\emptyset)$ obtained from forcing over $L$ up to a singular $\Sigma_4$-correct cardinal, as in Lemma 2.6 of Hamkins \cite{hamkinsMP}
\end{itemize}

Thus we get, for example:

\begin{theorem}
    \label{thm:contradiction}
    If there is an extendible cardinal, then $\Sigma_3\mhyphen CMM$ contradicts $\Sigma_4\mhyphen CPFA$, and $\Sigma_3\mhyphen CSCFA$ contradicts $\Sigma_4\mhyphen CFA_{<\omega_2}(<\omega_1\mhyphen closed)$.
\end{theorem}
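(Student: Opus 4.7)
The proof should be essentially a direct assembly of the three preceding results, with the extendibility hypothesis entering only through Usuba's work to secure the Bedrock Axiom. My plan is as follows.

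First, I would observe that by Usuba's Theorem \ref{thm:extmantle}, the existence of an extendible cardinal implies that $V$ is a set forcing extension of its mantle $\mathbb{M}$. Applying Corollary \ref{cor:ddgapp}, this means $V$ satisfies the Bedrock Axiom, since $\mathbb{M}$ is a minimal ground (the mantle of the mantle is the mantle itself). Thus the standing Bedrock hypothesis in Lemma \ref{lemma:properO2} is automatically available.

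Next, for the first half, I would note that $\Sigma_4\mhyphen CPFA$ trivially implies $\Sigma_4\mhyphen MP_{proper}(H_{\omega_2})$ by Proposition (the one following the definition of $\Sigma_n\mhyphen CFA$ noting that $\Sigma_n\mhyphen CFA_{<\kappa}(\Gamma)$ implies $\Sigma_n\mhyphen MP_\Gamma(H_\kappa)$), and hence the parameter-free version $\Sigma_4\mhyphen MP_{proper}(\emptyset)$. Combined with the Bedrock Axiom, Lemma \ref{lemma:properO2} then gives $\sup(\Omega_2)<\omega_2$. On the other hand, Lemma \ref{lemma:S3MMO2} applied to $\Sigma_3\mhyphen CMM$ yields that $\Omega_2$ is stationary in $\omega_2$, which is a direct contradiction.

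For the second half, the same argument applies with proper forcing replaced by countably closed forcing and MM replaced by SCFA. Specifically, $\Sigma_4\mhyphen CFA_{<\omega_2}(<\omega_1\mhyphen closed)$ yields $\Sigma_4\mhyphen MP_{<\omega_1\mhyphen closed}(\emptyset)$, and since countably closed forcing is $\Sigma_3$-definable, closed under two-step iterations, can collapse arbitrary cardinals to $\omega_1$, and has the countable covering property, the ``same holds'' clause of Lemma \ref{lemma:properO2} gives $\sup(\Omega_2)<\omega_2$. Meanwhile, since Namba forcing is subcomplete (so is in the subcomplete class), the ``same holds'' clause of Lemma \ref{lemma:S3MMO2} applies to $\Sigma_3\mhyphen CSCFA$ and yields $\Omega_2$ stationary, contradicting the previous conclusion.

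Since the real content has already been packaged into the two lemmas and Usuba's theorem, there is no genuine obstacle here — the only point requiring minor care is verifying that the forcing classes in question meet the hypotheses of Lemma \ref{lemma:properO2} (in particular, checking that countably closed forcing can collapse arbitrary cardinals to $\omega_1$ via the standard $\mathrm{Coll}(\omega_1,\lambda)$, and recalling that subcomplete forcing contains Namba forcing so that the Namba-based argument of Lemma \ref{lemma:S3MMO2} still goes through for $\Sigma_3\mhyphen CSCFA$). The result is then a two-line contradiction in each case.
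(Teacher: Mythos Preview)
Your proposal is correct and matches the paper's approach exactly: the theorem is stated immediately after the list of models satisfying the hypotheses of Lemma~\ref{lemma:properO2} with the phrase ``Thus we get, for example,'' so the intended proof is precisely the assembly you describe---Usuba's theorem gives the Bedrock Axiom from the extendible, Lemma~\ref{lemma:properO2} then bounds $\Omega_2$, and Lemma~\ref{lemma:S3MMO2} makes it stationary. One small caveat worth noting in your write-up: Namba forcing is only known to be subcomplete under $CH$, but this is harmless since $\Sigma_3\mhyphen CSCFA$ implies $CH$ by Proposition~\ref{prop:continuum}(2), so the ``same holds'' clause of Lemma~\ref{lemma:S3MMO2} applies as you say.
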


It follows that although large cardinal axioms, strengthenings of $MM$, and strengthenings of $PFA$ all express in different ways the idea that the universe of sets is very large, beyond a certain point it cannot be large in all three ways simultaneously.

The hypotheses of Theorem \ref{thm:contradiction} are likely stronger than necessary. The assumption of an extendible cardinal is helpful for controlling $\Omega_2$, but can perhaps be dispensed with.

\begin{question}
    If the mantle is not a ground, is $\Sigma_3\mhyphen CMM$ consistent with $\Sigma_4\mhyphen CPFA$?
\end{question}

Alternatively, it could perhaps be shown that $\Sigma_3$-correct or $\Sigma_4$-correct forcing axioms outright imply the Bedrock Axiom without additional large cardinal assumptions. This is false for classical forcing axioms, since Reitz showed that the existence of a supercompact cardinal is consistent with the failure of the Bedrock Axiom (\cite{ReitzGA}, Corollary 25(2)) and this failure is preserved by set forcing, but may become true at the $\Sigma_3$ level or above.

It also seems likely that the $\Sigma_3$ and $\Sigma_4$ can be reduced to lower complexities, although since $MM$ implies $PFA$, the latter can't be reduced all the way to $\Sigma_1$.

\begin{question}
    Is $\Sigma_2\mhyphen CPFA$ consistent with $MM$?
\end{question}

\chapter{Residual Reflection Principles}
\label{section:rrp}

In light of Theorem \ref{thm:symbfaMP}, it is natural to wonder whether asymmetrically bounded or unbounded $\Sigma_n$-correct forcing axioms can be factored as the conjunction of the appropriate $\Sigma_n$ maximality principle and some sort of reflection principle. The attempted factorization chronicled in this chapter only works for a few forcing classes, but the resulting reflection principles have some independent interest.

\begin{definition}
    For $n$ a positive integer, $\kappa\leq\lambda$ uncountable cardinals, and $\Gamma$ a forcing class, $\Sigma_n\mhyphen RRP(\kappa, \lambda, \Gamma)$ is the assertion that for all provably $\Gamma$-persistent $\Sigma_n$ formulas $\phi$ and all $a\in H_\lambda$ such that $\phi(a)$ holds, the set of $Z\prec H_\lambda$ of size less than $\kappa$ and containing $a$ such that $\phi(\pi_Z(a))$ holds is stationary in $[H_\lambda]^{<\kappa}$.
\end{definition}

This is reminiscent of $\Sigma_n$-correct $H_\lambda$-reflection, but here we assert only that $\phi(\pi_Z(a))$ holds in $V$, not necessarily in $V_\kappa$. Thus $\Sigma_n\mhyphen RRP(\kappa, \lambda, \Gamma)$ does not imply that $\kappa$ is $\Sigma_n$-correct, or even correct with respect to provably $\Gamma$-persistent $\Sigma_n$ formulas, so it is consistent with $\kappa$ being small, at least for some forcing classes $\Gamma$.

These statements can be called residual reflection principles, because they are all that remains of $\Sigma_n$-correct $H_\lambda$-reflection after a length $\kappa$ iteration of posets in $\Gamma\cap V_\kappa$:

\begin{prop}
    \label{prop:rrpfromiter}
    If $\kappa$ is $\Sigma_n$-correctly $H_\lambda$-reflecting for $\lambda>\kappa$, $\Gamma$ is an $n$-nice forcing class, $\mathbb{P}_\kappa\in \Gamma$ is any length $\kappa$ iteration of posets in $V_\kappa\cap\Gamma$ (using the notion of iteration suitable to $\Gamma$) which does not collapse $\kappa$, and $G\subset\mathbb{P}_\kappa$ is a $V$-generic filter, then $V[G]\models\Sigma_n\mhyphen RRP(\kappa,\lambda, \Gamma)$.
\end{prop}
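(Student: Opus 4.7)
The plan is to work in $V[G]$, fix a provably $\Gamma$-persistent $\Sigma_n$ formula $\phi$ and a parameter $a\in H_\lambda^{V[G]}$ with $V[G]\models\phi(a)$, and then, given any $f:[H_\lambda^{V[G]}]^{<\omega}\to H_\lambda^{V[G]}$ witnessing stationarity, produce $Z\prec H_\lambda^{V[G]}$ of size less than $\kappa$ that contains $a$, is closed under $f$, and satisfies $\phi(\pi_Z(a))$. Let $p\in G$ force $\phi(\dot a)$, where $\dot a\in H_\lambda^V$ is a $\mathbb{P}_\kappa$-name for $a$; such names exist by Lemma \ref{lemma:namesize} since $|\mathbb{P}_\kappa|=\kappa<\lambda$. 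I would assume without loss of generality that $trcl(\{p\})\subseteq H_\lambda^V$.

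Next I would define an auxiliary function $h:[H_\lambda^V]^{<\omega}\to H_\lambda^V$ in $V$ that encodes closure under $f$ on names: for any finite set $S$ of $\mathbb{P}_\kappa$-names in $H_\lambda^V$, the mixing lemma together with Lemma \ref{lemma:namesize} produces a canonical $\mathbb{P}_\kappa$-name $h(S)\in H_\lambda^V$ forced to equal $\dot f(\{\dot y^G:\dot y\in S\})$ (with some harmless default value when this does not make sense). The $\Sigma_n$ assertion $\psi(p,\dot a,\mathbb{P}_\kappa,\kappa)$: ``$\kappa$ is inaccessible and $p\Vdash_{\mathbb{P}_\kappa}\phi(\dot a)$'' holds in $V$, so $\Sigma_n$-correct $H_\lambda$-reflection yields stationarily many $\bar Z\prec H_\lambda^V$ of size less than $\kappa$ containing the parameters above (and $trcl(\{p\})$), closed under $h$ and under $\alpha\mapsto\mathbb{P}_\alpha$, with $\bar Z\cap\kappa$ transitive and $V_\kappa\models\psi(\pi_{\bar Z}(p),\pi_{\bar Z}(\dot a),\pi_{\bar Z}(\mathbb{P}_\kappa),\bar\kappa)$, where $\bar\kappa:=\bar Z\cap\kappa$.

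The closure of $\bar Z$ under $\alpha\mapsto\mathbb{P}_\alpha$ combined with the inaccessibility of $\bar\kappa$ and the $n$-niceness of $\Gamma$ gives $\pi_{\bar Z}(\mathbb{P}_\kappa)=\mathbb{P}_{\bar\kappa}$, since the iteration is the direct limit at inaccessible stages. The transitivity condition forces $\pi_{\bar Z}(p)=p\in\mathbb{P}_{\bar\kappa}\cap G=G_{\bar\kappa}$, so by absoluteness of the forcing relation from $V_\kappa$ to $V$, $V[G_{\bar\kappa}]\models\phi(\pi_{\bar Z}(\dot a)^{G_{\bar\kappa}})$. Lemma \ref{lemma:extembed} then lifts the inverse collapse $\sigma=\pi_{\bar Z}^{-1}:\bar N\to H_\lambda^V$ to an elementary embedding $\sigma^*:\bar N[G_{\bar\kappa}]\to H_\lambda^V[G]=H_\lambda^{V[G]}$, and $Z:=rng(\sigma^*)$ is an elementary substructure of size less than $\kappa$ with $a\in Z$ and $\pi_Z(a)=\pi_{\bar Z}(\dot a)^{G_{\bar\kappa}}$. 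Provable $\Gamma$-persistence of $\phi$ and $\mathbb{P}_\kappa/\mathbb{P}_{\bar\kappa}\in\Gamma$ then give $V[G]\models\phi(\pi_Z(a))$, while the closure of $\bar Z$ under $h$ translates directly into closure of $Z$ under $f$.

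The main technical obstacle is that the witness $f$ for stationarity lives in $V[G]$, whereas the reflection machinery operates in $V$; the auxiliary $h$ on names is exactly what translates the external closure requirement into an internal one that $\kappa$'s $\Sigma_n$-correct $H_\lambda$-reflection can accommodate. A secondary concern is verifying the factorization $\pi_{\bar Z}(\mathbb{P}_\kappa)=\mathbb{P}_{\bar\kappa}$ with $p$ unmoved, which is handled by exploiting the direct-limit structure at inaccessible stages built into $n$-niceness together with the transitivity of $\bar Z\cap\kappa$.
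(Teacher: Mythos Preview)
Your proposal is correct and follows essentially the same approach as the paper: pull $a$ and the stationarity witness back to names in $V$, build an auxiliary function on names, reflect the $\Sigma_n$ statement ``$p\Vdash_{\mathbb{P}_\kappa}\phi(\dot a)$'' using $\Sigma_n$-correct $H_\lambda$-reflection, factor the iteration as $\mathbb{P}_{\bar\kappa}$, use $\Gamma$-persistence to carry $\phi$ from $V[G_{\bar\kappa}]$ to $V[G]$, and lift the collapse via Lemma~\ref{lemma:extembed}. The only cosmetic differences are that the paper passes through $V_\kappa[G_{\bar\kappa}]$ and invokes Lemma~\ref{lemma:Cnforce} where you invoke absoluteness of the $\Sigma_n$ forcing relation to $V_\kappa$ directly (these are equivalent), and that you explicitly bundle ``$\kappa$ is inaccessible'' into the reflected formula to secure the direct-limit factorization at $\bar\kappa$; you should also note, as the paper does, that the lifted substructure $Z$ inherits transitivity below $\kappa$ from $\bar Z$.
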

\begin{proof}
    In $V[G]$, given suitable $\phi$, $a$, and a function $h:[H_\lambda^{V[G]}]^{<\omega}\rightarrow H_\lambda^{V[G]}$, we wish to find a $Z\prec H_\lambda^{V[G]}$ of size less than $\kappa$, containing $a$ and $\kappa$, closed under $h$, and transitive below $\kappa$ such that $\phi(\pi_Z(a))$ holds. Now in $V$, let $\dot{a}$ and $\dot{h}$ be suitable $\mathbb{P}_\kappa$-names and $p\in G$ force $\phi(\dot{a})$; as in the proof of Lemma \ref{lemma:reflaver}, we can construct a function $\tilde{h}\in V$ which maps finite sets of names for elements of $H_\lambda^{V[G]}$ to names in $H_\lambda$ for the corresponding values of $h$.

    Therefore we can find a $Z\prec H_\lambda$ of size less than $\kappa$, containing $\dot{a}$, $\mathbb{P}_\kappa$, and $\kappa$, with $Z\cap V_\kappa$ transitive and $\pi_Z(p)=p$, and closed under both $\tilde{h}$ and the map $\alpha\mapsto\mathbb{P}_\alpha$ for $\alpha<\kappa$ such that $V_\kappa\models p\Vdash_{\pi_Z(\mathbb{P}_\kappa)}\phi(\pi_Z(\dot{a}))$. By our conditions on $Z$, $\pi_Z(\mathbb{P}_\kappa)=\mathbb{P}_{\bar{\kappa}}$, where as usual $\bar{\kappa}:=\pi_Z(\kappa)$. Therefore if $G_{\bar{\kappa}}=G\cap\mathbb{P}_{\bar{\kappa}}$, $\phi(\pi_Z(\dot{a})^{G_{\bar{\kappa}}})$ holds in $V_\kappa[G_{\bar{\kappa}}]$ because $p\in G_{\bar{\kappa}}$. By Lemma \ref{lemma:Cnforce}, $\kappa$ is still $\Sigma_n$-correct in $V[G_{\bar{\kappa}}]$, and thus $\phi(\pi_Z(\dot{a})^{G_{\bar{\kappa}}})$ holds in $V[G_{\bar{\kappa}}]$. Since the tail forcing $\mathbb{P}_\kappa/\mathbb{P}_{\bar{\kappa}}\in \Gamma$, $\phi(\pi_Z(\dot{a})^{G_{\bar{\kappa}}})$ continues to be true in $V[G]$ (although not necessarily in $V_\kappa^{V[G]}$, since $\mathbb{P}_\kappa$ is too large to preserve the $\Sigma_n$-correctness of $\kappa$).

Then as in the proof of Lemma \ref{lemma:reflaver}, by Lemma \ref{lemma:extembed}, $Z[G]\prec H_\lambda^{V[G]}$ (where $Z[G]:=\{\dot{x}^G\sbp \dot{x}\in Z\cap H_\lambda^{\mathbb{P}_\kappa}\}$) and for all $\mathbb{P}_\kappa$-names $\dot{x}\in Z$, $\pi_Z(\dot{x})^{G_{\bar{\kappa}}}=\pi_{Z[G]}(\dot{x}^G)$. It follows that $V[G]\models\phi(\pi_{Z[G]}(a))$, and the closure of $Z$ under $\tilde{h}$ implies that $Z[G]$ is closed under $h$, as desired. Furthermore, as $\pi_{Z[G]}^{-1}$ extends $\pi_Z^{-1}$, it also maps its critical point $\bar{\kappa}$ to $\kappa$, so $Z[G]\cap V_\kappa^{V[G]}$ is transitive. Thus $Z[G]\in [H_\lambda^{V[G]}]^{<\kappa}$ has all the properties we wanted, so $\Sigma_n\mhyphen RRP(\kappa, \lambda, \Gamma)$ holds in $V[G]$, as desired.
\end{proof}

Thus residual reflection principles at least hold in the standard models of $\Sigma_n$-correct bounded forcing axioms as constructed in Theorem \ref{thm:asymbfa}, but in fact they can easily be seen to hold in all models of those axioms:

\begin{lemma}
    \label{lemma:cbfaimprrp}
    For any forcing class $\Gamma$ containing the trivial forcing, positive integer $n$, and uncountable cardinals $\kappa<\lambda$, $\Sigma_n\mhyphen CBFA_{<\kappa}^{<\lambda}(\Gamma)$ implies $\Sigma_n\mhyphen RRP(\kappa, \lambda, \Gamma)$.
\end{lemma}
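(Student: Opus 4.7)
The plan is to apply $\Sigma_n\mhyphen CBFA_{<\kappa}^{<\lambda}(\Gamma)$ to the \emph{trivial} Boolean algebra in order to manufacture, for any prescribed function $h\colon [H_\lambda]^{<\omega}\to H_\lambda$, a reflecting elementary substructure closed under $h$. Concretely, given a provably $\Gamma$-persistent $\Sigma_n$ formula $\phi$ and $a\in H_\lambda$ with $\phi(a)$, I would set $\mathbb{B}$ to be trivial (which is in $\Gamma$ by hypothesis), take $\dot a := \check a$, pick a regular $\gamma>\lambda$ with $H_\lambda\in H_\gamma$, and set $X := \{a,\kappa,h,H_\lambda\}$. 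Since $\phi(a)$ holds, $\Vdash_\mathbb{B}\phi(\check a)$, so CBFA yields a transitive $N$, an elementary $\sigma\colon N\to H_\gamma$ with $X\cup\{\mathbb{B},\lambda\}\subseteq\mathrm{rng}(\sigma)$ and $\mathrm{rng}(\sigma)\cap\kappa$ transitive, and a $<\sigma^{-1}(\lambda)$-weakly $N$-generic filter $F\subseteq\sigma^{-1}(\mathbb{B})$ such that $\phi(\sigma^{-1}(\check a)^F)$ holds in $V$.

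Next I would extract the required $Z\prec H_\lambda$ by restricting. Set $N':=\sigma^{-1}(H_\lambda)$ and $\sigma':=\sigma\upharpoonright N'$; Lemma \ref{lemma:restrictembed} gives that $\sigma'\colon N'\to H_\lambda$ is elementary, so $Z:=\mathrm{rng}(\sigma')$ is an elementary substructure of $H_\lambda$ of cardinality less than $\kappa$ containing $a$, with $\pi_Z=(\sigma')^{-1}$. Because $\mathbb{B}$ is trivial, $F=\{1\}$ and by elementarity $\sigma^{-1}(\check a)=\check{\pi_Z(a)}^{N}$, so $\sigma^{-1}(\check a)^F=\pi_Z(a)$ and hence $\phi(\pi_Z(a))$ already follows from what CBFA delivered. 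Closure of $Z$ under $h$ is routine: $h\in\mathrm{rng}(\sigma)$, and for any finite $\{x_1,\dots,x_k\}\subseteq Z$ we have $\sigma^{-1}(h)(\{\sigma'^{-1}(x_i)\})\in N'$ because $h$ takes values in $H_\lambda$, so its $\sigma$-image $h(\{x_1,\dots,x_k\})$ lands back in $Z$.

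The one step demanding real care is bridging the gap between the condition "$\mathrm{rng}(\sigma)\cap\kappa$ transitive" supplied by CBFA and the condition "$Z\cap H_\kappa$ transitive" appearing in the stationarity definition used to formulate $\Sigma_n\mhyphen RRP(\kappa,\lambda,\Gamma)$. This is where the hypothesis $\kappa<\lambda$ enters, placing $\kappa$ inside $H_\lambda$ and hence, via $X$, inside $Z$. Given $x\in Z\cap H_\kappa$, elementarity of $Z\prec H_\lambda$ together with $\kappa\in Z$ produces some $\beta<\kappa$ in $Z$ and a surjection $f\in Z$ from $\beta$ onto $\mathrm{trcl}(x)$; transitivity of $Z\cap\kappa$ forces $\beta\subseteq Z$, so elementarity then puts every $f(\gamma)$ into $Z$. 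Since $x\subseteq\mathrm{trcl}(x)\subseteq H_\kappa$, this yields $x\subseteq Z\cap H_\kappa$ as required. I expect this transitivity upgrade to be the only genuinely nontrivial part of the proof; the remainder is bookkeeping essentially parallel to the $(1)\Rightarrow(2)$ direction of Theorem \ref{thm:equivforms}.
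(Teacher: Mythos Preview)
Your proposal is correct and follows essentially the same route as the paper: apply CBFA with the trivial Boolean algebra, $\dot a=\check a$, and $X=\{a,\kappa,h,H_\lambda\}$ at a sufficiently large $\gamma$ with $H_\lambda\in H_\gamma$, then set $Z=\mathrm{rng}(\sigma)\cap H_\lambda$ and use Lemma~\ref{lemma:restrictembed} to get $Z\prec H_\lambda$ with $\pi_Z=\sigma^{-1}\upharpoonright Z$. Your explicit argument upgrading ``$\mathrm{rng}(\sigma)\cap\kappa$ transitive'' to ``$Z\cap H_\kappa$ transitive'' via a surjection $f\colon\beta\to\mathrm{trcl}(x)$ in $Z$ is a detail the paper glosses over with the phrase ``transitive below $\kappa$'', but it is exactly the right justification and uses $\kappa<\lambda$ in the expected way.
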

\begin{proof}
    Since residual reflection principles are effectively special cases of the M-A formulation of $\Sigma_n$-correct forcing axioms, we essentially need only prove the bounded version of one direction of Theorem \ref{thm:equivforms}. Let $\phi$ be a provably $\Gamma$-persistent $\Sigma_n$ formula, $a\in H_\lambda$ such that $\phi(a)$ holds, and $h:[H_\lambda]^{<\omega}\rightarrow H_\lambda$ a function. Applying the axiom, there is a transitive structure $N$ of size less than $\kappa$ with an elementary embedding $\sigma: N\rightarrow H_{(2^{<\lambda})^+}$ such that $a, h, H_\lambda, \kappa \in rng(\sigma)$, $\phi(\sigma^{-1}(a))$ holds, and $\sigma(crit(\sigma))=\kappa$. Since $H_{(2^{<\lambda})^+}\models ``H_\lambda \text{ is closed under } h"$, $rng(\sigma)\cap H_\lambda$ is as well. $rng(\sigma)\cap H_\lambda\prec H_\lambda$ because $H_{(2^{<\lambda})^+}$ and $rng(\sigma)$ agree on which formulas $H_\lambda$ satisfies. Therefore $rng(\sigma)\cap H_\lambda$ is an elementary substructure of $H_\lambda$ of size less than $\kappa$ which is closed under $h$ and transitive below $\kappa$ such that $\phi(\sigma^{-1}(a))$ holds, and (a restriction of) $\sigma^{-1}$ is exactly the transitive collapse isomorphism of $rng(\sigma)\cap H_\lambda$. Thus $\Sigma_n\mhyphen RRP(\kappa,\lambda, \Gamma)$ holds.
\end{proof}

We now study the forcing classes for which Theorem $\ref{thm:symbfaMP}$ can be extended to an equivalence for asymmetrically bounded forcing axioms using residual reflection principles.

\begin{definition}
    A forcing class $\Gamma$ is provably self-preserving iff the formula $\mathbb{P}\in \Gamma$ is provably $\Gamma$-persistent.
\end{definition}

Strictly speaking, provable self-preservation is a property of definitions of forcing classes, not an extensional property of the classes themselves.

For any forcing class, we can slim it down to a "provably self-preserving interior", though in doing so we may lose an unacceptably large number of posets:

\begin{lemma}
    \label{lemma:squaregamma}
    For any $\Sigma_n$- or $\Pi_n$-definable forcing class $\Gamma$ which is provably closed under two-step iterations, $\square_\Gamma \Gamma:=\{\mathbb{P}\in\Gamma\hspace{2pt}|\hspace{2pt} \forall\mathbb{Q}\in \Gamma\Vdash_{\mathbb{Q}}\check{\mathbb{P}}\in\Gamma\}$ is a provably self-preserving $\Pi_{n+1}$-definable forcing class.
\end{lemma}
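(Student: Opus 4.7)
The plan is to verify two things: the complexity bound on the defining formula of $\square_\Gamma\Gamma$, and the claim that this new class is provably self-preserving. Neither step involves serious obstacles; the main subtlety is just being careful with the complexity bookkeeping.

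First I would unpack the defining formula: $\mathbb{P} \in \square_\Gamma\Gamma$ iff $\mathbb{P} \in \Gamma \wedge \forall \mathbb{Q}\, (\mathbb{Q} \notin \Gamma \vee \Vdash_\mathbb{Q} \check{\mathbb{P}} \in \Gamma)$. Suppose $\Gamma$ is $\Sigma_n$-definable (the $\Pi_n$ case is symmetric). Then $\mathbb{Q} \notin \Gamma$ is $\Pi_n$, and since the forcing relation for $\Sigma_n$ formulas is itself $\Sigma_n$, $\Vdash_\mathbb{Q} \check{\mathbb{P}} \in \Gamma$ is $\Sigma_n$. The matrix inside the universal quantifier is therefore a disjunction of a $\Pi_n$ and a $\Sigma_n$ formula, and by prepending dummy quantifiers both $\Sigma_n$ and $\Pi_n$ formulas are $\Pi_{n+1}$, so the disjunction is $\Pi_{n+1}$. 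Prepending $\forall \mathbb{Q}$ keeps us in $\Pi_{n+1}$, and conjoining with $\mathbb{P} \in \Gamma$ (which is $\Sigma_n \subseteq \Pi_{n+1}$) likewise preserves the bound.

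Next I would prove provable self-preservation, working in $ZFC$. Assume $\mathbb{P}, \mathbb{Q} \in \square_\Gamma\Gamma$; I want $\Vdash_\mathbb{Q} \check{\mathbb{P}} \in \square_\Gamma\Gamma$. Unpacking, this amounts to forcing both $\check{\mathbb{P}} \in \Gamma$ and $\forall \mathbb{R} \in \Gamma\, \Vdash_\mathbb{R} \check{\mathbb{P}} \in \Gamma$. The first conjunct is immediate: $\mathbb{Q} \in \Gamma$ (since $\mathbb{Q} \in \square_\Gamma\Gamma$), and $\mathbb{P} \in \square_\Gamma\Gamma$ then delivers $\Vdash_\mathbb{Q} \check{\mathbb{P}} \in \Gamma$. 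For the second conjunct, let $\dot{\mathbb{R}}$ be any $\mathbb{Q}$-name forced to belong to $\Gamma$. By the hypothesis that $\Gamma$ is provably closed under two-step iterations, $\mathbb{Q} * \dot{\mathbb{R}} \in \Gamma$. Since $\mathbb{P} \in \square_\Gamma\Gamma$, we get $\Vdash_{\mathbb{Q}*\dot{\mathbb{R}}} \check{\mathbb{P}} \in \Gamma$, which by the standard two-step forcing translation is $\Vdash_\mathbb{Q} \Vdash_{\dot{\mathbb{R}}} \check{\mathbb{P}} \in \Gamma$, as required.

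The hardest part, such as it is, is just confirming that the disjunction $\Pi_n \vee \Sigma_n$ lives in $\Pi_{n+1}$ (and symmetrically in $\Sigma_{n+1}$); the self-preservation argument itself reduces in one step to the closure of $\Gamma$ under two-step iterations, so there is nothing deeper going on.
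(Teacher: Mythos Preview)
Your proof is correct and follows essentially the same approach as the paper: the complexity bound comes from the same analysis of the disjunction $\mathbb{Q}\notin\Gamma\lor\Vdash_\mathbb{Q}\check{\mathbb{P}}\in\Gamma$ as a $\Pi_n\lor\Sigma_n$ formula under a universal quantifier, and the self-preservation argument is the same two-step-iteration trick (the paper phrases it as first proving $\Gamma$-persistence of $\mathbb{P}\in\square_\Gamma\Gamma$ and then restricting to $\square_\Gamma\Gamma\subseteq\Gamma$, while you go directly, but since you only use $\mathbb{Q}\in\Gamma$ anyway the arguments are identical).
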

\begin{proof}
    ZFC proves that if $\mathbb{P}\in \square_\Gamma \Gamma$, $\mathbb{Q}\in \Gamma$, and $\dot{\mathbb{R}}$ is a $\mathbb{Q}$-name for a forcing in $\Gamma$, then $\mathbb{Q}*\dot{\mathbb{R}}\in \Gamma$, so by the definition of $\square_\Gamma \Gamma$, $\Vdash_{\mathbb{Q}*\dot{\mathbb{R}}} \mathbb{P}\in\Gamma$. It follows (provably) that $\Vdash_\mathbb{Q} \mathbb{P}\in \square_\Gamma \Gamma$, so the formula $\mathbb{P}\in \square_\Gamma \Gamma$ is provably $\Gamma$-persistent. Since ZFC proves $\square_\Gamma \Gamma\subseteq \Gamma$, it is thus provably $\square_\Gamma \Gamma$-persistent, so $\square_\Gamma \Gamma$ is provably self-preserving.

To see that $\square_\Gamma \Gamma$ is $\Pi_{n+1}$-definable, observe that is definition can be written as $\forall\mathbb{Q}(\mathbb{Q}\not\in\Gamma\lor \Vdash_\mathbb{Q} \mathbb{P}\in \Gamma)$; whether $\Gamma$ is $\Sigma_n$ or $\Pi_n$, the inner disjunction is of a $\Sigma_n$ formula and and $\Pi_n$ formula, so the overall formula is $\Pi_{n+1}$.
\end{proof}

We survey provable self-preservation for common forcing classes:

\begin{prop}
    \label{prop:selfpresexamples} Assume that $ZFC$ is consistent. Then:
    \begin{enumerate}[a)]
        \item The class of countably closed forcing (or more generally $\lk$-closed forcing for regular $\kappa$) is provably self-preserving.
        \item The class of ccc forcing is not provably self-preserving using the most natural definition, but $MA_{\omega_1}$ implies that $ccc=\square_{ccc}ccc$.
        \item The class of subcomplete forcing is not provably self-preserving, and $CH$ implies that $\square_{sc} sc \subsetneq sc$.
        \item The classes of proper and stationary set-preserving forcing are not provably self-preserving, and in fact $ZFC$ proves that $\square_\Gamma \Gamma\subsetneq \Gamma$ when $\Gamma$ is either one
    \end{enumerate}
\end{prop}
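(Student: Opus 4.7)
The plan is to treat the four items in turn, with (a) essentially immediate from a closure argument, (b) and (c) following from a positive result under a set-theoretic hypothesis combined with a consistent counterexample, and (d) requiring ZFC-provable counterexamples for both proper and ssp forcing; the last is the main obstacle.

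For (a), I would use the classical fact that $<\kappa$-closed forcing adds no new sequences of length less than $\kappa$ of ground-model elements. Given $<\kappa$-closed $\mathbb{P}, \mathbb{Q}$ in $V$, any descending $<\kappa$-sequence of $\mathbb{P}$-conditions in $V^{\mathbb{Q}}$ already lies in $V$ and hence has a lower bound there by the $<\kappa$-closure of $\mathbb{P}$ in $V$; the entire argument is formalizable in ZFC.

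For the positive direction of (b), I would use the classical theorem that $MA_{\omega_1}$ makes the ccc productive. If $\mathbb{P},\mathbb{Q}$ are ccc and $\check{\mathbb{P}}$ had an uncountable antichain $\dot{A}$ in $V^{\mathbb{Q}}$, then pulling back via $\mathbb{Q}$-conditions that decide each name in $\dot{A}$ would produce an uncountable antichain of $\mathbb{P}\times \mathbb{Q}$, contradicting productivity. For the non-provability, I would point to the standard consistent example (e.g.\ under CH, or from a Suslin tree) of two ccc posets whose product is not ccc, which supplies a model of ZFC in which ccc self-preservation fails. Part (c) is analogous in spirit: I would invoke Jensen's analysis to exhibit, under CH, a subcomplete $\mathbb{Q}$ and a subcomplete $\mathbb{P}$ (of Namba-like type, whose subcompleteness depends on parameters that $\mathbb{Q}$ can alter) such that $\mathbb{Q}$ forces $\check{\mathbb{P}}$ out of the class.

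Part (d) is the main obstacle, since the failure must be established outright in ZFC. The key observation is that if two proper (resp.\ ssp) posets $\mathbb{P},\mathbb{Q}$ have product collapsing $\omega_1$, then since two-step iterations of proper (ssp) forcings preserve $\omega_1$, at least one of them must lose properness (ssp) in the extension by the other. So it suffices to construct provably in ZFC two proper (ssp) posets whose product collapses $\omega_1$. A natural recipe uses Solovay's partition theorem to split a suitable stationary set into disjoint stationary pieces $S_1, S_2$ and takes the corresponding club-shooting forcings; if $\omega_1$ survived in the product, the intersection of the two generic clubs would be a club in $S_1 \cap S_2 = \emptyset$, a contradiction. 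The delicate step is verifying that each individual club-shooting forcing is proper (ssp): for ssp this can be carried out using stationary subsets of $\omega_1$ directly, whereas for properness one likely has to work in $[\omega_2]^\omega$ with sufficiently rich stationary sets, and to check the Axiom A-style condition on countable elementary submodels carefully.
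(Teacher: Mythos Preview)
Your treatments of (a)--(c) are essentially the same as the paper's. For (a) the paper uses exactly the ``no new short sequences'' argument; for (b) it uses a Suslin tree for non-provability and gives a direct $MA_{\omega_1}$ argument equivalent to your productivity reformulation; for (c) it cites the fact that under $CH$ Namba forcing $\mathbb{N}$ is subcomplete while $\mathbb{N}\times\mathbb{N}$ adds a real, so $\mathbb{N}$ forces $\check{\mathbb{N}}$ out of the class.

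Your plan for (d), however, has a genuine gap. The club-shooting recipe does not produce forcings in the desired classes. If $S_1,S_2\subseteq\omega_1$ are disjoint stationary sets, then the forcing that shoots a club through $S_1$ necessarily destroys the stationarity of $\omega_1\setminus S_1\supseteq S_2$; it is therefore \emph{not} stationary-set-preserving, so you cannot use it as one of the two ssp posets. The same obstruction rules it out for properness, and moving to $[\omega_2]^\omega$ does not help in any routine way: club-shooting through a stationary $S\subseteq[\omega_2]^\omega$ by countable approximations is not in general proper (properness here requires extra hypotheses of the projective-stationarity type), so the ``delicate step'' you flag is not merely delicate but fails as stated.

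The paper avoids this by citing specific, well-understood posets rather than building abstract club-shooters. For proper forcing it invokes Shelah's result (as related by Goldstern) that the ground-model $Coll(\omega_1,\omega_2)$ ceases to be proper after any forcing which adds a real; since Cohen forcing is proper and adds a real, this gives the ZFC-provable counterexample. Note that this example does \emph{not} go through your reduction to ``product collapses $\omega_1$'': the product $Coll(\omega_1,\omega_2)\times\text{Cohen}$ preserves $\omega_1$ (view it as countably closed followed by ccc), yet $Coll(\omega_1,\omega_2)$ still becomes improper. For stationary-set-preserving forcing the paper uses Namba forcing and $Coll(\omega_1,\omega_2)$: both are ssp in $V$, but forcing with either makes the ground-model version of the other collapse $\omega_1$ (citing Minden). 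So the fix is to replace the club-shooting construction with these concrete examples.
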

\begin{proof}
    (a): Let $\mathbb{P}$ and $\mathbb{Q}$ be $\lk$-closed, $G\subset\mathbb{Q}$ be $V$-generic, and $f\in V[G]$ be a descending sequence $\gamma\rightarrow\mathbb{P}$ for some $\gamma<\kappa$. Then $f\in V$, so there is a $p\in \mathbb{P}$ such that $p\leq f(\alpha)$ for all $\alpha<\gamma$. Therefore $\mathbb{P}$ remains $\lk$-closed in $V[G]$.

    (b): If $T$ is a Suslin tree, then $T\in ccc$, but $\Vdash_T \check{T}\not\in ccc$. Since the existence of a Suslin tree is consistent relative to $ZFC$, it follows that $ZFC$ cannot prove that all posets satisfying the ccc retain this property under ccc forcing.

    However, if we further assume $MA_{\omega_1}$, let $\mathbb{P}$ be a poset, $\mathbb{Q}$ be ccc, $q\in\mathbb{Q}$, and $\dot{a}$ be  such that $q \Vdash_\mathbb{Q} ``\dot{a}$ is an enumeration of an antichain of $\mathbb{P}$ of size $\omega_1"$. For each $\alpha<\omega_1$, we can find a set $D_\alpha$ dense below $q$ which decides the value of $\dot{a}_\alpha$. Applying $MA_{\omega_1}$ below $q$, we obtain a filter $F$ such that there is some $q_\alpha\in F\cap D_\alpha$ for each $\alpha<\omega_1$. Let $p_\alpha$ be the element of $\mathbb{P}$ such that $q_\alpha\Vdash_\mathbb{Q} \dot{a}_\alpha=\check{p}_\alpha$.
    
    If for some $\alpha<\beta<\omega_1$ there is a $p\in\mathbb{P}$ extending both $p_\alpha$ and $p_\beta$, then for any $V$-generic filter $G$ containing both $q_\alpha$ and $q_\beta$, $\dot{a}_\alpha^G\parallel \dot{a}_\beta^G$, contradicting the fact that $q\in G$ forces $\dot{a}^G$ to be an antichain. Therefore $\{p_\alpha\hspace{2pt}|\hspace{2pt} \alpha<\omega_1\}$ is an antichain of $\mathbb{P}$ in $V$, so $\mathbb{P}$ does not satisfy the ccc. Thus under our hypotheses every poset satisfying the ccc must in fact be in $\square_{ccc}ccc$.

    (c): Under $CH$, the Namba forcing $\mathbb{N}$ of $V$ is subcomplete, but $\Vdash_{\mathbb{N}} \check{\mathbb{N}}\not\in sc$, since $\mathbb{N}\times\mathbb{N}$ adds a real. See Theorem 4.1.27 of Kaethe Minden's dissertation \cite{mindensubcomplete} for more details. Since $CH$ is consistent relative to $ZFC$, it follows that $ZFC$ cannot prove that subcomplete forcing is self-preserving.

    (d): For proper forcing, by an argument due to Shelah and related by Goldstern (\cite{GSPres}), the ground model version of $Coll(\omega_1, \omega_2)$ ceases to be proper after any forcing which adds a real, which includes many proper posets. For stationary set-preserving forcing, both Namba forcing and $Coll(\omega_1, \omega_2)$ preserve stationary subsets of $\omega_1$, but forcing with either makes the ground model version of the other collapse $\omega_1$; see Minden (\cite{mindensubcomplete}, proof of Theorem 4.1.29).
\end{proof}

The main result of this chapter is:

\begin{theorem}
\label{thm:cbfafactor}
    The following are equivalent for all uncountable cardinals $\kappa\leq \lambda$, positive integers $n$, and provably self-preserving $n$-nice forcing classes $\Gamma$:
    \begin{enumerate}
        \item $\Sigma_n\mhyphen CBFA_{<\kappa}^{<\lambda}(\Gamma)$
        \item $\Sigma_n\mhyphen MP_\Gamma(H_\kappa)$ and $\Sigma_n\mhyphen RRP(\kappa, \lambda, \Gamma)$
    \end{enumerate}
\end{theorem}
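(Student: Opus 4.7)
The forward direction $(1) \Rightarrow (2)$ is immediate: Proposition \ref{prop:cbfaeasy}(2) gives $\Sigma_n\mhyphen MP_\Gamma(H_\kappa)$, and Lemma \ref{lemma:cbfaimprrp} gives $\Sigma_n\mhyphen RRP(\kappa, \lambda, \Gamma)$.

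For the reverse direction, assume both principles and fix CBFA data $(\mathbb{B}, \dot{a}, b, \gamma, X, \phi)$. The plan has three coordinated stages. \emph{Stage 1:} WLOG $\gamma \in C^{(n)}$ (by enlarging if needed, so that $H_\gamma \prec_{\Sigma_n} V$ and hence the $\Sigma_n$ forcing relation for $\mathbb{B}$ is absolute between $H_\gamma$ and $V$). By a L\"owenheim--Skolem argument on $H_\gamma$, produce $Y \prec H_\gamma$ of size less than $\lambda$ containing $X \cup \mathrm{trcl}(\{\dot{a}, b\}) \cup \{\mathbb{B}, \lambda\}$, with $Y \cap \lambda \in \lambda$ and suitable closure (including via Lemma \ref{lemma:macSkolem} so that maximal antichains of $\mathbb{B}$ in $Y$ of size less than $\lambda$ are subsets of $Y$). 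Let $\mathbb{B}^* := \pi_Y(\mathbb{B}) \in H_\lambda$. Provable self-preservation of $\Gamma$ combined with the $\Sigma_n$-correctness of $H_\gamma$ yields $V \models \mathbb{B}^* \in \Gamma \land \Vdash_{\mathbb{B}^*} \phi(\dot{a}, \check{b})$. \emph{Stage 2:} Apply $\Sigma_n\mhyphen RRP(\kappa, \lambda, \Gamma)$ to the $\Sigma_n$ formula $\rho(\mathbb{B}^*, \dot{a}, b) := (\mathbb{B}^* \in \Gamma) \land (\Vdash_{\mathbb{B}^*} \phi(\dot{a}, \check{b}))$, which has parameters in $H_\lambda$, is provably $\Gamma$-persistent (crucially invoking self-preservation for the clause $\mathbb{B}^* \in \Gamma$), and holds in $V$ by Stage 1. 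This produces $Z \prec H_\lambda$ of size less than $\kappa$, with $Z \cap \kappa$ transitive, containing $\mathbb{B}^*$, $\dot{a}$, $b$, and $\pi_Y^{-1}(X)$ (arrangeable by stationarity since $|\pi_Y^{-1}(X)| = |X| < \kappa$), such that $V \models \bar{\mathbb{B}} \in \Gamma \land \Vdash_{\bar{\mathbb{B}}} \phi(\bar{\dot{a}}, \check{\bar{b}})$, where $\bar{\mathbb{B}} := \pi_Z(\mathbb{B}^*)$, $\bar{\dot{a}} := \pi_Z(\dot{a})$, $\bar{b} := \pi_Z(b)$.

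\emph{Stage 3:} Apply $\Sigma_n\mhyphen MP_\Gamma(H_\kappa)$ to the $\Sigma_n$ formula $\chi := \exists F\,(F \text{ is a } {<}\bar\lambda\text{-weakly } N_Z\text{-generic filter on } \bar{\mathbb{B}} \text{ with } \phi(\bar{\dot{a}}^F, \bar{b}))$, where $N_Z$ is the transitive collapse of $Z$ and $\bar\lambda = \pi_Z(\lambda)$. All parameters lie in $N_Z \in H_\kappa$; the formula is provably $\Gamma$-persistent (weak genericity is $\Delta_0$ and $\phi$ is persistent); and it is $\Gamma$-forceable by $\bar{\mathbb{B}}$ itself (a $V$-generic filter on $\bar{\mathbb{B}}$ is automatically $N_Z$-generic, and $\bar{\mathbb{B}}$ forces $\phi$ by Stage 2). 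MP yields such an $F \in V$. The CBFA witness is then $(N, \sigma, F)$ with $\sigma := \pi_Y^{-1} \circ \pi_Z^{-1}\colon N \to H_\gamma$ the composed inverse collapse, where $N$ is the transitive collapse of $Z$ embedded appropriately; this $\sigma$ is elementary into $H_\gamma$, satisfies $\sigma^{-1}(\mathbb{B}) = \bar{\mathbb{B}}$ so $F$ lives on the correct algebra, and has $X \subseteq \mathrm{rng}(\sigma)$ by the inclusion $\pi_Y^{-1}(X) \subseteq Z$.

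The main obstacle is the Stage 1 transfer of $\Gamma$-membership and the forcing relation from the intermediate collapse $\pi_Y[Y]$ to $V$, which genuinely needs both the $\Sigma_n$-correctness of $H_\gamma$ (to absorb the $\Sigma_n$ complexity of $\Gamma$ and the forcing relation for $\Sigma_n$ formulas) and the provable self-preservation of $\Gamma$ (which is what makes $\rho$ qualify as input to RRP and what lets us conclude $\mathbb{B}^* \in \Gamma^V$ rather than merely in the collapsed model). A secondary technical point is verifying that the Skolem closure built into $Y$ and $Z$ makes the composed collapse map $\sigma$ both elementary into $H_\gamma$ and compatible with the RRP-produced reflection, so that the Boolean algebra output by RRP coincides with the $\sigma^{-1}(\mathbb{B})$ required by CBFA's statement.
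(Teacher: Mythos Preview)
Your Stage 1 contains a genuine gap. You claim that provable self-preservation together with $\Sigma_n$-correctness of $H_\gamma$ gives $V\models\mathbb{B}^*\in\Gamma\land\Vdash_{\mathbb{B}^*}\phi(\dot a,\check b)$, but neither ingredient does the work you need. Self-preservation says that $\Gamma$-membership is preserved upward into $\Gamma$-\emph{forcing extensions}; it says nothing about the transitive collapse $N_Y$ of an elementary substructure. And $\Sigma_n$-correctness of $H_\gamma$ only transfers $\Sigma_n$ truth between $H_\gamma$ and $V$; the relevant structure here is $N_Y$, which is merely some small transitive set sitting inside $H_\gamma$, not a $\Sigma_n$-elementary substructure of it. From $N_Y\models\mathbb{B}^*\in\Gamma$ (which you do get by collapsing $Y\models\mathbb{B}\in\Gamma$) you cannot conclude $V\models\mathbb{B}^*\in\Gamma$ once $n\geq 2$, since $\Sigma_n$ statements are not upward absolute from arbitrary transitive sets. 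The same objection applies to the forcing relation $\Vdash_{\mathbb{B}^*}\phi$. Without this, your formula $\rho$ is not known to hold in $V$, so RRP cannot be invoked in Stage 2.

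The paper avoids this obstacle entirely by proving the Miyamoto--Asper\'o formulation instead: given $a\in H_\lambda$ with $\phi(\check a)$ $\Gamma$-forceable, it applies RRP directly to the formula $\exists\mathbb{P}\in\Gamma\ \Vdash_{\mathbb{P}}\phi(\check a)$. This formula is $\Sigma_n$, is genuinely true in $V$ (witnessed by the given large poset, which need not be shrunk), and is provably $\Gamma$-persistent precisely by the self-preservation hypothesis (via a product argument). RRP then yields stationarily many $Z$ with $\phi(\pi_Z(a))$ $\Gamma$-forceable, and since $\pi_Z(a)\in H_\kappa$, MP finishes. The key idea you are missing is to existentially quantify the poset away inside the formula fed to RRP, rather than trying to produce a specific small poset in $\Gamma^V$ by collapsing.
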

\begin{proof}
    $(1\Rightarrow 2):$ We get the maximality principle from Theorem \ref{thm:symbfaMP} and the residual reflection principle from Lemma \ref{lemma:cbfaimprrp}.

    $(2\Rightarrow 1):$ Given a provably $\Gamma$-persistent $\Sigma_n$ formula $\phi$ and an $a\in H_\lambda$ such that $\phi(a)$ holds in some $\Gamma$-extension, the formula $\exists\mathbb{P}\in\Gamma\hspace{3pt} \Vdash_{\mathbb{P}} \phi(\check{a})$ is $\Sigma_n$ because $\Gamma$ is $\Sigma_n$-definable. To see that it is provably $\Gamma$-persistent, let $\mathbb{P}$ witness it and $\mathbb{Q}\in \Gamma$ be arbitrary. Then $\Vdash_{\mathbb{P}}\check{\mathbb{Q}}\in \Gamma$ because $\Gamma$ is provably self-preserving, so $\Vdash_{\mathbb{P}\times\mathbb{Q}}\phi(\check{a})$ because $\phi$ is provably $\Gamma$-persistent. Since products commute and $\Vdash_{\mathbb{Q}}\check{\mathbb{P}}\in \Gamma$, it follows that $\Vdash_{\mathbb{Q}}(\exists\mathbb{P}\in\Gamma\hspace{3pt} \Vdash_{\mathbb{P}} \phi(\check{a}))$, as desired. Thus by the residual reflection principle, there are stationarily many $Z\prec H_\lambda$ of size less than $\kappa$ containing $a$ such that $\phi(\pi_Z(a))$ is $\Gamma$-forceable. Since $\pi_Z(a)\in H_\kappa$, the maximality principle yields $\phi(\pi_Z(a))$ in $V$. Thus the M-A formulation of $\Sigma_n\mhyphen CBFA_{<\kappa}^{<\lambda}(\Gamma)$ holds, and bounded versions of the arguments for Theorem \ref{thm:equivforms} yield the other formulations.
\end{proof}

Thus for example $\Sigma_n$-correct bounded forcing axioms for countably closed forcing can be factored into the $\Sigma_n$-maximality principle for countably closed forcing and a $\Sigma_n$-residual reflection principle when $n\geq 2$. The same holds for $\square_{ccc}ccc$ when $n\geq 3$ (the ccc is a $\Delta_2$ property because it holds iff it holds in $H_\theta$ for some uncountable $\theta$ iff it holds in $H_\theta$ for all uncountable $\theta$ such that the poset is itself in $H_\theta$). Furthermore, since by the results of the next chapter $\Sigma_2\mhyphen MP_{ccc}(H_{2^{\aleph_0}
})$ implies that $CH$ fails badly and by Proposition \ref{prop:mpbfa} it also implies $MA$, Proposition \ref{prop:selfpresexamples}(b) yields that $\square_{ccc}ccc=ccc$ under the hypotheses of the above theorem when $\Gamma=ccc$. Thus $\Sigma_n\mhyphen CBMA^{<\lambda}$ is equivalent to the conjunction of the $\Sigma_n$-maximality principle for $ccc$ forcing and $\Sigma_n\mhyphen RRP(2^{\aleph_0},\lambda, ccc)$ when $n\geq 3$, but not necessarily when $n=2$. Even though ccc forcing has a $\Sigma_2$ definition, we must use the more complex definition of $\square_{ccc} ccc$ in order for the preservation to be provable in $ZFC$ alone.

For other forcing classes, such as subcomplete, proper, or semiproper forcing, even shifting to more complex definitions does not appear to help. Examining the consistency proofs for $\Sigma_n$-correct forcing axioms sheds some light on the obstacle: when there is a $\mathbb{P}\in \Gamma^{V[G]}$ forcing $\phi(\dot{a})$, we can find an $N$, $\sigma$, and $F$ such that $\phi(\sigma^{-1}(\dot{a})^F)$ already holds not because there is some $\mathbb{Q}\in \Gamma^{V[G]}$ forcing this, as would be required for a straightforward maximality principle argument to work, but because there is such a $\mathbb{Q}$ which was in $\Gamma$ as defined inside of some ground model $V[G_\alpha]$. Thus perhaps, in order to get a factorization for a wider range of forcing classes, a more geological approach is needed. For now, the following (somewhat vague) question remains open:

\begin{question}
    Is $\Sigma_n\mhyphen CBFA_{<\kappa}^{<\lambda}(\Gamma)$ equivalent to the conjunction of some sort of generalized maximality principle and some sort of reflection principle for a wide number of natural forcing classes?
\end{question}

Further open questions arise from the fact that, although the proof of Theorem \ref{thm:cbfafactor} does not appear to generalize beyond the provably self-preserving classes, it is unclear how to establish the separations that would definitively show that the theorem does not generalize:

\begin{question}
    If $\Gamma$ is a forcing class which is never self-preserving, is it consistent for suitable $\kappa$ that $\Sigma_n\mhyphen MP_\Gamma(H_\kappa)$ and $\Sigma_n\mhyphen RRP(\kappa, \lambda, \Gamma)$ hold but $\Sigma_n\mhyphen CBFA_{<\kappa}^{<\lambda}(\Gamma)$ fails?
\end{question}

\begin{question}
    Is it consistent that $\Sigma_2\mhyphen MP_{ccc}(H_\mathfrak{c})$ and $\Sigma_2\mhyphen RRP(\mathfrak{c},\lambda, ccc)$ hold for all $\lambda\geq \mathfrak{c}$, but $\Sigma_2\mhyphen CMA$ fails?
\end{question}

\chapter{Consequences}

We conclude with a brief survey of some of the implications of $\Sigma_n$-correct forcing axioms. It is highly likely that they have many more consequences, perhaps including some much more dramatic than those explored here, but the constraints of time, space, and my own limited knowledge will restrict us to only these few. Particular attention will be paid to $\Sigma_n$-correct Martin's Axiom, since for $n\geq 2$ it has a somewhat different character than and in particular is much stronger than ordinary Martin's Axiom.

\section{The Cardinality of the Continuum}
\label{section:continuum}

We begin with some simple observations on the implications for various axioms on the size of the continuum.

\begin{prop}
    \label{prop:continuum}
    \begin{enumerate}
        \item $\Sigma_n\mhyphen CPFA$ and $\Sigma_n\mhyphen CMM$ imply that $2^{\aleph_0}=\aleph_2$ for any positive $n$
        \item $\Sigma_2\mhyphen CSCFA$ and $\Sigma_2\mhyphen CFA_{<\omega_2}(<\omega_1\mhyphen closed)$ imply $CH$
        \item $\Sigma_2\mhyphen CMA$ implies that the continuum is weakly Mahlo
    \end{enumerate}
\end{prop}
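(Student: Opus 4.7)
For part (1), since $\Sigma_n\mhyphen CPFA$ implies classical $PFA$ and $\Sigma_n\mhyphen CMM$ implies classical $MM$, I would simply invoke the theorems of Todor\v{c}evi\'c--Veli\v{c}kovi\'c and Foreman--Magidor--Shelah giving $2^{\aleph_0}=\aleph_2$ under these axioms; no further work is needed.

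For part (2), the plan is to apply $\Sigma_2\mhyphen MP_\Gamma(H_{\omega_2})$, implied by the corresponding $\Sigma_2$-correct bounded forcing axiom. First observe that $CH$ is $\Sigma_2$: it asserts ``there exist $\alpha$ and $f$ with $\alpha$ the least uncountable cardinal (a $\Pi_1$ condition) and $f$ a bijection $\alpha\to \mathcal{P}(\omega)$''. Next, both countably closed and subcomplete forcing provably preserve $CH$, since neither adds reals (the subcomplete case is Jensen's theorem) and each preserves $\omega_1$, so any bijection $\omega_1\to\mathcal{P}(\omega)$ in $V$ remains one in the extension. Finally, $CH$ is forceable by $\mathrm{Col}(\omega_1, 2^{\aleph_0})$, which is $<\hspace{-2pt}\omega_1$-closed and hence subcomplete. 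The $\Sigma_2$-maximality principle then delivers $CH$ in $V$.

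For part (3), regularity of $\mathfrak{c}$ is immediate from classical $MA$. The substantive task is to show every club $C\subseteq \mathfrak{c}$ contains a regular cardinal. I plan to apply the Miyamoto--Asper\'o formulation of $\Sigma_2\mhyphen CMA$ (Theorem \ref{thm:equivforms}) to the formula
$$\phi(b)\;\equiv\;\text{``$\bigcup b$ is a regular uncountable cardinal''},$$
which is $\Pi_1$ (hence $\Sigma_2$), with parameter $b:=C\in H_{\mathfrak{c}^+}$. Since $C$ is unbounded in $\mathfrak{c}$ we have $\bigcup C=\mathfrak{c}$, regular uncountable by $MA$, so $\phi(C)$ holds in $V$ and is forceable by trivial ccc forcing; provable ccc-persistence follows because ZFC proves that ccc forcing preserves cardinals and cofinalities. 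The axiom yields stationarily many $Z\prec H_{\mathfrak{c}^+}$ of size $<\mathfrak{c}$ containing $C$ with $\phi(\pi_Z(C))$ holding in $V$. Intersecting with the club of $Z$ for which $Z\cap\mathfrak{c}$ is an ordinal---a club since $\mathfrak{c}$ is regular---I obtain stationarily many such $Z$ for which $\pi_Z$ fixes ordinals below $Z\cap\mathfrak{c}$, so that $\pi_Z(C)=C\cap(Z\cap\mathfrak{c})$ and $\bigcup \pi_Z(C)=\sup(C\cap(Z\cap\mathfrak{c}))=:\beta_Z$. Then $\phi(\pi_Z(C))$ asserts that $\beta_Z$ is a regular uncountable cardinal, and $\beta_Z$ is by construction a limit point of $C$, hence in $C$ by closure. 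Projecting this stationary family via Lemma \ref{lemma:clubequiv} yields a stationary set of regular cardinals in $C$, whence $\mathfrak{c}$ is weakly Mahlo.

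The main obstacle is the usual bookkeeping around formula complexity and provable persistence---specifically, verifying that $\phi$ is genuinely $\Sigma_2$ and ZFC-provably preserved by ccc forcing---together with making sure that the transitivity of $Z\cap\mathfrak{c}$ transfers the reflected judgment of regularity back to $V$ rather than merely to $Z$. For part (2), a small additional subtlety is confirming that subcomplete forcing provably preserves $CH$, which rests on the standard theorem that subcomplete forcing does not add reals.
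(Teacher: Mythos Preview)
Your proposal is correct and matches the paper's proof in all three parts: (1) reduces to classical $PFA$/$MM$, (2) applies the $\Sigma_2$ maximality principle to $CH$ as a forceable, provably persistent $\Sigma_2$ sentence, and (3) uses the Miyamoto--Asper\'o reflection to push a $\Pi_1$ property of $\sup C$ down to $\sup(\pi_Z(C))=Z\cap\mathfrak{c}\in C$. The only cosmetic difference is that the paper reflects ``$\sup C$ is weakly inaccessible'' (having first established weak inaccessibility of $\mathfrak{c}$ via the maximality principle), whereas you reflect ``$\sup C$ is a regular uncountable cardinal'' directly; both yield weak Mahloness by the same mechanism.
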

\begin{proof}
    (1) follows from the well-known fact that $PFA$ implies that the continuum is $\aleph_2$.

    (2): Since $\omega_1$ is $\Delta_2$ definable without parameters and $\mathcal{P}(\omega)$ is $\Pi_1$-definable, the assertion that there is a bijection between them is $\Sigma_2$. Since $Coll(\omega_1, \mathfrak{c})$ is a countably closed (and hence subcomplete) poset which forces $CH$, and neither subcomplete nor countably closed posets add reals, either $\Sigma_2$-correct forcing axiom implies $CH$. In fact, $CH$ follows merely from the corresponding $\Sigma_2$ maximality principles.

    (3): First, we show (following Stavi and Vaananen \cite{svMP}) that $\Sigma_2\mhyphen MP_{ccc}(H_\mathfrak{c})$ implies that the continuum is weakly inaccessible. First, by Proposition \ref{prop:mpbfa}, it implies $MA$ and thus that the continuum is regular. To see that the continuum is a weak limit cardinal, let $\lambda<\mathfrak{c}$. Since $\mathcal{P}(\omega)$ is $\Pi_1$ definable without parameters and $\lambda^{++}$ is $\Delta_2$ in $\lambda$, "there is an injective function $\lambda^{++}\rightarrow\mathcal{P}(\omega)$" is a $\Sigma_2$ statement which can be forced by adding $\lambda^{++}$ reals and, since ccc forcing preserves cardinals, provably remains true after any further ccc forcing. Thus it holds in $V$, so the continuum cannot be $\lambda^+$.

    Now strengthen our hypotheses to $\Sigma_2\mhyphen CBMA^{<\mathfrak{c}^+}$. (Ordinary Martin's Axiom is equivalent to all of its bounded forms, since by definition a ccc Boolean algebra only has small antichains, but this fails for the $\Sigma_n$-correct versions, since the boundedness will restrict the size of the names that can be used as parameters.) Let $C\subseteq\mathfrak{c}$ be a club. Then "$\sup(C)$ is weakly inaccessible" is a true $\Pi_1$ formula provably preserved by ccc forcing because ccc forcing preserves weak inaccessibility, so applying the Miyamoto-Aper\'o form of the axiom, there is a $Z\prec H_{\mathfrak{c}^+}$ of size less than $\mathfrak{c}$ containing $\mathfrak{c}$ and $C$ such that the supremum of $\pi_Z(C)$ is weakly inaccessible and $Z\cap\mathfrak{c}$ is transitive. Since by elementarity $\pi_Z(C)=Z\cap C$ is unbounded in $\pi_Z(\mathfrak{c})=Z\cap\mathfrak{c}$, $\sup(Z\cap C)=Z\cap\mathfrak{c}\in C$ by closure, so $C$ contains a weakly inaccessible cardinal. It follows that $\mathfrak{c}$ is weakly Mahlo.
\end{proof}

Thus combined with Theorem \ref{thm:contradiction}, we get that $\Sigma_n\mhyphen CMA$, $\Sigma_n\mhyphen CPFA$, $\Sigma_n\mhyphen CMM$, $\Sigma_n\mhyphen CSCFA$, and $\Sigma_n\mhyphen CFA_{<\kappa}(<\hspace{-3pt}\omega_1\mhyphen closed)$ are mutually inconsistent for $n\geq 4$ (and perhaps for smaller $n$), with some inconsistencies possibly requiring an extendible cardinal (or at least the Bedrock Axiom).

We can strengthen (3) above to show that the continuum is in fact so large as to be undefinable in terms of ccc-persistent formulas, so it is not the least weakly Mahlo cardinal, and in fact should have all properties in weakly hyper-Mahlo hierarchy:

\begin{prop}
    For $n\geq 2$, $\Sigma_n\mhyphen MP_{ccc}(H_{\mathfrak{c}})$ (i.e. $\Sigma_n$-correct symmetrically bounded Martin's Axiom) implies that the continuum is not the least element of any class of ordinals with a provably ccc-persistent $\Sigma_n$ definition. $\Sigma_n\mhyphen CBMA^{<\mathfrak{c}^+}$ (or merely $\Sigma_n\mhyphen RRP(\mathfrak{c}, \mathfrak{c}^+, ccc)$) further implies that any such class containing $\mathfrak{c}$ must be stationary below it.
\end{prop}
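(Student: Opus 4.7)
The plan is to prove the two assertions separately, in parallel with the argument already given for Proposition \ref{prop:continuum}(3). For the first conclusion, I would suppose toward contradiction that $\phi$ is a provably ccc-persistent $\Sigma_n$ formula whose defining class has $\mathfrak{c}$ as its least element. The key observation is that ``$\alpha<\mathfrak{c}$'' admits a $\Sigma_1$ expression, since $\alpha<\mathfrak{c}$ is equivalent to the existence of an injection $f:\alpha+1\to V$ all of whose values are subsets of $\omega$. Form the parameter-free $\Sigma_n$ sentence
\[
\chi \;:=\; \exists\alpha\,\bigl(\phi(\alpha)\wedge \alpha<\mathfrak{c}\bigr).
\]
Then $\chi$ is ccc-forceable (adjoining $\mathfrak{c}^+$ Cohen reals makes $\mathfrak{c}^V$ itself a witness in the extension, since $\phi(\mathfrak{c}^V)$ persists and the continuum strictly grows) and provably ccc-persistent (any witness $\alpha_0<\mathfrak{c}^V$ still satisfies $\alpha_0<\mathfrak{c}^V\leq\mathfrak{c}^{V[G]}$ after ccc forcing, since ccc forcing preserves cardinals, while $\phi(\alpha_0)$ persists by hypothesis). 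Applying $\Sigma_n\mhyphen MP_{ccc}(H_{\mathfrak{c}})$ to the parameter-free sentence $\chi$ then gives $\chi$ in $V$, contradicting the minimality of $\mathfrak{c}$.

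For the stationarity conclusion, I would apply $\Sigma_n\mhyphen RRP(\mathfrak{c},\mathfrak{c}^+,ccc)$ directly to the formula $\phi$ with the parameter $a=\mathfrak{c}\in H_{\mathfrak{c}^+}$. Given an arbitrary club $C\subseteq\mathfrak{c}$, intersecting the resulting stationary set of $Z\prec H_{\mathfrak{c}^+}$ with the club of substructures containing $C$ produces a $Z$ of size less than $\mathfrak{c}$ with $\mathfrak{c},C\in Z$, with $Z\cap H_{\mathfrak{c}}$ transitive (this transitivity is automatic from the paper's notion of stationarity in $[H_{\mathfrak{c}^+}]^{<\mathfrak{c}}$), and with $\phi(\pi_Z(\mathfrak{c}))$ holding. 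Transitivity gives $\pi_Z(\mathfrak{c})=Z\cap\mathfrak{c}$, while elementarity makes $\pi_Z(C)=C\cap Z$ unbounded in $Z\cap\mathfrak{c}$; closure of $C$ in $\mathfrak{c}$ then forces $Z\cap\mathfrak{c}\in C$, exhibiting an element of $C$ in the class defined by $\phi$.

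The only delicate point is the provable ccc-persistence of $\chi$ in the first part, and this is precisely where the ccc hypothesis is essential: one uses that ccc forcing cannot shrink the continuum, so that a witness below the old $\mathfrak{c}$ is automatically below the new one. Note that the analogous argument would fail for a class like $<\omega_1$-closed forcing, which fixes $\mathfrak{c}$ rather than enlarging it, and that the forcing needed to witness ccc-forceability of $\chi$ necessarily enlarges $\mathfrak{c}$ (this is why $\mathfrak{c}$ needs to be described by a non-$\Pi_1$ formula like the one above, and also why nothing like Proposition \ref{prop:continuum}(2) can hold). Everything else is routine application of the reflection and maximality machinery already assembled in the chapter, so no additional consistency-theoretic ingredients are needed.
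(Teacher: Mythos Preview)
Your proposal is correct and follows essentially the same approach as the paper: for the first part you form the sentence $\exists\alpha<\mathfrak{c}\,\phi(\alpha)$, verify it is $\Sigma_n$, ccc-forceable, and provably ccc-persistent, then apply the maximality principle, exactly as the paper does (the paper appeals to the $\Delta_2$-definability of $\mathfrak{c}$ rather than your $\Sigma_1$ characterization of ``$\alpha<\mathfrak{c}$'', but this is a cosmetic difference). For the second part the paper simply invokes Lemma~\ref{lemma:clubequiv} to project the stationary set of $Z$'s down to a stationary subset of $\mathfrak{c}$, whereas you unfold the same argument by hand using $C\in Z$ and closure of $C$; again the substance is identical.
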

\begin{proof}
    Let $\phi$ be a provably ccc-persistent $\Sigma_n$ formula such that $\phi(\mathfrak{c})$ holds. Then any ccc forcing which adds at least $(\mathfrak{c}^V)^+$ reals forces "there is an $\alpha$ smaller than the continuum such that $\phi(\alpha)$ holds". Since any further ccc forcing preserves $\phi(\mathfrak{c}^V)$ and cannot decrease the continuum, the quoted property corresponds to a provably ccc-persistent formula; because the cardinality of the continuum is $\Delta_2$-definable, this formula is also $\Sigma_n$ as long as $n\geq 2$. Thus by the maximality principle, there is some $\alpha<\mathfrak{c}^V$ such that $V\models \phi(\alpha)$, as desired.

    If $\Sigma_n\mhyphen RRP(\mathfrak{c}, \mathfrak{c}^+, ccc)$ holds, then there are stationarily many $Z\in [H_{\mathfrak{c}^+}]^{<\mathfrak{c}}$ such that $Z\cap \kappa$ is transitive (and thus equal to $\pi_Z(\kappa)$) and $\phi(\pi_Z(\kappa))$ holds. Thus by Lemma \ref{lemma:clubequiv}, $\{\alpha<\mathfrak{c}\sbp \phi(\alpha)\}$ is stationary.
\end{proof}

Of course, similar results hold for other forcing classes, but since ccc forcing preserves more properties related to the size of cardinals (most notably the cofinality function), this particular case is more relevant to characterizing how large the continuum is under $\Sigma_n\mhyphen CMA$ (whereas the $\Sigma_n$-correct forcing axioms for other natural classes often imply that $\kappa$ is merely $\omega_2$).

The first result of the following section can be interpreted as saying that under $\Sigma_2$-correct Martin's Axiom, the continuum is "very weakly compact" (i.e. weakly inaccessible and has the tree property). Propostion \ref{prop:genextend} can similarly be read as a statement about the largeness of the continuum when $\Gamma$ is ccc forcing.

\section{Combinatorics}
\label{section:combin}

This sections surveys some of the combinatorial consequences of $\Sigma_n$-correct forcing axioms, starting with the tree property:

\begin{prop}
\begin{enumerate}
    \item $\Sigma_2\mhyphen CBMA^{<\mathfrak{c}^+}$ (or merely $\Sigma_2\mhyphen RRP(\mathfrak{c}, \mathfrak{c}^+, ccc)$) implies that the continuum has the tree property.
    \item If $\Gamma$ is an $n$-nice forcing class, then $\Sigma_{n+1}\mhyphen RRP(\kappa, \kappa^+, \Gamma)$ implies that for every $\kappa$-tree, there is a $\Gamma$-extension in which it has a cofinal branch.
\end{enumerate}
\end{prop}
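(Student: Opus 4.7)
The strategy for (2) is to adapt the weakly-compact reflection argument used earlier in the paper (in the derivation that $\Sigma_2$-correctly $+1$-reflecting cardinals have the tree property) to the residual setting. Fix a $\kappa$-tree $T$ in $V$ and, toward a contradiction, suppose that no $\Gamma$-extension of $V$ contains a cofinal branch through $T$. I will package this assumption into a single provably $\Gamma$-persistent $\Sigma_{n+1}$ formula. Since ``there is a cofinal branch of $T$'' is $\Sigma_1$ and $\Gamma$ is $\Sigma_n$-definable, Lemma \ref{lemma:neccompl} gives that $\square_\Gamma(\text{``$T$ has no cofinal branch''})$ is $\Pi_n$ and provably $\Gamma$-persistent. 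Conjoined with the $\Pi_1$ assertion ``$T$ is a $\kappa$-tree'' (whose preservation under $\Gamma$-forcing is immediate, since the tree order and level sizes relative to $\kappa$ as an ordinal do not change), this produces a provably $\Gamma$-persistent $\Pi_n$ (hence $\Sigma_{n+1}$) formula $\phi(T,\kappa)$ which holds in $V$ with parameter $(T,\kappa)\in H_{\kappa^+}$.

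I then apply $\Sigma_{n+1}\mhyphen RRP(\kappa,\kappa^+,\Gamma)$ to $\phi$ at $(T,\kappa)$, intersecting the resulting stationary set with the clubs of $Z\prec H_{\kappa^+}$ of size less than $\kappa$ that contain $T$ and $\kappa$, are closed under $\alpha\mapsto T_\alpha$ and $\alpha\mapsto|T_\alpha|$, have $Z\cap\kappa$ transitive, and have $\bar\kappa:=Z\cap\kappa$ regular (the last via Lemma \ref{lemma:clubequiv}). For such a $Z$, elementarity together with the closure conditions forces $|T_\alpha|\in Z\cap\kappa=\bar\kappa$ for each $\alpha<\bar\kappa$, and then any bijection $|T_\alpha|\to T_\alpha$ living in $Z$, together with the transitivity of $Z\cap\kappa$, forces $T_\alpha\subseteq Z$. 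Since no node of $T$ at level $\geq\bar\kappa$ can lie in $Z$ (its level would have to belong to $Z\cap\kappa=\bar\kappa$), the identification $\pi_Z(T)=T\upharpoonright\bar\kappa$ follows. The reflected $\phi$ now says that $T\upharpoonright\bar\kappa$ has no cofinal branch in any $\Gamma$-extension; but $T$ has height $\kappa>\bar\kappa$, so any $t\in T_{\bar\kappa}$ supplies via its $T$-predecessors a cofinal branch of $T\upharpoonright\bar\kappa$ already in $V$, hence in every $\Gamma$-extension, contradicting the reflected $\square_\Gamma$ assertion.

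For (1), the same template with $\Gamma=\,$ccc and $n=2$ would need $\Sigma_3$ reflection, because Lemma \ref{lemma:neccompl} together with the $\Delta_2$-definability of ccc places $\square_{ccc}(\text{``no cofinal branch''})$ at $\Pi_2\subseteq\Sigma_3$, exceeding the $\Sigma_2$ bound in the hypothesis. The main obstacle is therefore to identify a $\Sigma_2$ provably ccc-persistent reflection target that still produces the conclusion. Under the stronger $\Sigma_2\mhyphen CBMA^{<\mathfrak{c}^+}$ hypothesis I would invoke Theorem \ref{thm:symbfaMP} to combine $\Sigma_2\mhyphen MP_{ccc}(H_{\mathfrak{c}})$ with the reflection template of (2) applied only to the ccc-persistent $\Pi_1$ formula ``$T$ is a $\kappa$-tree'', producing the cofinal branch directly in $V$ via the node-predecessor argument at the reflected $\bar\kappa$. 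Under bare $\Sigma_2\mhyphen RRP(\mathfrak{c},\mathfrak{c}^+,ccc)$ the delicate point is to construct a ground-model surrogate for $\square_{ccc}(\text{``no cofinal branch''})$ — for instance by encoding the forceability of a cofinal branch as a $\Sigma_1$ statement about a parameter in $H_{\mathfrak{c}^+}$ and reflecting the contrapositive — so that the same $T_{\bar\kappa}$-predecessor contradiction goes through. Pinning down this surrogate is what I expect to be the principal work in (1).
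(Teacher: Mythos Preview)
Your argument for (2) is correct and essentially identical to the paper's.

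For (1), however, you have missed the key idea entirely. You correctly observe that $\square_{ccc}(\text{``no cofinal branch''})$ lands at $\Pi_2$ and so overshoots the $\Sigma_2$ hypothesis, but the resolution is not to look for a surrogate or to invoke the maximality principle: the point is that the $\Pi_1$ assertion ``$T$ has no cofinal branch'' is \emph{itself} provably ccc-persistent once the height is a regular cardinal above $\omega_1$. If a ccc poset $\mathbb{P}$ forced a cofinal branch $\dot b$, then the subtree $T_{\dot b}=\{x\in T:\exists p\in\mathbb{P}\ p\Vdash\check x\in\dot b\}$ has height $\mathfrak{c}$ and at most countable levels, since conditions placing distinct nodes of the same level into $\dot b$ are incompatible. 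Kurepa's theorem on trees whose level widths are uniformly bounded below the (regular) height then yields a cofinal branch of $T_{\dot b}$, and hence of $T$, already in $V$. Thus the formula you want to reflect is just the $\Pi_1$ conjunction ``$T$ has no cofinal branch and its height is regular $>\omega_1$''; ccc forcing preserves both conjuncts, and $\Sigma_2\mhyphen RRP(\mathfrak{c},\mathfrak{c}^+,ccc)$ applies directly. The contradiction then comes exactly as in your (2), via the predecessors of a node at level $\bar\kappa$.

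Your proposed route under the stronger $\Sigma_2\mhyphen CBMA^{<\mathfrak{c}^+}$ hypothesis does not work either: reflecting only ``$T$ is a $\kappa$-tree'' tells you that $T\upharpoonright\bar\kappa$ is a $\bar\kappa$-tree, and the node-predecessor argument gives a cofinal branch of $T\upharpoonright\bar\kappa$, but this is no contradiction at all since you have reflected no branchlessness assertion. The maximality principle plays no role in the paper's argument for (1).
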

   \begin{proof} 
   (1): Assume toward a contradiction that $T=(\mathfrak{c}, <_T)$ is a $\mathfrak{c}$-tree with no cofinal branch. Having no cofinal branch is a $\Pi_1$ property of $\mathfrak{c}$ and $T$, since it amounts to saying that all functions from $\mathfrak{c}$ with the ordinal ordering to $T$ fail to be strictly order-preserving. To see that it is provably ccc-persistent, let $\mathbb{P}$ satisfy the ccc and $\dot{b}$ be a $\mathbb{P}$-name for a cofinal branch of $T$; we can assume without loss of generality that all conditions of $\mathbb{P}$ force $\dot{b}$ to be a cofinal branch. Define $T_{\dot{b}}:=\{x\in T\sbp \exists p\in \mathbb{P}\hspace{3pt} p\Vdash \check{x}\in\dot{b}\}$.

   Since $\dot{b}$ is forced to be a cofinal branch, $T_{\dot{b}}$ contains at least one element from every level of $T$. However, conditions of $\mathbb{P}$ which force different elements of the same level to be in $\dot{b}$ are incompatible, so by the ccc there are at most countably many conditions for each level. It follows that each level of $T_{\dot{b}}$ is at most countable. As $CH$ fails under our hypotheses, $T_{\dot{b}}$ is a tree of height $\mathfrak{c}$ with width uniformly bounded below $\mathfrak{c}$, so by a standard result of Kurepa (see e.g. Kanamori Proposition 7.9 \cite{kanamori}), it has a cofinal branch, which is then also a cofinal branch of $T$ in $V$.

   Thus $T$'s lack of a cofinal branch is a provably ccc-persistent $\Pi_1$ property, so there are stationarily many $Z\prec H_{\mathfrak{c}^+}$ of size less than $\mathfrak{c}$ such that $\pi_Z(T)$ has no cofinal branch. Fix a particular such $Z$ which is closed under the mapping from ordinals $\alpha<\mathfrak{c}$ to the $\alpha$th level of $T$ and transitive below $\mathfrak{c}$. Combining these two properties, every node of $T$ below level $\pi_Z(\mathfrak{c})$ is in $Z$, but by elementarity $Z$ contains no nodes from higher levels, so $\pi_Z(T)=T\upharpoonright\pi_Z(\mathfrak{c})$. Thus $T\upharpoonright\pi_Z(\mathfrak{c})$ has no cofinal branch, contradicting the fact that $T_{\pi_Z(\mathfrak{c})}$ is nonempty. Hence $\mathfrak{c}$ must have the tree property under our hypotheses.

   (2): Assume toward a contradiction that $T$ is a $\kappa$-tree with no cofinal branch in any $\Gamma$-extension. By Lemma \ref{lemma:neccompl}, this is a provably $\Gamma$-persistent $\Pi_n$ property. Then we can argue as above to produce a $Z\prec H_{\kappa^+}$ such that $\pi_Z(T)=T\upharpoonright\pi_Z(\kappa)$ has no cofinal branch in any $\Gamma$-extension, contradicting the fact that the predecessors of any node of height $\pi_Z(\kappa)$ in $T$ form a cofinal branch of $T\upharpoonright\pi_Z(\kappa)$ in $V$ itself.
\end{proof}

Next, some implications of the $\Sigma_n$-correct forcing axioms for subcomplete or countably closed forcing:

\begin{prop}
    Let $\Gamma$ be the class of subcomplete forcing or the class of countably closed forcing. Then $\Sigma_2\mhyphen CFA_{<\omega_2}(\Gamma)$ implies:
    \begin{enumerate}
        \item $\diamondsuit$
        \item There are no ($\omega_1$-)Kurepa trees
    \end{enumerate}
\end{prop}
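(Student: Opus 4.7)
The plan is to derive (1) from the $\Sigma_2$-maximality principle consequence of the axiom and (2) from the classical forcing axiom consequence, invoking only existing preservation results about the two forcing classes. Recall that $\Sigma_2\mhyphen CFA_{<\omega_2}(\Gamma)$ implies both $\Sigma_2\mhyphen MP_\Gamma(H_{\omega_2})$ and $FA_{<\omega_2}(\Gamma)$ by the easy implications in Section \ref{section:sandcon}.

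For (1), the core observation is that $\diamondsuit$ can be expressed as a $\Sigma_2$ sentence: ``there exists a sequence $\vec{A}=\langle A_\alpha\sbp\alpha<\omega_1\rangle$ with each $A_\alpha\subseteq\alpha$ such that for every $X\subseteq\omega_1$ and every club $C\subseteq\omega_1$, some $\alpha\in C$ satisfies $X\cap\alpha=A_\alpha$''. Such a sequence lies in $H_{\omega_2}$, and the inner matrix is $\Pi_1$ in $\vec{A}$ once the quantifiers over $X$ and $C$ are bounded by $\mathcal{P}(\omega_1)\subseteq H_{\omega_2}$, so the outer existential gives $\Sigma_2$. The statement is $\Gamma$-forceable, since $\mathrm{Add}(\omega_1,1)$ (countable conditions, countably closed hence also subcomplete) adds a $\diamondsuit$-sequence by the standard density argument. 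For provable $\Gamma$-persistence, I would invoke the classical fact that $<\!\omega_1$-closed forcing preserves $\diamondsuit$: given a name $\dot{X}$ for a subset of $\omega_1$ in the extension and a name $\dot{C}$ for a club, countable closure lets one build a descending sequence of conditions deciding increasing initial segments of both, producing a ground-model $\alpha$ on which $\dot{X}$ and $\dot{C}$ are already decided and at which the ground-model $\diamondsuit$-sequence guesses correctly. The same argument adapts to subcomplete forcing via its characteristic embedding-descent property. With $\Sigma_2$-ness, forceability, and provable persistence in hand, $\Sigma_2\mhyphen MP_\Gamma(\emptyset)$ delivers $\diamondsuit$ in $V$.

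For (2), no appeal to $\Sigma_n$-correctness is needed: the classical $FA_{<\omega_2}(\Gamma)$ already suffices. Assume toward contradiction that $T$ is an $\omega_1$-Kurepa tree. The Silver-style collapse $\mathbb{P}_T$, whose conditions are countable functions picking out a single branch through each countable level of $T$ in a coherent way, is $<\!\omega_1$-closed and (by Minden's analysis in \cite{mindensubcomplete}) subcomplete; it forces that the set of cofinal branches of $T$ has cardinality at most $\aleph_1$. I would choose a family of $\aleph_1$ dense subsets of $\mathbb{P}_T$: for each branch-candidate that might appear in the enumeration and for each level $\alpha<\omega_1$, a dense set forcing a decision. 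Applying $FA_{<\omega_2}(\Gamma)$ produces a filter meeting all of them, from which one reads off an $\aleph_1$-sized enumeration of branches of $T$ in $V$, contradicting Kurepa-ness.

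The principal obstacle in writing out the proof will be the provable persistence of $\diamondsuit$ under subcomplete forcing. The $<\!\omega_1$-closed case is a routine closure argument, but for subcomplete forcing one must work through the defining condition—transferring the potential counterexample $(\dot{X},\dot{C})$ into a countable full model $\bar{N}$, running the density argument there to pin down a candidate $\alpha$, and then using the elementary embedding $\sigma'$ to verify that the ground-model guess $A_\alpha$ still coincides with $\dot{X}^G\cap\alpha$ in the extension. This requires care with the fact that subcomplete forcing can add new subsets of $\omega_1$, so the preservation argument is not as automatic as for countably closed forcing.
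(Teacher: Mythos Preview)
Your argument for (1) is essentially the paper's: $\diamondsuit$ is $\Sigma_2$, forceable by $\mathrm{Add}(\omega_1,1)$, and provably preserved by $\Gamma$-forcing (the paper simply cites Jensen for the subcomplete case rather than reproving it).

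Your argument for (2), however, has a genuine gap. You propose to use only the classical $FA_{<\omega_2}(\Gamma)$, but for $\Gamma$ the class of countably closed posets this is a theorem of $ZFC$: given any countably closed $\mathbb{P}$ and $\aleph_1$ dense sets, one builds a descending $\omega_1$-sequence meeting each in turn, using countable closure at limits. Since the existence of Kurepa trees is consistent with $ZFC$, no argument from $FA_{<\omega_2}(<\!\omega_1\text{-closed})$ alone can possibly rule them out. Concretely, the failure point is your choice of dense sets: if $T$ is Kurepa with $\geq\aleph_2$ branches, then to force the generic enumeration $f:\omega_1\to\{\text{branches}\}$ to be \emph{surjective} you need one dense set per branch, and there are too many branches for $FA_{<\omega_2}$. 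A filter meeting only $\aleph_1$ dense sets yields a function from $\omega_1$ into the branches, but nothing prevents it from missing most of them.

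The paper instead uses $\Sigma_2\mhyphen MP_\Gamma(H_{\omega_2})$ for (2) as well, with $T\in H_{\omega_2}$ as the parameter. The statement ``there is a surjection from $\omega_1$ onto the set of cofinal branches of $T$'' is $\Sigma_2$ in $T$; it is $\Gamma$-forceable by collapsing the set of branches to $\omega_1$ with countable conditions; and it is provably $\Gamma$-persistent because subcomplete (hence also countably closed) forcing adds no new cofinal branches to ground-model $\omega_1$-trees (Minden, Theorem 3.1.2 of \cite{mindensubcomplete}), so once a surjection exists it remains one. The $\Sigma_2$-correctness is doing real work here and cannot be dropped.
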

\begin{proof}
    In fact, both consequences follows from $\Sigma_2\mhyphen MP_\Gamma(H_{\omega_2})$.

    (1): It is a standard result that adding a Cohen subset to $\omega_1$ forces $\diamondsuit$ (see e.g. Kunen's Lemma IV.7.30 \cite{kunen}). This forcing is countably closed and thus subcomplete. Jensen proved (\cite{jensenfacch}, Part 3, Lemma 4) that subcomplete forcing and thus countably closed forcing\footnote{The result for countably closed forcing was in fact known well before Jensen.} preserves $\diamondsuit$. $\diamondsuit$ is a $\Sigma_2$ sentence, since it asserts that there is a sequence $\langle A_\alpha\sbp \alpha<\omega_1\rangle$ such that for all $A, C\subseteq \omega_1$ with $C$ a club, there is an $\alpha\in C$ such that $A\cap\alpha=A_\alpha$. Thus $\diamondsuit$ follows from the maximality principles for both forcing classes.

    (2): Let $T\in H_{\omega_2}$ be an $\omega_1$-tree. Then there is a countably closed (hence subcomplete) forcing to add a surjection from $\omega_1$ to the set of cofinal branches of $T$, and further subcomplete forcing provably does not add branches to $T$ by Minden's Theorem 3.1.2 \cite{mindensubcomplete}. Asserting that $T$ is not a Kurepa tree is $\Sigma_2$, since we can say that there is a function $f$ with domain $\omega_1$ such that every strictly order-preserving map $\omega_1\rightarrow T$ with downward-closed range is itself in the range of $f$. Thus $T$ not being a Kurepa tree is a $\Gamma$-forceable and provably $\Gamma$-persistent $\Sigma_2$ formula, so $T$ is not a Kurepa tree in $V$. Since every $\omega_1$-tree is isomorphic to one in $H_{\omega_2}$ it follows that there are no Kurepa trees at all.
\end{proof}

Finally, we consider the following version of simultaneous stationary reflection:

\begin{definition}
    $\nu\mhyphen RP_{<\kappa}(\theta)$, where $\nu<\kappa\leq \theta$ are cardinals with $\kappa$ and $\theta$ regular and uncountable, is the statement that for every sequence of $\nu$ stationary subsets of $[\theta]^\omega$ $\mathcal{S}=\langle S_\alpha\sbp \alpha<\nu\rangle$ and every $X\subset \theta$ with $|X|<\kappa$, there is a $Y$ such that $X\subseteq Y\subset \theta$, $|Y|<\kappa$, and $S_\alpha\cap [Y]^\omega$ is stationary in $[Y]^\omega$ for all $\alpha<\nu$. When $\nu$ and/or $\kappa$ is left unspecified, it is assumed that $\nu=1$ and $\kappa=\omega_2$. When $\theta$ is left unspecified, it is assumed that the principle holds for all regular $\theta\geq \kappa$.
\end{definition}

These principles follow from $\Sigma_2$-correct forcing axioms for a wide range of forcing classes, but different proofs seem to be necessary for two overlapping cases:

\begin{theorem}
    If $\kappa$ and $\lambda$ are regular uncountable cardinals with $\omega_1<\kappa<\lambda$ and $\Gamma$ is a subclass of proper forcing, $\Sigma_2\mhyphen RRP(\kappa, \lambda, \Gamma)$ implies $\nu\mhyphen RP_{<\kappa}(\theta)$ for all cardinals $\nu<\kappa$ and regular $\theta\geq \kappa$ such that $\theta^{\omega\cdot \nu}<\lambda$. Thus $\Sigma_2\mhyphen CFA_{<\kappa}(\Gamma)$ implies $\nu\mhyphen RP_{<\kappa}$ for all $\nu<\kappa$.
\end{theorem}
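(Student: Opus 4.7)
The plan is to apply $\Sigma_2\mhyphen RRP(\kappa, \lambda, \Gamma)$ directly to the formula
$$\phi(\mathcal{S}, \theta, [\theta]^\omega, \nu) \equiv \text{``for all } \alpha < \nu,\, S_\alpha \text{ is stationary in } [\theta]^\omega\text{''},$$
which, with $[\theta]^\omega$ included as a parameter so that the witnessing quantifier ``$\exists a \in [\theta]^\omega$'' becomes bounded, can be written in $\Pi_1$ form and is therefore $\Sigma_2$. This formula is provably $\Gamma$-persistent, since the equivalent definition of proper forcing adopted in Section \ref{section:notation} explicitly stipulates that proper posets preserve the stationarity of subsets of $[\theta]^\omega$, and $\Gamma$ is assumed to be a subclass of proper forcing. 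The cardinal arithmetic hypothesis $\theta^{\omega \cdot \nu} < \lambda$ majorizes the size of the transitive closure of $\{\mathcal{S}, \theta, [\theta]^\omega, \nu\}$ (which is essentially $\nu \cdot \theta^\omega$), so all parameters lie in $H_\lambda$.

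Applying the residual reflection principle then yields stationarily many $Z \prec H_\lambda$ of size less than $\kappa$ containing the above parameters together with $X$ and $\omega$, such that $\pi_Z(S_\alpha)$ is stationary in $[\pi_Z(\theta)]^\omega$ for every $\alpha < \nu$. I set $Y := Z \cap \theta$. Arranging $X \subseteq Z$ via a standard closure condition gives $X \subseteq Y$, and $|Y| < \kappa$ is immediate, so the remaining task is to verify that $S_\alpha \cap [Y]^\omega$ is stationary in $[Y]^\omega$ for each $\alpha < \nu$.

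The core argument: given $f : [Y]^{<\omega} \to Y$, I define $\bar{f} : [\pi_Z(\theta)]^{<\omega} \to \pi_Z(\theta)$ by $\bar{f}(\bar{u}) := \pi_Z(f(\pi_Z^{-1}"\bar{u}))$, exploiting that $\pi_Z \upharpoonright Y$ is a bijection $Y \to \pi_Z(\theta)$. Stationarity of $\pi_Z(S_\alpha)$ in $V$ yields some $\bar{a} \in \pi_Z(S_\alpha)$ closed under $\bar{f}$, and by the definition of the collapse $\bar{a} = \pi_Z(b)$ for some $b \in S_\alpha \cap Z$. The crucial observation is that, because $Z \prec H_\lambda$ contains $\omega$ and $b$ is countable, by elementarity $Z$ contains a bijection $\omega \to b$, whence $b \subseteq Z$; combined with $b \subseteq \theta$ this forces $b \subseteq Y$. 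A short unwinding of the definition of $\bar{f}$ then confirms that $b$ itself is closed under $f$ and hence is the desired witness in $S_\alpha \cap [Y]^\omega$.

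The main subtlety is precisely this verification that the pullback of $\bar{a}$ genuinely lands inside $S_\alpha$ rather than producing some truncation $b \cap Z$ which need not belong to $S_\alpha$; this is exactly what the elementarity-plus-countability observation ensures. The corollary for $\Sigma_2\mhyphen CFA_{<\kappa}(\Gamma)$ then follows by combining Proposition \ref{prop:cbfaeasy}(5) and Lemma \ref{lemma:cbfaimprrp}: the unbounded axiom yields $\Sigma_2\mhyphen CBFA_{<\kappa}^{<\lambda}(\Gamma)$ for every cardinal $\lambda$, which in turn yields $\Sigma_2\mhyphen RRP(\kappa, \lambda, \Gamma)$, so for each $\nu < \kappa$ and regular $\theta \geq \kappa$ one simply chooses any $\lambda > \theta^{\omega \cdot \nu}$ and invokes the first part.
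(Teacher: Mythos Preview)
Your proposal is correct and follows essentially the same route as the paper: apply the residual reflection principle to the $\Pi_1$ assertion that each $S_\alpha$ is stationary (provably $\Gamma$-persistent since $\Gamma$ consists of proper posets), set $Y=Z\cap\theta$, conjugate a test function through $\pi_Z$, and pull back a witness from $\pi_Z(S_\alpha)$ using that countable elements of $Z$ are subsets of $Z$. Two cosmetic remarks: the parameter $[\theta]^\omega$ is unnecessary since the existential witness already lies in $S_\alpha$, and you should briefly note (as the paper does, using $\omega_1\subset Z$) that the pulled-back witness $b$ also satisfies $b\cap\omega_1\in\omega_1$, so that it meets the paper's stationarity definition rather than merely weak stationarity.
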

\begin{proof}
    Whenever the specified bounds on $\nu$ and $\theta$ hold, given $\mathcal{S}$ and $X$ as in the above definition, $\mathcal{S}\in H_\lambda$. Since there is a $\Pi_1$ formula asserting that $\mathcal{S}$ is a $\nu$-sequence of stationary subsets of $[\theta]^\omega$, which is provably $\Gamma$-persistent because every poset in $\Gamma$ is proper, by the residual reflection principle there is a $Z\prec H_\lambda$ of size less than $\kappa$ with $X\cup(\nu+1)\cup \omega_1\cup \{\theta, \mathcal{S}\}\subset Z$ such that, setting $\bar{\theta}:=\pi_Z(\theta)$ and $\bar{\mathcal{S}}:=\pi_Z(\mathcal{S})$, $\bar{\mathcal{S}}$ is a $\nu$-sequence of stationary subsets of $[\bar{\theta}]^\omega$. Set $Y:=\pi_Z^{-1}"\bar{\theta}=Z\cap \theta$.

    Then $Y\subset\theta$ by definition, $|Y|<\kappa$ because $|Z|<\kappa$, and $X\subseteq Y$ because $X\subseteq Z$ and $X\subset \theta$, so it is sufficient to verify that the stationarity of each $S_\alpha$ reflects to $Y$. By elementarity, $\bar{\mathcal{S}}_\alpha=\pi_Z(S_\alpha)$ for each $\alpha<\nu$. For every $a\in Z\cap [\theta]^\omega$, $a\subset Y$, since by elementarity there is a bijection $f:\omega\rightarrow a$ in $Z$, which must then be an actual bijection in $H_\lambda$ and thus in $V$, and since $\omega\subset Z$ we have that $Z$ computes the values of $f$ correctly, so those values must all be in $Z$. Then because by the definition of the transitive collapse $\pi_Z(a)=\pi_Z"(a\cap Z)$, it follows from the injectivity of $\pi_Z$ that $\pi_Z^{-1}"\pi_Z(a)=a$. For similar reasons, $\pi_Z^{-1}"\pi_Z(S_\alpha)=S_\alpha\cap Z$ for all $\alpha<\nu$, so $\pi_Z^{-1}"\pi_Z(S_\alpha)\subseteq S_\alpha\cap [Y]^\omega$.

    To complete the proof, we fix an $\alpha<\nu$ and an $h:[Y]^{\omega}\rightarrow \omega$ and produce an $a\in [Y]^\omega$ such that $\pi_Z(a)\in \pi_Z(S_\alpha)$ and $h"[a]^{<\omega}\subseteq a$. Define $h'=\pi_Z\circ h \circ \pi_Z^{-1}$; then $h':[\bar{\theta}]^{<\omega}\rightarrow \bar{\theta}$, so there is some $\bar{a}\in \pi_Z(S_\alpha)$ closed under it and transitive below $\omega_1$ by stationarity. Then $\bar{a}=\pi_Z(a)$ for some $a\in S_\alpha$ because all elements of $\pi_Z(S_\alpha)$ are of this form. For any finite $\{\beta_1, \dotsc, \beta_k\}\subset a$, $\pi_Z(\{\beta_1,\dotsc,\beta_k\})=\{\pi_Z(\beta_1),\dotsc, \pi_Z(\beta_k)\}\subset \bar{a}$, since as argued in the previous paragraph $a\subset Z$ in this situation. Hence $h'(\{\pi_Z(\beta_1), \dotsc, \pi_Z(\beta_k)\})=\pi_Z(h(\{\beta_1, \dotsc, \beta_k\}))\in \bar{a}$, so $h(\{\beta_1, \dotsc, \beta_k\})\in a$ by elementarity. It follows that $a$ is closed under $h$; to see that it is furthermore transitive below $\omega_1$, note that $\pi_Z$ does not move countable ordinals or map uncountable ordinals to countable because $\omega_1\subset Z$, so this follows from the fact that $\bar{a}\cap\omega_1$ is transitive. Therefore $\pi_Z^{-1}"\pi_Z(S_\alpha)$ is stationary in $[Y]^\omega$, so $S_\alpha\cap [Y]^\omega$ is as well, as desired.
    \end{proof}

    \begin{theorem}
        If $\lambda>\omega_2$ is a regular cardinal and $\Gamma$ is a forcing class which provably preserves stationary subsets of $\omega_1$, $\Sigma_2\mhyphen CBFA_{<\omega_2}^{<\lambda}(\Gamma)$ implies $\omega_1\mhyphen RP(\theta)$ for all regular $\theta\geq \omega_2$ such that $\theta^{\omega_1}<\lambda$ and there is a proper poset in $\Gamma$ which collapses $\theta$ to $\omega_1$. Thus if $\Gamma$ can collapse arbitrarily large cardinals to $\omega_1$ with proper posets, $\Sigma_2\mhyphen CFA_{<\omega_2}(\Gamma)$ implies $\omega_1\mhyphen RP$.
    \end{theorem}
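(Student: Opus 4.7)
Given a stationary $S\subseteq[\theta]^\omega$ and $X_0\in[\theta]^{<\omega_2}$, the plan is to fix a proper poset $\mathbb{Q}\in\Gamma$ that collapses $\theta$ to $\omega_1$ and use it to convert the reflection question for $S$ into a reflection question for a stationary subset of $\omega_1$, which $\Gamma$ provably preserves. In $V^{\mathbb{Q}}$, pick a continuous increasing sequence $f:\omega_1\to[\theta]^\omega$ cofinal with union $\theta$, and set $T=\{\alpha<\omega_1\sbp f(\alpha)\in S\}$. Since $\mathbb{Q}$ is proper, $S$ remains stationary in $[\theta]^\omega$ in $V^{\mathbb{Q}}$, and a standard argument using the continuity and cofinality of $f$ shows that $T$ is stationary in $\omega_1$ iff $S$ is stationary in $[\theta]^\omega$. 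Thus $\mathbb{Q}$ forces $T$ to be stationary in $\omega_1$, and this persists under further $\Gamma$-forcing by the assumption on $\Gamma$.

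Let $\dot f,\dot T$ be $\mathbb{Q}$-names for $f,T$. The hypothesis $\theta^{\omega_1}<\lambda$ places $\mathbb{Q},\dot f,\dot T,S,\theta,X_0$ in $H_\lambda$. Consider the formula $\phi(\dot a,b)$ asserting that $\dot a=(T,f)$, $b=(S,X_0,\theta)$, $T$ is stationary in $\omega_1$, $f:\omega_1\to[\theta]^\omega$ is continuous increasing with union $\theta$, and $T=\{\alpha<\omega_1\sbp f(\alpha)\in S\}$. This is $\Pi_1$ (hence $\Sigma_2$), is forced by $\mathbb{Q}$, and is provably $\Gamma$-persistent. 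Apply the Jensen formulation of $\Sigma_2\mhyphen CBFA_{<\omega_2}^{<\lambda}(\Gamma)$ with parameter set $X:=X_0\cup\{X_0,S,\theta,\mathbb{Q}\}$ (of size $<\omega_2$) to obtain a transitive $N$ with $|N|<\omega_2$, an elementary embedding $\sigma:N\to H_\gamma$ satisfying $X\subseteq rng(\sigma)$ and $rng(\sigma)\cap\omega_2$ transitive, and a $<\sigma^{-1}(\lambda)$-weakly $N$-generic filter $F\subseteq\sigma^{-1}(\mathbb{Q})$ such that, writing $\bar\theta=\sigma^{-1}(\theta)$, $\bar S=\sigma^{-1}(S)$, and $(\bar T,\bar f)=\sigma^{-1}(\dot a)^F$, the statement $\phi((\bar T,\bar f),(\bar S,X_0,\bar\theta))$ holds in $V$. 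In particular $\bar T\subseteq\omega_1$ is stationary in $V$ and $\bar f:\omega_1\to[\bar\theta]^\omega$ is continuous increasing cofinal with trace $\bar T$ through $\bar S$.

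Set $Y:=\sigma``\bar\theta\subseteq\theta$. Then $|Y|=|\bar\theta|<\omega_2$, and $X_0\subseteq rng(\sigma)\cap\theta=\sigma``\bar\theta=Y$, since each element of $X_0$ is an ordinal $<\theta$ in the range of $\sigma$. Running the continuity-cofinality argument of the first paragraph inside $V$ gives that $\bar S$ is stationary in $[\bar\theta]^\omega$ in $V$. To transfer this to a statement about $S\cap[Y]^\omega$, fix any $h:[Y]^{<\omega}\to Y$ and define externally $\bar h:=\sigma^{-1}\circ h\circ\sigma:[\bar\theta]^{<\omega}\to\bar\theta$; by stationarity of $\bar S$ there is $w\in\bar S$ closed under $\bar h$, and then $z:=\sigma``w=\sigma(w)\in\sigma(\bar S)=S$ lies in $[Y]^\omega$ and is closed under $h$ by direct unwinding of the definition of $\bar h$. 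Hence $S\cap[Y]^\omega$ is stationary in $[Y]^\omega$, as desired.

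The main obstacle is this last transfer: the image $\sigma``\bar S$ is in general a proper subset of $S\cap[Y]^\omega$, so stationarity does not transfer by naive image-pushing; it is crucial that the function $\bar h$ be formed externally in $V$ (it need not lie in $N$) so that $\bar S$'s stationarity in $V$ can be applied to it. The final sentence of the theorem follows immediately: by Proposition \ref{prop:cbfaeasy}(5), $\Sigma_2\mhyphen CFA_{<\omega_2}(\Gamma)$ implies $\Sigma_2\mhyphen CBFA_{<\omega_2}^{<\lambda}(\Gamma)$ for all $\lambda$, so choosing $\lambda>\theta^{\omega_1}$ regular together with the hypothesized proper $\Gamma$-collapse of each regular $\theta\geq\omega_2$ yields $\omega_1\mhyphen RP(\theta)$ everywhere.
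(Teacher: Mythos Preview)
Your argument is sound in outline but proves the wrong statement: $\omega_1\mhyphen RP(\theta)$ is the \emph{simultaneous} reflection of an $\omega_1$-sequence $\mathcal{S}=\langle S_\alpha\sbp\alpha<\omega_1\rangle$ of stationary subsets of $[\theta]^\omega$ to a single $Y$, whereas you fix a single stationary set $S$ and prove $1\mhyphen RP(\theta)$. The paper's proof (which otherwise follows exactly your strategy) handles this by letting $\dot{a}$ name the full sequence $\langle\{\beta<\omega_1\sbp \dot{f}"\beta\in S_\alpha\}\sbp\alpha<\omega_1\rangle$ and reflecting the $\Pi_1$ assertion that every term of this sequence is stationary in $\omega_1$. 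Your argument generalizes straightforwardly to this setting, but as written it does not establish the theorem.

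Two smaller points also deserve care. First, you assert that $\dot f,\dot T\in H_\lambda$ because $\theta^{\omega_1}<\lambda$, but the proper poset $\mathbb{Q}\in\Gamma$ could be arbitrarily large, so a naive name for $f$ need not lie in $H_\lambda$; the paper explicitly argues that the values of $\dot f$ are determined by $\omega_1$ many antichains each of size at most $\theta$, and replaces $\mathbb{Q}$ by an isomorphic copy so that the relevant conditions (and hence $\dot a$) lie in $H_\lambda$. Second, your final transfer step uses $\sigma(w)=\sigma``w$ for countable $w$, which requires $crit(\sigma)>\omega$; this is not guaranteed by your choice of $X$, and the paper secures it by including $\omega_1+1$ in the range of $\sigma$.
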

    \begin{proof}
        This is essentially an adaptation of Foreman, Magidor, and Shelah's argument that $FA^+(<\hspace{-2pt} \omega_1\mhyphen closed)$ implies $RP$ (\cite{MMfms}, paragraph before Theorem 13). Given $\theta$, a sequence of stationary subsets of $[\theta]^\omega$ $\mathcal{S}=\langle S_\alpha\sbp \alpha<\omega_1\rangle \in H_\lambda$, $X\subset \theta$ of size $\omega_1$, and a proper $\mathbb{P}\in \Gamma$ which collapses $\theta$ to $\omega_1$, let $\dot{f}$ and $\dot{a}$ be $\mathbb{P}$-names such that $\Vdash_{\mathbb{P}} ``\dot{f}:\check{\omega_1}\rightarrow \check{\theta}$ is a bijection'' and $\Vdash_{\mathbb{P}} \dot{a}=\langle \{\beta<\omega_1\sbp \dot{f}"\beta\in \check{S}_\alpha\}\sbp \alpha<\omega_1\rangle$.

        To show that $\mathbb{P}$ forces $\dot{a}$ to be an $\omega_1$-sequence of stationary subsets of $\omega_1$, let $G\subset\mathbb{P}$ be $V$-generic and argue in $V[G]$. For any club $C\subseteq \omega_1$ in $V[G]$, $\{\dot{f}^G"\beta\sbp \beta\in C\}$ is a club in $([\theta]^\omega)^{V[G]}$, since by the surjectivity of $\dot{f}^G$ every countable subset of $\theta$ is contained in $\dot{f}^G"\beta$ for some $\beta$, and $\bigcup\limits_{n<\omega}\dot{f}^G"\beta_n=\dot{f}^G"\sup\limits_{n<\omega}\beta_n$ for any increasing sequence $\langle \beta_n\sbp n<\omega\rangle$ in $C$. By the properness of $\mathbb{P}$, each $S_\alpha$ remains stationary in $V[G]$, so for all $\alpha<\omega_1$ there is a $\beta_\alpha\in C$ such that $\dot{f}^G"\beta_\alpha \in S_\alpha$. Thus $\beta_\alpha\in C\cap \dot{a}^G_\alpha$. Since this holds for arbitrary clubs $C$ and $\alpha<\omega_1$, each term of the sequence $\dot{a}^G$ is stationary in $\omega_1$.

        Returning to $V$, we have shown $\mathbb{P}\in \Gamma$ forces $\dot{a}$ to be a sequence of stationary subsets of $\omega_1$ and by hypothesis $ZFC$ proves that every poset in $\Gamma$ preserves this $\Pi_1$ property, but in the case where we wish to apply the bounded form of the axiom, we need additionally that $\dot{a}\in H_\lambda$. Observe that for each $\beta<\omega_1$, there are $\theta$ possible values of $\dot{f}(\beta)$, so the values of $\dot{f}$ are determined by $\omega_1$ many antichains of $\mathbb{P}$ each of size at most $\theta$. Since $\dot{a}$ is entirely determined by the values of $\dot{f}$, we can assume without loss of generality that only these conditions of $\mathbb{P}$ appear in its transitive closure, and by replacing $\mathbb{P}$ with an isomorphic poset if necessary, we can arrange that they are all in $H_\lambda$, so $\dot{a}$ is as well.

        Hence for arbitrary regular $\gamma>\lambda$, we can find an elementary embedding $\sigma:N\rightarrow H_\gamma$ for transitive $N$ of size $\omega_1$ with $\omega_1+1 \cup X\cup\{\theta, \lambda, \mathbb{P}, \dot{a}, \dot{f}, \mathcal{S}\}\subset rng(\sigma)$ and a $<\hspace{-3pt}\sigma^{-1}(\lambda)$-weakly $N$-generic filter $F\subset\sigma^{-1}(\mathbb{P})$ such that $\sigma^{-1}(\dot{a})^F$ is an $\omega_1$-sequence of stationary subsets of $\omega_1$, $\sigma^{-1}(\dot{f})^F$ is a bijection $\omega_1\rightarrow \sigma^{-1}(\theta)$, and $\sigma^{-1}(\dot{a})^F=\langle \{\beta<\omega_1\sbp \sigma^{-1}(\dot{f})^F"\beta\in \sigma^{-1}(S_\alpha)\}\sbp \alpha<\omega_1\rangle$. Set $g:=\sigma\circ \sigma^{-1}(\dot{f})^F$, so that $g:\omega_1\rightarrow\theta$ is injective, and let $Y:=rng(g)$.

        Then $|Y|=\omega_1$ because $dom(g)=\omega_1$, $Y\subset\theta$ because $\sigma$ preserves the order of ordinals, and $X\subseteq Y$ because $X\subseteq rng(\sigma)\cap \theta=\sigma"\sigma^{-1}(\theta)=rng(g)$. To verify that $Y$ reflects the stationarity of the $S_\alpha$, let $C\subseteq [Y]^\omega$ be a club. Then since $\{g"\beta\sbp \beta<\omega_1\}$ can easily be verified to be a club in $[Y]^\omega$ using the fact that $g:\omega_1\rightarrow Y$ is a bijection, the intersection $C_g:=\{g"\beta\in C\sbp \beta<\omega_1\}$ is a club as well. We now verify that $\{\beta<\omega_1\sbp g"\beta\in C\}$ is a club in $\omega_1$. It is unbounded because for any $\delta<\omega_1$, by the unboundedness of $C_g$ there is a $\beta<\omega_1$ with $g"\delta\subseteq g"\beta\in C$, and since $g$ is injective we must have $\delta\leq \beta$. It is closed because for any increasing sequence $\langle \beta_n\sbp n<\omega\rangle$ in it with supremum $\beta_\omega$, $\bigcup\limits_{n<\omega}g"\beta_n\in C_g$ because $C_g$ is closed, so since direct images commute with unions we have $g"\beta_\omega\in C$.

        Fix $\alpha<\omega_1$. From the above paragraph, it follows that there is a $\beta\in \sigma^{-1}(\dot{a})^F_\alpha$ such that $g"\beta\in C$. By one of the conditions on $\sigma^{-1}(\dot{a})^F$, $\sigma^{-1}(\dot{f})^F"\beta\in \sigma^{-1}(S_\alpha)$. Hence $\sigma(\sigma^{-1}(\dot{f})^F"\beta)\in S_\alpha$, and since $|\sigma^{-1}(\dot{f})^F"\beta|=\omega<crit(\sigma)$, $\sigma(\sigma^{-1}(\dot{f})^F"\beta)=\sigma"(\sigma^{-1}(\dot{f})^F"\beta)=g"\beta$. It follows that $g"\beta\in C\cap S_\alpha\cap [Y]^\omega$, so the stationarity of each $S_\alpha$ reflects to $Y$, as desired.
    \end{proof}

\section{Large Cardinals}
\label{section:large}

We conclude with some implications of $\Sigma_n$-correct forcing axioms with a large cardinal character.

\begin{prop}
    \label{prop:cma0sharp}
    $\Sigma_2\mhyphen CBMA^{<\mathfrak{c}^{++}}$ implies that $0^\sharp$ exists.
\end{prop}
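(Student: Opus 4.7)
The plan is to produce a measurable cardinal inside $L$, since by Kunen's classical theorem an $L$-measure yields a nontrivial elementary embedding $j : L \to L$ and hence $0^\sharp$. Toward this, I would establish that under $\Sigma_2\mhyphen CBMA^{<\mathfrak{c}^{++}}$ the continuum is $\Sigma_2$-correctly $+2$-reflecting in $L$, then apply Proposition \ref{prop:+2ref} inside $L$---where GCH provides $2^{\mathfrak{c}} = \mathfrak{c}^+$---to obtain stationarily many $1$-extendible cardinals below $\mathfrak{c}$ in $L$. Each such cardinal carries a normal measure derived from the witnessing extendibility embedding, so $L$ has a measurable cardinal and the Kunen argument delivers $0^\sharp$.

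The central technical step is extending Lemma \ref{lemma:+1equicon} from asymmetry $\kappa^+$ to asymmetry $\kappa^{++}$. By Lemma \ref{lemma:cbfaimprrp}, our hypothesis yields $\Sigma_2\mhyphen RRP(\mathfrak{c}, \mathfrak{c}^{++}, ccc)$. Using the $L$-internal $<_L$-bijection $g_\kappa : \kappa^{++} \to L_{\kappa^{++}}$, I would code an arbitrary parameter $a \in L_{\mathfrak{c}^{++}}$ as an ordinal $\gamma = g_\mathfrak{c}^{-1}(a) < \mathfrak{c}^{++}$ and reduce the task to reflecting the $\Sigma_2$ statement $L \models \phi(g_\mathfrak{c}(\gamma), \mathfrak{c}, \mathfrak{c}^+)$, which is provably ccc-persistent because $L$ is absolute and ccc forcing preserves cardinal successors. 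Applying RRP produces stationarily many (in $V$) substructures $Z \prec H_{\mathfrak{c}^{++}}$ of size less than $\mathfrak{c}$, transitive below $\mathfrak{c}$ and containing $\gamma, \mathfrak{c}, \mathfrak{c}^+$, with $L \models \phi(g_{\pi_Z(\mathfrak{c})}(\pi_Z(\gamma)), \pi_Z(\mathfrak{c}), \pi_Z(\mathfrak{c}^+))$. Since $\mathfrak{c}$ is $\Sigma_2$-correct in $L$ by Lemma \ref{lemma:symequicon} and the reflected parameters all lie in $L_\mathfrak{c}$, this transfers to $L_\mathfrak{c}$ satisfying the same formula.

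The hardest part will be converting the $V$-stationarity furnished by RRP into the $L$-internal stationarity demanded by the definition of $+2$-reflecting in $L$, since the substructures $Z$ produced by RRP typically lie outside $L$. The proof of Lemma \ref{lemma:+1equicon} overcomes the analogous issue in the $+1$-setting by packaging the reflected property as an $L$-definable set of ordinals and using that $L$-clubs of ordinals are automatically $V$-clubs. The same strategy should carry over once each pair $(\pi_Z(\mathfrak{c}), \pi_Z(\gamma))$ is coded as a single ordinal: the resulting $L$-definable set of ordinals inherits stationarity from $V$ to $L$. Some care will be needed to manage the possible discrepancy between $V$-cardinal and $L$-cardinal successors at $\mathfrak{c}$, but this should be controllable given the high degree of reflection $\mathfrak{c}$ enjoys in both models under the hypothesis.
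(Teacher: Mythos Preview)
Your plan has a fatal flaw, and it is precisely the step you flag as ``the hardest part.'' You propose to show that $\mathfrak{c}$ is $\Sigma_2$-correctly $+2$-reflecting \emph{in $L$}, and then to run Proposition~\ref{prop:+2ref} inside $L$ to obtain $1$-extendible cardinals there. But a $1$-extendible cardinal is measurable (the extendibility embedding yields a $\kappa$-complete ultrafilter inside the model), and by Scott's theorem $L$ has no measurable cardinals. Thus $L$ provably has no $+2$-reflecting cardinals at all, so the goal you set yourself is outright false, and the conversion of $V$-stationarity in $[H_{\mathfrak{c}^{++}}]^{<\mathfrak{c}}$ to $L$-stationarity in $[L_{(\mathfrak{c}^{++})^L}]^{<\mathfrak{c}}$ must fail. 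This is exactly why the paper remarks after Proposition~\ref{prop:+2ref} that lower bounds at asymmetry $\kappa^{++}$ and beyond await further inner model theory: the $+1$ argument of Lemma~\ref{lemma:+1equicon} works because $+1$-reflection is compatible with $V=L$ and reduces (via Lemma~\ref{lemma:alt+1ref}) to ordinal stationarity in $\kappa$, which transfers downward; no such reduction is available at $+2$.

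The paper's argument is far more direct and sidesteps all of this. One simply notes that $L_{\mathfrak{c}^+}\in H_{\mathfrak{c}^{++}}$ and that the $\Pi_1$ statement ``$|L_{\mathfrak{c}^+}|>\mathfrak{c}$'' is provably ccc-persistent (ccc forcing preserves cardinalities). Applying the Miyamoto--Asper\'o form of the axiom to this statement yields some $Z\prec H_{\mathfrak{c}^{++}}$ of size $<\mathfrak{c}$, transitive below $\mathfrak{c}$, with $\pi_Z(L_{\mathfrak{c}^+})=L_\beta$ for some $\beta$ satisfying $\pi_Z(\mathfrak{c})^+\le\beta<\mathfrak{c}$. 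Then $\pi_Z^{-1}\upharpoonright L_\beta:L_\beta\to L_{\mathfrak{c}^+}$ is a nontrivial elementary embedding with critical point $\pi_Z(\mathfrak{c})$, and Kunen's theorem gives $0^\sharp$. The embedding lives in $V$, not in $L$, which is why no contradiction arises.
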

\begin{proof}
    Since ccc forcing provably preserves cardinalities, by the M-A formulation of the axiom applied to the statement that $|L_{\mathfrak{c}^+}|>\mathfrak{c}$ there is a $Z\prec H_{\mathfrak{c}^{++}}$ of cardinality less than the continuum, transitive below $\mathfrak{c}$, and containing $\mathfrak{c}$ and $L_{\mathfrak{c}^+}$ such that $|\pi_Z(L_{\mathfrak{c}^+})|>\pi_Z(\mathfrak{c})$. By elementarity, $\pi_Z(L_{\mathfrak{c}^+})$ satisfies the sentence forcing it to be of the form $L_\beta$ for some $\beta$, and we must have $\pi_Z(\mathfrak{c})^+\leq \beta<\mathfrak{c}$. Thus by Lemma \ref{lemma:restrictembed}, $\pi_Z^{-1}$ restricts to an elementary embedding $L_\beta\rightarrow L_{\mathfrak{c}^+}$ with critical point $\pi_Z(\mathfrak{c})$. $0^\sharp$ follows from a well-known result of Kunen (see e.g. \cite{kanamori} Theorem 21.1).
\end{proof}

Of course, the existence of $0^\sharp$ follows from the $\Sigma_2$-correct forcing axioms for a wide range of forcing classes by the stationary reflection results of the previous section, but the proof in the case of ccc forcing (and other cardinal-preserving classes) is considerably more straightforward, and fairly striking given the low consistency strength of ($\Sigma_1$-correct) Martin's Axiom.

\begin{theorem}
\label{thm:cfavopenka}
    If $\Sigma_n\mhyphen CFA_{<\kappa}(\Gamma)$ holds (or merely $\Sigma_n\mhyphen RRP(\kappa,\lambda, \Gamma)$ for all $\lambda$), then Vopenka's principle holds for all proper classes with provably $\Gamma$-persistent $\Sigma_n$ definitions.
\end{theorem}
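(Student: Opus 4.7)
The plan is to apply the residual reflection principle directly, in the spirit of the argument for Proposition \ref{prop:scCneqvopenka}, to extract a small member of the class together with an elementary embedding into a larger one. Let $\mathcal{C}$ be a proper class of structures of the form $(A, \in, R)$ with $A$ transitive, defined by a provably $\Gamma$-persistent $\Sigma_n$ formula $\phi$. Since $\mathcal{C}$ is a proper class, it must contain structures of arbitrary cardinality; fix any $(A, \in, R) \in \mathcal{C}$ with $|A| \geq \kappa$, and fix a regular $\lambda$ large enough that $A, R \in H_\lambda$.

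First I would invoke $\Sigma_n\mhyphen RRP(\kappa, \lambda, \Gamma)$ applied to $\phi$ with the pair $(A, R)$ as parameter: since $\phi((A, \in, R))$ is a provably $\Gamma$-persistent $\Sigma_n$ statement holding in $V$, there are stationarily many $Z \prec H_\lambda$ of size less than $\kappa$ containing $A$ and $R$ such that $\phi((\pi_Z(A), \in, \pi_Z(R)))$ holds. (If we assume full $\Sigma_n\mhyphen CFA_{<\kappa}(\Gamma)$ rather than $\Sigma_n\mhyphen RRP$ directly, the M-A form of Theorem \ref{thm:equivforms} supplies the needed stationarily-many $Z$, as in Lemma \ref{lemma:cbfaimprrp}.) Fix any such $Z$ and set $\bar{S} := (\pi_Z(A), \in, \pi_Z(R))$; then $\bar{S} \in \mathcal{C}$ by the choice of $Z$, and $|\pi_Z(A)| \leq |Z| < \kappa \leq |A|$ forces $\bar{S} \neq (A, \in, R)$.

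Finally, the inverse collapse $\pi_Z^{-1}$ restricts to an elementary embedding of $\bar{S}$ into $(A, \in, R)$ by Lemma \ref{lemma:restrictembed}, since $\pi_Z(A)$ is a transitive element of the collapse's domain and $\pi_Z^{-1}$ carries $\in$ to $\in$ and $\pi_Z(R)$ to $R$. This supplies the elementary embedding between two distinct members of $\mathcal{C}$ required by Vopenka's principle. The main piece of bookkeeping to be careful about is ensuring that the instance of RRP applied with parameters $(A, R)$ really delivers $\phi$ of the collapsed pair (i.e.\ that the $\Sigma_n$ formula defining membership in $\mathcal{C}$ is read off the reflected parameters in the expected way), and that the restricted collapse map gives elementarity with respect to the full signature $(\in, R)$ rather than merely as a map between ambient sets; both are routine consequences of the restriction lemma, so beyond those verifications the theorem reduces immediately to a single application of RRP.
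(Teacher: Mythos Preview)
Your proposal is correct and follows essentially the same approach as the paper: pick a structure of size at least $\kappa$ in the class, apply the axiom (in M-A/RRP form) to reflect it to a structure in $H_\kappa$ still satisfying $\phi$, and use the inverse of the collapse map together with Lemma \ref{lemma:restrictembed} to produce the required elementary embedding. The paper's proof is terser but identical in substance, invoking the Jensen form of the axiom to obtain $\sigma$ with $\sigma(B)=A$ directly; your version spells out the distinctness of the two structures via the cardinality gap, which the paper leaves implicit.
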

\begin{proof}
    Let $\phi$ define the class and let $A$ be a structure of size at least $\kappa$ in the class. Since $\phi$ is a provably $\Gamma$-persistent $\Sigma_n$ formula, applying the axiom there is some $B\in H_\kappa$ such that $\phi(B)$ and there is an elementary embedding of transitive sets $\sigma$ such that $\sigma(B)=A$. $\sigma$ then restricts to the desired elementary embedding $B\rightarrow A$.
\end{proof}

Admittedly, since many applications of Vopenka's principle involve structures with domain $V_\alpha$, which are not preserved by any nontrivial forcing, the above result is less exciting than it may at first appear. However, it can be improved to an equivalence between $\Sigma_n$-correct forcing axioms and a sort of hybridization of maximality principles and structural reflection principles of the sort Bagaria studies in \cite{BAGARIAstrucref}.\footnote{Note however that this principle is distinct from and considerably stronger than Bagaria's generic structural reflection principles, since the latter require $A\in\mathcal{C}$ to hold in $V$ and only give an elementary embedding in some forcing extension, while the structural reflection principle given here allow $A$ to enter $\mathcal{C}$ in a forcing extension but yields the embedding in $V$.}

\begin{theorem}
\label{thm:cfasr}
    The following are equivalent for regular uncountable cardinals $\kappa$, forcing classes $\Gamma$, and positive integers $n$:
    \begin{enumerate}
        \item $\Sigma_n\mhyphen CFA_{<\kappa}(\Gamma)$
        \item Whenever $\mathcal{C}$ is a class of first order structures with a provably $\Gamma$-persistent $\Sigma_n$ definition with parameters in $H_\kappa$, for every structure $A$ such that $\Vdash_{\mathbb{P}} \check{A}\in \mathcal{C}$ for some poset $\mathbb{P}\in \Gamma$, there is a $B\in \mathcal{C}\cap H_\kappa$ with an elementary embedding $B\rightarrow A$.
    \end{enumerate}
\end{theorem}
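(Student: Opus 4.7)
The plan is to derive both implications from the Miyamoto--Asper\'o formulation of $\Sigma_n\mhyphen CFA_{<\kappa}(\Gamma)$ (Theorem~\ref{thm:equivforms}), translating between its reflection statement and the structural embedding statement in~(2) by encoding the test parameters and all required closure data into the signature of the target structure $A$.

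For $(1 \Rightarrow 2)$, let $\mathcal{C}$ be defined by a provably $\Gamma$-persistent $\Sigma_n$ formula $\phi_{\mathcal{C}}(X, p)$ with parameter $p \in H_\kappa$, and let $A$ be a first-order structure with $\Vdash_\mathbb{P} \phi_{\mathcal{C}}(\check A, \check p)$ for some $\mathbb{P}\in\Gamma$. I pick a regular $\lambda\geq\kappa$ with $A, p\in H_\lambda$ and apply the M--A formulation to $\phi_{\mathcal{C}}(\cdot, p)$ with parameter $A$, intersecting the resulting stationary set with the club of $Z\prec H_\lambda$ containing $\{A\}\cup trcl(\{p\})$. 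This yields such a $Z$ of size less than $\kappa$ with $\phi_{\mathcal{C}}(\pi_Z(A), \pi_Z(p))$ holding; and $\pi_Z(p) = p$ because $trcl(\{p\})\subseteq Z$ forces $\pi_Z$ to fix every element of $trcl(\{p\})$. Then $B := \pi_Z(A)$, viewed as the first-order structure whose domain and interpretations are the $\pi_Z$-images of those of $A$, lies in $\mathcal{C}\cap H_\kappa$, and Lemma~\ref{lemma:restrictembed} makes $\pi_Z^{-1}\upharpoonright B$ an elementary embedding $B\to A$ in the language of $A$.

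For $(2 \Rightarrow 1)$, I verify the M--A formulation. Given a provably $\Gamma$-persistent $\Sigma_n$ formula $\phi$, a cardinal $\lambda\geq\kappa$, $b\in H_\lambda$ with $\Vdash_\mathbb{P}\phi(\check b)$, and an arbitrary $h:[H_\lambda]^{<\omega}\to H_\lambda$, I produce a $Z\prec H_\lambda$ of size less than $\kappa$ with $b\in Z$, $h"[Z]^{<\omega}\subseteq Z$, and $\phi(\pi_Z(b))$. Fix a sequence $\vec S$ of Skolem functions for $(H_\lambda, \in)$ and set $A := (H_\lambda, \in, h, \vec S, b)$. Let $\mathcal{C}$ be the parameter-free class of first-order structures $(X_0, E_0, h_0, \vec S_0, b_0)$ such that $X_0$ is transitive, $E_0 = {\in}\upharpoonright X_0$, $h_0$ and $\vec S_0$ have the appropriate arities into $X_0$, and $\phi(b_0)$ holds. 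The non-$\phi$ clauses are bounded quantifications over $X_0$, so the defining formula is $\Sigma_n$; it is provably $\Gamma$-persistent because $\phi$ is, and $\Vdash_\mathbb{P}\check A\in\mathcal{C}$ is immediate. I apply (2) to obtain $B = (B_0, \in, h_B, \vec S_B, b_B)\in\mathcal{C}\cap H_\kappa$ with elementary embedding $\sigma:B\to A$. Setting $Z := \sigma[B_0]$, a straightforward $\in$-induction using that $B_0$ is transitive and $\sigma$ is $\in$-preserving shows $\pi_Z\circ\sigma = \mathrm{id}_{B_0}$, so $\pi_Z(b) = b_B$. Elementarity of $\sigma$ in the extended signature forces $Z$ to be closed under $h$ and under every function in $\vec S$, whence $Z\prec H_\lambda$ and $h"[Z]^{<\omega}\subseteq Z$; also $b = \sigma(b_B)\in Z$, and $\phi(b_B)$ holds by $B\in\mathcal{C}$. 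Since $h$ was arbitrary, such $Z$ form a stationary subset of $[H_\lambda]^{<\kappa}$, giving the M--A formulation.

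The main delicate step is the complexity bookkeeping in $(2\Rightarrow 1)$: the defining formula of $\mathcal{C}$ must be genuinely $\Sigma_n$, which I arrange by stating transitivity of $X_0$ and the identification $E_0 = {\in}\upharpoonright X_0$ as bounded quantifications over $X_0$ (so they are $\Pi_0$ rather than global $\Pi_1$), and by folding all closure data --- the test function $h$, the Skolem sequence $\vec S$, and the distinguished element $b$ --- into $A$'s signature rather than as parameters of the class definition. A secondary subtlety is matching the structural embedding $\sigma$ produced by~(2) with the set-theoretic Mostowski collapse $\pi_Z$ used in M--A; requiring $E_0 = {\in}\upharpoonright X_0$ and $X_0$ transitive in $V$ forces $\sigma$ to act as $\pi_Z^{-1}$ by uniqueness of the Mostowski collapse. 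The ``transitive below $\kappa$'' refinement of stationarity --- in successor cases $\kappa = \theta^+$ --- can be recovered by additionally requiring $\theta+1\subseteq X_0$ in $\mathcal{C}$ and adding $\theta$ as a constant to $A$'s signature, following the convention of Section~\ref{section:equivalences}.
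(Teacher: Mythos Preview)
Your proof is correct but takes a genuinely different route from the paper. The paper works directly with the Jensen-style formulation in both directions: for $(2\Rightarrow 1)$ it fixes the data $\phi$, $\gamma$, $X$, $\mathbb{P}$, $\dot{a}$, $b$ of the official statement, enumerates $X$ as $\langle x_\alpha\mid\alpha<\delta\rangle$, and takes $\mathcal{C}$ to be the class of tuples $(M,\mathbb{Q},\dot{y},z,\langle c_\alpha\mid\alpha<\delta\rangle)$ with $M$ transitive and admitting an $M$-generic filter $F\subseteq\mathbb{Q}$ such that $\phi(\dot{y}^F,z)$ holds (using only the parameter $\delta\in H_\kappa$); applying (2) to $A=(H_\gamma,\mathbb{P},\dot{a},b,\langle x_\alpha\rangle)$ then yields the Jensen data $N$, $\sigma$, $F$ in one stroke. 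You instead factor through the Miyamoto--Asper\'o formulation via Theorem~\ref{thm:equivforms} and, rather than building the generic-filter requirement into the definition of $\mathcal{C}$, build Skolem functions and the test function $h$ into the \emph{signature} of $A$, so that the elementary embedding $\sigma:B\to A$ returned by (2) automatically has range an elementary substructure of $H_\lambda$ closed under $h$. The trade-off: the paper's approach gets the $N$-generic filter directly without invoking Theorem~\ref{thm:equivforms}, while yours avoids mentioning filters at all and makes the reflection content more transparent; your complexity bookkeeping (keeping the transitivity and $E_0={\in}$ clauses $\Delta_0$) is a point the paper sidesteps by absorbing the filter existence into the $\Sigma_n$ formula.
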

\begin{proof}
    $(1\Rightarrow 2):$ Let $\mathcal{C}$ be defined by a provably $\Gamma$-persistent $\Sigma_n$ formula $\phi(x, t)$ for some parameter $t\in H_\kappa$ (so that $\Vdash_{\mathbb{P}} \phi(\check{A}, \check{t})$). Then for any regular $\gamma>\kappa$ large enough that $A\in H_\gamma$, by the $\Sigma_n$-correct forcing axiom there is a transitive $N\in H_\kappa$ and an elementary embedding $\sigma: N\rightarrow H_\gamma$ such that $t\in N$, $\sigma(t)=t$, $A\in rng(\sigma)$, and $\phi(B, \sigma^{-1}(t))$ holds, where $B:=\sigma^{-1}(A)$. As $\sigma^{-1}(t)=t$, we have $B\in\mathcal{C}$, and by Lemma \ref{lemma:restrictembed}, $\sigma\upharpoonright B$ is the desired elementary embedding $B\rightarrow A$.

    $(2\Rightarrow 1):$ Let $\phi$, $\gamma$, $X$, $\mathbb{P}$, $\dot{a}$, and $b$ be as in the statement of $\Sigma_n\mhyphen CFA_{<\kappa}(\Gamma)$. Let $\delta:=|X|<\kappa$ and fix an enumeration $\langle x_\alpha\sbp \alpha<\delta\rangle$ of $X$. Define $\mathcal{C}$ to be the class of all $(M, \mathbb{Q}, \dot{y}, z, \langle c_\alpha\sbp \alpha<\delta\rangle)$ such that $M$ is a transitive structure, $\mathbb{Q}$ is a forcing poset, $\dot{y}$ is a $\mathbb{Q}$-name, and there is an $M$-generic filter $F\subseteq\mathbb{Q}$ such that $\phi(\dot{y}^F, z)$ holds. Then $\mathcal{C}$ has a provably $\Gamma$-persistent $\Sigma_n$ definition with parameter $\delta\in H_\kappa$ because $\phi$ is assumed to be $\Sigma_n$ and provably $\Gamma$-persistent, and $\Vdash_{\mathbb{P}} (H_\gamma^V, \mathbb{P}, \dot{a}, b, \langle x_\alpha\sbp \alpha<\delta\rangle)\in \mathcal{C}$. Applying the structural reflection principle (2), there is an elementary embedding 
    $$\sigma: (N, \sigma^{-1}(\mathbb{P}), \sigma^{-1}(\dot{a}), \sigma^{-1}(b), \langle \sigma^{-1}(x_\alpha)\sbp \alpha<\delta\rangle)\rightarrow (H_\gamma, \mathbb{P}, \dot{a}, b, \langle x_\alpha\sbp \alpha<\delta\rangle)$$
    where $N\in H_\kappa$ is transitive and there is an $N$-generic filter $F\subseteq \sigma^{-1}(\mathbb{P})$ such that $\phi(\sigma^{-1}(\dot{a})^F, \sigma^{-1}(b))$ holds in $V$. Since by construction each $x_\alpha\in X$ is in the range of $\sigma$ as well, $\Sigma_n\mhyphen CFA_{<\kappa}(\Gamma)$ holds.
\end{proof}

This result suggests that there should be generalized forcing axioms corresponding to all of Bagaria's (and Bagaria and L\"ucke's in \cite{BLpatterns} and elsewhere) structural reflection principles. That is, for any suitable forcing class there should be a $\Sigma_n$-product structural reflection forcing axiom equiconsistent with a strong cardinal for $C^{(n-1)}$, a $\Sigma_n$-weak structural reflection forcing axiom equiconsistent with a cardinal strongly unfoldable for $C^{(n-1)}$ (and presumably equivalent to $\Sigma_n\mhyphen CBFA_{<\kappa}^{<\kappa^+}$, given that strong unfoldability is equivalent to $+1$-reflection), a $\Sigma_n$-exact structural reflection forcing axiom of consistency strength somewhere below a 2-huge cardinal, and so on. However, I will not further explore that possibility here.

Finally, in line with the results of Bagaria (mentioned after Proposition \ref{prop:scCneqvopenka}) on the connection between the Vopenka scheme and strengthenings of extendibility, we can get a sort of generic $C^{(n)}$-extendibility from $\Sigma_{n+2}$-correct forcing axioms:

\begin{prop}
\label{prop:genextend}
    If $n$ is a positive integer such that $\Sigma_{n+2}\mhyphen CFA_{<\kappa}(\Gamma)$ holds for some regular uncountable cardinal $\kappa$ and $n+1$-nice forcing class $\Gamma$, then for every $\alpha\in C^{(n)}$ above $\kappa$, there is a $\Gamma$-extension $V[G]$, a $\beta\in (C^{(n)})^{V[G]}$, and an elementary embedding $j: V_\alpha^V\rightarrow V_\beta^{V[G]}$ in $V[G]$ with critical point $\kappa$ and $j(\kappa)\geq\alpha$.
\end{prop}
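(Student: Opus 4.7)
The plan is to use the Miyamoto-Asper\'o formulation of $\Sigma_{n+2}\mhyphen CFA_{<\kappa}(\Gamma)$ from Theorem \ref{thm:equivforms}, applied to a $\Sigma_{n+1}$ formula built around the class $\diamondsuit C^{(n)}$ of ordinals that are $\Sigma_n$-correct in some set-forcing ground. By Lemmas \ref{lemma:groundrel} and \ref{lemma:groundsuccess} this class is $\Sigma_{n+1}$ definable for $n\geq 2$, and it is provably persistent under all set forcing because grounds remain grounds. So the formula $\phi(x,y) := $ ``$x=V_\xi$ for some ordinal $\xi \in \diamondsuit C^{(n)}$ with $\xi > y$'' is $\Sigma_{n+1}$, provably $\Gamma$-persistent, and satisfied by $(V_\alpha,\kappa)$ in $V$ since $\alpha \in C^{(n)} \subseteq \diamondsuit C^{(n)}$.

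Applying the M-A form to the parameter $(V_\alpha,\kappa) \in H_{\alpha^+}$ and imposing the usual club conditions, I would obtain a $Z \prec H_{\alpha^+}$ of size less than $\kappa$ containing $V_\alpha$ and $\kappa$, with $Z\cap\kappa$ transitive, such that $\phi(\pi_Z(V_\alpha),\pi_Z(\kappa))$ holds. Setting $\bar\alpha := \pi_Z(\alpha)$ and $\bar\kappa := \pi_Z(\kappa)$, this gives an ordinal $\bar\alpha < \kappa$ with $\bar\alpha \in \diamondsuit C^{(n)}$ and, via Lemma \ref{lemma:restrictembed}, an elementary embedding $\sigma := \pi_Z^{-1}\upharpoonright V_{\bar\alpha}: V_{\bar\alpha}\to V_\alpha$ in $V$ with $crit(\sigma)=\bar\kappa$, $\sigma(\bar\kappa)=\kappa$, and $\sigma(\bar\alpha)=\alpha$. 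This is a $V$-internal ``small-to-large'' embedding sending its critical point to $\kappa$, analogous to condition (3) of Proposition \ref{prop:sceqcorref} but with the witness $\bar\alpha$ situated in a ground $W$ rather than in $V$ itself.

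The next step is to convert this internal embedding into the desired generic $C^{(n)}$-extendibility embedding. Mirroring the ultrafilter argument in the proof of Proposition \ref{prop:sceqcorref}(3$\Rightarrow$1), I would derive from $\sigma$ the fine, normal, $\bar\kappa$-complete ultrafilter $U := \{X \subseteq \mathcal{P}_{\bar\kappa}(\bar\alpha) : \sigma"\bar\alpha \in \sigma(X)\}$ on $\mathcal{P}_{\bar\kappa}(\bar\alpha)$ in $V$. By elementarity, in $V_\alpha$ the image $\sigma(U)$ is a fine, normal, $\kappa$-complete ultrafilter on $\mathcal{P}_\kappa(\alpha)$. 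One then forces with an appropriate poset $\mathbb{P} \in \Gamma$ (exploiting the $(n+1)$-niceness of $\Gamma$ and the ground $W$ witnessing $\bar\alpha \in \diamondsuit C^{(n)}$) to obtain a $\Gamma$-extension $V[G]$ in which the associated generic ultrapower embedding yields a $j: V_\alpha^V \to V_\beta^{V[G]}$ with $crit(j)=\kappa$ and $j(\kappa)\geq \alpha$, for some $\beta \in (C^{(n)})^{V[G]}$ above $j(\alpha)$ secured by persistence of $\diamondsuit C^{(n)}$ together with the $\Sigma_n$-correctness transfer from $\alpha$.

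The main obstacle is realizing the ultrapower construction specifically inside a $\Gamma$-extension rather than some arbitrary generic extension. Unlike the Woodin-Cox form, which naturally produces generic embeddings via stationary-tower-style forcing outside $\Gamma$, here we must exploit that $\bar\alpha \in \diamondsuit C^{(n)}$ places a ground-model witness at our disposal, and must arrange the rank $\beta$ to be $\Sigma_n$-correct in the extension. There is additionally a delicate point about whether $U$ lies in the domain $V_{\bar\alpha}$ of $\sigma$ (so that $\sigma(U)$ makes sense): one may need to replace $V_{\bar\alpha}$ by a slightly larger transitive structure such as $H_{\bar\alpha^{++}}$ in the M-A application, reflecting $H_{\alpha^{++}}$ instead, and verify that the same scheme goes through. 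Tying these pieces together — the internal embedding from M-A, the ultrafilter image, and the $\Gamma$-generic realization — is the crux of the argument.
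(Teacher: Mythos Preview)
Your argument has a genuine gap at exactly the point you flag as the ``main obstacle'': after obtaining the internal embedding $\sigma:V_{\bar\alpha}\to V_\alpha$, you never explain how to produce the required $\Gamma$-extension $V[G]$ and embedding $j:V_\alpha^V\to V_\beta^{V[G]}$. The ultrafilter $\sigma(U)$ (setting aside the domain issue you note) lives in $V$ and yields an ultrapower embedding in $V$, not in a $\Gamma$-extension; the phrase ``forces with an appropriate poset $\mathbb{P}\in\Gamma$ (exploiting the $(n+1)$-niceness of $\Gamma$ and the ground $W$\ldots)'' is not an argument. Nothing about $\bar\alpha\in\diamondsuit C^{(n)}$ or its witnessing ground $W$ suggests a poset in $\Gamma$ that would realize the desired embedding, and the $(n+1)$-niceness conditions concern iterability, not the existence of such posets. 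As written, the proposal stops precisely where the actual content would have to begin.

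The paper's proof avoids all of this by working contrapositively. Assuming no such $\Gamma$-extension exists, one writes this failure as a formula $\phi(V_\alpha,\kappa)$ saying ``for all $\mathbb{P}\in\Gamma$ and ordinals $\beta$, if $\mathbb{P}$ forces an embedding $V_\alpha\to V_\beta$ with critical point $\kappa$ and $j(\kappa)\geq\alpha$, then $\mathbb{P}$ forces $\beta\notin C^{(n)}$''. This is $\Pi_{n+1}$ (the inner $\beta\notin C^{(n)}$ is $\Sigma_n$; quantifying over $\Gamma$ adds a $\Pi_{n+1}$ layer) and provably $\Gamma$-persistent by two-step iteration closure: a counterexample in a $\Gamma$-extension composes to one in $V$. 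Reflecting $\phi$ via the axiom yields $\sigma:N\to H_\gamma$ with $\phi(\sigma^{-1}(V_\alpha),\bar\kappa)$; but then $\sigma\upharpoonright\sigma^{-1}(V_\alpha)$ is itself an embedding into $V_\alpha$ with critical point $\bar\kappa$, and since $\alpha\in C^{(n)}$ the trivial forcing already contradicts $\phi(\sigma^{-1}(V_\alpha),\bar\kappa)$. No ultrafilters, no $\diamondsuit C^{(n)}$, no construction of a specific poset: the reflection embedding is the witness.
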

\begin{proof}
   Assume that no such $j$ exists and let $\phi(V_\alpha, \kappa)$ denote the assertion that for all $\mathbb{P}\in \Gamma$ and ordinals $\beta$, if $\mathbb{P}$ forces that there is an elementary embedding $j:\check{V}_\alpha\rightarrow V_\beta$ with $crit(j)=\check{\kappa}$ and $j(\check{\kappa})\not\in \check{V}_\alpha$, then $\mathbb{P}$ forces that $\beta\not\in C^{(n)}$. Then $\phi$ is $\Pi_{n+1}$ because by Corollary \ref{cor:Cndef} the assertion $\beta\not\in C^{(n)}$ is $\Sigma_n$, quantifying over all $\mathbb{P}\in \Gamma$ adds a $\Pi_{n+1}$ disjunct, and the rest of the statement adds only universal quantifiers and conjuncts or disjuncts of the same or lower complexity. Furthermore, it is provably $\Gamma$-persistent, since if any $\mathbb{Q}\in \Gamma$ forces that $\dot{\mathbb{P}}$ and $\beta$ form a counterexample to $\phi(V_\alpha, \kappa)$, then $\mathbb{Q}*\dot{\mathbb{P}}$ and $\beta$ are already a counterexample in $V$.

   Thus for sufficiently large regular $\gamma$, there is a transitive $N\in H_\kappa$ and an elementary embedding $\sigma: N\rightarrow H_\gamma$ whose range is transitive below $\kappa$ and contains $V_\alpha$ and $\kappa$ such that $\phi(\sigma^{-1}(V_\alpha), \bar{\kappa})$ holds, where as usual $\bar{\kappa}:=\sigma^{-1}(\kappa)$. However, the trivial forcing forces that $\alpha$ is an ordinal such that $\sigma$ restricts to an elementary embedding $j:\sigma^{-1}(V_\alpha)\rightarrow V_\alpha$ with $crit(j)=crit(\sigma)=\bar{\kappa}$, $j(\bar{\kappa})=\sigma(\bar{\kappa})=\kappa\not\in \sigma^{-1}(V_\alpha)$, and $\alpha\in C^{(n)}$, contradicting $\phi(\sigma^{-1}(V_\alpha), \bar{\kappa})$. Hence some $\Gamma$-extension must contain the desired elementary embedding.
\end{proof}

This is somewhat reminiscent of the Woodin-Cox characterization of forcing axioms in terms of a sort of generic supercompactness for $C^{(n-1)}$ in Section \ref{section:equivforms}. However, the above result gives us the generic embedding specifically in a $\Gamma$-extension, whereas the poset from Cox's argument used in Theorem \ref{thm:equivforms} need not be in $\Gamma$.

\appendix

\chapter{Analyses of the Formula Complexity of Common Concepts}

To simplify proofs which involve analyzing formula complexity, this appendix separates out the analyses for various assertions that come up frequently.

"$\alpha$ is a cardinal": $\Pi_1$\\
We can express this as "for all $\beta<\alpha$ and all functions $f:\beta\rightarrow\alpha$, there is some $\gamma<\alpha$ such that for all $\delta<\beta$, $f(\delta)\neq \gamma$." Since the only unbounded quantifier is the one over $f$, this is $\Pi_1$.

\vspace{12pt}

"$\lambda=\kappa^{+\alpha}$": $\Delta_2$\\
$\Pi_2$ expression: "$\lambda$ is a cardinal and for all strictly order-preserving functions $f:\alpha\rightarrow [\kappa, \lambda)$, there is some $\beta<\alpha$ such that $f(\beta)$ is not a cardinal or, for all $\gamma<\lambda$, $\gamma<\kappa$ or $\gamma$ is not a cardinal or there is some $\beta<\alpha$ such that $f(\beta)=\gamma$." Every concept used involves only bounded quantifiers except "is a cardinal", which is $\Pi_1$, "is not a cardinal", which is $\Sigma_1$, and the quantification over order-preserving functions.\\
$\Sigma_2$ expression: "$\lambda$ is a cardinal and there exists a strictly order-preserving function $f:\alpha\rightarrow [\kappa, \lambda)$ such that for all $\beta<\alpha$, $f(\beta)$ is a cardinal, and for all $\gamma<\lambda$, $\gamma<\kappa$ or $\gamma$ is not a cardinal or there is some $\beta<\alpha$ such that $f(\beta)=\gamma$."

\vspace{12pt}

"$x=V_\alpha$": $\Pi_1$\\
The rank function can be defined in a $\Delta_1$ way, since it only involves bounded quantifiers and uniquely defined entities which can be referred to with an existential or universal quantifier interchangeably. Thus "for all $y$, $y\in x$ iff the rank of $y$ is less than $\alpha$" is $\Pi_1$.

\vspace{12pt}

"$x=H_\kappa$": $\Pi_1$\\
We can express this as "for all $y$, $y\in x$ iff there is an $\alpha<\kappa$ and a function $f\in x$ with $dom(f)=\alpha$ and $rng(f)$ a transitive superset of $y$."

\vspace{12pt}

"$S\subseteq [H_\lambda]^{<\kappa}$ is stationary": $\Pi_1$ (with $S$, $\lambda$, and $\kappa$ as parameters)\\
We can express this as "for all functions $h:[H_\lambda]^{<\omega}\rightarrow H_\lambda$, there is some $Z\in S$ such that for all $x, y\in H_\kappa$, if $x\in y\in Z$, then $x\in Z$, and for all $t\in [H_\lambda]^{<\omega}$, if $t\subset Z$ then $h(t)\in Z$." $H_\kappa$ has a $\Pi_1$ definition by the above, $[H_\lambda]^{<\omega}$ has a $\Pi_1$ definition for the same reason as the full power set does, and everything else involves only bounded quantifiers.

\vspace{12pt}

"$M\models \phi(a)$" ($M$ a set model containing $a$, $\phi$ an arbitrary formula in the language of $M$): $\Delta_1$\\
This is expressible either as "there exists a function assigning truth values to the subformulas of $\phi$ which obeys the Tarski recursion relations and assigns `True' to $\phi(a)$" or "all functions assigning truth values to the subformulas of $\phi$ which obey the Tarski recursion relations assign `True' to $\phi(a)$". Since the Tarski relations only involve quantifying over $M$, these are $\Sigma_1$ and $\Pi_1$ respectively.

\vspace{12pt}

"$p\Vdash_{\mathbb{P}} \phi(\dot{a})$": same complexity as $\phi$ (minimum $\Delta_1$)\\
The definition of the forcing relation (see, e.g. Kunen's Definition IV.2.42) involves a recursion on subformulas of $\phi$ as above, where each step only involves quantifiers over $\mathbb{P}$, except for steps dealing with the quantifiers of $\phi$, in which case we need a matching quantifier over $\mathbb{P}$-names. Thus the overall number of quantifiers is the same, except we need an extra quantifier for the function handling the recursion, which as above can be either existential or universal, and thus only increases overall complexity if $\phi$ has no unbounded quantifiers of its own.

\chapter{Various Useful Lemmas}

This appendix collects and proves various basic facts I use repeatedly, for the benefit of any readers unfamiliar with any of them.

\begin{lemma}
For any uncountable cardinal $\kappa$, $H_\kappa\prec_{\Sigma_1} V$.
\label{lemma:HS1correct}
\end{lemma}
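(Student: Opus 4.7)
The plan is to split into the two directions and exploit the absoluteness of $\Delta_0$ formulas between transitive structures throughout. Fix an uncountable cardinal $\kappa$, a $\Delta_0$ formula $\phi(x,y)$, and a parameter $a \in H_\kappa$.

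For the easy direction, suppose $H_\kappa \models \exists x\,\phi(x,a)$ and pick a witness $b \in H_\kappa$. Since $H_\kappa$ is transitive and $\phi$ is $\Delta_0$, $V \models \phi(b,a)$, hence $V \models \exists x\,\phi(x,a)$. This uses only that $H_\kappa$ is transitive.

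For the harder direction, suppose $V \models \exists x\,\phi(x,a)$ and fix $b \in V$ witnessing this. I would choose a regular $\theta$ large enough that $a, b \in H_\theta$ and apply downward L\"owenheim--Skolem to produce $M \prec H_\theta$ with $\operatorname{trcl}(\{a\}) \cup \{a,b\} \subseteq M$ and $|M| = |\operatorname{trcl}(\{a\})| + \aleph_0$. Since $a \in H_\kappa$, $|\operatorname{trcl}(\{a\})| < \kappa$, so $|M| < \kappa$. Let $\pi \colon M \to N$ be the Mostowski collapse. The main technical point — and the only place one has to be a little careful — is verifying that $\pi$ fixes $a$: by $\in$-induction on $\operatorname{trcl}(\{a\})$, for any $y$ in this transitive closure we have $y \subseteq \operatorname{trcl}(\{a\}) \subseteq M$, so $\pi(y) = \{\pi(z) : z \in y \cap M\} = \{\pi(z) : z \in y\} = \{z : z \in y\} = y$ by the inductive hypothesis, and hence $\pi(a) = a$.

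Now $N$ is a transitive set of cardinality less than $\kappa$, so $N \subseteq H_\kappa$. Because $\pi$ is an isomorphism and $H_\theta \models \phi(b,a)$, we have $N \models \phi(\pi(b), a)$. Since $\phi$ is $\Delta_0$ and both $N$ and $H_\kappa$ are transitive, $\phi(\pi(b), a)$ holds in $V$ and therefore in $H_\kappa$, giving $H_\kappa \models \exists x\,\phi(x,a)$. The only obstacle worth flagging is the fixed-point argument for $\pi$, and it is handled cleanly by including $\operatorname{trcl}(\{a\})$ in $M$ before collapsing; everything else is standard absoluteness.
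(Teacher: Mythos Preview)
Your proof is correct and follows essentially the same approach as the paper: both take a witness $b$ in $V$, apply downward L\"owenheim--Skolem to a small structure containing $b$ and $\operatorname{trcl}(\{a\})$, Mostowski-collapse to land inside $H_\kappa$ (using that $a$ is fixed because its transitive closure is contained in the substructure), and finish with $\Delta_0$-absoluteness. The only cosmetic difference is that you work inside $H_\theta$ for large regular $\theta$ while the paper works inside an arbitrary transitive set $T$ containing $a$ and $b$; you are also more explicit about the $\pi(a)=a$ step.
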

\begin{proof}
First, observe that transitive classes agree on the truth of $\Delta_0$ formulas with parameters in both of them, since the element relations are the same and all quantifiers can be taken to range over the same domains.

Now if $V\models\exists y \psi(a, y)$, where $\psi$ is $\Delta_0$ and $a\in H_\kappa$, then for any $b$ such that $\psi(a, b)$ holds and any transitive set $T$ containing $a$ and $b$, $T\models \psi(a, b)$. By the Lowenheim-Skolem theorem, there is an elementary substructure of $T$ of size less than $\kappa$ containing $b$ and all elements of $trcl(\{a\})$, which collapses to a transitive set in $H_\kappa$ containing $a$ and some $\bar{b}$ such that $\psi(a, \bar{b})$ holds. Since $\psi$ is $\Delta_0$, it follows that $H_\kappa\models \psi(a, \bar{b})$ and thus $H_\kappa\models \exists y \psi(a, y)$.
\end{proof}

\begin{lemma}
\label{lemma:clubequiv}
    If $\kappa\leq \lambda$ are uncountable cardinals with $\kappa$ regular, for any club $C\subseteq \kappa$, $\{Z\in [H_\lambda]^{<\kappa}\sbp Z\cap\kappa\in C\}$ is a club in $[H_\lambda]^{<\kappa}$, and for any club $C'\subseteq [H_\lambda]^{<\kappa}$, $\{Z\cap\kappa\sbp Z\in C'\land Z\cap\kappa\in\kappa\}$ contains a club in $\kappa$. Similar statements hold for stationary sets.
\end{lemma}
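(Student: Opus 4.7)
The plan is to handle each direction of the correspondence by translating a witnessing function on one side of $[H_\lambda]^{<\kappa}\leftrightarrow\kappa$ into an appropriate witnessing function on the other.

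For the first statement, given a club $C\subseteq\kappa$ with continuous enumeration $c:\kappa\to\kappa$, I would exhibit a single function $g:[H_\lambda]^{<\omega}\to H_\lambda$ whose closure forces membership in $\{Z \sbp Z\cap\kappa\in C\}$. Explicitly, set $g(\{\alpha\}):=c(\alpha)$ whenever $\alpha<\kappa$ is an ordinal (and $g$ arbitrary elsewhere). If $Z\in[H_\lambda]^{<\kappa}$ is closed under $g$ with $Z\cap H_\kappa$ transitive, then $Z\cap\kappa$ is an ordinal $\alpha<\kappa$, and $\alpha$ is closed under $c$, so $\alpha$ is a limit of elements of $C$; by closedness of $C$, $\alpha\in C$. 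This exhibits $\{Z \sbp Z\cap\kappa\in C\}$ as being in the club filter on $[H_\lambda]^{<\kappa}$.

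For the second statement, fix a function $f$ whose closure (together with transitivity of $Z\cap H_\kappa$) witnesses the clubness of $C'$. Recursively construct a continuous $\subseteq$-increasing chain $\langle Z_\alpha \sbp \alpha<\kappa\rangle$ in $[H_\lambda]^{<\kappa}$ with each $Z_\alpha$ closed under $f$, $Z_\alpha\cap H_\kappa$ transitive, $\alpha\in Z_\alpha$, and $|Z_\alpha|<\kappa$ (using regularity of $\kappa$ at successor stages and taking unions at limits). Each $Z_\alpha$ then belongs to $C'$. A standard closure-point argument shows that $D:=\{\alpha<\kappa \sbp Z_\alpha\cap\kappa=\alpha\}$ is a club in $\kappa$: unboundedness follows from iterating the closure step $\omega$-many times starting from any $\beta<\kappa$ and invoking regularity, while closure of $D$ under suprema below $\kappa$ follows from continuity of the chain. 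Since each $\alpha\in D$ arises as $Z_\alpha\cap\kappa$ with $Z_\alpha\in C'$, $D$ is contained in the target set.

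The analogous statements for stationary sets follow by duality from these two club-level statements. If $S\subseteq\kappa$ is stationary and $C^*\subseteq[H_\lambda]^{<\kappa}$ is any club, the second part produces a club $D\subseteq\kappa$ contained in the projection of $C^*$; picking $\alpha\in S\cap D$ and lifting it back gives an element of $\{Z \sbp Z\cap\kappa\in S\}\cap C^*$, showing $\{Z \sbp Z\cap\kappa\in S\}$ is stationary. The reverse direction is symmetric, using the first part. The main technical obstacle is simply verifying that the closure points $D$ in the second part really form a club in $\kappa$; this is where regularity of $\kappa$ is essential, and where care is needed to ensure that each $Z_\alpha$ can be obtained of size strictly less than $\kappa$ while simultaneously satisfying closure under $f$, transitivity of $Z_\alpha\cap H_\kappa$, and the inductive conditions linking $Z_\alpha$ to $\alpha$. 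The transitivity condition on $Z\cap H_\kappa$, being part of the paper's standing definition of stationarity, passes through both constructions as a hypothesis on the relevant $Z$'s rather than something that must be separately engineered into $g$ or $f$.
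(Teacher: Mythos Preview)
Your argument for the second statement and the stationary-set duality is essentially the same as the paper's: both build a continuous $\subseteq$-chain $\langle Z_\alpha\sbp\alpha<\kappa\rangle$ inside $C'$ and read off a club in $\kappa$ from the ordinals $Z_\alpha\cap\kappa$. The paper takes the range of $\alpha\mapsto Z_\alpha\cap\kappa$ directly, while you take the fixed points; these are interchangeable.

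For the first statement your approach differs from the paper's, which verifies unboundedness and closure directly rather than exhibiting a single function $g$. Your function-based approach is natural given the paper's definition of stationarity, but as written it has a small gap: closure under $g(\{\alpha\})=c(\alpha)$ together with $Z\cap H_\kappa$ transitive does \emph{not} force $Z\cap\kappa$ to be a limit of $C$. If $Z\cap\kappa=\beta+1$, closure under $g$ only yields $c(\beta)\leq\beta$, hence $c(\beta)=\beta\in C$; nothing prevents $\beta+1\notin C$. (The case $Z\cap\kappa=0$ is similarly unhandled.) The fix is trivial: let $g(\{\alpha\})$ be the least element of $C$ strictly above $\alpha$, so that any ordinal closed under $g$ is genuinely a limit point of $C$. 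With that adjustment your argument goes through and is a perfectly good alternative to the paper's.
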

\begin{proof}
    Given any $X\in [H_\lambda]^{<\kappa}$, let $\alpha\in C$ be greater than any ordinal in $X\cap\kappa$. Then $X\cup \alpha$ is a superset of $X$ in $\{Z\in [H_\lambda]^{<\kappa}\sbp Z\cap\kappa\in C\}$, so the latter set is unbounded. To see that it is closed, note that if $\gamma<\kappa$ and $\langle X_\alpha\sbp \alpha<\gamma\rangle$ is a chain with $X_\alpha\cap\kappa\in C$ for all $\alpha<\gamma$, then 
    $$(\bigcup\limits_{\alpha<\gamma}X_\alpha)\cap\kappa=\sup\{X_\alpha\cap\kappa\sbp \alpha<\gamma\}\in C$$
    because $C$ is closed, so the union is in $\{Z\in [H_\lambda]^{<\kappa}\sbp Z\cap\kappa\in C\}$ as well.

    For $C'$, first note that $\{Z\in [H_\lambda]^{<\kappa}\sbp Z\cap\kappa\in\kappa\}$ is a club, so by intersecting it with $C'$ we can assume that $Z\cap\kappa$ is transitive for all $Z\in C'$. Now we recursively construct a sequence $\langle Z_\alpha\sbp \alpha<\kappa\rangle$ in $C'$. Let $Z_0$ be an arbitrary element of $C'$, at successor stages invoke the unboundedness of $C'$ to arrange $Z_\alpha\cup\{Z_\alpha\cap\kappa\}\subseteq Z_{\alpha+1}\in C'$, and at limit stages $\gamma<\kappa$ use the closure of $C'$ to set $Z_\gamma:=\bigcup\limits_{\alpha<\gamma} Z_\alpha$. Then the map $\alpha\mapsto Z_\alpha\cap\kappa$ is a strictly increasing function $\kappa\rightarrow\kappa$, so its range is unbounded in $\kappa$, and by the definition at limit stages it is continuous, so its range is also closed. Hence $\{Z_\alpha\cap\kappa\sbp \alpha<\kappa\}$ is a club in $\kappa$ contained in $\{Z\cap\kappa\sbp Z\in C'\land Z\cap\kappa\in\kappa\}$, as desired.

    For stationary sets, if $S\subseteq\kappa$ is stationary, then for any club $C\subseteq [H_\lambda]^{<\kappa}$, $S\cap \{Z\cap\kappa\sbp Z\in C\land Z\cap\kappa\in\kappa\}\neq\emptyset$ by the above, so every $Z$ such that $Z\cap\kappa$ is in that intersection lies in $\{Z\in [H_\lambda]^{<\kappa}\sbp Z\cap\kappa\in S\}\cap C$. Thus the latter intersection is also nonempty, so $\{Z\in [H_\lambda]^{<\kappa}\sbp Z\cap\kappa\in S\}$ is stationary. The argument in the other direction works similarly.
\end{proof}

\begin{lemma}
If $j:M\rightarrow N$ is an elementary embedding between transitive classes and $X\in M$ is transitive, then $j\upharpoonright X$ is an elementary embedding $X\rightarrow j(X)$. Furthermore, for any $Y\in M$, $j\upharpoonright X$ is an elementary embedding of the structure $(X, \in, Y\cap X)$ into $(j(X), \in, j(Y)\cap j(X))$.
\label{lemma:restrictembed}
\end{lemma}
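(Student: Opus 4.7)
The plan is to use the fact that the satisfaction relation $\models$ for first-order formulas in a set-sized structure is uniformly $\Delta_1$-definable in $\mathrm{ZFC}^-$ (as noted in the formula-complexity appendix). Writing $\mathrm{Sat}(S, R, \ulcorner \phi \urcorner, \vec{a})$ for the predicate ``the structure $(S, \in, R)$ satisfies the formula with Gödel number $\ulcorner \phi \urcorner$ under the parameter assignment $\vec{a}$'', this gives a single formula whose truth can be evaluated inside any transitive class satisfying enough of $\mathrm{ZFC}^-$. Since $X \in M$ is a set (and $j(X) \in N$ is a set in $N$, hence a set), everything will be evaluated on set-sized structures.

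First I would prove the unadorned version: for any formula $\phi$ and $a_1,\dots,a_k \in X$,
\[
(X,\in)\models \phi(a_1,\dots,a_k) \iff (j(X),\in)\models \phi(j(a_1),\dots,j(a_k)).
\]
By absoluteness of $\Delta_1$ formulas between transitive classes, the left-hand side is equivalent to $M \models \mathrm{Sat}(X, \emptyset, \ulcorner \phi \urcorner, \vec{a})$. Since $\ulcorner\phi\urcorner$ is a natural number (hence fixed by $j$) and each $a_i \in X \subseteq M$, elementarity of $j$ yields $N \models \mathrm{Sat}(j(X), \emptyset, \ulcorner \phi \urcorner, j(\vec{a}))$, which by absoluteness again is the right-hand side. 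In particular, for $a \in X$ the formula $x \in X$ gives $j(a) \in j(X)$, so $j{\upharpoonright}X$ really does map into $j(X)$.

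For the second statement, the same argument applies to the satisfaction predicate for structures carrying one unary predicate, provided I can show that the interpretation of the predicate on the right is the $j$-image of its interpretation on the left. For this I need $j(Y \cap X) = j(Y) \cap j(X)$, which follows from applying $j$ to the $\Delta_0$ assertion $\forall z\, (z \in Y\cap X \leftrightarrow z\in Y \wedge z\in X)$ inside $M$; equivalently, one notes that intersection is absolute between transitive classes. Then $M \models \mathrm{Sat}(X, Y\cap X, \ulcorner\phi\urcorner, \vec{a})$ transfers via $j$ to $N \models \mathrm{Sat}(j(X), j(Y)\cap j(X), \ulcorner\phi\urcorner, j(\vec{a}))$, which is the claim.

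There is no real obstacle here beyond being careful about where $\mathrm{Sat}$ is evaluated; the only minor point to check is that $M$ and $N$ satisfy enough of $\mathrm{ZFC}^-$ to have the satisfaction predicate available, which is standard for transitive class models under discussion throughout the paper. No induction on $\phi$ is needed externally, which avoids the awkward issue that $j$ need not be surjective onto $j(X)$ — the existential quantifier case is absorbed into the internal evaluation of $\mathrm{Sat}$ in $N$, where it ranges over all of $j(X)$ automatically.
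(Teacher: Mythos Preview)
Your proposal is correct and follows essentially the same approach as the paper: both arguments internalize the satisfaction relation for the set-sized structure $(X,\in,Y\cap X)$ inside $M$, push it across via elementarity of $j$, and read off the result in $N$. You are simply more explicit about the absoluteness of $\mathrm{Sat}$, why $j$ fixes G\"odel numbers, and why $j(Y\cap X)=j(Y)\cap j(X)$, whereas the paper compresses all of this into a single sentence.
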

\begin{proof}
For any formula $\phi$ in the language of set theory with an added predicate for $Y$ and any $a\in X$ such that $X\models \phi(a)$, $M\models ``(X, \in, Y\cap X)\models \phi(a)"$, so by elementarity $N\models ``(j(X), \in, j(Y)\cap j(X))\models \phi(j(a))"$.
\end{proof}

\begin{lemma}
    If $j:M\rightarrow N$ is an elementary embedding between transitive classes satsifying $ZFC^-$, $\mathbb{P}\in M$ is a forcing poset, $G\subseteq\mathbb{P}$ is an $M$-generic filter, and $H\subseteq j(\mathbb{P})$ is an $N$-generic filter with $j"G\subseteq H$, then $j$ extends to an elementary embedding $j^*:M[G]\rightarrow N[H]$. Furthermore, $rng(j^*)=\{\dot{x}^H\sbp \dot{x}\in N^{j(\mathbb{P})}\cap rng(j)\}$.
    \label{lemma:extembed}
\end{lemma}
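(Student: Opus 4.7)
The plan is to define $j^*$ on $M[G]$ by specifying how it acts on interpretations of names, namely $j^*(\dot{x}^G) := j(\dot{x})^H$ for each $\mathbb{P}$-name $\dot{x} \in M$, and then to verify well-definedness, elementarity, and the extension property using the forcing theorem in $M$ and $N$ together with the elementarity of $j$ on names. Note that since $j$ is elementary and $M$ satisfies $ZFC^-$, $j(\mathbb{P})$ really is a forcing notion in $N$, and $j$ sends $\mathbb{P}$-names in $M$ to $j(\mathbb{P})$-names in $N$, so the target interpretations make sense.

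First I would handle well-definedness. Suppose $\dot{x}^G = \dot{y}^G$. By the truth lemma applied in $M$, there is some $p \in G$ with $p \Vdash^M_{\mathbb{P}} \dot{x} = \dot{y}$. Because the forcing relation is definable in the ground model and $j$ is elementary, $j(p) \Vdash^N_{j(\mathbb{P})} j(\dot{x}) = j(\dot{y})$. Since $j"G \subseteq H$, $j(p) \in H$, and then the truth lemma in $N$ gives $j(\dot{x})^H = j(\dot{y})^H$. The same template, with $\dot{x} = \dot{y}$ replaced by an arbitrary formula $\phi(\dot{x}_1,\ldots,\dot{x}_k)$, proves elementarity of $j^*$: $M[G] \models \phi(\dot{x}_1^G,\ldots,\dot{x}_k^G)$ iff some $p \in G$ forces $\phi(\dot{x}_1,\ldots,\dot{x}_k)$ in $M$, iff (by elementarity of $j$) $j(p) \in H$ forces $\phi(j(\dot{x}_1),\ldots,j(\dot{x}_k))$ in $N$, iff $N[H] \models \phi(j^*(\dot{x}_1^G),\ldots,j^*(\dot{x}_k^G))$.

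To see that $j^*$ extends $j$, for any $x \in M$ the canonical check name $\check{x}$ satisfies $\check{x}^G = x$, and $j(\check{x}) = \widecheck{j(x)}$ by elementarity (both names can be characterized by a $\Delta_0$ formula saying ``$\dot{x}$ is the check name of $x$''), so $j^*(x) = \widecheck{j(x)}^H = j(x)$. Finally, for the range characterization, every element of $M[G]$ has the form $\dot{x}^G$ with $\dot{x} \in M^\mathbb{P}$, and $j^*(\dot{x}^G) = j(\dot{x})^H$ where $j(\dot{x}) \in rng(j) \cap N^{j(\mathbb{P})}$; conversely, $rng(j)$ consists precisely of the $j(\dot{x})$ for $\dot{x} \in M$, and $\dot{x}$ is a $\mathbb{P}$-name iff $j(\dot{x})$ is a $j(\mathbb{P})$-name, so every $\dot{x} \in N^{j(\mathbb{P})} \cap rng(j)$ arises this way and $\dot{x}^H$ lies in $rng(j^*)$.

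The one real subtlety is the double use of the forcing theorem (in $M$ for names in $M$, in $N$ for their $j$-images), combined with the transfer of the forcing relation through $j$; this is routine because the relation $p \Vdash_{\mathbb{Q}} \psi(\dot{y}_1,\ldots,\dot{y}_k)$ has the same complexity as $\psi$ and is definable in the ground model, so its $j$-image is exactly the forcing relation computed in $N$. No other step presents any obstacle.
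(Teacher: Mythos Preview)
Your proof is correct and follows essentially the same approach as the paper's: define $j^*(\dot{x}^G)=j(\dot{x})^H$, use the forcing theorem in $M$ plus elementarity of $j$ plus $j"G\subseteq H$ to transfer forcing statements to $N$, and handle the extension via check names. You have simply spelled out more of the details (separating well-definedness from elementarity, justifying $j(\check{x})=\widecheck{j(x)}$, and unpacking the range characterization) where the paper is terse.
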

\begin{proof}
    Let $j^*(\dot{x}^G)=j(\dot{x})^H$. To see this is a well-defined elementary embedding, let $\dot{x}\in M$ be any $\mathbb{P}$-name such that $M[G]\models\phi(\dot{x}^G)$. Then there is some $p\in G$ such that $p\Vdash \phi(\dot{x})$, so by elementarity $j(p)\Vdash \phi(j(\dot{x}))$. Since $j(p)\in H$ by hypothesis, $N[H]\models \phi(j(\dot{x})^H)$, as desired. By considering check names, we see that $j^*$ extends $j$. The "furthermore" is immediate from the definitions of $j^*$ and $M[G]$.
\end{proof}

\begin{lemma}
For any infinite cardinal $\kappa$ such that $\mathbb{P}\in H_\kappa$ is a forcing poset or Boolean algebra, if $G\subset \mathbb{P}$ is $V$-generic, $H_\kappa[G]=H_\kappa^{V[G]}$.
\label{lemma:namesize}
\end{lemma}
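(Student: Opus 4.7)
The plan is to prove the two inclusions $H_\kappa[G]\subseteq H_\kappa^{V[G]}$ and $H_\kappa^{V[G]}\subseteq H_\kappa[G]$ separately. Before doing either, I would note the preliminary fact that, since $\mathbb{P}\in H_\kappa$, we have $|\mathbb{P}|<\kappa$, so $\mathbb{P}$ satisfies the $|\mathbb{P}|^+$-cc; hence $\mathbb{P}$ preserves all cardinals $\geq\kappa$, and in particular $\kappa$ remains a cardinal in $V[G]$. This is what makes the hereditary-cardinality estimates below meaningful.

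For the forward inclusion, I would argue by $\in$-induction on $\mathbb{P}$-names that $|\mathrm{trcl}(\{\dot{x}^G\})|^{V[G]}\leq |\mathrm{trcl}(\{\dot{x}\})|^V$ for every name $\dot{x}$. This is because every element of $\dot{x}^G$ is $\dot{\tau}^G$ for some $\dot{\tau}\in\mathrm{dom}(\dot{x})\subseteq\mathrm{trcl}(\{\dot{x}\})$, so the inductive hypothesis bounds the transitive closure of the interpretation by that of the name. Applied to $\dot{x}\in H_\kappa$, this gives $|\mathrm{trcl}(\{\dot{x}^G\})|^{V[G]}<\kappa$, so $\dot{x}^G\in H_\kappa^{V[G]}$.

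For the reverse inclusion, given $x\in H_\kappa^{V[G]}$, set $\lambda:=|\mathrm{trcl}(\{x\})|^{V[G]}<\kappa$ and pick a bijection $f\colon\lambda\to\mathrm{trcl}(\{x\})$ with $f(0)=x$, yielding a relation $E\subseteq\lambda\times\lambda$ defined by $(\alpha,\beta)\in E\iff f(\alpha)\in f(\beta)$, so that the Mostowski collapse of $(\lambda,E)$ sends $0$ back to $x$. I would then construct a nice $\mathbb{P}$-name for $E$: for each pair $(\alpha,\beta)\in\lambda\times\lambda$, choose a maximal antichain $A_{\alpha,\beta}\subseteq\mathbb{P}$ of conditions deciding $(\check\alpha,\check\beta)\in\dot E$, and set $\dot E:=\{\langle\langle\check\alpha,\check\beta\rangle,p\rangle : p\in A_{\alpha,\beta},\ p\Vdash(\check\alpha,\check\beta)\in\dot E\}$. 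Since each $|A_{\alpha,\beta}|\leq|\mathbb{P}|<\kappa$ and $\lambda<\kappa$, the transitive closure of $\dot E$ has cardinality less than $\kappa$, so $\dot E\in H_\kappa^V$. Finally, I would define $\dot x$ to be the canonical $\mathbb{P}$-name that, given $\dot E$, interprets to the Mostowski collapse image of $0$; this can be written as a name whose transitive closure is built uniformly from $\dot E$, $\lambda$, and check-names of ordinals below $\lambda$, and so also lies in $H_\kappa^V$.

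The only mildly subtle point is ensuring that $\kappa$ stays a cardinal in $V[G]$ (handled by the ccc remark) and that the name $\dot x$ decoding $\dot E$ really lies in $H_\kappa^V$; the latter is routine once one writes it explicitly, since only bounded recursion over $\lambda$ using $\dot E$ is required. The nice-name argument is the real content, and it is standard.
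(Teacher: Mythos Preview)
Your approach is valid and takes a genuinely different route from the paper. For the reverse inclusion the paper argues by induction on $\kappa$: the limit case reduces to smaller cardinals (using that $\kappa$ stays a limit cardinal in $V[G]$), and for successor $\kappa$ it takes an $\in$-minimal $x\in H_\kappa^{V[G]}\setminus H_\kappa[G]$, so that every element of $x$ already has a name in $H_\kappa^V$; a $\kappa$-cc covering argument then collects fewer than $\kappa$ such small names into a single set $r\in H_\kappa^V$, and $\dot y=\{(\dot z,p):\dot z\in r,\ p\Vdash_{\mathbb P}\dot z\in\dot x\}$ is a name for $x$ in $H_\kappa^V$. Your global coding of $\mathrm{trcl}(\{x\})$ by a binary relation on $\lambda$ followed by a nice-name argument is a common textbook alternative and avoids the induction on $\kappa$.

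The one step you should not call ``routine'' is the decoding of $\dot x$ from $\dot E$. The Mostowski-collapse recursion $c(\alpha)=\{c(\beta):(\beta,\alpha)\in E\}$ runs along $E$, whose shape depends on $G$, so it is not literally a recursion of length $\lambda$ performable uniformly in $V$: the naive system $\tau_\alpha=\{(\tau_\beta,p):p\Vdash(\check\beta,\check\alpha)\in\dot E\}$ is circular, and iterating it from $\emptyset$ requires $\mathrm{rank}(x)$ stages, which may be any ordinal below $(\lambda^+)^{V[G]}$, so bounding the size of the resulting name needs its own argument. A clean fix is to take the Mostowski collapse $N$ of the Skolem hull of $\mathrm{trcl}(\{\dot E,\lambda,\mathbb P\})$ in some $H_\theta$; then $N\in H_\kappa^V$, the objects $\dot E,\lambda,\mathbb P$ are fixed by the collapse, and by elementarity $N$ contains a name $\dot x$ forced (in $N$) to be the collapse-image of $0$. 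Since $G$ is $N$-generic and Mostowski collapse is absolute between transitive models, $\dot x^G=x$. With this step spelled out your argument is complete; the paper's $\in$-minimal-counterexample approach trades this decoding issue for the induction on $\kappa$.
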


\begin{proof}
$H_\kappa[G]\subseteq H_\kappa^{V[G]}$ follows immediately from the observation that if a name has fewer than $\kappa$ elements, the set it names in $V[G]$ must as well. For the reverse inclusion, we proceed by induction on $\kappa$. In the limit case, if $x\in H_\kappa^{V[G]}$, then $x\in H_\lambda^{V[G]}$ for some $\lambda<\kappa$ (since $\kappa$ is still a limit cardinal in $V[G]$, as $\mathbb{P}$ is in $H_\kappa$), so inductively, $x=\dot{x}^G$ for some $\dot{x}\in H_\lambda^V$.

In the successor case, assume toward a contradiction that the desired inclusion fails and let $x$ be $\in$-minimal in $H_\kappa^{V[G]}- H_\kappa[G]$. Then $x\subseteq H_\kappa[G$]. In $V[G]$, let $\gamma$ be the cardinality of $x$ (so $\gamma<\kappa$), and let $f:\gamma\rightarrow H_\kappa^V\cap V^\mathbb{P}$ be such that $x=\{f(\xi)^G \sbp \xi<\gamma\}$. Since $\mathbb{P}$ is $\kappa$-cc, there is in $V$ a function $g:\gamma\to H_\kappa^V$ such that for all $\xi<\gamma$, $f(\xi)\in g(\xi)$ and the cardinality of $g(\xi)$ is less than $\kappa$. Since $\kappa$ is regular, $r:=\bigcup rng(g)\in H_\kappa$. Now let $x=\dot{x}^G$, where $\dot{x}$ is a $\mathbb{P}$-name but perhaps not in $H_\kappa$. We can then define in V:
$$\dot{y}:=\{(\dot{z},p)\sbp \dot{z}\in r\land p\in\mathbb{P}\land p\Vdash_{\mathbb{P}}\dot{z}\in\dot{x}\}.$$
Then $\dot{y}\in H_\kappa$ and $\dot{y}^G=x$, a contradiction.
\end{proof}

\begin{lemma}
If $\lambda<\gamma$ are regular cardinals, $Y\prec H_
\gamma$ is such that $Y\cap\lambda\in \lambda$, and $A\in Y$ has cardinality less than $\lambda$, then $A\subset Y$.
\label{lemma:macSkolem}
\end{lemma}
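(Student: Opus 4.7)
The plan is to exploit $Y \prec H_\gamma$ to pull into $Y$ a surjection from an ordinal of size $|A|$ onto $A$, use the hypothesis on $Y \cap \lambda$ to conclude that the whole domain lives in $Y$, and then apply elementarity pointwise.

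First I would observe that the assertion ``there exists an ordinal $\alpha$ and a bijection $f \colon \alpha \to A$ with $\alpha$ least such'' can be expressed in $H_\gamma$ using only $A$ as a parameter (no mention of $\lambda$ is needed, which avoids assuming $\lambda \in Y$). Since $A \in Y$ and $H_\gamma$ witnesses this statement, by elementarity there are $\alpha, f \in Y$ with $f \colon \alpha \to A$ a bijection and $\alpha$ the cardinality of $A$. In $H_\gamma$ we have $|A| < \lambda$, so $\alpha < \lambda$, giving $\alpha \in Y \cap \lambda$.

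Next I would use the hypothesis that $Y \cap \lambda$ is an ordinal: since $Y \cap \lambda$ is transitive as a set of ordinals, $\alpha \in Y \cap \lambda$ forces $\alpha \subseteq Y \cap \lambda \subseteq Y$. Thus every $\beta < \alpha$ lies in $Y$, and with $f \in Y$ as well, elementarity applied to the formula $\exists x\, (f(\beta) = x)$ puts $f(\beta) \in Y$ for each such $\beta$. Because $f$ is surjective onto $A$, this yields $A = \{f(\beta) : \beta < \alpha\} \subseteq Y$.

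There is essentially no obstacle here beyond choosing the right $H_\gamma$-definable witness: the delicate point is just to describe the bijection in a parameter-free way so that we do not need $\lambda \in Y$, only the weaker hypothesis that $Y \cap \lambda$ is an ordinal. Once $\alpha \in Y$ is secured below $\lambda$, transitivity of $Y \cap \lambda$ and pointwise elementarity do the rest.
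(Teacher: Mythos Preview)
Your proof is correct and follows essentially the same approach as the paper's: obtain $|A|$ and a bijection $f\colon |A|\to A$ in $Y$ by elementarity (since both are definable from $A$), use transitivity of $Y\cap\lambda$ to get the domain inside $Y$, and apply elementarity pointwise. Your extra care in noting that $\lambda\in Y$ is not needed is a nice clarification, but otherwise the argument is identical.
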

\begin{proof}
Let $\kappa:=|A|<\lambda$; then by elementarity $\kappa\in Y$ and there must be some bijection $f:\kappa\rightarrow A$ in $Y$. By hypothesis, each $\alpha<\kappa$ must then be in $Y$, so $A=\{f(\alpha)\sbp \alpha<\kappa\}\subset Y$.
\end{proof}

\begin{lemma}
\label{lemma:itername}
    Suppose $\kappa$ is a regular cardinal, $\langle\mathbb{P}_\alpha\sbp \alpha\leq \kappa\rangle$ is a forcing iteration such that $\mathbb{P}_\alpha\in H_\kappa$ for all $\alpha<\kappa$, $\mathbb{P}_\kappa$ is the direct limit of the preceding $\mathbb{P}_\alpha$, and $\mathbb{P}_\kappa$ satisfies the $\kappa$-cc, and $G\subset \mathbb{P}_\kappa$ is a $V$-generic filter. Then if $a\in H_\kappa^{V[G]}$, there is a $\beta<\kappa$ and a $\mathbb{P}_\beta$-name $\dot{a}\in H_\kappa^V$ such that $\dot{a}^{G_\beta}=a$.
\end{lemma}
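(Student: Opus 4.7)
The plan is to take the name $\dot{a}$ supplied by Lemma \ref{lemma:namesize} and show that, up to the natural identification of conditions across the iteration, it is already a $\mathbb{P}_\beta$-name for some $\beta<\kappa$. The main leverage comes from combining three facts: names in $H_\kappa^V$ have transitive closures of cardinality strictly less than $\kappa$; $\mathbb{P}_\kappa$ is a direct limit, so every condition in $\mathbb{P}_\kappa$ belongs to some proper initial segment $\mathbb{P}_\alpha$; and $\kappa$ is regular, so a set of fewer than $\kappa$ ordinals below $\kappa$ is bounded.

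More concretely, I would first apply Lemma \ref{lemma:namesize} (which is applicable because the $\kappa$-cc assumption on $\mathbb{P}_\kappa$ is exactly its hypothesis) to obtain $\dot{a}\in H_\kappa^V$ with $\dot{a}^G=a$. Let $P$ be the set of all conditions occurring as second coordinates of ordered pairs appearing anywhere in $\operatorname{trcl}(\{\dot{a}\})$; then $|P|<\kappa$. Since $\mathbb{P}_\kappa$ is the direct limit of $\langle\mathbb{P}_\alpha\sbp\alpha<\kappa\rangle$, for each $p\in P$ there is an $\alpha_p<\kappa$ with $p\in\mathbb{P}_{\alpha_p}$. By regularity of $\kappa$, $\beta:=\sup\{\alpha_p\sbp p\in P\}<\kappa$, and hence $P\subseteq\mathbb{P}_\beta$.

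Next I would verify by $\in$-induction on $\operatorname{trcl}(\{\dot{a}\})$ that every $\mathbb{P}_\kappa$-name in $\operatorname{trcl}(\{\dot{a}\})$ is literally a $\mathbb{P}_\beta$-name in the sense of Kunen (a set of ordered pairs whose second coordinates are in $\mathbb{P}_\beta$ and whose first coordinates are inductively $\mathbb{P}_\beta$-names). In particular $\dot{a}$ itself is a $\mathbb{P}_\beta$-name. Finally, a simultaneous induction on this same transitive closure shows $\dot{x}^G=\dot{x}^{G_\beta}$ for every $\dot{x}$ appearing, since the recursive definition of the interpretation only references pairs $\langle\tau,p\rangle\in\dot{x}$ with $p\in\mathbb{P}_\beta$, and $G\cap\mathbb{P}_\beta=G_\beta$. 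Specializing to $\dot{a}$ gives $\dot{a}^{G_\beta}=\dot{a}^G=a$, as required.

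The main (and really only) obstacle to watch for is the absoluteness of the name-interpretation recursion when re-reading a $\mathbb{P}_\kappa$-name as a $\mathbb{P}_\beta$-name; this is a routine but pedantic inductive check, and it is the step that quietly uses the assumption that $\mathbb{P}_\kappa$ is the \emph{direct} limit (rather than, say, an inverse limit), since otherwise $\dot{a}$ might involve conditions with unbounded support even if $|\operatorname{trcl}(\{\dot a\})|<\kappa$.
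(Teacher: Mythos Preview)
Your approach is more direct than the paper's and the overall strategy is sound, but there is a gap in the first step. You invoke Lemma~\ref{lemma:namesize} for $\mathbb{P}_\kappa$, asserting that ``the $\kappa$-cc assumption on $\mathbb{P}_\kappa$ is exactly its hypothesis.'' It is not: the stated hypothesis of Lemma~\ref{lemma:namesize} is $\mathbb{P}\in H_\kappa$, and under the assumptions of Lemma~\ref{lemma:itername} the direct limit $\mathbb{P}_\kappa$ typically has size $\kappa$, so $\mathbb{P}_\kappa\notin H_\kappa$. Once you have a name $\dot a\in H_\kappa^V$ the remainder of your argument (bounding the finitely many conditions in $\operatorname{trcl}(\{\dot a\})$ using the direct-limit structure and regularity, then reinterpreting over $\mathbb{P}_\beta$) is completely correct; the issue is only in producing that small name to begin with.

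The paper sidesteps this by never applying Lemma~\ref{lemma:namesize} to $\mathbb{P}_\kappa$. Instead it codes $a$ by a subset $A\subseteq\delta$ for some $\delta<\kappa$, takes a \emph{nice} $\mathbb{P}_\kappa$-name $\dot A=\bigcup_{\alpha<\delta}\{\check\alpha\}\times X_\alpha$, and uses the $\kappa$-cc directly to see that fewer than $\kappa$ conditions appear. Then the direct-limit/regularity argument (exactly your step) gives $\beta<\kappa$ with $A\in V[G_\beta]$, hence $a\in V[G_\beta]$; only now is Lemma~\ref{lemma:namesize} invoked, legitimately, for $\mathbb{P}_\beta\in H_\kappa$. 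Your route can be repaired by proving the mild strengthening of Lemma~\ref{lemma:namesize} to posets that are $\kappa$-cc with $\mathbb{P}\subseteq H_\kappa$ (which is standard and follows by the same nice-name argument), but as written you are citing a lemma whose hypothesis is not met.
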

    \begin{proof}
    Taking $\delta=|trcl(\{a\})|^{V[G]}<\kappa$, we can code $a$ by some $A\subset \delta$. Specifically, choosing some bijection $f:\delta\rightarrow \delta\times \delta$ in $V$, let $A$ be such that $(trcl(\{a\}), \in\upharpoonright trcl(\{a\}))\cong (\delta, f"A)$, with the isomorphism given by some bjiection $trcl(\{a\})\rightarrow \delta$ in $V[G]$. Let $\dot{A}$ be a nice $\mathbb{P}_\kappa$-name for $A$, i.e. 
    $$\dot{A}=\bigcup\limits_{\alpha<\delta}\{\check{\alpha}\}\times X_\alpha$$
    for some sequence $\langle X_\alpha\sbp \alpha<\delta\rangle$ of antichains of $\mathbb{P}_\kappa$. By the $\kappa$-cc and the regularity of $\kappa$, fewer than $\kappa$ conditions appear in $\dot{A}$. Since each condition of $\mathbb{P}_\kappa$ is supported in a $\mathbb{P}_\alpha$ for $\alpha<\kappa$, again applying the regularity of $\kappa$ there is some $\beta<\kappa$ such that the nontrivial coordinates of every forcing condition used in $\dot{A}$ lie in $\mathbb{P}_\beta$. Taking $\dot{A}\upharpoonright\beta$ to be the $\mathbb{P}_\beta$-name obtained by truncating the forcing conditions of $\dot{A}$ at the $\beta$ stage, $(\dot{A}\upharpoonright\beta)^{G_\beta}=A$. Since $V[G_\beta]$ contains $A$ and $f$, it can compute $a$ as the $\in$-maximal element of the transitive collapse of $(\delta, f"A)$, so there must be some $\mathbb{P}_\beta$-name for $a$. By Lemma \ref{lemma:namesize}, we can take this $\dot{a}\in H_\kappa^V$.
\end{proof}

\begin{lemma}
If $M$ is an inner model closed under $\lambda$-sequences, $\mathbb{P}\in M$ is a $\lambda^+$-cc forcing poset, and $G\subset \mathbb{P}$ is a $V$-generic filter, then $M[G]$ is closed under $\lambda$ sequences in $V[G]$.
\label{lemma:closurepreserved}
\end{lemma}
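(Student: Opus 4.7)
The plan is to show that any $\vec{a}=\langle a_\alpha : \alpha<\lambda\rangle \in V[G]$ with each $a_\alpha \in M[G]$ admits a $\mathbb{P}$-name in $M$. First, in $V$, fix a name $\dot{s}$ for $\vec{a}$ and a condition $p\in G$ forcing $\dot{s}:\check{\lambda}\to \check{M}[\dot{G}]$. For each $\alpha<\lambda$, the set
\[
D_\alpha = \{q\leq p : \exists \nu\in M^{\mathbb{P}}\ (q \Vdash \dot{s}(\check{\alpha})=\nu)\}
\]
is dense below $p$, since whenever $q\leq p$ lies in $G$, $\dot{s}(\check{\alpha})^G\in M[G]$ is the interpretation of some $\mu\in M^{\mathbb{P}}$, and some extension $q'\leq q$ in $G$ forces $\dot{s}(\check{\alpha})=\mu$.

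Applying the $\lambda^+$-cc in $V$, extract for each $\alpha$ a maximal antichain $A_\alpha\subseteq D_\alpha$ with $|A_\alpha|\leq\lambda$, and choose for each $q\in A_\alpha$ a witness $\nu_{\alpha,q}\in M^{\mathbb{P}}$ with $q\Vdash \dot{s}(\check{\alpha})=\nu_{\alpha,q}$. Each $A_\alpha$ is a subset of $M$ of $V$-cardinality at most $\lambda$, so a well-ordering of $A_\alpha$ is a $\lambda$-sequence in $V$ of elements of $M$; by closure of $M$ under $\lambda$-sequences, $A_\alpha\in M$, and for the same reason the indexed family $\langle \nu_{\alpha,q} : q\in A_\alpha\rangle$ lies in $M$. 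The antichain $A_\alpha$ remains maximal below $p$ in $M$ (by downward absoluteness of maximality), so the mixing lemma applied inside $M$ produces $\dot{a}_\alpha\in M$ with $q\Vdash \dot{a}_\alpha=\nu_{\alpha,q}$ for all $q\in A_\alpha$; since $G$ meets $A_\alpha$, we have $\dot{a}_\alpha^G=a_\alpha$.

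Finally, the $V$-sequence $\langle \dot{a}_\alpha : \alpha<\lambda\rangle$ has all values in $M$, so it lies in $M$ by closure, and the canonical name $\dot{f}=\{(\mathrm{op}(\check{\alpha},\dot{a}_\alpha), 1_{\mathbb{P}}) : \alpha<\lambda\}\in M$ satisfies $\dot{f}^G=\vec{a}$, placing $\vec{a}$ in $M[G]$. The main subtlety is the passage from $V$-names that merely represent elements of $M^{\mathbb{P}}$ to actual members of $M^{\mathbb{P}}$; the $\lambda^+$-cc hypothesis is precisely what keeps the relevant antichains and associated choice data small enough for $M$'s $\lambda$-closure to absorb them.
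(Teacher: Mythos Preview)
Your proof is correct and follows essentially the same approach as the paper's: for each $\alpha<\lambda$, use the $\lambda^+$-cc to obtain a maximal antichain of size at most $\lambda$ of conditions deciding $\dot{s}(\check{\alpha})$ via an $M$-name, then use $M$'s closure under $\lambda$-sequences to absorb all this data into $M$ and assemble a name there. The only cosmetic difference is that the paper builds the final name directly as $\dot{F}=\{\langle\sigma_{\alpha,q},q\rangle : \alpha<\lambda,\ q\in A_\alpha\}$ (with $\sigma_{\alpha,q}$ the canonical name for $\langle\alpha,\dot{x}_{\alpha,q}\rangle$), whereas you first mix inside $M$ to get single names $\dot{a}_\alpha$ and then form the sequence; this extra step is harmless but not needed. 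One minor wording point: your density argument for $D_\alpha$ refers only to conditions lying in the fixed generic $G$, whereas density below $p$ must be verified for arbitrary $q\leq p$; the fix is the standard move of passing to an arbitrary generic containing $q$.
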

\begin{proof}
Let $f:\lambda\rightarrow M[G]$ be a sequence in $V[G]$ and let $\dot{f}$ be a name for it in $V$. We construct a new name $\dot{F}$ for $f$ as follows: for each $\alpha<\lambda$, given $p$ incompatible with all conditions we have used with $\alpha$ so far, we find a $q\leq p$ and a name $\dot{x}_{\alpha,q}\in M$ such that $q\Vdash_\mathbb{P} \dot{f}(\check{\alpha})=\dot{x}_{\alpha, q}$ (assuming without loss of generality that $1_\mathbb{P}$ forces $\dot{f}$ to be a function with domain $\lambda$). Letting $\sigma_{\alpha, q}$ be the canonical name for the ordered pair $\langle\check{\alpha}, \dot{x}_{\alpha, q}\rangle$, we put $\langle \sigma_{\alpha, q}, q\rangle$ into $\dot{F}$. We continue until for each $\alpha$, our $q$ form a maximal antichain $A_\alpha$ of $\mathbb{P}$.

By the $\lambda^+$-cc, $\dot{F}$ consists of $\lambda^2=\lambda$ ordered pairs. Since we built it from names in $M$, conditions of $p$, and ordinals, it is a subset of $M$ of size $\lambda$, so it is in $M$. For each $\alpha$, there is an unique $q\in A_\alpha\cap G$ which forces that $\dot{F}^G(\alpha)=\dot{x}_{\alpha, q}^G=\dot{f}^G(\alpha)$, so $f=\dot{F}^G\in M[G]$.
\end{proof}

\printbibliography[heading=bibintoc]

\end{document}